\documentclass[12pt, reqno]{amsart}
\usepackage{amssymb}
\usepackage{graphicx}
\usepackage{xcolor} 
\usepackage{tensor}
\usepackage[color=yellow]{todonotes}

\usepackage{enumitem}
\setlist[enumerate]{leftmargin=*, label=(\arabic*)}
\setlist[itemize]{leftmargin=2em}

\usepackage{fullpage} 

\usepackage{hyperref}

\usepackage{xifthen} 

\definecolor{green}{rgb}{0,0.8,0} 

\newtheorem{theorem}{Theorem}[section]
\newtheorem{corollary}[theorem]{Corollary}
\newtheorem{lemma}[theorem]{Lemma}
\newtheorem{proposition}[theorem]{Proposition}
\theoremstyle{definition}
\newtheorem{definition}[theorem]{Definition}

\theoremstyle{remark}
\newtheorem{remark}[theorem]{Remark}
\numberwithin{equation}{section}


\newcommand{\tA}{{\tilde A}}

\newcommand{\ta}{{\tilde a}}
\newcommand{\tb}{{\tilde b}}
\newcommand{\tF}{{\tilde F}}
\newcommand{\tf}{{\tilde f}}

\newcommand{\A}{{\mathbf A}}

\newcommand{\nrm}[1]{\Vert#1\Vert}
\newcommand{\abs}[1]{\vert#1\vert}
\newcommand{\brk}[1]{\langle#1\rangle}
\newcommand{\set}[1]{\{#1\}}

\newcommand{\tr}{\textrm{tr}}

\newcommand{\aleq}{\lesssim}
\newcommand{\ageq}{\gtrsim}

\newcommand{\lap}{\Delta}

\newcommand{\ud}{d}
\newcommand{\rd}{\partial}
\newcommand{\nb}{\nabla}

\newcommand{\alp}{\alpha}
\newcommand{\bt}{\beta}
\newcommand{\gmm}{\gamma}

\newcommand{\dlt}{\delta}
\newcommand{\Dlt}{\Delta}
\newcommand{\eps}{\epsilon}

\newcommand{\sgm}{\sigma}
\newcommand{\Sgm}{\Sigma}

\newcommand{\Tht}{\Theta}

\newcommand{\omg}{\omega}

\newcommand{\bfa}{{\bf a}}
\newcommand{\bfb}{{\bf b}}

\newcommand{\bfw}{{\bf w}}

\newcommand{\bfA}{{\bf A}}
\newcommand{\bfB}{{\bf B}}

\newcommand{\bfD}{{\bf D}}

\newcommand{\bfF}{{\bf F}}

\newcommand{\bfH}{{\bf H}}

\newcommand{\bfQ}{{\bf Q}}

\newcommand{\bfW}{{\bf W}}


\newcommand{\bbN}{\mathbb N}

\newcommand{\bbR}{\mathbb R}
\newcommand{\R}{\mathbb R}
\newcommand{\bbS}{\mathbb S}

\newcommand{\bbZ}{\mathbb Z}


\newcommand{\calB}{\mathcal B}
\newcommand{\calC}{\mathcal C}

\newcommand{\calE}{\mathcal E}
\newcommand{\calF}{\mathcal F}
\newcommand{\calG}{\mathcal G}

\newcommand{\calQ}{\mathcal Q}

\newcommand{\weakto}{\rightharpoonup}

\newcommand{\pfstep}[1]{\vskip.5em \noindent {\bf #1.}}
\newcommand{\pfsubstep}[1]{\vskip.5em \noindent {\it #1.}}



\newcommand{\covD}[1][]{%
\ifthenelse{\isempty{#1}}{\bfD}{{\bfD^{(#1)}}}%
}

\newcommand{\Cal}{\mathrm{Cal}}
\newcommand{\Str}{\mathrm{Str}}
\newcommand\DA{{\mathbf{DA}}}
\newcommand\DB{{\mathbf{DB}}}

\newcommand{\tw}{\tilde w}

\renewcommand{\div}{\mathrm{div}\,}
\newcommand{\curl}{\mathrm{curl}\,}

\newcommand{\g}{\mathfrak  g}
\newcommand{\G}{\mathbf{G}}
\newcommand{\la}{\langle}
\newcommand{\ra}{\rangle}

\renewcommand{\P}{\mathbf P}
\newcommand{\rst}{\!\upharpoonright}		

\newcommand{\spE}{\calE_{e}} 
\newcommand{\Egs}{\calE_{GS}} 

\newcommand{\hM}{\calQ} 

\newcommand{\nE}{{\calE}} 
\newcommand{\cstr}{c^{Str}}
\newcommand{\cstrd}{c^{Str,[\delta_{0}]}}


\newcommand{\blue}[1]{{
#1}}

\newcommand{\LieBr}[2]{[#1, #2]}

\newcommand{\smexp}{\sgm_{0}}
\newcommand{\dltStr}{\dlt_{s}}

\newcommand{\smin}{\sgm^{(0)}}
\newcommand{\pmin}{p^{(0)}}
\newcommand{\smid}{{\sgm^{(1)}}}
\newcommand{\pmid}{p^{(1)}}
\newcommand{\smidp}{{\sgm^{(2)}}}
\newcommand{\pmidp}{p^{(2)}}
\newcommand{\smax}{\sgm^{(3)}}
\newcommand{\pmax}{p^{(3)}}
\newcommand{\smaxp}{\sgm^{(4)}}
\newcommand{\pmaxp}{p^{(4)}}

\vfuzz2pt 
\hfuzz2pt 


\begin{document}

\title[]{The Yang--Mills heat flow and the caloric gauge }
\author{Sung-Jin Oh}%
\address{Department of Mathematics, UC Berkeley, Berkeley, CA 94720 and KIAS, Seoul, Korea 02455}%
\email{sjoh@math.berkeley.edu}%

\author{Daniel Tataru}%
\address{Department of Mathematics, UC Berkeley, Berkeley, CA, 94720}%
\email{tataru@math.berkeley.edu}%


\date{\today}%
\begin{abstract}
This is the first part of the four-paper sequence, which establishes the Threshold Conjecture and the Soliton Bubbling vs.~Scattering Dichotomy for the energy critical hyperbolic Yang--Mills equation in the (4 + 1)-dimensional Minkowski space-time.

The primary subject of this paper, however, is another PDE, namely the energy critical Yang--Mills heat flow on the 4-dimensional Euclidean space. Our first goal is to establish sharp criteria for global existence and asymptotic convergence to a flat connection for this system in $\dot{H}^{1}$, including the Dichotomy Theorem (i.e., either the above properties hold or a harmonic Yang--Mills connection bubbles off) and the Threshold Theorem (i.e., if the initial energy is less than twice that of the ground state, then the above properties hold). Our second goal is to use the Yang--Mills heat flow 
in order to define the caloric gauge, which will play a major role in the analysis of the hyperbolic Yang--Mills equation in the subsequent papers.

\end{abstract}
\maketitle
\setcounter{tocdepth}{2}
\tableofcontents

\section{Introduction}
The goal of this paper is two-fold:
\begin{itemize}
\item {\it To develop a large data global theory of the Yang--Mills heat flow on $\bbR^{4}$.} Consider the Yang--Mills heat flow on $\bbR^{4}$ with a compact structure group. For initial data $a \in \dot{H}^{1}(\bbR^{4})$, we establish sharp criteria for global existence and asymptotic convergence to a flat connection, including the Dichotomy Theorem (Theorem~\ref{thm:dich-simple}) and the Threshold Theorem (Theorem~\ref{thm:thr-simple}).

\item {\it To define the caloric gauge for the hyperbolic Yang--Mills equation.} Using the large data global theory of the Yang--Mills heat flow, we define the so-called \emph{caloric gauge} (Definition~\ref{d:C}) and identify the structure of the hyperbolic Yang--Mills equation in this gauge (Theorem~\ref{t:main-wave}). 
\end{itemize}

While this paper is primarily devoted to analysis of the Yang--Mills heat flow, in the larger scheme of things it constitutes the first part of a four-paper sequence, whose overall aim is to prove the Threshold Conjecture and the Dichotomy Theorem for the energy critical \emph{hyperbolic} Yang--Mills equation in $\bbR^{1+4}$. The four installments of the series are concerned with the following topics:
\begin{enumerate}
\item the \emph{caloric gauge} for the hyperbolic Yang--Mills equation, present paper.
\item large data \emph{energy dispersed} caloric gauge solutions, \cite{OTYM2}.
\item \emph{topological classes} of connections and large data local well-posedness, \cite{OTYM2.5}
\item \emph{soliton bubbling} vs. scattering dichotomy for large data solutions, \cite{OTYM3}.
\end{enumerate}
A short overview of the whole sequence is provided in the survey paper \cite{OTYM0}.

In the remainder of the introduction, we formulate the three Yang--Mills equations that play a role in this paper, namely the harmonic Yang--Mills equation (elliptic), the Yang--Mills heat flow (parabolic) and the hyperbolic Yang--Mills equation. Then in Section~\ref{sec:results}, the main results are stated in a more precise form, along with discussion of some major ideas.

\subsection{Lie groups and Lie algebras}
Let $\G$ be a compact noncommutative Lie group and $\g$ its associated Lie
algebra. We denote by $Ad(O) X = O X O^{-1}$ the action of $\G$ on
$\g$ by conjugation (i.e., the adjoint action), and by $ad(X) Y = [X,
Y]$ the associated action of $\g$, which is given by the Lie
bracket. We introduce the notation $\brk{X, Y}$ for a bi-invariant inner product on $\g$,
\begin{equation*}
\la [X,Y],Z \ra = \la X, [Y,Z] \ra, \qquad X,Y,Z \in \g, 
\end{equation*}
or equivalently 
\begin{equation*}
\la X,Y \ra = \la Ad(O) X,  Ad(O) Y  \ra, \qquad X,Y \in \g, \quad O \in \G. 
\end{equation*}
If $\G$ is semisimple then one can take 
$\brk{X, Y} = -\tr(ad(X) ad(Y))$ i.e. negative of the Killing form on $\g$, which is then positive definite,
However, a bi-invariant inner product on $\g$ exists for any compact Lie group $\G$.

\subsection{Connections and curvature}
The objects of study here are connection $1$-forms taking values in the Lie algebra $\g$,
\[
A_{j}: \R^{d}\rightarrow \g.
\]
The associated covariant differentiation  operators $\covD_{j} = (\covD[A])_j$,  acting on $\g$-valued functions $B$,
are defined by
\[
\covD_{j} B:= \partial_{j} B + ad(A_{j}) B.
\]
 Their commutators yield the curvature tensor
\[
F_{jk}: = \partial_{j}A_{k} - \partial_{k}A_{j} +[A_j,A_k],
\]
in the sense that $\covD_{j} \covD_{k} - \covD_{k} \covD_{j} = ad(F_{jk})$. A basic property of $F$ is that it satisfies the \emph{Bianchi identity}:
\begin{equation*}
	\covD_{\alp} F_{\bt \gmm} + \covD_{\bt} F_{\gmm \alp} + \covD_{\gmm} F_{\alp \bt} = 0.
\end{equation*}
Given a $\G$-valued function $O$, its action $B \to Ad(O) B$ induces a gauge transformation for $A$,
namely 
\[
A_k \rightarrow O A_k O^{-1} -\partial_k O  O^{-1} =: \calG(O) A_{k}.
\]
Correspondingly, we have for $F$
\[
F_{jk} \to O F_{jk} O^{-1}.
\]

\subsection{Yang--Mills equations}
While this article is primarily devoted to the Yang--Mills heat flow, there are in effect three Yang--Mills equations
which play a role in our work. These are as follows:

\subsubsection{The harmonic Yang--Mills equations in the 
Euclidean space $\R^d$.}
This is obtained as the Euler--Lagrange equation for the Lagrangian (or the energy)
\begin{equation}\label{Le}
\spE[A]: = \frac{1}{2}\int_{\R^{d}} \la F_{jk}, F^{jk}\ra \,dx,
\end{equation}
and has the form
\begin{equation}\label{YM-e}
\covD^j F_{jk} = 0 
\end{equation}
Both the Lagrangian and the harmonic Yang--Mills equation are invariant with respect to gauge transformations,
therefore in order to have a good theory for these equations one needs to fix the gauge. A common choice 
here is the \emph{Coulomb gauge},
\begin{equation}\label{Coulomb-e}
\partial^j A_j = 0 
\end{equation}
which formally turns the equations \eqref{YM-e} into a strongly elliptic system,
\begin{equation}\label{YM-e-Coulomb}
\Delta_A A_k = - [A^j, \covD_k A_j] 
\end{equation}
where $\Delta_A$ is the covariant Laplacian, given by
\begin{equation}\label{DeltaA}
\Delta_A = \covD^j \covD_j.
\end{equation}

\subsubsection{The Yang--Mills heat flow in $\R^+ \times \R^d$}
This can be viewed as the gradient flow for the above Lagrangian, but
written in a gauge invariant fashion. Using the letter $s$ for the
heat-time, we add a heat-time connection component $A_s$ and
the corresponding curvatures $F_{sj}$.  Then the covariant Yang--Mills
heat flow has the form
\begin{equation}\label{caloric}
F_{sj} = \covD^{\ell} F_{\ell j}, \qquad A_j(s=0) = a_j
\end{equation}
The solutions of the harmonic Yang--Mills equation play the role of
steady states for the Yang--Mills heat flow.

This flow is also gauge invariant. A natural way to fix the gauge is
via the \emph{de Turck gauge condition}
\begin{equation}\label{deTurck}
A_s = \partial^k A_k, 
\end{equation}
which formally turns the system \eqref{caloric} into a semilinear strongly parabolic system,
\begin{equation}\label{YM-h-deTurck}
(\partial_s - \Delta_A)A_j = [A_j, \partial^k A_k] - [A^k, \partial_j A_k].
\end{equation}

However, there is a second gauge choice which plays the leading role
in this article, namely the \emph{local caloric gauge}\footnote{The word \emph{local} is here to differentiate this gauge with the global caloric gauge defined in Definition~\ref{d:C} below.},
\begin{equation}\label{local-caloric}
A_s = 0.
\end{equation}
In this gauge  the system \eqref{caloric} becomes a semilinear degenerate parabolic system, 
\begin{equation}\label{YM-h-lc}
(\partial_s - \Delta_A)A_j = - \covD^k \partial_j A_k
\end{equation}
where the degenerate part occurs at the level of the divergence of
$A$, namely 
\begin{equation}
\partial_s (\partial^k A_k) = [ A^j,\covD^kF_{jk}].
\end{equation}
This would seem to be less favorable from an analytic point of
view. However, as it turns out, under this gauge the long-time
behavior is better. Incidentally, this is exactly the gauge which
corresponds to directly taking the gradient flow for the Lagrangian in
\eqref{Le}.

\begin{remark} \label{rem:ymhf}
In the literature, the gradient flow $\rd_{s} A_{j} = \covD^{\ell} F_{\ell j}$ is usually called the \emph{Yang--Mills heat flow}. In our work, however, we find it conceptually and technically useful to adopt the fully gauge-covariant formulation \eqref{caloric}, and view this flow as the equation in a gauge defined by \eqref{local-caloric}. Of course, these viewpoints are equivalent.
\end{remark}

The first part of the paper will be devoted to the study of global 
solutions for the Yang--Mills heat flow.

\subsubsection{ The hyperbolic Yang--Mills equation in the 
Minkowski space 
$\R^{1+d}$.}
Let $\bbR^{1+d}$ be the $(d+1)$-dimensional Minkowski space, equipped with the Minkowski metric $\mathrm{diag}(-1, +1, \cdots, +1)$ in the rectangular coordinates $(x^{0}, x^{1}, \ldots, x^{d})$. 

The \emph{hyperbolic Yang--Mills equations} are the Euler-Lagrange
equations associated with the formal Lagrangian action functional
\begin{equation}\label{Lh}
\mathcal{L}[A]: = \frac{1}{2}\int_{\R^{1+d}} \la F_{\alpha\beta}, F^{\alpha\beta}\ra \,dx dt.
\end{equation}
Here we are using the standard convention for raising indices using the Minkowski metric, and
greek letters for the Minkowski setting. The (hyperbolic) time is
denoted by $t$, and corresponds to the index $0$ (i.e., $t = x^{0}$).  Thus, the
hyperbolic Yang--Mills equations take the form
\begin{equation}\label{ym}
\covD^\alpha F_{\alpha \beta} = 0.
\end{equation}
In order to consider the Yang--Mills problem as an evolution equation
we need to consider initial data sets. An \emph{initial data set} for
\eqref{ym} consists of two pairs of 1-forms $(a_{j}, e_{j})$  on $\bbR^{d}$. 
We say that $(a_{j},e_{j})$ is the initial data for a Yang--Mills wave $A$ if
\begin{equation*}
	(A_{j}, F_{0 j}) \rst_{\set{t=0}} = (a_{j}, e_{j}).
\end{equation*}
Note that \eqref{ym} for $\beta = 0$ imposes the condition that the following
equation be true for any initial data for \eqref{ym}:
\begin{equation} \label{eq:YMconstraint}
	\covD^{j} e_{j} = 0.
\end{equation}
where $\covD^j$ denotes the covariant derivative with respect to the $a_j$
connection.  This equation is the \emph{Gauss} (or the
\emph{constraint}) \emph{equation} for \eqref{ym}.

We observe again that harmonic Yang--Mills connections play the role of
steady states for the hyperbolic Yang--Mills evolution. However, here
we have an additional class of symmetries, namely the Lorentz group.
Taking a Lorentz transform of a steady state yields a soliton, which
evolves with constant speed less than $1$. It is a simple computation 
to verify that the energy of this soliton is larger than the energy of the original 
harmonic Yang--Mills connection.

Yet again, \eqref{ym} is gauge invariant. There are several interesting gauge choices one can make for the 
hyperbolic Yang--Mills equations:

\begin{itemize}
\item The Lorenz gauge, 
\begin{equation}\label{lorenz}
\partial^\alpha A_\alpha = 0
\end{equation}
In this gauge the  hyperbolic Yang--Mills equations  become a semilinear wave system
\begin{equation}\label{YM-h-Lorenz}
\Box_A A_\alpha = - [A^\beta, \covD_\alpha A_\beta] 
\end{equation}
In particular it has finite speed of propagation.  This gauge is very
convenient for local well-posedness for large but regular data.
Unfortunately there are multiple technical difficulties if one tries
to implement such a gauge in the low regularity setting, see
e.g. \cite{MR3519539}.

\item the temporal gauge, 
\begin{equation}\label{temporal}
A_0 = 0
\end{equation}
This is akin to the local caloric gauge for the heat flow. In particular 
at the level of the divergence of $A_x$ we again get a pure transport equation. 
In this gauge the  Yang--Mills system is still strictly hyperbolic, and in
particular it has finite speed of propagation.  Because of this, it is also
convenient for local well-posedness for large but regular data.
Unfortunately working with either low regularity solutions or long time solutions
runs into difficulties largely caused by the lack of decay/dispersion in the transport part.

\item The Coulomb gauge
\begin{equation}\label{coulomb}
\partial^j A_j = 0.
\end{equation}
where only the spatial divergence is considered.
Here the causality is lost; however, the Coulomb gauge is an
``elliptic'' gauge which captures well the null structure of the
problem, and thus works well in low regularity settings. Indeed, the
Coulomb gauge was used in \cite{KT} to prove the small data result for
this problem in the critical Sobolev space. Unfortunately, for large data 
there are issues with the Coulomb gauge.
\end{itemize}

The aim of the second part of this paper will be to use the Yang--Mills
heat flow in the local caloric gauge in order to introduce a new gauge
choice for the hyperbolic Yang--Mills flow, which we call the \emph{caloric gauge}. The final objective here is to arrive at a good
formulation of the hyperbolic Yang--Mills equation in the caloric
gauge.

\subsection{Energy, scaling and criticality}
Here we review the standard energy, scaling and criticality considerations 
which apply to the three Yang--Mills problems described above.

\subsubsection{The harmonic Yang--Mills equation}
In the context of the harmonic Yang--Mills equation, the Lagrangian $\mathcal{L}_e[A]$ plays the 
role of the energy of the connection $A$; we will suggestively use the alternate notation $\calE_{\bbR^{d}}[A]$
for it.

On the other hand, the equations \eqref{YM-e} also have a scale invariance property,
\[
A(x) \to \lambda A(\lambda x).
\]
The Sobolev space with the same scaling is $\dot H^{\frac{d-2}2}$,
which we will view as the natural space for solutions to
\eqref{YM-e}. We will refer to this space as the critical Sobolev
space.  The energy is scale invariant in dimension $d = 4$, which we
will refer to as the \emph{energy critical dimension}.

\subsubsection{The Yang--Mills heat flow}

Here the energy plays the role of a Lyapunov functional,
\begin{equation}\label{Lyapunov}
\frac{d}{ds} \spE[A] = - \int_{\R^d} |\covD^j F_{jk}|^2 dx
\end{equation}
The scale invariance property now reads 
\[
A(x, s) \to \lambda A(\lambda x, \lambda^2 s).
\]
The critical Sobolev space  for the initial data $a$ is again $\dot H^{\frac{d-2}2}$,
and the energy critical dimension is $d = 4$ as well.

\subsubsection{The hyperbolic Yang--Mills equation}

Here the gauge invariant energy is given by
\[
\calE_{\set{t} \times \bbR^{d}}[A] = \frac12  \int_{\set{t} \times \R^d} \sum_{\alp < \bt} |F_{\alpha \beta}|^2 dx.
\]
The scale invariance property has the form 
\[
A(t,x) \to \lambda A(\lambda t, \lambda x).
\]
The critical Sobolev space  for the initial data $(a,e)$ is 
$\dot H^{\frac{d-2}2} \times \dot H^{\frac{d-4}2}$,
and the energy critical dimension is again $d = 4$.

\addtocontents{toc}{\protect\setcounter{tocdepth}{-1}}
\subsection*{Acknowledgments} 
Part of the work was carried out during the semester program ``New
Challenges in PDE'' held at MSRI in Fall 2015. S.-J. Oh was supported
by the Miller Research Fellowship from the Miller Institute, UC
Berkeley and the TJ Park Science Fellowship from the POSCO TJ Park
Foundation. D. Tataru was partially supported by the NSF grant
DMS-1266182 as well as by the Simons Investigator grant from the
Simons Foundation.

\addtocontents{toc}{\protect\setcounter{tocdepth}{2}}

\section{ The main results} \label{sec:results} In this section, we
present the main results proved in this paper. We focus primarily on
the energy critical dimension $d = 4$, although some of our results
and techniques easily extend to higher dimensions; see
Remark~\ref{rem:higher-d} below.

\subsection{Review of results for harmonic Yang--Mills 
connections on  \texorpdfstring{$\bbR^{4}$}{R4}} \label{subsec:har-ym} 
Here we do not consider any new
results, but instead recall the known results concerning the harmonic
Yang--Mills equation.

We start with the classical elliptic regularity result:
\begin{theorem}[\cite{MR648356}]
  All harmonic Yang--Mills connections in $H^1_{loc}$ are smooth up to
  a gauge transformation $O \in H^2_{loc}$ (or equivalently $O_{;x}
  \in H^1_{loc}$).
\end{theorem}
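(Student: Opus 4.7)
The plan is to exploit the gauge freedom to transform the harmonic Yang--Mills equation into an elliptic system in the Coulomb gauge \eqref{Coulomb-e}, run an elliptic bootstrap there, and then patch the local gauges into a single global transformation. The starting point is \eqref{YM-e-Coulomb}, $\Delta_{A} A_{k} = - [A^{j}, \covD_{k} A_{j}]$, which is a semilinear strongly elliptic system with quadratic and cubic nonlinearities in $A$ and $\partial A$.

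\emph{Step 1: local Coulomb gauge.} Since $A \in H^{1}_{loc}$, one has $F \in L^{2}_{loc}$. By absolute continuity of the integral, every point admits a small ball $B = B(x_{0}, r)$ on which $\int_{B} |F|^{2} \, dx$ is below the smallness threshold $\eps_{0}$ of Uhlenbeck's local Coulomb gauge lemma. That lemma produces a gauge transformation $O \in H^{2}(B)$ so that $\tA := \calG(O) A$ satisfies $\partial^{j} \tA_{j} = 0$ together with the quantitative Coulomb estimate $\nrm{\tA}_{H^{1}(B)} + \nrm{\tA}_{L^{4}(B)} \aleq \nrm{F}_{L^{2}(B)}$; in particular, after shrinking $r$ further, $\nrm{\tA}_{L^{4}(B)}$ can be made as small as desired.

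\emph{Step 2: elliptic bootstrap.} In Coulomb gauge the equation has the schematic form $\Delta \tA = \tA \cdot \partial \tA + \tA \cdot \tA \cdot \tA$. At the starting regularity the critical Sobolev embedding $\dot{H}^{1} \hookrightarrow L^{4}$ places the right-hand side just barely in $L^{4/3}$, which is the scale-invariant source space. The smallness of $\nrm{\tA}_{L^{4}(B)}$ from Step~1 allows one to absorb the top-order quadratic part back into the principal operator, and interior elliptic regularity on a slightly smaller ball $B' \Subset B$ upgrades $\tA$ to $W^{2,p}(B')$ for some $p$ strictly above $4/3$ (one passes through Lorentz or Morrey refinements to escape the endpoint). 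One then iterates: each improvement in the integrability of $\tA$ via Sobolev embedding improves the integrability of $\tA \cdot \partial \tA$ and $\tA^{3}$, which raises $\tA$ another step up the Sobolev scale. After finitely many rounds, $\tA \in H^{s}_{loc}$ for every $s$, hence $\tA \in C^{\infty}$ on $B'$.

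\emph{Step 3: patching.} Steps~1 and~2 furnish a countable cover $\set{B_{a}}$ of $\R^{d}$ and local gauges $O_{a} \in H^{2}(B_{a})$ such that each $\tA^{(a)} := \calG(O_{a}) A$ is smooth on $B_{a}$. On the overlap $B_{a} \cap B_{b}$, the transition $O_{ab} := O_{a} O_{b}^{-1}$ satisfies $dO_{ab} = \tA^{(a)} O_{ab} - O_{ab} \tA^{(b)}$, whose right-hand side is smooth in the coefficients, so a standard ODE-type bootstrap shows $O_{ab} \in C^{\infty}$. Since $\R^{d}$ is contractible, any such $\bfG$-valued cocycle $\set{O_{ab}}$ trivialises smoothly, producing smooth $V_{a}$ with $O_{ab} = V_{a} V_{b}^{-1}$; then $O := V_{a}^{-1} O_{a}$ is well defined globally, lies in $H^{2}_{loc}$, and satisfies $\calG(O) A \in C^{\infty}$. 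The principal obstacle is Step~2: because of energy-criticality, the very first round of elliptic regularity does not gain anything when run naively, and one must exploit the $L^{4}$ smallness coming out of Uhlenbeck's lemma (or use refined function spaces) to make the initial improvement strict; once one rung up the Sobolev ladder is secured, the remaining iteration is mechanical.
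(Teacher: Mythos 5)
Your proposal is correct and follows exactly the route the paper has in mind: the theorem is quoted from Uhlenbeck's work \cite{MR648356} without proof, and the paper's only comment on it is that the key is the local Coulomb gauge in which the harmonic Yang--Mills equation becomes a nice elliptic system, which is precisely your Step~1--2 (with the standard smallness/absorption device to get past the critical first rung) followed by the routine patching of local gauges over the contractible base.
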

Key to this theorem is existence of a good gauge (namely, the local
Coulomb gauge) in which the harmonic Yang--Mills equation becomes a
nice elliptic system. The importance of a judicious choice of gauge to
reveal essential analytic features of the equation is a theme that we
will see repeatedly below for the other Yang--Mills equations.

Another fundamental issue, which for example arises in the study of
moduli spaces \cite{Don}, is the behavior of $H^{1}_{loc}$ connections
with uniformly bounded energy $\spE$. We recall the following basic
results:
\begin{theorem} \label{thm:weak-conv} Consider a sequence $a^{n}$ of
  $H^{1}_{loc}$ harmonic Yang--Mills connections with uniformly bounded energies $\spE[a^{n}]
  \leq \spE < \infty$. Then:
  \begin{enumerate}
  \item (Uhlenbeck compactness) After passing to a subsequence,
    $a^{n}$ is weakly convergent in $H^{1}$ up to $H^{2}_{loc}$ gauge
    transformations.
  \item (Small energy) If $\spE$ is sufficiently small, then after
    passing to a subsequence, $a^{n}$ converges strongly in
    $H^{1}_{loc}$ to the flat connection up to $H^{2}_{loc}$ gauge
    transformations.
  \item (Dichotomy) Generally, one of the following two scenarios must
    hold:
    \begin{enumerate}[label=(\alph*)]
    \item After passing to a subsequence, $a^{n}$ converges strongly
      in $H^{1}_{loc}$ up to $H^{2}_{loc}$ gauge transformations.
    \item The sequence ``bubbles off'' a nontrivial harmonic
      Yang--Mills connection. More precisely, there exists a finite
      set of points $\Sgm$ such that, after passing to a subsequence,
      $a^{n}$ converges strongly in $H^{1}_{loc}$ on $\bbR^{4}
      \setminus \Sgm$ up to $H^{2}_{loc}$ gauge
      transformations. Moreover, for each $x_{0} \in \Sgm$, there
      exist sequences $x_{n} \to x_{0}$ and $r_{n} \to 0$ such that
      the rescaled sequence
      \begin{equation*}
        b^{n} (x) = r_{n} a^{n}(x_{n} + r_{n} x)
      \end{equation*}
      converges strongly in $H^{1}_{loc}$ to a nontrivial harmonic
      Yang--Mills connection.
    \end{enumerate}
  \end{enumerate}
\end{theorem}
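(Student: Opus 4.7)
The plan is to reduce everything to local gauge fixing in the Coulomb gauge, in which the harmonic Yang--Mills equation becomes the elliptic system $\Dlt A_k = -[A^j, \covD_k A_j]$. The key input is Uhlenbeck's local gauge theorem: there exists $\eps_{0} > 0$ such that on a ball $B$ with $\int_B |F|^2 < \eps_{0}$, after applying a gauge transformation $O \in H^2(B)$ one has $\p^j A_j = 0$ and $\nrm{A}_{H^1(B)} \lesssim \nrm{F}_{L^2(B)}$. Combined with standard elliptic bootstrap for the Coulomb-gauge equation, this yields $C^{\infty}$ bounds on a slightly smaller ball depending only on the $L^2$ curvature norm.

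With this tool in hand, I first identify the concentration set. The energy densities $|F[a^n]|^2 \, dx$ form a bounded sequence of Radon measures, so after passing to a subsequence they converge weak-$\ast$ to a finite measure $\mu$. Define
\begin{equation*}
  \Sgm = \{ x_0 \in \bbR^4 : \mu(\{x_0\}) \geq \eps_{0} / 2 \},
\end{equation*}
which is finite since $\mu(\bbR^4) \leq \spE$. At any $x_0 \notin \Sgm$ a sufficiently small ball $B_r(x_0)$ has $\int_{B_r(x_0)} |F[a^n]|^2 < \eps_{0}$ for large $n$; local Uhlenbeck gauges followed by the elliptic bootstrap give $C^{\infty}$ bounds for the gauge-transformed connections on $B_{r/2}(x_0)$, hence strong $H^1$ subsequential convergence there. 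A diagonal extraction over a countable exhaustion of $\bbR^4 \setminus \Sgm$, combined with patching of the local gauges by transition functions controlled in $H^2$, produces $O^n \in H^2_{loc}(\bbR^4 \setminus \Sgm)$ with $\calG(O^n) a^n$ converging strongly in $H^{1}_{loc}(\bbR^4 \setminus \Sgm)$; the uniform $\dot H^1$ bound then yields weak $H^1$ convergence globally. This gives (1) and scenario (3a). Statement (2) is the special case $\Sgm = \emptyset$ with small total energy, where the patching is carried out globally on $\bbR^4$ and the resulting $\tilde a^n = \calG(O^n) a^n$ are $\dot H^1$-small; testing the Coulomb-gauge equation against $\tilde a^n$ and absorbing the cubic term via $\nrm{\tilde a^n}_{L^4} \lesssim \nrm{F[a^n]}_{L^2}$ forces $\tilde a^n \to 0$ in $\dot H^1$, so the limit is the flat connection.

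For scenario (3b), given $x_0 \in \Sgm$ select $r_n \to 0$ and $x_n \to x_0$ so that the concentration function $\sup_y \int_{B_{r_n}(y)} |F[a^n]|^2$ first reaches the threshold $\eps_{0}/4$, realized at $y = x_n$. By the conformal covariance of harmonic Yang--Mills on $\bbR^4$, the rescaled connections $b^n(x) = r_n a^n(x_n + r_n x)$ are again harmonic Yang--Mills with $\spE[b^n]$ uniformly bounded; by construction their concentration on unit balls is bounded by $\eps_{0}/4$ and attained at the origin. Applying the already-established parts (1) and (3a) to $b^n$ extracts a weak $H^1$ limit $b^{\infty}$ on $\bbR^4$ minus a finite concentration set, which is nonzero because of the attainment at the origin; Uhlenbeck's removable singularity theorem for finite-energy $H^1$ harmonic Yang--Mills connections on $\bbR^4$ then extends $b^{\infty}$ smoothly across its remaining concentration points, yielding the desired nontrivial bubble.

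The principal technical difficulty is the gauge patching: the local Coulomb gauges on overlapping balls are highly non-unique, and must be glued into a single gauge transformation with uniform $H^2_{loc}$ control. This is where Uhlenbeck's original compactness argument is most delicate, and where the $H^2_{loc}$ regularity of $O$ in the statement enters essentially. A secondary subtlety is that energy at $x_0$ could in principle split across several scales; the remedy is the choice of the \emph{first} scale at which the concentration reaches the threshold, which rules out further splitting at the rescaled level and ensures a single well-defined limiting bubble.
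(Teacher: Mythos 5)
The paper does not actually prove Theorem~\ref{thm:weak-conv}: it appears in the review subsection on harmonic Yang--Mills connections, where the authors explicitly state that they are only recalling known results (Uhlenbeck compactness, $\eps$-regularity, bubbling) with references, so there is no internal argument to compare yours against. Your proposal follows the standard route from that literature --- local Uhlenbeck Coulomb gauges on balls of small curvature, elliptic bootstrap for $\Dlt A_k = -[A^j,\covD_k A_j]$, a finite concentration set $\Sgm$ extracted from the weak-$\ast$ limit of the energy measures, diagonal extraction with gauge patching away from $\Sgm$, and rescaling at a first concentration scale to produce the bubble --- and as an outline this is the correct approach.

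Two points are genuinely unfinished rather than merely compressed. First, in part (1) you deduce global weak $H^1$ convergence from ``the uniform $\dot H^1$ bound,'' but no such bound exists: the hypothesis controls only $\nrm{F[a^n]}_{L^2}$, which is gauge invariant, while $\nrm{a^n}_{\dot H^1}$ is not, and your Uhlenbeck gauges (hence the $H^1$ control of the transformed connections) are available only on small-curvature balls, i.e.\ away from $\Sgm$. Producing a single gauge in which the sequence is bounded in $H^1$ across the concentration points is exactly the content of the Sedlacek-type refinement of Uhlenbeck compactness (gauges on annuli around the points of $\Sgm$ together with a removable-singularity step for the limit), and it cannot be replaced by the one-line deduction you give. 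Second, the patching of the local Coulomb gauges into a single $O^n \in H^2_{loc}$ with uniform control is acknowledged but not carried out; since this is the technical heart of the theorem, what you have is a correct roadmap rather than a proof. Smaller repairs: in (2) it is cleaner (and avoids the unproved global small-energy Coulomb gauge on $\bbR^4$) to note that absence of concentration gives strong $H^1_{loc}$ convergence to a harmonic Yang--Mills connection of small energy, which is flat by the energy gap; in (3b) the concentration function should be restricted to a small ball about $x_0$ containing no other point of $\Sgm$, so that the maximizing centers $x_n$ really converge to $x_0$, and with your threshold $\eps_0/4<\eps_0/2$ the rescaled sequence has no concentration points at all, so the bubble is obtained by strong convergence everywhere (nontriviality following from convergence of the energy on the closed unit ball), making the appeal to removable singularities at ``remaining concentration points'' unnecessary at that step.
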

\noindent
For more sophisticated results on the structure of the possible
singular set $\Sgm$  we refer to \cite{Riv}.

By part (2), we see that
\begin{equation} \label{eq:GS-en} \Egs = \inf \set{\spE[Q] : \hbox{$Q$
      is a nontrivial harmonic Yang--Mills connection on $\bbR^{4}$}}
\end{equation}
is strictly positive. Part (3) then implies that, if $\Egs < \infty$,
then there exists a harmonic Yang--Mills connection $Q$, which we call
a \emph{ground state}, such that
\begin{equation*}
  \spE[Q] = \Egs.
\end{equation*}

For noncommutative compact structure groups, we indeed have $\Egs <
\infty$. This nontrivial fact comes from the beautiful interplay among
the harmonic Yang--Mills equation on $\bbR^{4}$, topology and the
theory of Lie groups. To property describe it, we need to introduce
the concept of \emph{topological classes}.

For a compact base manifold, such as $\bbS^{4}$, this term refers to
the isomorphism classes of principal $\G$-bundles which supports the
connection. On the other hand, for $\bbR^{4}$, which is contractible
and thus supports only the trivial fiber bundles, a topological class
must be interpreted rather as a property of a connection. If $a$ is a
harmonic Yang--Mills connection on $\bbR^{4}$, then consider its
pullback on $\bbS^{4} \setminus \set{point}$ by the stereographic
projection. By conformal invariance, it is a harmonic Yang--Mills
connection on $\bbS^{4} \setminus \set{point}$ with the same energy,
and by the singularity removal theorem of Uhlenbeck \cite{U2}, it can
be uniquely extended to a principal $\G$-bundle on the whole sphere
$\bbS^{4}$. We identify its isomorphism class with the topological
class of $a$.

\begin{remark} 
  More generally, if $A$ is smooth with a rapidly decaying curvature
  $F$, then it uniquely determines a principal $\G$-bundle on the
  one-point compactification $\bbS^{4}$ of $\bbR^{4}$, whose
  isomorphism class may be identified with the topological class of
  $A$. In fact, this procedure essentially works under the mere
  condition that $A \in H^{1}_{loc}$ with finite energy. We refer the
  reader to \cite[Section~3]{OTYM2.5} for a precise definition of
  topological classes of such connections along these ideas.
\end{remark}

By the above discussion, it suffices to consider the harmonic
Yang--Mills connections on $\bbS^{4}$. For concreteness, we also
restrict our attention to $\G = SU(2)$, in which case we normalize $\brk{A, B} = -
\tr(AB)$. Then the Chern number $c_{2}$ of a principal
$\G$-bundle, which is always an integer, characterizes its topological
class. It may be computed from $a$ by the Chern--Weyl formula
\begin{equation} \label{eq:chern-no} c_{2}(a) = \frac{1}{8 \pi^{2}}
  \int_{\bbS^{4}} \tr(f \wedge f).
\end{equation}

One way to construct harmonic Yang--Mills connections is to look for
the absolute minimizers of the energy functional $\spE$ in a fixed
topological class; such connections are called \emph{instantons}. A
procedure due to Atiyah--Drinfeld--Hitchin--Manin \cite{ADHM} gives
explicit construction of all instantons in the case of $\G =
SU(2)$. In particular, we have:
\begin{theorem}[\cite{ADHM}] \label{thm:ADHM} Consider $\G = SU(2)$
  with $\brk{AB} = - \tr(AB)$. In every topological class of principal
  $\G$-bundles on $\bbS^{4}$, there exists an instanton (ground state)  $Q$ with
  energy
  \begin{equation*}
    \spE(Q) = 8 \pi^{2} \abs{c_{2}}.
  \end{equation*}
\end{theorem}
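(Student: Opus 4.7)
The plan is to combine a topological lower bound on the energy with an explicit algebraic construction that saturates it, which are the two standard ingredients underlying the theorem.

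First I would establish the topological lower bound via the Bogomolny trick. On $\bbR^{4}$ (or equivalently on $\bbS^{4}$ by conformal invariance), the Hodge star acts as an involution on $2$-forms, so the curvature splits orthogonally as $F = F^{+} + F^{-}$ with $\ast F^{\pm} = \pm F^{\pm}$. Computing pointwise gives
\begin{equation*}
\abs{F}^{2} = \abs{F^{+}}^{2} + \abs{F^{-}}^{2}, \qquad \brk{F \wedge F} = \tfrac{1}{2}(\abs{F^{+}}^{2} - \abs{F^{-}}^{2}) \, \mathrm{dvol},
\end{equation*}
with the inner product chosen so that the Chern--Weil formula \eqref{eq:chern-no} is valid. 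Therefore
\begin{equation*}
\spE[A] \geq \abs{8\pi^{2} c_{2}(A)} = 8\pi^{2} \abs{c_{2}(A)},
\end{equation*}
with equality if and only if $F$ is self-dual (when $c_{2} \geq 0$) or anti-self-dual (when $c_{2} \leq 0$). Any such $F$ automatically solves the harmonic Yang--Mills equation \eqref{YM-e}, since $\covD^{j} F_{jk}$ is then proportional to $\covD^{j} (\ast F)_{jk}$, which vanishes by the Bianchi identity. Consequently, any (anti-)self-dual connection in a given topological class $c_{2} = k$ is automatically an absolute minimizer of $\spE$ within that class, with energy exactly $8\pi^{2} \abs{k}$.

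The main obstacle is then the existence half: producing an (anti-)self-dual connection with prescribed Chern number $k \in \bbZ$. For $k = \pm 1$ this is handled by writing down the explicit BPST instanton on $\bbR^{4}$ in quaternionic form, verifying that $F = \pm \ast F$ and that the Chern number integrates to $\pm 1$. For general $\abs{k} \geq 1$, I would invoke the ADHM correspondence \cite{ADHM}: the self-dual $SU(2)$ connections with $c_{2} = k$ are in bijection with equivalence classes of linear algebraic ``ADHM data'' (a quadruple of matrices on $\bbC^{k}$ satisfying a real quadratic constraint), and concrete families of such data are easily exhibited (e.g.\ 't Hooft's multi-instanton ansatz, which superposes $k$ BPST profiles located at arbitrary points). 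This produces an instanton in every topological class and completes the proof.

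The energy bound portion is essentially a one-line computation once the decomposition of $F$ is in hand; the genuine difficulty, and the reason the theorem is attributed to ADHM, is the explicit algebraic construction filling in every integer topological class. For the purposes of the present paper only the existence of \emph{some} instanton in each class is needed, so any of the above constructions (BPST for $\abs{k}=1$ plus 't Hooft ansatz for general $k$) would suffice, and the full ADHM classification can be treated as a black box.
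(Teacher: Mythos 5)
The paper does not prove this statement at all: Theorem~\ref{thm:ADHM} is quoted as a classical result of Atiyah--Drinfeld--Hitchin--Manin, cited to \cite{ADHM} and used as a black box, so there is no internal proof to compare against. Your sketch is the standard argument and is essentially correct: the Bogomoln'yi step (orthogonal splitting $F = F^{+} + F^{-}$, the pointwise inequality between $\brk{F\wedge F}$ and the energy density, and the Chern--Weil identity) gives $\spE[A] \geq 8\pi^{2}\abs{c_{2}}$ with equality exactly for (anti-)self-dual connections, and such connections solve \eqref{YM-e} by the Bianchi identity, hence are absolute minimizers in their class; the paper itself proves precisely this pointwise inequality later as Lemma~\ref{lem:bog}, though for a different purpose (the Threshold Theorem). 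The existence half --- BPST for $\abs{c_{2}}=1$, the 't~Hooft ansatz or the full ADHM construction for general charge, with orientation reversal handling the opposite sign of $c_{2}$ --- is exactly what the attribution to \cite{ADHM} encodes, and treating it as a black box is legitimate since only existence in each class is needed here. The only caveats are notational: your intermediate constant in $\brk{F\wedge F} = \tfrac12(\abs{F^{+}}^{2}-\abs{F^{-}}^{2})\,\mathrm{dvol}$ and the assignment of self-dual versus anti-self-dual to $c_{2}\gtrless 0$ depend on the normalization of $\abs{F}^{2}$ (full sum over $j,k$ versus $j<k$) and on orientation conventions; under one consistent choice your statements are fine and the final bound $\spE \geq 8\pi^{2}\abs{c_{2}}$, which is all that matters, is convention-independent.
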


One may wonder if the instantons exhaust all harmonic Yang--Mills
connections. Remarkably, the answer is \emph{no}, as demonstrated by
\cite{SSU, Bor, SaSe, Parker}. Nevertheless, Gursky--Kelleher--Streets
\cite{GKS} recently proved the following lower bound:
\begin{theorem}[\cite{GKS}] \label{thm:gks-simple} Consider $\G =
  SU(2)$ with $\brk{AB} = - \tr(AB)$. If $a$ is a harmonic Yang--Mills
  connection on a principal $\G$-bundle on $\bbS^{4}$, then either $a$
  is an instanton, or
  \begin{equation*}
    \spE(a) \geq 8 \pi^{2} \abs{c_{2}} + 16 \pi^{2}.
  \end{equation*}
\end{theorem}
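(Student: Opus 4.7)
The plan is to combine the self-dual / anti-self-dual decomposition of curvature on $\bbS^{4}$ with a Bochner--Weitzenbock argument. Write $F = F^{+} + F^{-}$ with $\ast F^{\pm} = \pm F^{\pm}$, and set $\spE^{\pm}[a] := \tfrac{1}{2} \int_{\bbS^{4}} \la F^{\pm}, F^{\pm} \ra$, so that $\spE[a] = \spE^{+}[a] + \spE^{-}[a]$. A standard computation using \eqref{eq:chern-no} together with the pointwise identities for $F^{\pm} \wedge F^{\pm}$ yields
\[ |\spE^{+}[a] - \spE^{-}[a]| = 8 \pi^{2} |c_{2}|, \]
and consequently $\spE[a] = 8 \pi^{2} |c_{2}| + 2 \min(\spE^{+}[a], \spE^{-}[a])$. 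The theorem therefore reduces to the gap statement: if $a$ is harmonic Yang--Mills and \emph{not} an instanton, then $\min(\spE^{+}[a], \spE^{-}[a]) \geq 8 \pi^{2}$. Relabelling $\pm$ if necessary, I may assume $\spE^{-}[a] \leq \spE^{+}[a]$ and $F^{-} \not\equiv 0$; the goal is to show $\spE^{-}[a] \geq 8 \pi^{2}$.

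First I would derive an elliptic identity for $F^{-}$. The Bianchi identity $\covD F = 0$ and the Yang--Mills equation $\covD^{\ast} F = 0$ together imply $\covD F^{-} = 0 = \covD^{\ast} F^{-}$. The Bochner--Weitzenbock formula for a $\g$-valued anti-self-dual $2$-form on a $4$-manifold then reads
\[ \covD^{\ast} \covD F^{-} + \tfrac{s}{3} F^{-} - 2 W^{-}(F^{-}) + \{F, F^{-}\} = 0, \]
where $W^{-}$ is the anti-self-dual Weyl tensor of the base and $\{F, F^{-}\}$ denotes a pointwise quadratic coupling induced by the bundle curvature. On the round $\bbS^{4}$ one has $W^{-} \equiv 0$ and scalar curvature $s = 12$; pairing with $F^{-}$ and integrating produces
\[ \int_{\bbS^{4}} |\covD F^{-}|^{2} + 4 \int_{\bbS^{4}} |F^{-}|^{2} + \int_{\bbS^{4}} \la \{F, F^{-}\}, F^{-} \ra = 0. \]
The coupling term is pointwise bounded by $C |F| |F^{-}|^{2}$. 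Applying the refined Kato inequality $|\covD F^{-}|^{2} \geq \tfrac{3}{2} |\nabla |F^{-}||^{2}$ (valid because $\covD F^{-} = \covD^{\ast} F^{-} = 0$), the sharp Sobolev embedding $\dot H^{1}(\bbS^{4}) \hookrightarrow L^{4}(\bbS^{4})$, and H\"older's inequality in the form $\int |F| |F^{-}|^{2} \leq \|F\|_{L^{2}} \|F^{-}\|_{L^{4}}^{2}$, the preceding identity can be recast as an inequality of the shape
\[ \big( 8 \pi^{2} - \spE^{-}[a] \big) \int_{\bbS^{4}} |F^{-}|^{2} \leq 0, \]
forcing either $F^{-} \equiv 0$ (excluded) or $\spE^{-}[a] \geq 8 \pi^{2}$.

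The main obstacle is to track the sharp constants throughout so that the threshold is exactly $8 \pi^{2}$, corresponding to twice the minimal $SU(2)$ instanton energy in the final gap $16\pi^{2}$. This requires (i) the sharp Sobolev constant on $\bbS^{4}$, attained by the conformal extremizers; (ii) the refined Kato inequality for harmonic anti-self-dual bundle-valued $2$-forms, which gains a factor of $\sqrt{3/2}$ over the naive Kato inequality; and (iii) the precise algebraic form of the Weitzenbock coupling $\{F, F^{-}\}$, which for $\mathfrak{su}(2)$ one may analyse using the Lie bracket together with the pointwise orthogonality of $\Lambda^{2}_{+}$ and $\Lambda^{2}_{-}$. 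The technical delicacy is that no slack can be admitted in any of these estimates: the extremizer case in the chain of inequalities must correspond exactly to $F^{-} \equiv 0$, i.e.\ to $a$ being self-dual.
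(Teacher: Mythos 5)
First, a point of orientation: the paper does not prove this theorem --- it is quoted from Gursky--Kelleher--Streets \cite{GKS} (with the general-group version deferred to \cite[Section~6]{OTYM2.5}) --- so there is no internal proof to compare against; what you propose is a reproof of a cited result. Your reduction is fine: the Chern--Weil identity $|\spE^+-\spE^-|=8\pi^2|c_2|$, hence $\spE=8\pi^2|c_2|+2\min(\spE^+,\spE^-)$, together with the fact that ``not an instanton'' for $SU(2)$ over $\bbS^4$ means both $F^+\neq 0$ and $F^-\neq 0$, correctly reduces the statement to the gap $\min(\spE^+,\spE^-)\geq 8\pi^2$, and the Weitzenb\"ock strategy is indeed the right family of ideas (it is the Bourguignon--Lawson mechanism that \cite{GKS} sharpen). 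But there are two genuine gaps. The first is the coupling term: for an anti-self-dual adjoint-valued $2$-form, the curvature term in the Weitzenb\"ock formula involves only $F^-$ acting on it, because $\Lambda^2\cong\mathfrak{so}(4)$ acts on $\Lambda^2_-$ through its projection to $\Lambda^2_-$ (the same algebraic fact behind $H^2$-vanishing for ASD connections). Your version $\{F,F^-\}$ with the crude bound $C|F||F^-|^2$ and H\"older via $\|F\|_{L^2}$ controls the error only by the \emph{total} energy, so the asserted conclusion $(8\pi^2-\spE^-)\int|F^-|^2\leq 0$ does not follow from the ingredients you display; the argument can only close in $\spE^-$ alone if the cubic term is $\langle\{F^-,F^-\},F^-\rangle$.

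The second gap is the decisive one: the sharp constants. An unspecified constant $C$ in the cubic term can never produce the threshold $8\pi^2$. What is needed is the optimal pointwise algebraic inequality $|\langle\{F^-,F^-\},F^-\rangle|\leq c_0|F^-|^3$ for $\mathfrak{su}(2)$-valued anti-self-dual forms --- this is precisely where the hypothesis $\G=SU(2)$ enters and why the threshold is group-dependent --- combined with the refined Kato constant and the sharp, conformally invariant (Yamabe-type) Sobolev inequality on $\bbS^4$, assembled with no slack and accompanied by a rigidity analysis of the equality case, which must force exactly $F^-\equiv 0$. This bookkeeping is not a routine afterthought: it is the entire content separating the sharp theorem of \cite{GKS} from the classical non-sharp gap theorems, and your proposal explicitly defers it (``the main obstacle'') while not identifying the group-dependent algebraic constant at all. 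As it stands, the proposal is a plausible outline of the known strategy rather than a proof; completing it would essentially amount to reproducing the conformally invariant argument of \cite{GKS}.
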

In particular, observe that we have a refined threshold for a
topologically trivial harmonic Yang--Mills connection: It is either
flat, or must have energy at least $2 \Egs = 16 \pi^{2}$.

For a general noncommutative Lie group $\G$, instantons may be
constructed from the $SU(2)$ version by an appropriate embedding
$su(2) \to \g$, which always exists and forms the basis of the
classical Cartan--Weyl structure theory of compact Lie
groups. Moreover, Theorem~\ref{thm:gks-simple} holds more generally
for any simple compact Lie group, and also has important implications
for general noncommutative compact Lie groups; see
Theorem~\ref{thm:gks}. For details, we refer the reader to
\cite[Section~6]{OTYM2.5}.

\subsection{Large data global theory for the Yang--Mills heat flow on
 \texorpdfstring{ $\bbR^{4}$}{R4}}
Here we consider the question of local and global well-posedness, as
well as asymptotic convergence property, for the Yang--Mills heat flow
\eqref{caloric} at energy regularity.

Here and in the rest of this paper, we restrict our attention to
connection $1$-forms in $\dot{H}^{1}$, in anticipation of the results
we will prove for the hyperbolic Yang--Mills equation (see
Section~\ref{subsec:hyp-ym}). More precisely, we will need a global
theory of the Yang--Mills heat flow only for those connections whose
hyperbolic Yang--Mills evolutions scatter. These must be in the
trivial topological class (``topologically trivial''), or
equivalently, gauge-equivalent to a $\dot{H}^{1}$ connection
\cite[Section~4]{OTYM2.5}.

In the question of well-posedness of the Yang--Mills heat flow, the
gauge choice is critical, as we would like to avoid pure gauge
singularities. Local well-posedness is most readily understood in the
\emph{de Turck gauge}, where the Yang--Mills heat flow becomes a
strongly parabolic semilinear flow. The following result is relatively
easy to establish, and is provided without proof:

\begin{theorem}\label{thm:deturck}
  The Yang--Mills heat flow \eqref{caloric} in the de Turck gauge is
  locally well-posed in $\dot H^1$, as well as globally well-posed for
  small data.
\end{theorem}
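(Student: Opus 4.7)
The plan is to set up a Banach fixed-point argument for the Duhamel form of \eqref{YM-h-deTurck} in a resolution space adapted to the $\dot{H}^{1}$-critical parabolic scaling. First I would expand
\begin{equation*}
\Dlt_{A} A_{j} = \Dlt A_{j} + 2[A^{k}, \rd_{k} A_{j}] + [\rd^{k} A_{k}, A_{j}] + [A^{k}, [A_{k}, A_{j}]],
\end{equation*}
which lets me rewrite \eqref{YM-h-deTurck} in the canonical semilinear heat form
\begin{equation*}
(\rd_{s} - \Dlt) A_{j} = \calN(A)_{j}, \qquad \calN(A) = O(A \cdot \nb A) + O(A \cdot A \cdot A),
\end{equation*}
whose Duhamel formulation is $A(s) = e^{s \Dlt} a + \int_{0}^{s} e^{(s-s') \Dlt} \calN(A)(s') \, ds'$.

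Next I would choose a resolution space on $(0, T) \times \bbR^{4}$ whose norm is invariant under the critical parabolic scaling $A(s, x) \mapsto \lmb A(\lmb^{2} s, \lmb x)$, for example
\begin{equation*}
\nrm{A}_{X_{T}} := \nrm{A}_{L^{\infty}_{s} \dot{H}^{1}_{x}} + \nrm{A}_{L^{2}_{s} \dot{H}^{2}_{x}} + \nrm{A}_{L^{6}_{s,x}}.
\end{equation*}
The first two pieces are handled by the classical parabolic energy/maximal regularity estimate, while the third is the critical Strichartz norm of the heat semigroup in $\bbR^{4}$, a standard consequence of the dispersive bound $\nrm{e^{s \Dlt} f}_{L^{p}_{x}} \lesssim s^{-2(1/2 - 1/p)} \nrm{f}_{L^{2}_{x}}$ combined with a $TT^{*}$ argument. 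Together they yield
\begin{equation*}
\nrm{e^{s \Dlt} a}_{X_{T}} \lesssim \nrm{a}_{\dot{H}^{1}}, \qquad \bigg\| \int_{0}^{s} e^{(s-s') \Dlt} f(s') \, ds' \bigg\|_{X_{T}} \lesssim \nrm{f}_{L^{2}_{s,x}},
\end{equation*}
uniformly in $T$.

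The nonlinear step then reduces to bounding $\calN(A)$ in $L^{2}_{s, x}$. Sobolev embeddings $\dot{H}^{1}(\bbR^{4}) \hookrightarrow L^{4}_{x}$ and $\dot{H}^{2}(\bbR^{4}) \hookrightarrow \dot{W}^{1,4}_{x}$ give
\begin{equation*}
\nrm{A \cdot \nb A}_{L^{2}_{s,x}} \leq \nrm{A}_{L^{\infty}_{s} L^{4}_{x}} \nrm{\nb A}_{L^{2}_{s} L^{4}_{x}} \lesssim \nrm{A}_{X_{T}}^{2},
\end{equation*}
while the cubic term is controlled directly by $\nrm{A \cdot A \cdot A}_{L^{2}_{s,x}} = \nrm{A}_{L^{6}_{s,x}}^{3} \lesssim \nrm{A}_{X_{T}}^{3}$, together with the corresponding difference estimates. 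The small-data global statement then follows by running the contraction on a small ball in $X_{\infty}$. For large data in $\dot{H}^{1}$ I would instead close the contraction on $X_{T}$ with $T$ small, using the fact that $\nrm{e^{s \Dlt} a}_{L^{6}_{s,x}([0, T] \times \bbR^{4})} \to 0$ as $T \to 0$ (by density of Schwartz functions in $\dot{H}^{1}$ and dominated convergence) to make the critical piece of the free evolution small; the non-critical pieces are merely bounded and absorbed into the radius of the ball. The main delicate point is precisely this critical endpoint $L^{6}_{s,x}$ Strichartz estimate, which is what allows the cubic nonlinearity to be closed with no loss in $T$; the remaining estimates are routine parabolic regularity.
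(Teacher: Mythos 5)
The paper does not actually contain a proof of Theorem~\ref{thm:deturck}: it is explicitly stated as ``provided without proof,'' so there is nothing to compare your argument against line by line. Judged on its own, your strategy --- rewrite \eqref{YM-h-deTurck} as $(\rd_s-\Delta)A = O(A\cdot\nb A)+O(A^3)$ and contract in a parabolically scale-invariant space built on $L^\infty\dot H^1\cap L^2\dot H^2$ --- is exactly the standard route, and the small-data global part (contraction on a small ball of $X_\infty$, which also yields \eqref{decay-heat}) is correct as written. One inaccuracy: the $L^6_{s,x}$ bound for the heat flow of $\dot H^1$ data is not an endpoint Strichartz estimate requiring a $TT^*$ argument; it follows from interpolation, $\|A(s)\|_{\dot H^{4/3}}\leq\|A(s)\|_{\dot H^1}^{2/3}\|A(s)\|_{\dot H^2}^{1/3}$ together with $\dot H^{4/3}(\bbR^4)\hookrightarrow L^6$, so the $L^6_{s,x}$ component of your norm is in fact redundant, and the delicacy you flag there is not where the real issue lies.

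The genuine gap is in the large-data local step. All three components of $X_T$ are scale-invariant, so the phrase ``the non-critical pieces are merely bounded and absorbed into the radius of the ball'' does not parse, and more substantively, if you contract for the full connection $A$ on a ball of radius comparable to $\|a\|_{\dot H^1}$, your displayed quadratic estimate $\|A\cdot\nb A\|_{L^2_{s,x}}\leq\|A\|_{L^\infty L^4}\|\nb A\|_{L^2 L^4}$ gives a Lipschitz constant of size $\|a\|_{\dot H^1}$, which is not small for large data; smallness of $\|e^{s\Delta}a\|_{L^6_{s,x}([0,T])}$ alone does not repair this, because the factor placed in $L^\infty L^4$ never shrinks as $T\to0$. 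The fix is standard but must be said: run the contraction for the perturbation $v=A-e^{s\Delta}a$ in a small ball, and distribute every bilinear and trilinear estimate so that at least one free-evolution factor is measured in a norm that vanishes as $T\to0$. Such norms are available: $\|e^{s\Delta}a\|_{L^2([0,T];\dot H^2)}\to0$ (since $\int_0^\infty\|e^{s\Delta}a\|_{\dot H^2}^2\,ds=\tfrac12\|a\|_{\dot H^1}^2<\infty$), hence so do its interpolants with $L^\infty\dot H^1$, e.g. $L^6_{s,x}$ and $L^3_s\dot W^{1,3}_x$, while the companion factor is controlled by $X_T$ via $\|\nb u\|_{L^3_{s,x}}+\|u\|_{L^6_{s,x}}\lesssim\|u\|_{X_T}$. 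For instance $\|e^{s\Delta}a\cdot\nb v\|_{L^2}\leq\|e^{s\Delta}a\|_{L^6_{s,x}}\|\nb v\|_{L^3_{s,x}}$ and $\|v\cdot\nb e^{s\Delta}a\|_{L^2}\leq\|v\|_{L^6_{s,x}}\|\nb e^{s\Delta}a\|_{L^3_{s,x}}$, both with constants that tend to $0$ with $T$, and terms with no free-evolution factor are at least quadratic in the small radius. With this rearrangement your scheme closes; without it, the large-data contraction as described does not.
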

In particular the small data solutions will satisfy
\begin{equation}\label{decay-heat}
  \| A \|_{L^\infty \dot H^1 \cap L^2 \dot H^2} \lesssim \|a\|_{\dot H^1}
\end{equation}
and will decay to zero at infinity. We remark that a similar small
data result can also be proved in the Coulomb gauge; however this
gauge no longer extends to all large data.

Unfortunately, the small data global well-posedness result in the de
Turck gauge does not readily extend to large data. To understand why,
consider initial data $a$ whose curvature $f$ vanishes, $f = 0$. By
the energy dissipation relation \eqref{Lyapunov} we expect the
solution to satisfy $F_{jk} = 0 $ at all heat-times $s > 0$. Such
solutions are gauge equivalent to the zero solution, so they can be
represented as
\[
A = O_{;x}.
\]
The equation \eqref{caloric} is clearly globally solvable in $\dot
H^1$, for any extension of $O = Id$ to $s > 0$ would yield a solution.
However, if we now impose the de Turck gauge condition we arrive at
the following equation for $O$:
\[
O_{;s} = \covD^j O_{;j}
\]
which is nothing but the harmonic heat flow equation for $\G$-valued
maps. It is well-known (see \cite{CoGh, ChDi}) that this flow can
develop singularities in finite time.  Hence the same will happen for
the Yang--Mills heat flow \eqref{caloric} in the de Turck gauge.

The above discussion motivates the introduction of the \emph{local
  caloric gauge} \eqref{local-caloric} as a substitute for the de
Turck gauge. The pure gauge blow-up described above no longer arises
as pure gauge solutions are stationary in the local caloric gauge.

We start with the basic local well-posedness result in the local
caloric gauge:
\begin{theorem} \label{thm:hf-loc-simple} The Yang--Mills heat flow
  \eqref{caloric} in the local caloric gauge is locally well-posed in
  $\dot H^1$, as well as globally well-posed for small data.
\end{theorem}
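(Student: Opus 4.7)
The plan is to derive Theorem~\ref{thm:hf-loc-simple} from the already-known de Turck gauge theory of Theorem~\ref{thm:deturck} by gauge-transforming to the local caloric gauge. This avoids direct semilinear analysis of the degenerate parabolic equation \eqref{YM-h-lc}, whose principal part $\rd_{s} - (\Delta - \nabla \mathrm{div})$ is elliptic only on divergence-free fields, and hence is not amenable to a direct parabolic fixed point argument in $\dot H^{1}$.

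\emph{Existence.} Given $a \in \dot H^{1}$, I would first invoke Theorem~\ref{thm:deturck} to obtain the de Turck solution $A^{dT}$ on an interval $[0, S]$, with $S$ depending on $\nrm{a}_{\dot H^{1}}$ (and $S = \infty$ for small data). I would then construct the gauge transformation $O : [0,S] \times \R^{4} \to \G$ to the local caloric gauge by solving the linear ODE (fiberwise in $x$)
\begin{equation*}
\rd_{s} O = O \cdot A^{dT}_{s}, \qquad O(0, x) = \mathrm{Id},
\end{equation*}
where $A^{dT}_{s} = \rd^{k} A^{dT}_{k}$. The transformed connection $A = \calG(O) A^{dT}$ then satisfies $A_{s} = 0$ and thus solves \eqref{YM-h-lc} with initial data $a$. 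The substantive step is to show that $A \in C_{s} \dot H^{1}$, which reduces to controlling the Maurer--Cartan-type quantity $(\rd_{j} O) O^{-1}$ uniformly in $s$. For this I would upgrade the bound \eqref{decay-heat} via standard parabolic smoothing (e.g., $s^{1/2} \nabla A^{dT} \in L^{\infty}_{s, x}$, together with integrability of $A^{dT}_{s}$ in suitable scale-invariant norms), and then run a Picard or Gr\"onwall argument for $O$ and its first spatial derivatives in an energy-critical norm such as $L^{\infty}_{s} L^{4} \cap L^{\infty}_{s} \dot H^{1}$ for $(\rd_{j} O) O^{-1}$.

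\emph{Small data global} follows because \eqref{decay-heat} holds on $[0, \infty)$ in the small data regime, so $A^{dT}_{s}$ is $s$-integrable in critical norms, the ODE for $O$ is globally solvable, $O(s) \to O_{\infty}$ as $s \to \infty$ in a suitable sense, and $A \to 0$. For \emph{uniqueness}, one can gauge-transform any candidate local caloric solution back to the de Turck gauge on a possibly shorter interval by solving a harmonic-heat-flow-type equation for the inverse gauge transformation, and then invoke uniqueness in Theorem~\ref{thm:deturck}.

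The principal obstacle is the regularity analysis of $O$ in the existence step: one must carefully track how the parabolic smoothing of $A^{dT}$ balances against the $s$-integration defining $O$, in order to propagate $\dot H^{1}$ regularity and not merely $L^{2}$. The degeneracy of \eqref{YM-h-lc} means that this bookkeeping, rather than the semilinear estimates themselves, is the essential cost of the method; the payoff is that pure-gauge blowups like the harmonic map heat flow singularities (responsible for the failure of large-data well-posedness in the de Turck gauge) are automatically absent in the local caloric gauge, since pure-gauge solutions are stationary there.
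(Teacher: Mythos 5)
Your route—transferring Theorem~\ref{thm:deturck} to the local caloric gauge by solving $\rd_s O = O A^{dT}_s$, $O(0)=Id$—is not the paper's route, and the step you yourself flag as "the principal obstacle" is where the argument as sketched genuinely fails at $\dot H^1$ regularity. Writing $Q_j = O^{-1}\rd_j O$, the ODE is $\rd_s Q_j = [Q_j, A^{dT}_s] + \rd_j A^{dT}_s$ with $A^{dT}_s = \rd^k A^{dT}_k$, and a Picard/Gr\"onwall scheme for $Q$ in $L^\infty_s(\dot H^1\cap L^4)$ needs absolute $s$-integrability of the forcing and of the coefficient, i.e. quantities like $\|\rd A^{dT}_s\|_{L^1_s L^2}$ and $\|A^{dT}_s\|_{L^1_s L^\infty}$. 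These are \emph{not} controlled by $\|a\|_{\dot H^1}$: already for the linear evolution, $\int_0^\infty \|e^{s\lap}P_k \rd^k a_k\|_{\dot H^1}\,ds \sim \|P_k\rd^k a_k\|_{L^2}$, so the full norms require the $\ell^1$-summed (Besov/$\bfH$-type) norm $\|\rd^k a_k\|_{\ell^1 L^2}$ rather than the $\ell^2$ one; the time integral of $\rd_j A^{dT}_s$ converges only conditionally (via the $\lap^{-1}(e^{s\lap}-1)$ cancellation), and that cancellation is not straightforwardly available once conjugated by the rough gauge factors in Duhamel's formula, nor for the commutator term, whose Gr\"onwall factor is exactly $\exp(\int\|A^{dT}_s\|_{L^\infty}ds)$. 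This is precisely why the paper tracks $\rd^\ell A_\ell$ only \emph{in the caloric gauge}, where $\rd_s \rd^\ell A_\ell = -[A^\ell, \covD^j F_{j\ell}]$ is quadratic and $\ell^2\times\ell^2$ dyadic summation closes (Lemma~\ref{l:da}); in the de Turck gauge the analogous quantity is linear at top order and the summation does not close. Your uniqueness step has a similar problem in reverse: transforming an arbitrary $C_s\dot H^1$ caloric solution into the de Turck gauge means solving a harmonic-map-heat-flow-type system for $O$ at critical regularity with rough coefficients, itself a nontrivial critical problem you do not address; note the paper only proves uniqueness among limits of smooth solutions and explicitly leaves unconditional uniqueness open, so your claim is stronger than what the paper establishes and is unsupported as written. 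Finally, well-posedness here includes (locally Lipschitz) dependence on the data, which your scheme would also have to thread through the gauge transformation.

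For contrast, the paper never changes gauge at rough regularity. It solves the degenerate system directly for regular data by splitting into a nondegenerate parabolic equation for $\curl A$ coupled with a transport equation for $\div A$ (Lemma~\ref{l:h3-loc}); it proves covariant a priori bounds under the $L^3L^3$ curvature assumption (Lemmas~\ref{l:h1}--\ref{l:nocov-smth}); it obtains well-posedness of the \emph{linearized} flow via the infinitesimal de Turck trick—solving the nondegenerate covariant heat equation for $F_{0j}$, recovering $A_0$ by heat-time integration, and setting $B_j = F_{0j}+\covD_j A_0$ (Lemma~\ref{l:lin}); rough $\dot H^1$ solutions are then built as limits of frequency-truncated smooth solutions with convergence controlled by the linearized (frequency-envelope) estimates, and small-data global existence follows from a bootstrap giving $\|F\|_{L^3L^3}^2\aleq \spE[a]$. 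If you want to salvage your approach for this particular statement (the paper concedes the de Turck trick is viable locally in time), you would need either to restrict to data in $\bfH$-type spaces where $\rd^\ell a_\ell\in\ell^1 L^2$ makes the $L^1_s$ norms finite, or to supply a genuinely new paradifferential/renormalization argument exploiting the conditional convergence of $\int \rd A^{dT}_s\,ds$ inside the conjugated Duhamel formula; neither is provided in your sketch.
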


A precise statement of this theorem is provided in
Section~\ref{s:caloric-loc}. However, for reader's convenience we
briefly describe here the main features:

\begin{enumerate}[label=(\alph*)]
\item Existence, uniqueness and $C^1$ local dependence on the initial
  data. However, in contrast to the case of the de Turck gauge, $C^2$
  dependence does not seem to hold.

\item Higher regularity also holds; in particular the data to solution
  map is Lipschitz in $\dot H^1 \cap \dot H^\sgm$ for all $\sgm >
  1$. Also a frequency envelope version of the result is valid.

\item The curvature $F$ satisfies global parabolic bounds,
  \[
  \|F\|_{L^\infty L^2 \cap L^2 \dot H^1} \lesssim_{\|a\|_{\dot H^1}} 1
  \]
  and decays to zero at infinity.

\item The connection $A(s)$, however, does not decay to $0$ at
  infinity. Instead, the limit
  \[
  a_\infty = \lim_{s \to \infty} A(s)
  \]
  exists in $\dot H^1$, and has zero curvature $f_{\infty} = 0$.
\end{enumerate}

Next, we introduce the gauge- and scaling-invariant space-time norm
\begin{equation*}
  \nrm{F}_{L^{3}(J; L^{3})}
\end{equation*}
which plays an important role in our study. In fact, we have the
following Structure Theorem:
\begin{theorem}[Structure Theorem] \label{thm:str-simple} Let $A$ be a
  Yang--Mills heat flow given by Theorem~\ref{thm:hf-loc-simple} on a
  heat-time interval $J$, such that
  \begin{equation} \label{eq:l3-simple} \nrm{F}_{L^{3}(J; L^{3})} \leq
    \hM < \infty.
  \end{equation}
  When $J$ is finite, $A$ can be extended as a Yang--Mills heat flow
  past its endpoint. When $J$ is infinite, properties (b)--(d) holds
  for $A$, where the implicit constants depend also on $\hM$.
\end{theorem}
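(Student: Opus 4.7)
My plan is a divide-and-conquer argument driven by the scale- and gauge-invariant $L^{3}L^{3}$ bound on $F$, combined with Theorem~\ref{thm:hf-loc-simple}. Fix a small universal constant $\veps > 0$ and partition $J$ into a finite union $J = \bigcup_{k=1}^{N} J_{k}$ of successive subintervals $J_{k} = [s_{k}, s_{k+1}]$ with $\nrm{F}_{L^{3}(J_{k}; L^{3})} \leq \veps$, so that $N \lesssim (\hM/\veps)^{3}$. This reduces every global claim to a uniform estimate on each $J_{k}$ depending only on $\nrm{F(s_{k})}_{L^{2}}$ and $\veps$, which is then chained over $k$.

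The core estimate on each $J_{k}$ comes from the gauge-covariant parabolic equation for $F$. Combining $F_{sj} = \covD^{\ell} F_{\ell j}$ with the Bianchi identity yields $(\rd_{s} - \Delta_{A}) F_{jk} = \{F, F\}_{jk}$, where the right-hand side is a commutator cubic in $F$. Pairing with $F$ in $L^{2}_{x}$ and using gauge-covariant integration by parts, I would obtain
\begin{equation*}
\frac{d}{ds} \nrm{F(s)}_{L^{2}}^{2} + 2 \nrm{\covD F(s)}_{L^{2}}^{2} \lesssim \nrm{F(s)}_{L^{3}}^{3}.
\end{equation*}
Integrating on $J_{k}$ gives $\nrm{F(s_{k+1})}_{L^{2}}^{2} + \nrm{\covD F}_{L^{2}(J_{k}; L^{2})}^{2} \leq \nrm{F(s_{k})}_{L^{2}}^{2} + C\veps^{3}$, and chaining over $k = 1, \ldots, N$ yields the parabolic curvature bounds of property~(c). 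Higher regularity~(b) and its frequency envelope formulation are propagated by the same scheme applied to gauge-covariantly differentiated and frequency-localized pieces of $F$ on each $J_{k}$.

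When $J$ is finite, the uniform $L^{\infty}L^{2}$ bound on $F$ combined with an Uhlenbeck-type gauge representation at $s_{\max}$ yields $\nrm{A(s_{\max})}_{\dot{H}^{1}} \lesssim_{\hM} 1$ modulo a gauge transformation, and Theorem~\ref{thm:hf-loc-simple} then restarts the flow past $s_{\max}$. When $J = [0, \infty)$, the Yang--Mills Lyapunov identity $\frac{d}{ds}\spE[A] = -\nrm{F_{s}}_{L^{2}}^{2}$ makes $\nrm{F(s)}_{L^{2}}^{2}$ monotone non-increasing, and a Struwe-type concentration-compactness argument, obstructed by $\nrm{F}_{L^{3}L^{3}} < \infty$, forces the limit to vanish. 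Convergence $A(s) \to a_{\infty}$ in $\dot{H}^{1}$ then follows from $\rd_{s} A_{j} = \covD^{\ell} F_{\ell j}$ and parabolic smoothing for $F$, which makes the increments integrable in $s$ at infinity, with $F_{\infty} = 0$ inherited from $F(s) \to 0$.

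The main obstacle is propagating the gauge-covariant estimates uniformly across the $N$ subintervals despite $A$ being large and not decaying in the local caloric gauge: commutator, Sobolev, and nonlinear estimates must stay in genuinely gauge-covariant or diamagnetic form, since any implicit appeal to smallness of $A$ would produce constants that degenerate with $k$. A secondary delicate point is the concentration-compactness step extracting a bubble profile from a non-small caloric-gauge flow, which requires combining the parabolic bounds with weak limit arguments in a way that preserves gauge invariance.
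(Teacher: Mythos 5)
Your covariant energy scheme on small-$L^{3}L^{3}$ subintervals reproduces, in essence, the paper's curvature estimates (Proposition~\ref{p:cov-smth}), and that half is fine. But the heart of the Structure Theorem is control of the connection $A$ itself, not of $F$, and there your proposal has genuine gaps. The uniform bound $\|A\|_{L^{\infty}\dot H^{1}}\lesssim_{\hM}\|a\|_{\dot H^{1}}$ and the existence of the limits $A(s_{0})$ (finite endpoint) and $a_{\infty}$ (at infinity) in $\dot H^{1}$ do not follow from ``$\rd_{s}A_{j}=\covD^{\ell}F_{\ell j}$ plus parabolic smoothing'': the energy identity only gives $\covD^{\ell}F_{\ell j}\in L^{2}_{s}L^{2}_{x}$, i.e. $\|\covD F(s)\|_{L^{2}}$ is of size $s^{-1/2}$ times an $L^{2}(ds/s)$ quantity, which is borderline \emph{non}-integrable in $s$, so the increments are not summable by any linear estimate. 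The paper has to expand $\|\int \covD^{\ell}F_{\ell j}\,ds\|_{L^{4}}^{4}$ and $\|\int \rd\,\covD^{\ell}F_{\ell j}\,ds\|_{L^{2}}^{2}$ as multilinear heat-time integrals, integrate by parts and exploit off-diagonal decay between dyadic heat-times (Lemma~\ref{l:h1}, revisited for the limit in Lemma~\ref{l:global}); nothing in your outline supplies this bilinear mechanism. Your finite-endpoint step is also unsound: an ``Uhlenbeck-type gauge representation at $s_{\max}$'' is a local, small-curvature statement and does not yield a global $\dot H^{1}$ representative for large energy, and in any case the theorem asks you to extend $A$ itself, with its local caloric gauge and caloric data intact; the paper shows directly, with no gauge change, that $A(s)\to A(s_{0})$ in $\dot H^{1}$ and then restarts the flow (Corollary~\ref{c:ymhf-cont}).

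A further gap concerns restarting the flow at rough regularity and proving property (b). The $\dot H^{1}$ solutions of Theorem~\ref{thm:hf-loc-simple} are constructed as limits of regularized solutions, and both the continuation criterion (Lemma~\ref{l:l3-cont}) and the frequency envelope/Lipschitz statements in (b) hinge on estimates for the linearized flow, which in this gauge is only \emph{degenerate} parabolic: the divergence of $A$ (and of $B$) obeys a transport/ODE equation with no smoothing. The paper handles this with the ``infinitesimal de Turck trick'' --- solving the nondegenerate covariant heat equation for $F_{0j}$, recovering $A_{0}$ by heat-time integration, and setting $B_{j}=F_{0j}+\covD_{j}A_{0}$ (Lemmas~\ref{l:lin}, \ref{l:lin-fe}) --- and then runs the continuation through frequency-truncated data $A_{<k}$ with difference bounds. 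Your plan of ``the same scheme applied to frequency-localized pieces of $F$'' never addresses the degenerate divergence component of $A$ nor the difference estimates needed to pass to rough data, so the extension past a finite endpoint and the quantitative conclusions (b)--(d) remain unproved as written. (Minor point: the decay of $F$ and convergence $A(s)\to a_{\infty}$ at $s=\infty$ need no concentration-compactness; that machinery belongs to the Dichotomy Theorem, not here.)
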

Motivated by this result, for $a \in \dot{H}^{1}$ with a \emph{global}
solution $A$, we introduce the notation
\begin{equation} \label{eq:hM-def} \hM(a) = \nrm{F}_{L^{3}([0,
    \infty); L^{3})}.
\end{equation}
We will refer to $\hM(a)$ as the \emph{caloric size} of the connection $a$.
In what follows, by writing $\hM(a) < \infty$, it is implicit that $a$
has a global associated Yang--Mills heat flow $A$.

We note that many more strong conclusions about $A$ and nearby
Yang--Mills heat flows can be made from \eqref{eq:l3-simple}, which is
why we call Theorem~\ref{thm:str-simple} the Structure Theorem. We
refer the reader to Section~\ref{s:caloric-loc} for a more precise
statement and the proof.

\begin{remark} [Remark on the techniques]
  The subtlety in the proofs of Theorems~\ref{thm:hf-loc-simple} and
  \ref{thm:str-simple} lies in the fact that in the local caloric
  gauge, \eqref{YM-h-lc} is only degenerate parabolic. One way to
  handle this issue is via the \emph{de Turck trick}, which in our
  formalism amounts to working in the de Turck gauge; as discussed
  above, however, this approach is effective only locally in time. Our
  approach instead is to rely on a version of the de Turck trick for
  the linearization of the Yang--Mills heat flow; in this scheme, an
  auxiliary flow called the \emph{dynamic Yang--Mills heat flow} plays
  a major role. We refer the reader to Section~\ref{subsec:dymhf} for
  a further discussion.
\end{remark}

Next, we describe our main large data results for the Yang--Mills heat
flow.  By a blow-up analysis based on the monotonicity formula (or the
energy identity)
\begin{equation*}
  \int \frac{1}{2} \brk{F_{ij}, F^{ij}} (s_{1}) \, \ud x + \int_{s_{0}}^{s_{1}} \int \brk{\covD^{\ell} F_{\ell i}, \covD^{\ell} \tensor{F}{_{\ell}^{i}}} \, \ud x \ud s= \int \frac{1}{2} \brk{F_{ij}, F^{ij}} (s_{0}) \, \ud x,
\end{equation*}
Theorem~\ref{thm:str-simple} can be considerably strengthened as
follows:
\begin{theorem} [Dichotomy Theorem] \label{thm:dich-simple} Let $a$ be
  a connection 1-form in $\dot{H}^{1}$, and let $A$ be the solution to
  \eqref{eq:cYMHF} with initial data $A_{i}(s=0) = a_{i}$ given by
  Theorem~\ref{thm:hf-loc-simple}.  Then one of the following two
  properties must hold for the maximal solution:
  \begin{enumerate}[label=(\alph*)]
  \item The solution is global, $\hM(a) = \nrm{F}_{L^{3}([0, \infty);
      L^{3})}< \infty$, and $A(s)$ converges to a flat connection
    $a_{\infty}$ in $\dot{H}^{1}$ as $s \to \infty$.
  \item The solution ``bubbles off'' a nontrivial harmonic Yang--Mills
    connection, either
    \begin{enumerate}[label=(\roman*)]
    \item at a finite blow-up time $s < \infty$, or
    \item at infinity $s = \infty$.
    \end{enumerate}
  \end{enumerate}
\end{theorem}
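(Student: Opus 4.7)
The plan is to combine the Structure Theorem (Theorem~\ref{thm:str-simple}) with a parabolic concentration-compactness argument in the spirit of Struwe's bubbling analysis. Let $A$ be the maximal solution on $J = [0, s^*)$ provided by Theorem~\ref{thm:hf-loc-simple}. If $\nrm{F}_{L^{3}(J; L^{3})} < \infty$, then Theorem~\ref{thm:str-simple} directly yields alternative (a): a finite endpoint $s^{*} < \infty$ is excluded by extendability, so $s^{*} = \infty$, and properties (b)--(d) transferred from Theorem~\ref{thm:hf-loc-simple} provide the flat limit $a_{\infty}$ in $\dot H^{1}$. The substantive task is therefore to show that $\nrm{F}_{L^{3}(J; L^{3})} = \infty$ forces bubbling.

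Assuming divergence of the caloric size, I would fix a threshold $\veps_{\ast}>0$ below the small-energy threshold of Theorem~\ref{thm:weak-conv}(2). Because $L^{3}_{s,x}$ is the scale-invariant parabolic norm for $F$ and the dissipation identity \eqref{Lyapunov} forbids global-in-time loss of energy, the only way for the $L^{3}(J;L^{3})$ norm to diverge is curvature concentration at arbitrarily small parabolic scales. I would formalize this by a greedy selection that produces heat-times $s_{n}$, centers $x_{n}$, and scales $r_{n}$ with
\[
\int_{s_{n} - r_{n}^{2}}^{s_{n}} \!\! \int_{B_{r_{n}}(x_{n})} \abs{F}^{3} \, \ud x \, \ud s \geq \veps_{\ast},
\]
while analogous integrals over all parabolic subcylinders inside a fixed dilate remain bounded by a universal constant. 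Finite-time blow-up corresponds to $s_{n} \to s^{*}$ with $r_{n} \to 0$, while $s^{*} = \infty$ with curvature concentration at infinity corresponds to $s_{n} \to \infty$ with $r_{n}$ bounded below; the two cases will ultimately match scenarios (i) and (ii) of alternative (b).

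Then I would rescale, setting $A^{n}(s, x) := r_{n} A(s_{n} + r_{n}^{2} s, x_{n} + r_{n} x)$, so that each $A^{n}$ is a local caloric Yang--Mills heat flow on an expanding parabolic cylinder about $(0,0)$ with uniformly controlled local $L^{3}L^{3}$ curvature. An $\veps$-regularity principle, extracted from the genuinely parabolic equation satisfied by $F$ (rather than from the degenerate equation \eqref{YM-h-lc} for $A$), upgrades this to uniform pointwise smoothness bounds for $F^{n}$ on interior subcylinders. Placing each $A^{n}$ in a local Coulomb gauge and invoking the Uhlenbeck compactness piece of Theorem~\ref{thm:weak-conv}(1), one extracts a subsequential limit $A^{\infty}$ -- an ancient or eternal Yang--Mills heat flow of finite energy, defined up to $H^{2}_{loc}$ gauge. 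The energy identity, read on the rescaled windows, forces $\covD^{\ell} F_{\ell j} \to 0$ in $L^{2}_{s,x}$, so $A^{\infty}$ is $s$-independent and hence a harmonic Yang--Mills connection; the selection lower bound prevents it from being flat.

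The main obstacle I expect is reconciling the gauge-covariant $\veps$-regularity with the degenerate parabolicity of \eqref{YM-h-lc}: smoothing can only be harvested gauge-invariantly through $F$, and this must be married with a local Coulomb gauge on each rescaled window in order to identify the limit in $\dot H^{1}_{loc}$ and to transfer the local caloric gauge condition appropriately. A second delicate point is ensuring that the selection is robust enough to cover bubbling at infinity, where $r_{n}$ need not go to $0$; this requires exploiting the stronger quantitative bounds of Theorem~\ref{thm:str-simple} applied on finite sub-intervals, together with the global dissipation of $\spE[A(s)]$, to prevent the concentrated mass from escaping to spatial infinity without being captured by the translates $x_{n}$.
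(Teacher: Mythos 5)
Your reduction of the case $\hM(a) < \infty$ to alternative (a) via the Structure Theorem, and the overall Struwe-type architecture for the case $\hM(a)=\infty$ (rescale at concentration windows, kill the tension field using the finiteness of the total dissipation, gauge-fix, extract a nontrivial harmonic limit), is the same strategy as the paper's. However, your selection step contains a genuine gap: you assert that ``the energy identity, read on the rescaled windows, forces $\covD^{\ell} F_{\ell j} \to 0$ in $L^{2}_{s,x}$,'' but your greedy cylinders are chosen only so as to capture $L^{3}$ mass $\geq \veps_{\ast}$, and nothing in that choice makes the dissipation small on the time windows they occupy --- let alone on the full spatial slabs $\bbR^{4} \times (s_{n}-r_{n}^{2}, s_{n})$, which is what you need in order for the limit to be harmonic on all of $\bbR^{4}$ (as the theorem demands), not merely on a unit cube. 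The paper arranges both properties simultaneously by selecting in the opposite order: Lemma~\ref{lem:select-int} first partitions heat-time into subintervals of unit $L^{3}L^{3}$ norm and picks the one of minimal dissipation (pigeonholing against the bounded total dissipation coming from the monotonicity formula), and only afterwards locates a concentration cube inside that interval; this second step is itself the nontrivial analytic point, Lemma~\ref{lem:conc}, proved via an improved Gagliardo--Nirenberg inequality with a $\dot{B}^{-2,\infty}_{\infty}$ endpoint characterized by local averages, plus a heat-time pigeonhole. Mere divergence of the $L^{3}L^{3}$ norm does not by itself produce cylinders that both carry definite mass and sit inside low-dissipation slabs, so your asserted step would fail as written.

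The gauge step has a second gap, which you flag as the ``main obstacle'' but do not close. Theorem~\ref{thm:weak-conv}(1) is stated for sequences of \emph{harmonic} Yang--Mills connections, so it cannot be invoked for the rescaled heat flows; and a slice-wise local Coulomb gauge produces $s$-dependent gauge transformations, which destroys both the local caloric condition $A_{s}=0$ and the relation $\rd_{s} A = \covD^{\ell} F_{\ell \cdot}$ needed to pass to the limit in the flow, while the connections $A^{n}$ themselves carry no a priori uniform $\dot{H}^{1}$ bound (only their curvatures are uniformly controlled). The paper's resolution is Lemma~\ref{lem:rad-gauge}: using the covariant smoothing bounds of Proposition~\ref{p:cov-smth}, one imposes an $s$-\emph{independent} radial (exponential) gauge on the fixed rescaled slice $s=1$, obtains uniform local $\dot{H}^{1}$ bounds there, and then propagates them in heat-time via the spatially localized caloric-gauge energy bound (Lemma~\ref{l:h1}, Remark~\ref{rem:h1-loc}), with a regularization argument to handle rough $\dot{H}^{1}$ data. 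Without a substitute for this $s$-independent normalization, your compactness step does not identify the limit as a stationary (harmonic) connection, so the proposal as it stands is a plausible outline of the paper's argument rather than a proof.
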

A more precise form of Theorem~\ref{thm:dich-simple}, as well as its
proof, may be found in Section~\ref{s:thr}.

Theorem~\ref{thm:dich-simple} identifies the possible obstruction for
global existence and asymptotic convergence to a flat connection as
``bubbling off'' a nontrivial harmonic Yang--Mills connections. Taking
into account their theory reviewed in Section~\ref{subsec:har-ym}, we
obtain:

\begin{theorem} [Threshold Theorem]\label{thm:thr-simple}
  Let $a$ be a connection 1-form in $\dot{H}^{1}$, and let $A$ be the
  solution to \eqref{eq:cYMHF} with initial data $A_{i}(s=0) = a_{i}$
  given by Theorem~\ref{thm:hf-loc-simple}. If
  \begin{equation*}
    \spE[a] < 2 \Egs,
  \end{equation*}
  then the solution $A$ is global, $\hM(a) = \nrm{F}_{L^{3}([0,
    \infty); L^{3})}< \infty$, and $A(s)$ converges to a flat
  connection $a_{\infty}$ in $\dot{H}^{1}$ as $s \to \infty$.
\end{theorem}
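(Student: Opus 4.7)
The plan is to argue by contradiction using the Dichotomy Theorem together with the topological/energetic rigidity of harmonic Yang--Mills connections on $\bbR^{4}$. Suppose $\spE[a] < 2\Egs$ but the conclusion of the theorem fails. By Theorem~\ref{thm:dich-simple} we must be in case (b), so there is a nontrivial harmonic Yang--Mills connection $Q$ bubbling off at some heat-time $s_{\ast} \in (0, \infty]$: we find heat-times $s_{n} \to s_{\ast}$, scales $r_{n} \to 0$, and centers $x_{n}$ such that the rescaled connections $b^{n}(x) = r_{n} A(s_{n}, x_{n} + r_{n} x)$ converge, up to $H^{2}_{loc}$ gauge transformations, strongly in $\dot{H}^{1}_{loc}$ to $Q$.

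The next step is a single-bubble profile decomposition at the time slices $A(s_{n})$. Since $a \in \dot{H}^{1}$ is topologically trivial and the solution produced by Theorem~\ref{thm:hf-loc-simple} stays in $\dot{H}^{1}$ for $s < s_{\ast}$, each $A(s_{n})$ remains topologically trivial. Using Uhlenbeck compactness (Theorem~\ref{thm:weak-conv}) together with the $\veps$-regularity encoded in Theorem~\ref{thm:str-simple} to separate the bubble from its complement in a suitable local gauge, I would extract a residual connection $R_{n}$ so that
\[
\spE[A(s_{n})] = \spE[Q] + \spE[R_{n}] + o(1), \qquad c_{2}(A(s_{n})) = c_{2}(Q) + c_{2}(R_{n}) + o(1).
\]
Since $c_{2}(A(s_{n})) = 0$, this forces $c_{2}(R_{n}) = -c_{2}(Q)$ for all $n$ large.

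The contradiction now follows by splitting on the topological class of $Q$. If $c_{2}(Q) = 0$, then $Q$ is a topologically trivial nontrivial harmonic Yang--Mills connection, and Theorem~\ref{thm:gks-simple} gives $\spE[Q] \geq 2\Egs$; combined with energy dissipation $\spE[A(s_{n})] \leq \spE[a]$, this already contradicts $\spE[a] < 2\Egs$. If $c_{2}(Q) \neq 0$, then $R_{n}$ is also topologically nontrivial, and the Chern--Weil lower bound $\spE \geq 8\pi^{2} \abs{c_{2}}$ (the inequality underlying Theorem~\ref{thm:ADHM}) combined with $\Egs = 8\pi^{2}$ for $\G = SU(2)$, or the analogous lower bound for general compact $\G$, yields $\spE[Q] \geq \Egs$ and $\spE[R_{n}] \geq \Egs$, whence $\spE[a] \geq 2\Egs + o(1)$, again a contradiction. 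The main technical obstacle is the profile decomposition itself: one must show in a gauge-invariant way that the residual $R_{n}$ has a well-defined topological class and that the energies add with only $o(1)$ error. This is a parabolic variant of the standard bubble-extraction scheme for elliptic Yang--Mills (cf.\ Theorem~\ref{thm:weak-conv} and \cite{Riv}), adapted to sequences arising from the caloric flow and made possible by the structure theory developed in Section~\ref{s:caloric-loc}.
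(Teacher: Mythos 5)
Your overall strategy is aligned with the paper's (contradiction via Theorem~\ref{thm:dich-simple}, topological triviality of $\dot H^1$ data, and the rigidity of sub-$2\Egs$ harmonic connections from Theorem~\ref{thm:gks-simple}/\ref{thm:gks}), and the case $c_{2}(Q)=0$ closes as you say. The genuine gap is the step you yourself flag: the ``single-bubble profile decomposition'' producing a residual $R_{n}$ with a well-defined topological class and with \emph{both} energy and Chern number splitting additively up to $o(1)$. Nothing in Theorem~\ref{thm:weak-conv}, the Structure Theorem, or Section~\ref{s:caloric-loc} supplies this for the heat flow along a degenerating sequence of heat-times: you would need global gauges for the rough, non-harmonic residual, a neck analysis ruling out loss of energy and of characteristic charge between the bubble scale $r_{n}$ and unit scale, and a definition of $c_{2}(R_{n})$ for a connection that is merely $H^{1}_{loc}$. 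None of this is carried out, so the case $c_{2}(Q)\neq 0$ does not close as written. A smaller inaccuracy: the precise Dichotomy Theorem only gives $L^{2}_{loc}$ convergence of the rescaled connections and curvatures, not strong $\dot H^{1}_{loc}$ convergence; also, the sharp form of the Threshold Theorem (Theorem~\ref{thm:thr}) asserts a uniform bound $\hM(a)\leq \hM(\calE)$, which forces one to run the contradiction on a \emph{sequence} of solutions rather than a single one.

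The paper avoids the decomposition entirely, and this is exactly how you can repair your argument. It uses two soft, gauge-invariant facts: the characteristic number $\boldsymbol{\chi}(a)=\int_{\bbR^{4}} -\brk{f\wedge f}$ vanishes for every $\dot H^{1}$ connection (Lemma~\ref{lem:top-triv}), and the pointwise Bogomoln'yi bound $\abs{\brk{f\wedge f}}\leq \tfrac12\brk{f,f}$ (Lemma~\ref{lem:bog}). Since the rescaled curvatures converge in $L^{2}_{loc}$ to $F[Q]$, the integral of $\brk{\tilde F^{n}\wedge\tilde F^{n}}$ over a large ball $B_{R}$ is, up to $\eps$, equal to $-\boldsymbol{\chi}(Q)$; vanishing of the total characteristic number then forces the exterior contribution to be, up to $\eps$, equal to $+\boldsymbol{\chi}(Q)$. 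Applying the pointwise bound separately on $B_{R}$ and on its complement converts both contributions into energy, yielding $\calE \geq 2\abs{\boldsymbol{\chi}(Q)} - 2\eps \geq 2\Egs - 2\eps$, since Theorem~\ref{thm:gks} gives $\abs{\boldsymbol{\chi}(Q)}=\spE[Q]\geq\Egs$ whenever $\spE[Q]<2\Egs$. Thus no residual connection, no integrality of its charge, and no energy additivity are needed; only local $L^{2}$ convergence of curvature plus a pointwise inequality.
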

For a more precise form of Theorem~\ref{thm:thr-simple}, as well as
its proof, we again refer to Section~\ref{s:thr}.

Observe that the threshold energy is $2\Egs$ instead of the obvious
value $\Egs$! This refinement is a result of taking into account the
``topological triviality'' of $\dot{H}^{1}$ connections, as well as
``topological nontriviality'' of harmonic Yang--Mills connections with
energy below $2 \Egs$, as suggested by Theorem~\ref{thm:gks-simple}
(at least for $\G = SU(2)$).

\begin{remark} [Brief historical remarks]
  The bubbling analysis, which forms the basis of the proofs of
  Theorems~\ref{thm:dich-simple} and \ref{thm:thr-simple}, has its
  origin in the classical work of Struwe \cite{Str} (see also
  Schlatter \cite{Sch1}).  In the context of a general compact
  (Riemannian) base manifold, Schlatter \cite{Sch2} proved global
  existence and (weak) asymptotic convergence under non-sharp energy
  restrictions, and a sharp threshold theorem was proved recently in
  by Gursky--Kelleher--Streets \cite{GKS}, as a corollary of their
  lower bound on the energy of non-instanton harmonic Yang--Mills
  connections.  In comparison to these works, the significance of our
  results lies in the precise asymptotics of the Yang--Mills heat flow
  on the noncompact space $\bbR^{4}$ (encapsulated by the bound
  $\hM(a) < \infty$ via Theorem~\ref{thm:str-simple}), which allows us
  to define the \emph{global caloric gauge}, to be described below. We
  also refer to the interesting recent work of Waldron on (im)possible
  finite time singularities \cite{Wal1, Wal2}, and of
  Kelleher--Streets \cite{KeSt1, KeSt2} on the structure of general
  singular set.
\end{remark}

\subsection{Preview of results for the hyperbolic Yang--Mills
 equation  in \texorpdfstring{$\bbR^{1+4}$}{R1+4}} \label{subsec:hyp-ym} 
In order to motivate the
subsequent results concerning the caloric gauge, we state here the
local and global well-posedness results for the energy critical
hyperbolic Yang--Mills equation that will be proved in the subsequent
papers \cite{OTYM2, OTYM2.5, OTYM3} of the series.

For the local and global well-posedness properties of the hyperbolic
Yang--Mills equation, the question of the gauge choice is again
paramount. We begin with some classical, higher regularity local
well-posedness results:

\begin{theorem}
  The hyperbolic Yang--Mills equation \eqref{ym} is locally well-posed
  in $H^3 \times H^2$ in both the Lorenz and the temporal gauge.
\end{theorem}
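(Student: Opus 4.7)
The strategy is to reduce \eqref{ym} in each gauge to a quasi-hyperbolic system to which classical energy-method local well-posedness applies, and then to verify that (i) the chosen gauge condition propagates from $t=0$ and (ii) the Gauss constraint \eqref{eq:YMconstraint} is preserved. I work in spatial dimension $d=4$, where the critical Sobolev exponent for products is $n/2 = 2$; since $H^3(\bbR^4)\hookrightarrow L^\infty$ and $H^2(\bbR^4)\hookrightarrow L^p$ for every $p<\infty$, the multilinear expressions appearing in \eqref{ym} can be estimated with room to spare via Moser/Kato--Ponce.

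\textbf{Lorenz gauge.} Starting from $(a_j, e_j)$ obeying \eqref{eq:YMconstraint}, I first construct Cauchy data $(A_\alpha, \rd_t A_\alpha)\rst_{t=0}$ that realizes $A_j(0)=a_j$, $F_{0j}(0)=e_j$ and satisfies the compatibility conditions $\rd^\alpha A_\alpha\rst_{t=0}=0$ and $\rd_t(\rd^\alpha A_\alpha)\rst_{t=0}=0$; this pins down $A_0(0)$ and $\rd_t A_0(0)$ via an elliptic and then algebraic determination and preserves the $H^3\times H^2$ regularity. Imposing \eqref{lorenz} collapses \eqref{ym} to the semilinear wave system \eqref{YM-h-Lorenz}, schematically $\Box A = A\cdot\rd A + A^3$. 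A standard Picard iteration in $C_t H^3_x \cap C^1_t H^2_x$, combined with the Moser estimate
\[
\nrm{[A,\rd A]}_{H^2} \lesssim \nrm{A}_{L^\infty}\nrm{\rd A}_{H^2} + \nrm{A}_{H^3}\nrm{\rd A}_{L^4}\nrm{1}
\]
and the analogous cubic bound, produces a unique local-in-time solution with the desired regularity; continuous dependence follows from subtracting equations and running the same estimates. Propagation of the Lorenz condition is checked by noting that $\phi:=\rd^\alpha A_\alpha$ satisfies a homogeneous linear wave equation with coefficients depending on $A$ (obtained by commuting $\rd^\alpha$ through \eqref{ym} and using the Bianchi identity), and that $\phi(0)=\rd_t\phi(0)=0$ by construction; uniqueness for this auxiliary equation forces $\phi\equiv 0$, so the solution of \eqref{YM-h-Lorenz} is genuinely a solution of \eqref{ym}. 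The Gauss constraint propagates identically by the Bianchi identity $\covD^\beta\covD^\alpha F_{\alpha\beta}\equiv 0$.

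\textbf{Temporal gauge.} I set $A_0=0$ and introduce $E_j:=F_{0j}=\rd_t A_j$, reducing \eqref{ym} to the first-order system
\[
\rd_t A_j = E_j,\qquad \rd_t E_j = \covD^k F_{kj},\qquad \covD^j E_j = 0,
\]
with Cauchy data $(a_j, e_j)$. The principal part of the evolution equation for $E_j$ is $\Delta A_j - \rd_j\rd^k A_k$, which is degenerate; nevertheless, the natural energy
\[
\calE(t) = \tfrac12\int_{\bbR^4}\bigl(|E|^2 + |F|^2\bigr)\,dx
\]
together with its higher-order covariant analogues
$
\calE_s(t) = \sum_{|\alpha|\le s-1}\int(|\covD^\alpha E|^2 + |\covD^\alpha F|^2)
$
controls $(A, \rd_t A)$ in $H^s\times H^{s-1}$ modulo a divergence-gradient piece that the Gauss constraint pins down elliptically. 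Concretely, from $\covD^j E_j = 0$ and $A_0=0$ one recovers $\rd^j A_j$ at each time by solving a linear elliptic problem in $A$, whose nonlinear source is controlled by $\calE_s$; this is the analogue of the div-curl decomposition. A standard contraction-mapping argument for the semilinear system on $C_t H^3_x\cap C^1_t H^2_x$, combined with this elliptic correction, yields local well-posedness. The constraint $\covD^j E_j=0$ is again preserved by Bianchi, and no further gauge-fixing is needed since \eqref{temporal} is already a complete gauge.

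\textbf{Main obstacle.} In the Lorenz case the analysis is routine at this regularity, the only care being the initial-data construction and the closed wave equation for $\phi=\rd^\alpha A_\alpha$. The genuine difficulty is the temporal gauge, where the evolution equation has degenerate principal symbol along gauge directions; my plan handles this by retaining the Gauss constraint as a live identity throughout the iteration and using it to recover control of $\rd^j A_j$, rather than trying to view the system as strictly hyperbolic in $(A,\rd_t A)$ alone. Both gauges are \emph{a posteriori} equivalent via a gauge transformation solving an ODE/wave equation, but I would prove the two well-posedness statements independently by the two routes above rather than transferring one to the other.
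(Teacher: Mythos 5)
The paper does not actually prove this theorem: it is stated as classical background (the only guidance given is the remark that in the temporal gauge ``at the level of the divergence of $A_x$ we again get a pure transport equation'' and that the small data problem there is ``slightly nonperturbative''), so your proposal must be judged against the standard arguments. Your Lorenz gauge part is fine: the reduction to \eqref{YM-h-Lorenz}, the construction of $A_0(0),\rd_t A_0(0)$ from the gauge and constraint equations, Picard iteration in $C_tH^3\cap C^1_tH^2$ (note $H^3\times H^2\to H^2$ products in $\bbR^4$ do hold, though your displayed Moser estimate is written sloppily), and the propagation of $\phi=\rd^\alpha A_\alpha$ via a homogeneous wave equation with vanishing data is the standard route.

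The temporal gauge part contains a genuine misstep. You claim that ``from $\covD^j E_j=0$ and $A_0=0$ one recovers $\rd^j A_j$ at each time by solving a linear elliptic problem in $A$.'' The Gauss constraint is an equation for $E=\rd_t A$, not for $A$: it gives $\rd^j E_j=-[A^j,E_j]$, i.e.\ an elliptic gain of one derivative for the \emph{gradient part of $E$}, and the divergence of $A$ is then recovered only by integrating the transport/ODE equation $\rd_t(\rd^j A_j)=\rd^j E_j=-[A^j,E_j]$ in time -- exactly the transport structure the paper points to. There is no fixed-time elliptic determination of $\rd^j A_j$, and your energy functional $\calE_s$ built from $\covD^{(\le s-1)}E$ and $\covD^{(\le s-1)}F$ simply does not see this component. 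A second, related problem is your plan to ``retain the Gauss constraint as a live identity throughout the iteration'': Picard iterates do not satisfy the constraint (it propagates only for exact solutions via Bianchi), so you cannot invoke it inside the contraction argument. The correct repair is to build the transport equation for $\rd^j A_j$ (equivalently, the improved regularity of the gradient sector of $E$) into the iteration scheme itself -- e.g.\ evolve the curl part of $A$ by the genuine wave equation and the divergence part by the transport equation, or follow the Segal/Eardley--Moncrief-type setup -- and only afterwards verify that the solution of the modified system satisfies the constraint and hence the original equations. As written, your degenerate linear propagator does not preserve $H^3\times H^2$ in the gradient sector for unconstrained iterates, so the contraction as described does not close.
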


As mentioned before, both of these gauges are consistent with
causality, i.e, the corresponding evolution has finite speed of
propagation. Because of this, the large data problem is easily
localized and reduced to a small data problem. The small data problem
is perturbative in the Lorenz gauge, but slightly nonperturbative in
the temporal gauge.

While in both of these gauges one can lower somewhat the regularity of
the data, descending to (critical) energy regularity while working
directly in these gauges\footnote{We remark that, a-posteriori, we
  obtain local well-posedness at the critical regularity in the
  temporal gauge (see Theorem~\ref{t:dich-hyp}). However, its proof is
  highly indirect, and the key analysis is still performed in the
  caloric gauge \cite{OTYM2, OTYM2.5}. } appears to be fraught with
difficulties.  The same applies to the global problem; the two are in
effect equivalent at least to a certain extent.

The Coulomb gauge \eqref{coulomb}, on the other hand, provides a much
better structure for the equations. In recent work of the second
author with Krieger~\cite{KT}, the small data problem was considered
at energy regularity:

\begin{theorem}
  The hyperbolic Yang--Mills equation \eqref{ym} in the Coulomb
  gauge~\eqref{coulomb} is globally well-posed for small data in $\dot
  H^1 \times L^2$.
\end{theorem}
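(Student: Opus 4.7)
The plan is to work directly in the Coulomb gauge and reduce the hyperbolic Yang--Mills system to a nonlinear wave equation amenable to a small-data contraction argument at the critical regularity $\dot{H}^{1} \times L^{2}$. Under $\partial^{j} A_{j} = 0$, the Gauss constraint \eqref{eq:YMconstraint} becomes an elliptic equation for $A_{0}$ of schematic form $\Delta A_{0} = \mathrm{quadratic}(A, \partial_{t,x} A, A_{0})$, which can be inverted to recover $A_{0}$ from the spatial components with one extra degree of spatial regularity. Substituting back, the spatial components satisfy
\[
\Box A_{j} = -2 \mathbb{P} [A^{k}, \partial_{k} A_{j}] + \mathcal{N}_{\text{null}}(A, \partial A) + \mathcal{R}(A),
\]
where $\mathbb{P}$ is the Leray projection, $\mathcal{N}_{\text{null}}$ collects the bilinear terms carrying classical $Q_{\alpha\beta}$ null structure (this structure emerges precisely from the Coulomb condition together with the antisymmetry of the curvature), and $\mathcal{R}$ contains cubic-and-higher terms and the contribution of the solved $A_{0}$.

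I would set up the iteration in a dyadically assembled composite space $\mathbf{S}^{1}$ analogous to the one used for critical wave maps, combining an $X^{s,b}$-type norm with Strichartz norms and null-frame square-function norms at each frequency scale, together with a matching norm $\mathbf{N}$ for the right-hand side. For $\mathcal{N}_{\text{null}}$ and $\mathcal{R}$, perturbative bilinear and trilinear estimates of the form $\|\cdot\|_{\mathbf{N}} \lesssim \|\cdot\|_{\mathbf{S}^{1}}^{2,3}$ are available via now-standard null form calculus combined with dyadic summation, and the elliptic contribution of $A_{0}$ fits into the same scheme.

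The decisive obstacle is the bilinear $\mathbb{P} [A^{k}, \partial_{k} A_{j}]$ in the high--low frequency regime: when a low-frequency $A$ multiplies a high-frequency $\partial A$, the interaction carries no null structure and is not perturbatively controllable in $\mathbf{S}^{1}$ at critical regularity. The strategy is to move this paradifferential piece onto the left-hand side and construct a parametrix for the magnetic wave operator
\[
\bigl(\Box + 2 P_{<k-C}(\mathbb{P} A)^{\ell}\, \partial_{\ell}\bigr)\, P_{k} A_{j}, \qquad k \in \mathbb{Z},
\]
via a pseudodifferential renormalization $U_{<k}(x, D)$ that approximately conjugates the low-frequency covariant derivative into a flat derivative at phase-space scale $2^{k}$, in the spirit of Tao's construction for wave maps but adapted to the non-abelian Coulomb setting. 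One must then show that $U_{<k}$ is bounded on the atomic constituents of $\mathbf{S}^{1}$ and that the renormalization error is itself perturbative; this is the technical heart of the argument, and I expect it to be the principal source of difficulty, since both the construction of $U_{<k}$ using a low-frequency $A$ that is only in an energy-type space, and the transference of $\mathbf{S}^{1}$ bounds through the parametrix, require a delicate phase-space analysis.

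Once the parametrix for the paradifferential magnetic operator is available with the required $\mathbf{S}^{1}$ bounds, the rest of the nonlinearity is perturbative at small data. A standard contraction mapping in a small ball of $\mathbf{S}^{1}$ then yields a unique global solution, persistence of higher regularity, a continuous data-to-solution map, and scattering to a linear Yang--Mills wave in the Coulomb gauge as $t \to \pm \infty$. The renormalization/parametrix step is the deep part; the multilinear estimates for the null form and cubic remainders, while numerous, are comparatively routine in this framework.
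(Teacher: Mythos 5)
You should first be aware that the paper does not prove this statement at all: it records it as the small-data result of Krieger and the second author, citing \cite{KT}, and at the level of strategy your outline is a faithful description of that cited work — Coulomb gauge, elliptic recovery of $A_{0}$ from the constraint \eqref{eq:YMconstraint}, extraction of (generalized) null structure from the remaining bilinear terms, removal of the non-perturbative low--high part of $[A^{\ell},\partial_{\ell}A_{j}]$ by a paradifferential renormalization in the style of the wave maps parametrix, and a contraction in a wave-maps-type $S^{1}/N$ framework. So there is no ``different route'' to compare against; the question is only whether your write-up constitutes a proof.

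It does not, because the step you yourself call the technical heart is precisely the entire content of the theorem at critical regularity, and you supply nothing for it. You do not say how $U_{<k}$ is to be built from a connection that is merely small in the energy space, why it is bounded on the dyadic building blocks of $S^{1}$ and $N$, or why the conjugation error is genuinely better than the paradifferential magnetic term it is meant to remove; these are exactly the points where a naive transplant of the wave maps construction fails and where the specific structure of the problem must enter. In particular, the removal exploits that the bad term acts by $ad(A_{<k})$ with $A$ divergence-free, so that (with respect to the bi-invariant inner product) the relevant operator is skew-adjoint and can be approximately conjugated away by a gauge-type renormalization built from frequency-localized antiderivatives of the low-frequency potential, with smallness of the energy guaranteeing that the renormalization is a near-isometry and its errors perturbative; none of this appears in your argument. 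A secondary caveat: after substituting the elliptic $A_{0}$, the claim that the remaining bilinear terms carry ``classical $Q_{\alpha\beta}$ null structure'' must be checked termwise — in the Coulomb gauge the relevant structure also involves nonlocal quadratic forms of the type $\Delta^{-1}\partial$ applied to products (compare the form $\bfQ$ with symbol \eqref{eq:Q-sym-simple} appearing in this paper's caloric-gauge formulation), not only the classical null forms — and the trilinear estimates involving the parametrix are not routine bookkeeping. As it stands, your proposal is an accurate roadmap of \cite{KT}, but with the decisive construction and all of the accompanying estimates deferred, it has a genuine gap rather than being a proof.
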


At the same time, blow-up solutions are known to exist in certain
cases (see \cite{KST2, MR2929728}) just above the ground state
energy. This lead one to a ``Threshold Conjecture'' similar to the one
for the Yang--Mills heat flow.

Unfortunately, it appears\footnote{At this point, one should regard
  this as a conjecture for which we have some evidence but not a
  proof.} that in general the Coulomb gauge cannot be extended to all
data below the threshold energy. Hence we need an alternative gauge
choice which should retain as much as possible of the algebraic
structure associated to the Coulomb gauge, but which is well-defined
at least for subthreshold connections.

This gauge, which we will call the \emph{(global) caloric gauge}, is
defined using the Yang--Mills heat flow.  It is precisely the
Threshold Theorem for the Yang--Mills heat flow which guarantees that
the caloric gauge is defined for all subthreshold connections.

The primary aim of the four-paper series, of which the present one is
the first, is to prove the following two results, which are analogous
to Theorems~\ref{thm:thr-simple} and \ref{thm:dich-simple}. The first
result gives an affirmative answer to the Threshold Conjecture:
\begin{theorem} [Threshold Theorem] \label{t:caloric} The hyperbolic
  Yang--Mills equation \eqref{ym} in the caloric gauge is globally
  well-posed, and the solution scatters, for all initial data $(a,e)
  \in \dot H^1 \times L^2$ with energy below $2 \Egs$.
\end{theorem}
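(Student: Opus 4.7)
The plan is to reduce Theorem~\ref{t:caloric} to a bubbling-versus-scattering dichotomy in the caloric gauge and then rule out the bubbling scenario via the threshold hypothesis. First, I would exploit the subthreshold assumption $\spE[a]<2\Egs$ together with Theorem~\ref{thm:thr-simple} to set up the caloric gauge: the Yang--Mills heat flow launched from $a$ is global, has finite caloric size $\hM(a)<\infty$, and converges to a flat connection $a_\infty$ in $\dot H^1$. Gauging $a_\infty$ to zero at spatial infinity and transporting this choice backward along the heat flow yields the global caloric gauge at $s=0$, and Theorem~\ref{t:main-wave} recasts the hyperbolic Yang--Mills equation in this gauge as a wave-type system whose linear part retains the favorable null and elliptic structure of the Coulomb formulation. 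Local well-posedness at $\dot H^1\times L^2$ regularity for this system, established in \cite{OTYM2.5}, then provides maximal caloric-gauge solutions together with a sharp continuation criterion.

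Second, I would globalize these solutions via a Bahouri--Gerard / Kenig--Merle concentration-compactness argument adapted to the caloric gauge in \cite{OTYM2, OTYM3}. The plan is to prove, as in \cite{OTYM2}, that any caloric-gauge solution with sufficiently small energy dispersion of the curvature on a time interval automatically satisfies full scattering estimates; here the Structure Theorem~\ref{thm:str-simple} furnishes the deterministic parabolic control on the background $A(s)$ that is needed to close the estimates. Then, along the lines of \cite{OTYM3}, I would show that any nonscattering caloric-gauge solution must concentrate a definite amount of energy in a sequence of rescaled space-time windows, whose weak limit is a \emph{soliton} for the hyperbolic Yang--Mills flow, defined as a Lorentz boost of a nontrivial harmonic Yang--Mills connection on $\bbR^4$.

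Third, I would close the argument by excluding the soliton scenario under the energy threshold. Any bubbled-off harmonic Yang--Mills connection arises as a weak $\dot H^1$ limit of topologically trivial configurations, so it must itself lie in the trivial topological class. By Theorem~\ref{thm:gks-simple}, such a connection is either flat or has energy at least $2\Egs$; combined with the fact that Lorentz boosting strictly increases the energy of a nonflat connection, this contradicts the subthreshold hypothesis $\spE[a]<2\Egs$ and forces scattering. The main obstacle is the concentration-compactness step within the caloric gauge: profile decompositions of hyperbolic Yang--Mills solutions must be performed while simultaneously controlling how the caloric gauge behaves under rescaling, space-time translation, and weak limits---a substantially more delicate task than for wave maps or Maxwell--Klein--Gordon, because the gauge itself is defined through a nonlinear parabolic flow whose large-data asymptotics (encoded in Theorem~\ref{thm:str-simple}) must be propagated stably through the limiting procedure and reconciled with the Lorentz symmetry generating the soliton.
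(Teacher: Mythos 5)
There is nothing in this paper to check your proposal against: Theorem~\ref{t:caloric} is stated in Section~\ref{subsec:hyp-ym} only as a preview of results ``that will be proved in the subsequent papers'' \cite{OTYM2, OTYM2.5, OTYM3}, and the present paper's contribution to it is limited to supplying ingredients --- the heat-flow Threshold Theorem (Theorems~\ref{thm:thr-simple}, \ref{thm:thr}), the construction and regularity of the caloric gauge (Theorem~\ref{t:caloric-g}, Section~\ref{s:caloric}), and the structure of the hyperbolic equation in that gauge (Theorems~\ref{t:main-wave}, \ref{t:main-wave-s}). Your proposal defers exactly the two hard analytic steps --- global bounds for energy-dispersed caloric solutions and the bubbling-versus-scattering dichotomy --- to those same forthcoming papers, so it is a restatement of the announced roadmap rather than a proof; as a secondary point, the second and fourth installments are organized around energy dispersion and induction on energy in the style of the wave-maps threshold arguments, not a Bahouri--G\'erard/Kenig--Merle profile decomposition, although that concerns the other papers rather than this one.

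Within the portion you do argue, Step 3 contains a genuine error. You assert that a bubbled-off harmonic Yang--Mills connection, being a weak $\dot H^1$ limit of topologically trivial configurations, ``must itself lie in the trivial topological class,'' and then invoke Theorem~\ref{thm:gks-simple} to conclude it is flat or has energy at least $2\Egs$. But the topological class, equivalently the characteristic number $\boldsymbol{\chi}$, is not continuous under weak $\dot H^1$ limits --- that failure is precisely what makes bubbling possible --- and the dangerous bubble below $2\Egs$ is a (Lorentz boost of an) instanton with $\abs{\boldsymbol{\chi}(Q)} = \spE[Q] \geq \Egs$, which your reasoning does not exclude. The mechanism used in this paper for the parabolic analogue (proof of Theorem~\ref{thm:thr}, Section~\ref{subsec:thr}) is different: by Lemma~\ref{lem:top-triv} the characteristic number of the topologically trivial configuration vanishes, so the amount $\boldsymbol{\chi}(Q)$ concentrating in the bubble region must be exactly compensated outside it, and the Bogomoln'yi bound (Lemma~\ref{lem:bog}) then forces the exterior to carry energy at least $\abs{\boldsymbol{\chi}(Q)}$, whence the total energy is at least $2\abs{\boldsymbol{\chi}(Q)} \geq 2\Egs$, contradicting the subthreshold hypothesis. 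Your argument should replace the ``trivial class of the bubble'' step by this compensation argument, combined in the hyperbolic setting with the fact that Lorentz boosting a nontrivial harmonic Yang--Mills connection only increases its energy.
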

In Theorem~\ref{t:caloric}, the restriction to ``topologically
trivial'' data $(a, e) \in \dot{H}^{1} \times L^{2}$ is natural, since
one of the conclusions is scattering of the solution. This explains
the refined threshold $2 \Egs$. On the other hand, for more general
data $(a, e) \in H^{1}_{loc} \times L^{2}_{loc}$ with finite energy,
we establish the following sharp dichotomy:
\begin{theorem} [Dichotomy Theorem] \label{t:dich-hyp} The hyperbolic
  Yang--Mills equation \eqref{ym} is locally well-posed in the
  temporal gauge for all initial data $(a,e) \in H^1_{loc} \times
  L^2_{loc}$ with finite energy. Moreover, one of the following two
  properties must hold:
  \begin{enumerate}[label=(\alph*)]
  \item The solution is global, and $A$ scatters as $t \to \infty$
    after a suitable gauge transformation.
  \item The solution ``bubbles off'' a soliton, either
    \begin{enumerate}[label=(\roman*)]
    \item at a finite blow-up time $t < \infty$, or
    \item at infinity $t = \infty$.
    \end{enumerate}
  \end{enumerate}
\end{theorem}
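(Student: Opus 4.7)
This theorem represents the culmination of the four-paper series, so the plan is to combine three inputs: (i) results from \cite{OTYM2.5} on topological classes and large data local well-posedness, (ii) the energy dispersed scattering theorem for caloric gauge solutions from \cite{OTYM2}, and (iii) a Bahouri--Gerard / Kenig--Merle profile decomposition carried out in \cite{OTYM3}. The overall shape is familiar from the study of energy critical wave maps and Schr\"odinger maps, but each step requires substantial gauge-theoretic input specific to Yang--Mills.

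First I would address local well-posedness. Given $(a,e) \in H^{1}_{loc} \times L^{2}_{loc}$ with finite energy, I would approximate by smooth data for which temporal gauge well-posedness is classical, then pass to the limit using finite speed of propagation for the temporal gauge and cutoff functions to localize the constraint equation. The subtle point is uniqueness and continuous dependence at low regularity, for which I would transfer the analysis locally to the caloric gauge: on a ball where the energy drops below $2\Egs$, the Threshold Theorem~\ref{thm:thr-simple} makes the caloric gauge available; the strong local well-posedness proved in \cite{OTYM2.5} can then be pulled back to the temporal gauge by the $\dot{H}^{1}$-controlled gauge transformation relating the two gauges. Because the caloric gauge is only available after passing through a topologically trivial piece, this step requires the topological patching arguments of \cite{OTYM2.5}.

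For the dichotomy itself, I would argue by concentration-compactness. Assuming (a) fails, either finite time blow-up occurs or scattering fails at infinity; in both cases the energy dispersion scattering theorem of \cite{OTYM2} (caloric gauge solutions with sufficiently small $\|F\|_{L^{3}_{t,x}}$ on long enough slabs must scatter) forces a sequence of space-time points $(t_{n},x_{n})$ and scales $r_{n}\to 0$ at which the curvature concentrates in a gauge-invariant way. Rescaling $A^{(n)}(t,x) = r_{n} A(t_{n}+r_{n} t, x_{n}+r_{n}x)$ and using a profile decomposition adapted to the caloric gauge, I would extract a limiting nontrivial finite-energy Yang--Mills wave $A^{\infty}$ on a slab in $\bbR^{1+4}$. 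The residual energy dispersion of this limit must be zero, so the dispersive part scatters, and the profile itself is a soliton, i.e., a Lorentz boost of a harmonic Yang--Mills connection, yielding case (b).

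The hard part is the rigidity step identifying $A^{\infty}$ as a boosted harmonic Yang--Mills connection. This demands (i) an a priori control that concentration velocities stay strictly subluminal, which I would extract from Morawetz and energy flux monotonicity on light cones for the hyperbolic Yang--Mills flow, and (ii) a rigidity statement in the caloric gauge showing that vanishing energy dispersion together with compactness modulo symmetries forces the curvature to be Lorentz-boosted and self-similar in the appropriate sense. Matching the profile with the remainder across possibly distinct topological classes of the bubble and the background, and ensuring that the caloric gauge patches correctly in the presence of Uhlenbeck singularities, will be the most delicate technical obstruction and is the main content of \cite{OTYM3}.
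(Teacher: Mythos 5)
The first thing to say is that the present paper does not prove Theorem~\ref{t:dich-hyp} at all: it appears in Section~2.3 explicitly as a \emph{preview} of results ``that will be proved in the subsequent papers \cite{OTYM2, OTYM2.5, OTYM3} of the series,'' and even its precise formulation is deferred. So there is no proof in this paper against which your argument can be checked; and your sketch has the same status, since at every genuinely hard point (low-regularity uniqueness in the temporal gauge, extraction of the limiting object, the rigidity/identification step) you cite the later papers rather than supply an argument. As a self-contained proof proposal it is therefore incomplete by construction; the only question is whether the program you outline matches the one the series actually carries out.

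It largely does not. Your dichotomy step is a Bahouri--G\'erard/Kenig--Merle scheme: a profile decomposition adapted to the caloric gauge, followed by a rigidity theorem identifying a compact-modulo-symmetries, zero-dispersion profile as a boosted harmonic Yang--Mills connection. The internal evidence of this paper points to a different, Sterbenz--Tataru-type strategy: Section~2.6 refers to ``the energy induction argument in \cite{OTYM2},'' the black box from \cite{OTYM2} is a large-data \emph{energy-dispersed} global/scattering result (with energy dispersion measured by $L^\infty$-type norms of the curvature, cf.\ Proposition~\ref{prop:ed} and the $\dot W^{-2,\infty}$ envelopes there --- not by smallness of $\nrm{F}_{L^3_{t,x}}$; the $L^3L^3$ norm in this paper is in \emph{heat}-time and is the caloric size $\hM$, not a hyperbolic dispersion norm), and the model for the bubbling argument is the parabolic Dichotomy Theorem~\ref{thm:dich} proved here via the monotonicity formula, interval selection, concentration scales, and Uhlenbeck-type gauge normalization --- with self-similar scenarios excluded separately --- rather than via profile reassembly. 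This is not a cosmetic difference: a Kenig--Merle implementation for Yang--Mills would require linear and nonlinear profile decompositions compatible with the gauge action and with nontrivial topological classes, plus a long-time perturbation theory at energies up to $2\Egs$, none of which is available or constructed in the series; the energy-dispersion route is designed precisely to avoid this. Two further points of detail: the threshold $2\Egs$ and the caloric gauge are tied to \emph{topologically trivial} ($\dot H^1$) data, so ``the energy drops below $2\Egs$ on a ball'' does not by itself make the caloric gauge available for the localized problem --- this is exactly what the excision/gluing and topological-class theory of \cite{OTYM2.5} is for; and in alternative (a) the gauge transformation to a scattering representative is part of the conclusion obtained from the caloric-gauge global theory, not something you can extract from a smallness assumption on a spacetime $L^3$ norm of $F$.
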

In (a), for sufficiently large $t$ the solution $A(t)$ can in fact be
gauge-transformed into the caloric gauge, which then scatters as $t
\to \infty$ in the same sense as Theorem~\ref{t:caloric}. In (b), a
soliton for the hyperbolic Yang--Mills equation is simply a Lorentz
transformation of a nontrivial harmonic Yang--Mills connection $Q$. By
time reversibility, this theorem applies also (separately) to the past
time direction.

Further discussion of gauges and of the above results, as well as a
more complete formulation of these, are postponed for the other three
papers.  The main goal of the second part of the present paper, which
consists of Sections~\ref{s:caloric}--\ref{s:wave}, is to properly
define the caloric gauge, and to provide a formulation of the
hyperbolic Yang--Mills equations in this gauge which suffices for the
proof of the above theorems. The caloric gauge and the corresponding
results are described in the next subsection, and the proofs are
provided later on.

\subsection{The global caloric gauge and the manifold of caloric
  connections}
Here, we state our main results concerning the caloric gauge
connections in a simplified form for the reader's convenience.  More
accurate statements are given and proved in Section~\ref{s:caloric}.

Consider a connection $a$ on $\R^4$, whose associated Yang--Mills heat
flow is global and satisfies $\calQ(a) < \infty$.  Since the limiting
connection $a_\infty$ is flat, it must be gauge equivalent to the zero
connection.  Precisely, there must be a gauge transformation $O$ with
the property that
\[
(a_{\infty})_{j} = O^{-1} \partial_j O .
\]
Here $O= O(a) \in \dot H^2$ (interpreted in the sense that $O_{;j} \in
\dot H^1$) is unique up to constant conjugations.  Conjugating the
full heat flow with respect to such an $a$ yields a gauge-equivalent
connection
\[
\tA_j = O A_j O^{-1} - O_{;j}
\]
which solves the Yang--Mills heat flow, and satisfies $\tA(\infty) =
0$.  This leads us to the following definition of caloric connections:

\begin{definition} \label{d:C} We will say that a connection $a \in
  \dot H^1$ with $\hM(a) < \infty$ is (globally) \emph{caloric} if the
  corresponding limiting connection vanishes, $a_{\infty}= 0$; we
  denote the set of all such connections by $\calC$. More
  quantitatively, we denote by $\calC_{\hM}$ the set of all caloric
  connections whose caloric size $\hM(a)$ satisfies
  \begin{equation*}
    \hM(a) = \nrm{F}_{L^{3}([0, \infty); L^{3})} \leq \hM.
  \end{equation*}
\end{definition}

Then the Threshold Theorem for the Yang--Mills heat flow can be
restated as an existence result for gauge-equivalent caloric
connections:

\begin{theorem}\label{t:caloric-g}
  For every connection $\ta_j \in \dot H^1$ with energy below $2\Egs$,
  there exists a gauge-equivalent caloric connection $a \in \dot H^1$,
  which is unique up to constant gauge transformations.
\end{theorem}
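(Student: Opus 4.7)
The plan is to reduce the statement to the Threshold Theorem (Theorem~\ref{thm:thr-simple}) plus the classical fact that every flat connection in $\dot{H}^{1}$ on the contractible space $\mathbb{R}^{4}$ can be trivialized by a gauge transformation $O$ with $\partial O \in \dot{H}^{1}$.

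For \emph{existence}, I would first apply Theorem~\ref{thm:thr-simple} to $\tilde{a}$: since $\spE[\tilde{a}] < 2\Egs$, the Yang--Mills heat flow $\tilde{A}(s)$ emanating from $\tilde{a}$ in the local caloric gauge is global, has caloric size $\hM(\tilde{a}) < \infty$, and converges in $\dot{H}^{1}$ as $s \to \infty$ to a flat connection $\tilde{a}_{\infty}$. Since $\tilde{a}_{\infty} \in \dot{H}^{1}$ is flat and topologically trivial, the flat trivialization lemma produces a gauge transformation $O$ with $\partial O \in \dot{H}^{1}$ satisfying $O^{-1} \partial_{j} O = (\tilde{a}_{\infty})_{j}$, equivalently $\calG(O) \tilde{a}_{\infty} = 0$. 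Setting $a := \calG(O) \tilde{a}$, the gauge covariance of \eqref{caloric} together with the fact that the local caloric condition $A_{s} = 0$ is preserved under $s$-independent gauge transformations guarantees that $A(s) := \calG(O) \tilde{A}(s)$ is the Yang--Mills heat flow from $a$ in the local caloric gauge. By continuity of $\calG(O)$ on $\dot{H}^{1}$, its limit as $s \to \infty$ is $\calG(O) \tilde{a}_{\infty} = 0$, so $a \in \calC$ by Definition~\ref{d:C}.

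For \emph{uniqueness up to constant conjugation}, suppose $a, a' \in \calC$ are both gauge-equivalent to $\tilde{a}$, hence to each other via some $U$ with $a' = \calG(U) a$. The uniqueness portion of Theorem~\ref{thm:hf-loc-simple}, combined once more with gauge covariance, forces the two heat flows to satisfy $A'(s) = \calG(U) A(s)$ for all $s \geq 0$. Sending $s \to \infty$ and using $a_{\infty} = a'_{\infty} = 0$ yields $\calG(U)(0) = 0$, i.e.\ $\partial_{j} U \cdot U^{-1} = 0$ on $\mathbb{R}^{4}$, whence $U$ is constant by connectedness.

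The main obstacle is the flat trivialization step. Existence and uniqueness (up to a constant conjugation) of $O$ as a locally Lipschitz map from $\mathbb{R}^{4}$ into $\G$ is classical: flatness of $\tilde{a}_{\infty}$ renders the overdetermined system $\partial_{j} O = O (\tilde{a}_{\infty})_{j}$ compatible, and simple connectedness of $\mathbb{R}^{4}$ promotes the local Frobenius-type integrability to a global solution. Upgrading the construction to the $\dot{H}^{1}$ setting with $\partial O \in \dot{H}^{1}$ requires a smoothing-and-limiting argument controlled by uniform Uhlenbeck-type estimates; this analytic bookkeeping, rather than the algebraic manipulations above, is where the real work lies.
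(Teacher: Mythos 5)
Your overall strategy coincides with the paper's: the Threshold Theorem gives $\hM(\ta)<\infty$, a global flow and a flat $\dot H^1$ limit $\ta_\infty$; one then trivializes $\ta_\infty$ by a gauge transformation $O$ with $O^{-1}\rd_x O=\ta_\infty$ and conjugates the whole flow, and uniqueness follows from gauge covariance of the local caloric flow together with $A(\infty)=A'(\infty)=0$, forcing $\rd_x U\, U^{-1}=0$. The algebraic reductions and the uniqueness argument are essentially right (modulo a word of justification for applying a rough $U$, with only $U_{;x}\in\dot H^1$, to solutions defined as limits of regular solutions before invoking uniqueness).

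The genuine gap is the ``flat trivialization lemma'' itself, which you invoke as classical and then defer (``smoothing-and-limiting \ldots Uhlenbeck-type estimates \ldots where the real work lies''). In the paper this step is the entire content of the corresponding result (Proposition~\ref{p:cal-a}), and it is not routine bookkeeping, for two reasons your sketch does not address. First, the Frobenius/ODE construction needs more regularity than $\ta_\infty\in\dot H^1\subset L^4$, and naive mollification of $\ta_\infty$ destroys flatness, so there are no obvious smooth flat approximants to integrate; the paper produces them intrinsically, as the automatically flat and smooth limits $a^n_\infty$ of the heat flows of the frequency-truncated data $P_{<n}\ta$, with convergence quantified by the frequency envelope of $\ta$. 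Second, and this is the crux, at bare $\dot H^1$ regularity the approximate trivializations $O^n$ need not converge at all: they are determined only up to constants and there is no pointwise anchor (when $\ta\in\bfH$ one gets $a_\infty\in\ell^1\dot H^1$ and hence $L^\infty$ convergence of $O^n$, but this fails for general $\dot H^1$ data). The paper resolves this by renormalizing $\tilde O^n=P^n O^n$ with constants $P^n\in\G$, produced via an averaged Poincar\'e inequality, nearest-point projection onto $\G$ after embedding $\G\hookrightarrow O(N)$, and an approximate cocycle identity; this yields only $L^2_{loc}$ convergence, which is nevertheless enough (with dominated convergence) to pass to the limit in $a^n=\calG(O^n)\ta$ in $\dot H^1$. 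Your proposal contains no mechanism for producing flat smooth approximants nor for handling the constant ambiguity of the $O^n$, so the central existence step remains unproved as written.
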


The connection $a$ is defined as
\[
a_j = \Cal(\ta) := O \ta_j O^{-1} - O_{;j}, \qquad O = O(\ta) .
\]

To properly solve the hyperbolic Yang--Mills equation in the caloric
gauge, we need to view the family $\calC$ of the caloric gauge
connections as an infinite dimensional manifold. Here the $\dot H^1$
topology is no longer sufficient, so we introduce the slightly
stronger topology\footnote{Here $\ell^1$ stands for dyadic summation
  in frequency. We prefer this notation to the more classical Besov
  style notation, as we can apply it to a larger class of spaces, see
  also Section~\ref{sec:notations}.}
\[
\bfH = \{ a \in \dot H^1: \partial^{\ell} a_{\ell} \in \ell^1 L^2\},
\]
which reflects the fact, discussed later in greater detail, that
caloric connections satisfy a nonlinear form of the Coulomb gauge
condition. Then we have
\begin{theorem} \label{t:smooth} For any caloric connections $a \in
  \calC_{\hM}$ with energy $\calE$, we have the $\bfH$ bound
  \begin{equation}
    \| a \|_{\bfH} \lesssim _{\hM, \calE} 1.
  \end{equation}
  The set $\calC$ of all $\dot H^1$ caloric connections is a $C^1$
  infinite dimensional submanifold of $\bfH$.
\end{theorem}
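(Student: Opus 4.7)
The theorem has two parts: the quantitative embedding $\calC_{\hM} \hookrightarrow \bfH$ with constants depending on $\hM$ and $\calE$, and the $C^1$-submanifold structure on $\calC$. My approach to the first is to represent $a$ and its divergence as $s$-integrals of heat-flow quantities (which is possible precisely because $a_\infty = 0$), then estimate those dyadically. The second part should follow from an implicit function theorem applied at each caloric point, promoted to the $\bfH$-topology.

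\textbf{First part: the $\bfH$ bound.} The $\dot H^1$ bound follows from the representation $a_j = - \int_0^\infty F_{sj} \, ds$ (using $\partial_s A_j = F_{sj}$ in the local caloric gauge together with $a_\infty = 0$) combined with the Structure Theorem. For the $\ell^1 L^2$ control on the divergence, the key observation is the evolution identity for $\partial^\ell A_\ell$ recorded in the introduction. Integrating it from $s = 0$ to $\infty$ and using $\partial^\ell A_\ell \to 0$ yields
\begin{equation*}
\partial^\ell a_\ell = - \int_0^\infty [A^j, \covD^k F_{jk}] \, ds.
\end{equation*}
The integrand is a commutator between $A$, controlled in $L^\infty_s \dot H^1$ by the Structure Theorem, and $\covD F = F_s$, controlled in $L^2_s L^2_x$ by the Lyapunov identity. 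Applying $P_k$ and expanding into low-high, high-low, and high-high paraproducts, one estimates each piece using Sobolev embeddings combined with the heat-smoothing effect at spatial frequency $2^k$ and heat-time $s \sim 2^{-2k}$. The $\ell^1$-summability over $k$ should be extracted using the caloric-size hypothesis $\nrm{F}_{L^3_s L^3_x} \leq \hM$, via a frequency-envelope strengthening of the Structure Theorem.

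\textbf{Second part: $C^1$ submanifold structure.} Around any $a \in \calC$, I would parametrize nearby connections as $a + b$ with $b \in \bfH$ small, and define
\begin{equation*}
\Phi(b) = (a+b)_\infty \in \dot H^1,
\end{equation*}
so that locally $\calC = \Phi^{-1}(0)$. That $\Phi$ is $C^1$ from $\bfH$ to $\dot H^1$ would follow from the Lipschitz/higher regularity dependence in Theorem~\ref{thm:hf-loc-simple}, promoted to the $\bfH$-topology using the auxiliary dynamic Yang--Mills heat flow mentioned in the introduction. The differential $d\Phi_0$ sends $b$ to the $s \to \infty$ limit of the linearized Yang--Mills heat flow, and its kernel is naturally identified with $T_a \calC$. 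A Hodge-type splitting of $\bfH$ along the direction of infinitesimal gauge transformations (which furnish the flat tails at $s = \infty$) then yields local $C^1$ coordinates on $\calC$.

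\textbf{Main obstacle.} The principal difficulty is that $\bfH$ is strictly finer than $\dot H^1$: every estimate must carry through with $\ell^1$-summable frequency envelopes rather than $\ell^2$ (i.e., $\dot H^1$) bounds. Propagating this refined regularity along the (dynamic) Yang--Mills heat flow uniformly in $\hM$ and $\calE$ is the analytic heart of both parts, both for the integral estimate of $\partial^\ell a_\ell$ in part one and for the $C^1$-dependence of the limit map $a \mapsto a_\infty$ in the $\bfH$-topology in part two.
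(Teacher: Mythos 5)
Your treatment of the $\bfH$ bound is essentially the paper's own route: the paper also integrates $\rd_s A_j = \covD^\ell F_{\ell j}$ and $\rd_s\, \rd^\ell A_\ell = -[A^\ell, \covD^j F_{j\ell}]$ in heat-time, using $a_\infty = 0$ (Lemma~\ref{l:h1} and Lemma~\ref{l:da}, assembled in Proposition~\ref{p:cal-in-bfH}). One caution: the inputs you name, $A \in L^\infty_s \dot H^1$ and $\covD^\ell F_{\ell j} \in L^2_s L^2_x$ from the Lyapunov identity, are by themselves not enough to make the $s$-integral of the commutator converge, let alone to give $\ell^1$-summability in $k$; the paper's proof runs through the frequency-envelope bound $\| P_k \covD F \|_{L^1_s \dot H^1} \aleq c_k$ (Theorem~\ref{t:heatA-L1}, Lemma~\ref{l:nolin}, Theorem~\ref{t:ymhf-l3-fe}), which is exactly the ``frequency-envelope strengthening'' you allude to but do not supply; with it the commutator acquires the envelope $c_k c_k^{[1]}$, which is $\ell^1$-summable since $c_k \in \ell^2$.

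For the submanifold structure there is a genuine gap. You set $\Phi(b) = (a+b)_\infty$ and want to realize $\calC$ locally as $\Phi^{-1}(0)$ via the implicit function theorem, but $d\Phi_0 : \bfH \to \dot H^1$, $b \mapsto B(\infty)$, is not a submersion: since $a$ is caloric, $B(\infty)$ is always curl-free (Lemma~\ref{l:global}), so the range of $d\Phi_0$ is the proper closed subspace of gradients, i.e.\ precisely the pure-gauge directions; worse, the nonlinear map $\Phi$ takes values in the set of flat connections $O^{-1}\rd O$, which is not a linear subspace of $\dot H^1$. Hence the classical IFT with target $\dot H^1$ does not apply, and the zero set of a non-submersion need not be a manifold. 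To rescue your scheme you would have to either give the flat connections near $0$ a Banach-manifold structure and prove transversality, or change the target (say to $\rd^\ell((a+b)_\infty)_\ell$ or to the gauge potential $O(a+b)$), and in any version you need the splitting of $\bfH$ into $T_a\calC$ and the covariant gradient directions $\covD[a](\ell^1 \dot H^2)$ — the ``Hodge-type splitting'' you assert is exactly the nontrivial point, not a known fact you may quote. The paper sidesteps the IFT: it builds an explicit chart by restricting the caloric projection $\Cal$ to the covariant Coulomb slice $\calB = \{ a + b : (\covD[a])^\ell b_\ell = 0 \}$; the differential at $a$ is $b \mapsto b - \covD[a] c$ and is bounded below because of the elliptic estimate $\| \covD[a] c \|_{\ell^1 \dot H^1} \aleq \| \Delta_a c \|_{\ell^1 L^2}$ from Theorem~\ref{t:deltaA}, and local surjectivity is proved by constructing gauge transformations $O(h)$ along the segment joining two nearby caloric connections, solving a perturbative elliptic equation for $O_{;h}$ in $\ell^1 \dot H^2$ (with the $C^1$ regularity of $\ta \mapsto \Cal(\ta)$ in $\bfH$ supplied by Proposition~\ref{p:cal-a}). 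These concrete ingredients — the lower bound on the differential, the surjectivity construction, and the $C^1$ theory of the limit map in the $\bfH$ topology — are what your outline is missing.
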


For an arbitrary $a \in \calC$, $O(a)$ is only defined as an
equivalence class, modulo constant gauge transformations. However, if
in addition we know that $a \in \bfH$, then $O(a)$ is continuous, and
we can fix its choice by imposing the additional condition
\begin{equation}\label{O-inf}
  \lim_{x \to \infty} O(x) = Id.
\end{equation}
With this choice we have the following regularity property:

\begin{theorem}
  The map $a \to O(a)$ is locally
  $C^{1}$
  from $\bfH$ to $\dot{H}^{2} \cap C^{0}$, and from $H^{\sgm}$ to
  $\dot{H}^{1} \cap \dot{H}^{\sgm+1}$ for $\sgm \geq 2$.  It is also
  also continuous from $\dot{H}^{1}$ to\footnote{Here $\dot H^2$ needs
    to be interpreted as a quotient space, modulo constant gauge
    transformations.}  $\dot H^2$.
\end{theorem}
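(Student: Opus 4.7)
The plan is to factor the map as $a \mapsto a_{\infty} \mapsto O$, where $a_{\infty} = \lim_{s \to \infty} A(s)$ is the long-time limit of the Yang--Mills heat flow issuing from $a$ (existing by Theorem~\ref{thm:str-simple}/Theorem~\ref{thm:dich-simple} together with $\hM(a) < \infty$), and $O$ is the $\G$-valued gauge transformation satisfying $(a_{\infty})_j = O^{-1} \rd_j O$, normalized by $O(x) \to Id$ as $|x| \to \infty$ whenever this is meaningful. The first factor is handled by the regularity theory of the heat flow itself: frequency-envelope and linearized variants of Theorem~\ref{thm:hf-loc-simple}, combined with the uniform long-time control from the Structure Theorem, show that $a \mapsto A(s)$ is locally $C^1$ into the relevant topology uniformly in $s \in [0, \infty]$. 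On $H^{\sgm}$ ($\sgm \geq 2$) the flow propagates Sobolev norms; on $\bfH$ the defining dyadic Coulomb structure $\rd^\ell A_\ell \in \ell^1 L^2$ is preserved along the flow (consistent with the local caloric gauge dynamics) and hence in the limit $a_\infty$; in the bare $\dot H^1$ setting one retains continuity only.

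For the second factor, flatness of $a_\infty$---forced by the gradient-flow structure---is precisely the compatibility condition for the overdetermined system $\rd_j O = O \cdot (a_\infty)_j$, so $O$ is constructed by integrating along paths, with flatness ensuring path-independence. Differentiating $\rd O = O a_\infty$ yields $\rd^2 O = (\rd O) a_\infty + O (\rd a_\infty)$, and the $\dot H^2$ bound follows by estimating each term in $L^2$ using the Sobolev embedding $\dot H^1(\bbR^4) \hookrightarrow L^4$ and the pointwise boundedness $O \in \G$. Higher regularity $\dot H^{\sgm+1}$ for $\sgm \geq 2$ follows by induction via the same product rule, exploiting the algebra property of $H^{\sgm}$ in four dimensions; the $\dot H^1$ component of the target in the $H^{\sgm}$ case encodes the normalization $O - Id \in \dot H^1$. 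Local $C^1$ dependence is then obtained by linearizing the same equations and invoking $C^1$ dependence of the flow on data.

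The main obstacle is the $C^0$ assertion on $\bbR^4$: since $\dot H^2(\bbR^4)$ barely fails to embed into $L^\infty$, uniform control of $O$ is not automatic from the $\dot H^2$ bound alone. This is precisely where the $\bfH$ topology earns its keep---the dyadic Coulomb condition $\rd^\ell a_\ell \in \ell^1 L^2$ supplies the endpoint dyadic summability needed to bound $\nrm{O - Id}_{L^\infty}$ and to produce uniform decay at infinity, so that the normalization $O(x) \to Id$ is well-defined and the target $C^0$ is continuously achieved. In the bare $\dot H^1$ case this mechanism fails, and one retains only continuity into $\dot H^2$ modulo constant gauge transformations; this weaker statement is then recovered from the $\bfH$ result by a density argument, exploiting that the $\dot H^2$ norm of $O$ is invariant under constant conjugations, together with stability of $O$ modulo constants under $\dot H^1$ perturbations of $a$.
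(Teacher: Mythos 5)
Your overall architecture (factor through $a_\infty$, use flatness to integrate $\rd_j O = O\,(a_\infty)_j$, use the $\ell^1 L^2$ divergence structure to get $C^0$ control) matches the paper's, but two steps you treat as routine are exactly where the real work lies. First, for rough data the path-integration of $\rd O = O a_\infty$ is not directly meaningful: $a_\infty$ is merely $\dot H^1$ (or $\ell^1 \dot H^1$), not continuous, so "integrating along paths with flatness ensuring path-independence" only makes sense after regularization. The paper constructs $O$ as a limit of gauge transformations $O^n$ attached to frequency-truncated data, and controls the differences via $[O^n(O^m)^{-1}]_{;x} = Ad(O^n)(a^n_\infty - a^m_\infty)$. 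Moreover, your key claim that the $\bfH$ structure "supplies the summability needed to bound $\nrm{O - Id}_{L^\infty}$" skips the essential intermediate step: what one actually needs is $a_\infty \in \ell^1 \dot H^1$, and this does not follow from $\rd^\ell a_{\infty,\ell} \in \ell^1 L^2$ alone (which itself requires propagating the divergence bound along the flow, as in Lemma~\ref{l:da}); it is obtained by combining the divergence bound with the flatness relation $\rd_j a_{\infty,k} - \rd_k a_{\infty,j} = -[a_{\infty,j}, a_{\infty,k}]$ into the elliptic equation $\lap_{a_\infty} a_{\infty,j} \in \ell^1 \dot H^{-1}$ and invoking Theorem~\ref{t:deltaA}. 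Similarly, the $C^1$ statement is not just "linearize and quote $C^1$ dependence of the flow": one must solve the covariant elliptic equation for $O^{-1}\rd_h O$ (equation \eqref{delta-Oh}) and it is precisely there that the $\ell^1$ summability is used to land in $\ell^1 \dot H^2 \subset \dot H^2 \cap C^0$.

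The more serious gap is the last claim, continuity from $\dot H^1$ into $\dot H^2$ modulo constants. Your proposed "density argument, exploiting ... stability of $O$ modulo constants under $\dot H^1$ perturbations of $a$" assumes the very statement to be proved: for $\dot H^1$ data the approximating sequence $O^n$ has uniformly bounded $\dot H^1$ derivative but no pointwise convergence, and there is no a priori reason the $O^n$ converge even modulo constants unless one exhibits a coherent choice of renormalizing constants. The paper does this by a genuinely different mechanism: fixing a matrix embedding of the compact group $\G$, averaging $O^n(O^m)^{-1}$ over cubes, using Poincar\'e's inequality to show the average lies near $\G$, projecting to the nearest point $O^{nm} \in \G$, verifying that these satisfy an approximate cocycle condition, and then extracting the constants $P^n$ by a diagonal argument so that $P^n O^n$ converges in $L^2_{loc}$. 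Without some substitute for this compactness/renormalization step (and it is not supplied by invariance of the $\dot H^2$ norm under constant conjugation), the $\dot H^1$ part of the theorem remains unproved in your write-up.
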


Finite energy solutions to the hyperbolic Yang--Mills equation will be
continuous functions of time which take values into $\calC$.  They are
however not smooth in time, instead their time derivative will merely
belong to $L^2$.  Because of this, we need to take the closure of its
tangent space $T_{a} \calC$ (which a-priori is a closed subspace of
$\bfH$) in $L^2$.  This is denoted by $T_a^{L^2}\calC$. It is also
convenient to have a direct way of characterizing this space; that is
naturally done via the linearization of the Yang--Mills heat flow:

  \begin{definition}\label{d:tangent}
    For a caloric gauge connection $a \in \calC$, we say that $ L^2
    \ni b \in T_a^{L^2} \calC$ if the solution to the linearized local
    caloric gauge Yang--Mills heat flow equation
    \begin{equation} \label{eq:YM-h-lc-lin}
      \partial_s B_k = [B^j,F_{kj}] + \covD^j ( \covD_k B_j - \covD_j B_k), \qquad B_k(0) = b_k
    \end{equation}
    satisfies
    \[
    \lim_{s \to \infty} B(s) = 0 \qquad \hbox{ in } L^{2}.
    \]
  \end{definition}

  A key property of the tangent space $T_a^{L^2} \calC$ is the
  following nonlinear div-curl type decomposition:

  \begin{theorem} \label{t:cal-proj} Let $a \in \calC$. Then for each
    $e \in L^2$ there exists a unique decomposition
    \begin{equation} \label{eq:cal-proj-simple} e = b - \covD a_0,
      \qquad b \in T_a^{L^2} \calC, \qquad a_0 \in \dot H^1.
    \end{equation}
    with the corresponding bound
    \begin{equation}
      \| b\|_{L^2} +\|a_0\|_{\dot H^1} \lesssim \|e \|_{L^2}.
    \end{equation}
  \end{theorem}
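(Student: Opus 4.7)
The strategy is to realize the desired decomposition as a nonlinear Hodge/Helmholtz decomposition of $L^{2}$ along the ``gauge'' direction $\covD a_{0}$ versus the ``caloric slice'' $T_{a}^{L^{2}}\calC$, using the linearized local caloric YMHF \eqref{eq:YM-h-lc-lin} as the projection. The guiding model is the trivial background $a = 0$, where \eqref{eq:YM-h-lc-lin} collapses to $\p_{s} B_{k} = \lap B_{k} - \p_{k} \div B$. For this flow $\div B$ is conserved while $\p_{i} B_{j} - \p_{j} B_{i}$ satisfies a pure heat equation and dissipates; hence $T_{0}^{L^{2}}\calC$ is precisely the space of divergence-free $L^{2}$ vector fields, and the theorem reduces to the standard Hodge decomposition $e = b + \nb \lap^{-1} \div e$ with $a_{0} = - \lap^{-1} \div e \in \dot{H}^{1}$.

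The core of the proof is to build the nonlinear analogue of this projection. First I would define the end-state map $\Pi : L^{2} \to L^{2}$ by $\Pi e = \lim_{s \to \infty} B(s)$, where $B$ solves \eqref{eq:YM-h-lc-lin} with $B(0) = e$. Using a covariant energy estimate for \eqref{eq:YM-h-lc-lin} together with the $L^{3}_{s} L^{3}_{x}$ bound on the background $F$ from $\hM(a) < \infty$ (Theorem~\ref{thm:str-simple}) and the $\dot{H}^{1}$ decay $A(s) \to 0$ guaranteed by $a \in \calC$, this limit exists and obeys $\nrm{\Pi e}_{L^{2}} \lesssim \nrm{e}_{L^{2}}$. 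By Definition~\ref{d:tangent} we have $\ker \Pi = T_{a}^{L^{2}} \calC$.

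Next come two key computations. \textbf{(a) Pure gauge evaluation:} for every $c \in \dot{H}^{1}$, $\Pi(\covD c) = \p c$. This uses gauge equivariance: $a + \eps \covD c = \calG(e^{-\eps c}) a + O(\eps^{2})$ is an infinitesimal pure gauge, and since the time-independent gauge transformation $e^{-\eps c}$ preserves the local caloric gauge condition $A_{s} = 0$, the heat flow of the perturbed datum equals $\calG(e^{-\eps c}) A(s) + O(\eps^{2})$; evaluating at $s = \infty$ with $A(\infty) = 0$ gives $\calG(e^{-\eps c}) \cdot 0 = \eps \p c + O(\eps^{2})$, which linearizes to the claim. \textbf{(b) Image of $\Pi$ is pure gradients:} $\Pi e = \p \psi(e)$ for a unique $\psi(e) \in \dot{H}^{1}$ with $\nrm{\psi(e)}_{\dot H^{1}} \lesssim \nrm{e}_{L^{2}}$. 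Indeed, since $A(s) \to 0$ in $\dot{H}^{1}$ the linearized flow is asymptotically free, and quantitatively I would track the covariant curl $\covD_{i} B_{j} - \covD_{j} B_{i}$, which obeys a covariant heat equation with source terms controlled by $F$ and $\covD F$; the integrability $\int_{0}^{\infty} \nrm{F}_{L^{3}}^{3} \, \ud s < \infty$ together with $A(s) \to 0$ then forces the covariant curl of $B(s)$ to vanish in the $L^{2}$ limit, hence $\p_{i} (\Pi e)_{j} - \p_{j} (\Pi e)_{i} = 0$ and the representation follows from the standard Hodge theory on $\bbR^{4}$.

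Given (a) and (b), the decomposition reduces to solving $\Pi(e + \covD a_{0}) = \p \psi(e) + \p a_{0} = 0$, which yields the unique solution $a_{0} = - \psi(e) \in \dot{H}^{1}$ with $\nrm{a_{0}}_{\dot H^{1}} \lesssim \nrm{e}_{L^{2}}$ and then $\nrm{b}_{L^{2}} \lesssim \nrm{e}_{L^{2}} + \nrm{\covD a_{0}}_{L^{2}} \lesssim \nrm{e}_{L^{2}}$. Uniqueness follows by applying $\Pi$ to the difference of two decompositions: if $\covD(a_{0} - a_{0}') \in T_{a}^{L^{2}}\calC$, then $0 = \Pi(\covD(a_{0} - a_{0}')) = \p(a_{0} - a_{0}')$, forcing $a_{0} = a_{0}'$ in $\dot{H}^{1}$. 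The main obstacle is step (b): while the heuristic is clear, showing uniformly in $e \in L^{2}$ that the covariant curl actually decays in $L^{2}$ despite the degenerate parabolic character of \eqref{eq:YM-h-lc-lin} requires carefully combining the strict parabolicity in the curl direction with the Structure Theorem estimates on $A$ and $F$; this is the principal technical task of the argument.
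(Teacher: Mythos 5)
Your skeleton is in fact the same as the paper's (Proposition~\ref{p:transverse}, Remark~\ref{r:proj-a0}): solve the linearized flow \eqref{eq:YM-h-lc-lin} with data $e$, use that pure gauge directions $\covD c$ evolve as $\covD[A(s)]c$ and hence limit to the flat gradient $\rd c$, show that the limit of the general flow is a flat gradient, and set $b = e + \covD a_0$; your equation $\rd \psi(e) + \rd a_0 = 0$ is literally the paper's choice $W(\infty) = -\nabla c$, and your step (a) and the uniqueness argument are fine given the well-posedness theory for \eqref{eq:YM-h-lc-lin}. The genuine gap is that the two analytic facts everything rests on --- existence of the $L^2$ limit $\Pi e = \lim_{s\to\infty} B(s)$ for arbitrary $e \in L^2$, and the gradient structure of that limit --- are not proved, and the route you sketch for them is the one that is problematic at exactly this regularity. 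A covariant energy estimate yields uniform bounds, not convergence: the flow is only degenerate parabolic, its divergence component obeys $\rd_s \covD^\ell B_\ell = -2[B^\ell, \covD^k F_{k\ell}]$, an ODE in $s$ with no smoothing, so convergence of $B(s)$ is genuinely nontrivial (it is the content of Lemma~\ref{l:global}(2)/Corollary~\ref{c:global}, not of an energy inequality). As for your step (b), tracking $\curl_A B$ is the paper's argument only for $\dot H^1$ data; for $e \in L^2$ the curl datum is merely $\dot H^{-1}$ (so ``decay in $L^2$'' is not even the right scaling), the natural bound for the source $[\covD F, B]$ only gives $L^2_s L^1_x$, which just misses the dual Sobolev space in four dimensions, and, more seriously, recovering the $L^2$ limit of $B$ from the limits of its covariant curl and divergence means solving a covariant div--curl system at $L^2$ regularity --- precisely what the paper says it wants ``to avoid''. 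You flag this as the principal technical task but do not carry it out, so the proof is incomplete at its crux.

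The missing idea is the infinitesimal de Turck trick. Instead of evolving $B$, one evolves $F_{0j}$, which satisfies the \emph{nondegenerate} covariant heat equation $(\rd_s - \lap_A - 2\, ad(F)) F_{0j} = 0$ with data $e_j$ and hence decays, and one reconstructs $a_0 = - \int_0^\infty \covD^\ell F_{\ell 0}(s)\, \ud s$, where the $L^1_s$ parabolic estimate of Theorem~\ref{t:heatA-L1} (used as in Case~1 of Lemma~\ref{l:lin}) gives $\| a_0 \|_{\dot H^1} \lesssim \| e \|_{L^2}$. Then $b = e + \covD a_0$, and since $B(s) = F_{0 \cdot}(s) + \covD A_0(s)$ with $F_{0\cdot}(s) \to 0$, $A_0(\infty) = 0$ and $A(\infty) = 0$, one gets $B(s) \to 0$ in $L^2$, i.e.\ $b \in T_a^{L^2}\calC$, which yields existence, the stated bound, and (with the transversality $T_a^{L^2}\calC \cap \covD[a]\dot H^1 = 0$) uniqueness. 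Without this substitute for your step (b), the argument does not close.
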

  For any $e \in L^{2}$, we introduce the notation
  \begin{equation*}
    b = \Pi_{a}(e)
  \end{equation*}
  for its projection $b$ as in \eqref{eq:cal-proj-simple} to
  $T^{L^{2}}_{A} \calC$.

  Finally, as already hinted by Theorem~\ref{t:smooth}, a key property
  of a caloric connection is that its divergence $\rd^{\ell} A_{\ell}$
  satisfies a generalized Coulomb condition. We separate out the
  quadratic par, which can be explicitly determined, and the remaining
  higher order terms, which only play a perturbative role in the
  subsequent analysis:
  \begin{theorem}
    For $a \in \calC$, we have the representation
    \begin{equation}\label{coulomb+}
      \partial^k A_k = \DA(A) = \bfQ(A,A) + \DA^3(A).
    \end{equation}
    where $ \bfQ(A,A)$ is a symmetric\footnote{Note that the symbol of
      $\bfQ$ is odd, but this is combined with the antisymmetry of the
      Lie bracket appearing in the bilinear form; see
      Definition~\ref{def:mult-form}.} bilinear form (see
    Definition~\ref{def:mult-form} below) with symbol
    \begin{equation} \label{eq:Q-sym-simple} \bfQ(\xi, \eta) =
      \frac{\abs{\xi}^{2} - \abs{\eta}^{2}}{2 (\abs{\xi}^{2} +
        \abs{\eta}^{2})}.
    \end{equation}
    and $ \DA^3(A)$ is a $C^1$ map on $\calC$ containing cubic and
    higher order terms, and satisfying better bounds.
  \end{theorem}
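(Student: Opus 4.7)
The plan is to exploit the defining caloric property $A(s) \to 0$ as $s \to \infty$, which in particular forces $W(s) := \partial^k A_k(s) \to 0$ in $L^2$. Integrating in heat-time yields
\begin{equation*}
\DA(a) = \partial^k a_k = W(0) = -\int_0^\infty \partial_s W(s) \, ds.
\end{equation*}
I would then compute $\partial_s W$ from the local caloric gauge Yang--Mills heat flow \eqref{YM-h-lc}. Writing the flow as $(\partial_s - \Delta) A_j + \partial_j W = N_j(A, A) + C_j(A, A, A)$, with explicit quadratic part $N_j = [W, A_j] + 2[A^k, \partial_k A_j] - [A^k, \partial_j A_k]$ and cubic piece $C_j = [A^k, [A_k, A_j]]$, one observes that taking the divergence in $j$ causes the linear terms $\partial^j \Delta A_j - \Delta W$ to cancel, so
\begin{equation*}
\partial_s W = \partial^j N_j(A, A) + \partial^j C_j(A, A, A).
\end{equation*}
In particular $\DA(A)$ starts at quadratic order, which confirms the Coulomb condition at the linear level and is the origin of the nonlinear Coulomb character of the caloric gauge.

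To extract the leading quadratic term I would substitute the linearized evolution $A^{(1)}(s) = e^{s\Delta} a$ into the integrand. This is justified because the linearized flow $\partial_s A = \Delta A - \partial W$ preserves $W$, and the caloric condition forces this linear $W$ to vanish, reducing the equation to the scalar heat equation on each component of $a$. This produces
\begin{equation*}
\bfQ(a, a) := -\int_0^\infty \partial^j N_j(e^{s\Delta} a, e^{s\Delta} a) \, ds.
\end{equation*}
Passing to Fourier variables and using $\int_0^\infty e^{-s(\abs{\xi}^2 + \abs{\eta}^2)} \, ds = (\abs{\xi}^2 + \abs{\eta}^2)^{-1}$, the bilinear form $\partial^j N_j$ contributes a polynomial numerator in $\xi, \eta$; after collecting the index contractions and symmetrizing using the antisymmetry of the Lie bracket together with the symmetry in the two input frequencies, the scalar factor collapses to $(\abs{\xi}^2 - \abs{\eta}^2)/(2(\abs{\xi}^2 + \abs{\eta}^2))$, matching the claim \eqref{eq:Q-sym-simple}.

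The remainder $\DA^3(A) := \DA(A) - \bfQ(A, A)$ splits into two pieces: (a) the intrinsically cubic term $-\int \partial^j C_j(A, A, A) \, ds$ and (b) the correction from substituting the nonlinear flow $A(s)$ in place of $e^{s\Delta} a$ inside the quadratic integral, which is cubic in $a$ by Duhamel. Both are at least cubic, and their $C^1$ dependence on $a \in \calC$ follows from the regularity of the heat flow established in Theorems~\ref{thm:hf-loc-simple} and \ref{thm:str-simple}, with the extra factor of $A$ supplying the improved bounds referenced in the statement. The main obstacle will be the bookkeeping in the symbol computation: correctly combining the index contractions produced by $N_j$, the outer divergence, and the heat-time integration so that all terms in the matrix-valued kernel reduce to the single scalar symbol, and then estimating $\DA^3$ in the $\bfH$-compatible topology so that the decomposition is consistent with the manifold structure of $\calC$ from Theorem~\ref{t:smooth}.
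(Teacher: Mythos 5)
Your overall route is the same as the paper's (Proposition~\ref{p:DAB}): start from the transport equation for the divergence, $\rd_{s} \rd^{\ell} A_{\ell} = -[A^{k}, \covD^{j} F_{jk}]$ as in \eqref{DA-rep}, integrate from $s=\infty$ using the caloric condition $A(\infty)=0$, substitute the free heat flow to read off the quadratic symbol, and push everything else into $\DA^{3}$, estimated via the parabolic/frequency-envelope bounds for $A$, $F$ and their nonlinear corrections. So the architecture is fine; the problem is one specific step.

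The genuine gap is the assertion that ``the caloric condition forces this linear $W$ to vanish.'' Caloric connections are not divergence-free: $W=\rd^{k}A_{k}=\DA(A)$ is precisely the quantity being computed, and it is nonzero --- the theorem only says it is quadratic. Hence the terms of $\rd^{j}N_{j}$ carrying a factor of $W$, namely $[\rd^{j}W, A_{j}]$ and $2[A^{k},\rd_{k}W]$ (and likewise the $-\rd_{j}W$ correction to your linearized flow, so that $A^{(1)}(s)\neq e^{s\Delta}a$ exactly), do not vanish. If you keep them as ``quadratic,'' the symbol does not collapse to \eqref{eq:Q-sym-simple}: antisymmetrizing the full formally-quadratic integrand yields the extra matrix piece $(\eta^{a}\eta^{b}-\xi^{a}\xi^{b})/\bigl(2(\abs{\xi}^{2}+\abs{\eta}^{2})\bigr)$ on top of the scalar symbol. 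If instead you literally set $W=0$, the identity \eqref{coulomb+} as you would derive it fails by exactly those contributions. The non-circular fix, which is what the paper does implicitly, is to invoke the already-established quantitative bound on the divergence --- $\nrm{P_{k}\rd^{\ell}A_{\ell}(s)}_{L^{2}}\aleq_{\hM,\calE} c_{k}c_{k}^{[1]}(1+2^{2k}s)^{-N}$, i.e.\ Lemma~\ref{l:da} and \eqref{DA-fe} of Proposition~\ref{p:ab-l2}, which follow from the integrated representation alone --- to conclude that every $W$-bearing term is cubic or higher and belongs to $\DA^{3}$, where it must also be estimated; these divergence-bearing terms are among the main contributions to $\DA^{3}$ and are missing from your remainder list (a)--(b). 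With that reassignment the quadratic part reduces to $\int_{0}^{\infty}[e^{s\Delta}a^{j},\Delta e^{s\Delta}a_{j}]\,ds$ and the antisymmetrized symbol is indeed $\frac{\abs{\xi}^{2}-\abs{\eta}^{2}}{2(\abs{\xi}^{2}+\abs{\eta}^{2})}$, as claimed.
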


\begin{remark} [Brief historical remarks]
  The caloric gauge was introduced by Tao \cite{Tao-caloric} in the
  context of mappings from $\bbR^{2}$ into hyperbolic space, using the
  harmonic map heat flow on $\bbR^{2}$. Its construction was extended
  to general targets up to the ground state energy
  (cf. Theorem~\ref{t:caloric-g}) by Smith \cite{Sm1}. Various authors
  successfully applied the caloric gauge in analysis of dispersive
  equations for mappings, including Tao
  \cite{Tao:2008wn,Tao:2008tz,Tao:2008wo,Tao:2009ta,Tao:2009ua} for
  the wave maps on $\bbR^{1+2}$; Bejenaru--Ionescu--Kenig--Tataru
  \cite{BIKT}, Smith \cite{Sm2} and Dodson--Smith \cite{DoSm} for
  Schr\"odinger maps on $\bbR \times \bbR^{2}$. We also note some
  recent applications of the caloric gauge in the context of wave maps
  on the hyperbolic space \cite{LOS5, Li1, LMZ, Li2}.

  The idea of caloric gauge was extended to the Yang--Mills setting
  (i.e., for vector bundle connections) by the first author \cite{Oh1,
    Oh2} at subcritical regularity. In that case, since the scaling
  symmetry is broken, it is more natural to only impose the
  \emph{local} caloric gauge condition \eqref{local-caloric} for
  heat-times below certain threshold $s_{0} > 0$ dictated by the
  initial data; this is in contrast to the global caloric gauge used
  in this work.
\end{remark}

\subsection{The hyperbolic Yang--Mills equation in the 
caloric gauge} \label{subsec:ym-cal-evol}

We now turn our attention to the hyperbolic Yang--Mills equation. We will consider
solutions which at any fixed time $t$ are in the caloric gauge,
$A_x(t) \in \calC$. We will refer to such solutions as \emph{caloric Yang--Mills waves}.

We first clarify the notion of an initial data set for the Yang--Mills equation in the caloric gauge. On the one hand, we have the gauge covariant notion $(a, e)$, which satisfies the constraint equation \eqref{eq:YMconstraint}. On the other hand, in the caloric gauge, we will consider the spatial component of the 
connection as the dynamic variable, and view the temporal part of the connection
as an auxiliary variable (which is analogous to the Coulomb case). From this point if view, we have:
 \begin{definition}  \label{d:cal-data}
An initial data for the Yang--Mills equation in the caloric gauge is a pair 
$(a,b)$ where $a \in \calC$ and $b_k \in T_a^{L^2} \calC$. 
\end{definition}
These two notions are related to each other by the following result:
\begin{theorem}\label{t:data-simple}
  \begin{enumerate}
  \item Given any Yang--Mills initial data pair $(a, e) \in \dot H^1
    \times L^2$ such that $\hM(a) < \infty$, there exists a caloric gauge
    Yang--Mills data $(\tilde a, b) \in T^{L^{2}} \calC$ and $a_0 \in
    \dot H^1$, so that the initial data pair $(\tilde a, \tilde e)$ is
    gauge equivalent to $(a,e)$, where
    \[
    \tilde e_k = b_k - \covD^{(\tilde{a})}_k a_0 .
    \]
  \item Given any caloric gauge data $(\tilde a, b) \in T^{L^{2}}
    \calC$, there exists a unique $a_0 \in \dot H^1$, with Lipschitz
    dependence on $(a, b) \in \dot H^1 \times L^2$, so that
    \[
    e_k = b_k - \covD^{(a)}_k a_0
    \]
    satisfies the constraint equation
    \eqref{eq:YMconstraint}. 
  \end{enumerate}
\end{theorem}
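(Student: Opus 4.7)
Both parts reduce to the unique decomposition of $L^{2}$ one-forms provided by Theorem~\ref{t:cal-proj}, combined with the observation that, thanks to the bi-invariance of $\brk{\cdot,\cdot}$, the identity $\int \brk{\covD^{j}\alpha, e_{j}} = -\int \brk{\alpha, \covD^{j} e_{j}}$ identifies the constraint-satisfying subspace $\set{e \in L^{2} : \covD^{j} e_{j} = 0}$ with the $L^{2}$-orthogonal complement of $\calD := \mathrm{Im}\,\covD^{(\tilde a)} \subset L^{2}$.

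\medskip

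For part (1), given $(a,e) \in \dot H^{1} \times L^{2}$ with $\hM(a) < \infty$, I would apply the caloric gauge construction preceding Theorem~\ref{t:caloric-g} to produce the gauge transformation $O = O(a)$ and set $\tilde a := \Cal(a) \in \calC$. Conjugation yields $\tilde e_{k} := O e_{k} O^{-1} \in L^{2}$ (the pointwise $\G$-action preserves the $L^{2}$ norm), and the constraint $\covD^{j} \tilde e_{j} = 0$ is inherited from $(a,e)$ by gauge covariance. Applying Theorem~\ref{t:cal-proj} to $\tilde e$ at the base $\tilde a$ yields the required decomposition $\tilde e = b - \covD^{(\tilde a)} a_{0}$ with $b = \Pi_{\tilde a}(\tilde e) \in T_{\tilde a}^{L^{2}} \calC$ and $a_{0} \in \dot H^{1}$.

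\medskip

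For part (2), given $(\tilde a, b)$ with $\tilde a \in \calC$ and $b \in T_{\tilde a}^{L^{2}} \calC$, I would orthogonally decompose $b = c + d$ in $L^{2}$ with $c \in \calD^{\perp}$ and $d \in \calD$; the subspace $\calD$ is closed in $L^{2}$, being a topological complement of $T_{\tilde a}^{L^{2}} \calC$ by Theorem~\ref{t:cal-proj}. The map $\covD^{(\tilde a)} : \dot H^{1} \to \calD$ is a continuous bijection: injectivity follows from the uniqueness part of Theorem~\ref{t:cal-proj} applied to the zero element (any nontrivial kernel element would produce two distinct decompositions of $0 \in L^{2}$); surjectivity is immediate from the definition of $\calD$; and boundedness of the inverse follows from the open mapping theorem. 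Hence there is a unique $a_{0} \in \dot H^{1}$ with $\covD^{(\tilde a)} a_{0} = d$, and $e_{k} := b_{k} - \covD^{(\tilde a)}_{k} a_{0} = c$ satisfies the constraint by the identification in the plan. Uniqueness is immediate from injectivity of $\covD^{(\tilde a)}$, and Lipschitz dependence on $(\tilde a, b) \in \bfH \times L^{2}$ follows from the quantitative control of $\Pi_{\tilde a}$ and $(\covD^{(\tilde a)})^{-1}|_{\calD}$ provided by Theorem~\ref{t:smooth}.

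\medskip

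\textbf{Main obstacle.} The delicate point is propagating quantitative control over the projection $\Pi_{\tilde a}$ (and equivalently over $(\covD^{(\tilde a)})^{-1}|_{\calD}$) uniformly in $\tilde a \in \calC_{\hM}$, with constants depending only on $\hM$ and the energy. This is not automatic for large caloric connections, since the naive coercivity estimate $\nrm{\covD^{(\tilde a)} \alpha}_{L^{2}}^{2} \gtrsim \nrm{\partial \alpha}_{L^{2}}^{2}$ does not follow from Cauchy--Schwarz; it must instead be bootstrapped from the generalized Coulomb structure~\eqref{coulomb+} of $\tilde a$ (in particular $\partial^{\ell} \tilde a_{\ell} \in \ell^{1} L^{2}$) via the frequency-envelope/paraproduct technology underpinning Theorem~\ref{t:smooth}.
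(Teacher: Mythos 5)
Your part (1) follows the paper's argument: pass to the caloric gauge via $\Cal$, conjugate $e$ by $O(a)$, and apply the decomposition of Theorem~\ref{t:cal-proj} (Proposition~\ref{p:transverse}) at the base point $\tilde a$. For part (2) you take a genuinely different, softer route, and it works. The paper observes that imposing the constraint \eqref{eq:YMconstraint} on $e = b - \covD^{(\tilde a)} a_{0}$ is literally the covariant elliptic equation $\covD^{k}\covD_{k} a_{0} = \covD^{k} b_{k}$, whose right-hand side lies in $\dot H^{-1}$ for $b \in L^{2}$, $\tilde a \in \dot H^{1}$, and solves it by Theorem~\ref{t:deltaA}; this requires no caloric structure at all, yields $\nrm{a_{0}}_{\dot H^{1}} \aleq \nrm{b}_{L^{2}}$ at once, and is the form of the argument that carries over to the higher-regularity Lipschitz statements. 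You instead recycle Theorem~\ref{t:cal-proj}: its unique, bounded decomposition exhibits $\calD = \covD^{(\tilde a)}\dot H^{1}$ as the range of a bounded projection (hence closed) and $\covD^{(\tilde a)}$ as injective with bounded inverse on $\calD$ (indeed for $d \in \calD$ the decomposition of $d$ itself gives $\nrm{a_{0}}_{\dot H^{1}} \aleq \nrm{d}_{L^{2}}$, so the open mapping theorem is not even needed), and then the $L^{2}$-orthogonal projection of $b$ onto $\calD$ produces $a_{0}$, the constraint for $b - \covD^{(\tilde a)} a_{0} \in \calD^{\perp}$ following from the adjoint identity in your plan. Solving the elliptic equation is of course exactly the computation of this orthogonal projection, so the two arguments agree in substance; the difference is that yours routes an elementary elliptic fact through the much heavier heat-flow projection theorem, while the paper's applies to arbitrary $a \in \dot H^{1}$, $b \in L^{2}$. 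Two smaller points: your ``main obstacle'' is not an obstacle on the paper's route, since $\dot H^{1}$ solvability and coercivity for $\lap_{a}$ hold uniformly for all $a \in \dot H^{1}$ by the variational/diamagnetic argument in Theorem~\ref{t:deltaA} and do not use the generalized Coulomb condition \eqref{coulomb+}; and the Lipschitz dependence of $a_{0}$ on the base point is not what Theorem~\ref{t:smooth} provides --- it comes from the elliptic equation with varying coefficients (or from difference bounds in the spirit of Proposition~\ref{p:diff-l2}), a point on which the paper's own proof is admittedly terse.
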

For a proof, see Theorem~\ref{t:data} below. In view of this result, we can fully describe caloric Yang--Mills waves as continuous functions
\[
I \ni t \to (A_x(t), \partial_{t} A_x(t)) \in T^{L^2} \calC.
\]

Next, we consider the task of providing a full description of the gauge-dependent system of nonlinear wave equations satisfied  
by a caloric Yang--Mills wave $A$. Recall the Yang--Mills equation \eqref{ym}:
\begin{equation*}
	\covD^{\alp} F_{\alp \bt} = 0.
\end{equation*}
Separating these equations into the spatial ($\beta \neq 0$) case
\begin{equation}\label{ym-x}
\covD^\alpha \covD_\alpha A_k = \covD^k \covD^\alpha A_\alpha - [A^\alpha, \covD_k A_\alpha],
\end{equation}
and respectively the temporal ($\beta = 0 $) case
\begin{equation}\label{ym-0}
\covD^k \covD_k A_0 = \covD_0 \covD^k A_k  - [A^k, \covD_0 A_k],
\end{equation}
we will seek to interpret the first equation as a hyperbolic evolution for $A_x$,
and the second as an elliptic compatibility condition for $A_0$. This is achieved 
as follows:

\pfstep{Step~1}
Use the equation \eqref{ym} to show 
that $A_0$ is uniquely determined by $A = A_{x}$ and $B = B_x = \partial_{t} A_x$,
\begin{equation}\label{a0-map}
A_0 = \bfA_0(A, B)
\end{equation}
where $\bfA_0$ is a $C^1$  map on $T^{L^2} \calC$ which contains
an explicitly computed quadratic part $\bfA_{0}^{2}$, as well as purely perturbative
higher order terms $\bfA_{0}^{3}$:
\begin{equation} \label{a0-quad-}
\bfA_0(A_x,B_x) = \bfA_0^2(A, B) +  \bfA_0^3(A, B) .
\end{equation}

\pfstep{Step~2} Use the equations \eqref{ym-x} to show 
that $\covD^0 A_0$ is uniquely determined by $A$ and $B$, i.e.,
\begin{equation}\label{da0-quad-}
\covD^0 A_0 = \DA_0(A_x,B_x)
\end{equation}
where $\DA_0$ is a $C^1$ map on $T^{L^2} \calC$ which plays a purely perturbative
role in the analysis.

\bigskip

The above steps allow us to recast the equations \eqref{ym-x} in the form
\begin{equation}\label{ymcg-main}
\Box_A A_k = P [A^j,\partial_k A_j] 
+  2\Delta^{-1} \partial_k \bfQ(\partial^\alpha A^j,\partial_\alpha A_j) 
+ R(A,\partial_t A)
\end{equation}
Here on the right we have two quadratic terms depending only on $A$
and $B$, both of which have a favorable null structure, and a
higher order remainder term $R$, which admits favorable $L^1L^2$
bounds and thus only plays a perturbative role. However, in the
covariant d'Alembertian $\Box_{A}$ on the left, we still have the coefficients
$A_0$ and $\covD^0 A_0$, which are determined as above in terms of $A_x$
and $\partial_t A_x$. Of these only the quadratic part $\bfA_0^2$ of $A_0$ plays
a nonperturbative role. We arrive at:

\begin{theorem}\label{t:main-wave}
The hyperbolic Yang--Mills equation in the caloric gauge takes the form \eqref{ymcg-main},
where  
\begin{itemize}
\item $\bfQ$ is a symmetric quadratic form of order zero with symbol \eqref{eq:Q-sym-simple}.
\item $R$ is a $C^1$ map on $T^{L^2} \calC$ satisfying perturbative bounds.
\item $A_0$ and $\covD^0 A_0$ are uniquely determined by $A$ and $A_x$ via 
\eqref{a0-map}, \eqref{a0-quad-} and \eqref{da0-quad-}.
\item The generalized Coulomb condition \eqref{coulomb+} holds.
\end{itemize}
\end{theorem}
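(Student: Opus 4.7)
The plan is to carry out the two-step reduction sketched above: first derive an elliptic equation for $A_{0}$, then an elliptic equation for $\covD^{0} A_{0}$, and finally substitute into the spatial Yang--Mills equations \eqref{ym-x} to arrive at \eqref{ymcg-main}. Throughout we work on $T^{L^{2}} \calC$, exploiting the $\bfH$-control of $A$ from Theorem~\ref{t:smooth} and the generalized Coulomb representation \eqref{coulomb+}.

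\emph{Step 1 (the map $\bfA_{0}$).} Expand the Gauss constraint $\covD^{k} F_{0k} = 0$ using $F_{0k} = \partial_{t} A_{k} - \partial_{k} A_{0} + [A_{0}, A_{k}]$ to obtain a covariant Laplace-type equation
\[
\Delta_{A} A_{0} = - \partial_{t}(\partial^{k} A_{k}) + (\text{quadratic and higher in } A, \partial_{t} A, A_{0}).
\]
Replace $\partial^{k} A_{k}$ by $\bfQ(A,A) + \DA^{3}(A)$ using \eqref{coulomb+}, so that the right-hand side depends only on $(A, \partial_{t} A) \in T^{L^{2}} \calC$. Solvability follows by inverting $\Delta_{A}$ on a suitable space (using the $\bfH$ bound for $a \in \calC_{\hM}$ in Theorem~\ref{t:smooth}) and closing a fixed-point/implicit-function argument. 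Linearizing at $A = 0$ isolates the quadratic part $\bfA_{0}^{2}$, leaving a cubic-and-higher $C^{1}$ remainder $\bfA_{0}^{3}$, giving \eqref{a0-map}--\eqref{a0-quad-}.

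\emph{Step 2 (the map $\DA_{0}$).} One cannot simply time-differentiate Step~1, because this would bring in an uncontrolled $\partial_{t}^{2} A$. Instead, contract the spatial equations \eqref{ym-x} with $\covD^{k}$ (equivalently, combine the constraint with the Bianchi identity) to derive an elliptic equation of the schematic form
\[
\Delta_{A} (\covD^{0} A_{0}) = (\text{bilinear or higher in } A, \partial_{t} A),
\]
in which at most one time derivative appears on the right-hand side. Solving as in Step~1 produces the $C^{1}$ map $\DA_{0}$ on $T^{L^{2}} \calC$ of \eqref{da0-quad-}.

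\emph{Step 3 (the wave equation).} Starting from \eqref{ym-x}, write $\covD^{\alpha} A_{\alpha} = \covD^{0} A_{0} + \partial^{k} A_{k}$ and substitute $\covD^{0} A_{0} = \DA_{0}$ from Step~2 together with $\partial^{k} A_{k} = \bfQ(A,A) + \DA^{3}(A)$ from \eqref{coulomb+}. Apply a Hodge-type decomposition to $[A^{j}, \covD_{k} A_{j}]$ into its divergence-free part (yielding $P [A^{j}, \partial_{k} A_{j}]$) and a gradient part; the gradient part combines with $\covD_{k} \bfQ(A, A)$ and, after using the spatial YM equation to convert spatial frequencies into space-time frequencies, produces the second quadratic term in \eqref{ymcg-main} with symbol \eqref{eq:Q-sym-simple} (a direct Fourier computation). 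All cubic-or-higher contributions from $\DA^{3}$, $\bfA_{0}^{3}$, and cross terms involving $A_{0}$ are collected into $R(A, \partial_{t} A)$.

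\emph{Main obstacle.} The delicate part is verifying that $R$ is a genuine $C^{1}$ map on $T^{L^{2}} \calC$ obeying perturbative (e.g. $L^{1}_{t} L^{2}_{x}$) bounds at energy regularity, since this must be compatible with the later wave-equation analysis in \cite{OTYM2}. This requires careful multilinear estimates for the cubic and higher-order terms, using the caloric-gauge regularity of $a$ (Theorem~\ref{t:smooth}), the improved bounds on $\DA^{3}$ and $\bfA_{0}^{3}$ from Steps~1--2, and the null structure encoded in $\bfQ$. The algebraic identification of the symbol \eqref{eq:Q-sym-simple} is a subsidiary but crucial calculation, best carried out in Fourier variables after isolating the exact quadratic contributions from Steps~1 and 2.
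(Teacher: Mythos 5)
Your Step~1 and the broad three-step outline match the paper's plan, but the core mechanism of the actual proof is missing, and your Step~2 does not work as stated. Contracting \eqref{ym-x} with $\covD^{k}$ does not produce an elliptic equation $\lap_{A}(\covD^{0}A_{0})=(\hbox{terms with at most one time derivative})$: the contraction generates $\covD^{k}\Box_{A}A_{k}$, hence $\rd_{t}^{2}(\rd^{\ell}A_{\ell})$, and replacing $\rd^{\ell}A_{\ell}$ via \eqref{coulomb+} only pushes the problem into $\rd_{t}^{2}$ acting on $\DA(A)$, whose linearization again requires $\rd_{t}^{2}A$ — i.e.\ the substitution is circular, and $\covD^{0}A_{0}$ is not a curvature component that Bianchi alone can isolate at fixed time. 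The paper's route is genuinely different: it uses the \emph{dynamic} Yang--Mills heat flow and the Yang--Mills tension field $w_{\nu}(s)$, namely the caloric-gauge representation $A_{0}(s)=\int_{s}^{\infty}w_{0}\,ds'$ (valid because $A_{0}(\infty)=0$), the constraint \eqref{w-constraint} to trade $\rd^{0}w_{0}$ for $\covD^{k}w_{k}$ as in \eqref{eq:l-caloric-d0a0}, and the parabolic evolution of $w$ (Lemma~\ref{lem:dymhf-tension}, with $w(s=0)=0$ by \eqref{ym} and with $\covD_{0}$ eliminated from the source by Bianchi). This is how $\covD^{0}A_{0}=\DA_{0}(A,B)$ is defined and estimated in Proposition~\ref{p:DA0}; your proposal contains no substitute for this ``infinitesimal de Turck'' step.

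Relatedly, the perturbative bounds for $R$, $\DA^{3}$, $\bfA_{0}^{3}$, $\DA_{0}^{3}$ — which are the real content of the theorem — are not obtainable by a generic implicit-function argument plus ``careful multilinear estimates'' at fixed time. In the paper they come from heat-time integral representations such as $\DA(s)=\int_{s}^{\infty}[A^{k},\covD^{j}F_{kj}]\,ds'$ and $\covD^{\ell}B_{\ell}=-2\int_{0}^{\infty}[B^{j},\covD^{i}F_{ij}]\,ds$, combined with the $L^{p}$ parabolic regularity and frequency-envelope theory of Propositions~\ref{p:DAB}, \ref{p:AB2-sp-lin}, \ref{p:w}, \ref{p:A0}, \ref{p:DA0}; even the symbol \eqref{eq:Q-sym-simple} arises only by integrating products of heat kernels in $s$, not from inverting $\lap_{A}$ at fixed time. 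Your Step~3 identification of the null-form term in \eqref{ymcg-main} also needs the exact algebraic cancellation between $\rd_{k}\bfQ(A,A)$, the gradient part of $[A^{j},\rd_{k}A_{j}]$, and $-\rd_{k}\DA_{0}^{2}$ (supplying the $\rd^{0}\cdot\rd_{0}$ piece), which presupposes the explicit quadratic parts produced by that heat-flow machinery. As written, the proposal defers precisely the steps that constitute the proof.
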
 

A more precise form of this result is contained in Section~\ref{s:wave}.
All perturbative terms are shown to satisfy favorable bounds purely in terms 
of Strichartz type norms for the connection $A$. The 
exact structure of all explicit quadratic forms $\bfQ$, $\bfA_0^2$ and $\DA^2$
will play a key role in the next paper of our three paper sequence.

To study the small data problem it would be sufficient to work with
the equation \eqref{ymcg-main}. However, for the large data problem we
also need to flow the wave equation in the parabolic direction. To smooth out the space-time connection $(A_{0}, A)$, we use the \emph{dynamic Yang--Mills heat flow}
\begin{equation} \label{eq:dymhf-simple}
	F_{s \alp} = \covD^{\ell} F_{\ell \alp}, \qquad \alp = 0, 1, \ldots 4
\end{equation}
 which is the Yang--Mills heat flow for $A$ adjoined with an $s$-evolution equation for $A_{0}$.
Then at nonzero heat-times $s$ the equation \eqref{ym} becomes
\begin{equation}\label{ym-cov-w}
\covD^\alpha F_{\alpha \beta}(s) = w_\alpha(s).
\end{equation}
The space-time 1-form $w_{\alp}$ is called the \emph{(hyperbolic) Yang--Mills tension field}. In general $w_\alpha(s) \neq 0$ for $s > 0$, as the two flows (wave and heat) do not commute. In order to proceed, additional steps are needed:

\pfstep{Step~3} Compute parabolic evolutions for $w_\alpha$, showing that at time $t$ they
depend only on the data $A(t)$, $B(t)$ and of course on $s$,
\[
w_\alpha = \bfw_\alpha(A(t),B(t), s).
\]
A key point is that the initial data $w_{\alp}(s=0)$ is zero, thanks to the hyperbolic Yang--Mills equation \eqref{ym}. As a consequence, $w_{\alp}$ turns out to be quadratic and higher order.

\pfstep{Step~4} Separate $\bfw_\alpha$ into the quadratic part and 
a higher order term,
\[
\bfw_\alpha(s) = \bfw_\alpha^2(s) + \bfw_\alpha^3(s).
\]
 where the former can be explicitly computed, and the latter is purely perturbative.

\pfstep{Step~5} Recalculate $A_0$ and $\covD^0 A_0$ to include the dependence on 
$w(s)$, and write the analogue of the equation \eqref{ymcg-main}
for $A(s)$:
\begin{equation}\label{ymcg-main-s}
\begin{split}
\Box_{A(s)} A_k(s) = & \ P [A^j(s),\partial_k A_j(s)]  +  
2\Delta^{-1} \partial_k \bfQ(\partial^\alpha A^j(s),\partial_\alpha A_j(s)) + R(A(s),\partial_t A(s)).
\\ 
& \ + P \bfw_k^2(s) + R_s(A,\partial_t A)
\end{split}
\end{equation}
The extra term on the right is matched by a like contribution to the quadratic part of $A_0$,
i.e. \eqref{a0-quad-} is replaced by
\begin{equation} \label{a0-quad}
A_0(s) = \bfA_0(A(s),B(s)) + \Delta^{-1} \bfw_0^2+ \bfA_{0,s}^3(A,B) 
\end{equation}

Now we can state

\begin{theorem}\label{t:main-wave-s}
The caloric flow $A(s)$ of a hyperbolic Yang--Mills wave in the caloric gauge takes 
the form \eqref{ymcg-main-s}, where the additional terms   $\bfw_k^2(s)$, $R_s$ 
and $\bfA_{0,s}^3(A,B)$ satisfy the following properties:
\begin{itemize}
\item  $\bfw_k^2(s)(A,B)$ are explicit quadratic forms localized\footnote{with decaying tails}
at frequency $s^{-\frac12}$. 
\item $R_s$ and $\bfA_{0,s}^3(A,B)$ are $C^1$ maps on $T^{L^2} \calC$
  satisfying perturbative bounds, also localized at frequency
  $s^{-\frac12}$.
\end{itemize}
\end{theorem}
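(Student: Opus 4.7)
The plan is to follow Steps 3, 4, 5 outlined just before the theorem statement. The strategy is to treat $w_{\alp}(s)$ as measuring the obstruction to commuting the hyperbolic Yang--Mills equation with the dynamic Yang--Mills heat flow, derive a covariant parabolic equation for it with \emph{trivial} initial data at $s = 0$, and then repeat the derivation of Theorem~\ref{t:main-wave} with $w_{\alp}(s)$ appearing as a source.

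First, I would derive the parabolic equation for the Yang--Mills tension field $w_{\alp}$. Starting from \eqref{ym-cov-w} and differentiating in $s$ along the dynamic heat flow \eqref{eq:dymhf-simple}, one commutes $\covD_{s}$ past $\covD^{\alp}$ using the identities $[\covD_{s}, \covD_{\alp}] = ad(F_{s \alp}) = ad(\covD^{\ell} F_{\ell \alp})$ together with the Bianchi identity. This produces a covariant heat equation of the schematic form
\begin{equation*}
(\rd_{s} - \Dlt_{A(s)}) w_{\bt} = \calN_{\bt}(A(s), F(s), w(s)),
\end{equation*}
where $\calN_{\bt}$ is polynomial in $A, F, w$ and their covariant derivatives, with no linear-in-$w$ term beyond $\Dlt_{A}$. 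Crucially, $w_{\alp}(s = 0) = 0$ by the hyperbolic equation \eqref{ym} satisfied by the initial data. This gives the representation $w = \bfw(A(t), B(t), s)$ claimed in Step~3.

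Second, I apply Duhamel's formula against the covariant heat semigroup. Because $w_{\alp}(0) = 0$, the source is of quadratic-and-higher order in $F$, and the entire expression inherits this. The quadratic piece $\bfw^{2}_{\alp}(s)$ is obtained by linearizing everything around the flat connection: replacing $\covD$ by $\rd$, $F$ by its linear approximation $\rd a$, and $e^{(s - s')\Dlt_{A}}$ by $e^{(s - s')\Dlt}$; the remainder $\bfw^{3}_{\alp}(s)$ collects the cubic-and-higher terms. The frequency localization at $s^{-\frac12}$ with decaying tails is then a direct consequence of the heat kernel smoothing $e^{(s - s')\Dlt}$ acting on quadratic quantities in the Duhamel integral $\int_{0}^{s}$; in particular, for the leading quadratic part the symbol can be computed explicitly in terms of elementary heat factors. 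Perturbative bounds on $\bfw^{3}_{\alp}$ follow from the Structure Theorem~\ref{thm:str-simple} applied to $A(s)$, combined with the tangent space bound from Theorem~\ref{t:cal-proj} for $B(t)$.

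Third, I repeat Steps 1--2 from the derivation of Theorem~\ref{t:main-wave} but now with $w_{\alp}(s) \neq 0$ on the right-hand side of the equations \eqref{ym-x}--\eqref{ym-0}. The $\bt = 0$ equation becomes an elliptic equation for $A_{0}(s)$ with the extra source $w_{0}(s)$; inverting it and peeling off the explicit quadratic part gives the $\Dlt^{-1} \bfw^{2}_{0}$ summand in \eqref{a0-quad}, while the induced correction to $\DA_{0}$ is absorbed into the cubic remainder $\bfA^{3}_{0,s}$. Substituting the resulting expressions into the spatial equation \eqref{ym-x} then produces \eqref{ymcg-main-s}, with $P \bfw^{2}_{k}(s)$ appearing directly as a source and the rest of the $w$-dependent contributions absorbed in $R_{s}$. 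The $C^{1}$ regularity of $R_{s}$ and $\bfA^{3}_{0,s}$ on $T^{L^{2}} \calC$, along with their frequency localization at $s^{-\frac12}$, is inherited from the corresponding properties of $w(s)$ established above and from the parabolic regularization of the elliptic solve for $A_{0}(s)$.

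The main obstacle will be carefully tracking the \emph{frequency localization} at scale $s^{-\frac12}$ throughout this process, especially for the cubic remainders. This requires combining the parabolic smoothing estimates inherent in the Duhamel formula with the nonlinear $\bfH$-bounds on $A(s) \in \calC_{\hM}$ from Theorem~\ref{t:smooth}, and verifying that each successive nonlinear operation (multiplication by $A$, inversion of $\Dlt$, etc.) preserves the quadratic frequency-envelope structure required for $R_{s}$ and $\bfA^{3}_{0,s}$ to be genuinely perturbative in the Strichartz norms used later.
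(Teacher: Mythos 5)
Your overall strategy is the same as the paper's (Steps 3--5: the covariant parabolic equation for the tension field $w$ with vanishing data at $s=0$, Duhamel against the heat semigroup to extract an explicit quadratic part with symbol built from heat factors, hence localization at frequency $s^{-1/2}$, and then a re-derivation of the wave equation with $w(s)$ as a source; cf. Lemma~\ref{lem:dymhf-tension}, Proposition~\ref{p:w} and Theorem~\ref{thm:main-eq-s}). However, there is a concrete gap in your Step~5 bookkeeping. You assert that after inverting the elliptic equation for $A_{0}(s)$ the quadratic correction $\Dlt^{-1}\bfw_{0}^{2}$ appears in \eqref{a0-quad}, but that ``the induced correction to $\DA_{0}$ is absorbed into the cubic remainder $\bfA^{3}_{0,s}$.'' This is not correct: the correction to $\covD^{0}A_{0}(s)$ is genuinely quadratic. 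In the paper it is obtained not by an elliptic solve but by integrating $\rd^{0}w_{0} = -\covD^{k}w_{k} - [A^{0},w_{0}]$ in heat-time from $s$ to $\infty$ (using $A_{0}(\infty)=0$), and its quadratic part is exactly $\DA_{0;s}^{2} = \Dlt^{-1}\rd^{k}\bfw_{k}^{2}(s)$ (Proposition~\ref{p:DA0-diff}). This term cannot be shoved into $\bfA^{3}_{0,s}$ (which corrects $A_{0}$, not $\rd_{t}A_{0}$), and it is precisely what produces the Leray projection in \eqref{ymcg-main-s}: the wave equation carries $\bfw_{j}^{2}(s) - \rd_{j}\DA_{0;s}^{2} = \bfw_{j}^{2}(s) - \Dlt^{-1}\rd_{j}\rd^{k}\bfw_{k}^{2}(s) = P_{j}\bfw^{2}(s)$. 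With your allocation the source would be the unprojected $\bfw_{j}^{2}(s)$ plus an unaccounted quadratic error, so the claimed form of the equation (and the matching between $P\bfw^{2}_{k}$ and the $\Dlt^{-1}\bfw_{0}^{2}$ term in $A_{0}(s)$, which the paper needs later) is not actually reached; the appearance of $P$ in your sketch is unexplained.

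A secondary point: for the representation $w(s) = \bfw(A(t),B(t),s)$ you need the right-hand side of the $w$-equation to be free of $\covD_{0}$-derivatives of the curvature; otherwise $\bfw^{2}$ would involve $\rd_{t}B$ rather than only $(A,B)$. The paper achieves this by a further application of the Bianchi identity for $\nu\neq 0$ and by the special cancellation in the $\nu=0$ equation (cf.\ \eqref{eq:ymhf-w1} and the two equations following it); your sketch invokes Bianchi only in deriving the parabolic equation and does not address this reduction. Finally, the perturbative bounds on $\bfw^{3}$, $R_{s}$ and $\bfA^{3}_{0,s}$ require the fixed-time $\dot{W}^{\sgm,p}$ frequency-envelope theory for caloric flows (Propositions~\ref{p:main-sp}, \ref{p:AB2-sp-lin} and the covariant parabolic estimates), not merely the Structure Theorem and the projection bound you cite; as stated, that step is substantially under-powered.
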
 
The analysis of the equation \eqref{ymcg-main-s}, carried out in \cite{OTYM2}, will be very similar
to that of \eqref{ymcg-main}, with the minor proviso that the
quadratic terms in $\bfw$ in the two equations above have a very mild
nonperturbative role, and exhibit a null form type cancellation. 

\begin{remark} \label{rem:higher-d}
Finally, we briefly discuss what happens in dimensions higher than $4$. After replacing $\dot{H}^{1}$ by $\dot{H}^{\frac{d-2}{2}}$ and defining 
\begin{equation*}
\calQ(a) = \nrm{F}_{L_{s}^{d-1}([0, s_{+}); L^{d-1}_{x})},
\end{equation*}
the analogue of Theorems~\ref{thm:hf-loc-simple} and
\ref{thm:str-simple} may be proved for any dimension $d \geq 4$, by
essentially a technical extension of the arguments in this paper. For
connections $a \in \dot{H}^{\frac{d-2}{2}}$ with $\hM(a) < \infty$,
the caloric gauge condition can be defined exactly in the same way
(Definition~\ref{d:C}), and the subsequent results concerning the
caloric gauge also extend easily to higher dimensions.

On the other hand, our proof of the Dichotomy and Threshold Theorems
(Theorems~\ref{thm:dich-simple} and \ref{thm:thr-simple}) rely on
specific features of dimension $4$ (energy criticality, knowledge of
harmonic Yang--Mills connections etc.), and does not admit direct
generalization to higher dimensions.
\end{remark}

\subsection{Remarks on the dynamic Yang--Mills heat flow and the de
  Turck trick} \label{subsec:dymhf} In this paper, the dynamic
Yang--Mills heat flow
\begin{equation}
  F_{s \alp} = \covD^{\ell} F_{\ell \alp}
\end{equation}
plays a central role in multiple major ways. This subsection is
devoted to a brief discussion of these aspects.
 
The same flow appears in our work in three distinct capacities:
\begin{enumerate}
\item {\it As a gauge covariant smoothing flow for space-time
    connections.}  This is the most direct interpretation of the
  dynamic Yang--Mills heat flow (as opposed to the original
  Yang--Mills heat flow, which is for spatial connections). It is used
  in the energy induction argument in \cite{OTYM2}; for this reason,
  we derive equations obeyed by the dynamic Yang--Mills heat flow
  $A(s)$ of a caloric Yang--Mills wave (see
  Section~\ref{subsec:ym-cal-evol}). Noncommutativity of the
  hyperbolic and parabolic Yang--Mills equations gives rise to a
  nontrivial Yang--Mills tension field $w_{\mu}(s)$, whose analysis is
  key for deriving the equations for $A(s)$.

  Curiously, $w_{\mu}(s)$ $(s > 0)$ also makes appearance in estimates
  for $\rd_{0} A_{0}$, even at $s = 0$. This is due to other uses of
  related to the ``infinitesimal de Turck trick'', which we explain
  below.

\item {\it As a means to perform the ``infinitesimal de Turck trick''
    for the linearized Yang--Mills heat flow in the local caloric
    gauge.}  The usual de Turck trick is a way of compensating for the
  degeneracy of \eqref{YM-h-lc} by an $s$-dependent gauge
  transformation; in our gauge-covariant formalism, it amounts to
  working in the de Turck gauge \eqref{deTurck}. As we have seen,
  however, this approach is problematic for large data global theory.

  Instead, we perform the de Turck trick not for $A$, but rather at
  the level of the linearized flow \eqref{eq:YM-h-lc-lin} (thus the
  name ``infinitesimal de Turck trick''). The algorithm is as
  follows. Given a one-parameter family of Yang--Mills heat flows
  $A_{j}(t, x, s)$ with data $a_{j}(t, x)$ $(t \in I, x \in \bbR^{4},
  s \in J)$, we add a $t$-component $A_{0}(t, x, s)$ and view it as a
  connection 1-form on $I \times \bbR^{4} \times J$. In the
  $s$-direction, we then \emph{impose} the equation
  \begin{equation} \label{eq:Fs0} F_{s0} = \covD^{\ell} F_{\ell 0}
  \end{equation}
  which, combined with \eqref{caloric}, forms the dynamic Yang--Mills
  heat flow system.

  The key idea is to work with
  \begin{equation} \label{eq:F0j-deT} F_{0j} = \rd_{t} A_{j} -
    (\covD[A])_{j} A_{0}.
  \end{equation}
  As opposed to $\rd_{t} A_{j}$, which solves \eqref{eq:YM-h-lc-lin},
  $F_{0j}$ has the advantage of obeying a \emph{nondegenerate}
  covariant parabolic equation:
  \begin{equation*}
    \covD_{s} F_{0j} - \lap_{A} F_{0j} - 2 ad(\tensor{F}{_{j}^{\ell}})F_{0\ell} = 0.
  \end{equation*}
  Solving this equation would determine $F_{0j}$ from any data
  $F_{0j}(s=0) = e_{j}$.  We \emph{choose} $e_{j} = \rd_{t} a_{j}$,
  which amounts to prescribing $a_{0} = 0$.  Then $A_{0}$ may be
  determined by integrating
  \begin{equation} \label{eq:ds-a0} \rd_{s} A_{0} = F_{s0} =
    \covD^{\ell} F_{\ell 0}
  \end{equation}
  where the first equality is the local caloric condition, and the
  second one is \eqref{eq:Fs0}. Finally, using \eqref{eq:F0j-deT}, we come
  back to the solution $\rd_{t} A$ of the linearized Yang--Mills heat
  flow.

  The success of this approach is based on solvability of $\lap_{A}$
  and $\rd_{s} - \lap_{A}$, which is developed in
  Section~\ref{sec:cov}. It forms the basis of our analysis in
  Section~\ref{s:caloric-loc}.

\item {\it As a means to obtain useful representation of projection to
    the caloric manifold.}  This is a variant of the ``infinitesimal
  de Turck trick''.  Previously, we chose to initialize $a_{0} =
  0$. When $a(t=0)$ is a caloric connection, another natural choice is
  to set $A_{0}(s = \infty) = 0$, which amounts to making a gauge
  transformation in $t$ so that the nearby $a(t)$'s are also
  caloric. Integrating \eqref{eq:ds-a0} from $s = \infty$ to $0$, we
  obtain the following representation of $a_{0}$:
  \begin{equation*}
    a_{0} = - \int_{0}^{\infty} \covD^{\ell} F_{\ell 0}(s) \, ds.
  \end{equation*}
  By \eqref{eq:F0j-deT}, we have
  \begin{equation*}
    e_{j} = \rd_{t} a_{j} - (\covD[a])_{j} a_{0}.
  \end{equation*}
  Since $a(t)$'s are caloric, $\rd_{t} a_{j}$ clearly belongs to
  $T_{a} \calC$ since each $a(t)$ is caloric, whereas $\covD[a] a_{0}$
  is a pure covariant gradient. By Theorem~\ref{t:cal-proj}, $\rd_{t}
  a_{j}$ is precisely the projection $\Pi_{a} e_{j}$.

  The procedure just described gives an explicit algorithm for
  computing $\Pi_{a}$, which we will use extensively in
  Section~\ref{s:caloric} and onward.  The same idea also allows us to
  relate the second order variation $\rd_{0} a_{0}(s=0)$ with integral
  of $\covD_{0} \covD^{\ell} F_{\ell 0}(s) = \covD^{\ell} w_{\ell}(s)$
  from $s = \infty$ to $0$ (up to minor error terms), which explains
  the usefulness of $w(s)$ in (1).
\end{enumerate}

\section{Notations, conventions and other preliminaries}
\label{sec:notations}
\subsection{Notations and conventions}
 Here we collect some notation and conventions used in this paper.

\begin{itemize}
\item We employ the usual asymptotic notation $A \aleq B$ to denote $A\leq C B$ for some implicit 
constant $C > 0$. The dependence of $C$   on various parameters is specified by subscripts.

\item We use the notation $\rd$ (without sub- or superscripts) for the
  spatial gradient $\rd = (\rd_{1}, \rd_{2}, \ldots, \rd_{d})$, and
  $\nb$ for the space-time gradient $\nb = (\rd_{0}, \rd_{1}, \ldots,
  \rd_{d})$. We write $\rd^{(n)}$ (resp. $\nb^{(n)}$) for the
  collection of $n$-th order spatial (resp. space-time) derivatives,
  and $\rd^{(\leq n)}$ (resp. $\nb^{(\leq n)}$) for those up to order
  $n$.
\end{itemize}

Linear, translation invariant operators acting on functions in $\R^4$ are viewed as multipliers,
and described in a standard fashion via their symbol. Bilinear operators also play an important role
in this paper. In the Lie algebra context, the connection between bilinear operators and symbols 
is described as follows:

\begin{definition} \label{def:mult-form} By a \emph{bilinear operator
    with symbol} $m(\xi, \eta) = m^{jk}(\xi, \eta)$ (which is a
  complex-valued $4 \times 4$-matrix), we mean an expression of the
  form
\begin{equation*}
	\mathfrak{L}(a, b) = 
	\iint 
	\left( m^{\bfa \bfb}(\xi, \eta) [\hat{a}_{\bfa}(\xi), \hat{b}_{\bfb}(\eta)] \right)
	e^{i (\xi + \eta) \cdot x}  \, \frac{\ud \xi \, \ud \eta}{(2 \pi)^{8}}.
\end{equation*}
\end{definition}
If $\mathfrak{L}$ were symmetric, then the symbol $m(\xi, \eta)$ is 
\emph{anti-symmetric in $\xi, \eta$}, in the sense that 
$m^{\bfa \bfb}(\xi, \eta) = - m^{\bfb \bfa}(\eta, \xi)$; this is due to the antisymmetry of the Lie bracket.

\subsection{Function spaces}
We begin with the standard Sobolev spaces:
\begin{itemize}
\item  The $n$-th homogeneous $L^{p}$-Sobolev space for functions from
  $\bbR^{d}$ into a normed vector space $V$ is denoted by $\dot{W}^{n,
    p}(\bbR^{d}; V)$. In the special case $p = 2$, we write
  \[
\dot{H}^{n}(\bbR^{d}; V) = \dot{W}^{n, 2}(\bbR^{d}; V).
\]
The Lebesgue spaces (i.e.,
  when $n = 0$) are denoted by $L^{p}(\bbR^{d}; V)$.

\item The mixed space-time norm $L^{q}_{s} \dot{W}^{\sgm, r}_{x}$ [resp. $L^{q}_{t} \dot{W}^{\sgm, r}_{x}$] of
  functions on $\bbR^{d}_{x} \times J_{s}$ [resp. $I_{t} \times \bbR^{d}_{x}$] is often abbreviated as $L^{q} \dot{W}^{\sgm, r}$. It will be clear from the context which variable (either $s$ or $t$) is involved.

\item Given a function space $X$ (on either $\bbR^{d}$ or
  $\bbR^{1+d}$), we define the space $\ell^{p} X$ by
  \begin{equation*}
    \nrm{u}_{\ell^{p}X}^{p} = \sum_{k} \nrm{P_{k} u}^{p}_{X}
  \end{equation*}
  (with the usual modification for $p = \infty$), where $P_{k}$ $(k
  \in \bbZ)$ are the usual Littlewood--Paley projections to dyadic
  frequency annuli.
\end{itemize}

In the last section of the paper, where we make the connection with the hyperbolic Yang-Mills
equation, we need Strichartz type norms to describe bounds for various remainder terms.
Generally the Strichartz norms are used to describe the dispersive decay of solutions for the linear 
wave equation. In  particular for solutions to the homogeneous wave equation 
$\Box u = 0$ in $\R^{1+4}$  we have
\[
\| \nabla u \|_{L^p \dot W^{\sigma,q}} \lesssim \| \nabla u(0)\|_{L^2}
\]
for exponents $(p,q,\sigma)$ in the admissible Strichartz range
\begin{equation}\label{Str}
\frac{1}{p} + \frac{4}{q} = 2 + \sigma, \qquad 2 \leq p,q \leq \infty, \qquad \frac{2}{p}+ \frac{3}{q} \leq \frac32.
\end{equation}
The exponents $(p,q)$ for which equality holds in the last relation above are referred to as sharp Strichartz
exponents. For the remainder term bounds in the last section we seek to avoid using sharp Strichartz
norms, and instead use only a restricted range of exponents. For this reason we 
choose a sufficiently small universal threshold $\dltStr > 0$ and define
\begin{equation}
\| u\|_{\Str} = \sup\{ \| u \|_{L^p \dot W^{\sigma,q}}; \ (p,q,\sigma)\  \text{admissible}, \ 
\dltStr \leq \frac1p  \leq \frac{1}{2} - \dltStr, \ \   \frac{2}{p}+ \frac{3}{q} \leq \frac32 - \dltStr \}.
\end{equation}
as well as 
\begin{equation}
\| u\|_{\Str^1} = \| \nabla u\|_{\Str}
\end{equation}
These norms have two key properties, which will play an important role in the next paper of the sequence
\cite{OTYM2}:
\begin{itemize}
\item They are divisible in time, i.e. can be made small by subdividing the time interval. 
\item Saturating the associated Strichartz inequalities requires strong pointwise concentration, rather
than the usual range of Knapp examples (wave packets). 
\end{itemize}

\subsection{Frequency envelopes}
To provide more accurate versions of many of our estimates and results we use the 
language of frequency envelopes. 

Given a sequence $c_{k}$ $(k \in \bbZ)$ of positive numbers and a translation invariant norm $\nrm{\cdot}_{X}$, we introduce the shorthand
\begin{equation*}
	\nrm{u}_{X_{c}} := \sup_{k} \frac{\nrm{P_{k} u}_{X}}{c_{k}}.
\end{equation*}

\begin{definition}
Given a translation invariant space of functions $X$, we  say that a sequence 
$c_k$ of positive numbers is a frequency envelope for a function $u \in X$ if 
\begin{enumerate}[label=(\roman*)]
\item The dyadic pieces of $u$ satisfy
\[
\nrm{u}_{X_{c}} \leq 1, \hbox{ or equivalently, } \| P_k u\|_{X} \leq c_k
\]
\item The sequence $c_k$ is slowly varying,
\[
	2^{-\delta(j-k)} \aleq \frac{c_{k}}{c_{j}} \aleq 2^{\delta(j-k)}, \qquad j > k.
\]
\end{enumerate}
\end{definition}
Here $\delta$ is a small positive universal constant. For some of the results we need to relax 
the slowly varying property in a quantitative way. Fixing a universal small constant $0 < \eps \ll 1$,
we set
\begin{definition} Let $\sgm_1,\sgm_2 > 0$. A frequency envelope $c_k$ is called 
$(-\sgm_{1}, \sgm_{2})$-admissible if
\begin{equation*}
	2^{-\sgm_{1} (1-\eps)(j-k)} \aleq \frac{c_{k}}{c_{j}} \aleq 2^{\sgm_{2} (1-\eps) (j-k)}, \qquad j > k.
\end{equation*}
\end{definition}

Another situation that will occur frequently is that where we have a reference frequency envelope 
$c_k$, and then a secondary envelope $d_k$ describing properties which apply on a background 
controlled by $c_k$. In this context the envelope $d_k$ often cannot be chosen arbitrarily
but instead must be in a constrained range depending on $c_k$. To address such matters we 
 set:

\begin{definition} \label{def:fe-compat}
We say that the envelope $d_k$ is \emph{$\sigma$-compatible} with $c_k$ if 
we have
\[
c_k \sum_{j < k} 2^{\sigma(1-\eps) (j-k)} d_j \lesssim d_k.
\]
\end{definition}

We will often replace envelopes $d_k$ which do not satisfy the above compatibility condition
by slightly larger envelopes that do:

\begin{lemma}
Assume that $c_k$ and $d_k$ are $(-\sgm_{1}, S)$ envelopes, and also that $c_{k}$ is bounded.  Then for $\tilde{\sigma} < \sgm(1-\eps)$ the envelope
\[
e_k = d_k + c_k \sum_{j<k} 2^{\tilde{\sigma} (j-k)} d_{j}
\]
is $\sgm$-compatible with $c_k$. The implicit constant in Definition~\ref{def:fe-compat} is bounded above by $1+C_{\sgm(1-\eps) - \tilde{\sgm}} \nrm{c}_{\ell^{\infty}}$. 
\end{lemma}
\begin{proof}
We need to show that 
\[
c_k  \sum_{j<k} 2^{\sgm(1-\eps) j-k} e_{j} \lesssim e_k.
\]
This is trivial for the first term in $e_j$, so we consider the contribution of the second,
\[
c_k  \sum_{\ell < j<k} 2^{\tilde{\sigma}(\ell-k)}  d_{\ell}  2^{(\sgm(1-\eps) -\tilde{\sigma})(j-k)} c_j  \lesssim c_k  \sum_{\ell < j<k} 2^{\sigma(\ell-k)}  d_{\ell}. 
\]
The claim regarding the bound on the implicit constant in Definition~\ref{def:fe-compat} follows by inspection.
\end{proof}

Finally we need the following additional frequency envelope notations:
\begin{align*}
(c \cdot d)_{k} =  \ c_{k} d_{k}, \qquad & \qquad  
a_{\leq k} =  \ \sum_{j \leq k} a_{j}, \\
c_{k}^{[\sgm]} =  \ \sup_{j < k} 2^{(1-\eps) \sgm (j-k)} c_{j} &  \qquad (\sigma > 0).
\end{align*}

\section{ Linear covariant elliptic and parabolic flows} \label{sec:cov}

\subsection{Solvability for \texorpdfstring{$\Delta_A$}{DeltaA}}
Our goal here is  to study the elliptic equation 
\begin{equation}\label{deltaA}
\Delta_A B = F,
\end{equation}
where $A$ is a connection $1$-form on $\bbR^{4}$, $\covD$ is the covariant derivative associated to $A$ and $\Delta_{A} = \covD^{\ell} \covD_{\ell}$ is the covariant Laplacian. Moreover, $B, F$ are $\g$-valued functions on $\bbR^{4}$. In this subsection, we assume that $A \in \dot{H}^{1}$, and omit the dependence of all implicit constants on $\nrm{A}_{\dot{H}^{1}}$.

The main result is:
\begin{theorem}\label{t:deltaA}
Assume that $A \in \dot H^1$. 
Then the equation \eqref{deltaA} is solvable with bounds as follows:
\begin{equation}
\| B\|_{\dot{H}^\sgm} \lesssim \| F\|_{\dot{H}^{\sgm-2}}, \qquad (0 < \sgm < 2).
\end{equation}
If in addition  $\partial^j A_j \in \ell^1 L^2$, then we  also have
\begin{equation}
\| B\|_{\ell^1 \dot{H}^2} \lesssim \| F\|_{\ell^1 L^2}.
\end{equation}
\end{theorem}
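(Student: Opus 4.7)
\medskip

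The plan is to view $\Delta_{A} = \Delta + L_{A}$ as a compact perturbation of the flat Laplacian, where
\begin{equation*}
L_{A} B := 2 [A^{j}, \rd_{j} B] + [\rd^{j} A_{j}, B] + [A^{j}, [A_{j}, B]],
\end{equation*}
and combine the Fredholm alternative with elliptic bootstrap and dyadic decomposition.

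First, I would treat the case $\sigma = 1$. By Sobolev embedding $\dot{H}^{1} \hookrightarrow L^{4}$ on $\bbR^{4}$, together with $\rd A \in L^{2}$, H\"older's inequality shows that $L_{A} : \dot{H}^{1} \to L^{4/3} \hookrightarrow \dot{H}^{-1}$ is bounded. Moreover, $L_{A}$ is \emph{compact} from $\dot{H}^{1}$ to $\dot{H}^{-1}$: if $B_{n} \weakto 0$ in $\dot{H}^{1}$, then by Rellich--Kondrachov $B_{n} \to 0$ in $L^{4}_{\mathrm{loc}}$, and the tail smallness of $\nrm{A}_{L^{4}(\set{|x|>R})}$ and $\nrm{\rd A}_{L^{2}(\set{|x|>R})}$ as $R \to \infty$ handles the exterior region. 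Thus $\Delta_{A} = \Delta (I + \Delta^{-1} L_{A})$ is Fredholm of index zero from $\dot{H}^{1}$ to $\dot{H}^{-1}$. Its kernel is trivial: $\Delta_{A} B = 0$ and integration by parts give $\int |\covD B|^{2} = 0$, hence $\covD_{j} B = 0$ for all $j$; by bi-invariance of $\brk{\cdot,\cdot}$, $\rd_{j} \abs{B}^{2} = 2\brk{B, \covD_{j} B} = 0$, so $\abs{B}$ is constant, and $B \in L^{4}$ forces $B \equiv 0$. The cokernel is trivial by self-adjointness of $\Delta_{A}$. Hence $\Delta_{A}: \dot{H}^{1} \to \dot{H}^{-1}$ is an isomorphism, yielding both solvability and the $\sigma = 1$ estimate.

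For general $\sigma \in (0, 2)$, self-adjointness of $\Delta_{A}$ makes the estimate for $\sigma$ dual to that for $2 - \sigma$, so it suffices to handle $\sigma \in [1, 2)$. Starting from the $\dot{H}^{1}$ solution, I would rewrite $\Delta B = F - L_{A} B$ and estimate each term of $L_{A} B$ in $\dot{H}^{\sigma-2}$ using Sobolev product/paraproduct estimates (with $A \in \dot{H}^{1}$ supplying the derivatives), then conclude by elliptic regularity of $\Delta$. Approximating $F$ by smooth compactly supported functions justifies this bootstrap rigorously.

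Finally, for the $\ell^{1} \dot{H}^{2}$ bound -- which is an $\ell^{1}$-refined endpoint ($\sigma = 2$) estimate -- I would dyadically localize and set up a frequency envelope $c_{k}$ for $\nrm{P_{k} B}_{\dot{H}^{2}}$. Writing $\Delta P_{k} B = P_{k} F - P_{k} L_{A} B$ and decomposing $P_{k} L_{A} B$ into standard high-low, low-high, and high-high paraproduct pieces, the delicate contribution is the high-high-to-low interaction in $P_{k}[\rd^{j} A_{j}, B]$. For this piece, the plain bound $\rd A \in L^{2}$ produces an $\ell^{2}$-summable but not $\ell^{1}$-summable tail, and the extra hypothesis $\rd^{j} A_{j} \in \ell^{1} L^{2}$ is exactly what restores $\ell^{1}$-summability. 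The other pieces are controlled by the previous part and a low-frequency $A$ factor. A frequency envelope induction then closes the bound.

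The main obstacle is the $\ell^{1}$ estimate: the hypothesis $\rd^{j} A_{j} \in \ell^{1} L^{2}$ is strictly stronger than what one needs in the first part, which signals that the dyadic summation is at the critical edge. Identifying the single paraproduct interaction responsible for the potential logarithmic loss, and matching it exactly with the additional hypothesis, is the heart of this step.
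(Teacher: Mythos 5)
Your $\sigma=1$ step is correct and is a genuinely different route from the paper (which produces the solution variationally, as the minimizer of $\int \tfrac12\langle \covD^j B,\covD_j B\rangle-\langle B,F\rangle$, with coercivity from the diamagnetic inequality); the compactness of $L_A:\dot H^1\to\dot H^{-1}$ does hold by approximating $A$, $\partial^jA_j$, $A\otimes A$ in $L^4,L^2,L^2$ and using Rellich. One caveat: the open-mapping constant of your Fredholm isomorphism depends on $A$ itself, whereas the theorem's implicit constants are meant to depend only on $\|A\|_{\dot H^1}$; this is recoverable by pairing the equation with $B$, but you should say so. The genuine gaps are in the two remaining steps, and both stem from not engaging with the fact that $A$ is \emph{large} in $\dot H^1$.

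For $1<\sigma<2$, the proposed ``product estimates plus elliptic regularity'' bootstrap is purely perturbative: the relevant estimate $\dot H^1\cdot\dot H^{\sigma-1}\subset\dot H^{\sigma-2}$ is scaling-critical, so $\Delta B=F-L_AB$ only gives $\|B\|_{\dot H^\sigma}\lesssim\|F\|_{\dot H^{\sigma-2}}+C\|A\|_{\dot H^1}\|B\|_{\dot H^\sigma}$, and the last term cannot be absorbed when $\|A\|_{\dot H^1}$ is not small; iteration gains nothing because there is no smoothing. Splitting $A$ into a small high-frequency piece and a bounded low-frequency piece does give $B\in\dot H^\sigma$ qualitatively, but with a constant depending on the frequency envelope of $A$ — exactly the dependence the paper flags as the main difficulty and removes by its frequency induction: estimating $B_{\geq k}$ through the commutator equation $\Delta_A B_{\geq k}=[P_{<k},\Delta_A]B$, a reiteration step to absorb the term $d_k\sum_{j<k}2^{j-k}c_j$, and the Gronwall-type product bound $\prod_{j\le k}\bigl(1+C(d_j^2+e_j)\bigr)\lesssim e^{C\sqrt k}$ available for $\ell^2$ envelopes, whose subexponential growth is precisely why every $\sigma<2$ (but not $\sigma=2$) is reachable without smallness. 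None of this mechanism is present in your outline.

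In the $\ell^1\dot H^2$ step you identify the high$\times$high$\to$low interaction in $[\partial^jA_j,B]$ as the place where $\partial^jA_j\in\ell^1L^2$ is needed. That interaction is in fact harmless: the output at frequency $2^k$ from inputs at frequency $2^j$, $j>k$, carries a Bernstein gain $2^{2(k-j)}$ and sums with only $\ell^2$ information. The critical contribution is the opposite one — the divergence (together with the quadratic term $A\cdot A$) at high frequency acting on the \emph{low} frequencies of $B$ — which enters the induction as a factor $(e_k+d_k^2)\sum_{j<k}c_j$; the hypothesis $e_k\in\ell^1$ is what keeps the product $\prod_j(1+C(d_j^2+e_j))$ bounded, hence $\sum_k c_k\lesssim1$ and $c_k\lesssim d_k^2+e_k$, while $e_k\in\ell^2$ only yields the $e^{C\sqrt k}$ loss. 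Your closing assertion that ``a frequency envelope induction then closes the bound'' presupposes exactly the absorption that fails for large $A$, so as written neither the intermediate-range estimate nor the endpoint $\ell^1$ bound is proved.
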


\begin{proof}
All these bounds are perturbative if we assume in addition that $A$ is small in $\dot H^1$.
Else we proceed with the following steps:

\pfstep{The case $\sgm = 1$} 
Here the solutions are variationally interpreted as minimum points for the 
functional
\[
L(B) = \int \frac12 \langle \covD^j B, \covD_j B \rangle -   \langle B, F\rangle dx.
\]
The desired solvability result may be proved with a standard calculus of variations argument combined with the diamagnetic inequality for $\bfD$ as follows. Note that, for every $\eps > 0$ and smooth $B$, $\rd_{j} (\eps + \brk{B, B})^{\frac{1}{2}} \leq (\eps + \brk{B, B})^{-\frac{1}{2}} \brk{B, \bfD_{j} B} \leq \abs{\bfD_{j} B}$. Multiplying by a nonnegative test function $\varphi$ and taking $\eps \to 0$, we obtain the \emph{diamagnetic inequality}: 
\begin{equation*}
\abs{\rd \abs{B}} \leq \abs{\covD B} \hbox{ in the sense of distributions}.
\end{equation*}
By Sobolev embeddings, we immediately see that $\nrm{B}_{L^{4}} \aleq \nrm{\covD B}_{L^{2}}$; then expanding $\covD_{j} B = \rd_{j} B + ad(A_{j}) B$ and estimating $\nrm{ad(A_{j}) B}_{L^{2}} \aleq \nrm{A}_{\dot{H}^{1}} \nrm{B}_{L^{4}}$, we obtain
\begin{equation} \label{eq:deltaA-sgm=1}
	\nrm{B}_{L^{4}} + \nrm{B}_{\dot{H}^{1}} \leq C (1 + \nrm{A}_{\dot{H}^{1}}) \nrm{\covD B}_{L^{2}}.
\end{equation}
As a result, we obtain the following lower bound on $L(B)$:
\begin{align*}
	L(B) \geq \frac{1}{2} \nrm{\bfD B}_{L^{2}}^{2} - \nrm{B}_{\dot{H}^{1}} \nrm{F}_{\dot{H}^{-1}} 
	\geq \frac{1}{4} \nrm{\bfD B}_{L^{2}}^{2} - C^{2} (1+\nrm{A}_{\dot{H}^{1}})^{2} \nrm{F}_{\dot{H}^{-1}}^{2}.
\end{align*}
By \eqref{eq:deltaA-sgm=1} and the convexity of $L$, used in the form of the identity
\begin{equation*}
	L(\tfrac{B+B'}{2}) + \frac{1}{2} \int \brk{\covD^{j} (\tfrac{B-B'}{2}), \covD_{j}(\tfrac{B-B'}{2})} \, \ud x  = \frac{1}{2} \left( L(B) + L(B') \right),
\end{equation*}
we see that any minimizing sequence $B^{(n)} \in \dot{H}^{1}$ for $L$ converges strongly to a minimizer $B \in \dot{H}^{1}$, which is unique.

Finally, the desired bound follows from $L(B) \leq L(0) = 0$ and \eqref{eq:deltaA-sgm=1}.

\pfstep{The case $\sgm > 1$} By duality the case $\sgm < 1$ reduces to this.

It suffices to start with $F$ localized at frequency $1$ with $\nrm{F}_{\dot{H}^{\sgm-2}} = 1$, and prove
that the bounds above hold.  The smaller frequencies of $B$ are
obtained from the $\dot H^1$ bound, so we need to get the higher
frequencies. A perturbative argument at high frequencies shows that we
must have $B \in \dot{H}^{\sgm}$, but this proof depends on the frequency
envelope of $A$. It remains to remove this dependence. 

Let $c_{k} = \nrm{P_{k} B}_{\dot{H}^{2}}$, and let $d_{k}$, $e_{k}$ be $(-\dlt, \dlt)$ frequency envelopes for $A$ in $\dot{H}^{1}$ and $\rd^{\ell} A_{\ell}$ in $L^{2}$, respectively. 
A direct application of the $\dot{H}^{1}$ bound to $\lap_{A} B = F$ yields
\begin{equation} \label{eq:deltaA-B-low}
	c_{k} \aleq 2^{k} \qquad \hbox{ for any } k.
\end{equation}
which is effective only for $k \leq 0$. For $k > 0$ we view our equation as an equation for $B_{\geq k} = (1-P_{< k}) B$, i.e.,
\[
\Delta_A B_{\geq k} = [P_{<k}, \Delta_A] B.
\]
We furthermore decompose
\begin{align*}
	[P_{<k}, \Delta_A] B
	= & P_{<k} \left( (2 ad(A^{\ell}) \rd_{\ell}  + ad(\rd^{\ell} A_{\ell} ) + ad(A_{\ell}) ad(A^{\ell}) ) P_{\geq k} B \right) \\
	& - P_{\geq k} \left( (2 ad(A^{\ell}) \rd_{\ell}  + ad(\rd^{\ell} A_{\ell} ) + ad(A_{\ell}) ad(A^{\ell}) ) P_{<k} B \right).
\end{align*}
In what follows, we omit the tensor index $\ell$. Applying the $\dot H^1$ result to the above equation, and using Littlewood--Paley trichotomy, we obtain the bound
\begin{align*}
\nrm{P_{\geq k} B}_{\dot{H}^{1}}
\aleq & \nrm{[P_{<k}, \lap_{A}] B}_{\dot{H}^{-1}} \\
\aleq & \nrm{P_{\geq k} B}_{\dot{H}^{1}} \sum_{j \geq k - 5} 2^{j-k} d_{k} + 2^{-2k} \nrm{\rd P_{<k} B}_{L^{\infty}} \sum_{j \geq k - 5} 2^{2(k-j)} d_{j}\\
& + 2^{-k} \nrm{P_{<k} B}_{L^{\infty}} \sum_{j \geq k - 5}  2^{k-j}(e_{j} + d_{j}^{2}) \\
\aleq & d_{k} \nrm{P_{\geq k} B}_{\dot{H}^{1}} + 2^{-k} d_{k} ( 2^{-k} \nrm{\rd P_{<k} B}_{L^{\infty}} )
 + 2^{-k} (e_{k} + d_{k}^{2}) \nrm{P_{<k} B}_{L^{\infty}} .
\end{align*}
We apply this only for those good $k$'s where $d_k \ll 1$, which are all but finitely many. Then the first term on the far RHS can be absorbed into the LHS, and we obtain
\begin{equation} \label{eq:deltaA-B-hi}
	\nrm{B_{\geq k}}_{\dot{H}^{1}} \aleq 2^{-k} d_{k} ( 2^{-k} \nrm{\rd P_{<k} B}_{L^{\infty}}) + 2^{-k} (e_{k} + d_{k}^{2} ) \nrm{B_{<k}}_{L^{\infty}} .
\end{equation}
The LHS controls any $\nrm{B_{j}}_{\dot{H}^{1}}$ with $j \geq k$. Since for any $j \in \bbZ$ we can find a good $k < j$ such that $d_{k} \ll 1$ and $j -k = O(1)$, we have (after relabeling $j \to k$)
\begin{equation*} 
	c_{k} \aleq d_{k} \sum_{j < k} 2^{j - k} c_{j} + (e_{k} + d_{k}^{2}) \sum_{j < k} c_{j} \qquad \hbox{ for any } k > 0.
\end{equation*}
The first term on the RHS may be essentially absorbed into the second term after reiteration:
\begin{align*}
d_{k} \sum_{j < k} 2^{j - k} c_{j} 
\aleq & 2^{-k} d_{k} + d_{k} \sum_{0 < j < k} 2^{j - k} \left(d_{j} \sum_{i < j} 2^{i - j} c_{i} + (e_{j} + d_{j}^{2}) \sum_{i < j} c_{i} \right) \\
\aleq & 2^{-k} d_{k} + d_{k} \sum_{i < k} c_{i} \sum_{\max \set{i, 0} < j < k} \left(d_{j} 2^{i - k} + (e_{j} + d_{j}^{2}) 2^{j - k} \right) \\
\aleq & 2^{-k} d_{k} + d_{k}^{2} \sum_{i < k} 2^{(1-\dlt) (i - k)} c_{i} + d_{k} (e_{k} + d_{k}^{2})  \sum_{i < k} c_{i} \\
\aleq & 2^{-k} d_{k} + (e_{k} + d_{k}^{2}) \sum_{i < k} c_{i}.
\end{align*}
Plugging this bound back to the preceding bound, summing up in $k$ and using the relation $\sum_{k > 0} 2^{-k} d_{k} \aleq 1$, we arrive at
\begin{equation*} 
	1 + \sum_{0 < j \leq k} c_{k} \aleq \sum_{0 < j \leq k}(e_{j} + d_{j}^{2}) \left( 1+ \sum_{0 < i < j} c_{i} \right) \qquad \hbox{ for any } k > 0.
\end{equation*}
By induction on $k$, it follows that
\begin{equation*}
	1 + \sum_{j=1}^{k} c_{j} \aleq \prod_{j=1}^k \left( 1+C (d_j^2+e_j) \right).
\end{equation*}
In the first case we simply have $e_k \lesssim d_k$. 
Since $d_k \in \ell^2$, this yields 
\[
\sum_{j=1}^k c_j \lesssim e^{C \sqrt{k}},
\]
which suffices for the $\dot{H}^\sgm$ bound when $\sgm < 2$.

In the second case we have $d_k^2 + e_k \in \ell^1$ so we get instead instead
\[
\sum_{j=1}^k c_j \lesssim 1,
\]
which leads to
\[
c_k \lesssim d_k^2 + e_k . \qedhere
\]
\end{proof}

We continue with the frequency envelope version of the above result:

\begin{theorem}\label{t:deltaA-fe}
  Assume that $A \in \dot H^1$, with a $(-1,S)$ frequency envelope $c_k$.
  Also assume that $F \in \dot H^{-1}$ has a $1$-compatible $(-1,S)$
  frequency envelope $d_k$. Then the equation \eqref{deltaA} is solvable with bounds as follows:
\begin{equation}
\| B_k \|_{\dot H^1} \lesssim d_k .
\end{equation}
\end{theorem}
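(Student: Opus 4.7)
The theorem refines the $\sgm = 1$ case of Theorem~\ref{t:deltaA}, which already provides existence of a solution $B \in \dot H^1$ with $\|B\|_{\dot H^1} \lesssim \|F\|_{\dot H^{-1}}$. The plan is to upgrade this to the dyadic bound $b_k := \|P_k B\|_{\dot H^1} \lesssim d_k$ by adapting the frequency-localization scheme used in the case $\sgm > 1$ of that proof, now carried out at target regularity $\dot H^1$ instead of $\dot H^2$.

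The main obstacle is the paradifferential contribution $A_{<k} \cdot \nabla B_{\sim k}$ to the equation at frequency $k$, which cannot be absorbed directly since $\|A\|_{\dot H^1}$ is of unit size rather than small. To bypass this, I would work with the high-frequency truncation $B_{\geq k} = (1 - P_{<k})B$, which satisfies
\[
\lap_A B_{\geq k} = P_{\geq k} F - [P_{<k}, \lap_A] B.
\]
The commutator on the right enjoys better bounds than the full operator $\lap_A$, since the paradifferential part (where $A$ has frequency much less than the output) essentially commutes with $P_{<k}$. Applying the $\dot H^1$ solvability from Theorem~\ref{t:deltaA} then reduces matters to estimating the contributions of $P_{\geq k} F$ and of the commutator.

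Expanding $[P_{<k}, \lap_A]B$ as in the proof of Theorem~\ref{t:deltaA}, the Littlewood--Paley trichotomy together with Bernstein estimates produces terms controlled by the envelope $c_k$ of $A$ at frequency $\sim k$. The high-high interactions contribute a self-referential term of the form $c_k \|B_{\geq k-O(1)}\|_{\dot H^1}$, which at good scales $k$ (those with $c_k \ll 1$, i.e., all but finitely many since $c_k \in \ell^2$) can be absorbed into the left-hand side. The remaining low-high contributions take the form $c_k \sum_{j<k} 2^{(1-\eps)(j-k)} b_j$, together with analogous contributions from the $ad(\rd^\ell A_\ell)$ and $ad(A)^2$ pieces. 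Under the inductive hypothesis $b_j \lesssim d_j$ for $j < k$, the $1$-compatibility condition $c_k \sum_{j<k} 2^{(1-\eps)(j-k)} d_j \lesssim d_k$ closes the recursion, provided the contribution of $P_{\geq k} F$ is also absorbed into $d_k$ (which follows from the admissibility of $d_k$ after careful bookkeeping of the tail, since $d_k \in \ell^2$ forces actual decay at infinity).

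The finitely many ``bad'' scales $k$ where $c_k$ is not small enough for the absorption argument to apply must be handled separately: here I would use the global $\dot H^1$ bound $\|B\|_{\dot H^1} \lesssim \|F\|_{\dot H^{-1}}$ combined with interpolation between the envelope bounds already established at neighboring good scales, possibly after replacing $d_k$ by a slightly enlarged but still $1$-compatible envelope on an $O(1)$ window. The base of the induction, at sufficiently negative $k$, is supplied by the same global $\dot H^1$ bound together with the slow variation of $d_k$.
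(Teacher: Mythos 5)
There is a genuine gap, and it sits exactly at the point you wave away. Your plan applies the $\dot H^1$ solvability of Theorem~\ref{t:deltaA} to the equation $\lap_A B_{\geq k} = P_{\geq k} F + [P_{<k},\lap_A]B$, which yields at best $\|P_k B\|_{\dot H^1} \leq \|B_{\geq k}\|_{\dot H^1} \lesssim \|P_{\geq k}F\|_{\dot H^{-1}} + \|[P_{<k},\lap_A]B\|_{\dot H^{-1}}$. For the desired conclusion you therefore need $\|P_{\geq k}F\|_{\dot H^{-1}} \lesssim d_k$, and this is false for a general $1$-compatible $(-1,S)$ envelope: admissibility constrains $d$ only in a relative, slowly-varying sense and (for $S>1$) allows $d_j$ to \emph{grow} like $2^{(1-\eps)(j-k)}$ for $j>k$ over long ranges, while compatibility constrains the sum over $j<k$ and says nothing about the tail above $k$. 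Even under the extra assumption $d\in\ell^2$ (which is not part of the hypotheses), "decay at infinity" does not give $\bigl(\sum_{j\geq k} d_j^2\bigr)^{1/2} \lesssim d_k$; e.g.\ $d_j \sim j^{-1}$ for large $j$ has $\ell^2$ tails of size $\sim k^{-1/2} \gg d_k$. The $B_{\geq k}$ truncation worked in the $\sgm>1$ case of Theorem~\ref{t:deltaA} only because there the source $F$ had first been reduced to a single dyadic frequency, so $P_{\geq k}F$ vanished for $k>0$; that reduction is not available here, since the whole point is a bound dyadic block by dyadic block for general $F$. A secondary symptom of the same problem is that your upward induction also meets $high\times high\to low$ contributions involving $b_j$ with $j\geq k$, which are not yet covered by the induction hypothesis and are only partially removable by absorption.

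The fix is the one the paper uses: instead of truncating $B$, write a paradifferential equation for the single block $B_k = P_k B$, namely $\Delta_{A_{<k}} B_k = G_k$, where $G_k$ consists of $P_k F$ (only one dyadic piece of the data), the terms in which $A$ carries frequency $\geq k$, and the commutator $[P_k,\Delta_{A_{<k}}]B$; Theorem~\ref{t:deltaA} is then applied with the truncated connection $A_{<k}$, whose $\dot H^1$ norm is still only bounded, not small. The recursion is closed not by a good/bad-scale induction but by a minimal bootstrap constant $C$ in $\|B_k\|_{\dot H^1}\leq C d_k$ (made to exist by relaxing $d_k$ to $\max\{d_k,\eps\}$, after first reducing to $(-\dlt,S)$ envelopes so the $high\times high\to low$ terms are subsumed): at a near-optimal $k$ one deduces $c_k\approx 1$, whence $1$-compatibility forces $d_k\gtrsim d_k^{[1]}$, and the conclusion at that scale follows from the $S=1$ case, which needs no compatibility. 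Your treatment of the "finitely many bad scales'' by interpolation from neighboring good scales is, by comparison, not a complete argument (bad scales need not lie within $O(1)$ of a good one with uniform constants), and in the paper it is precisely the bootstrap-plus-$S=1$ mechanism that disposes of those scales.
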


\begin{proof}
If $S = 1$ then  this follows from the previous result, and no compatibility condition is needed; so we may assume that $S > 1$. 
Note also that it suffices to consider a $(- \dlt, S)$ frequency envelope $d_{k}$, as $-\dlt$ can be improved to $-1$ using Theorem~\ref{t:deltaA}.
Let $C$ be minimal with the property that 
\begin{equation}\label{boot-ell}
\| B_k \|_{\dot H^1} \leq C d_k.
\end{equation}
To guarantee that such a $C$ exist, we can always replace $d_k$ by 
\[
d_k^\epsilon = \max \{d_k, \epsilon \} 
\]
These envelopes are  still $1$-compatible with $c_{k}$, and the desired result is obtained by letting $\epsilon \to 0$.

Now we write the equation for $B_k$ in the paradifferential form,
\begin{align*}
\Delta_{A_{<k}} B_k = G_k =:& P_k F 
 - P_{k} \left(2 ad(P_{\geq k} A^{\ell}) \rd_{\ell} B + ad(\rd^{\ell} P_{\geq k} A_{\ell}) B \right) \\
& - P_{k} \left( ad(P_{\geq k} A^{\ell}) ad(A_{\ell}) B + ad(P_{< k} A^{\ell}) ad(P_{\geq k} A_{\ell}) B\right)
 - [P_k, \Delta_{A_{<k}}] B ,
\end{align*}
where $A_{<k} = P_{<k} A$.
We use Littlewood--Paley to estimate
\[
\|  G_k\|_{\dot H^{-1}} \lesssim d_k (1+ C \sum_{j < k} 2^{j-k} c_j ) 
+ C c_k \sum_{j < k}  2^{j-k} d_j,
\]
where all contributions in the $high \times high \to low$ case are rapidly 
decreasing and subsumed in the $j= k$ term, provided that we choose $\dlt$ sufficiently small.

If $C \lesssim 1$ then we are done. Else, let $k$ be so that \eqref{boot-ell}
is near optimal. Then we must have 
\begin{equation*}
	C d_{k} \aleq d_{k} (1 + C \sum_{j < k} 2^{j-k} c_{j}) + C c_{k} \sum_{j < k} 2^{j-k} d_{j},
\end{equation*}
so either
\[
1 \aleq  \sum_{j < k} 2^{j-k} c_j ,
\]
or 
\[
d_k \aleq c_k \sum_{j < k}  2^{j-k} d_j .
\]
In the first case we must clearly have $c_k \approx 1$. In the second case, by the compatibility condition,
\begin{equation*}
	d_{k} \aleq c_{k} \sum_{j < k} 2^{j-k} d_{j} \aleq c_{k} 2^{-m \eps} \sum_{j < k - m} 2^{(1-\eps) (j-k)} d_{j} + c_{k} 2^{(S-1) m} d_{k} \aleq d_{k} \left(2^{-m \eps} + 2^{(S-1)m} c_{k} \right),
\end{equation*}
so that after choosing $m$ appropriately large, the same conclusion $c_{k} \approx 1$ holds.

Therefore, in both cases, by the compatibility condition we must also have
\[
d_k \ageq c_{k} \sum_{j < k} 2^{(1-\eps) (j-k)} d_{j} \ageq d_k^{[1]},
\]
and then the conclusion follows from the $S = 1$ case.
\end{proof}

\subsection{Solvability for \texorpdfstring{$\partial_s - \Delta_A$: $L^{2}$}{ds-DeltaA} theory}
Our goal here is  to study the parabolic equation 
\begin{equation}\label{heatA}
\partial_s B_{j} - \Delta_A B_{j} - 2 ad(\tensor{F}{_{j}}^{k}) B_{k} = G_{j} + \covD^{k} H_{k j}. 
\end{equation}
Here $B$, $G$ are $\g$-valued 1-forms and $H$ is a $\g$-valued covariant 2-tensor on $\bbR^{4} \times J$.
We assume that $A$ is a $L^\infty (J; \dot H^1)$ connection, with curvature
$F \in L^2 (J; \dot H^1)$ and $\partial_s A \in L^2 (J; L^{2})$. For simplicity, we will often omit the indices and abbreviate \eqref{heatA} as
\begin{equation*}
\left( \partial_s - \Delta_A - 2 ad(F) \right) B = G + \covD H.
\end{equation*}
Furthermore, we will skip writing out the heat-time interval $J$, and drop the dependence of implicit constants on $\nrm{A}_{L^{\infty} \dot{H}^{1}}$, $\nrm{F}_{L^{2} \dot{H}^{1}}$ and $\nrm{\rd_{s} A}_{L^{2} L^{2}}$.

We start with a basic solvability result:
\begin{theorem} \label{t:heatA}
Let $-2 < \sgm < 2$, and $A$ as above. Then the above equation is well-posed in $\dot H^{\sgm}$, with bounds\footnote{Here it is important to have covariant derivatives on the left if $\sgm \geq 1$, and on the right 
if $\sgm \leq -1$.}
\begin{equation}\label{parab-lin}
\| B \|_{L^\infty \dot H^\sgm} +  \|\covD B\|_{L^2 \dot H^{\sgm}} \lesssim \|B(0)\|_{\dot H^\sgm} + \|G\|_{L^1 \dot H^\sgm}
 +\|H\|_{ L^2 \dot H^{\sgm}}.
\end{equation}
\end{theorem}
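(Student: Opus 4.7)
My plan is to first prove the a priori estimate \eqref{parab-lin} and then obtain existence by a standard parabolic approximation/continuation argument (mollify the coefficients, solve by Duhamel on short heat-time subintervals, iterate using the a priori bound). Since the formal adjoint of $\rd_{s} - \Delta_{A} - 2 ad(F)$ on a time-reversed interval is of the same form, the range $-2 < \sgm < 0$ follows from $0 < \sgm < 2$ by duality, so I focus on $\sgm \geq 0$. I will take the $L^{2}$ case ($\sgm = 0$) as the base step and access the remaining range by frequency-envelope arguments of the type already used for Theorems~\ref{t:deltaA} and~\ref{t:deltaA-fe}.

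For the base case, I would pair the equation with $B$ and integrate by parts in space to obtain
\begin{equation*}
\tfrac{1}{2} \tfrac{d}{ds} \nrm{B}_{L^{2}}^{2} + \nrm{\covD B}_{L^{2}}^{2} = 2 \la ad(F) B, B\ra + \la G, B\ra - \la H, \covD B\ra.
\end{equation*}
The only genuinely nonperturbative piece is the curvature coupling, which I would estimate using the diamagnetic inequality and the 4D Sobolev embedding $\nrm{B}_{L^{4}} \aleq \nrm{\covD B}_{L^{2}}$, together with H\"older:
\begin{equation*}
\abs{\la ad(F) B, B\ra} \aleq \nrm{F}_{L^{4}} \nrm{B}_{L^{4}} \nrm{B}_{L^{2}} \leq \tfrac{1}{4} \nrm{\covD B}_{L^{2}}^{2} + C \nrm{F}_{L^{4}}^{2} \nrm{B}_{L^{2}}^{2}.
\end{equation*}
The gradient term absorbs into the left, while the Gr\"onwall weight $\nrm{F}_{L^{4}}^{2}$ lies in $L^{1}_{s}$ because of the standing hypothesis $F \in L^{2} \dot{H}^{1} \hookrightarrow L^{2} L^{4}$. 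The $G$, $H$ source terms are handled by Young's inequality and spatial duality, yielding \eqref{parab-lin} for $\sgm = 0$.

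For $0 < \sgm < 2$, my plan is to project to dyadic frequency annuli and run an energy estimate for the frozen-coefficient operator,
\begin{equation*}
(\rd_{s} - \Delta_{A_{<k}}) P_{k} B = P_{k} (G + \covD H) + 2 P_{k} (ad(F) B) + [\Delta_{A_{<k}} - \Delta_{A}, P_{k}] B,
\end{equation*}
weighted by $2^{2\sgm k}$ and tested against $P_{k} B$. A frequency envelope bookkeeping in the style of Theorem~\ref{t:deltaA-fe} converts the Littlewood--Paley trichotomy bounds on the $F$-coupling, the $A$-commutators, and the $H$ source into a sequence inequality that closes whenever $\sgm$ is strictly below the natural regularity threshold $2$ of $\Delta_{A}$. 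Interpolation with $\sgm = 0$ then fills in the open interval $(0, 2)$, and duality completes the argument for $-2 < \sgm < 0$.

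The main obstacle is that the zeroth-order potential $2 ad(F)$ is genuinely nonperturbative: $F$ is not small in any norm, so the estimate cannot be closed by a contraction argument, and the Gr\"onwall step above (crucially relying on the $s$-integrability $F \in L^{2} L^{4}$) is essential. A parallel difficulty appears for $\sgm \geq 1$ when commuting a spatial derivative across $\rd_{s}$, where the resulting $ad(\rd_{s} A) B$ term is absorbed using $\rd_{s} A \in L^{2} L^{2}$ in the same Gr\"onwall manner. Finally, in order to place covariant rather than flat derivatives as required by the statement (on the left when $\sgm \geq 1$ and on the right when $\sgm \leq -1$), one uses $\covD_{j} \covD_{k} - \covD_{k} \covD_{j} = ad(F_{jk})$ to exchange flat and covariant derivatives at the cost of lower-order curvature terms that are already controlled by the preceding step.
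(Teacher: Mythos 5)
Your $\sgm=0$ step and the duality reduction for $-2<\sgm<0$ are essentially the paper's (your Gr\"onwall argument with $\nrm{F}_{L^2L^4}$ and the paper's splitting of $J$ into intervals where $\nrm{F}_{L^2\dot H^1}$ is small are the same divisibility device). The gap is in your main step for $0<\sgm<2$. The frequency-envelope bookkeeping you invoke ``in the style of Theorem~\ref{t:deltaA-fe}'' is not available under the hypotheses of Theorem~\ref{t:heatA}: that argument, and its parabolic counterpart Theorem~\ref{t:heatA-fe}, require either smallness of $A$ or a dyadic envelope hypothesis of the type \eqref{hf-A-fe}, i.e. $\nrm{P_kA}_{L^\infty\dot H^1}\aleq c_k$ with $c_k\in\ell^2$ \emph{uniformly in $s$}. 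Here one only knows $A\in L^\infty\dot H^1$, which gives $\sup_s\sum_k\nrm{P_kA(s)}_{\dot H^1}^2\aleq 1$ but not square-summability of $\sup_s\nrm{P_kA(s)}_{\dot H^1}$; the coefficient profile may migrate in frequency as $s$ varies. Consequently the diagonal paraproduct terms in your dyadic energy estimate, e.g. $2^{2\sgm k}\la P_k(ad(A_{\sim k})\rd B),P_kB\ra$ and the analogous commutators from $[P_k,\Delta_A]B$ and from moving $\covD$ off $H$, are controlled only through $\nrm{A}_{L^\infty\dot H^1}$, which carries no small factor and, unlike the $L^2_s$-based norms of $F$ and $\rd_sA$, is not divisible in heat-time, so no smallness can be gained by subdividing $J$. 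The ``good $k$'' absorption that rescues the elliptic proof of Theorem~\ref{t:deltaA} (where $\nrm{P_kA}_{\dot H^1}$ is a single $\ell^2$ sequence, so all but finitely many $k$ are good) has no analogue here because the bad set of frequencies moves with $s$; thus your assertion that the sequence inequality ``closes whenever $\sgm<2$'' is precisely the point that is missing.

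This is exactly the difficulty the paper's route is designed to avoid: instead of a flat paradifferential decomposition, one differentiates the equation covariantly, so that the only quadratic couplings produced are $(\rd_sA)B$, $(\covD F)B$ and $F\covD B$, all governed by the divisible norms $\nrm{\rd_sA}_{L^2L^2}$ and $\nrm{F}_{L^2\dot H^1}$; the $\sgm=1$ case then follows by applying the $\sgm=0$ bound to $\covD B$ on suitably small subintervals, interpolation gives $0\le\sgm\le1$, and for $1<\sgm<2$ one must in addition prevent a derivative from falling on $\rd_sA$ and $\covD F$, which the paper handles by writing $(\rd_sA)B+(\covD F)B=\covD_\ell\covD^\ell\lap_A^{-1}\bigl((\rd_sA)B+(\covD F)B\bigr)$ and invoking the elliptic Theorem~\ref{t:deltaA} --- a step absent from your outline (your last paragraph treats $ad(\rd_sA)B$ only at the $\sgm=1$ level). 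Either adopt this covariant bootstrap, or supply a genuinely new mechanism that closes the dyadic estimate assuming only $A\in L^\infty\dot H^1$, $F\in L^2\dot H^1$, $\rd_sA\in L^2L^2$.
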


\begin{proof}
We begin with the case $\sgm = 0$. When the term $2 ad(F) B$ is absent, the desired estimate follows by multiplying \eqref{heatA} by $B_{j}$ and integrating by parts over $\bbR^{4} \times J$. The contribution of the term $2 ad(F) B$ is then treated perturbatively, by splitting $J$ in to a finite number of intervals on each of which the $L^{2} \dot{H}^{1}$ norm of $F$ is small.

For $\sgm = 1$ we differentiate the equation to obtain the following (schematic) linear equation 
for $\covD B$:
\begin{equation} \label{eq:heatA-dB}
(\partial_s - \Delta_A - 2 Ad(F)) \covD B = \covD G + \covD^{2} H + (\partial_s A) B + (\covD F) B + F \covD B,
\end{equation}
and apply the $\sgm = 0$ result. The last three terms on the right are perturbative. 
By interpolation this yields the result for $0 \leq \sgm \leq 1$.

For $1 < \sgm < 2$ we use again the differentiated equation \eqref{eq:heatA-dB}, and perturb off the $\sgm-1$ result.
To insure that no additional derivative falls on $\rd_{s} A$ and $\covD F$, we write
\begin{equation*}
	(\rd_{s} A) B + (\covD F) B = \covD_{\ell} \covD^{\ell} \lap_{A}^{-1} ((\rd_{s} A) B + (\covD F) B)
\end{equation*}
and note that, by Theorem~\ref{t:deltaA},
\begin{align*}
	\nrm{\covD \lap_{A}^{-1} ((\rd_{s} A) B + (\covD F) B)(s)}_{\dot{H}^{\sgm-1}} \aleq & \nrm{((\rd_{s} A) B + (\covD F) B)(s)}_{\dot{H}^{\sgm-2}} \\
	\aleq & ( \nrm{\rd_{s} A(s)}_{L^{2}} + \nrm{F(s)}_{\dot{H}^{1}})\nrm{B(s)}_{\dot{H}^{\sgm}} .
\end{align*}
Hence the last three terms on the right in \eqref{eq:heatA-dB} can be treated perturbatively, by putting $(\rd_{s} A) B + (\covD F) B$ in $L^{2} \dot{H}^{\sgm-2}$ and $F \covD B$ in $L^{1} \dot{H}^{\sgm-1}$.

Finally, for negative $\sgm$ we use duality, as our assumptions are invariant with respect to  heat-time reversal. 
\end{proof}

We will also need a frequency envelope version of the above result. Simply the fact that 
this result applies for a range of indices $\sgm$ already allows us to obtain
the following

\begin{corollary} \label{cor:heatA-fe-low}
Assume that $d_k$ is a $(-2,2)$ frequency envelope for $B(0)$ in $L^2$, $G$ in $L^1 L^2$ 
and $H$ in $L^{2} L^2$. Then 
\begin{equation}
\| P_k B\|_{L^\infty L^2} + \|P_k \covD B\|_{L^2} \lesssim d_k.
\end{equation}
\end{corollary}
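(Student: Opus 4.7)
The plan is to reduce the frequency envelope bound to Theorem~\ref{t:heatA} by taking advantage of the full range $\sigma \in (-2, 2)$ in which that theorem is valid. The envelope is only $(-2, 2)$-admissible, meaning $d_k$ varies at rate strictly less than $2$, which leaves a slack between $2(1-\eps)$ and $2$ that we can exploit on both the low- and the high-frequency sides of $k$.

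Fix an index $k$ and split the data and sources by flat Littlewood--Paley projections: $B(0) = P_{\leq k} B(0) + P_{> k} B(0)$, with analogous splittings for $G$ and $H$. Since \eqref{heatA} is linear in $(B, G, H)$ with fixed coefficients in $A$ and $F$, the solution decomposes as $B = B^{lo} + B^{hi}$, where each piece solves \eqref{heatA} with the respective portion of the data and sources. For $B^{lo}$, I would apply Theorem~\ref{t:heatA} with an exponent $\sigma_2 \in (2(1-\eps),2)$ chosen just above the admissibility rate. Using $d_j \aleq d_k 2^{2(1-\eps)(k-j)}$ for $j \leq k$, together with Minkowski in the $L^2_s$ norm and the triangle inequality under the $L^1_s$ integral for the source $G$, one readily computes
\begin{equation*}
\| P_{\leq k} B(0)\|_{\dot H^{\sigma_2}} + \| P_{\leq k} G\|_{L^1 \dot H^{\sigma_2}} + \| P_{\leq k} H\|_{L^2 \dot H^{\sigma_2}} \aleq d_k 2^{\sigma_2 k},
\end{equation*}
the geometric sums converging precisely because $\sigma_2 > 2(1-\eps)$. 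Theorem~\ref{t:heatA} then gives $\| B^{lo}\|_{L^\infty \dot H^{\sigma_2}} + \| \covD B^{lo}\|_{L^2 \dot H^{\sigma_2}} \aleq d_k 2^{\sigma_2 k}$, whence $\| P_k B^{lo}\|_{L^\infty L^2} + \| P_k \covD B^{lo}\|_{L^2 L^2} \aleq d_k$ by $2^{\sigma_2 k}$-rescaling the $k$-th dyadic block.

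The argument for $B^{hi}$ is mirror-symmetric: apply Theorem~\ref{t:heatA} with an exponent $\sigma_1 \in (-2, -2(1-\eps))$ chosen just above $-2$, and use the envelope upper bound $d_j \aleq d_k 2^{2(1-\eps)(j-k)}$ for $j > k$. The constraint $\sigma_1 < -2(1-\eps)$ is precisely what ensures convergence of the sums $\sum_{j > k} 2^{2 \sigma_1 j} d_j^2 \aleq d_k^2 2^{2 \sigma_1 k}$, leading to $\|B^{hi}\|_{L^\infty \dot H^{\sigma_1}} + \|\covD B^{hi}\|_{L^2 \dot H^{\sigma_1}} \aleq d_k 2^{\sigma_1 k}$ and then to $\|P_k B^{hi}\|_{L^\infty L^2} + \|P_k \covD B^{hi}\|_{L^2 L^2} \aleq d_k$. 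Summing the two pieces yields the result. The only technical subtlety I anticipate is the handling of the $L^1_s \dot H^{\sigma}_x$ norm of $G$, since $L^1$ does not cooperate well with the $\ell^2$ Littlewood--Paley structure; this is handled by pulling $\ell^2 \hookrightarrow \ell^1$ under the $L^1_s$ integral, which is lossy in general but harmless here because of the geometric gain.
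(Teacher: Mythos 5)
Your proposal is correct and implements exactly the argument the paper has in mind: the corollary is stated as an immediate consequence of Theorem~\ref{t:heatA} holding for the whole range $-2<\sgm<2$, and your low/high splitting at frequency $k$ with exponents $\sgm_2\in(2(1-\eps),2)$ and $\sgm_1\in(-2,-2(1-\eps))$, combined with the $(-2,2)$-admissibility of $d_k$, is the standard way to realize that remark. The technical points you flag (the $\ell^2\hookrightarrow\ell^1$ step for the $L^1_s$ source and the geometric summation) are handled correctly.
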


The lower range limit for this bound (more precisely, the lower admissibility of $d_{k}$) is entirely satisfactory, but we would like to  increase the upper limit in order to also have a higher regularity result. The obvious price to pay is that  we need a stronger 
assumption on the connection $A$. To quantify that we will use an $\ell^2$ frequency envelope $c_k$
so that
 \begin{equation}\label{hf-A-fe}
\| P_k A\|_{L^\infty \dot H^1} + \|P_k F\|_{L^2 \dot H^1}+ \|P_k \partial_s A\|_{L^2 L^{2}} \lesssim c_k.
\end{equation}
Then we have the following:

\begin{theorem} \label{t:heatA-fe}
Assume that \eqref{hf-A-fe} holds for some $(-1,S)$ frequency envelope $c_k$.
Let $d_k$ be a $1$-compatible $(-2,S)$ frequency envelope for $B(0)$ in $L^2$, $G$ in $L^1 L^2$ 
and $H$ in $L^2 L^{2}$. Then we have
\begin{equation}
\| P_k B\|_{L^\infty L^2} + \|P_k \covD B\|_{L^2 L^{2}} \lesssim d_k.
\end{equation}
\end{theorem}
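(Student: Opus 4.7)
The plan is to parallel the proof of Theorem~\ref{t:deltaA-fe} and upgrade Corollary~\ref{cor:heatA-fe-low} to cover $(-2,S)$ envelopes for $S > 2$, using a bootstrap combined with a paradifferential decomposition. As in that earlier proof, relaxing the lower admissibility exponent from $-2$ to any small $-\delta$ is harmless, since the sharp $-2$ endpoint is recovered directly from Corollary~\ref{cor:heatA-fe-low}; the real task is to push the upper admissibility exponent above the value $2$ allowed by that corollary.

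Concretely, I would project \eqref{heatA} to frequency $k$ and replace $A$ with the truncated connection $A_{<k}$ in the principal parabolic part, rewriting the equation as
\[
(\partial_s - \Delta_{A_{<k}}) P_k B - 2\, ad(P_{<k} F) P_k B = P_k G + \covD^\ell P_k H_\ell + R_k,
\]
where $R_k$ collects the error terms $[\Delta_{A_{<k}}, P_k] B$, $(\Delta_A - \Delta_{A_{<k}}) P_k B$, $[P_k, 2\,ad(F)] B$, and $2\, ad(P_{\geq k} F) P_k B$. Since $A_{<k}$ and $P_{<k} F$ still satisfy the hypotheses of Theorem~\ref{t:heatA}, the $\sigma = 0$ case of that theorem applied to the above equation gives
\[
\|P_k B\|_{L^\infty L^2} + \|P_k \covD B\|_{L^2 L^2} \lesssim \|P_k B(0)\|_{L^2} + \|P_k G\|_{L^1 L^2} + \|P_k H\|_{L^2 L^2} + \|R_k\|_{L^1 L^2 + \covD L^2 L^2}.
\]
Setting $C$ to be the smallest constant with $\|P_k B\|_{L^\infty L^2} + \|P_k \covD B\|_{L^2 L^2} \leq C d_k$ for all $k$ (first truncating $d_k$ from below by $\varepsilon$ and letting $\varepsilon \to 0$ at the end, as in Theorem~\ref{t:deltaA-fe}), I would estimate $R_k$ via Littlewood--Paley trichotomy using \eqref{hf-A-fe} for the connection and the bootstrap bound for $B$. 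The anticipated outcome is an inequality of the shape
\[
C d_k \lesssim d_k + C c_k \sum_{j<k} 2^{(1-\epsilon)(j-k)} d_j + C\, o_k(1)\, d_k,
\]
where the last factor comes from the $\ell^2$-smallness of the tail of $c_k$ near a saturating index $k$, achieved after subdividing the heat-time interval $J$ so that each piece contributes a small portion of $\|F\|_{L^2 \dot H^1}$. As in Theorem~\ref{t:deltaA-fe}, either the first term dominates and we conclude $C \lesssim 1$, or the middle term dominates, in which case the $1$-compatibility $c_k \sum_{j<k} 2^{(1-\epsilon)(j-k)} d_j \lesssim d_k$ together with a brief iteration forces $c_k \sim 1$ at the saturating index and hence $d_k \sim d_k^{[1]}$, reducing the claim to the $S \leq 2$ range already covered by Corollary~\ref{cor:heatA-fe-low}.

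The main obstacle I anticipate is the zero-order term $2\, ad(F) B$: since $F$ only carries an $L^2 \dot H^1$ bound (not $L^1 L^\infty$), the naive placement of $ad(F) B$ in $L^1 L^2$ loses a factor and must be avoided. Instead, one writes $ad(F) B$ in divergence form, or equivalently places it in the $L^2 \dot H^{-1}$ norm and absorbs it through the $\|\covD B\|_{L^2 L^2}$ piece on the left-hand side of Theorem~\ref{t:heatA}; this is the same mechanism that was already used in the proof of Theorem~\ref{t:heatA} itself. A secondary technical point is the treatment of the high-frequency piece $2\, ad(P_{\geq k} F) P_k B$ inside $R_k$ when $S$ is large: here the $\ell^2$ summability of $P_k F$ in $L^2 \dot H^1$ on each subinterval of $J$ is crucial to avoid logarithmic losses and to feed the bootstrap closure above.
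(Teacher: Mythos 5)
Your overall strategy is the paper's: paradifferential localization with $A_{<k}$, the $\sgm=0$ case of Theorem~\ref{t:heatA} applied to the frequency-localized equation, with the rough error terms fed back in divergence form through $\covD[A_{<k}]\lap_{A_{<k}}^{-1}:\dot H^{-1}\to L^{2}$ (Theorem~\ref{t:deltaA}) -- exactly the absorption mechanism you describe -- followed by a bootstrap with a minimal constant $C$ and a saturation dichotomy reducing to the low-$S$ case of Corollary~\ref{cor:heatA-fe-low}. However, two of your intermediate claims do not hold as stated and need the paper's repair. First, the term you write as $C\,o_k(1)\,d_k$ cannot be made small by subdividing the heat-time interval: the paradifferential errors such as $[P_k,\lap_{A_{<k}}]B$ and $[P_k,\covD[A_{<k}]]H$ are controlled by $\nrm{A}_{L^\infty \dot H^1}$, which is not divisible in $s$ (only the contributions through $F\in L^{2}\dot H^{1}$ and $\rd_s A\in L^{2}L^{2}$ are), and they contribute $C\,d_k\sum_{j<k}2^{j-k}c_j$, which is $O(C d_k)$ with a non-small constant. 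In the paper this term is not small at all; it is handled by an extra branch of the dichotomy: if it saturates, then $\sum_{j<k}2^{j-k}c_j\gtrsim 1$, which by the admissibility (slow variation) of $c_k$ again forces $c_k\approx 1$. Second, the weight in your middle term matters: the Littlewood--Paley estimates produce $c_k\sum_{j<k}2^{j-k}d_j$, strictly stronger than the compatibility weight $2^{(1-\eps)(j-k)}$, and it is precisely this $\eps$-gap (split the sum at $k-m$ and balance the gain $2^{-\eps m}$ against $2^{(S-1)m}c_k$) that forces $c_k\approx 1$ in the saturating case. With the weight $2^{(1-\eps)(j-k)}$ as you wrote it, ``middle term dominates'' merely says that $1$-compatibility is saturated, which is consistent with arbitrarily small $c_k$ and yields nothing; so as literally written that branch of your dichotomy does not close, though the mechanism you cite from Theorem~\ref{t:deltaA-fe} is the correct one.

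With these two corrections, both already present in the proofs of Theorems~\ref{t:deltaA-fe} and \ref{t:heatA-fe}, your argument coincides with the paper's. The remaining differences are cosmetic: the paper moves the entire term $P_k(2\,ad(F)B)$ to the right-hand side and estimates it in $L^{1}L^{2}$ (using the bootstrap control of $B$, so your worry about that placement is unfounded), so that the localized operator $\rd_s-\lap_{A_{<k}}$ carries no curvature term and Theorem~\ref{t:heatA} applies without the slight abuse of pairing $A_{<k}$ with $P_{<k}F$; your alternative of keeping $ad(P_{<k}F)P_kB$ on the left is also fine, since the $\sgm=0$ case only uses the $L^{2}\dot H^{1}$ bound on that coefficient.
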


\begin{proof}
The proof is analogous to that of Theorem~\ref{t:deltaA-fe}. We may assume that $S > 2$ and that $d_{k}$ is a $(-\dlt, S)$ frequency envelope.  Let $C$ be minimal so that 
  \begin{equation} \label{eq:boot-heat}
\| P_k B\|_{L^\infty L^2} + \|P_k \covD B\|_{L^2 L^{2}} \leq C d_k.
\end{equation}
To insure that such a $C$ exists, we can always relax $d_{k}$ in the high frequencies while keeping the compatibility condition, as in the proof of Theorem~\ref{t:deltaA-fe}. 
  
 We rewrite \eqref{heatA} in the paradifferential form
 \begin{align*}
	(\rd_{s} - \lap_{A_{<k}}) B_{k} 
	= & P_{k} G + \covD[A_{<k}] P_{k} H + P_{k} \left(2 ad(P_{\geq k} A) \rd B + ad(\rd P_{\geq k} A) B \right) \\
	& + P_{k} \left(ad(P_{\geq k} A) ad(A) B + ad(P_{< k} A) ad(P_{\geq k} A)  B \right) + P_{k} (2 ad(F) B) \\
	& + [P_{k}, \lap_{A_{<k}}] B + [P_{k}, \covD[A_{<k}] ] H \\
	= & P_{k} G + G'_{k} + \covD[A_{<k}] (P_{k} H + \covD[A_{<k}] \lap_{A_{<k}}^{-1} G''_{k}),
\end{align*}
where $A_{<k} = P_{< k} A$, $B_{k} = P_{k} B$ and
\begin{align*}
	G'_{k} = & P_{k} (2 ad(F) B), \\
	G''_{k} = &  P_{k} \left(2 ad(P_{\geq k} A) \rd B + ad(\rd P_{\geq k} A) B \right)\\
	& + P_{k} \left(ad(P_{\geq k} A) ad(A) B + ad(P_{< k} A) ad(P_{\geq k} A)  B \right) \\
	& +  [P_{k}, \lap_{A_{<k}}] B + [P_{k}, \covD[A_{<k}]] H.
\end{align*}
Recall that, by Theorem~\ref{t:deltaA},
\begin{equation*}
	\covD[A_{<k}] \lap_{A_{<k}}^{-1} : \dot{H}^{-1} \to L^{2}.
\end{equation*}
Applying the $L^{2}$ bound for $B_{k}$ in Theorem~\ref{t:heatA} (which does not require any curvature information), it follows that
\begin{equation*}
\| B_{k}\|_{L^\infty L^2} + \|\covD[A_{<k}] B_{k}\|_{L^2 L^{2}} \aleq d_{k} + \nrm{G'_{k}}_{L^{1} L^{2}} + \nrm{G''_{k}}_{L^{2} \dot{H}^{-1}}.
\end{equation*}
By Littlewood--Paley theory, we may estimate
\begin{align*}
	\nrm{G'_{k}}_{L^{1} L^{2}} + \nrm{G''_{k}}_{L^{2} \dot{H}^{-1}}
	\aleq & C c_{k}\sum_{j < k} 2^{j-k} d_{j} + C d_{k} \sum_{j < k} 2^{j-k} c_{j},
\end{align*}
and
\begin{align*}
	\nrm{P_{k} \covD[A] B}_{L^{2} L^{2}} \aleq & \nrm{\covD[A_{<k}] B_{k}}_{L^{2} L^{2}} + \nrm{P_{k} (ad(A_{\geq k}) B}_{L^{2} L^{2}} + \nrm{[P_{k}, \covD[A_{<k}] ] B}_{L^{2} L^{2}} \\
	\aleq & \nrm{\covD[A_{<k}] B_{k}}_{L^{2} L^{2}} + C d_{k} \sum_{j < k} 2^{j-k} c_{j}.
\end{align*}
where the $high \times high \to low$ interaction terms are again rapidly decreasing, and thus is subsumed to the $j = k$ terms after fixing $\dlt$ to be sufficiently small. Thus, we arrive at the estimate
\begin{equation}
\| P_k B\|_{L^\infty L^2} + \|P_{k} \covD[A] B\|_{L^2 L^{2}} \lesssim d_k + C(c_k \sum_{j < k} 2^{j-k} d_j 
+  d_k \sum_{j < k} 2^{j-k} c_j).
\end{equation}
Let $k$ be so that \eqref{eq:boot-heat} is near optimal.
Then we must have 
\[
C d_k  \lesssim d_k + C(c_k \sum_{j < k} 2^{j-k} d_j
+  d_k \sum_{j < k} 2^{j-k} c_j),
\]
and it follows that
\begin{equation*}
\hbox{either} \qquad	1 \aleq \sum_{j < k} 2^{j-k} c_{j} \qquad \hbox{ or } \qquad d_{k} \ageq c_{k} \sum_{j < k} 2^{j-k} d_{j}.
\end{equation*}
As in the proof of Theorem~\ref{t:deltaA-fe}, in either case we must have $c_{k} \approx 1$. Then, by the compatibility condition, $d_{k} \ageq d_{k}^{[1]}$, so the desired bound follows from the case $S = 1$. \qedhere
\end{proof}

Next, we prove an $L^{1}_{s}$-type bound.
\begin{theorem} \label{t:heatA-L1}
Consider the equation \eqref{heatA} with $H = 0$, $G \in L^{1} \dot{H}^{\sgm-1}$ and $B(0) \in \dot{H}^{\sgm-1}$, where $-1 < \sgm < 1$.
Then the solution $B$ obeys the bound
\begin{equation*}
	\nrm{\covD B}_{L^{1} \dot{H}^{\sgm}} \aleq \nrm{B(0)}_{\dot{H}^{\sgm-1}} + \nrm{G}_{L^{1} \dot{H}^{\sgm-1}}.
\end{equation*}
Assume that \eqref{hf-A-fe} holds for some $(-1, S)$ frequency envelope $c_{k}$. Let $d_{k}$ be a $1$-compatible $(-1, S)$ frequency envelope for $B(0)$ in $\dot{H}^{-1}$ and $G$ in $L^{1} \dot{H}^{-1}$. Then
\begin{equation*}
	\nrm{P_{k} (\covD B)}_{L^{1} L^{2}} \aleq d_{k}.
\end{equation*}
\end{theorem}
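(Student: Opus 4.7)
My plan is to obtain the $L^1_s$ bound as a parabolic smoothing refinement of the $L^2_s$ estimate from Theorem~\ref{t:heatA}, following the paradifferential scheme of Theorem~\ref{t:heatA-fe}. The frequency envelope version (second statement) contains the main technical content; the Lebesgue bound (first statement) will then follow by summing the envelope bound across dyadic frequencies, using that $-1 < \sgm < 1$ to handle the summation.

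First I would freeze the frequency by setting $B_k = P_k B$ and recasting \eqref{heatA} in the paradifferential form
\begin{equation*}
(\partial_s - \lap_{A_{<k}}) B_k = P_k G + G'_k,
\end{equation*}
where $A_{<k} = P_{<k} A$ and $G'_k$ collects the perturbative remainders involving $A_{\geq k}$, the curvature term $2 \, \mathrm{ad}(F) B$, and the commutator $[P_k, \lap_{A_{<k}}]$, exactly as in the proof of Theorem~\ref{t:heatA-fe}. The crux is the frequency-localized exponential decay
\begin{equation*}
\|v(s)\|_{L^2} \aleq e^{-c s 2^{2k}} \|v(0)\|_{L^2}, \qquad v = P_k v,
\end{equation*}
for the homogeneous equation $(\partial_s - \lap_{A_{<k}}) v = 0$, which follows from the energy identity $\tfrac{1}{2} \partial_s \|v\|_{L^2}^2 = - \|\covD[A_{<k}] v\|_{L^2}^2$ combined with the Bernstein-type lower bound $\|\covD[A_{<k}] v\|_{L^2}^2 \gtrsim 2^{2k} \|v\|_{L^2}^2$ valid for $v$ at frequency $2^k$ (after absorbing the low-frequency $A_{<k}$ correction using \eqref{hf-A-fe}).

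Next I would apply Duhamel with this smoothing estimate and integrate in $s$, picking up a factor of $\int_0^\infty e^{-cs 2^{2k}} \, ds \sim 2^{-2k}$, to conclude
\begin{equation*}
\|P_k B\|_{L^1 L^2} \aleq 2^{-2k} \left( \|P_k B(0)\|_{L^2} + \|P_k G\|_{L^1 L^2} + \|G'_k\|_{L^1 L^2} \right).
\end{equation*}
Multiplying by $2^k$ and using that $\covD \sim \nabla$ on frequency $2^k$ modulo perturbations converts the first two terms on the right into the envelope quantities $\|P_k B(0)\|_{\dot H^{-1}} \aleq d_k$ and $\|P_k G\|_{L^1 \dot H^{-1}} \aleq d_k$. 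The remainder $\|G'_k\|_{L^1 L^2}$ is then estimated by Littlewood--Paley, using the envelope $c_k$ for $A$ and the $L^\infty L^2$ bound for $P_j B$ obtained from Theorem~\ref{t:heatA-fe} at $\sgm' = -1$; this gives a two-sided paradifferential expression $d_k \sum_{j<k} 2^{j-k} c_j + c_k \sum_{j<k} 2^{j-k} d_j$ (times the current bootstrap constant). A bootstrap identical to that of Theorem~\ref{t:heatA-fe} then closes the estimate: at a near-optimal $k$, one of $1 \aleq \sum_{j<k} 2^{j-k} c_j$ or $d_k \aleq c_k \sum_{j<k} 2^{j-k} d_j$ must hold, forcing $c_k \approx 1$, whence $1$-compatibility forces $d_k \gtrsim d_k^{[1]}$ and reduces matters to the base case $S=1$.

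The hard part will be the frequency-localized smoothing bound in the second paragraph. For the model Laplacian $\lap$ this is the trivial $\|e^{s \lap} P_k f\|_{L^2} \lesssim e^{-cs 2^{2k}} \|P_k f\|_{L^2}$, but for $\lap_{A_{<k}}$ the $s$-dependence of $A$, the presence of a genuine zeroth-order term $ad(A_{<k})^2$ and the fact that $A_{<k}$ lies only in $L^\infty \dot H^1$ all complicate the Bernstein-type lower bound; I expect to absorb the $A_{<k}$ correction into a Gronwall-type argument using that $A$ has a finite $\dot H^1$ envelope. Once this is in place, the remaining steps closely parallel Theorem~\ref{t:heatA-fe}, and the Lebesgue bound (first statement) is obtained by picking an envelope $d_k$ saturating the RHS, the range $-1 < \sgm < 1$ ensuring the dyadic summation converges in the correct direction.
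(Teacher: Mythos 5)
There is a genuine gap, and it sits exactly where you flagged "the hard part": the frequency-localized exponential decay $\nrm{v(s)}_{L^2} \aleq e^{-c s 2^{2k}} \nrm{v(0)}_{L^2}$ for $(\rd_s - \lap_{A_{<k}})v = 0$ with data at frequency $2^k$ is not available under the hypotheses \eqref{hf-A-fe}, and your proposed fix (energy identity plus a Bernstein lower bound, absorbed by Gronwall) does not work when $A$ is large. Two distinct problems: (i) the covariant flow does not preserve the frequency localization --- the interaction with $A_{<k}$ (which may live at frequencies just below $2^k$) feeds energy into low frequencies, where it decays only at the slow rates of those frequencies, so the lower bound $\nrm{\covD[A_{<k}] v}_{L^2}^2 \gtrsim 2^{2k}\nrm{v}_{L^2}^2$ is lost after time $0$; (ii) even at $s=0$ the correction $\nrm{[A_{<k},v]}_{L^2} \aleq \nrm{A}_{\dot H^1} 2^{k}\nrm{v}_{L^2}$ is of the same size as the main term with a large constant, so it cannot be absorbed, and a Gronwall coefficient of size $\sim 2^{2k}$ wipes out the exponential gain. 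Tellingly, the paper proves pointwise-in-$s$ decay of frequency-localized solutions only in Theorem~\ref{t:heatB}, only of polynomial type $(1+2^{2k}s)^{-N}$, and only under the much stronger self-similar hypothesis \eqref{A=parab} on $A$, which is not assumed in Theorem~\ref{t:heatA-L1} (and would be circular to invoke where this theorem is used). A secondary gap: even granting some decay, your $L^1_s$ estimate of the paradifferential remainder $G'_k$ cannot be closed, because the commutator term $[P_k,\lap_{A_{<k}}]B$ must be paired either with $\nrm{\rd B}_{L^1_s L^2}$ (circular, with a non-small constant $\sum_{j<k}2^{j-k}c_j$) or with an $L^2_s$-type norm of $A$ itself, which is not among the hypotheses; only $F$ and $\rd_s A$ carry divisible-in-$s$ norms.

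The paper's proof avoids any pointwise-in-$s$ decay statement. After reducing to $G=0$ by Duhamel and recording the unweighted bound $\nrm{P_k B}_{L^\infty L^2} + \nrm{P_k \covD B}_{L^2 L^2} \aleq 2^k d_k$ from Theorem~\ref{t:heatA-fe}, it commutes the equation with the weight $s^{\frac12+\eps}$, writes the resulting source as $\covD^{\ell}(s^{-\frac12+\eps}H_{\ell})$ with $H_{\ell} = \covD_{\ell}\lap_A^{-1}B$, proves the envelope bound $\nrm{P_k H}_{L^\infty L^2} + \nrm{P_k H}_{L^2\dot H^1}\aleq d_k$ via the elliptic Theorem~\ref{t:deltaA-fe} (plus commutator estimates using $F$, $\covD F$), deduces $\nrm{P_k(s^{-\frac12+\eps}H)}_{L^2L^2}\aleq 2^{-2\eps k}d_k$ by splitting at $s = 2^{-2k}$, and applies Theorem~\ref{t:heatA-fe} again to get the weighted bound $\nrm{s^{\frac12+\eps}P_k\covD B}_{L^2L^2}\aleq 2^{-2\eps k}d_k$. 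The $L^1_s$ bound then follows by interpolating (Cauchy--Schwarz in $s$) between the unweighted and weighted $L^2_s$ bounds. If you want to repair your argument, this weighted-norm route --- trading a hypothetical exponential decay for two $L^2$-in-$s$ bounds with different $s$-weights --- is the mechanism to adopt; your bootstrap/dichotomy scheme and the reduction of the first statement to the envelope statement are fine, but they are downstream of the missing decay lemma.
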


Note that the scaling of Theorem~\ref{t:heatA-L1} differs from Corollary~\ref{cor:heatA-fe-low} and Theorem~\ref{t:heatA-fe}. This reflects the fact that in what follows, Theorem~\ref{t:heatA-L1} will typically be applied to a covariant derivative $\covD B$ of a solution $B$ to \eqref{heatA}.

\begin{proof}
We directly prove the frequency envelope version. By Duhamel's formula, the general case is easily reduced to the homogeneous case $G = 0$. Note that $2^{k} d_{k}$ is a $(-2, S)$ frequency envelope for $B$ in $L^{2}$, which is $1$-compatible with $c_{k}$. Thus, by Theorem~\ref{t:heatA-fe}, we have
\begin{equation} \label{eq:heatA-L1-pf-1}
	\nrm{P_{k} B}_{L^{\infty} L^{2}} + \nrm{P_{k} (\covD B)}_{L^{2} L^{2}} \aleq 2^{k} d_{k}.
\end{equation}

On the other hand, commuting the equation with $s^{\frac{1}{2} + \eps}$, we obtain
\begin{equation*}
\left( \rd_{s} - \lap_{A} - 2ad(F) \right) s^{\frac{1}{2} + \eps}B = \frac{1+2\eps}{2} s^{-\frac{1}{2}+\eps}B = \frac{1+2\eps}{2} \covD^{\ell} \left( s^{-\frac{1}{2} + \eps} H_{\ell} \right).
\end{equation*}
where
\begin{equation*}
	H_{\ell} = \covD_{\ell} (\Dlt_{A})^{-1} B.
\end{equation*}
We claim that
 \begin{equation} \label{eq:heatA-L1-pf-H}
	\nrm{P_{k} H}_{L^{\infty} L^{2}}
	+ \nrm{P_{k} H}_{L^{2} \dot{H}^{1}} \aleq d_{k}.
\end{equation}
Assuming \eqref{eq:heatA-L1-pf-H}, we first conclude the proof. Using the $L^{\infty} L^{2}$ bound for $s < 2^{-2k}$, and the $L^{2} \dot{H}^{1}$ bound for $s > 2^{-2k}$, it follows that
\begin{equation*}
	\nrm{P_{k} (s^{-\frac{1}{2} + \eps} H)}_{L^{2} L^{2}} \aleq 2^{- 2 \eps k} d_{k}.
\end{equation*}
For $\eps > 0$ sufficiently small, $2^{-2 \eps k} d_{k}$ still satisfies the admissibility and the compatibility conditions. Thus, by Theorem~\ref{t:heatA-fe}, we have
\begin{equation} \label{eq:heatA-L1-pf-2}
	\nrm{s^{\frac{1}{2} + \eps} P_{k} (\covD B)}_{L^{2} L^{2}} \aleq 2^{- 2 \eps k} d_{k}.
\end{equation}
Interpolating \eqref{eq:heatA-L1-pf-1} and \eqref{eq:heatA-L1-pf-2}, we obtain the desired frequency envelope bound for $\covD B$ in $L^{1} L^{2}$.

It remains to prove \eqref{eq:heatA-L1-pf-H}. For the $L^{\infty} L^{2}$ bound, we first note that Theorem~\ref{t:deltaA-fe} and the $L^{\infty} L^{2}$ bound in \eqref{eq:heatA-L1-pf-1} imply 
\begin{equation*}
	\nrm{P_{k} (\lap_{A}^{-1} B)}_{L^{\infty} \dot{H}^{1}} \aleq d_{k}.
\end{equation*}
Then splitting $\covD = \rd + ad(A)$ and using Littlewood--Paley trichotomy, we estimate
\begin{align*}
	\nrm{P_{k} H}_{L^{\infty} L^{2}} 
	\aleq & \nrm{P_{k} (\covD \lap_{A}^{-1} B)}_{L^{\infty} L^{2}} \\
	\aleq & \nrm{P_{k} (\lap_{A}^{-1} B)}_{L^{\infty} \dot{H}^{1}} + \nrm{P_{k} (ad(A) \lap_{A}^{-1} B)}_{L^{\infty} L^{2}} \\
	\aleq & d_{k} + c_{k} \sum_{j < k} 2^{j - k} d_{j} \aleq d_{k},
\end{align*}
where we used the compatibility condition in the last inequality.

On the other hand, the $L^{2} \dot{H}^{1}$ bound needs a bit of additional work, in order to make use of the $L^{2} L^{2}$ bound for $\covD B$ in \eqref{eq:heatA-L1-pf-1}. We begin by writing
\begin{equation*}
	H = \lap_{A}^{-1} \lap_{A} H =  \lap_{A}^{-1} \covD_{\ell} B + \lap_{A}^{-1} [\lap_{A}, \covD_{\ell}] \lap_{A}^{-1} B.
\end{equation*}
The contribution of the first term on the RHS is directly dealt with Theorem~\ref{t:deltaA-fe}. For the second term, note that schematically, $[\lap_{A}, \covD_{\ell}] = ad (\covD F) + ad(F) \covD$. Using Littlewood--Paley trichotomy, as well as the frequency envelope bounds for $F \in L^{2} \dot{H}^{1}$, $\covD F \in L^{2} L^{2}$, $\lap_{A}^{-1} B \in L^{\infty} \dot{H}^{-1}$ and $\covD \lap_{A}^{-1} B \in L^{\infty} L^{2}$, we have
\begin{equation*}
	\nrm{P_{k} (ad(\covD F) \lap_{A}^{-1} B)}_{L^{2} \dot{H}^{-1}}
	+ \nrm{P_{k} (ad(F) \covD \lap_{A}^{-1} B)}_{L^{2} \dot{H}^{-1}}
	\aleq d_{k} + c_{k} \sum_{j < k} 2^{j - k} d_{j} \aleq d_{k}.
\end{equation*}
The desired $L^{2} \dot{H}^{1}$ bound for $H$ now follows from Theorem~\ref{t:deltaA-fe} and an argument to peel off $ad(A) = \covD - \rd$ as in the previous case.
\end{proof}

\subsection{Solvability for \texorpdfstring{$\rd_{s} - \lap_{A}$: $L^{p}$}{ds-DeltaA-p} 
parabolic regularity} \label{subsec:heat-Lp}
Next, we consider $L^{p}$ solvability of \eqref{heatA}, and parabolic regularity properties of its solutions. 
For this, we assume that $A$ satisfies a stronger parabolic regularity property:
\begin{equation}\label{A=parab}
\| P_k A (s)\|_{\dot H^1} \lesssim c_k (1+2^{2k} s)^{-N} \qquad \hbox{ for all } s > 0,
\end{equation}
where $\{c_k\} \in \ell^2$ and $N > 2$. Then for the homogeneous covariant equation
\begin{equation}\label{heatA+}
(\partial_s - \Delta_A - 2Ad(F)) B = 0, \qquad B(0) = b,
\end{equation}
the following result holds:
\begin{theorem} \label{t:heatB}
Let $A$ be as in \eqref{A=parab} with a $(-\dlt, S)$ compatible frequency envelope $c_{k}$, and 
\begin{equation} \label{eq:heatB-sp}
-2 < \sgm < \frac{4}p, \qquad 2 \leq p \leq \infty, \qquad 0 < \dlt \leq \min\set{\frac{4}{p} - \sgm, 2 + \sgm}.
\end{equation}
Let $d^{\sgm, p}_{k}$ be a $(-\dlt, S)$ frequency envelope for $b$ in $\dot{W}^{\sgm, p}$, which is $\dlt$-compatible with $c_{k}$.  
Then there exists a unique solution $B \in C([0, \infty); \dot{W}^{\sgm, p})$ to \eqref{heatA+}, and we have
\begin{equation}
\| P_k B(s)\|_{\dot W^{\sgm, p}} \lesssim_{\nrm{c}_{\ell^{2}}, N}  d_k^{\sgm, p} (1+2^{2k} s)^{-N}.
\end{equation}
\end{theorem}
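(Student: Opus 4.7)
The plan is to combine a paradifferential localization of the equation at each dyadic frequency with $L^{p}$ parabolic regularity for the leading operator, and then close a frequency envelope bound via bootstrap, using the $L^{2}$-based theory of Theorem~\ref{t:heatA-fe} as input through Bernstein's inequality. Existence and uniqueness in $C([0,\infty); \dot W^{\sgm,p})$ may be obtained by approximating $A$ by smooth connections, solving the resulting strongly parabolic equations, and passing to the limit using the quantitative bound described below; alternatively, one sets up a Duhamel iteration directly.

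For the main bound, fix a dyadic frequency $2^{k}$, write $A = A_{<k} + A_{\geq k}$, apply $P_{k}$ to \eqref{heatA+}, and arrive paradifferentially at
\begin{equation*}
(\rd_{s} - \lap_{A_{<k}}) B_{k} = R_{k},
\end{equation*}
where $R_{k}$ collects the commutator $[P_{k}, \lap_{A_{<k}}]B$, the high-frequency pieces of $A$ through the terms $ad(P_{\geq k}A)\rd B$, $ad(\rd P_{\geq k} A)B$ and the quadratic $ad(A)ad(A)$ expressions, together with the curvature correction $P_{k}(2\, ad(F)B)$. The leading operator acts on functions essentially localized at frequency $\lesssim 2^{k}$; the decay assumption \eqref{A=parab} implies that $A_{<k}(s)$ is smooth at scales $\gg 2^{-k}$ uniformly in $s$, so a standard heat kernel / Duhamel argument (starting from the $L^{2}$ bounds in Theorem~\ref{t:heatA-fe}, and stepping up in $p$ by Bernstein combined with parabolic smoothing on the scale $s \sim 2^{-2k}$) yields for the propagator the fundamental estimate
\begin{equation*}
\| P_{k} e^{s\lap_{A_{<k}}} f \|_{\dot W^{\sgm,p}} \aleq_{\|c\|_{\ell^{2}}, N}  (1 + 2^{2k}s)^{-N} \| P_{k} f \|_{\dot W^{\sgm,p}}.
\end{equation*}
With the bootstrap ansatz $\|P_{j}B(s)\|_{\dot W^{\sgm,p}} \leq C\, d_{j}^{\sgm,p}(1 + 2^{2j}s)^{-N}$, Littlewood--Paley trichotomy reduces each term in $R_{k}$ to expressions involving products of $c_{j}$'s and $d_{\ell}^{\sgm,p}$'s, which the $\dlt$-compatibility of $d^{\sgm,p}$ with $c$ and the $(-\dlt, S)$-admissibility allow to resum and absorb, closing the bootstrap with a constant depending only on $\|c\|_{\ell^{2}}$ and $N$.

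The hard part will be justifying the $L^{p}$ heat kernel bound above when $p > 2$, because $A \in \dot H^{1}$ just barely misses $\dot W^{1,\infty}$, so $\lap_{A_{<k}}$ is not a Calder\'on--Zygmund perturbation of $\lap$ in $L^{p}$ uniformly in $k$. The remedy is that after rescaling $s \mapsto 2^{-2k} s$ and $x \mapsto 2^{-k}x$, the rescaled connection is bounded in $\dot H^{1}$ by $\|c\|_{\ell^{2}}$, and since the output is also frequency-localized near $2^{k}$, one only needs $L^{p}$ smoothing on a \emph{single} time scale before the factor $(1+2^{2k}s)^{-N}$ forces rapid decay. This can be achieved either by classical $L^{p}$ Gaussian kernel bounds for parabolic equations with bounded measurable coefficients, or bootstrapped from the $L^{2}$ result of Theorem~\ref{t:heatA-fe} by a Duhamel iteration that raises the integrability exponent through Bernstein's inequality one dyadic step at a time. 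The range $-2 < \sgm < 4/p$ in \eqref{eq:heatB-sp} is precisely the range in which $\dot W^{\sgm,p}$ can be controlled by $L^{2}$-Sobolev spaces in the admissible interval $(-2,2)$ via Bernstein on each dyadic block, consistent with this plan; the bound $\dlt \leq \min\{4/p - \sgm, 2+\sgm\}$ reserves the required room at both endpoints for the commutator terms.
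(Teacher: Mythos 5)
Your overall scheme (paradifferential localization plus a frequency--localized $L^{p}$ semigroup bound for $\lap_{A_{<k}}$, then a bootstrap) is not the paper's argument, and as written it has a genuine gap at its center: the claimed propagator estimate
\[
\nrm{P_{k} e^{s \lap_{A_{<k}}} f}_{\dot W^{\sgm,p}} \aleq_{\nrm{c}_{\ell^{2}},N} (1+2^{2k}s)^{-N} \nrm{P_{k} f}_{\dot W^{\sgm,p}}
\]
is essentially the content of the theorem itself (for connections with only low frequencies), and neither of your proposed justifications produces it. Classical Gaussian/heat-kernel bounds for the perturbed operator give uniform boundedness at the natural scale but say nothing about rapid decay $(1+2^{2k}s)^{-N}$ at a \emph{fixed} output frequency for $s \gg 2^{-2k}$; that decay requires controlling the leakage of frequency-$2^{k}$ data under the non-commuting low-frequency terms $ad(A_{<k})\rd$ over arbitrarily long heat times, which is exactly the difficulty. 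Nor can you bootstrap it from Theorem~\ref{t:heatA-fe}: that theorem gives $L^{\infty}_{s}L^{2}$ and $L^{2}_{s}$ envelope bounds with no $(1+2^{2k}s)^{-N}$ weight, so Bernstein plus Duhamel cannot manufacture the $N$-fold decay. You also cannot borrow the decay factor "after one time scale'' to shorten the needed interval, since that factor is precisely what is being proved — the reasoning is circular at this point.

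The second, related gap is the bootstrap closure. The dangerous interactions are low-frequency $A$ against frequency-$2^{k}$ $B$ — whether you hide them in the leading operator or expose them in $[P_{k},\lap_{A_{<k}}]B$ — and they enter with coefficients of the form $\sum_{j<k} 2^{j-k} c_{j}$ or $\sup_{j} c_{j}$, which are bounded by $\nrm{c}_{\ell^{2}}$ but in no way small; admissibility and $\dlt$-compatibility of $d^{\sgm,p}$ alone cannot absorb them, so "resum and absorb'' does not close for large $A$. The paper supplies the missing mechanism: it perturbs off the \emph{flat} heat flow (no paradifferential leading operator here), first proving the refined inhomogeneous estimate \eqref{eq:inhom-free} which records both a short-time gain $\min\{2^{2k}\abs{s-s_{0}},2^{2k}s\}^{\alp}$ and the long-time weight, and then splits $[0,\infty)$ into finitely many intervals (their number controlled by $\nrm{c}_{\ell^{2}}$ and $\eps_{0}$) on each of which either the rebalanced envelope satisfies $\tilde c_{k(s)} \leq \eps_{0}$ — making the resonant low--high interactions small — or the interval is short relative to $s$, so the Duhamel integral gains $\min\{1, 2^{2k}\abs{J_{j}}\}^{\dlt_{2}}$; off-diagonal frequency gains handle the remaining regimes, and the solution is re-initialized at the left endpoint of each interval by induction. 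Here the full strength of \eqref{A=parab}, i.e.\ the $(1+2^{2j}s)^{-N}$ decay of $A$ itself, is used quantitatively in estimating the paraproduct pieces $G^{lh}_{k}, G^{hh}_{k}$, not merely as a qualitative smoothness statement about $A_{<k}$. Without some divisibility-in-$s$/smallness device of this kind your contraction has no source of smallness, and the argument does not close.
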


\begin{proof}
We proceed in several steps.

\pfstep{Step~1: $A = 0$, inhomogeneous case}
We start with the constant coefficient case $A = 0$. We will treat the general case as a perturbation of this case. For this purpose, we need a slightly refined estimate for the inhomogeneous equation. 

Consider the solution $B$ to the inhomogeneous equation
\begin{equation*}
	(\rd_{s} - \lap) B = G, \qquad B(s_{0}) = 0,
\end{equation*}
with
\begin{equation*}
	\nrm{P_{k} G}_{\dot{W}^{\sgm, p}} \leq 2^{2k} d^{\sgm, p}_{k} (2^{2k} s)^{-\bt} (1+2^{2k} s)^{-M}.
\end{equation*}
where $d^{\sgm, p}_{k}$ an arbitrary positive sequence here, and $\bt < 1$. Then for any $0 < \alp \leq 1$, we have
\begin{equation} \label{eq:inhom-free}
	\nrm{P_{k} B(s)}_{\dot{W}^{\sgm, p}} \aleq_{\alp} d^{\sgm, p}_{k} (2\min \set{2^{2k} \abs{s - s_{0}}, 2^{2k} s}^{\alp} (2^{2k} s)^{-\bt} (1+2^{2k} s)^{-M}.
\end{equation}
Indeed, note by Duhamel's formula, $B$ takes the form
\begin{equation*}
	B(s) = \int_{s_{0}}^{s} e^{(s - \tilde{s}) \lap} G(\tilde{s}) \, d \tilde{s}.
\end{equation*}
We consider two cases:
\begin{enumerate}
\item {\it Short interval.} If $s < 2 s_{0}$, then $\tilde{s} \in (s_{0}, s)$ obeys $\tilde{s} \approx s$, so that
\begin{align*}
	\nrm{P_{k} B(s)}_{\dot{W}^{\sgm, p}} 
	\aleq & \int_{s_{0}}^{s} \nrm{e^{(s - \tilde{s}) \lap} G(\tilde{s})}_{\dot{W}^{\sgm, p}} \, d \tilde{s} \\
	\aleq & \int_{s_{0}}^{s} (s - \tilde{s})^{-1+\alp} \nrm{G(\tilde{s})}_{\dot{W}^{\sgm - 2 (1-\alp), p}} \, d \tilde{s} \\
	\aleq & d^{\sgm, p}_{k} (2^{2k} \abs{s - s_{0}})^{\alp} (2^{2k} s)^{-\bt} (1+2^{2k} s)^{-M}.
\end{align*}
\item {\it Long interval.} If $s \geq 2 s_{0}$, then we split $\int_{s_{0}}^{s} = \int_{s_{0}}^{\frac{s}{2}} + \int_{\frac{s}{2}}^{s}$ and proceed as follows:
\begin{align*}
	\nrm{P_{k} B(s)}_{\dot{W}^{\sgm, p}}
	\leq &   \nrm{e^{\frac{s}{2} \lap} \int_{s_{0}}^{\frac{s}{2}} e^{(\frac{s}{2} - \tilde{s}) \lap} G(\tilde{s}) \, d \tilde{s}}_{\dot{W}^{\sgm, p}} 
	+ \int_{\frac{s}{2}}^{s} \nrm{e^{(s - \tilde{s}) \lap} G(\tilde{s})}_{\dot{W}^{\sgm, p}} \, d \tilde{s} \\
	\aleq & (1+2^{2k} s)^{-M} \int_{s_{0}}^{\frac{s}{2}} (\frac{s}{2} - \tilde{s})^{-1+\alp} \nrm{G(\tilde{s}) }_{\dot{W}^{\sgm-2(1-\alp), p}} \, d \tilde{s} \\
	& + \int_{s_{0}}^{\frac{s}{2}} (s - \tilde{s})^{-1+\alp} \nrm{G(\tilde{s}) }_{\dot{W}^{\sgm - 2(1-\alp), p}} \, d \tilde{s} \\
	\aleq & d^{\sgm, p}_{k} (2^{2k} s)^{\alp - \bt}  (1+2^{2k} s)^{-M}.
\end{align*}
\end{enumerate}

\pfstep{Step~2: $A \neq 0$, homogeneous case}
Next, we consider the covariant homogeneous equation \eqref{heatA+} with an arbitrary $A$ satisfying \eqref{A=parab}.
Let $\dlt_{0} = \min \set{2+\sgm - \dlt(1-\eps), \frac{1}{2}}$, and consider the slowly varying envelope 
\begin{equation*}
\tilde{c}_{k} = \sup_{j} 2^{-\dlt_{0}(1-\eps) \abs{j-k}} c_{j}.
\end{equation*}
Given a small constant $\eps_{0}$ to be fixed below, we split the time interval $[0,\infty)$ into finitely many subintervals $J_j=[s_{j},s_{j+1}]$ so that each $J_j$ has one of the following properties:
\begin{itemize}
\item Either $\tilde{c}_{k(s)} \leq \eps_{0}$ for all $s \in J_j$;

\item or, $s_{j+1} - s_j \leq \eps_{0} s_j$.
\end{itemize}
Once $\eps_{0}$ is fixed, the number of such subintervals can be bounded by a constant depending on $\nrm{c}_{\ell^{2}}$ and $\eps_{0}$.

In both cases we solve the problem perturbatively off the constant coefficient case (Case~1). We expand $\Dlt_{A}$ and $F$ in terms of $A$, and then write the equation schematically as 
\[
(\partial_s - \Delta) B = ad(A) \rd B + ad(\rd A) B + ad(A) ad(A) B, \qquad B(s_j) = b_j,
\]
or equivalently 
\[
B(s) = e^{(s-s_j) \Delta} b_j + (L B)(s) 
\]
where 
\[
(LB)(s) = \int_{s_j}^s e^{(\tilde s-s_j) \Delta}   (ad(A) \rd B + ad(\rd A) B + ad(A) ad(A) B)(\tilde s) d\tilde s.
\]
We make the induction hypothesis that 
\begin{equation} \label{eq:heatB-ind}
\| P_k b_j \|_{\dot W^{\sgm,p}} \lesssim  d_k^{\sgm,p} (1+2^{2k} s_j)^{-N},
\end{equation}
and use a fixed point argument in the space $X$ with the norm
\[
\| B \|_{X} = \sup_{s \in J_j} \sup_{k}  (d_k^{\sgm,p})^{-1}(1+2^{2k} s)^{-N} \|B_k(s)\|_{\dot W^{\sgm,p}}.
\]
Observe that \eqref{eq:heatB-ind} follows from the hypothesis in the initial step $j = 1$. To continue the induction, it suffices to show that in both cases $L$ is a contraction in $X$. 

Indeed, suppose $B$ satisfies
\[
  \|P_{k} B(s)\|_{\dot W^{\sgm,p}} \leq  (d_k^{\sgm,p})(1+2^{2k} s)^{-N}.
\]
Then we seek to estimate the expression
\[
P_{k} G(s) =  P_{k} (ad(A) \rd B + ad(\rd A) B + ad(A) ad(A) B )(s)
\]
We separate out essentially the $high \times high \to low$ interaction:
\begin{align*}
	G_{k}^{hh} = & ad(P_{\geq k-5} A) \rd P_{\geq k} B + ad(\rd P_{\geq k-5}A) P_{\geq k} B \\
	& + ad(P_{\geq k-5} A) ad(A) P_{\geq k} B + ad(P_{< k-5} A) ad(A_{\geq k-5}) P_{\geq k} B 
\end{align*}
and decompose $P_{k} G$ into $P_{k} G_{k}^{hh}$ and $P_{k} G_{k}^{lh} = P_{k} G - P_{k} G_{k}^{hh}$.
Using the standard Littlewood--Paley trichotomy, we obtain
\begin{align*}
\nrm{P_{k} G_{k}^{lh}(s)}_{\dot{W}^{\sgm, p}}
\aleq & 2^{2k} c_{k} \sum_{\ell < k} 2^{(\frac{4}{p} - \sgm) (\ell-k)} d_{\ell}^{\sgm, p}(1 + 2^{2k} s)^{-N} (1+2^{2\ell} s)^{-N} \\
& + 2^{2k} d^{\sgm, p}_{k} \sum_{\ell < k} 2^{\ell-k} c_{\ell} (1 + 2^{2k} s)^{-N} (1+2^{2\ell} s)^{-N},  \\
\nrm{P_{k} G_{k}^{hh}(s)}_{\dot{W}^{\sgm, p}}
\aleq & 2^{2k} \sum_{\ell > k} 2^{\sgm (k -\ell)} c_{\ell} d^{\sgm, p}_{\ell} (1+ 2^{2\ell} s)^{-2N}.
\end{align*}
For $0 < \alp < 2 N$, we have the simple inequalities
\begin{align}
	\sum_{\ell < k} 2^{\alp \ell} (1+2^{2\ell} s)^{-N} \aleq & 2^{\alp k} (1+2^{2k} s)^{-\frac{\alp}{2}}, \\
	\sum_{\ell > k} 2^{\alp \ell} (1+2^{2\ell} s)^{-N} \aleq & 2^{\alp k} (2^{2k} s)^{-\frac{\alp}{2}} (1+2^{2k} s)^{-N+\frac{\alp}{2}},
\end{align}
so it follows that
\begin{align*} 
	\nrm{P_{k} G^{lh}_{k}(s)}_{\dot{W}^{\sgm, p}} 
	\aleq & 2^{2k} c_{k} (\sum_{\ell < k} 2^{\dlt (\ell-k)} d_{\ell}^{\sgm, p} ) (1+2^{2k} s)^{-N - \dlt_{1}} \\
	& + 2^{2k} d_{k} ( \sum_{\ell < k} 2^{\dlt_{0} (\ell-k)} c_{\ell} ) (1+2^{2k} s)^{-N-\frac{1}{2} (1-\dlt_{0})} \\
	\nrm{P_{k} G^{hh}_{k}(s)}_{\dot{W}^{\sgm, p}} 
	\aleq & 2^{2k} ( \sup_{\ell > k} 2^{(2 + \sgm)(1 - \eps)(k-\ell)} c_{\ell} d^{\sgm, p}_{\ell} ) (2^{2k} s)^{-1+ \frac{2+\sgm}{2} \eps}(1+2^{2k} s)^{-2N+1 - \frac{2+\sgm}{2} \eps}, 
\end{align*}
where $\dlt_{1} = \frac{1}{2} (\frac{4}{p} - \sgm - \dlt) > 0$. Using the slowly varying properties
\begin{equation*}
d_{\ell}^{\sgm, p} \aleq 2^{\dlt(1-\eps)(\ell-k)} d_{k}^{\sgm, p} \hbox{ for } \ell > k, \qquad
c_{\ell} \aleq 2^{\dlt_{0}(1-\eps) \abs{\ell-k}} \tilde{c}_{k} \hbox{ for any }\ell,
\end{equation*}
as well as the following consequence of $\dlt$-compatibility
\begin{equation*}
	c_{k} \sum_{\ell < k} 2^{\dlt(\ell - k)} d^{\sgm, p}_{\ell}
	\aleq 2^{- \dlt \eps m} c_{k}  \sum_{\ell < k-m} 2^{\dlt(1-\eps)(\ell - k)} d^{\sgm, p}_{\ell} + 2^{S m} c_{k} d^{\sgm, p}_{\ell}
	\aleq (2^{- \dlt \eps m} + 2^{S m} \tilde{c}_{k}) d^{\sgm, p}_{k},
\end{equation*}
where $m > 0$ is to be chosen below, we obtain
\begin{align} 
	\nrm{P_{k} G^{lh}_{k}(s)}_{\dot{W}^{\sgm, p}} 
	\aleq & 2^{2k} (2^{- \dlt \eps m} + 2^{S m} \tilde{c}_{k}) d^{\sgm, p}_{k} (1+2^{2k} s)^{-N - 2 \dlt_{2}} , \label{eq:heatB-key-lh} \\
	\nrm{P_{k} G^{hh}_{k}(s)}_{\dot{W}^{\sgm, p}} 
	\aleq & 2^{2k} \left(\frac{2^{2k} s}{1 + 2^{2k } s} \right)^{-1 + 2 \dlt_{2}} \tilde{c}_{k} d_{k} (1+2^{2k} s)^{-N-2\dlt_{2}} , \label{eq:heatB-key-hh}
\end{align}
with $\dlt_{2}= \frac{1}{2}\min\set{\dlt_{1}, \frac{1}{2}(1-\dlt_{0}), N-1+\frac{2+\sgm}{2}\eps, \frac{2+\sgm}{2} \eps}$.

Applying Step~1 to each piece, with $\alp = \dlt_{2}$ for $G^{lh}_{k}$ and $\alp = 1 - \dlt_{2}$ for $G^{hh}_{k}$,  this leads to
\begin{equation*}
	\nrm{P_{k} LB(s)}_{\dot{W}^{\sgm, p}} \aleq  (1+2^{2k} s)^{-N-\dlt_{2}} (2^{- \dlt \eps m} + 2^{S m} \tilde{c}_{k})  d^{\sgm, p}_{k} \min\set{1, 2^{2k} \abs{J_{j}}}^{\dlt_{2}} .
\end{equation*}

Now we consider the two scenarios (this is where we fix $m$ and $\eps_{1}$):
\begin{enumerate}
\item {\it Short intervals.} 
Here $J_{j} = [s_j,s_{j+1}]$ where $s_j,s_{j+1} \approx 2^{-2k_j}$ and $|s_{j+1}-s_j| \aleq \eps_{1} 2^{-2k_j}$.
Then we gain from the first factor if $2^{2k} \gg 2^{2k_j}$, and from the last otherwise.
\item {\it Long intervals.} 
Here $J_{j} = [s_j,s_{j+1}]$ where $s_j \approx 2^{-2k_j}$ and $s_1 \approx 2^{-2k_{j+1}}$, and 
$|J_{j}| \approx 2^{-2k_{j+1}}$.  Then we gain from the first factor if $2^{2k} \gg 2^{2k_j}$, from $2^{-\dlt \eps m} + 2^{S m} \tilde{c}_{k} \leq 2^{-\dlt \eps m} + 2^{Sm} \eps_{1}$ if $2^{2k_{j}} \aleq 2^{2k} \aleq 2^{2k_{j+1}}$, and from the last factor otherwise. \qedhere
\end{enumerate}
\end{proof}

We also need a version of the previous result
for the inhomogeneous equation
\begin{equation}\label{heatB+}
(\partial_s - \Delta_A - 2Ad(F)) B = G, \qquad B(0) = 0,
\end{equation} 

\begin{theorem} \label{t:heatB-inhom} Let $0 \leq \beta < 1$.
 Let $d_{k}$ be $(-1, S)$ admissible and $1$-compatible with $c_{k}$. If
\begin{equation}
\| P_k G \|_{L^2} \lesssim  2^{2k} (2^{2k} s)^{-\beta} d_{k} (1+2^{2k} s)^{-N}.
\end{equation}
Then
\begin{equation}
\| P_k B \|_{L^{2}} \lesssim  (2^{2k} s)^{1-\beta}  d_{k}     (1+2^{2k} s)^{-N}.
\end{equation}
Similarly, for general $(\sgm, p)$ as in \eqref{eq:heatB-sp}, 
 if $d_{k}$ is $(-\dlt, S)$ admissible and $\dlt$-compatible with $c_{k}$, and if
\begin{equation}
\| P_k G \|_{\dot W^{\sgm,p}} \lesssim  2^{2k} (2^{2k} s)^{-\beta} d^{\sgm,p}_{k} (1+2^{2k} s)^{-N}.
\end{equation}
then we have
\begin{equation}
\| P_k B \|_{\dot W^{\sgm,p}} \lesssim  (2^{2k} s)^{1-\frac{\alp}{2}} d^{\sgm,p}_{k}     (1+2^{2k} s)^{-N}.
\end{equation}
\end{theorem}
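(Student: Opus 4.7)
The strategy is to reduce the inhomogeneous problem to the homogeneous one via Duhamel's principle, and then use Theorem~\ref{t:heatB} as a black-box propagator estimate, closing with an explicit one-variable integral computation.

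First, I would write the solution as
\begin{equation*}
B(s) = \int_{0}^{s} U_{A}(s, \tilde{s}) G(\tilde{s}) \, d\tilde{s},
\end{equation*}
where $U_{A}(s, \tilde{s})$ is the propagator of the homogeneous covariant heat equation $(\rd_{s} - \lap_{A} - 2 ad(F)) B = 0$ from heat-time $\tilde{s}$ to $s$. The proof of Theorem~\ref{t:heatB} applies verbatim (shifting the time origin from $0$ to $\tilde{s}$, using that hypothesis \eqref{A=parab} is invariant under this translation up to a harmless $N$-loss) to yield the shifted estimate
\begin{equation*}
\| P_{k} U_{A}(s, \tilde{s}) b \|_{\dot W^{\sgm, p}} \aleq e_{k} (1 + 2^{2k}(s - \tilde{s}))^{-N},
\end{equation*}
whenever $e_{k}$ is $(-\dlt, S)$-admissible and $\dlt$-compatible with $c_{k}$.

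Second, I would apply this estimate with $b = G(\tilde{s})$ and effective envelope
\begin{equation*}
e_{k}(\tilde{s}) = 2^{2k} (2^{2k} \tilde{s})^{-\bt} d_{k}^{\sgm, p} (1 + 2^{2k} \tilde{s})^{-N}.
\end{equation*}
The modulating prefactor $2^{2k}(2^{2k} \tilde{s})^{-\bt}(1+ 2^{2k} \tilde{s})^{-N}$ is slowly varying in $k$ (its logarithmic derivative in $k$ is uniformly bounded) and decaying at high frequencies relative to $k$ with $2^{2k} \tilde s \gtrsim 1$. Consequently the admissibility and compatibility properties of $d_{k}^{\sgm, p}$ transfer to $e_{k}(\tilde{s})$ uniformly in $\tilde{s}$, using $\bt < 1$ and the fact that decaying factors only make the compatibility sums smaller.

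Third, integration in $\tilde s$ gives
\begin{equation*}
\| P_{k} B(s) \|_{\dot W^{\sgm, p}} \aleq d_{k}^{\sgm, p} \int_{0}^{s} 2^{2k} (2^{2k} \tilde{s})^{-\bt} (1 + 2^{2k} \tilde{s})^{-N} (1 + 2^{2k}(s - \tilde{s}))^{-N} \, d \tilde{s}.
\end{equation*}
The change of variables $\tau = 2^{2k} \tilde{s}$ reduces this to
\begin{equation*}
d_{k}^{\sgm, p} \int_{0}^{2^{2k} s} \tau^{-\bt} (1 + \tau)^{-N} (1 + 2^{2k} s - \tau)^{-N} \, d\tau,
\end{equation*}
which I would bound by splitting into the regimes $2^{2k} s \leq 1$ and $2^{2k} s > 1$; in the latter case further splitting into $\tau \in (0, 2^{2k} s / 2)$ and $\tau \in (2^{2k} s / 2, 2^{2k} s)$. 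The hypothesis $\bt < 1$ controls the integrable singularity near $\tau = 0$, while $N > 1$ controls the tails. In all regimes the result is dominated by $(2^{2k} s)^{1-\bt} (1 + 2^{2k} s)^{-N}$, matching the target bound.

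\textbf{Main obstacle.} The nontrivial step is verifying admissibility and $\dlt$-compatibility of the modulated envelope $e_{k}(\tilde{s})$ uniformly in $\tilde{s} \in (0, s)$, since the modulating factor changes transition frequency with $\tilde{s}$. This is essentially combinatorial Littlewood--Paley bookkeeping akin to that in the proof of Theorem~\ref{t:heatB}, but must be carried out carefully to ensure the perturbative setup for the shifted homogeneous estimate remains valid throughout the integration. The $L^{2}$ part of the statement is then the specialization $(\sgm, p) = (0, 2)$, modulo the trivial relabeling $\alp \leftrightarrow 2 \bt$ in the general $\dot W^{\sgm, p}$ formulation.
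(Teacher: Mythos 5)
Your reduction via Duhamel and the shifted homogeneous propagator is not the paper's route, and it has a genuine gap at exactly the point you flag as the "main obstacle": the modulated envelope $e_{k}(\tilde{s}) = 2^{2k}(2^{2k}\tilde{s})^{-\bt} d_{k}^{\sgm,p}(1+2^{2k}\tilde{s})^{-N}$ is \emph{not} admissible in the sense required by Theorem~\ref{t:heatB}. Below the transition frequency $k(\tilde{s})$ the prefactor grows like $2^{2(1-\bt)k}$, i.e.\ at rate up to $2$ per dyadic step, whereas $(-\dlt,S)$-admissibility caps the growth at $2^{\dlt(1-\eps)}$ per step (and even the $(-1,S)$ condition of the $L^{2}$ case fails once $\bt<\tfrac12$). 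Having a bounded logarithmic derivative in $k$ is not the relevant condition; the lower admissibility index must dominate that derivative, and here it does not. So Theorem~\ref{t:heatB} cannot be invoked slice-wise with $e_{k}(\tilde{s})$, and the natural repairs do not save the argument: if you replace $e_{k}(\tilde{s})$ by its minimal admissible majorant, or decompose $G(\tilde{s})$ into dyadic pieces and apply Theorem~\ref{t:heatB} to each with its own single-bump admissible envelope, the output below the source frequency only carries $2^{-\dlt\abs{k-k(\tilde{s})}}$-type tails. After the $\tilde{s}$-integration this produces, at frequencies with $2^{2k}s\ll 1$, a bound of the rough size $\sum_{j\geq k(s)}2^{-\dlt(j-k)}d_{j}$, which misses the factor $(2^{2k}s)^{1-\bt}$ entirely — and that low-frequency gain is precisely the nontrivial content of the statement. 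In other words, the frequency-leakage control cannot be outsourced to the homogeneous theorem as a black box; it has to be carried inside the perturbative scheme. (Your final one-variable integral computation is fine, but it only becomes relevant once the slice-wise propagator bound with the non-admissible envelope is justified, which is the missing step.)

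For comparison, the paper does not use a Duhamel reduction at all: it reruns the proof of Theorem~\ref{t:heatB}, reusing Step~1 — whose constant-coefficient inhomogeneous estimate \eqref{eq:inhom-free} was set up precisely for sources with the singular weight $(2^{2k}s)^{-\bt}$ — and the interval partition of Step~2, changing only the induction hypothesis at the interval endpoints to \eqref{eq:heatB-ind-inhom}, which already carries the $(2^{2k}s)^{1-\frac{\alp}{2}}$ gain and is consistent with \eqref{eq:inhom-free}. If you want to keep your structure, you would in effect have to reprove the paradifferential/trichotomy estimates of Theorem~\ref{t:heatB} for data with the modulated weight, which amounts to the same interior argument; as written, your proof is incomplete.
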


\begin{proof}
The proof repeats the proof of the previous theorem. Step 1 is reused in its entirety, as well as the 
interval partition in Step 2.  The only difference is in the choice of the induction hypothesis 
\eqref{eq:heatB-ind}, which is now replaced by 
\begin{equation} \label{eq:heatB-ind-inhom}
\| P_k b_j \|_{\dot W^{\sgm,p}} \lesssim (2^{2k} s)^{1-\frac{\alp}{2}}  d_k^{\sgm,p} (1+2^{2k} s_j)^{-N},
\end{equation}
But this is still consistent with \eqref{eq:inhom-free}, so the rest of the argument is again identical 
to the previous proof.
\end{proof}

\section{The Yang--Mills heat flow and the local caloric gauge} \label{s:caloric-loc}
 
\subsection{The covariant and dynamic Yang--Mills heat flows} \label{subsec:dymhf-re}

Our first goal here is to introduce the equations for the Yang--Mills heat flow
and its linearization in a gauge independent fashion. The Yang--Mills heat flow
models the parabolic evolution of a connection 1-form $A = A_{j} \, \ud x^{j}$.
This can be thought of as the gradient flow associated to the functional
\[
\spE[A] = \frac12 \int_{\bbR^{4}} \langle F_{jk}, F^{jk} \rangle \, d x.
\]
We will denote by  $s \in [0,\infty)$ the heat-time variable. 
To describe this flow covariantly we add an $s$ component $A_s$ 
to the connection, so that our connection 1-form 
is now
\[
 A_{j} \, \ud x^{j} + A_{s} \, \ud x
\]
One can think of $A_s$ as the generator of a semigroup of gauge 
transformations.  

\begin{definition}
Given an interval $J \subseteq [0, \infty)$, we say that a connection
1-form $A = A_{j} \, \ud x^{j} + A_{s} \, \ud x$ on $\bbR^{4} \times
J$ is a \emph{covariant Yang--Mills heat flow} if it solves
\begin{equation} \label{eq:cYMHF}
	F_{si} = \covD^{\ell} F_{\ell i}
\end{equation}
\end{definition}
\noindent
This equation is invariant under pointwise gauge transformations on $\bbR^{4} \times J$.

Assuming that \eqref{eq:cYMHF} holds, it is not difficult to see that the curvature
tensors also must solve their own system of covariant parabolic equations:
\begin{lemma} \label{lem:cymhf-eqns} 
Let $ A_{j} \, \ud x^{j} + A_{s} \, \ud s$ be a sufficiently regular solution to
  \eqref{eq:cYMHF}. Then the curvature components $F_{ij}$ and
  $F_{si}$ obey the following covariant parabolic equations.
\begin{align} 
\label{eq:YMHF-Fij}
	\covD_{s} F_{ij} - \covD^{\ell} \covD_{\ell} F_{ij}
	= & - 2 \LieBr{\tensor{F}{_{i}^{\ell}}}{F_{j \ell}}, \\
\label{eq:YMHF-Fsi}
	\covD_{s} F_{si} - \covD^{\ell} \covD_{\ell} F_{si}
	= & - 2 \LieBr{\tensor{F}{_{s}^{\ell}}}{F_{i \ell}}. 
\end{align}
\end{lemma}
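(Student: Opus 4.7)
The plan is to derive both covariant heat equations by systematically combining three ingredients: the Bianchi identity, the commutator formula $[\covD_{\alp}, \covD_{\bt}] = ad(F_{\alp\bt})$, and the YMHF equation \eqref{eq:cYMHF} itself. Throughout, I would work purely at the level of tensorial identities, so that gauge covariance is automatic and no gauge choice enters.

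For \eqref{eq:YMHF-Fij}, I start by applying the Bianchi identity to the triple $(s,i,j)$ to obtain $\covD_{s} F_{ij} = \covD_{i} F_{sj} - \covD_{j} F_{si}$. Substituting \eqref{eq:cYMHF} gives $\covD_{s} F_{ij} = \covD_{i} \covD^{\ell} F_{\ell j} - \covD_{j} \covD^{\ell} F_{\ell i}$. I then commute the outer spatial derivative past $\covD^{\ell}$ in each term, producing $ad(F_{i}{}^{\ell}) F_{\ell j}$ and $ad(F_{j}{}^{\ell}) F_{\ell i}$ plus the reordered pieces $\covD^{\ell} \covD_{i} F_{\ell j}$ and $\covD^{\ell} \covD_{j} F_{\ell i}$. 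Applying Bianchi once more inside $\covD_{i} F_{\ell j}$ to rewrite it as $\covD_{\ell} F_{ij} + \covD_{j} F_{\ell i}$ (and symmetrically for the other piece) produces the desired $\covD^{\ell} \covD_{\ell} F_{ij}$ together with two copies of $\covD^{\ell} \covD_{j} F_{\ell i}$ which cancel between the two halves. Using $\sum_{\ell}[F_{j}{}^{\ell}, F_{i\ell}] = -\sum_{\ell}[F_{i}{}^{\ell}, F_{j\ell}]$ (relabel $\ell$ and use antisymmetry of the Lie bracket) collapses the remaining curvature commutators to the claimed $-2[F_{i}{}^{\ell}, F_{j\ell}]$.

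For \eqref{eq:YMHF-Fsi}, the natural starting point is to differentiate the YMHF equation directly in $s$: $\covD_{s} F_{si} = \covD_{s} \covD^{\ell} F_{\ell i} = \covD^{\ell} \covD_{s} F_{\ell i} + [F_{s}{}^{\ell}, F_{\ell i}]$. Applying Bianchi to $\covD_{s} F_{\ell i} = \covD_{\ell} F_{si} + \covD_{i} F_{\ell s}$ yields the Laplacian term $\covD^{\ell} \covD_{\ell} F_{si}$ plus the leftover $\covD^{\ell} \covD_{i} F_{\ell s}$, which I commute as $\covD_{i} \covD^{\ell} F_{\ell s} + [F^{\ell}{}_{i}, F_{\ell s}]$. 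The key observation is that $\covD^{\ell} F_{\ell s}$ vanishes: using $F_{\ell s} = -\covD^{m} F_{m\ell}$ from \eqref{eq:cYMHF}, and symmetrizing in the dummy indices, $\covD^{\ell} \covD^{m} F_{m\ell} = \tfrac{1}{2}[\covD^{\ell}, \covD^{m}] F_{m\ell} = \tfrac{1}{2}[F^{\ell m}, F_{m\ell}]$, each summand of which is $[F_{\ell m}, F_{\ell m}] = 0$ after using $F_{m\ell} = -F_{\ell m}$. The remaining commutators $[F^{\ell}{}_{i}, F_{\ell s}] + [F_{s}{}^{\ell}, F_{\ell i}]$ then combine via antisymmetry of the Lie bracket to $-2[F_{s}{}^{\ell}, F_{i\ell}]$.

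The computation is essentially mechanical, and the main (mild) obstacle is bookkeeping signs and the antisymmetric contractions. The one nontrivial algebraic input is the vanishing of the double covariant divergence $\covD^{\ell} \covD^{m} F_{m\ell}$, which one can view as a Yang--Mills analogue of $\nabla_{\alp} \nabla_{\bt} F^{\alp\bt} = 0$ and is what ultimately makes the $\covD_{s} F_{si}$ calculation close cleanly.
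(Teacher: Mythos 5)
Your proof is correct: both identities check out, including the signs in the curvature commutator terms and the key auxiliary fact $\covD^{\ell}\covD^{m}F_{m\ell}=\tfrac12[F^{\ell m},F_{m\ell}]=0$ which makes the $F_{si}$ equation close. The paper states this lemma without proof (as a routine consequence of \eqref{eq:cYMHF}), and your derivation via the Bianchi identity, the commutator formula $[\covD_{\alp},\covD_{\bt}]=ad(F_{\alp\bt})$, and the flow equation is exactly the standard computation it has in mind.
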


Next we consider the linearization of the covariant Yang--Mills heat flow. 
To describe this we consider a smooth one parameter family $A(t)$ 
of covariant Yang--Mills heat flows. Here, anticipating the use of $t$ 
as the hyperbolic Yang--Mills time variable, we will use the index $0$ 
for $t$ derivatives. Then we seek to write the equations for 
\[
B = \partial_0 A
\]
A direct linearization in the equations \eqref{eq:cYMHF} yields the evolution
\begin{equation}\label{eq:cYMHF-lin}
\covD_s B_i - \covD_i B_s =  \covD^\ell (\covD_\ell B_i - \covD_i B_\ell)+ [B^\ell,F_{\ell i}] 
\end{equation}
This is invariant with respect to gauge transformations which depend 
on $s$ and $x$. Further, the effect of a one-parameter family of gauge transformations $O(t)$ in the 
equations \eqref{eq:cYMHF} that satisfies $O(t=0) = Id$ is the invariance of \eqref{eq:cYMHF-lin} under the following transformation:
\begin{equation} \label{eq:cYMHF-lin-gt}
B_i \to B_i - \covD_i A_0, \qquad B_s \to B_s - \covD_s A_0
\end{equation}
where $A_0(s,x) = O_{;t}(0, s, x)$ is the linearized action of $O$ at $t = 0$. In this sense, \eqref{eq:cYMHF-lin-gt} may be regarded as a \emph{linearized gauge transformation} for \eqref{eq:cYMHF-lin}. Another useful statement that is equivalent to the invariance of \eqref{eq:cYMHF-lin} under \eqref{eq:cYMHF-lin-gt} is that $B_{i} \ud x^{i} + B_{s} \ud s = \covD_{i} A_{0} \ud x^{i} + \covD_{s} A_{0} \ud s$ solves the linearized flow \eqref{eq:cYMHF-lin} for any $\g$-valued function $A_{0}$. 

 To achieve full covariance, we 
proceed as above and add to our connection the $s$ dependent $A_0$ component:
\[
 A_{\alpha} \, \ud x^{\alpha} + A_{s} \, \ud s.
\]
Instead of tracking $B$, we are now considering the parabolic 
evolution of the corresponding curvature tensor $F_{0j}, F_{s0}$, 
which  can be interpreted as a gauge-covariant deformation
of $A_j,A_0$.  These are related to $B_j, B_0$ via the relations
\begin{equation}\label{BvsF}
F_{0j} = B_j - \covD_j A_0, \qquad F_{0s} = B_s - \covD_s A_0.
\end{equation}
Due to the above gauge invariance, these still solve the equations
\eqref{eq:cYMHF-lin}.

So far, the choice of $A_0$ was arbitrary. 
We \emph{specify} covariantly the parabolic evolution of $A_{0}$:

\begin{definition}
Given an interval $J \subseteq [0, \infty)$, we say that a connection
1-form $ A_{\alpha} \, \ud x^{\alpha} + A_{s} \, \ud x$ on $\bbR^{4} \times
J$ is a \emph{dynamic (covariant) Yang--Mills heat flow} if it solves
\begin{equation} \label{eq:cdYMHF}
	F_{s\alpha} = \covD^{\ell} F_{\ell \alpha}
\end{equation}
\end{definition}
\noindent
Compared with \eqref{eq:cYMHF}, here we have added the $\alpha = 0$
equation, which specifies $A_0$ given its value at the initial heat-time $s=0$. Indeed, $F_{s0} = \covD^{\ell} F_{\ell 0}$ may be equivalently written as
\begin{equation} \label{eq:cdYMHF-0-exp}
	\covD_{s} A_{0} - \covD^{\ell} \covD_{\ell} A_{0} = B_{s} - \covD^{\ell} B_{\ell}.
\end{equation}
This choice is independent of the equation \eqref{eq:cYMHF-lin} satisfied by $B$. On the other hand, the advantage of writing \eqref{eq:cdYMHF} instead of \eqref{eq:cdYMHF-0-exp} is that the former is manifestly covariant under any $t$- and/or $s$-dependent gauge transformation $O = O(t, x, s)$.

Assuming that $A$ is a dynamic Yang--Mills heat flow,
one can differentiate to obtain the desired covariant parabolic equations 
for the curvature, which expand the equations in Lemma~\ref{lem:cymhf-eqns}:
\begin{lemma} \label{lem:cymhf-eqns+} 
Let $A = A_{\alpha} \, \ud x^{\alpha} + A_{s} \, \ud s$ be a sufficiently regular solution to
  \eqref{eq:cdYMHF}. Then the curvature components $F_{\alpha\beta}$ and
  $F_{s\alpha}$ obey the following covariant parabolic equations.
\begin{align} 
\label{eq:YMHF-Fij+}
	\covD_{s} F_{\alpha \beta} - \covD^{\ell} \covD_{\ell} F_{\alpha \beta}
	= & - 2 \LieBr{\tensor{F}{_{\alpha}^{\ell}}}{F_{\beta \ell}}, \\
\label{eq:YMHF-Fsi+}
	\covD_{s} F_{s\alpha} - \covD^{\ell} \covD_{\ell} F_{s\alpha}
	= & - 2 \LieBr{\tensor{F}{_{s}^{\ell}}}{F_{\alpha \ell}}. 
\end{align}
\end{lemma}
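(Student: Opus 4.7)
The plan is to observe that the derivations of \eqref{eq:YMHF-Fij+} and \eqref{eq:YMHF-Fsi+} proceed by exactly the same tensor calculus as in Lemma~\ref{lem:cymhf-eqns}: the arguments only use the flow equation $F_{s\alpha}=\covD^{\ell}F_{\ell\alpha}$ (which now holds for $\alpha=0$ too, since $A$ is a \emph{dynamic} covariant Yang--Mills heat flow), the Bianchi identity $\covD_{[\mu}F_{\nu\rho]}=0$, and the commutator identity $[\covD_{\mu},\covD_{\nu}]=ad(F_{\mu\nu})$. None of these ingredients distinguishes a spatial index from the temporal index $0$, so the same manipulation produces the asserted parabolic equations for the full Minkowski index range.

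For \eqref{eq:YMHF-Fij+}, I would begin with the Bianchi identity applied to the triple $(s,\alpha,\beta)$,
\begin{equation*}
\covD_{s}F_{\alpha\beta}=\covD_{\alpha}F_{s\beta}-\covD_{\beta}F_{s\alpha},
\end{equation*}
substitute $F_{s\gamma}=\covD^{\ell}F_{\ell\gamma}$, and commute $\covD_{\alpha}$ and $\covD_{\beta}$ past $\covD^{\ell}$, which produces terms of the form $ad(\tensor{F}{_{\alpha}^{\ell}})F_{\ell\beta}$ and $ad(\tensor{F}{_{\beta}^{\ell}})F_{\ell\alpha}$. The remaining piece $\covD^{\ell}(\covD_{\alpha}F_{\ell\beta}-\covD_{\beta}F_{\ell\alpha})$ collapses to $\covD^{\ell}\covD_{\ell}F_{\alpha\beta}$ after a second application of Bianchi to the triple $(\alpha,\beta,\ell)$ (using the antisymmetry $F_{\mu\nu}=-F_{\nu\mu}$). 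Bookkeeping of the Lie brackets using antisymmetry of $F$ and of $[\cdot,\cdot]$ combines the two bracket contributions into $-2[\tensor{F}{_{\alpha}^{\ell}},F_{\beta\ell}]$, as claimed.

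For \eqref{eq:YMHF-Fsi+}, I would differentiate the flow equation in $s$:
\begin{equation*}
\covD_{s}F_{s\alpha}=\covD_{s}\covD^{\ell}F_{\ell\alpha}=\covD^{\ell}\covD_{s}F_{\ell\alpha}+[\tensor{F}{_{s}^{\ell}},F_{\ell\alpha}].
\end{equation*}
Now apply the already-proved equation \eqref{eq:YMHF-Fij+} with $(\alpha,\beta)$ replaced by $(\ell,\alpha)$ to rewrite $\covD_{s}F_{\ell\alpha}=\covD^{m}\covD_{m}F_{\ell\alpha}-2[\tensor{F}{_{\ell}^{m}},F_{\alpha m}]$, and commute $\covD^{\ell}\covD^{m}\covD_{m}F_{\ell\alpha}$ back to $\covD^{m}\covD_{m}\covD^{\ell}F_{\ell\alpha}=\covD^{m}\covD_{m}F_{s\alpha}$ using two applications of $[\covD_{\mu},\covD_{\nu}]=ad(F_{\mu\nu})$. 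The generated commutator terms, together with the covariant derivative of the bracket $-2\covD^{\ell}[\tensor{F}{_{\ell}^{m}},F_{\alpha m}]$, must be rearranged. To organize the algebra I would expand using the Leibniz rule and the second Bianchi identity $\covD^{\ell}F_{\ell m}=F_{sm}$ (reading the flow equation with $\alpha=m$), so that all uncancelled terms reassemble into the single bracket $-2[\tensor{F}{_{s}^{\ell}},F_{\alpha\ell}]$.

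The only real difficulty is the index/sign bookkeeping in the Lie bracket algebra of the last step; this is routine but delicate, and is the main thing to be careful about. Everything else is a direct transcription of the spatial computation in Lemma~\ref{lem:cymhf-eqns} with $i$ replaced by $\alpha$.
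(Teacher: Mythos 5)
Your proposal is correct and takes the route the paper intends: the lemma is stated there without detailed proof, as a direct computation from the Bianchi identity, the commutation relation $[\covD_{\mu},\covD_{\nu}]=ad(F_{\mu\nu})$, and the flow equation $F_{s\alpha}=\covD^{\ell}F_{\ell\alpha}$ (now valid for $\alpha=0$ as well), which is exactly your argument, and the deferred bracket bookkeeping in the second step does close up to yield $-2\LieBr{\tensor{F}{_{s}^{\ell}}}{F_{\alpha\ell}}$. One minor slip: the identity $\covD^{\ell}F_{\ell m}=F_{sm}$ invoked at the end is the flow equation itself (as your parenthetical concedes), not a Bianchi identity.
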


At this point, the way one should think of the $\alpha = 0$ component of these  equations is as a
covariant  equivalent formulation of  the linearized equations  \eqref{eq:cYMHF-lin},
given that $A_0$ satisfies the gauge fixing condition \eqref{eq:cdYMHF} for $\alpha = 0$; 
see Section~\ref{subsec:dymhf}.

\begin{remark}[\eqref{eq:cdYMHF} with $\alpha = 0$ as an infinitesimal de Turck trick]
Note that, by imposing the equation $F_{s0} = \covD^{\ell} F_{\ell 0}$, we have arrived at a \emph{nondegenerate} parabolic equation for $F_{0 j}$ in \eqref{eq:YMHF-Fij+}, as opposed to the degenerate parabolic equation \eqref{eq:cYMHF-lin}. This statement can be alternatively seen as follows, in a way that resembles the classical de Turck trick for \eqref{eq:cYMHF} (cf.~\eqref{deTurck}--\eqref{YM-h-deTurck}). Using $\covD^{\ell} \covD_{i} B_{\ell} = \covD_{i} \covD^{\ell} B_{\ell} + [\tensor{F}{^{\ell}_{i}}, B_{\ell}]$, \eqref{eq:cYMHF-lin} can be rewritten as 
\begin{equation*}
	\covD_{s} B_{i} - \covD^{\ell} \covD_{\ell} B_{i} + 2 [\tensor{F}{^{\ell}_{i}}, B_{\ell}] = \covD_{i} (B_{s} - \covD^{\ell} B_{\ell}).
\end{equation*}
If one attempts to cancel the undesirable RHS by a linearized gauge transformation of the form \eqref{eq:cYMHF-lin-gt}, then one is naturally led to the condition \eqref{eq:cdYMHF-0-exp}, which is equivalent to \eqref{eq:cdYMHF} with $\alp = 0$ under the definitions $F_{0 j} = B_{j} - \covD_{j} A_{0}$ and $F_{0 s} = B_{s} - \covD_{s} A_{0}$.
\end{remark}

\subsection{Covariant bounds for solutions}

Postponing for the moment the gauge dependent well-posedness question,
we now explore the possible covariant bounds for sufficiently regular  solutions.
These are necessarily curvature based. The first is the monotonicity formula:

\begin{proposition}
Let $A$ be a sufficiently regular covariant Yang--Mills heat flow. 
Then for $0 \leq s_0 \leq s_1$ we have the relation
\begin{equation}
  \int \frac{1}{2} \brk{F_{ij}, F^{ij}} (s_{1}) \, \ud x + \int_{s_{0}}^{s_{1}} \int 
\brk{\covD^{\ell} F_{\ell i}, \covD^{\ell} \tensor{F}{_{\ell}^{i}}} \, \ud x \ud s
= \int \frac{1}{2} \brk{F_{ij}, F^{ij}} (s_{0}) \, \ud x.
\end{equation}
\end{proposition}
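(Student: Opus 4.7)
The proof is a direct computation leveraging three ingredients: the bi-invariance of $\langle \cdot, \cdot \rangle$, the Bianchi identity, and the covariant Yang--Mills heat flow equation \eqref{eq:cYMHF}. The strategy is to compute $\frac{d}{ds}$ of the energy density and identify the dissipation term.

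First, I would differentiate $\tfrac{1}{2} \brk{F_{ij}, F^{ij}}$ in $s$ and use bi-invariance to swap $\rd_s$ for the covariant $\covD_s$ inside the inner product: the difference $\brk{[A_s, F_{ij}], F^{ij}}$ vanishes upon summation in $i, j$, since $\brk{ad(A_s) X, X} = 0$ by antisymmetry of $ad(A_s)$ against the symmetric $\brk{\cdot, \cdot}$. This gives
\[
  \frac{d}{ds} \tfrac{1}{2} \brk{F_{ij}, F^{ij}} = \brk{\covD_s F_{ij}, F^{ij}}.
\]

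Second, I would apply the Bianchi identity $\covD_s F_{ij} + \covD_i F_{js} + \covD_j F_{si} = 0$, yielding $\covD_s F_{ij} = \covD_i F_{sj} - \covD_j F_{si}$. Pairing with the antisymmetric tensor $F^{ij}$ and relabeling $i \leftrightarrow j$ in the second term, one finds $\brk{\covD_s F_{ij}, F^{ij}} = 2\brk{\covD_i F_{sj}, F^{ij}}$ (summed).

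Third, integrate by parts in $x$ on $\bbR^4$: bi-invariance yields the Leibniz rule $\rd_i \brk{X, Y} = \brk{\covD_i X, Y} + \brk{X, \covD_i Y}$, so under the ``sufficiently regular'' hypothesis (which ensures that spatial boundary terms vanish)
\[
  \int \brk{\covD_i F_{sj}, F^{ij}} \, \ud x = - \int \brk{F_{sj}, \covD_i F^{ij}} \, \ud x = - \int \brk{F_{sj}, F_s{}^j} \, \ud x,
\]
where the last equality invokes \eqref{eq:cYMHF} in the form $\covD^i F_{ij} = F_{sj}$. Integrating the resulting ODE for $\spE(s)$ on $[s_0, s_1]$ then produces the claimed identity, modulo tracking the summation convention for the dummy index $\ell$ on the RHS.

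There is no genuine obstacle in the regular setting: the only items to verify are the permissibility of differentiating under the integral and the absence of boundary terms, both of which follow from the regularity assumption. The main care one must exercise is in bookkeeping signs and combinatorial factors from the antisymmetry of $F_{ij}$ and from the double summation $\brk{F_{ij}, F^{ij}}$. For the less regular solutions produced later in the paper, this identity would be established by approximation using the local well-posedness theory of Section~\ref{s:caloric-loc}.
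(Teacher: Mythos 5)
Your computation is correct and is exactly the ``direct computation'' the paper has in mind: the paper offers no further detail for this proposition, and your chain --- covariantize $\rd_s$ via bi-invariance, apply the Bianchi identity $\covD_s F_{ij}=\covD_i F_{sj}-\covD_j F_{si}$, integrate by parts, and substitute $F_{sj}=\covD^\ell F_{\ell j}$ --- is the standard argument. The only caveat, which you already flag, is the bookkeeping of the summation convention (full double sum in $\brk{F_{ij},F^{ij}}$ versus the single sum in the dissipation term), which is a normalization matter in the statement rather than a gap in your argument.
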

This is verified by a direct computation. Based on this relation, one
expects that well-behaved solutions to the curvature equations satisfy
\begin{equation*}
	F \in L^{\infty} L^{2}, \quad \covD F \in L^{2} L^{2}.
\end{equation*}
By the diamagnetic inequality, the latter bound implies $F \in L^{2} L^{4}$. Interpolating between this norm and $L^{\infty} L^{2}$, one obtains in particular
\begin{equation*}
	F \in L^{3} L^{3}.
\end{equation*}
This norm plays a key role in our 
analysis, as it serves as a continuation criteria for solutions. 
More precisely, we have the following:
\begin{proposition} \label{p:cov-smth}
Let $A$ be a sufficiently regular covariant Yang--Mills heat flow on the heat-time interval $J = [s_{0}, s_{1})$. Suppose that
\begin{equation}\label{l3}
	\nrm{F}_{L^{3}_{s}(J, L^{3}_{x})} \leq \hM < \infty.
\end{equation}
for some $\hM < \infty$. 
\begin{enumerate}
\item Then for any $n \geq 0$, there exists $\hM_{n}  = \hM_{n}(\nrm{F(s=s_{0})}_{L^{2}}, \hM) < \infty$ such that 
\begin{align} \label{est-cov}
\nrm{s^{n/2} \covD_{x}^{(n)} F}_{L^{\infty}_{\frac{\ud s}{s}} (J; L^{2} _{x})}
	+ \nrm{s^{(n+1)/2} \covD_{x}^{(n+1)} F}_{L^{2}_{\frac{\ud s}{s}} (J; L^{2} _{x})}
\aleq \hM_{n}.
\end{align}
\item Suppose, in addition, that $(A_{0}, A)$ is a sufficiently regular dynamic Yang--Mills heat flow on $J$. Then for any $n \geq 0$, there exists $\hM_{0, n} = \hM_{0, n}(\nrm{F_{0x}(s = s_{0})}_{L^{2}}, \nrm{F(s=s_{0}}_{L^{2}}, \hM) < \infty$ such that
\begin{align} \label{est-cov-0}
\nrm{s^{n/2} \covD_{x}^{(n)} F_{0j}}_{L^{\infty}_{\frac{\ud s}{s}} (J; L^{2} _{x})}
	+ \nrm{s^{(n+1)/2} \covD_{x}^{(n+1)} F_{0j}}_{L^{2}_{\frac{\ud s}{s}} (J; L^{2} _{x})}
\aleq \hM_{0, n}.
\end{align}
\end{enumerate}
\end{proposition}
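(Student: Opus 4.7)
My plan is to prove \eqref{est-cov} and \eqref{est-cov-0} by induction on $n$, using the covariant parabolic equations from Lemmas~\ref{lem:cymhf-eqns} and \ref{lem:cymhf-eqns+}, weighted $L^{2}_{x}$-energy estimates in the heat-time variable, and the hypothesis \eqref{l3} on $F$ as a smallness condition upon partitioning $J$. The overall scheme is close in spirit to the proof of Theorems~\ref{t:heatA} and \ref{t:heatB}, but performed directly on the curvature equations so that no gauge has to be fixed in advance.

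For the base case $n=0$ of part (1), I would pair \eqref{eq:YMHF-Fij} with $F^{ij}$ and integrate by parts in $x$ to obtain the differential inequality
\[
\frac{1}{2} \frac{d}{ds} \|F(s)\|_{L^{2}}^{2} + \|\covD F(s)\|_{L^{2}}^{2} \aleq \|F(s)\|_{L^{3}}^{3},
\]
so that integration over $J$ and \eqref{l3} give $\|F\|_{L^{\infty} L^{2}} + \|\covD F\|_{L^{2} L^{2}} \aleq \hM_{0}(\|F(s_{0})\|_{L^{2}}, \hM)$. This is exactly the $n=0$ case, since the weighted norms $L^{\infty}_{\ud s/s}$ and $L^{2}_{\ud s/s}$ collapse to the ordinary $L^{\infty}$ and $L^{2}$ norms when the weight $s^{n/2}$ is trivial.

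For the inductive step, I commute \eqref{eq:YMHF-Fij} with $n$ covariant spatial derivatives. The commutator identities $[\covD_{\alp}, \covD_{\bt}] = ad(F_{\alp \bt})$ produce a schematic covariant parabolic equation
\[
\covD_{s} \covD^{(n)} F - \covD^{\ell} \covD_{\ell} \covD^{(n)} F = \sum_{k + l \leq n} \covD^{(k)} F \cdot \covD^{(l)} F.
\]
Testing against $s^{n} \covD^{(n)} F$ and integrating by parts in $x$, the differentiation of the weight produces a term $n s^{n-1} \|\covD^{(n)} F\|_{L^{2}}^{2}$, which upon integration in $s$ is exactly the quantity controlled at the previous induction step. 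The lower-order nonlinearities in the sum ($l \leq n-1$) are treated by Sobolev $\dot{H}^{1}(\bbR^{4}) \hookrightarrow L^{4}$, applied to the scalar function $|\covD^{(k)} F|$ via the diamagnetic inequality $|\rd|B|| \leq |\covD B|$, together with interpolation of the form $\|G\|_{L^{3}}^{2} \aleq \|G\|_{L^{2}}^{2/3} \|\covD G\|_{L^{2}}^{4/3}$; Young's inequality then absorbs a small fraction of $s^{n} \|\covD^{(n+1)} F\|_{L^{2}}^{2}$ into the left-hand side.

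The main technical hurdle will be the top-order nonlinear term, which couples $F$ (known only in $L^{3}L^{3}$) with $\covD^{(n)} F$ itself. To close the bootstrap, I partition $J$ into finitely many subintervals on which $\|F\|_{L^{3}_{s}L^{3}_{x}}$ is smaller than a fixed absorption constant; the number of pieces depends only on $\hM$. On each subinterval the problematic term is genuinely small and absorbs into the parabolic gain, and the resulting estimates chain across subintervals via a Gronwall-type bookkeeping that produces the stated dependence of $\hM_{n}$ on $\|F(s_{0})\|_{L^{2}}$ and $\hM$. Part (2) follows by exactly the same scheme applied to \eqref{eq:YMHF-Fij+}, which is linear in $F_{0j}$ with coefficients depending on $F$ that are already controlled by part (1); at $n=0$ interpolation and Gronwall give $\|F_{0x}\|_{L^{\infty} L^{2}} \aleq \|F_{0x}(s_{0})\|_{L^{2}} \exp(C\hM^{3})$, and the higher-order inductive step is identical to that of part (1), now with the bilinear nonlinearity split into one copy of $F$ (controlled by part (1)) and one copy of $F_{0j}$ (controlled by the previous step).
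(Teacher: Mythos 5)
Your proposal is correct and follows essentially the same route as the paper: induction on the number of covariant derivatives, testing the curvature equation against $s^{n}\covD^{(n)}_{x}F$, splitting the nonlinearity into a top-order piece and lower-order pieces handled by covariant Sobolev/H\"older plus the induction hypothesis, and subdividing $J$ into finitely many subintervals (with reinitialization) to absorb the top-order term, with part (2) treated as a linear equation in $F_{0x}$ perturbed by coefficients controlled in part (1). The only (harmless) variation is that you gain smallness for the top-order term from $\nrm{F}_{L^{3}L^{3}}$ directly via Gagliardo--Nirenberg and a weighted H\"older in $s$, whereas the paper gains it from smallness of $\nrm{s^{1/2}\covD F}_{L^{2}_{\frac{ds}{s}}L^{2}}$ on subintervals; both yield constants depending only on $\nrm{F(s_{0})}_{L^{2}}$ and $\hM$.
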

Here the $L^3 L^{3}$ bound in the hypothesis is applied only to the spatial components of the 
curvature. However, the conclusion applies as well to the $F_{0j}$ components.

\begin{proof}
We prove each part in order. 
\pfstep{Proof of (1)}
We proceed by an induction argument. Assume that, for $0 \leq n' \leq n-1$, we have
\begin{equation} \label{eq:cov-est-ind}
	\sum_{0 \leq m \leq n'}
	\left( \nrm{s^{m/2} \covD_{x}^{(m)} F}_{L^{\infty}_{\frac{\ud s}{s}} (J; L^{2} _{x})}^{2}
	+ \nrm{s^{(m+1)/2} \covD_{x}^{(m+1)} F}_{L^{2}_{\frac{\ud s}{s}} (J; L^{2} _{x})}^{2} \right)
	\leq 2 \hM_{n'}^{2}
\end{equation}
for some $\hM_{n'} < \infty$, then we claim that \eqref{eq:cov-est-ind} holds for $n' = n$. In the base case $n =0$, we make no induction hypothesis.

For each fixed $i, j$, the curvature component $F_{i j}$ obeys
\begin{equation*}
	\covD_{s} F_{i j} - \lap_{A} F_{i j} = - 2 [\tensor{F}{_{i}^{k}}, F_{j k}].
\end{equation*}
Commuting with $\covD_{x}^{(n)}$, we obtain a schematic equation of the form
\begin{equation*}
	\covD_{s} \covD_{x}^{(n)} F_{i j} + \lap_{A} \covD_{x}^{(n)} F_{i j} = \sum_{n' = 0}^{n} [\covD_{x}^{(n')} F, \covD_{x}^{(n - n')}F].
\end{equation*}
Multiplying by $s^{n} \covD_{x}^{(n)} F_{i j}$ (using the bi-invariant inner product) and integrating over $\bbR^{4} \times (s_{1}, s_{2})$, we get
\begin{align*}
&\hskip-2em 
	\frac{1}{2} \int s_{2}^{n} \abs{\covD_{x}^{(n)} F_{i j}}^{2} (s_{2})\, \ud x 
	+ \int_{s_{1}}^{s_{2}} \int s^{n+1} \, \abs{\covD_{x}^{(n+1)} F_{i j}}^{2} \, \ud x \, \frac{\ud s}{s} \\
= & \ 	\frac{1}{2} \int s_{1}^{n} \abs{\covD_{x}^{(n)} F_{i j}}^{2} (s_{1})\, \ud x 
	+ n \int_{s_{1}}^{s_{2}} \int s^{n} \, \abs{\covD_{x}^{(n)} F_{i j}}^{2} \, \ud x \, \frac{\ud s}{s} \\
&	+ \sum_{n' = 0}^{n} \int_{s_{1}}^{s_{2}} \int s^{n+1}  \brk{[\covD_{x}^{(n')} F, \covD_{x}^{(n - n')} F], \covD_{x}^{(n)} F} \, \frac{\ud s}{s}
\end{align*}
Therefore,
\begin{align*}
&\hskip-2em  	
	\nrm{s^{n/2} \covD_{x}^{(n)} F}_{L^{\infty}_{\frac{\ud s}{s}} ((s_{1}, s_{2}); L^{2} _{x})}^{2}
	+ \nrm{s^{(n+1)/2} \covD_{x}^{(n+1)} F}_{L^{2}_{\frac{\ud s}{s}} ((s_{1}, s_{2}); L^{2} _{x})}^{2} \\
\leq & \ C \left(
	\nrm{s_{1}^{n/2} \covD_{x}^{(n)} F(s)}_{L^{2} _{x}}^{2}
	+ n \nrm{s^{n/2} \covD_{x}^{(n)} F}_{L^{2}_{\frac{\ud s}{s}} ((s_{1}, s_{2}); L^{2} _{x})}^{2}
	+ I_{n}^{2}
	\right)
\end{align*}
where
\begin{equation*}
	I_{n}(s_{1}, s_{2})
	= \sum_{n' = 0}^{n} \int_{s_{1}}^{s_{2}} \int s^{n+1} \brk{[\covD_{x}^{(n')} F, \covD_{x}^{(n - n')} F], \covD_{x}^{(n)} F} \, \frac{\ud s}{s}.
\end{equation*}

We decompose
\begin{align*}
	I_{n}(s_{1}, s_{2})
	= I_{n, high}(s_{1}, s_{2}) + I_{n, low}(s_{1}, s_{2})
\end{align*}
where
\begin{align*}
	I_{n, high}(s_{1}, s_{2})
= &	\int_{s_{1}}^{s_{2}} \int s^{n+1} \brk{[F, \covD_{x}^{(n)} F], \covD_{x}^{(n)} F} \, \frac{\ud s}{s}, \\
	I_{n, low}(s_{1}, s_{2})
= &	\sum_{0\leq n', n-n' < n}\int_{s_{1}}^{s_{2}} \int s^{n+1} \brk{[\covD_{x}^{(n')} F , \covD_{x}^{(n-n')} F ], \covD_{x}^{(n)} F} \, \frac{\ud s}{s}.
\end{align*}

Observe that $I_{n, low}$ is nontrivial only when $n \geq 2$. In that case, we estimate $I_{n, low}$ using H\"older and covariant Sobolev inequalities as follows:
\begin{align*}
	\abs{I_{n, low}(s_{1}, s_{2})}
\leq & C \left(\sum_{0 \leq n' < n} \nrm{s^{(n'+1)/2} \covD_{x}^{(n')} F}_{L^{2}_{\frac{\ud s}{s}} ((s_{1}, s_{2}); L^{4} _{x})} \right)^{2} \nrm{s^{n/2} \covD_{x}^{(n)} F}_{L^{\infty}_{\frac{\ud s}{s}} ((s_{1}, s_{2}); L^{2} _{x})} \\
\leq & C \left(\sum_{0 \leq n' < n} \nrm{s^{(n'+1)/2} \covD_{x}^{(n'+1)} F}_{L^{2}_{\frac{\ud s}{s}} ((s_{1}, s_{2}); L^{2} _{x})} \right)^{2} \nrm{s^{n/2} \covD_{x}^{(n)} F}_{L^{\infty}_{\frac{\ud s}{s}} ((s_{1}, s_{2}); L^{2} _{x})}
\end{align*}
By the induction hypothesis,
\begin{align*}
	\abs{I_{n, low}(s_{1}, s_{2})}
\leq & C \hM_{n}^{2} \nrm{s^{n/2} \covD_{x}^{(n)} F}_{L^{\infty}_{\frac{\ud s}{s}} ((s_{1}, s_{2}); L^{2} _{x})}
\end{align*}
which is acceptable since it is linear in $F$.

On the other hand, for $I_{n, high}$ we proceed differently in the base case $n = 0$ and the inductive case $n > 0$. In the base case, we simply have
\begin{align*}
\abs{I_{n, high}(J)} 
\leq C \nrm{F}_{L^{3}_{s} (J; L^{3}_{x})}^{3},
\end{align*}
and $I_{n, low} = 0$, so the desired conclusion \eqref{eq:cov-est-ind} follows with $n = 0$.

In the inductive case $n > 0$, we estimate
\begin{align*}
  & \hskip-2em
  \abs{I_{n, high}(s_{1}, s_{2})} \\
  \leq & C \nrm{s^{1/2} F}_{L^{2}_{\frac{\ud s}{s}} ((s_{1}, s_{2}); L^{4} _{x})} \nrm{s^{(n+1)/2} \covD_{x}^{(n)} F}_{L^{2}_{\frac{\ud s}{s}} ((s_{1}, s_{2}); L^{4} _{x})} \nrm{s^{n/2} \covD_{x}^{(n)} F}_{L^{\infty}_{\frac{\ud s}{s}} ((s_{1}, s_{2}); L^{2} _{x})} \\
  \leq & C \nrm{s^{1/2} \covD_{x} F}_{L^{2}_{\frac{\ud s}{s}} ((s_{1},
    s_{2}); L^{2} _{x})} \nrm{s^{(n+1)/2} \covD_{x}^{(n+1)}
    F}_{L^{2}_{\frac{\ud s}{s}} ((s_{1}, s_{2}); L^{2} _{x})}
  \nrm{s^{n/2} \covD_{x}^{(n)} F}_{L^{\infty}_{\frac{\ud s}{s}}
    ((s_{1}, s_{2}); L^{2} _{x})}.
\end{align*}
Therefore, this term can be absorbed into the LHS if
\begin{equation} \label{eq:cov-est-small}
C \nrm{s^{1/2} \covD_{x} F}_{L^{2}_{\frac{\ud s}{s}} ((s_{1}, s_{2}); L^{2} _{x})} \ll 1.
\end{equation}
By the induction hypothesis, $\nrm{s^{1/2} \covD_{x} F}_{L^{2}_{\frac{\ud
      s}{s}} (J; L^{2} _{x})} \leq \hM_{0} < \infty$, so the interval
$J$ can be split into $O(\hM_{0})$-many of intervals on each of which
\eqref{eq:cov-est-small} holds. Reinitializing data at every (left)
endpoint of these intervals, we obtain the conclusion
\eqref{eq:cov-est-ind} for $n' = n$.

\pfstep{Proof of (2)}
As in (1), we again proceed by an induction. Here, the key point is that the equation obeyed by $F_{0x}$, namely
\begin{equation*}
	\covD_{s} F_{0j} - \lap_{A} F_{0j} = - 2[\tensor{F}{_{0}^{k}}, F_{jk}]
\end{equation*}
is linear in $F_{0x}$. The contribution of the RHS can be treated perturbatively, using the bound proved in (1), and splitting $J$ into small intervals to gain smallness of $F$. We leave the details to the reader.
\end{proof}

While in these bounds we cannot substitute covariant derivatives by
regular derivatives, we do have the corresponding $L^p$ bounds which would normally follow 
from Sobolev embeddings:

\begin{corollary}
Under the same assumptions as the previous proposition, we also have
\begin{equation}\label{GNS}
\| s^{ 2(\frac12 -\frac1q)+(\frac14-\frac1p)}  F_{\alp \bt} \|_{L^p_{\frac{ds}s}(J;L^q)} \lesssim_{\hM} 1, \qquad \frac1p + \frac2q \leq \frac12
\end{equation}
respectively
\begin{equation}\label{GNS1}
\| s^{ \frac{n}2 + 2(\frac12 -\frac1q)+(\frac14-\frac1p)}  \covD_{x}^{(n)} F_{\alp \bt} \|_{L^p_{\frac{ds}s}(J;L^q)} \lesssim_{\hM} 1, \qquad 
p,q \geq 2, \quad n \geq 1.
\end{equation}
\end{corollary}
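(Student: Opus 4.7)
To prove the corollary, I would combine the covariant Gagliardo--Nirenberg--Sobolev (GNS) inequality on $\bbR^4$ with the parabolic regularity bounds \eqref{est-cov} from Proposition~\ref{p:cov-smth}, and conclude using H\"older's inequality in the heat-time variable.

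The starting point is the covariant GNS inequality: for an integer $k\geq 1$ with $k\geq 2-4/q$ and $q \in [2, \infty)$,
\begin{equation*}
\|F(s)\|_{L^q_x} \lesssim \|F(s)\|_{L^2_x}^{1-\theta}\|\covD_x^{(k)} F(s)\|_{L^2_x}^{\theta}, \qquad \theta = \frac{2 - 4/q}{k}.
\end{equation*}
This follows from the classical GNS applied to the scalar function $|F|$ together with the diamagnetic inequality $|\nabla|F|| \leq |\covD F|$, in its iterated or higher-order form for $k \geq 2$; equivalently, one can use covariant Sobolev embedding directly on the $\g$-valued form $F$, modulo commutator terms controlled by $A$. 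Multiplying through by the scale-invariant weight $s^{k\theta/2} = s^{1-2/q}$ recasts the inequality as
\begin{equation*}
s^{1-2/q}\|F(s)\|_{L^q_x} \lesssim \|F(s)\|_{L^2_x}^{1-\theta} \left(s^{k/2}\|\covD_x^{(k)} F(s)\|_{L^2_x}\right)^{\theta}.
\end{equation*}

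Next, I would take the $L^p_{ds/s}(J)$-norm of both sides and apply H\"older's inequality in $s$. Using the $L^{\infty}_{ds/s} L^2$ bound on $F$ and the $L^2_{ds/s} L^2$ bound on $s^{k/2}\covD_x^{(k)} F$ supplied by \eqref{est-cov} (at orders $0$ and $k-1$ respectively), the right-hand side closes when $p\theta = 2$, yielding the endpoint estimate on the heat-Strichartz line $1/p + 2/q = 1$. Varying $k$ (for each $q$ in the admissible range) and interpolating covers a full family of such endpoints. For $(p,q)$ in the more restrictive range $1/p + 2/q \leq 1/2$ required by \eqref{GNS}, I would apply a further H\"older step in $s$ to pass from the endpoint exponent to the claimed exponent $p$; the excess weight this generates is precisely the factor $s^{1/4 - 1/p}$ appearing in the statement.

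The higher-order bound \eqref{GNS1} follows by the same argument with $\covD_x^{(n)} F$ in place of $F$, now invoking \eqref{est-cov} at orders $n$ and $n+k$ to control $\|\covD_x^{(n)} F\|_{L^{\infty}_{ds/s} L^2}$ and $\|\covD_x^{(n+k)} F\|_{L^2_{ds/s} L^2}$; the only change is a shift of the weight by $s^{n/2}$. The main labor in either case is the exponent bookkeeping to ensure that the successive H\"older inequalities close with the precise weights asserted. No analytic ingredient beyond the covariant GNS inequality, the diamagnetic inequality, and Proposition~\ref{p:cov-smth} is needed.
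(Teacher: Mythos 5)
Your overall route is the same one the paper intends: the diamagnetic inequality, a (covariant) Gagliardo--Nirenberg interpolation, the weighted parabolic bounds \eqref{est-cov} (and \eqref{est-cov-0} for the $F_{0j}$ components), and a H\"older pairing in heat-time of the $L^{\infty}_{\frac{ds}{s}}L^{2}$ and $L^{2}_{\frac{ds}{s}}L^{2}$ ingredients. Your first two steps are correct and give, for each $q$ and each admissible integer $k$, the bound $\| s^{2(\frac12-\frac1q)} F\|_{L^{p}_{\frac{ds}{s}}(J;L^{q})}\aleq 1$ along the curves $\frac1p=\frac1k(1-\frac2q)$; combining these with the $p=\infty$ endpoints (obtained by the same GNS step from the $L^{\infty}_{\frac{ds}{s}}L^{2}$ bounds in \eqref{est-cov}) and interpolating in the heat-time exponent at fixed $q$ already covers the whole region $\frac1p+\frac2q\le\frac12$ (indeed $k=2$ suffices), and the identical argument applied to $\covD_x^{(n)}F$ gives the higher-order analogue with the extra weight $s^{n/2}$.

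The genuine gap is your final step. A ``further H\"older step in $s$'' cannot generate the factor $s^{\frac14-\frac1p}$: with respect to the scale-invariant measure $\frac{ds}{s}$ on an arbitrary (possibly unbounded) interval $J$, no pure power of $s$ lies in any $L^{r}$, so H\"older cannot trade heat-time integrability for such a weight; moreover the exponent $\frac14-\frac1p$ changes sign across the stated range ($p\lessgtr 4$), so no single H\"older application could produce it in both regimes. In fact, since the implicit constant is only allowed to depend on the scale-invariant quantities $\hM$ and the initial energy, the rescaling $F\mapsto\lambda^{2}F(\lambda x,\lambda^{2}s)$, $J\mapsto\lambda^{-2}J$ shows that the only heat-time weight for which such a uniform bound can hold is the scale-invariant one, namely $s^{2(\frac12-\frac1q)}$ for $F$ and $s^{\frac n2+2(\frac12-\frac1q)}$ for $\covD_{x}^{(n)}F$. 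So the correct completion of your argument is the interpolation step you already have, yielding the scale-invariantly weighted family (which is what the subsequent applications, performed on dyadic heat-time blocks where weights are interchangeable up to powers of the block scale, actually require); the additional factor $s^{\frac14-\frac1p}$ is not something your H\"older step, or any global-in-$s$ argument with these hypotheses, can deliver.
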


\begin{proof}
This follows from the diamagnetic inequality $\rd_{j} \abs{F_{\alp \bt}} \leq \abs{\covD_{j} F_{\alp \bt}}$ and the standard Sobolev embeddings; we leave the details to the reader.
\end{proof}

\subsection{Main results in the local caloric gauge}
To prove local solvability for the Yang--Mills heat flow we need to fix the gauge.
One natural choice in this context is the de Turck gauge,
\begin{equation} \label{eq:deturck}
	A_{s} = \rd^{\ell} A_{\ell},
\end{equation}
With this gauge choice, the covariant Yang--Mills heat flow equations
\eqref{eq:cYMHF} are reduced to a genuinely \emph{parabolic}
semilinear system, and their local theory is relatively
straightforward.  Unfortunately, it is not clear to us whether this
gauge leads to global solutions in the large data case, rather than
gauge related singularities. Part of the difficulty is that this flow 
is nontrivial even for flat connections, where it corresponds to a
critical harmonic map heat flow into the Lie group $\G$.

In order to avoid such difficulties we will work from the start in the the local caloric gauge,
\begin{equation} \label{eq:i-caloric}
	A_{s} = 0.
\end{equation}
The Yang--Mills heat flow written in the local caloric gauge takes the form
\begin{equation}\label{caloric-re}
\partial_s A_i = \covD^{\ell} \covD_{\ell} A_i - \covD^{\ell}\partial_i A_\ell , \qquad A_i(0) = a_i.
\end{equation}
Here one can see the downside of working in this gauge, namely that our evolution is only degenerate
parabolic. This will cause some small difficulties with the local theory, but has the chief advantage 
that it is very well suited for the global theory. 

The non-parabolic component of the above system is captured in the
evolution of $\partial^k A_k$, which we capture here for later use:
\begin{equation}\label{DA-rep}
\partial_s \partial^\ell A_\ell = - [A^k, \covD^\ell F_{\ell k}].
\end{equation}

This evolution retains some gauge freedom, namely that which 
corresponds to purely spatial (i.e. $s$ independent) gauge transformations.
Later in this section we will  take advantage of this gauge freedom to construct our caloric gauge
for the hyperbolic Yang--Mills equation.

In the same vein, the linearized Yang--Mills heat flow written in the local caloric gauge
$B_s = 0$ has the form 
\begin{equation}\label{lin-heat}
\partial_s B_i = \covD^j ( \covD_j B_i - \covD_i B_j)  +  [B^j,F_{ji}] , \qquad B_i(0) = b_i.
\end{equation}
We introduce the notation
\begin{equation}
	(\curl_{A} B)_{ij} = \covD_{i} B_{j} - \covD_{j} B_{i}.
\end{equation}

In addition to the above gauge transformations $B \to OBO^{-1}$, here we have the additional
gauge freedom arising from pure gauge variations of $A$, namely\footnote{This is the linearized gauge freedom \eqref{eq:cYMHF-lin-gt} of \eqref{eq:cYMHF-lin} under the additional restriction $O_{;s}= 0$ in order to keep the local caloric gauge condition $A_{s} = 0$.}
\begin{equation}
B_i \to B_i - \covD_i C
\end{equation}
where $C$ is again independent of $s$ (though its covariant derivatives are not).

We repeat this discussion with the $A_0$ equation of the dynamic Yang--Mills heat flow, which takes the form
\begin{equation}\label{A0-cal}
\partial_s A_0 = \covD^{\ell} F_{\ell 0},
\end{equation}
where $F_{0\ell}$ are uniquely determined in terms of their initial data
from the equations
\begin{equation}\label{F0l}
\partial_s F_{0\ell} - \Delta_A F_{0\ell} = - 2 [\tensor{F}{_{0}^{k}},F_{\ell k}] .
\end{equation}
The gauge freedom here is even simpler,
\begin{equation}
A_0 \to A_0 +  C,
\end{equation}
where $C = C(x)$ is independent of the heat-time $s$.

The Yang--Mills heat flow written in the form \eqref{caloric-re} has the
disadvantage that the principal part $\lap A_{i} - \rd_{i} \rd^{\ell}
A_{\ell}$ of the RHS is not strictly negative-definite; hence the
principal part does not exhibit (forward in heat-time) smoothing for
the whole $A_{i}$.  Instead, at the leading order this equation
decouples into a nondegenerate parabolic equation for the curl of $A$,
coupled with an ODE evolution for the divergence of $A$. The same
considerations apply to the linearized equation. 

Unfortunately we cannot take advantage of this decoupling directly.
However, a carefully covariant version  of it turns out to be effective, and 
leads us to the critical $\dot H^1$ well-posedness theory
for the system \eqref{caloric-re}.

From the above discussion we retain the special role played
by the divergence of $A$, which in general gains no regularity in time.
For later use, we will also consider solutions for which the divergence 
of $A$ is more regular, and extend the well-posedness theory to the 
space
\[
\bfH = \{ a \in \dot H^1: \partial^{\ell} a_{\ell} \in \ell^1 L^2\}.
\]

The same considerations as above apply to the linearized equation \eqref{lin-heat}.
On the other hand, the evolution \eqref{F0l}  of $F_{0\ell}$ is nondegenerate parabolic.
For this reason we will use a roundabout way to obtain solutions
to the linearized flow. Precisely, we follow the following algorithm:

\begin{itemize}
\item We initialize $a_0 = 0$ and $f_{0\ell} = b_\ell$ at $s = 0$.
\item We solve the parabolic equation for $F_{0j}$.
\item We obtain $A_0$ by integrating \eqref{A0-cal} in heat-time.
\end{itemize}

As discussed in Section~\ref{subsec:dymhf}, this may be though of as an infinitesimal de Turck trick for \eqref{lin-heat}.

Our main local well-posedness result for the Yang--Mills heat flow in
the local caloric gauge is as follows:

\begin{theorem} \label{t:hf-loc}
  The Yang--Mills heat flow in the local caloric gauge is locally well-posed
  in both $\dot H^1$ and $\bfH$, with locally Lipschitz dependence on the
  initial data. The same result holds in $\dot H^1 \cap H^\sgm$ for all $\sgm \geq 1$.
\end{theorem}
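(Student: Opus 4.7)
The plan is to combine existence via the de Turck gauge, where the flow \eqref{YM-h-deTurck} is strictly parabolic, with continuity and uniqueness in the local caloric gauge obtained through the \emph{infinitesimal de Turck trick} on the linearization, as sketched in Section~\ref{subsec:dymhf}. First I would construct classical solutions by solving \eqref{YM-h-deTurck} for smooth (say $\dot H^1 \cap H^\sigma$ with $\sigma$ large) data as a standard semilinear parabolic system. A single $s$-dependent gauge transformation $O(s,x)$ with $O(0)=\mathrm{Id}$, obtained by pointwise integration of the ODE $(\partial_s O)\, O^{-1} = \partial^\ell \tilde A_\ell$ along the de Turck solution $\tilde A$, conjugates $\tilde A$ into the local caloric gauge and produces a classical solution $A$ to \eqref{caloric-re}.

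To descend to critical regularity $a\in\dot H^1$ (and to $\bfH$), I would approximate $a$ by smooth truncations $a^n$ with classical solutions $A^n$ and propagate uniform bounds through the linearized flow \eqref{lin-heat}, analyzed not directly but through the infinitesimal de Turck algorithm stated before the theorem. Namely, I augment $A^n$ with $A_0^n$ so that $(A_0^n, A^n)$ is a dynamic Yang--Mills heat flow; by Lemma~\ref{lem:cymhf-eqns+}, the curvature $F_{0j}$ satisfies the strictly covariantly parabolic equation
\begin{equation*}
\covD_s F_{0j} - \covD^\ell \covD_\ell F_{0j} = -2\LieBr{\tensor{F}{_0^k}}{F_{jk}},
\end{equation*}
with initial datum $F_{0j}(0) = b_j$. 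This equation is exactly of the form covered by Theorems~\ref{t:heatA} and \ref{t:heatA-fe} on the background $A^n$, whose curvatures are controlled by Proposition~\ref{p:cov-smth}. Once $F_{0j}$ is bounded in $L^\infty L^2 \cap L^2 \dot H^1$, $A_0$ is recovered by integrating \eqref{A0-cal} in heat-time, with $\covD^\ell F_{\ell 0}$ estimated via Theorem~\ref{t:deltaA}, and then $B_j = F_{0j} + \covD_j A_0$ provides the linearized bound and the $C^1$ dependence of the data-to-solution map.

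The main obstacle, reflecting the degeneracy of \eqref{caloric-re}, is that the divergence $\partial^\ell A_\ell$ satisfies the pure transport equation \eqref{DA-rep} and gains no heat smoothing. At $\dot H^1$ one thus splits estimates on $A$ into a curl part, controlled parabolically via Lemma~\ref{lem:cymhf-eqns} and Proposition~\ref{p:cov-smth}, and a divergence part, propagated only by \eqref{DA-rep}; one then checks that the cubic right-hand side $[A^k,\covD^\ell F_{\ell k}]$ closes at the critical scaling. At $\bfH$ regularity, one uses dyadic $\ell^1$-summability and the $\ell^1$ clause of Theorem~\ref{t:deltaA} to absorb the divergence into $\ell^1 L^2$; this is also where frequency envelopes with the compatibility conditions from Section~\ref{sec:notations} become essential. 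Higher regularity in $\dot H^1 \cap H^\sigma$ for $\sigma > 1$ follows by persistence, since the linear parabolic estimate \eqref{parab-lin} is available on the full range $-2 < \sigma < 2$ and can be iterated via successive $\covD$-differentiations combined with the covariant $L^p$ bounds \eqref{GNS}--\eqref{GNS1}.

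Finally, uniqueness and local Lipschitz dependence follow by applying the same infinitesimal de Turck analysis to the difference of two solutions, treating it as a linearized heat flow with perturbative source controlled by the background curvatures; here the covariant parabolicity of $F_{0j}$ is exactly what converts the \emph{degenerate} equation \eqref{caloric-re} into a tractable parabolic problem on the level of the linearization. The hardest technical point in the whole proof will be closing all of these bounds simultaneously at the endpoint $\dot H^1$ (and $\bfH$) regularity, since there one has no slack in either the parabolic or the elliptic Sobolev embeddings, and the interplay between the frequency envelope compatibility in Theorems~\ref{t:deltaA-fe} and \ref{t:heatA-fe} and the ODE-type evolution \eqref{DA-rep} of the divergence must be tuned carefully.
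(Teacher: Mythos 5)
Your proposal follows the paper's own strategy in all of its essential parts: the critical-regularity theory is run exactly as in Section~\ref{s:caloric-loc}, namely the linearization is analyzed through the infinitesimal de Turck trick (solve the nondegenerate covariant parabolic equation for $F_{0j}$ via Theorems~\ref{t:heatA}, \ref{t:heatA-fe}, integrate \eqref{A0-cal} to recover $A_0$, set $B_j = F_{0j} + \covD_j A_0$, cf.\ Lemma~\ref{l:lin}), rough $\dot H^1$ and $\bfH$ solutions are obtained as limits of smooth truncations with differences controlled by the linearized bounds (Lemma~\ref{p:ext-h1}), the divergence is propagated through the transport identity \eqref{DA-rep} with $\ell^1$ summation for $\bfH$ (Lemma~\ref{l:da}), and higher regularity and Lipschitz dependence come from the frequency-envelope versions (Lemmas~\ref{l:lin-fe}, \ref{l:nolin}). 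The one place where you genuinely deviate is the construction of regular solutions: you build them in the de Turck gauge \eqref{YM-h-deTurck}, where the system is strictly parabolic, and then conjugate back by integrating an ODE in heat-time, whereas the paper (Lemma~\ref{l:h3-loc}) never leaves the local caloric gauge and instead runs a fixed point directly on the mixed parabolic/transport system for $(\curl A, \div A)$ in $(L^\infty H^2 \cap L^2 H^3) \times L^\infty H^2$. Your route is legitimate for smooth data and short heat-time — the de Turck flow is locally well-posed there (Theorem~\ref{thm:deturck}), and the gauge change is a benign ODE — and it buys you an off-the-shelf strictly parabolic local theory; the paper's route buys a lifespan depending only on the caloric-gauge data size, avoids importing the (unproved here) de Turck theorem, and keeps all subsequent estimates in a single gauge. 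Two small points to fix if you pursue your version: the gauge-fixing ODE should read $O^{-1}\partial_s O = \partial^\ell \tilde A_\ell$ (with the convention $A_s \to O A_s O^{-1} - \partial_s O\, O^{-1}$, your equation $(\partial_s O)O^{-1} = \partial^\ell \tilde A_\ell$ does not annihilate the new $A_s$), and for the uniform bounds on the approximating family you still need the $\dot H^1$ a~priori bound of Lemma~\ref{l:h1} (obtained by shrinking the interval so that $\nrm{F}_{L^3L^3}$ is under control) before Theorems~\ref{t:heatA}, \ref{t:heatA-fe} apply on the background $A^n$; as written you invoke only Proposition~\ref{p:cov-smth}, which controls curvature but not $A$ itself.
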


To study the long time behavior of solutions it is useful to have 
bounds which depend only on the $L^3 L^{3}$ norm of $F$:

\begin{theorem}\label{t:ymhf-l3}
Let $a \in \dot H^1$,   and $A$ be the corresponding $\dot H^1$
  solution for the Yang--Mills heat flow in the local caloric gauge on
  a time interval $J= [0,s_0)$. Assume that, for some $\hM, M_{1} > 0$, the $L^3 L^{3}$ curvature 
bound  \eqref{l3} holds in $J$ and that
\begin{equation} \label{eq:h1-hyp}
	\nrm{a}_{\dot{H}^{1}} \leq M_{1}.
\end{equation}
\begin{enumerate}
\item  Then
  we have the uniform bound
\begin{equation}
\| A\|_{L^\infty \dot H^1} + \| F\|_{L^\infty L^2 \cap L^2 \dot H^1} + \| \partial_s A\|_{L^2 L^{2}}  \lesssim_\hM \| a\|_{\dot H^1}.
\end{equation}
as well as the similar $\bfH$ bound.

\item Let $b \in \dot H^\sigma$ be a corresponding linearized data.
 Then  we have the uniform bound
\begin{equation}
\| B\|_{L^\infty \dot H^\sigma} + \| \mathrm{curl}_{A} B\|_{L^2 \dot H^\sigma}  \lesssim_{\hM,M_1} \| b\|_{\dot H^\sigma}, 
\qquad -2 < \sgm < 2 ,
\end{equation}
as well as the similar $\bfH$ bound.

\item Assume in addition that $a \in \dot H^{\sigma}$ for some $\sigma > 1$. Then 
 we have the uniform bound
\begin{equation}
\| A\|_{L^\infty \dot H^{\sigma}} + \| F\|_{L^\infty \dot H^{\sigma-1} \cap L^2 \dot H^{\sigma}} + 
\| \partial_s A\|_{L^2 \dot H^{\sigma-1} }  \lesssim_{M,M_1} \| a\|_{\dot H^{\sigma}}.
\end{equation}
Also for the linearized equation we have 
\begin{equation}
\| B\|_{L^\infty \dot H^\sigma} + \| \mathrm{curl}_{A} B\|_{L^2 \dot H^\sigma}  \lesssim_{\hM,M_1} \| b\|_{\dot H^\sigma}
+ \| b\|_{\dot H^1} \|a\|_{\dot H^\sigma}.
\end{equation}
\end{enumerate}
\end{theorem}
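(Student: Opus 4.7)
The overall plan is to reduce matters to the covariant curvature bounds of Proposition~\ref{p:cov-smth} together with the parabolic and elliptic solvability results of Section~\ref{sec:cov}, and to handle the degeneracy of the local caloric gauge flow \eqref{caloric-re} at the level of $\partial^\ell A_\ell$ by the infinitesimal de Turck trick outlined in Section~\ref{subsec:dymhf}.

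For Part~(1), the Sobolev embedding $\dot H^1(\bbR^4) \hookrightarrow L^4$ yields $\|f\|_{L^2} \aleq M_1 + M_1^2$, so Proposition~\ref{p:cov-smth} applies and furnishes $\|F\|_{L^\infty L^2} + \|\covD F\|_{L^2 L^2} \aleq_{\hM,M_1} 1$ together with higher covariant analogs. The flow equation $\partial_s A_i = \covD^\ell F_{\ell i}$ then gives $\|\partial_s A\|_{L^2 L^2}$ immediately. For $A$ itself in $L^\infty \dot H^1$, I would run a bootstrap continuity in $s$: assume an a priori $L^\infty \dot H^1$ bound and improve it. The gain comes from a Hodge-type decomposition $\|A\|_{\dot H^1}^2 = \|\mathrm{d} A\|_{L^2}^2 + \|\partial^\ell A_\ell\|_{L^2}^2$, where the curl part $\mathrm{d} A = F - [A \wedge A]$ is controlled by $F$ plus a Sobolev nonlinear term absorbed by the bootstrap, and the divergence evolves by \eqref{DA-rep} and is integrated in $s$ via a paraproduct analysis of $[A^k, \covD^\ell F_{\ell k}]$ using the $L^p$ parabolic regularity of Section~\ref{subsec:heat-Lp}. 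The $\bfH$-upgrade comes from tracking Littlewood--Paley pieces in $\ell^1 L^2$ across the divergence integration.

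For Part~(2), the linearized flow \eqref{lin-heat} is only degenerate parabolic at the level of $\partial^\ell B_\ell$, so Theorem~\ref{t:heatA} does not apply directly to $B$. I would instead perform the infinitesimal de Turck trick: augment $A$ by an auxiliary $A_0(s)$ with $a_0 = 0$ evolving by $\partial_s A_0 = \covD^\ell F_{\ell 0}$, and work with $F_{0j} = B_j - \covD_j A_0$. By Lemma~\ref{lem:cymhf-eqns+}, $F_{0j}$ obeys the nondegenerate covariant parabolic equation
\begin{equation*}
\covD_s F_{0j} - \Delta_A F_{0j} - 2\,\mathrm{ad}(\tensor{F}{_j^\ell})F_{0\ell} = 0, \qquad F_{0j}(s=0) = b_j,
\end{equation*}
whose coefficients $A, F$ have been controlled in Part~(1). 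Theorem~\ref{t:heatA} at regularity $\sigma \in (-2, 2)$ then yields
\begin{equation*}
\|F_{0j}\|_{L^\infty \dot H^\sigma} + \|\covD F_{0j}\|_{L^2 \dot H^\sigma} \aleq_{\hM, M_1} \|b\|_{\dot H^\sigma}.
\end{equation*}
Integrating $\partial_s A_0 = \covD^\ell F_{\ell 0}$ in $s$ and invoking Theorem~\ref{t:heatA-L1} controls $\covD A_0$ with one derivative gained over $F_{0j}$, so the reconstruction $B_j = F_{0j} + \covD_j A_0$ inherits the required bounds, and $\mathrm{curl}_A B$ differs from $\mathrm{curl}_A F_{0\cdot}$ only by a commutator $[F, A_0]$ that is absorbed into the nonlinear bounds.

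For Part~(3), persistence of higher regularity is obtained by iterating the above scheme at higher Sobolev indices: apply Proposition~\ref{p:cov-smth} at higher $n$ for the nonlinear problem, and Theorem~\ref{t:heatA} at higher $\sigma$ for the linearized problem. The extra term $\|b\|_{\dot H^1}\|a\|_{\dot H^\sigma}$ in the linearized bound reflects paraproduct-type cross interactions between the higher-regularity coefficients (built from $a$ at $\dot H^\sigma$) and the lower-regularity $\|b\|_{\dot H^1}$ norm of the solution. The principal technical obstacle, in my view, is closing the bootstrap in Part~(1): one must reconstruct non-covariant $\dot H^1$ and $\bfH$ control of $A$ from gauge-invariant curvature bounds without derivative losses, and this reconstruction rests on a careful dyadic analysis of $[A^k, \covD^\ell F_{\ell k}]$ in the divergence equation, leveraging Theorem~\ref{t:deltaA-fe} and the $L^p$ covariant parabolic theory of Section~\ref{subsec:heat-Lp}.
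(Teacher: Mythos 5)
There is a genuine gap in your Part~(1). You propose to recover $\|A\|_{L^\infty \dot H^1}$ from a Hodge splitting, writing $\mathrm{d} A = F - [A\wedge A]$ and claiming the quadratic term is ``absorbed by the bootstrap''. But $\|[A\wedge A]\|_{L^2} \aleq \|A\|_{L^4}^2 \aleq \|A\|_{\dot H^1}^2$, which for large data (arbitrary $M_1$) is of the same size as, or larger than, the quantity you are bootstrapping; a quadratic term of size $\sim M_1^2$ cannot be absorbed into a bound of size $\sim M_1$, and continuity in $s$ does not help because the quantity is already large at $s=0$. This is precisely why the paper never tries to invert the map $F \mapsto A$: in Lemma~\ref{l:h1} one writes $A(s)-a = \int_0^{s}\covD^\ell F_{\ell\, \cdot}\,d\tilde s$ and estimates this integral directly, first in $L^4$ (a quadrilinear integral, integrating by parts and summing dyadically in the heat-times using the covariant space-time bounds \eqref{GNS}, \eqref{GNS1} from Proposition~\ref{p:cov-smth}), and then in $\dot H^1$, where the commutator $[A,\covD F]$ is handled with the just-established $L^4$ bound for $A$. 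No bootstrap and no div-curl reconstruction is needed; the $\bfH$ bound then follows, as you correctly anticipate, by integrating \eqref{DA-rep} in heat-time (Lemma~\ref{l:da}), which also gives $\partial_s A\in L^2L^2$ from $\covD F\in L^2L^2$.

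Your Part~(2) does use the paper's key idea (the infinitesimal de Turck trick: solve the nondegenerate covariant heat equation for $F_{0j}$, integrate $\rd_s A_0=\covD^\ell F_{\ell 0}$, set $B_j=F_{0j}+\covD_j A_0$), but your claim that this single argument covers the full range $-2<\sgm<2$ is not correct as stated. The step controlling $A_0$ rests on putting the right-hand side of the equation for $\covD^j F_{0j}$ in $L^1\dot H^{\sgm-1}$ and on Theorem~\ref{t:heatA-L1}, which is only valid for $-1<\sgm<1$; moreover converting $A_0\in \dot H^{\sgm+1}$ into $\covD A_0\in\dot H^{\sgm}$ uses the product estimate $\dot H^{\sgm+1}\cdot\dot H^1\subset\dot H^{\sgm}$, which also fails outside $|\sgm|<1$. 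The paper therefore supplements this core case with two further arguments: for $1\le\sgm<2$ it applies the $|\sgm|<1$ result to $\covD^\ell B$ (treating the extra $B$-dependent terms perturbatively after splitting the heat-time interval using divisible norms of $F$), and for $-2<\sgm\le -1$ it argues by duality with the adjoint (backward) linearized flow. Your Part~(3) is also thinner than what is needed: the paper proves it through the frequency envelope version (Theorem~\ref{t:ymhf-l3-fe}, via Lemmas~\ref{l:lin-fe} and \ref{l:nolin}, exploiting that $\rd_x A$ solves the linearized equation), together with Lemma~\ref{l:h3} for $A$; simply invoking Proposition~\ref{p:cov-smth} at higher $n$ and Theorem~\ref{t:heatA} ``at higher $\sgm$'' does not produce the stated bound with the cross term $\|b\|_{\dot H^1}\|a\|_{\dot H^\sgm}$, nor does Theorem~\ref{t:heatA} even apply beyond $\sgm<2$.
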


These bounds assert that the solution to data map for the Yang--Mills heat flow in the local caloric gauge 
is  uniformly Lipschitz in $\dot H^1$, $\bfH$ and $\dot H^1 \cap \dot H^\sgm$ ( $\sgm > 1$) 
on bounded sets in $\dot H^1$ for as long as the $L^{3} L^3$ norm of $F$ remains controlled.

We will prove parts (1) and (2) directly. However, for part (3) we will instead establish a stronger 
and more accurate frequency envelope version of the above result.  For this, we will use the following 
notation:
\begin{itemize}
\item  $c_k$ is a $(-1,S)$  frequency envelope for the connection $a$ in $\dot H^1$. 
\item $d_k$ is a  $(-1,S)$  frequency envelope for the linearized data $b$ in $L^2$.
\end{itemize}
If $S =  2$ then these envelopes can be taken independently of each other. For larger $S$, we require that $d_{k}$ be $2$-compatible\footnote{In fact, any $\sgm$-compatibility with $\sgm > 1$ would do. The reason for this range is that it allows us to treat the linearized equation simultaneously at the two regularities that we are interested in, namely $\dot{H}^{1}$ and $L^{2}$.} with $c_{k}$.

With the above notation, we have:
\begin{theorem}\label{t:ymhf-l3-fe}
Let $a \in \dot H^1$ with $(-1, S)$ frequency envelope $c_k$, and $b \in L^2$   
with $2$-compatible $(-1, S)$ frequency envelope $d_k$. Let  $A$, $B$ be the corresponding solutions for the 
Yang--Mills heat flow, respectively its linearization around $A$, in the local caloric gauge on
a heat-time interval $J= [0,s_0)$. Assume that  the $L^{3} L^3$ curvature 
bound  \eqref{l3} holds in $J$, and also that \eqref{eq:h1-hyp} holds.  
Then we have the  frequency envelope bounds
\begin{equation}\label{ymhflc-feA}
\| P_k A\|_{L^\infty \dot H^1} +  \| P_k F\|_{L^\infty L^2 \cap L^2 \dot H^1}  + \| P_k \covD F\|_{L^1 \dot H^1}  \lesssim_{\hM, M_1} c_k,
\end{equation}
\begin{equation} \label{ymhflc-feDA}
\sup_{s_1,s_2} \|P_k( \partial^\ell A_\ell(s_1) - \partial^\ell A_\ell(s_2)) \|_{L^2}\lesssim_{\hM, M_{1}}  c_k c_k^{[1]}.
\end{equation}

respectively,
\begin{equation}\label{ymhflc-feB}
\| P_k B\|_{L^\infty L^2} + \| P_k \mathrm{curl}_{A} B\|_{L^2}  \lesssim_{\hM, M_1}   d_k,
\end{equation}
\begin{equation} \label{ymhflc-feDB}
\sup_{s_1,s_2}  \| \covD^k B_k(s_1) - \covD^k B_k(s_2) \|_{\dot{H}^{-1}} \aleq_{\hM, M_{1}} c_{k} d_{k}^{[2]} + d_{k} c_{k}^{[1]} + \sum_{j > k} 2^{k-j} c_{j} d_{j}.
\end{equation}
\end{theorem}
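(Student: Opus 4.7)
The plan is to combine the covariant parabolic regularity provided by Proposition~\ref{p:cov-smth} with the frequency envelope linear theory of Section~\ref{sec:cov}, using the infinitesimal de Turck trick to handle the degeneracy of the linearized Yang--Mills heat flow in the local caloric gauge. By Proposition~\ref{p:cov-smth}, the hypothesis \eqref{l3} automatically upgrades to higher covariant regularity for $F$, so that the background connection $A$ satisfies the parabolic regularity assumption \eqref{hf-A-fe} with some $\ell^2$ envelope $\tilde{c}_k$, which suffices for the compatibility and slowly varying conditions needed below. The bounds are then established via a bootstrap: we postulate the claimed envelope bounds with a large constant $C$ and close them with $C$ replaced by an absolute constant.

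For \eqref{ymhflc-feA}, the starting point is the nondegenerate covariant parabolic equation \eqref{eq:YMHF-Fij} for $F_{ij}$. Applying Theorem~\ref{t:heatA-fe} directly (with the commutator source $-2[F,F]$ treated perturbatively via a finite partition of the heat-time interval, exactly as in Theorem~\ref{t:heatA}) yields the $L^\infty L^2 \cap L^2 \dot H^1$ envelope bounds for $F$; the $L^1 \dot H^1$ bound for $\covD F$ then follows from Theorem~\ref{t:heatA-L1} applied to the equation for $\covD F$. To pass from envelope control on $F$ to envelope control on $A$, we perform a frequency-localized Hodge decomposition: the curl of $A$ equals $F$ minus a manifestly lower-order quadratic term, while the divergence $\partial^\ell A_\ell$ evolves by the transport-type equation \eqref{DA-rep}. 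Integrating the latter in $s$ and applying Littlewood--Paley trichotomy to $[A^k, \covD^\ell F_{\ell k}]$ with the envelope bound just obtained for $\covD F$ yields both the $\dot H^1$ envelope bound for $A$ and the sharper quantitative difference bound \eqref{ymhflc-feDA}.

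For the linearized bounds \eqref{ymhflc-feB} and \eqref{ymhflc-feDB}, we execute the infinitesimal de Turck trick of Section~\ref{subsec:dymhf}: initializing $a_0 = 0$ and $f_{0j} = b_j$ at $s = 0$, we solve the nondegenerate covariant parabolic equation \eqref{F0l} for $F_{0j}$ using Theorem~\ref{t:heatA-fe} (the $2$-compatibility of $d_k$ with $c_k$ comfortably suffices), then recover $A_0$ by integrating $\partial_s A_0 = \covD^\ell F_{\ell 0}$ in heat-time, and finally set $B_j = F_{0j} + \covD_j A_0$. The $L^\infty L^2$ envelope bound on $F_{0j}$ yields \eqref{ymhflc-feB} after bounding the pure-gauge contribution $\covD_j A_0$ by integrating its $s$-derivative. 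For \eqref{ymhflc-feDB}, the local caloric condition $A_s = 0$ combined with $F_{s0} = \covD^\ell F_{\ell 0}$ gives the useful identity relating $\covD^j B_j$ to $\partial_s A_0$ and $\Delta_A A_0$, reducing the difference $\covD^k B_k(s_1) - \covD^k B_k(s_2)$ to a bilinear combination of envelope-controlled quantities that is handled by Littlewood--Paley trichotomy.

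The main obstacle is the sharp frequency-envelope bookkeeping of the high-high $\to$ low interactions in the divergence estimates \eqref{ymhflc-feDA} and \eqref{ymhflc-feDB}: one must carefully exploit the slowly varying and compatibility conditions, together with the decay of $\covD F$ in $L^1 \dot H^1$, to pick up the required extra factor of $c_k^{[1]}$ or $d_k^{[2]}$ and avoid logarithmic losses when summing over frequencies. A secondary subtlety, forced by the degenerate parabolicity of the local caloric gauge equation for $A$, is that the linear frequency envelope theory cannot be applied to $A$ directly; the envelope gain at each frequency must instead be imported from $F$ via Hodge decomposition, which is precisely why the curl and divergence parts of $A$ require genuinely different treatments.
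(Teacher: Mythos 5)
Your overall architecture (covariant parabolic bounds plus the infinitesimal de Turck trick, with the curl/div split for $A$) is in the right spirit, but the first and most important step contains a genuine circularity. You claim that Proposition~\ref{p:cov-smth} upgrades \eqref{l3} to the background hypothesis \eqref{hf-A-fe}, and then apply Theorem~\ref{t:heatA-fe} directly to the $F_{ij}$ equation. Proposition~\ref{p:cov-smth} only yields \emph{covariant, non-frequency-localized} bounds ($s^{n/2}\covD^{(n)}F \in L^\infty L^2$ etc.); it says nothing about $\|P_k A\|_{L^\infty \dot H^1}$, $\|P_k F\|_{L^2\dot H^1}$ or $\|P_k \partial_s A\|_{L^2 L^2}$ in terms of $c_k$, which is exactly the hypothesis \eqref{hf-A-fe} that Theorem~\ref{t:heatA-fe} needs — and exactly the content of \eqref{ymhflc-feA} that you are trying to prove. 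Your fallback, ``postulate the claimed envelope bounds with a large constant $C$ and close the bootstrap,'' does not close as stated: since here the background and the unknown coincide, the paradifferential error terms come out as $C^2\, c_k \sum_{j<k}2^{j-k}c_j$ (and similar), and the sum $\sum_{j<k}2^{j-k}c_j$ is not small at the finitely many frequencies where $c_k \approx 1$, so the factor $C^2$ cannot be absorbed there; divisibility of $\|F\|_{L^3L^3}$ in $s$ does not rescue this either, because the envelope constant accumulates at the subinterval endpoints. The paper breaks the circle differently: since $\partial_x A$ solves the linearized flow, the \emph{non-envelope} linearized solvability of Lemma~\ref{l:lin} (whose hypotheses need only the bounds of Theorem~\ref{t:ymhf-l3}(1), already available) gives the $(-1,1)$-envelope case outright, and the general $(-1,S)$ case follows from the minimal-envelope self-improvement $c_k \lesssim c_k^0 + c_k c_k^{[1]}$ of Lemma~\ref{l:nolin}, where the exceptional frequencies $c_k^{[1]}\approx 1$ are handled by reverting to the $(-1,1)$ case. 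Some device of this kind is needed in your write-up; without it the $F$ and $\covD F$ envelope bounds, and hence your Hodge-decomposition step for $A$ and the bound \eqref{ymhflc-feDA}, are unsupported.

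There is a second, smaller gap in the linearized part: you assert that the $2$-compatibility of $d_k$ with $c_k$ ``comfortably suffices'' for a direct application of Theorem~\ref{t:heatA-fe} to the $F_{0j}$ flow. That theorem requires a \emph{$1$-compatible} data envelope, and $2$-compatibility is the weaker condition; with only $2$-compatibility the direct application fails, and one must pass to the equation for $\covD B$ (for which $2^k d_k$ is $1$-compatible), run a bootstrap there, and then remove the covariant derivative — with the non-perturbative frequencies $c_k\approx 1$ again handled by divisibility in $s$ and by falling back on Lemma~\ref{l:lin}. This is precisely Step~2 of Lemma~\ref{l:lin-fe}, and it is a genuine extra argument rather than a routine remark. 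Once these two points are repaired, your treatment of \eqref{ymhflc-feDA} and \eqref{ymhflc-feDB} by integrating $\partial_s \partial^\ell A_\ell = -[A^\ell,\covD^j F_{j\ell}]$ and $\partial_s \covD^k B_k = -2[B^j,\covD^k F_{kj}]$ in heat-time against the $L^1\dot H^1$ envelope of $\covD F$ coincides with the paper's Lemmas~\ref{l:da} and \ref{l:db}.
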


Theorem~\ref{t:ymhf-l3} allows us to consider the question of continuation of solutions 
for the Yang--Mills heat flow:

\begin{corollary}\label{c:ymhf-cont}
\begin{enumerate}
\item Let $a \in \dot H^1$ and $A$ be the corresponding $\dot H^1$
  solution for the Yang--Mills heat flow in the local caloric gauge on
  a heat-time interval $J= [0,s_0)$. Assume that  the $L^{3} L^3$ curvature 
bound  \eqref{l3} holds in $J$.  Then  the following limit  exists in $\dot H^1$:
\begin{equation}
A(s_0) = \lim_{s \to s_0} A(s) .
\end{equation} 
Further, there exists a strictly larger interval $\tilde J= [0,\tilde
s_0)$ as well as $\epsilon= \epsilon(\hM, \|a\|_{\dot H^1})$ so that for
all data $\tilde a$ with $\|\tilde a - a\|_{\dot H^1} \leq \epsilon$, the
corresponding solution $\tilde A$ exists in $\tilde J$, satisfies \eqref{l3}
with $\hM$ replaced by $2\hM$, as well as
\begin{equation}
\| A-\tilde A\|_{L^\infty(\tilde J; \dot H^1)} \lesssim_\hM \| a-\ta\|_{\dot H^1}.
\end{equation}

\item If in addition $a \in \bfH$, respectively $a \in \dot H^1 \cap \dot H^\sigma$ ($\sigma > 1$)
then the above limit exists in $\bfH$, respectively $\dot H^1 \cap \dot H^\sigma$.
\end{enumerate}
\end{corollary}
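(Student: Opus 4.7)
The plan is to derive the $\dot H^1$ convergence of $A(s)$ and the subsequent stable extension past $s_0$ from the uniform frequency-envelope bounds of Theorems~\ref{t:ymhf-l3} and \ref{t:ymhf-l3-fe}, combined with the local well-posedness Theorem~\ref{t:hf-loc}.

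First, for part~(1), existence of the $\dot H^1$ limit. In the local caloric gauge one has $F_{si}=\partial_s A_i$, so \eqref{eq:cYMHF} reads $\partial_s A_i=\covD^\ell F_{\ell i}$. Hence for each dyadic $k$,
\begin{equation*}
P_k A(s_2) - P_k A(s_1) = \int_{s_1}^{s_2} P_k(\covD^\ell F_{\ell i})(s)\,ds,
\end{equation*}
and \eqref{ymhflc-feA} supplies $\|P_k\covD F\|_{L^1_s\dot H^1_x}\lesssim c_k$. Thus each $P_k A(s)$ is Cauchy in $\dot H^1_x$ as $s\to s_0$, with tail controlled by an integrable-in-$s$ quantity. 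Since $c_k$ is an $\ell^2$ frequency envelope for $a\in\dot H^1$, dominated convergence over the dyadic sum yields $A(s)\to A(s_0)$ in $\dot H^1$.

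For the continuation and stability statements, I take $A(s_0)$ as new initial data and apply Theorem~\ref{t:hf-loc} to extend to $[s_0,s_0+\tau)$ with $\tau=\tau(\|A(s_0)\|_{\dot H^1})$. The $L^3_sL^3_x$ norm of $F$ is continuous in $s$, so after shrinking $\tau$ the total curvature bound can be upgraded from $\hM$ to $2\hM$ on $\tilde J$. For the Lipschitz bound, I consider the affine path $a_\theta=\tilde a+\theta(a-\tilde a)$, $\theta\in[0,1]$, with solutions $A_\theta$. A continuity-in-$\theta$ argument, using the linearized $\sigma=1$ estimate of Theorem~\ref{t:ymhf-l3}(2) and smallness of $\epsilon$, propagates both the existence of $A_\theta$ on $\tilde J$ and the bound $\|F_\theta\|_{L^3L^3}\leq 2\hM$ uniformly in $\theta$. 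Writing $A-\tilde A=\int_0^1\partial_\theta A_\theta\,d\theta$ and using the linearized $\dot H^1$ bound $\|\partial_\theta A_\theta\|_{L^\infty\dot H^1}\lesssim_{\hM,M_1}\|a-\tilde a\|_{\dot H^1}$ then yields the desired Lipschitz estimate.

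For part~(2), the same scheme applies with the appropriate ambient norm. In the $\bfH$ case the $\dot H^1$ convergence is already in hand, and convergence of $\partial^\ell A_\ell$ in $\ell^1 L^2$ is obtained by integrating \eqref{DA-rep} and estimating the bilinear expression $[A^k,\covD^\ell F_{\ell k}]$ in $L^1_s\ell^1 L^2_x$ using the $\bfH$-variant of the bounds in Theorem~\ref{t:ymhf-l3}(1) together with \eqref{ymhflc-feA}; the oscillation bound \eqref{ymhflc-feDA} alone is insufficient since it only produces uniform (not small) frequency envelopes across times. For the $\dot H^1\cap\dot H^\sigma$ case one repeats the Cauchy-integration argument driven by the higher-regularity frequency envelope supplied by Theorem~\ref{t:ymhf-l3}(3), which gives the necessary $L^1_s\dot H^\sigma$ control on $\covD F$. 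The principal technical obstacle throughout is maintaining the $L^3L^3$ curvature control across the $\theta$-continuity argument; this is precisely what forces the loosening of the constant from $\hM$ to $2\hM$, and it is ultimately underpinned by the stability content of the Structure Theorem~\ref{thm:str-simple}.
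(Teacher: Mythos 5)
Your argument for the existence of the limit $A(s_0)$ (integrating $\partial_s A=\covD^\ell F_{\ell i}$ dyadically against the $L^1_s\dot H^1$ envelope bound from \eqref{ymhflc-feA}) and for part (2) is sound and matches the paper's mechanism. The gap is in the continuation and stability half of part (1). First, a small but telling error: in this scale-invariant critical problem the local lifespan from Theorem~\ref{t:hf-loc} cannot depend only on $\|A(s_0)\|_{\dot H^1}$ (otherwise scaling would give global existence, contradicting the blow-up scenario allowed by the Dichotomy Theorem); the lifespan depends on the profile of the data, which is why the paper's lifespans are always tied to a frequency truncation scale $k_0$ with $\sum_{k\geq k_0}c_k^2\ll 1$. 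More seriously, your restart-at-$s_0$ extension and your continuity-in-$\theta$ stability argument both presuppose exactly the perturbative existence statement being proven. Solutions here are \emph{defined} as limits of regular solutions (Lemma~\ref{p:ext-h1} and the remark following it), so concatenating the given solution with a freshly constructed one at $s_0$ requires showing the glued object is again such a limit; and in the $\theta$-continuity argument the linearized bound of Theorem~\ref{t:ymhf-l3}(2) is only an a priori estimate valid where $A_\theta$ already exists on $\tilde J$ with curvature control, so it supplies closedness but not the openness step, i.e.\ not the existence of $A_{\theta'}$ on all of $\tilde J$ for nearby $\theta'$ — which is precisely the stability assertion of the corollary.

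The missing idea is the paper's regularization scheme: choose $k_0$ so that $\sum_{k\geq k_0}c_k^2\ll_{\hM}1$; the smooth truncated solution $A_{<k_0}$ extends beyond $s_0$ by the subcritical theory (Lemmas~\ref{l:h3-loc}, \ref{l:h3}, \ref{l:ext-h3}) with $\|F_{<k_0}\|_{L^3 L^3}\leq \tfrac54\hM$ on a strictly larger interval $\tilde J$, and then the rough solution $A$ — and every nearby solution $\tilde A$ with $\|\tilde a-a\|_{\dot H^1}\leq\epsilon$ — is rebuilt perturbatively on $\tilde J$ around this smooth background via the difference/frequency-envelope bounds \eqref{diff-reg}, \eqref{fe-reg-diff}, which simultaneously yield the extension, the bound $\|\tilde F\|_{L^3(\tilde J;L^3)}\leq 2\hM$ by bootstrap, the Lipschitz estimate $\|A-\tilde A\|_{L^\infty(\tilde J;\dot H^1)}\lesssim_\hM\|a-\tilde a\|_{\dot H^1}$, and membership in the solution class (limits of regular solutions) on the larger interval. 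Without perturbing off the extendable smooth truncation $A_{<k_0}$, your scheme has no non-circular source of existence on $\tilde J$ for the perturbed data.
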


As a consequence of the last result we have the following continuation criteria:

\begin{corollary}\label{c:max}
Let $A$ be a maximal $\dot H^1$ solution for the Yang--Mills heat flow in the local caloric gauge
 in an interval $J = [0,s_0)$. Then we have either $s_0 = \infty$ or $\| F\|_{L^3(J; L^{3})} = \infty$.
\end{corollary}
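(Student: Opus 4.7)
The plan is to prove the contrapositive: assuming $s_0 < \infty$ and $\nrm{F}_{L^3(J; L^3)} < \infty$, I will produce a strict extension of $A$ past $s_0$, contradicting the maximality of $J$.

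The argument is essentially a direct invocation of Corollary~\ref{c:ymhf-cont}. Under the hypothesis $\nrm{F}_{L^3(J; L^3)} \leq \hM < \infty$, part (1) of that corollary provides an enlarged heat-time interval $\tilde J = [0, \tilde s_0)$ with $\tilde s_0 > s_0$, along with a threshold $\eps = \eps(\hM, \nrm{a}_{\dot H^1})$, such that every initial datum $\tilde a$ within distance $\eps$ of $a$ in $\dot H^1$ launches a Yang--Mills heat flow defined on the whole of $\tilde J$. Taking the trivial perturbation $\tilde a = a$ yields a solution $\tilde A$ on $\tilde J \supsetneq J$ with the same initial data, and the uniqueness part of Theorem~\ref{t:hf-loc} then forces $\tilde A \equiv A$ on $J$. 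Hence $\tilde A$ is a genuine extension of $A$, contradicting the maximality of $J$.

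Conceptually the real work has already been carried out upstream: Theorem~\ref{t:ymhf-l3}(1) converts the $L^3 L^3$ bound on $F$ into an $L^\infty \dot H^1$ bound on $A$, which is what allows the $\dot H^1$ limit $A(s_0) = \lim_{s \to s_0} A(s)$ to be defined in Corollary~\ref{c:ymhf-cont}, and the local well-posedness of Theorem~\ref{t:hf-loc} is then what re-launches the flow from this limit on a quantitatively sized interval past $s_0$. There is no genuinely new obstacle to overcome at the level of Corollary~\ref{c:max}; it is purely a repackaging of the previous two theorems, and the only point requiring any care is making sure that the continuation interval length is controlled by $\hM$ and $\nrm{a}_{\dot H^1}$ (rather than by something that could degenerate as $s \to s_0$), which is precisely the content built into the statement of Corollary~\ref{c:ymhf-cont}.
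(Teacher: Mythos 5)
Your proof is correct and takes essentially the same route as the paper: Corollary~\ref{c:max} is presented there precisely as an immediate consequence of the continuation result Corollary~\ref{c:ymhf-cont} (equivalently, of the identical statement proved directly as Lemma~\ref{l:l3-cont}), by taking $\tilde a = a$, extending the flow past $s_0$, and contradicting maximality. As you note, all the substantive work lives upstream in the $L^3L^3$-based a priori bounds and the approximation-by-regular-solutions arguments that establish the continuation result itself.
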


Here we are especially interested in  the global behavior of solutions. Given a global covariant Yang--Mills heat flow $A$ with initial data $a = A(s=0)$, define
\begin{equation}
	\hM(a) = \nrm{F}_{L^{3}([0, \infty); L^{3})}.
\end{equation}
First, we show that small initial energy leads to a global solution, with an explicit bound on $\hM(a)$:
\begin{corollary} \label{c:global-small}
Let $a$ be a $\dot{H}^{1}$ connection with a sufficiently small $\spE[a]$. 
Then the corresponding solution $A$ to the Yang--Mills heat flow in the local caloric gauge exists globally, and obeys
\begin{equation*}
	\hM(a)^{2} \aleq \spE[a].
\end{equation*}
\end{corollary}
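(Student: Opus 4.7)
The plan is a bootstrap/continuity argument driven by the continuation criterion of Corollary~\ref{c:max}. By Theorem~\ref{t:hf-loc} there is a maximal $\dot{H}^{1}$ solution $A$ on $J=[0,s_{+})$, and by Corollary~\ref{c:max} it suffices to establish the a priori bound $\|F\|_{L^{3}([0,T];L^{3})}^{2}\lesssim\spE[a]$ for every $T\in J$. I will close this via the bootstrap hypothesis
\[
\|F\|_{L^{3}([0,T];L^{3})}^{2}\leq 2C_{0}\spE[a],
\]
under which Proposition~\ref{p:cov-smth} supplies enough covariant smoothing for $s>0$ to justify all integration-by-parts manipulations below.

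The energy identity gives both $\|F\|_{L^{\infty}L^{2}}^{2}\leq 2\spE[a]$ and $\|\covD^{\ell}F_{\ell\cdot}\|_{L^{2}L^{2}}^{2}\leq\spE[a]$. Testing the parabolic curvature equation \eqref{eq:YMHF-Fij} against $F^{ij}$ and integrating by parts in $x$ produces
\[
\rd_{s}\spE[A]+\|\covD F\|_{L^{2}}^{2}=-2\int\langle F^{ij},[\tensor{F}{_{i}^{\ell}},F_{j\ell}]\rangle\,dx,
\]
and subtracting the energy identity (which amounts to a Bianchi/Weitzenb\"ock-type relation) yields the key bound
\[
\|\covD F\|_{L^{2}([0,T];L^{2})}^{2}\leq\spE[a]+C\|F\|_{L^{3}([0,T];L^{3})}^{3}.
\]

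To turn this into control of the $L^{3}L^{3}$ norm I exploit the energy critical dimension. The diamagnetic inequality $|\rd|F||\leq|\covD F|$ combined with $\dot{H}^{1}(\R^{4})\hookrightarrow L^{4}$ gives $\|F\|_{L^{4}_{x}}\lesssim\|\covD F\|_{L^{2}_{x}}$; then H\"older in $x$ yields $\|F\|_{L^{3}}^{3}\leq\|F\|_{L^{2}}\|F\|_{L^{4}}^{2}$, and integrating in $s$,
\[
\|F\|_{L^{3}L^{3}}^{3}\leq\|F\|_{L^{\infty}L^{2}}\|F\|_{L^{2}L^{4}}^{2}\lesssim\spE[a]^{1/2}\|\covD F\|_{L^{2}L^{2}}^{2}\leq C\spE[a]^{3/2}+C\spE[a]^{1/2}\|F\|_{L^{3}L^{3}}^{3}.
\]
Once $\spE[a]$ is small enough, the cubic term on the right is absorbed into the left, leaving $\|F\|_{L^{3}([0,T];L^{3})}^{2}\lesssim\spE[a]$. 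For $C_{0}$ chosen appropriately this strictly improves the bootstrap, a standard continuity argument closes, and Corollary~\ref{c:max} then forces $s_{+}=\infty$.

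\textbf{Main obstacle.} The only step involving genuine structural input beyond monotonicity and Sobolev embedding is the identity relating the full covariant derivative $\|\covD F\|_{L^{2}}^{2}$ to the covariant divergence $\|\covD^{\ell}F_{\ell\cdot}\|_{L^{2}}^{2}$ controlled by the energy identity, modulo a purely cubic error; this is where the Bianchi identity and the commutator $[\covD_{i},\covD_{j}]=ad(F_{ij})$ enter. After this identity is in hand the argument is a completely standard critical-exponent continuity scheme.
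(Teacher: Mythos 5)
Your proposal is correct and follows essentially the same route as the paper: a bootstrap on $\|F\|_{L^{3}L^{3}}$ combined with the continuation criterion (Corollary~\ref{c:max}), the parabolic energy estimate $\|F\|_{L^{\infty}L^{2}}^{2}+\|\covD F\|_{L^{2}L^{2}}^{2}\lesssim \spE[a]+\|F\|_{L^{3}L^{3}}^{3}$ (which the paper simply quotes as the $n=0$ case of Proposition~\ref{p:cov-smth}), and the diamagnetic/Sobolev interpolation $\|F\|_{L^{3}L^{3}}^{2}\lesssim\|F\|_{L^{\infty}L^{2}}^{2}+\|\covD F\|_{L^{2}L^{2}}^{2}$, closed by absorption for small energy. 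The only cosmetic wrinkle is your phrase ``subtracting the energy identity'': the key bound follows directly by integrating your displayed identity in $s$ and discarding $\spE[A(T)]\geq 0$ (or, equivalently, via the Bianchi/Weitzenb\"ock comparison with $\|\covD^{\ell}F_{\ell\cdot}\|_{L^{2}}^{2}$ that you describe), so there is no genuine gap.
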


For solutions with $\hM(a) < \infty$, we obtain uniform global-in-time bounds for the Yang--Mills heat flow and its linearization in the local caloric gauge by Theorem~\ref{t:ymhf-l3}. Moreover, the following asymptotic convergence properties also hold.
\begin{corollary}\label{c:global}
   Let $A$ be a global $\dot H^1$ solution for the Yang--Mills heat
  flow in the local caloric gauge with $\calQ(a) = \|F\|_{L^3([0, \infty); L^{3})} \leq \hM < \infty$. Then
  the limiting connection $A_\infty$ exists in $\dot H^1$ and has zero
  curvature. The same applies to the linearized equation in $\dot{H}^{\sgm}$ with
  $-1 < \sgm <  2$ as well as in $\bfH$.

Furthermore, the map $a \to A_\infty$ is Lipschitz in $\dot H^1$, $\bf H$,
$\dot H^1 \cap \dot H^\sigma$ ($\sigma > 1$) uniformly on bounded convex 
subsets of $\dot H^1$ where \eqref{l3} holds uniformly.
\end{corollary}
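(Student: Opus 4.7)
The plan is to extract convergence of $A(s)$ in $\dot H^1$ from the $L^{1}_{s}$ integrability of $\covD F$ furnished by the frequency envelope bound \eqref{ymhflc-feA} of Theorem~\ref{t:ymhf-l3-fe}. Since the local caloric equation reads $\rd_s A_i = \covD^\ell F_{\ell i}$ and $\nrm{P_k \covD F}_{L^1 \dot H^1} \aleq c_k$ with $c_k$ an $\ell^2$ envelope for $a$ in $\dot H^1$, we have $\nrm{P_k \rd_s A}_{L^1 \dot H^1} \aleq c_k$, so absolute continuity of the integral gives $\int_{s'}^{s}\nrm{P_k \rd_s A}_{\dot H^1} \, d\tau \to 0$ as $s,s' \to \infty$ for each fixed $k$. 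Thus every $P_k A(s)$ is Cauchy in $\dot H^1$; the uniform frequency bound $\sum_k c_k^2 \aleq \nrm{a}_{\dot H^1}^2$ then promotes this, by dominated convergence over $k$, to Cauchy convergence in the full $\dot H^1$ norm. Hence $A_\infty := \lim_{s \to \infty} A(s)$ exists in $\dot H^1$. The $\bfH$ and $\dot H^{1} \cap \dot H^{\sgm}$ cases follow by the same scheme, invoking \eqref{ymhflc-feDA} summed in $\ell^1$ (together with $P_k A(s) \to P_k A_\infty$ in $L^2$) for the subtle divergence piece, and the higher-regularity analogue of \eqref{ymhflc-feA} in the $\sgm > 1$ case.

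To show $F_\infty = 0$, note that $A \mapsto F(A)$ is continuous from $\dot H^1$ to $L^2$ via Sobolev $\dot H^1 \hookrightarrow L^4$ in $\R^4$, so $F(s) \to F_\infty$ in $L^2$. Since $\int_0^\infty \nrm{F(s)}_{L^3}^3 \, ds \leq \hM^3 < \infty$, there is $s_n \to \infty$ along which $\nrm{F(s_n)}_{L^3} \to 0$. Extracting a common sub-subsequence with almost-everywhere convergence from $F(s_n) \to F_\infty$ in $L^2$ and $F(s_n) \to 0$ in $L^3$, we conclude $F_\infty = 0$ a.e., hence identically in $L^2$.

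The linearized statement follows by a parallel integration argument using \eqref{ymhflc-feB}, \eqref{ymhflc-feDB}, and an $L^{1}_{s}$-type control on $\rd_s B$ at each frequency. The cleanest way to produce the latter is via the infinitesimal de Turck trick of Section~\ref{subsec:dymhf}: one initializes $a_0 = 0$ and $F_{0j}(s{=}0) = b_j$, solves the nondegenerate covariant heat equation \eqref{F0l} for $F_{0j}$ using Theorem~\ref{t:heatA-L1} (which directly delivers $L^1_s$-type bounds at each frequency), then integrates \eqref{A0-cal} to recover $A_0$, and finally reconstructs $B_j = F_{0j} + \covD_j A_0$ via \eqref{BvsF}. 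The same frequency envelope summation as for $A$ then yields convergence of $B(s)$ in $\dot H^{\sgm}$ for $-1 < \sgm < 2$ and in $\bfH$.

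Lipschitz dependence of $a \mapsto A_\infty$ on bounded convex subsets where \eqref{l3} is uniform is obtained by applying the linearized convergence just proved along the one-parameter path $a_\theta = (1-\theta) a + \theta \tilde a$: the derivative $\rd_\theta A_\theta$ solves the linearized flow with data $\tilde a - a$, so the $\dot H^1$, $\bfH$, and $\dot H^1 \cap \dot H^\sgm$ linearized bounds yield $\nrm{A_\infty - \tilde A_\infty} \aleq \nrm{a - \tilde a}$ upon integrating in $\theta \in [0,1]$, with constants uniform on the convex subset by Theorem~\ref{t:ymhf-l3}. The main obstacle throughout is the degeneracy of the local caloric gauge, which leaves the divergence pieces $\rd^\ell A_\ell$ and $\covD^k B_k$ without parabolic smoothing; these must be handled via the weaker uniform envelope bounds \eqref{ymhflc-feDA} and \eqref{ymhflc-feDB}, whose frequency summation succeeds only because sequences of the form $c_k c_k^{[1]}$ remain absolutely summable thanks to the slow-variation and $\ell^2$ control of $c_k$.
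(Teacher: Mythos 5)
Your proposal is correct and follows essentially the same route as the paper: convergence of $A(s)$ from heat-time integrability of $\covD F$ (you package it through the frequency envelope bound \eqref{ymhflc-feA} plus dominated convergence in $k$, while the paper revisits the bilinear computation of Lemma~\ref{l:h1} between two heat-times), the infinitesimal de Turck trick ($F_{0j}$ flow plus integration of \eqref{A0-cal}) for the linearized limit, and Lipschitz dependence by integrating the linearized bounds along the straight segment, which stays in the good set by convexity. The only point where you are looser than the paper is the upper range $\sgm \geq 1$ of the linearized statement, where $[A, A_{0}]$ no longer maps $\dot H^{1} \times \dot H^{\sgm+1} \to \dot H^{\sgm}$ and the paper switches to the covariant curl--div argument (respectively frequency envelope bounds at higher regularity); this is a detail of implementation rather than a different method.
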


\subsection{Proof of the main results in the local caloric gauge}
This subsection is devoted to the proof of the results stated in the preceding subsection.  All the solutions 
for the Yang--Mills heat flow are assumed from here on to be in the local caloric gauge \eqref{eq:i-caloric}.

Due to the degeneracy of the 
parabolic system for $A$, we cannot address directly the local well-posedness question in $\dot H^1$,
and we begin with a more regular setting:

\begin{lemma}\label{l:h3-loc}
  The Yang--Mills heat flow in the local caloric gauge
  \eqref{caloric-re} is locally well-posed for initial data $a \in
  \dot H^1 \cap \dot H^3$, with Lipschitz dependence on the initial
  data and lifespan depending only on the initial data size.
\end{lemma}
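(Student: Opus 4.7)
The plan is to reduce to standard parabolic theory via the de Turck trick. The local caloric gauge equation \eqref{caloric-re} is only degenerate parabolic, since the principal symbol $-\abs{\xi}^{2} \dlt_{ij} + \xi_{i} \xi_{j}$ annihilates the gradient direction; passing to the de Turck gauge removes this degeneracy, at the cost of working modulo a heat-time dependent gauge transformation. At the $\dot H^{3}$ regularity level, which is well above scaling critical, everything can be handled classically.

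First I would solve the flow in the de Turck gauge $A_{s} = \rd^{\ell} A_{\ell}$, where the system becomes the strictly parabolic semilinear equation \eqref{YM-h-deTurck},
\[
(\rd_{s} - \lap_{A}) A_{j} = [A_{j}, \rd^{k} A_{k}] - [A^{k}, \rd_{j} A_{k}].
\]
For $a \in \dot{H}^{1} \cap \dot{H}^{3}$, a standard contraction argument based on the Duhamel formula for $\rd_{s} - \lap$, set in a space of the form $X_{T} = C([0,T]; \dot H^{1} \cap \dot H^{3}) \cap L^{2}([0,T]; \dot{H}^{2} \cap \dot{H}^{4})$, yields a unique solution $A^{dT}$ on $[0, T^{\star}]$ with $T^{\star}$ depending only on $\nrm{a}_{\dot{H}^{1} \cap \dot{H}^{3}}$, with Lipschitz dependence on the initial data. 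The needed product estimates for the quadratic and cubic nonlinear terms are standard at this regularity.

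Next I would produce a gauge transformation $U : \R^{4} \times [0, T^{\star}] \to \G$ by solving the heat-time ODE
\[
\rd_{s} U = U \cdot \rd^{\ell} A^{dT}_{\ell}, \qquad U(0, x) = Id,
\]
pointwise in $x$, and set
\[
A^{lc}_{j} := U A^{dT}_{j} U^{-1} - (\rd_{j} U) U^{-1}, \qquad A^{lc}_{s} := U A^{dT}_{s} U^{-1} - (\rd_{s} U) U^{-1} \equiv 0.
\]
By gauge invariance of the covariant flow, $A^{lc}$ solves the Yang--Mills heat flow in the local caloric gauge with $A^{lc}(0) = a$. The Sobolev regularity of $U$ and its spatial derivatives is recovered by differentiating the defining ODE in $x$, which gives equations of the form $\rd_{s} (\rd^{(n)} U) = (\rd^{(n)} U)\, \rd^{\ell} A^{dT}_{\ell} + (\text{lower order})$, and applying Gronwall to the resulting scalar inequalities; combined with the pointwise bound $\abs{U} \leq 1$ available since $\G$ is compact, this yields $A^{lc} \in C([0, T^{\star}]; \dot{H}^{1} \cap \dot{H}^{3})$.

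For uniqueness and Lipschitz dependence, given any two local caloric solutions with common data one may either gauge-transform both to the de Turck gauge via the reverse ODE and invoke uniqueness for the parabolic de Turck system, or write the difference as a linear inhomogeneous equation to which the covariant parabolic estimates of Theorem~\ref{t:heatA} apply, closing the argument by Gronwall. The principal technical obstacle is Step~3: propagating the full $\dot{H}^{3}$ regularity through the gauge-transformation step, since a loss in the smoothness of $U$ would spoil the regularity of $(\rd_{j} U) U^{-1}$. This is handled by the higher-order Gronwall bounds just described, which are routine given the $X_{T}$ control on $A^{dT}$ from Step~1. This preliminary lemma is what will subsequently be upgraded in Theorems~\ref{t:hf-loc} and \ref{t:ymhf-l3} to the critical $\dot{H}^{1}$ and $\bfH$ settings via the infinitesimal de Turck trick for the linearized flow.
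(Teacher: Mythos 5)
Your route is genuinely different from the paper's. The paper proves this lemma directly in the local caloric gauge by splitting $A$ into $\curl A$ and $\div A$: the curl satisfies a nondegenerate parabolic equation while the divergence satisfies a pure transport equation in $s$ (no derivatives of $\div A$ on the right), and a single contraction argument in $X = (L^\infty H^2 \cap L^2 H^3) \times L^\infty H^2$ gives existence, uniqueness and Lipschitz dependence all at once, with lifespan depending only on the data size. You instead solve in the de Turck gauge and pull back by the heat-time ODE $\rd_s U = U\, \rd^\ell A^{dT}_\ell$, $U(0)=Id$. For the existence part this is viable at $\dot H^1 \cap \dot H^3$: the data are preserved since $U(0)=Id$, and the Gronwall argument for $\rd^{(n)}U$ closes because $\rd^\ell A^{dT}_\ell \in L^2_s(\dot H^1 \cap \dot H^3) \subset L^2_s L^\infty_x \subset L^1_s L^\infty_x$ on a finite interval, using the $L^2(\dot H^2 \cap \dot H^4)$ component of your $X_T$ (this integrability is the one point you should make explicit, since $C_t \dot H^3$ alone does not give $\rd^\ell A_\ell \in L^\infty_x$ in $\bbR^4$). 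What the de Turck detour buys is that the dynamical PDE solved is strictly parabolic; what the paper's splitting buys is that uniqueness and Lipschitz dependence in the caloric gauge come for free from the contraction, with no gauge transformation to control.

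The genuine gap is in your uniqueness/Lipschitz step. First, passing \emph{from} the local caloric gauge \emph{to} the de Turck gauge is not a ``reverse ODE'': imposing $\tilde A_s = \rd^\ell \tilde A_\ell$ on $\calG(V)A^{lc}$ forces $-(\rd_s V)V^{-1} = \rd^\ell\bigl(V A^{lc}_\ell V^{-1} - (\rd_\ell V)V^{-1}\bigr)$, which is exactly the harmonic-map-heat-flow-type equation $O_{;s} = \covD^j O_{;j}$ for the gauge group that the paper discusses; it is a parabolic PDE whose solvability and regularity must themselves be established (doable locally at $H^3$, but it is an extra nontrivial step, not an ODE). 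Second, your fallback does not work as stated: the difference of two local-caloric solutions satisfies the \emph{degenerate} linearized system \eqref{lin-heat} (plus quadratic differences), whose principal part $\covD^j(\covD_j B_i - \covD_i B_j)$ annihilates gradient directions, so it is not of the form \eqref{heatA} and Theorem~\ref{t:heatA} does not apply directly — handling precisely this degeneracy is why the paper either splits into div/curl here or invokes the infinitesimal de Turck trick ($F_{0j}$, $A_0$) later. The repair is either to run the div/curl splitting on the difference (i.e., the paper's argument), or to carry out the caloric-to-de-Turck gauge transformation honestly and then argue that a gauge transformation relating two $A_s=0$ solutions with the same data is $s$-independent and equals the identity.
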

The same argument applies in $\dot H^1 \cap  \dot H^\sgm$ for all $\sgm \geq 3$.

\begin{proof}
We write the system as a mixed parabolic/transport for the curl/div of $A$:
\[
\left\{
\begin{array}{ll}
(\partial_s -\Delta) \curl A = [\covD A,\covD A] + [\covD^2A,A] + [A,A,\covD A],
\cr
\partial_s\  \div A =  [A,\partial \curl A]  + [A,A,\covD A].
\end{array}
\right.
\]
Here the only important structural information is that $\div A$ does
not appear differentiated in the second equation. For initial data 
$A(0) \in \dot H^1 \cap \dot H^3$ we solve this system in the space
\[
X =  (L^\infty H^2 \cap L^2  H^3)  \times L^\infty  H^2
\]
by estimating the right hand side in
\[
Y =  (L^1 H^2 + L^2 H^1)  \times L^1  H^2
\]
A standard fixed point argument in this setting yields local well-posedness,
with a lifespan depending only on the initial data size. The control of the $H^3$ 
norm is useful as it guarantees that $A \in L^\infty$. \qedhere

\end{proof}

We now turn our attention to obtaining scale invariant bounds for such
solutions. We will use the  $L^{3} L^3$ bound for $F$ as a key
a-priori assumption, while the initial data is assumed to have finite energy, i.e.,
\begin{equation} \label{eq:fin-en-sp}
	\frac{1}{2} \nrm{F[a]}_{L^{2}}^{2} \leq \calE < \infty.
\end{equation}

\begin{lemma}\label{l:h1}
  Let $A$ be a sufficiently regular Yang--Mills heat flow on a heat-time interval $J$, so that
  \eqref{l3}  and \eqref{eq:fin-en-sp} hold. Then we have the bound
\begin{equation} \label{eq:h1}
\| A \|_{L^\infty \dot H^1} \lesssim_{\hM, \calE}  1 + \inf_{s \in J} \| A(s)\|_{\dot H^1}.
\end{equation}
\end{lemma}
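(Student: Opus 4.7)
The plan is to combine the covariant parabolic smoothing bounds on $F$ from Proposition~\ref{p:cov-smth} with a Hodge decomposition at the level of $A$. Energy dissipation \eqref{Lyapunov} combined with \eqref{eq:fin-en-sp} gives $\|F(s)\|_{L^{2}} \leq \sqrt{2\calE}$ for all $s \in J$. Under the hypothesis \eqref{l3}, Proposition~\ref{p:cov-smth} then provides control on $F$ and its covariant derivatives in the weighted $L^{p}_{\ud s/s} L^{q}_{x}$ spaces in terms of $\hM$ and $\calE$. By the diamagnetic inequality and the Sobolev embedding $\dot{W}^{1,2}(\R^{4}) \hookrightarrow L^{4}$, these in particular yield the standard $L^{p}_{s} L^{q}_{x}$ bounds for $F$ and $\covD F$ that will be needed.

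Since $\|\partial A\|_{L^{2}}^{2} \sim \|\curl A\|_{L^{2}}^{2} + \|\div A\|_{L^{2}}^{2}$, it suffices to control the curl and divergence of $A$ separately. The curl is handled pointwise in $s$ via the identity $(\curl A)_{jk} = F_{jk} - [A_{j}, A_{k}]$, giving
\[
\|\curl A(s)\|_{L^{2}} \lesssim \sqrt{\calE} + \|A(s)\|_{\dot{H}^{1}}^{2}.
\]
For the divergence, I would select a reference time $s_{*} \in J$ with $\|A(s_{*})\|_{\dot{H}^{1}} \leq m + 1$, where $m := \inf_{s \in J} \|A(s)\|_{\dot{H}^{1}}$, and integrate the local caloric evolution \eqref{DA-rep},
\[
\partial_{s} \div A = -[A^{k}, \covD^{\ell} F_{\ell k}],
\]
from $s_{*}$ to $s$. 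Splitting $\covD^{\ell} = \partial^{\ell} + ad(A^{\ell})$ and integrating by parts in $x$, the contribution to $\div A(s) - \div A(s_{*})$ decomposes into a divergence of $\int [A, F] \, \ud\tau$ together with terms schematically of the form $\int [\partial A, F] \, \ud\tau$ and $\int [A, [A, F]] \, \ud\tau$, each of which is then estimated in $L^{2}$ using Proposition~\ref{p:cov-smth} together with $\dot{H}^{1} \hookrightarrow L^{4}$.

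To close the argument in the presence of the quadratic $[A,A]$ term in the curl bound, I would partition $J$ into finitely many sub-intervals $J_{1}, \ldots, J_{N}$ on each of which $\|F\|_{L^{3}(J_{i}; L^{3})} \leq \epsilon_{0}$ for a small universal constant $\epsilon_{0}$, with $N$ controlled purely in terms of $\hM$. On each $J_{i}$, starting from control inherited at its left endpoint, I would bootstrap the assumption $\|A\|_{L^{\infty}(J_{i}; \dot{H}^{1})} \leq K$ with $K := C_{0}(\hM, \calE)(1+m)$ down to $K/2$, and then concatenate across sub-intervals. The main obstacle is precisely the quadratic dependence $\|[A, A]\|_{L^{2}} \lesssim \|A\|_{\dot{H}^{1}}^{2}$ in the curl estimate: a direct bootstrap at large energy gives $\|\curl A\|^{2} \lesssim \calE + K^{4}$, defeating the target $\|A\|_{\dot{H}^{1}}^{2} \lesssim K^{2}$. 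The sub-interval decomposition is what enables the nonlinear errors to be absorbed using smallness of $\epsilon_{0}$. A secondary technical point is that the time integral $\int [A, \covD F]\, \ud\tau$ in the divergence estimate sits at a scale-invariant level (an $L^{1}_{s} L^{4}_{x}$ bound for $\covD F$ is not available at critical $\dot{H}^{1}$ regularity), which is why the integration-by-parts rearrangement into a divergence plus lower-order pieces, rather than a direct $L^{2}$ bound on $[A, \covD F]$, is essential.
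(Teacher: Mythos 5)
There is a genuine gap, and it sits exactly where you flagged your ``main obstacle'': the pointwise curl identity. Writing $\curl A(s) = F(s) - [A(s),A(s)]$ makes $A$ enter \emph{quadratically}, so under your bootstrap hypothesis $\|A\|_{L^\infty \dot H^1}\le K$ with $K = C_0(\hM,\calE)(1+m)$, $m=\inf_s\|A(s)\|_{\dot H^1}$, the best you can say is $\|\curl A(s)\|_{L^2}\lesssim \sqrt{\calE}+K^2$, and the bootstrap inequality $\sqrt{\calE}+CK^2+(\mathrm{div\ part})\le K/2$ simply fails once $m$ is large. The subdivision of $J$ into pieces with $\|F\|_{L^3(J_i;L^3)}\le\epsilon_0$ cannot repair this: the identity is pointwise in heat-time and the offending term is governed by $\|A(s)\|_{L^4}\sim 1+m$, not by any $L^3L^3$ smallness of $F$ over the subinterval, so no $\epsilon_0$ ever multiplies it. At best your route yields $\|A\|_{L^\infty\dot H^1}\lesssim_{\hM,\calE}1+m^2$, which is weaker than \eqref{eq:h1} and is only adequate in the small-data regime -- whereas the lemma is needed precisely with large $m$ (e.g.\ in the bubbling analysis). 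A secondary gap is in the divergence step: after integrating \eqref{DA-rep} and integrating by parts, a derivative must still land inside the time integral to measure $\div A$ in $L^2$, and when it lands on $F$ you are back at the critical term $\int [A,\covD F]\,ds$, which (as you yourself note) is not accessible by a single H\"older-in-$s$ application of Proposition~\ref{p:cov-smth}; your proposal does not supply the mechanism that actually handles it.

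The paper's proof avoids both problems by never estimating $\curl A$ or $A$ itself: since $\rd_s A_i=\covD^\ell F_{\ell i}$, it bounds the \emph{increment} $A(s)-A(s_*)=\int \covD^\ell F_{\ell i}\,d\tilde s$, first in $L^4$ and then in $\dot H^1$, by $O_{\hM,\calE}(1+m)$ with only a \emph{linear} occurrence of $A$ (through $\rd\covD F=\covD^{(2)}F+[A,\covD F]$), and then adds $\|A(s_*)\|_{\dot H^1}\le m+1$. The critical time integrals are controlled by squaring (or taking the fourth power of) the norm, expanding into multiple $s$-integrals restricted to ordered times $s_1\le\cdots$, integrating by parts to move the spatial derivative onto a later-time factor, and summing dyadic heat-time blocks using the off-diagonal gains built into the weighted bounds \eqref{GNS}, \eqref{GNS1}. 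If you want to salvage your outline, you must (i) apply your curl/div reasoning to the increment rather than to $A(s)$ itself (at which point the Hodge splitting is no longer needed), and (ii) incorporate the multilinear/off-diagonal summation device for the scale-invariant time integrals; the interval subdivision and bootstrap then become unnecessary.
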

\begin{remark}
The lemma is also valid for the dynamic Yang--Mills heat flow with the $A_{0}$ component added in \eqref{eq:h1}, after replacing \eqref{eq:fin-en-sp} by
\begin{equation} \label{eq:fin-en-spt}
	\frac{1}{2} \sum_{\alp < \bt} \nrm{F_{\alp \bt}(s=0)}_{L^{2}}^{2} \leq \calE < \infty.
\end{equation}
Later we will seek to imbalance the $A_0$ bounds.
 \end{remark}
 
 \begin{remark} \label{rem:h1-loc}
From the proof, it will also be evident that a spatially localized version of Lemma~\ref{l:h1} also holds, i.e.,
\begin{equation} \label{eq:h1-loc}
\| A \|_{L^\infty (J; \dot H^1(B_{R}))} \lesssim_{\hM, \calE}  1 + \inf_{s \in J} \| A(s)\|_{\dot H^1(B_{R})},
\end{equation}
for any fixed ball $B_{R} \subseteq \bbR^{4}$.
\end{remark}
\begin{proof}
We will proceed in two steps, where we first establish an $L^4$ bound for $A$, and then 
a $\dot H^1$ bound. For the $L^4$ bound we need to estimate 
\[
I = \left \|\int_{0}^{s_0} \covD^\ell F_{\ell i} ds \right\|_{L^4}^4 = \int_0^{s_0}  \langle \covD^\ell F_{\ell i}(s_1) ,  \covD^\ell F_{\ell i}(s_2)  \rangle
 \langle \covD^\ell F_{\ell i}(s_3) ,  \covD^\ell F_{\ell i}(s_4)  \rangle  ds_1 \cdots ds_{4}.
\]
We restrict to $s_1 \leq s_2 \leq s_3 \leq s_4$, as the other cases are similar. Then we integrate 
by parts the derivative on $F(s_1)$, and it will fall on one of the other factors. In the worst case
it applies to $F(s_2)$. Dividing time into dyadic regions and using the bounds \eqref{GNS} and \eqref{GNS1},
 we estimate the corresponding integral $I_1$ by
\[
\begin{split}
I_1 \lesssim & \  \sum_{dyadic} \left\| \int F(s_1) ds_1 \right\|_{L^4} \left\| \int \covD^2 F(s_2)
\right \|_{L^2} \left\|\int \covD F(s_3) ds_3\right\|_{L^8} \left\|\int \covD F(s_4)ds_4 \right\|_{L^8} 
\\ \lesssim_M & \ \sum_{dyadic} s_1^{\frac12} s_3^{-\frac14} s_{4}^{-\frac14}  \| \covD F\|_{L^2_{s_1}L^2}
\| s^{-\frac12} \covD^2 F\|_{L^2_{s_2}  L^2} \| s^{\frac18} \covD F\|_{L^2_{s_3} L^8}\| s^{\frac18} \covD F\|_{L^2_{s_4} L^8}.
\end{split}
\]
Given the ordering of the $s_i$'s, this sum has  has off-diagonal decay, and thus converges.

In a similar manner, for the $\dot H^1$ bound we need to estimate 
\[
\left \|  \int_{0}^{s_0} \partial \covD^\ell F_{\ell i} ds \right\|_{L^2}^2 \lesssim 
\left\|  \int_{0}^{s_0}  \covD^2 F ds \right\|_{L^2}^2 + \left\|  \int_{0}^{s_0}  [A, \covD F] ds \right\|_{L^2}^2 .
 \]
We proceed as above. The first term is written as
\[
\int_{s_1 \leq s_2} \langle \covD F(s_1), \covD^3F(s_2)\rangle ds_1 d s_{2},
\]
and we can combine the two $L^2L^2$ bounds for the two factors.

The second term is written as 
\[
\int_{s_1 \leq s_2} \langle[A, \covD F](s_1), [A,\covD F](s_2)\rangle ds_1 ds_{2}.
\]
Here we directly use the $L^4$ bound for $A$, the $L^2L^2$ bound for the first $\covD F(s_{1)}$ and 
the $L^2 L^\infty$ bound for $\covD F(s_{2})$. \qedhere

\end{proof}

Next, we establish higher regularity bounds.
\begin{lemma}\label{l:h3}
  Let $A$ be a sufficiently regular Yang--Mills heat flow so that
  \eqref{l3} holds. Then we have the bounds
\begin{equation}\label{A-high}
\| A \|_{L^\infty \dot H^{k+1}} \lesssim_{\hM,M_1}  \| A(0)\|_{\dot H^{k+1}}, \qquad k \geq 1
\end{equation}
\end{lemma}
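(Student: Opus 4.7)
I plan to induct on $k \ge 1$, with Lemma~\ref{l:h1} as the base case. In the inductive step, I assume $\|A\|_{L^\infty \dot H^\sigma} \lesssim_{\hM, M_1} \|a\|_{\dot H^\sigma}$ for all $1 \le \sigma \le k$, and derive the same bound at $\sigma = k+1$. The essential step is the curvature estimate $\|F\|_{L^\infty \dot H^k} \lesssim_{\hM, M_1} \|a\|_{\dot H^{k+1}}$, which I would obtain by an energy estimate on the covariant parabolic equation $\covD_s F_{ij} - \covD^\ell \covD_\ell F_{ij} = -2[\tensor{F}{_i^\ell}, F_{j \ell}]$ of Lemma~\ref{lem:cymhf-eqns}: commute with $\covD^{(k)}$ (covariant, not flat, to keep top-order commutators geometric) and pair with $\covD^{(k)} F$, obtaining an identity of the form
\begin{equation*}
\tfrac12 \tfrac{d}{ds} \|\covD^{(k)} F\|_{L^2}^2 + \|\covD^{(k+1)} F\|_{L^2}^2 = (\text{cubic}) + (\text{commutators}).
\end{equation*}
The cubic term is made perturbative by splitting $J$ into $O_\hM(1)$ subintervals on each of which $\|F\|_{L^3 L^3}$ is smaller than a universal constant, as in Proposition~\ref{p:cov-smth}. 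The commutators arising from $[\covD,\covD]=\mathrm{ad}(F)$ produce products of $\covD^{(m)} F$ with $m\le k$ and $\covD^{(n)} F$ with $n\le k+1$; these are controlled using Proposition~\ref{p:cov-smth} at the lower orders and the Gagliardo--Nirenberg-type bounds~\eqref{GNS}--\eqref{GNS1}. Crucially, I retain the initial-data contribution $\|\covD^{(k)} F(0)\|_{L^2}$, which is bounded by $\|a\|_{\dot H^{k+1}}$ after converting covariant to flat derivatives via the induction hypothesis; this removes the $s^{-k/2}$ weight that appears in the pure-smoothing estimate of Proposition~\ref{p:cov-smth}.

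From the curvature bound, I recover $\|A\|_{\dot H^{k+1}}$ by a div--curl decomposition. A Fourier identity on $\R^4$ for 1-forms gives $\|\partial A\|_{\dot H^k}^2 = \|dA\|_{\dot H^k}^2 + \|\partial^\ell A_\ell\|_{\dot H^k}^2$, and since $dA = F - [A,A]$ in components, we have
\begin{equation*}
\|A\|_{\dot H^{k+1}} \lesssim \|\partial^\ell A_\ell\|_{\dot H^k} + \|F\|_{\dot H^k} + \|[A,A]\|_{\dot H^k}.
\end{equation*}
The divergence evolves by the scalar transport identity~\eqref{DA-rep}, $\partial_s \partial^\ell A_\ell = -[A^k, \covD^\ell F_{\ell k}]$; integrating in $s$ and estimating the bilinear integrand in $L^1_s \dot H^k$ using Proposition~\ref{p:cov-smth} and the induction hypothesis yields the $\partial^\ell A_\ell$ bound, where the dyadic-in-$s$ Cauchy--Schwarz already exploited in Lemma~\ref{l:h1} produces an off-diagonal-decaying sum. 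The cubic $\|[A,A]\|_{\dot H^k}$ is handled by Littlewood--Paley paraproducts and the induction hypothesis.

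The main obstacle is ensuring that, in the covariant-to-ordinary conversion and in the expansion of the commutator $[\covD^{(k)},\covD^\ell\covD_\ell]$, no term requires the yet-unestablished bound on $\|A\|_{\dot H^{k+1}}$. This is resolved by a standard bootstrap: one assumes a priori $\|A(s)\|_{\dot H^{k+1}} \le 2K$ on a subinterval, uses the above to derive $\|A(s)\|_{\dot H^{k+1}} \le K$, and concludes by continuity. The commutator structure is crucial here: $[\covD,\covD]=\mathrm{ad}(F)$ always distributes at least one derivative onto a $F$ factor at top order, so no $\partial^{(k+1)} A$ factor ever appears on the right-hand side, and the top-order derivative on $A$ that the argument actually sees is $\partial^{(k)} A$, which is controlled by the induction hypothesis.
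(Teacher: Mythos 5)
Your first half --- upgrading Proposition~\ref{p:cov-smth} to data with $\covD^{(k)}F(0)\in L^2$, thereby removing the $s^{-k/2}$ weights --- is exactly the paper's starting point (the bound \eqref{est-cov-k}), so that part matches. Where you genuinely diverge is in recovering $A$ from $F$: the paper does not use a flat div--curl decomposition. It integrates the covariantly differentiated equation $\partial_s \covD^{(k+1)}A = \covD^{(k+2)}F + \sum_{j}[\covD^{(j+1)}F,\covD^{(k-j)}A]$ in $s$ from $0$, closing an induction purely at the level of $\|\covD^{(k+1)}A\|_{L^\infty L^2}$ (note the right-hand side contains at most $k$ covariant derivatives of $A$, and all bilinear terms are $s$-integrals whose $F$-factors carry divisible parabolic norms), and only afterwards converts covariant to flat derivatives; in that conversion every right-hand side term is an already-controlled covariant quantity (via diamagnetic/covariant Sobolev inequalities), so no top-order flat term ever needs to be absorbed.

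Your route has a genuine gap at precisely this point. In the decomposition $\|A\|_{\dot H^{k+1}} \lesssim \|\partial^{\ell}A_{\ell}\|_{\dot H^{k}} + \|F\|_{\dot H^{k}} + \|[A,A]\|_{\dot H^{k}}$, the last term is \emph{not} lower order: its worst piece obeys only $\|[A,\partial^{(k)}A]\|_{L^2}\lesssim \|A\|_{L^4}\|\partial^{(k)}A\|_{L^4}\lesssim M_1\|A\|_{\dot H^{k+1}}$, i.e.\ it has the same scaling as the quantity being estimated with a coefficient $\sim M_1$ that is not small for large data. This contradicts your claim that ``no $\partial^{(k+1)}A$ factor ever appears on the right-hand side'' (the $j=k$ piece $[\partial^{(k)}A,\covD F]$ of the divergence integrand also sees $\dot H^{k+1}$-level information on $A$, though there one can at least gain smallness by subdividing $J$, since it sits under an $s$-integral against divisible norms of $F$). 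The $[A,A]$ term, by contrast, is a fixed-heat-time algebraic expression, so heat-time subdivision gains nothing, and your bootstrap does not close: assuming $\|A\|_{\dot H^{k+1}}\le 2K$ you only derive a bound of the form $CM_1K+(\text{good terms})$, which does not improve the constant. The gap is repairable --- either by adopting the paper's covariant-first scheme, or by splitting $\|\partial^{(k)}A\|_{L^4}\lesssim\|A\|_{\dot H^{k}}^{1/2}\|A\|_{\dot H^{k+1}}^{1/2}$ and absorbing via Young's inequality, which however leaves an inhomogeneous term $C(M_1)\|A\|_{\dot H^{k}}$ that must then be removed by interpolation together with a scaling normalization, since $\|a\|_{\dot H^{k}}$ is not controlled by $\|a\|_{\dot H^{k+1}}$ --- but as written the argument does not close.
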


\begin{proof}
We first remark that bounds for $A(0)$ directly translate into bounds for $F(0)$,
namely 
\[
\|\covD^{(k)} F(0)\|_{L^2} \lesssim_{M_1}  \|A(0)\|_{\dot H^{k+1}}.
\]
This directly leads to improved bounds in Proposition~\ref{p:cov-smth}, with 
$k$ covariant derivatives added:
\begin{align} \label{est-cov-k}
\nrm{s^{m/2} \covD_{x}^{(m+k)} F}_{L^{\infty}_{\frac{\ud s}{s}} (J; L^{2} _{x})}
	+ \nrm{s^{(m+1)/2} \covD_{x}^{(m+k+1)} F}_{L^{2}_{\frac{\ud s}{s}} (J; L^{2} _{x})}
\leq_{\hM, k}  \|\covD^{(k)} F(0)\|_{L^2}
\end{align}

Next we prove the covariant version of \eqref{A-high}, arguing by induction on $k$.
Our starting point is the $k=0$ bound in the previous proposition. For the induction step 
we differentiate \eqref{A0-cal} $k+1$ times to obtain the schematic equation
\[
\partial_s \covD^{(k+1)} A  = \covD^{(k+2)} F + \sum_{j=0}^k [\covD^{(j+1)} F, \covD^{(k-j)} A].
\]
Then we integrate in $s$ from $s=0$ and estimate separately each term.
The analysis for the first term is identical to the proof of the $L^2$ bound in the previous Lemma,
but using \eqref{est-cov-k} instead of \eqref{est-cov}. For the summand in the second term
we need to consider the integral 
\[
I = \int_{s_1 \leq s_2}  \langle [\covD^{(j+1)} F,\covD^{(k-j)} A](s_1),  [\covD^{(j+1)} F,\covD^{(k-j)} A](s_2) \rangle dx ds_1 ds_2  
\]
Now we use \eqref{est-cov-k} for $F$, respectively our induction hypothesis for $A$ 
to bound the four factors in $L^2$, $L^\infty L^4$, $L^2 L^\infty$ respectively $L^\infty L^4$
to obtain
\[
\begin{split}
|I| \lesssim & \ \sum_{s_1< s_2, dyadic} s_1^\frac12 s_2^{-\frac12} \| \covD^{(j+1)} F\|_{L^2_{s_1} L^2}
\| \covD^{(k-j+1)} A\|_{L^\infty L^2} \| s  \covD^{(j+1)} F\|_{L^2_{s_2} L^\infty} \|  \covD^{(k-j+1)} A\|_{L^\infty L^2}
\\
\lesssim & \ \| \covD^{(j)} F(0)\|_{L^2}^2  \|  \covD^{(k-j+1)} A\|_{L^\infty L^2}.
\end{split}
\] 
Then the RHS is bounded using the induction hypothesis,
provided that $j \neq 0$. If $j = 0$ we can argue in a similar fashion 
if we mildly unbalance the estimate, using instead the norms 
 $L^2 L^4$, $L^\infty L^2$, $L^2 L^\infty$ respectively $L^\infty L^4$.
Alternatively, we can also divide the heat-time interval into subintervals 
where the above $L^2L^p$ norms of $F$ are small, and then reiterate.

It remains to make the transition from covariant to regular derivatives.
For $k=1$ we estimate as follows:
\[
\begin{split}
\| \partial^{(2)} A\|_{L^2} \lesssim_{M_1} & \ \| \covD \partial A\|_{L^2} \lesssim 
\| \covD^{(2)} A\|_{L^2} + \| [ \covD A, A]\|_{L^2}
\\  \lesssim_{M_1} & \  \| \covD^{(2)} A\|_{L^2} + \|  \covD A\|_{L^4} \|A\|_{L^4}
\lesssim_{M_1} \| \covD^{(2)} A\|_{L^2} .
\end{split}
\]
A similar argument inductively applies for higher $k$.
\end{proof}

We continue with some finer scale invariant estimates for $A$ and $F$:
\begin{lemma}\label{l:nocov-smth}
 Let $A$ be a sufficiently regular Yang--Mills heat flow so that
  \eqref{l3} and \eqref{eq:h1-hyp} hold. Then we have the bounds
  \begin{align} 
\| F\|_{L^2 \dot H^1} + \| s F\|_{L^\infty \dot H^2} + \|s^\frac12 F\|_{L^2 \dot H^2} 
\lesssim_{\hM,M_1} & 1 , \label{eq:nocov-smth-1} 
\end{align}
as well as
\begin{align}
\| \covD F\|_{\ell^2 L^1 \dot H^1} \lesssim_{\hM,M_1} & 1 ,\label{eq:nocov-smth-2} \\
\| A\|_{\ell^2 L^{\infty} \dot H^1} \lesssim_{\hM,M_1} & 1 , \label{eq:nocov-smth-3} \\
\| F- e^{s\Delta} f\|_{\ell^1 L^2 \dot H^1} \lesssim_{\hM,M_1} & 1 . \label{eq:nocov-smth-4}
\end{align}
\end{lemma}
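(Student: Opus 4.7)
The plan is to upgrade the covariant curvature bounds of Proposition~\ref{p:cov-smth} and its corollary to the non-covariant norms appearing in the statement, using the $\dot H^1$ bound on $A$ from Lemma~\ref{l:h1} (so that $\|A\|_{L^\infty L^4} \lesssim_{\hM,M_1} 1$ by Sobolev), the diamagnetic inequality $|\partial |F|| \leq |\covD F|$, and the $L^1_s$ parabolic regularity theory of Section~\ref{sec:cov}. The four estimates are handled in increasing order of refinement: \eqref{eq:nocov-smth-1} is a direct conversion from covariant to non-covariant derivatives; \eqref{eq:nocov-smth-2} comes from parabolic regularity applied to the equation \eqref{eq:YMHF-Fij} satisfied by the curvature; \eqref{eq:nocov-smth-3} is a direct consequence of \eqref{eq:nocov-smth-2}; and \eqref{eq:nocov-smth-4} is the most delicate since it requires $\ell^1$ rather than $\ell^2$ frequency-summability.

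For \eqref{eq:nocov-smth-1}, I write $\partial F = \covD F - ad(A) F$ and iterate to express $\partial^{(n)} F$ as $\covD^{(n)} F$ plus lower-order corrections involving $A$ and its derivatives. The $\|F\|_{L^2 \dot H^1}$ bound combines $\|\covD F\|_{L^2 L^2} \lesssim 1$ from Proposition~\ref{p:cov-smth} with $\|[A, F]\|_{L^2 L^2} \lesssim \|A\|_{L^\infty L^4} \|F\|_{L^2 L^4}$, where the diamagnetic inequality together with the Sobolev embedding $\dot H^1 \hookrightarrow L^4$ in $\bbR^{4}$ bounds $\|F\|_{L^2 L^4}$ by $\|\covD F\|_{L^2 L^2}$. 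The higher-order bounds $\|sF\|_{L^\infty \dot H^2}$ and $\|s^{1/2} F\|_{L^2 \dot H^2}$ follow similarly, combining the covariant bounds of Proposition~\ref{p:cov-smth} at $n = 1, 2$ with the scale-invariant $L^p_{ds/s} L^q$ bounds \eqref{GNS1} for $F$ and $\covD F$, taking care with the $s$-weights so that all correction terms are uniformly bounded.

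Estimate \eqref{eq:nocov-smth-2} is obtained by applying the $L^1_s$ parabolic regularity of Theorem~\ref{t:heatA-L1} (or a suitable borderline modification for $\sigma = 1$) to the covariant parabolic equation \eqref{eq:YMHF-Fij} satisfied by $F$. The quadratic source $-2[\tensor{F}{_{i}^{\ell}}, F_{j\ell}]$ is split off either into the $2\,ad(F)$ coefficient of the operator (treated perturbatively by partitioning $J$ into finitely many subintervals on which the $\|F\|_{L^2 \dot H^1}$ norm, now controlled via \eqref{eq:nocov-smth-1}, is small) or absorbed into the source term $G$ and estimated frequency-wise via Littlewood--Paley trichotomy. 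The resulting frequency-envelope bound $\|P_k \covD F\|_{L^1 \dot H^1} \lesssim d_k$, with $d_k$ an admissible envelope for $f = F(s=0)$ in $L^2$, then yields $\ell^2$-summability in $k$ via $\sum_k d_k^2 \lesssim \|f\|_{L^2}^2 \lesssim_{M_1} 1$. Estimate \eqref{eq:nocov-smth-3} is an immediate consequence: integrating the local caloric gauge evolution $\partial_s A_i = \covD^\ell F_{\ell i}$ gives $A_i(s) - a_i = \int_0^s \covD^\ell F_{\ell i}(s') \, ds'$, so $\|P_k(A(s) - a)\|_{\dot H^1} \leq \|P_k \covD F\|_{L^1 \dot H^1}$, and $\|a\|_{\ell^2 \dot H^1} = \|a\|_{\dot H^1} \leq M_1$ by Plancherel.

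For \eqref{eq:nocov-smth-4}, set $R = F - e^{s\Delta} f$, which satisfies $(\partial_s - \Delta) R = N(A, F)$ with $R(0) = 0$, where $N = 2\,ad(A) \partial F + ad(\partial^\ell A_\ell) F + ad(A)^2 F - 2[F \cdot F]$ arises from expanding $\lap_A - \lap$ together with the curvature--curvature bilinear term. Dyadic $L^2$-maximal regularity for the heat equation yields $\|P_k R\|_{L^2 \dot H^1} \lesssim \|P_k N\|_{L^2 \dot H^{-1}}$, reducing the task to summing $\|P_k N\|_{L^2 \dot H^{-1}}$ in $\ell^1_k$. A Littlewood--Paley trichotomy on $N$ controls the low-high interactions by pairing a uniformly bounded low-frequency factor with the $\ell^2$-summable high-frequency envelope of $F$ or $A$ at the output frequency $k$, while the high-high interactions are summed via Cauchy--Schwarz over $k' > k$ using Bernstein and the one-derivative gain in $\dot H^{-1}$. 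The principal obstacle lies precisely here: converting the available bilinear-in-$\ell^2$ bounds on $A$ and $F$ into an $\ell^1$-summable bound on $N$ requires carefully exploiting the off-diagonal frequency decay provided by the $\dot H^{-1}$ norm and the parabolic smoothing of $F$, and is the only step of the proof where the mere square-summability of the envelopes is not enough.
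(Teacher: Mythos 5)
Your treatment of \eqref{eq:nocov-smth-1}, \eqref{eq:nocov-smth-3} and \eqref{eq:nocov-smth-4} follows essentially the paper's own route: convert covariant to ordinary derivatives using $A\in L^\infty(\dot H^1\cap L^4)$, the diamagnetic inequality and the weighted $L^p_{\frac{ds}{s}}L^q$ bounds \eqref{GNS}--\eqref{GNS1}; obtain \eqref{eq:nocov-smth-3} by integrating $A_i(s)=a_i+\int_0^s\covD^\ell F_{\ell i}\,ds'$ against \eqref{eq:nocov-smth-2}; and prove \eqref{eq:nocov-smth-4} by writing $(\rd_s-\lap)(F-e^{s\lap}f)=G$ and estimating $G$ in $\ell^1L^2\dot H^{-1}$ via Littlewood--Paley trichotomy with off-diagonal/parabolic decay. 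The one step where your argument as written has a real soft spot is \eqref{eq:nocov-smth-2}. You invoke the $L^1_s$ parabolic regularity of Theorem~\ref{t:heatA-L1} applied to the curvature equation \eqref{eq:YMHF-Fij} to get the frequency-envelope bound $\|P_k\covD F\|_{L^1\dot H^1}\lesssim d_k$ with $d_k$ an $L^2$ envelope of $f$. But the exponent you need is the borderline $\sgm=1$ (excluded from the range $-1<\sgm<1$ of the fixed-Sobolev statement), so you are in fact relying on the frequency-envelope version, and its hypothesis \eqref{hf-A-fe} requires an $\ell^2$ frequency envelope for $A$ in $L^\infty\dot H^1$ (the $F\in L^2\dot H^1$ and $\rd_sA\in L^2L^2$ parts square-sum by orthogonality, but the $L^\infty_s$ part does not). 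Having such an envelope is exactly the statement $A\in\ell^2L^\infty\dot H^1$, i.e.\ \eqref{eq:nocov-smth-3}, which in your ordering is deduced \emph{from} \eqref{eq:nocov-smth-2}. So, as formulated, the step is circular.

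This is repairable in two ways. The paper's primary argument avoids the solvability theory altogether: it first upgrades \eqref{eq:nocov-smth-1} to one more covariant derivative (in particular $\|s\covD F\|_{L^2\dot H^2}\lesssim 1$), and then for each dyadic frequency $k$ interpolates, splitting $s\lessgtr 2^{-2k}$ and using Cauchy--Schwarz in $s$: $\|P_k\covD F\|_{L^1([0,2^{-2k}];\dot H^1)}\lesssim\|P_k\covD F\|_{L^2L^2}$ and $\|P_k\covD F\|_{L^1([2^{-2k},\infty);\dot H^1)}\lesssim\|s P_k\covD F\|_{L^2\dot H^2}$; square-summing in $k$ then uses only orthogonality of the $L^2_s$-based norms, no envelope hypotheses. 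Alternatively, if you want to keep the parabolic-solvability route (which is the paper's parenthetical remark), apply the fixed-Sobolev part of Theorem~\ref{t:heatA-L1} at $\sgm=0$ to $B=\covD F$ rather than to $F$: its equation is of the form \eqref{heatA} with data $\covD f\in\dot H^{-1}$ and source $[F,\covD F]\in L^1\dot H^{-1}$ (by $\covD F\in L^2L^2$ and $F\in L^2L^4$), giving $\covD^{(2)}F\in L^1L^2$ and hence $\covD F\in L^1\dot H^1$ after peeling off $[A,\covD F]$; since $L^1\dot H^1\subseteq\ell^2L^1\dot H^1$ by Minkowski's inequality, this yields \eqref{eq:nocov-smth-2} without the envelope hypothesis. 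Either fix restores the correct logical order \eqref{eq:nocov-smth-2}~$\Rightarrow$~\eqref{eq:nocov-smth-3}.
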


\begin{proof}
We start with \eqref{eq:nocov-smth-1}. The first bound is obvious. For the second we need to estimate at fixed $s$
\[
\| s \partial^2 F\|_{L^2} \lesssim \| s \covD \partial F(s)\|_{L^2} \lesssim \| s  \partial \covD F(s)\|_{L^2}
+ \| s \partial A F\|_{L^2} \lesssim \| s  \covD^2 F(s)\|_{L^2}
+ \|  \partial A \|_{L^2} \|sF\|_{L^\infty}  
\]
which suffices. For the third we  argue similarly.

More covariant derivatives are also allowed here.  In particular the
bound \eqref{eq:nocov-smth-2} for $\| \covD F\|_{\ell^2 L^1 \dot H^1}$ is obtained by combining
bounds for $\| \covD F\|_{L^2}$ and $\| s \covD F\|_{L^2 \dot H^2}$. (Alternatively, we could use the fact that $\covD F_{i x}$ solves the \eqref{heatA} with $G \in L^{1} \dot{H}^{-1}$, and appeal to Theorem~\ref{t:heatA-L1}.)

The improved $A$ bound \eqref{eq:nocov-smth-3} follows by integrating 
\begin{equation*}
A_{i}(s) = \int_{0}^{s} \covD^{\ell} F_{\ell i} (\tilde{s}) \, \ud \tilde{s}
\end{equation*}
using \eqref{eq:nocov-smth-2}.

Finally we consider the difference
\[
\tilde F = F - e^{s \Delta} f
\]
which solves the schematic equation
\[
(\partial_s - \Delta)\tilde F = [A , \rd F] + [\rd A, F] + [A, [A, F]] =: G.
\]
Then we use off-diagonal decay, as well as the preceding bounds, to obtain the following estimate for $G$:
\begin{equation*}
	\nrm{G}_{\ell^{1} L^{2} \dot{H}^{-1}} \aleq_{\hM, M_{1}} 1.
\end{equation*}
Then \eqref{eq:nocov-smth-4} follows from the usual heat flow estimate (or use Theorem~\ref{t:heatA-fe} with $A = 0$). \qedhere
\end{proof}

We now have sufficient estimates in order to establish the continuation of 
regular solutions  for  the Yang--Mills heat flow with regular data:

\begin{lemma}\label{l:ext-h3}
Let $J$ be the maximal time of existence for the Yang--Mills heat flow problem
in the local caloric gauge, with initial data $a \in \dot H^1 \cap \dot H^3$.
Then either $J = [0,\infty)$ or $\|F\|_{L^3(J; L^{3})} = \infty$.
\end{lemma}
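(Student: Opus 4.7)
The plan is to argue by contradiction: suppose $J = [0, s_*)$ with $s_* < \infty$ is the maximal heat-time interval of existence and that $\|F\|_{L^{3}(J; L^{3})} = \hM < \infty$. The goal is then to extend the solution past $s_*$, contradicting maximality. The strategy relies on the uniformity of the local lifespan in Lemma~\ref{l:h3-loc}, which depends only on the $\dot H^1 \cap \dot H^3$ size of the data.

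First I would upgrade the $L^{3}L^{3}$ control of $F$ to uniform $\dot H^1$ control of $A(s)$ along the flow. Applying Lemma~\ref{l:h1} with $\calE = \tfrac12 \| f[a] \|_{L^{2}}^{2}$ (finite since $a \in \dot H^{1} \cap \dot H^3$) yields
\[
\sup_{s \in J} \|A(s)\|_{\dot H^{1}} \leq M_{1} := C(\hM, \calE)\bigl(1 + \|a\|_{\dot H^{1}}\bigr) < \infty .
\]
With $M_1$ now available as a parameter, I next invoke Lemma~\ref{l:h3} with $k = 2$ to obtain the analogous uniform bound at the $\dot H^3$ level,
\[
\sup_{s \in J} \|A(s)\|_{\dot H^{3}} \leq M_{3} := C(\hM, M_1)\|a\|_{\dot H^{3}} < \infty.
\]
Thus the curve $s \mapsto A(s)$ stays in a fixed bounded subset of $\dot H^{1} \cap \dot H^{3}$ for all $s < s_*$.

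With this uniform size control in hand, I would apply Lemma~\ref{l:h3-loc} from a suitably chosen initial heat-time $s_0 < s_*$. That lemma provides a lifespan $\tau = \tau(M_1 + M_3) > 0$, depending only on the $\dot H^{1} \cap \dot H^3$ size of the initial data, on which any solution initialized from $A(s_0)$ exists and is unique. Choosing $s_0 \in (s_* - \tau/2, s_*)$, the locally constructed solution lives on $[s_0, s_0 + \tau] \supsetneq [s_0, s_*]$. By the uniqueness half of Lemma~\ref{l:h3-loc}, this new solution coincides with $A$ on $[s_0, s_*)$, so concatenating it with the original flow on $[0, s_0]$ produces a solution on $[0, s_0 + \tau)$, which strictly contains $[0, s_*)$. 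This contradicts the maximality of $J$ and forces either $s_* = \infty$ or $\|F\|_{L^{3}(J; L^3)} = \infty$.

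The essentially only delicate point is the correct bookkeeping of constants: $M_{1}$ must be produced before $M_{3}$, since Lemma~\ref{l:h3} requires an $\dot H^{1}$ bound as a hypothesis, and the lifespan $\tau$ in Lemma~\ref{l:h3-loc} must depend only on the data norm, not on the elapsed heat-time. Both are already built into the statements being invoked, so there is no genuine analytic obstacle — the argument is a soft consequence of the a priori bounds together with the deterministic local theory.
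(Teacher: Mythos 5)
Your proof is correct and follows essentially the same route as the paper: use the a priori bounds (Lemma~\ref{l:h1} for the uniform $\dot H^1$ control, then Lemma~\ref{l:h3} for $\dot H^3$) to see that the flow stays in a bounded set of $\dot H^1 \cap \dot H^3$ as long as $\|F\|_{L^3 L^3}$ is finite, and then invoke the local well-posedness of Lemma~\ref{l:h3-loc}, whose lifespan depends only on the data size, to continue past the endpoint. Your version merely spells out the restart-and-uniqueness concatenation and the order in which the constants $M_1$, $M_3$ are produced, which the paper leaves implicit.
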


\begin{proof}
We assume that $A$ is a solution in $\dot{H}^{1} \cap \dot{H}^3$ in a finite time interval $J$, so that 
$\|F\|_{L^3(J; L^{3})} < \infty$. By Lemma~\ref{l:h3}, the solution is uniformly bounded in 
$\dot{H}^{1} \cap \dot{H}^3$ for $s \in J$. Since the lifespan of the solution 
to the initial value problem depends 
only on the size of the data, it immediately follows that we can extend the solution past $J$. 
\end{proof}

Next, we consider the $L^2$ well-posedness for the linearized equation \eqref{lin-heat}.
It is convenient to also consider the corresponding inhomogeneous problem, which we write in the form
\begin{equation}\label{lin-inhom}
\partial_s B_k -  \covD^j ( \covD_j B_k - \covD_k B_j) - [B^j,F_{jk}] = H_k + \covD^{j} G_{jk}, \qquad B_k(0) = b_k
\end{equation}
where $G$ is antisymmetric.

\begin{lemma}\label{l:lin}
Let $A$ be a sufficiently regular  Yang--Mills heat flow so that \eqref{l3} holds. 
Then the linearized equation \eqref{lin-inhom} is well-posed in $\dot H^\sigma$ for $-2 < \sigma  < 2$,
and we have the bounds
\begin{equation}\label{B-hs+}
\| B\|_{L^\infty \dot H^\sigma} + \| \covD_i B_j - \covD_j B_i\|_{L^2 \dot H^\sigma} \lesssim \|B(0)\|_{\dot H^\sigma}
+ \|H \|_{L^1 \dot{H}^\sigma} + \| G\|_{L^2 \dot{H}^\sigma}
\end{equation}
\end{lemma}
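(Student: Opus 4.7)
The strategy is to reduce the degenerate parabolic system \eqref{lin-inhom} to the nondegenerate covariant parabolic equation governed by $\partial_s - \Delta_A - 2\,ad(F)$, for which Theorem~\ref{t:heatA} already provides the required $\dot H^\sigma$ theory. The reduction uses the \emph{infinitesimal de Turck trick} of Section~\ref{subsec:dymhf}: introduce an auxiliary $\g$-valued function $A_0$ on $\bbR^4 \times J$ with $A_0(0) = 0$ and evolution $\partial_s A_0 = \covD^\ell \tilde B_\ell$, and set $\tilde B_j := B_j - \covD_j A_0$. A direct computation, using $\partial_s A_j = \covD^\ell F_{\ell j}$ from the local caloric gauge together with the commutators $[\covD_j, \covD_k] = ad(F_{jk})$ and $[\covD_j, \Delta_A] = ad(\covD F) + ad(F)\covD$, then shows that
\begin{equation*}
(\partial_s - \Delta_A - 2\,ad(F))\,\tilde B_j = H_j + \covD^k G_{kj} + R_j, \qquad \tilde B_j(0) = b_j,
\end{equation*}
where the remainder $R_j$ consists of terms of schematic form $ad(F)\tilde B$, $ad(F)\covD A_0$, and $ad(\covD F)A_0$.

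The first step is to handle the base case $\sigma = 0$ directly, without the de Turck detour. Pairing \eqref{lin-inhom} with $B^k$ and integrating by parts yields the key identity
\begin{equation*}
\int \langle B^k, \covD^j(\covD_j B_k - \covD_k B_j)\rangle \, dx = -\tfrac{1}{2} \int |\covD_j B_k - \covD_k B_j|^2 \, dx,
\end{equation*}
modulo a commutator contribution of the form $\int \langle B, ad(F) B\rangle$. This supplies coercive control of $\|B\|_{L^\infty L^2}$ and $\|\curl_A B\|_{L^2 L^2}$ simultaneously, after subdividing $J$ into finitely many subintervals on which $\|F\|_{L^2 \dot H^1}$ is small to absorb the $[B, F]$ coupling. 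The forcing $\covD^k G_{k}$ is paired with $B$ and integrated by parts, dualising against $\covD B$ in the coercive norm, while $H$ is paired directly.

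For $\sigma \in (0, 2)$, the plan is to apply Theorem~\ref{t:heatA} at level $\sigma$ to the equation for $\tilde B_j$. The remainder terms $R_j$ are placed perturbatively in $L^2 \dot H^{\sigma-1}$ by combining Proposition~\ref{p:cov-smth} (which provides the needed bounds on $F$ and $\covD F$) with the $\sigma = 0$ bound on $\tilde B$ and an $L^\infty \dot H^\sigma$ bound on $A_0$ obtained from integrating $\partial_s A_0 = \covD^\ell \tilde B_\ell$ in heat-time; the integrability here relies on the $L^1 \dot H^\sigma$ estimate for $\covD \tilde B$ supplied by Theorem~\ref{t:heatA-L1}. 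Once $\tilde B$ is controlled in $L^\infty \dot H^\sigma$ and $\covD \tilde B$ in $L^2 \dot H^\sigma$, the reconstruction $B = \tilde B + \covD A_0$ and $\covD_i B_j - \covD_j B_i = \covD_i \tilde B_j - \covD_j \tilde B_i + ad(F_{ij}) A_0$ yields the stated bounds for $B$. The case $\sigma \in (-2, 0)$ follows by duality: the formal $L^2_{s,x}$-adjoint of \eqref{lin-inhom} has the same structure with heat-time reversed, so the estimate at exponent $-\sigma > 0$ applied to the adjoint solution yields the $\dot H^\sigma$ bound for $B$.

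The main obstacle will be the careful bookkeeping of the commutator remainders $R_j$ and the coupled bounds on $\tilde B$ and $A_0$. In particular, $\covD A_0$ on its own does not benefit from parabolic smoothing, since $\partial_s \covD A_0 = \covD \covD^\ell \tilde B_\ell$ lacks the coercive direction; the saving grace is that the combination $B = \tilde B + \covD A_0$ and its curl do inherit the full parabolic regularity via the $L^1 \dot H^\sigma$ estimate on $\covD \tilde B$. Matching each commutator structure appearing in $R_j$ to the appropriate variant of Theorem~\ref{t:heatA} or Theorem~\ref{t:heatA-L1} across the entire range $-2 < \sigma < 2$ is the principal technical task of the proof.
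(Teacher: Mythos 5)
Your overall skeleton — the infinitesimal de Turck trick, reduction to Theorem~\ref{t:heatA} for the transformed variable, recovery of $A_0$ by heat-time integration, reconstruction $B=\tilde B+\covD A_0$ with $\curl_A B=\curl_A\tilde B+[F,A_0]$, and duality for negative $\sigma$ — is exactly the paper's route for $|\sigma|<1$. But several of your concrete steps do not hold as stated. First, the transformed equation: with the gauge choice of the paper, $\partial_s A_0=\covD^{\ell}F_{\ell 0}$ (note the sign — opposite to the $\partial_s A_0=\covD^\ell\tilde B_\ell$ you wrote), the cross terms cancel \emph{exactly} and $\tilde B_j=F_{0j}$ solves $(\partial_s-\Delta_A-2\,ad(F))\tilde B=H+\covD G$ with no remainder $R_j$ at all; with the sign as you wrote it the term $-2\covD_j\covD^k\tilde B_k$ survives, so the gradient sector of the principal part becomes \emph{backward} parabolic and the degeneracy is not removed. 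Neither sign produces remainders of the form $ad(F)\covD A_0$, $ad(\covD F)A_0$, so the central computation of your scheme has not actually been carried out. Second, and more importantly, your bound on $A_0$ is one derivative short: to reconstruct $B\in L^\infty\dot H^\sigma$ and to estimate $[F,A_0]\in L^2\dot H^\sigma$ you need $A_0\in L^\infty\dot H^{\sigma+1}$, not $\dot H^\sigma$. Obtaining this is the crux of the proof, and the mechanism you cite does not deliver it: Theorem~\ref{t:heatA-L1} applied to $\tilde B$ at the required level needs data in $\dot H^{\sigma}$ with output at $\sigma+1$, which is outside its stated range for $\sigma\ge 0$, and it requires $H=0$, whereas your equation carries the divergence-form forcing $\covD^kG_{kj}$ with $G$ only in $L^2\dot H^\sigma$. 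The paper instead derives a parabolic equation for the scalar quantity $\covD^jF_{0j}$ itself, whose right-hand side is one derivative better because the two derivatives on $G$ collapse via antisymmetry, $\covD^k\covD^jG_{jk}=-\tfrac12[F^{jk},G_{jk}]$; Theorem~\ref{t:heatA-L1} then gives $\covD^jF_{0j}\in \ell^2L^1\dot H^{\sigma+1}$, which integrates to the needed $A_0\in L^\infty\dot H^{\sigma+1}$. This structural observation is missing from your proposal.

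Two further gaps: your ``base case'' $\sigma=0$ energy estimate for the degenerate system does not close, because the coercive term controls only $\curl_AB$, not $\covD B$, so the coupling $\int\langle B,[B^j,F_{j\cdot}]\rangle$ cannot be absorbed merely by making $\|F\|_{L^2\dot H^1}$ small on subintervals (you would need an $L^2L^4$-type bound on $B$, i.e. the full covariant gradient). And the claim that one argument covers all of $\sigma\in(0,2)$ is unjustified: the $A_0$ estimates above (e.g. $[F,A_0]$ with $F\in L^2\dot H^1$, $A_0\in L^\infty\dot H^{\sigma+1}$) break down at $\sigma=1$, which is why the paper treats $1\le\sigma<2$ separately by differentiating the equation, applying the $|\sigma|<1$ result to $\covD B$, and invoking the improved bounds $\covD F,\ \partial_sA\in\ell^2L^1\dot H^1$ from Lemma~\ref{l:nocov-smth} together with divisibility of those norms. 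Your duality step for negative $\sigma$ is fine and matches the paper.
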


\begin{proof} 

Depending on $\sigma$ we divide the problem in three cases:
\pfstep{Case 1: $-1 < \sigma  < 1$} 
For this part of the proof we do not use the fact that $A$ is an Yang--Mills heat flow. Instead, we use only the following
properties:

\begin{itemize}
\item $A$ is bounded in $\dot H^1$, $\|A\|_{L^\infty \dot H^1} \aleq_{\hM, M_{1}} 1$.

\item $F$ is bounded in $L^2 \dot H^{1}$, $\|F\|_{L^2 \dot H^{1}} \lesssim_{\hM, M_1} 1$.

\item $\partial_s A \in L^2$, $\|\partial_s A\|_{L^2}  \lesssim_{\hM, M_1} 1$.
\end{itemize}
These are precisely what we need to apply Theorem~\ref{t:heatA}.

Our approach  here is to solve the linearized 
equation via the $A_0$ flow, as follows:

\begin{itemize}
\item We assign for simplicity the initial value $a_0 = 0$. Then we have matching initial data 
$f_{0j} = b_j$. 

\item We find $F_{0j}$ by solving  the inhomogeneous version of the covariant curvature flow \eqref{F0l},
namely 
\begin{equation}\label{F0l-inhom}
\partial_s F_{0\ell} - \Delta_A F_{0\ell} = - 2 [{F_{\ell}}^k,F_{k0}] +  H_\ell + \covD^j G_{j\ell}.
\end{equation}
\item We recover $A_0$ by integrating \eqref{A0-cal} in heat-time
\[
A_0(s) = \int_0^s \covD^k F_{k 0 }(s_1)\ ds_1.
\]

\item We find the  solution $B$ to the linearized inhomogeneous problem \eqref{lin-inhom}
by 
\[
B_j = F_{0j} + \covD_j A_0.
\]
\end{itemize}

This approach may at first appear more roundabout, but it has the chief advantage 
that we only need to solve dynamically a strongly parabolic evolution. It may be thought of as an ``infinitesimal de Turck's trick'' for the linearized Yang--Mills heat flow (cf. Section~\ref{subsec:dymhf}).

The bounds for $F_{0j}$ are already provided by Theorem~\ref{t:heatA}, which yields
\[
\| F_{0j} \|_{L^\infty \dot H^\sigma } + \| \covD F_{0j}\|_{ L^2 \dot H^\sigma}  \lesssim \|f_{0j} \|_{\dot H^\sigma } + \| H\|_{L^1 \dot H^\sigma }
+ \|G\|_{L^2 \dot H^\sigma } .
\]
By the formula $\mathrm{curl}_{A} B = \curl_{A} F_{0 x} + [F, A_{0}]$, it remains to estimate 
\begin{equation}
\| \covD A_0\|_{L^\infty \dot H^\sigma} + \|[F,A_0] \|_{L^2 \dot H^\sigma }
\lesssim \| f_{0j}\|_{\dot H^\sigma }+ \| H\|_{L^1 \dot H^\sigma }
+ \|G\|_{L^2 \dot H^\sigma } .
\end{equation}
Since $-1 < \sgm < 1$, this reduces to 
\[
\|A_0 \|_{L^\infty \dot H^{\sigma+1}} \lesssim \| f_{0j}\|_{\dot H^\sigma }+ \| H\|_{L^1 \dot H^\sigma }
+ \|G\|_{L^2 \dot H^\sigma } .
\]

To bound $A_0$, we need to better understand the expression $\covD^j F_{0j}$. This
solves the (schematic) parabolic equation
\[
(\partial_s -\Delta_A) \covD^j F_{0j}= [\covD F, F_{0j}] + [F, \covD F_{0j}] + \covD^{j} H_{j} + [F,  G],
\]
where we used the fact that $ \covD^{k} \covD^{j}G_{jk} = -\frac{1}{2} [F^{jk}, G_{jk}]$ by antisymmetry. As $\sigma \in (-1,1)$, we can estimate the right hand side in $L^1 \dot H^{\sigma-1}$.
By Theorem~\ref{t:heatA-L1}, it follows that
\[
\|\covD^j F_{0j}\|_{\ell^2 L^1 \dot H^{\sigma+1}} \lesssim  \| f_{0j}\|_{\dot H^\sigma }+ \| H\|_{L^1 \dot H^\sigma }
+ \|G\|_{L^2 \dot H^\sigma } 
\]
which in turn leads by integration to the desired $L^\infty \dot H^{\sgm+1}$ bound for $A_0$.

As a final remark, we observe that by interpolating the bounds \eqref{B-hs+} with different $\sigma$ we 
obtain the slightly stronger form
\begin{equation}\label{B-hs++}
\| B\|_{\ell^2 L^\infty \dot H^\sigma} + \| \covD_i B_j - \covD_j B_i\|_{L^2 \dot H^\sigma} \lesssim \|B(0)\|_{\dot H^\sigma}
+ \|H \|_{L^1 \dot{H}^\sigma} + \| G\|_{L^2 \dot{H}^\sigma}.
\end{equation}

\pfstep{Case 2: $1 \leq \sigma < 2$}
In addition to the previous case, here we use the bounds 
\begin{itemize}
\item $\covD F$ is bounded in $\ell^2 L^1 \dot H^{1}$, $\|\covD F\|_{\ell^2 L^1 \dot H^{1}} \lesssim_{M,M_1} 1$.

\item $\partial_s A \in \ell^2 L^1 \dot H^{1}$, $\|\rd_{s} A \|_{\ell^2 L^1 \dot H^{1}} \lesssim_{M,M_1} 1$.
\end{itemize}
which were established in Lemma~\ref{l:nocov-smth}.

Here we apply the previous estimates to $\covD^\ell B$. The  equations for $\covD^\ell B$ have the form
\begin{equation}\label{dlin-inhom}
\partial_s \covD^\ell B_k -  \covD^j ( \covD_j \covD^\ell B_k - \covD_k \covD^\ell B_j) - [\covD^\ell B^j,F_{jk}] =  H_k^\ell +  \covD^j G_{jk}^\ell 
\end{equation}
where
\[
\begin{split}
H_k^\ell =&  \covD^\ell H_k  +  \frac{1}{2} [F^{\ell j}, G_{jk}]  + [F^{\ell j},  \covD_j  B_k - \covD_k  B_j] + [\rd_{s} A, B_k] + [B^j,\covD^{\ell} F_{jk}],
\\
G_{jk}^\ell = &  \covD^\ell G_{jk} + ([ \tensor{F}{^{\ell}_j}, B_k] - [\tensor{F}{^{\ell}_k}, B_j]).
\end{split}
\]
Obtaining $\dot H^{\sigma -1}$ bounds for $\covD B$ suffices, in view of the elliptic bound
\[
\| \covD B\|_{\ell^2 L^\infty \dot H^{\sigma -1}} + \| \mathrm{curl}_{A} \, \covD B \|_{L^2 \dot H^{\sigma -1}} \approx_{\hM, M_1} 
\| B\|_{\ell^2 L^\infty \dot H^{\sigma}} + \| \mathrm{curl}_{A} \, B \|_{L^2 \dot H^{\sigma}} .
\]
The LHS of the last relation comes from the bounds \eqref{B-hs++} for $\covD B$. To obtain those, we 
treat the $B$ dependent terms in $H_k^\ell $ and $G_{jk}^\ell $ perturbatively. To guarantee smallness,
we partition the heat-time interval $J$ into finitely many subintervals where $\|F\|_{L^2 \dot H^1}$ and $\| \covD F\|_{\ell^2 L^1 \dot H^1}$
are small. The last norm is used in order to estimate the last two terms in $H_k^\ell$:
\[
\| [\covD F,B]\|_{L^1 \dot H^{\sigma -1}} \lesssim \| \covD F\|_{\ell^2 L^1 \dot H^1} \| B\|_{\ell^2 L^\infty \dot H^{\sigma-1}}.
\]
This is where the improved $\ell^2$ summation is essential in the last term. We omit further details.

\pfstep{Case 3: $-2 <  \sigma \leq -1$}
Here we argue by duality. The well-posedness for the linearized flow in $\dot{H}^{\sigma}$ is equivalent to the 
well-posedness for the adjoint linearized flow in $\dot{H}^{-\sigma}$. The adjoint linearized flow is a backward 
degenerate parabolic flow, which has exactly the same form as  \eqref{lin-inhom} but with the sign of $\partial_s$ 
reversed. Since our assumptions on $A$ and $F$ in the previous two steps are stable with respect to time reversal,
it follows that their conclusion applies to the adjoint linearized flow as well. Thus the desired conclusion follows.
\end{proof}

We now complement the previous result with a frequency envelope bound.
We assume that $A$ satisfies
 \begin{equation}\label{hf-A-fe-re}
\| P_k A\|_{L^\infty \dot H^1} + \|P_k F\|_{L^2 \dot H^1} + \|P_k \partial_s A\|_{L^2 L^{2}} + \| P_k \covD F\|_{L^1 \dot H^1}\lesssim c_k.
\end{equation}
Then we have the following:

\begin{lemma} \label{l:lin-fe}
Assume that \eqref{hf-A-fe} holds for some $(-1,S)$ frequency envelope $c_k$.
Let $d_k$ be a 2-compatible $(-1,S)$ frequency envelope for $B(0)$ in $L^2$, $G$ in $L^1 L^2$ 
and $H$ in $L^2$. Then we have
\begin{equation}
\| P_k B\|_{L^\infty L^2} + \|P_k \mathrm{curl}_{A} B\|_{L^2 L^{2}} \lesssim d_k .
\end{equation}
\end{lemma}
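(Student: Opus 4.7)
The plan is to reproduce the ``infinitesimal de Turck trick'' used in Case~1 of the proof of Lemma~\ref{l:lin}, but now tracking everything at the frequency-envelope level via the results of Section~\ref{sec:cov}. Namely, we initialize $a_{0} = 0$, so that $f_{0j} = b_{j}$, solve the \emph{nondegenerate} covariant parabolic equation \eqref{F0l-inhom} for $F_{0j}$, recover $A_{0}$ by integrating $\partial_{s} A_{0} = \covD^{\ell} F_{0\ell}$, and then set $B_{j} = F_{0j} + \covD_{j} A_{0}$. The task is to establish frequency envelope bounds for $F_{0j}$ and $\covD A_{0}$, and to pass these to $B$ and $\mathrm{curl}_{A} B$.

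For $F_{0j}$ itself, note that \eqref{F0l-inhom} is precisely the covariant parabolic equation treated in Theorem~\ref{t:heatA-fe}, with source of the form $\tilde G + \covD \tilde H$ where the envelope $d_{k}$ controls $\tilde G \in L^{1} L^{2}$ and $\tilde H \in L^{2} L^{2}$. Since by hypothesis $A$ satisfies \eqref{hf-A-fe-re} with envelope $c_{k}$, and $d_{k}$ is $2$-compatible (in particular $1$-compatible) with $c_{k}$, Theorem~\ref{t:heatA-fe} directly yields
\begin{equation*}
\| P_{k} F_{0x} \|_{L^{\infty} L^{2}} + \| P_{k} \covD F_{0x} \|_{L^{2} L^{2}} \lesssim d_{k}.
\end{equation*}

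The harder step is the bound on $\covD A_{0}$. Here one differentiates $\partial_{s} A_{0}$ to get the schematic parabolic equation for $\covD^{\ell} F_{0\ell}$ used in Lemma~\ref{l:lin},
\begin{equation*}
(\partial_{s} - \Delta_{A}) \covD^{\ell} F_{0\ell} = [\covD F, F_{0x}] + [F, \covD F_{0x}] + \covD^{j} H_{j} + [F, G],
\end{equation*}
and estimates the right-hand side in $L^{1} \dot H^{-1}$ with a $1$-compatible $(-1,S)$ envelope of size $d_{k}$. By Littlewood--Paley trichotomy this reduces to low/high and high/high interactions between $F, \covD F$ (envelope $c_{k}$, controlled by \eqref{hf-A-fe-re}) and $F_{0x}, \covD F_{0x}$ (envelope $d_{k}$, from the previous step); the $2$-compatibility of $d_{k}$ with $c_{k}$ is exactly what lets the high-frequency tails of $F$ multiplying low-frequency pieces of $F_{0x}$ (which formally cost one derivative due to the shift between the scaling of $B$ and that of $\covD^{\ell} F_{0\ell}$) sum back to $d_{k}$. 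Theorem~\ref{t:heatA-L1} (applied with $\sigma = 0$) then gives $\| P_{k} \covD \covD^{\ell} F_{0\ell}\|_{L^{1} L^{2}} \lesssim d_{k}$, and integrating in $s$ one obtains $\| P_{k} \covD A_{0}\|_{L^{\infty} L^{2}} \lesssim d_{k}$. A parallel but simpler argument on the undifferentiated identity $A_{0}(s) = \int_{0}^{s} \covD^{\ell} F_{0\ell}$, using Theorem~\ref{t:heatA-L1}, controls $\| P_{k} A_{0}\|_{L^{\infty} \dot H^{-1}}$ by $d_{k}$.

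Combining everything: $\|P_{k} B\|_{L^{\infty} L^{2}} \lesssim \|P_{k} F_{0x}\|_{L^{\infty} L^{2}} + \|P_{k} \covD A_{0}\|_{L^{\infty} L^{2}} \lesssim d_{k}$, and from the identity $\mathrm{curl}_{A} B = \mathrm{curl}_{A} F_{0x} + [F, A_{0}]$ together with a Littlewood--Paley paraproduct estimate using $\|P_{k} F\|_{L^{2} \dot H^{1}} \lesssim c_{k}$ and the bound on $A_{0}$ just obtained (again leveraging the $2$-compatibility of $d_{k}$ with $c_{k}$), one gets $\|P_{k} \mathrm{curl}_{A} B\|_{L^{2} L^{2}} \lesssim d_{k}$. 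The main obstacle in carrying this out is the bookkeeping of the bilinear $[F, \covD F_{0x}]$ and $[\covD F, F_{0x}]$ interactions in $L^{1} \dot H^{-1}$, precisely because $F$ only has $L^{2} \dot H^{1}$ control (not $L^{1}$), so one must exploit the $\ell^{2}$-summability of $c_{k}$ together with the compatibility condition to close the sums; this is exactly analogous to the role played by $\|\covD F\|_{\ell^{2} L^{1} \dot H^{1}}$ in Case~2 of Lemma~\ref{l:lin}, and is the reason that $2$-compatibility (rather than plain $1$-compatibility) is imposed in the hypothesis.
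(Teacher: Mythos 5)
Your argument is essentially the paper's Step~1, but it rests on a reversed implication: you assert that $2$-compatibility of $d_k$ with $c_k$ is ``in particular'' $1$-compatibility, and later that $2$-compatibility is the stronger condition needed to close the bilinear sums. This is backwards. By the definition of $\sgm$-compatibility, $c_k \sum_{j<k} 2^{\sgm(1-\eps)(j-k)} d_j \lesssim d_k$, a \emph{larger} $\sgm$ puts a faster-decaying weight on the low frequencies and is therefore a \emph{weaker} hypothesis; $1$-compatible implies $2$-compatible, not conversely (e.g.\ $c_k$ of unit size on a long frequency range and $d_j \sim 2^{-3(j-j_0)/2}$ above a low frequency $j_0$ is $2$- but not $1$-compatible). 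Consequently, under the lemma's actual hypothesis you are not entitled to apply Theorem~\ref{t:heatA-fe} (or Theorem~\ref{t:heatA-L1}) to $F_{0j}$ with data envelope $d_k$ — both theorems require $1$-compatibility — and the product estimates downstream genuinely need it too: e.g.\ $[A,A_0]$ and $[F,A_0]$ in the high$\times$low regime produce $c_k \sum_{j<k} 2^{j-k} d_j$, which a merely $2$-compatible envelope does not control by $d_k$. (A smaller issue: you cannot pull $\covD$ through the $s$-integral to get $\covD A_0$ from $\covD\covD^{\ell}F_{\ell 0}$, since the connection depends on $s$; the paper instead bounds $A_0$ in $L^\infty \dot H^1$ from the $L^1\dot H^1$ bound for $\covD^{\ell}F_{\ell 0}$ and then handles $[A,A_0]$ by a paraproduct, which again uses $1$-compatibility.)

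The missing idea is the paper's second step, which is precisely how the weaker $2$-compatible hypothesis is reduced to the $1$-compatible case. One differentiates: the data $\covD b$ for $\covD B$ has frequency envelope $2^k d_k$, and $2^k d_k$ \emph{is} $1$-compatible with $c_k$ exactly when $d_k$ is $2$-compatible. One then runs your (Step~1) argument on the equation \eqref{dlin-inhom} for $\covD B$, treating the $B$-dependent source terms $[F,\mathrm{curl}_A B]$, $[\covD F, B]$, $[F,B]$ by a bootstrap; smallness fails only at the finitely many frequencies where $c_k \approx 1$, which are handled by subdividing the heat-time interval using the divisible norms of $F$. Finally one must transfer the resulting bound for $\covD B$ back to $B$ and $\mathrm{curl}_A B$, again perturbatively, with the exceptional frequencies covered by Lemma~\ref{l:lin}. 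Without this differentiate-and-return structure the proof does not close under the stated hypothesis; as written, your proposal proves the lemma only for $1$-compatible envelopes.
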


\begin{proof}
We prove the result in two steps repeating the analysis in the previous proof.

\pfstep{Step~1: The result for $1$-compatible frequency envelopes}  This is similar to the argument in the previous lemma  
 in the case $|\sigma| < 1$. The desired bound for
  $F_{0j}$ follows from Theorem~\ref{t:heatA-fe}. 

Next we consider the parabolic flow for $\covD^j F_{0j}$, 
\[
(\partial_s -\Delta_A) \covD^j F_{0j} =R           
\]
where (schematically)
\[
R =  [\covD F, F_{0j}] + [F, \covD F_{0j}] + \covD^j H_j + [F,G].
\]
Repeatedly using the Littlewood-Paley trichotomy we  estimate the right hand side
\[
\| P_k R\|_{L^1 L^2} \lesssim  2^k d_k 
\]
where the worst term is the first one where $\covD F$ is the high frequency factor. 
There we combine the $L^1 \dot H^1$ bound for $\covD F$ with the $L^\infty L^2$ bound for $F_{0j}$.

The frequency envelope $2^{k} d_k$ is still admissible. 
Hence by Theorem~\ref{t:heatA-fe} this bound for $R$ yields
\[
\| P_k \covD^j F_{0j}\|_{L^\infty L^2 \cap L^2 \dot H^1} \lesssim 2^k d_k .
\]
Moreover, by Theorem~\ref{t:heatA-L1} we get
\[
\| P_k \covD^j F_{0j}\|_{L^1 \dot H^1} \lesssim d_k.
\]
This in turn after integration yields
\[
\| P_k A_0\|_{L^\infty \dot H^1} \lesssim d_k,
\]
and thus the similar bound
\[
\| P_k \covD A_0\|_{L^\infty L^2} \lesssim d_k.
\]

Finally it remains to estimate $[F,A_0]$ in $L^2 L^{2}$. Here we combine the $A_0$ bound in $L^{\infty} \dot H^1$
and the $L^2 \dot H^1$ bound for $F$.

\pfstep{Step~2: The result for $2$-compatible frequency envelopes}  This is similar to the argument in the previous lemma  
 in the case $1 \leq \sigma < 2$. 

Here we work with the equations for $\covD B$.  If $d_k$ is a $2$-compatible frequency envelope for the initial data $b$,
\[
\| P_k b\|_{L^2} \lesssim d_k,
\]
then the data $\covD b$ for $\covD B$ satisfies (by Littlewood--Paley trichotomy)
\[
\| P_k \covD b \|_{L^2} \lesssim 2^k (d_k + c_k \sum_{j< k} 2^{2(j-k)} d_j) \lesssim  2^k d_k,
\]
and is $1$-compatible with $c_k$. 

We make the bootstrap assumption 
\[
\| P_k B\|_{L^{\infty} L^2} + \| P_k \mathrm{curl}_{A} B\|_{L^2 L^{2}} \leq C d_k.
\]
on some subinterval. To show that this extends to the whole interval, we need to verify that the $B$ dependent terms in the right hand side of \eqref{dlin-inhom}
can be treated perturbatively. The terms to bound are
\[
\| P_k ( [F, \mathrm{curl}_{A} B])\|_{L^1 L^2} + \| P_k [\covD F,B]\|_{L^1 L^2} + \| P_k [F,B]\|_{L^2}  \ll 2^k d_k
\]
The desired bound is obtained by standard Littlewood-Paley bilinear theory, which yields
\[
2^k (d_k + c_k \sum_{j < k} 2^{2(j-k)}d_j) \lesssim  
2^k (d_k + c_k \sup_{j < k} 2^{(2-\epsilon)(j-k)}d_j) \lesssim
2^k d_k
\]
However  we also need to gain smallness. In the last chain of inequalities it is clear that smallness holds
unless 
\begin{equation}\label{bad-case}
c_k \approx 1, \qquad 2^{(2-\epsilon)(j-k)} d_j \lesssim d_k, \qquad j < k .
\end{equation}
The first property selects only finitely many values of $k$; in those cases, we can gain smallness from the divisible norms of 
$F$ by subdividing the time interval.

Now we are in position to apply the bound in Step~1 for $\covD B$ to obtain
\[
\| P_k \covD B\|_{L^{\infty} L^2} + \| P_k\ \mathrm{curl}_{A} \covD B\|_{L^2} \lesssim 2^k d_k.
\]
It remains to return to $B$ and show that this implies
\[
\| P_k B\|_{L^{\infty} L^2} + \| P_k\ \mathrm{curl}_{A} B\|_{L^2} \lesssim d_k.
\] 
But this is done again perturbatively, where the errors are small unless \eqref{bad-case} holds.
But then we are in a position to apply Lemma~\ref{l:lin} directly. \qedhere
\end{proof}

Finally we use the frequency envelope bounds for the linearized equations 
in order to prove frequency envelope bounds for solutions to the Yang--Mills heat flow:
 \begin{lemma}\label{l:nolin}
Let $A$ be a sufficiently regular  Yang--Mills heat flow so that \eqref{l3} holds. 
Let $c_k^0$ be a $(-1,S)$ frequency envelope for the initial data $a$  in $\dot H^1$.
Then we have 
\begin{equation}
\label{hf-A-fe-re+}
\| P_k A\|_{L^\infty \dot H^1} + \|P_k F\|_{L^2 \dot H^1}+ \|P_k \partial_s A\|_{L^2 L^{2}}+ \| P_k \covD F\|_{L^1 \dot H^1} \lesssim c_k^0
\end{equation} 
\end{lemma}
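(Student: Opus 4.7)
The plan is to first derive frequency envelope bounds for the curvature $F$ by applying the $L^2$-based covariant parabolic estimates of Section~\ref{sec:cov} (specifically Theorems~\ref{t:heatA-fe} and~\ref{t:heatA-L1}) to the parabolic system satisfied by $F$ in the local caloric gauge, and then to recover the envelope for $A$ by integrating $\partial_s A_i = \covD^\ell F_{\ell i}$ in heat-time. In the local caloric gauge $A_s = 0$, the equation from Lemma~\ref{lem:cymhf-eqns} reads
$$
(\partial_s - \Delta_A - 2\,\mathrm{ad}(F)) F = 0, \qquad F(s=0) = f[a],
$$
which has the form \eqref{heatA+} needed for these theorems.

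The hypothesis on $A$ required by Theorems~\ref{t:heatA-fe} and~\ref{t:heatA-L1} (namely~\eqref{hf-A-fe-re}) is already supplied, with an $\ell^2$-summable envelope $\tilde c_k$, by the earlier Lemmas~\ref{l:h1} and~\ref{l:nocov-smth}. The initial data $f[a] = da + [a,a]$ admits $c_k^0$ as an $L^2$ frequency envelope: Bernstein handles the linear term, giving $\|P_k da\|_{L^2} \approx 2^k \|P_k a\|_{L^2} \lesssim c_k^0$, while Littlewood--Paley trichotomy handles the bilinear term, giving $\|P_k[a,a]\|_{L^2} \lesssim M_1 c_k^0$ where $M_1 \lesssim \|c^0\|_{\ell^2}$; equivalently, $f[a]$ has $\dot H^{-1}$-envelope $\approx 2^{-k} c_k^0$. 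The compatibility conditions of Theorems~\ref{t:heatA-fe} and~\ref{t:heatA-L1} may be arranged, if necessary, by slightly inflating $c_k^0$ via the inflation lemma of Section~\ref{sec:notations}, producing an envelope comparable to $c_k^0$ modulo constants depending on $\hM$ and $M_1$.

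Theorem~\ref{t:heatA-fe} then yields $\|P_k F\|_{L^\infty L^2} + \|P_k \covD F\|_{L^2 L^2} \lesssim c_k^0$, while Theorem~\ref{t:heatA-L1} yields $\|P_k \covD F\|_{L^1 L^2} \lesssim 2^{-k} c_k^0$, hence $\|P_k \covD F\|_{L^1 \dot H^1} \lesssim c_k^0$. Integrating $\partial_s A_i = \covD^\ell F_{\ell i}$ from $s=0$ then recovers
$$
\|P_k A\|_{L^\infty \dot H^1} \leq \|P_k a\|_{\dot H^1} + 2^k \|P_k \covD F\|_{L^1 L^2} \lesssim c_k^0,
$$
as well as $\|P_k \partial_s A\|_{L^2 L^2} \leq \|P_k \covD F\|_{L^2 L^2} \lesssim c_k^0$. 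The remaining bound $\|P_k F\|_{L^2 \dot H^1} \lesssim c_k^0$ is obtained by applying Theorem~\ref{t:heatA-fe} to the differentiated equation for $\covD F$ (whose initial data $\covD[a] f[a]$ has $L^2$-envelope $\lesssim 2^k c_k^0$), or equivalently by writing $\covD F = \partial F + \mathrm{ad}(A) F$ and controlling the bilinear commutator via Littlewood--Paley trichotomy with the now-established envelope for $A$.

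The principal technical obstacle is the careful bookkeeping of the various admissibility and compatibility conditions between $c_k^0$, $\tilde c_k$, and their derived envelopes; these amount to routine but detailed envelope arithmetic, supported by the $\ell^2$-summability of $\tilde c_k$ and the $\dot H^1$-bound on $a$. Once these are in place, the lemma follows directly from the covariant parabolic machinery of Section~\ref{sec:cov}.
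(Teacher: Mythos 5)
Your overall route (covariant parabolic estimates for the curvature flow of $F$, then integrate $\partial_s A_i = \covD^\ell F_{\ell i}$ to recover $A$, and Theorem~\ref{t:heatA-L1} for the $L^1\dot H^1$ bound) is viable in spirit, and for envelopes of limited decay (essentially the $(-2,2)$ range covered by Corollary~\ref{cor:heatA-fe-low}, where no compatibility is needed) it does produce the lemma. The genuine gap is in your treatment of the compatibility condition for general $(-1,S)$ envelopes, which is exactly the nontrivial case the lemma is meant to cover. Theorems~\ref{t:heatA-fe} and \ref{t:heatA-L1} require the data envelope to be $1$-compatible with the \emph{background} envelope $\tilde c_k$ for $(A,F,\partial_s A)$ in \eqref{hf-A-fe}. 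A priori (from Lemmas~\ref{l:h1} and \ref{l:nocov-smth}) $\tilde c_k$ is only known to be some $\ell^2$ sequence bounded in terms of $\hM, M_1$; it is \emph{not} known to be $\aleq c_k^0$ — that is precisely what the lemma asserts. When $c^0_k$ decays rapidly at high frequencies (large $S$, e.g.\ smooth data) while $\tilde c_k$ does not, compatibility fails, and your proposed fix — inflating $c^0_k$ via the lemma of Section~\ref{sec:notations} — produces $e_k = c^0_k + \tilde c_k \sum_{j<k} 2^{\tilde\sigma(j-k)} c^0_j$, which at high frequencies is of size $\sim \tilde c_k \cdot O(\|c^0\|_{\ell^\infty})$, not comparable to $c^0_k$. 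The resulting conclusion is only $\|P_k F\| \aleq c^0_k + \tilde c_k$, strictly weaker than \eqref{hf-A-fe-re+}; so the "routine envelope arithmetic" you invoke does not close.

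The paper resolves this circularity with a two-step bootstrap that your proposal is missing. First (Case~1) it proves the result for $(-1,1)$ envelopes, where no compatibility is required, by the structural observation that $\partial_x A$ solves the linearized flow \eqref{lin-heat} and applying Lemma~\ref{l:lin} (a range of exponents $-2<\sgm<2$) together with Theorem~\ref{t:heatA-L1} applied to $\covD F$; this yields $c_k^{[1]} \aleq c_k^{0[1]}$ for the solution's minimal $(-1,1)$ envelope. Then (Case~2), for general $S$, it introduces the corrected envelope $d_k = c^0_k + c_k \sup_{j<k} 2^{2(1-\eps)(j-k)} c^0_j$ built from the solution's own minimal $(-1,S)$ envelope $c_k$ (so that $2^k d_k$ is $2$-compatible with $c_k$ by construction), applies Lemma~\ref{l:lin-fe} to $\covD A$ and Theorem~\ref{t:heatA-L1}, and closes a self-improvement: $c_k \aleq c^0_k + c_k c_k^{[1]}$, hence $c_k \aleq c^0_k$ unless $c_k^{[1]} \approx 1$, in which case Case~1 forces $c_k \approx c_k^{[1]} \approx c^{0}_k$. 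You need some version of this self-referential bootstrap (with the envelope built from the solution itself and the dichotomy on $c_k^{[1]}$), not an inflation against the a priori background envelope, to obtain the stated bound for all admissible $S$.
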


\begin{proof}
This is based on the observation that if $A$ is a solution for the Yang--Mills heat flow 
then $\partial_x A$ are solutions for the corresponding linearized equations. 

\pfstep{Case 1} Here we consider the easier case when $c_k^0$ is a $(-1,1)$ frequency envelope.
Then the conclusion immediately follows from Lemma~\ref{l:lin}, with the exception of the $L^1 \dot H^1$
bound, which is instead obtained from Theorem~\ref{t:heatA-L1} applied to $\covD F$.

\pfstep{Case 2} 
In order to work with more general envelopes, we denote by $c_k^{[1]}$ a minimal $(-1,1)$ frequency 
envelope for $(A,F)$ in the sense of \eqref{hf-A-fe}, and by $c_k$ a minimal $(-1,S)$ frequency  envelope for $(A,F)$ in the same sense. 
By the result in Case 1 above we have $c_k^{[1]} \aleq c_{k}^{0 [1]}$. 
Define the envelope
\[
d_k =  c_k^0 + c_k \sup_{j < k} 2^{2(1-\epsilon)(j-k)} c_j^0.
\]
Then $2^k d_k$ is $2$-compatible with respect to $c_k$, therefore
applying Lemma~\ref{l:lin-fe} to $\covD A$ it follows that
\[
\| P_k \covD A\|_{L^\infty \dot H^1} + \|P_k \covD F\|_{L^2 \dot H^1} \lesssim 2^k d_k. 
\]
Then from Theorem~\ref{t:heatA-L1}, we obtain
\[
\| P_k \covD F\|_{L^1 \dot H^1} \lesssim d_k.
\]
Moreover, removing the covariant derivative from the first bound, we have
\[
\| P_k A\|_{L^\infty \dot H^1} + \|P_k F\|_{L^2 \dot H^1} \lesssim  d_k + c_k c_k^{[1]}.
\]
Thus, it follows that
\[
c_k \lesssim d_k   + c_k c_k^{[1]}  \lesssim  c_k^0 + c_k c_k^{[1]} .
\]
This implies $c_{k} \lesssim c_k^0$ unless $c_k^{[1]} \approx 1$. But in that case $c_k \approx c_k^{[1]}$,
and again we win by the result in Case 1, which implies $c_k^{[1]} \approx c_{k}^{0[1]} \approx c_{k}^{0}$. \qedhere

\end{proof}

The bounds for the linearized equation allow us to consider the case of
rough data $a \in \dot H^1$.  Our strategy will be to  define rough solutions not
directly, but rather as limits of smooth solutions. 

 \begin{lemma}\label{p:ext-h1}
   For every initial data $a \in \dot H^1$ for the Yang--Mills heat
   flow in the local caloric gauge there exists a nontrivial time
   interval $J = [0,s_0)$ and a local solution $A \in C(J;\dot H^1)$,
   which is the unique limit of regular solutions.
\end{lemma}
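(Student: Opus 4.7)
The plan is to construct $A$ as the $C(J;\dot H^1)$-limit of regular solutions produced by Lemma~\ref{l:h3-loc}. Fix a slowly-varying $(-1,1)$ frequency envelope $c_k \in \ell^2$ for $a$ in $\dot H^1$ and regularize by $a^{(n)} := P_{\leq n} a \in \dot H^1 \cap \dot H^3$; each $a^{(n)}$ converges to $a$ in $\dot H^1$ and inherits the envelope $c_k$ up to a harmless constant. Lemma~\ref{l:h3-loc} then yields smooth solutions $A^{(n)}$ on maximal heat-time intervals $[0, T_n)$, but a priori $T_n$ may shrink as $\|a^{(n)}\|_{\dot H^3}\to\infty$.

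The decisive step is to establish a uniform lifespan $T_n \geq s_0 = s_0(\|a\|_{\dot H^1}) > 0$. I would do this via a continuity / bootstrap argument on the scale-invariant caloric size $\mathcal B_n(s) := \|F^{(n)}\|_{L^3([0,s);L^3)}$. Under the bootstrap $\mathcal B_n \leq 2\mathcal Q_*$ on some $[0, s_*) \subset [0, T_n)$, Proposition~\ref{p:cov-smth}, Lemma~\ref{l:h1}, Lemma~\ref{l:nocov-smth} and, crucially, the frequency-envelope estimate Lemma~\ref{l:nolin} all apply and give $\|P_k F^{(n)}\|_{L^\infty L^2 \cap L^2 \dot H^1} \aleq c_k$ with constants depending only on $\mathcal Q_*$ and $\|a\|_{\dot H^1}$. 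Interpolating per dyadic scale (Bernstein $L^2\to L^3$ on $\R^4$) yields $\|P_k F^{(n)}\|_{L^3([0,s_*);L^3)} \aleq c_k$, and the Littlewood--Paley square-function characterisation of $L^3$ upgrades this to
\begin{equation*}
  \mathcal B_n(s_*) \leq C(\mathcal Q_*,\|a\|_{\dot H^1}) \, \|a\|_{\dot H^1}.
\end{equation*}
Since the dependence of $C$ on $\mathcal Q_*$ is only polynomial, choosing $\mathcal Q_*$ sufficiently large strictly improves the bootstrap, and the continuation criterion Lemma~\ref{l:ext-h3} then produces a uniform $s_0$ together with an $n$-uniform caloric size bound.

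Next, I would verify that $\{A^{(n)}\}$ is Cauchy in $C([0,s_0];\dot H^1)$ via a standard high/low frequency split. For a cutoff $N$, the tail $\|P_{>N}(A^{(n)} - A^{(m)})\|_{L^\infty \dot H^1} \aleq (\sum_{k>N} c_k^2)^{1/2}$ holds uniformly in $n,m$ by Lemma~\ref{l:nolin}, and vanishes as $N\to\infty$ because $c_k \in \ell^2$. For the low frequencies, the difference $B^{(n,m)} := A^{(m)} - A^{(n)}$ satisfies the linearised Yang--Mills heat flow about $A^{(n)}$ up to an error quadratic in $B^{(n,m)}$, and Lemma~\ref{l:lin} applied at $\sigma = 0$ yields $\|B^{(n,m)}\|_{L^\infty L^2} \aleq \|a^{(m)} - a^{(n)}\|_{L^2} \to 0$ after subdividing $[0,s_0]$ into finitely many pieces on which the quadratic error is absorbed perturbatively (using $L^2\dot H^1$ divisibility of $F^{(n)}$); since only finitely many dyadic scales contribute to $P_{\leq N}$, this $L^2$-convergence upgrades to $\dot H^1$-convergence at those scales. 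Uniqueness of the limit among regular approximations follows by applying the same split to any two regularising sequences converging to $a$.

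The hardest part is the uniform lifespan. Because the $L^3 L^3$ norm of $F$ is scale invariant, no naive time-shortness argument produces smallness, and the bare energy identity does not close any bootstrap on its own. What makes the argument work is Lemma~\ref{l:nolin}, which converts the $\ell^2$-summability of the envelope $c_k$ into an $n$-independent control on $\mathcal B_n$ with only polynomial dependence on the bootstrap constant; this is precisely what allows the continuity argument to close irrespective of the size of $\|a\|_{\dot H^1}$.
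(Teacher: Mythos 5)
Your reduction to regular data and your Cauchy-sequence argument are in the right spirit, but the central step — the uniform lifespan via a bootstrap on $\mathcal{B}_n(s)=\|F^{(n)}\|_{L^3([0,s);L^3)}$ — has a genuine gap, and in fact the approach cannot work as stated. Under the bootstrap hypothesis $\mathcal{B}_n\leq 2\mathcal{Q}_*$ the a priori estimates you quote (Proposition~\ref{p:cov-smth}, Lemmas~\ref{l:h1}, \ref{l:nocov-smth}, \ref{l:nolin}) do give $\|P_kF^{(n)}\|_{L^\infty L^2\cap L^2\dot H^1}\lesssim_{\mathcal{Q}_*,\|a\|_{\dot H^1}} c_k$, and your Bernstein/interpolation/square-function step correctly yields $\mathcal{B}_n\leq C(\mathcal{Q}_*,\|a\|_{\dot H^1})\,\|a\|_{\dot H^1}$. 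But closing the bootstrap requires $C(\mathcal{Q}_*,\|a\|_{\dot H^1})\|a\|_{\dot H^1}\leq \mathcal{Q}_*$, i.e. \emph{sublinear} growth of $C$ in $\mathcal{Q}_*$; the claimed ``polynomial dependence'' is both unjustified and insufficient. The constants in those lemmas are obtained by subdividing the heat-time interval into $O(\hM^3)$ pieces on which $\|F\|_{L^3}$ is small and compounding the estimates, so the dependence on the bootstrap constant is far worse than linear. More structurally, your closing estimate contains no smallness parameter tied to $s_0$ at all (as you yourself note, $L^3L^3$ is scale invariant), so if the bootstrap closed it would close on the whole maximal interval and, via the continuation criterion, prove global existence together with $\hM(a)\lesssim_{\|a\|_{\dot H^1}}1$ for \emph{arbitrary} $\dot H^1$ data. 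This is incompatible with the large-data picture: e.g.\ a topologically trivial harmonic Yang--Mills connection (which can exist in $\dot H^1$ at energies $\geq 2\Egs$) is a static solution with $\|F\|_{L^3([0,\infty);L^3)}=\infty$. So no choice of $\mathcal{Q}_*$ can rescue this continuity argument; the missing ingredient is a genuine source of smallness.

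The paper obtains that smallness not from the size of the data or the length of the interval, but from the high-frequency tail of the envelope, and it runs the continuous induction in the truncation frequency rather than in time. One fixes $k_0$ with $\sum_{k\geq k_0}c_k^2\ll1$, takes the interval $J=[0,s_0]$ on which the \emph{fixed regular} solution $A_{<k_0}$ satisfies $\|F_{<k_0}\|_{L^3(J;L^3)}\leq1$ (possible by continuity in $s$ for smooth data), and then considers the maximal range of $k$ for which $A_{<k}$ exists on $J$ with $\|F_{<k}\|_{L^3(J;L^3)}\leq2$. Within that range, the linearized estimates (Lemma~\ref{l:lin}) combined with the higher-regularity bounds (Lemma~\ref{l:h3}) give the consecutive-difference bounds $\|A_{<k+1}-A_{<k}\|_{L^\infty\dot H^1}+\|F_{<k+1}-F_{<k}\|_{L^3L^3}\lesssim c_k$ (with extra decay in higher norms), and square-summing the tail shows $\|F_{<k}\|_{L^3L^3}\leq\tfrac32$, so the induction in $k$ never terminates; the same telescoping gives convergence of $A_{<k}$ in $L^\infty\dot H^1$ and uniqueness of the limit among limits of regular solutions. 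Your high/low Cauchy argument is close in spirit to this last step, but without the paper's frequency-truncation induction you have no admissible proof of the uniform lifespan on which it rests.
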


\begin{proof}

  For the existence part, we denote by $c_k$ a frequency envelope for
  $a$ in $\dot H^1$.  Then we consider a continuum of regularized data
  $a_{< k}= P_{<k} a$ for $k \geq k_0$, where
  $k_0$ will be chosen later.  We denote the corresponding
  solutions by $A_{<k}$, and the curvature $2$-forms by $F_{<k}$.  
  Given $\epsilon > 0$, we choose $J =
  [0,s_0]$ so that
\[
\| F_{<k_0} \|_{L^3(J; L^{3})} \leq 1.
\]
Then we consider the maximal interval $K = [k_0,k_1]$ with the property that for $k \in K$,
the solution $A_k$ exists in $J$ and satisfies
\[
 \| F_{<k} \|_{L^3(J; L^{3})} \leq 2, \ \text{for } k \in K.
\]

Within this range, the solutions $A_k$ are uniformly bounded in $L^\infty \dot H^1$.
Further,  we can combine the results in Lemmas~\ref{l:h3},\ref{l:lin} to conclude
that 
\[
\| (A_{<k+1} - A_{<k})\|_{L^\infty \dot H^1} + \|F_{<k+1} - F_{<k}\|_{L^{3} L^3} \lesssim c_k .
\]
with further decay away from frequency $2^k$,
\[
\| (A_{<k+1} - A_{<k})\|_{L^\infty \dot H^N} + \|F_{<k+1} - F_{<k}\|_{L^3 \dot W^{N-1,3}} \lesssim_N 2^{(N-1)k} c_k , \qquad N \geq 0.
\]
Here the implicit constants depend only on $\|a\|_{\dot H^1}$. Summing up, it follows that
\begin{equation} \label{diff-reg}
\| A_{<k_1} - A_{<k_0}\|_{L^\infty \dot H^1}^{2} +
\|F_{<k_1} - F_{<k_0}\|_{L^{3} L^3}^2 \lesssim \sum_{k_0 \leq  k  \leq k_1} c_k^2, \qquad k \in K.
\end{equation}
Now we choose $k_0$ so that 
\[
\sum_{k \geq k_0} c_k^2 \ll 1.
\]
Then for $k \in K$ we obtain 
\[
\| F_{<k} \|_{L^3 L^{3}} \leq \frac32.
\]
By the maximality of $K$ it follows that $K = [k_0,\infty)$. Further, by \eqref{diff-reg}
it follows that the limit
\[
A = \lim_{k \to \infty} A_{<k}
\]
exists in $L^\infty \dot H^1$. This is the desired solution. Further, we remark that 
the solution $A$ satisfies the frequency envelope bounds
\begin{equation}\label{fe-As}
\| P_k A\|_{L^\infty \dot H^1} + \| P_k F\|_{L^{3} L^{3}} \lesssim c_k, \qquad k > k_0,
\end{equation}
and the regularization bounds
\begin{equation}\label{fe-reg-diff}
\| A - A_{<k_0}\|_{L^\infty \dot H^1}^2 + \| F- F_{<k_0}\|_{L^{3} L^{3}}^2 \lesssim \sum_{k > k_0}
c_k^2,
\end{equation}
which will be useful later.
\end{proof}
\begin{remark} 
Our argument only insures that the rough solution we construct is unique among the limits of smooth solutions.
We leave open the question of establishing unconditional uniqueness in the larger class of $\dot H^{1}$ solutions.
\end{remark}

Next we consider the lifespan of $\dot H^1$ solutions.

\begin{lemma} \label{l:l3-cont}
Let $J$ be the maximal time of existence for a $\dot H^1$ solution to
the Yang--Mills heat flow in the local caloric gauge.  Then either $J
= [0,\infty)$ or $\|F\|_{L^3(J; L^{3})} = \infty$.
\end{lemma}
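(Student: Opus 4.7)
The plan is to argue by contradiction: assume $J = [0, s_0)$ with $s_0 < \infty$ and $\|F\|_{L^3(J; L^3)} \leq \hM < \infty$. The goal is to show that $A(s)$ converges in $\dot H^1$ as $s \to s_0^-$ to some $a_* \in \dot H^1$, and then invoke the local existence result (Lemma~\ref{p:ext-h1}) started from $a_*$ at time $s_0$. By uniqueness in the class of limits of smooth solutions, this yields an extension of $A$ past $s_0$, contradicting maximality of $J$.

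To extract the limit $a_*$, I first upgrade the a priori bounds of Lemmas~\ref{l:h1} and \ref{l:nolin} (originally stated for regular solutions) to the $\dot H^1$ solution under consideration, by approximating with smooth solutions $A_{<k}$ as in the proof of Lemma~\ref{p:ext-h1}; the $L^3 L^3$ hypothesis on $F$ for the rough solution transfers to a uniform $L^3 L^3$ bound on the $F_{<k}$ (after possibly a small enlargement of $\hM$), so that Lemmas~\ref{l:h1} and \ref{l:nolin} apply uniformly. Passing to the limit, I obtain a $(-1, S)$ frequency envelope $c_k$ for $a$ in $\dot H^1$ such that
\begin{equation*}
\|P_k A\|_{L^\infty(J; \dot H^1)} + \|P_k \partial_s A\|_{L^2(J; L^2)} \lesssim_{\hM, \|a\|_{\dot H^1}} c_k.
\end{equation*}

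Next I promote these bounds to convergence. From the $L^2_s L^2_x$ bound on $P_k \partial_s A$ and Cauchy--Schwarz in $s$,
\begin{equation*}
\|P_k (A(s_1) - A(s_2))\|_{\dot H^1} \lesssim 2^k |s_2 - s_1|^{1/2} c_k,
\end{equation*}
while the uniform $\dot H^1$ bound gives the trivial competitor $\|P_k(A(s_1) - A(s_2))\|_{\dot H^1} \lesssim c_k$. Taking the minimum of these two estimates and summing via Plancherel, using $c_k \in \ell^2$ (so dominated convergence applies), yields
\begin{equation*}
\|A(s_1) - A(s_2)\|_{\dot H^1}^2 \lesssim \sum_k \min\!\bigl(c_k^2,\, 2^{2k} |s_2-s_1|\, c_k^2 \bigr) \longrightarrow 0 \quad \text{as } s_1, s_2 \to s_0^-.
\end{equation*}
Hence $A(s)$ is Cauchy in $\dot H^1$ and converges to some $a_* \in \dot H^1$.

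The main (very mild) obstacle is the regularity-transfer in the first step: the a priori estimates of Lemmas~\ref{l:h1}--\ref{l:nolin} are formulated for sufficiently regular flows, so one must verify that the smooth regularizations $A_{<k}$ used to construct $A$ all live on a common subinterval matching $J$ and satisfy uniform-in-$k$ curvature bounds. This is handled exactly as in the proof of Lemma~\ref{p:ext-h1}, by bootstrapping the $L^3 L^3$ norm of $F_{<k}$ against a small multiple of $\hM$, using that the difference estimates between consecutive $A_{<k}$ are controlled by the tail $\sum_{j \geq k} c_j^2$ which can be made arbitrarily small. Once $a_*$ is constructed, Lemma~\ref{p:ext-h1} produces a $\dot H^1$ solution on $[s_0, s_0 + \delta)$, and concatenating with $A$ (using uniqueness among limits of smooth solutions) delivers the required extension, completing the contradiction.
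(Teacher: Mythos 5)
Your first two steps are sound and close to the paper's own argument: transferring the a priori bounds of Lemmas~\ref{l:h1} and \ref{l:nolin} to the rough solution by running the bootstrap on the truncated flows $A_{<k}$ over all of $J$ is exactly the first half of the paper's proof, and your Cauchy-in-$\dot H^1$ argument (interpolating $\|P_k\rd_s A\|_{L^2L^2}\lesssim c_k$ against the uniform $L^\infty\dot H^1$ envelope bound and summing) correctly produces the endpoint limit $a_*$; this is in the spirit of how the paper obtains the limit in Corollary~\ref{c:ymhf-cont}.

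The gap is in the last step, ``concatenating with $A$ (using uniqueness among limits of smooth solutions) delivers the required extension.'' In this paper a $\dot H^1$ solution \emph{means} a limit of regular solutions, and maximality of $J$ is contradicted only by exhibiting such a limit, with data $a$, on a strictly larger interval. Uniqueness among limits of smooth solutions lets you identify two such limits with the same data on the same interval; it does not show that your glued object is itself a limit of regular solutions on $[0,s_0+\delta)$. The restarted solution from $a_*$ is built from the regularizations $P_{<j}a_*$, a different approximating family than the $A_{<k}$ used on $[0,s_0)$, and to splice them you need either (i) that the original family $A_{<k}$ extends past $s_0$ on a \emph{common} interval with uniform $L^3L^3$ curvature bounds and converges there (individual extension follows from Lemma~\ref{l:ext-h3}, but the lifespan and bounds from Lemma~\ref{l:h3-loc} degenerate as $k\to\infty$, so this is not automatic), or (ii) a continuous-dependence statement for the rough construction at time $s_0$ applied to the data $A_{<k}(s_0)\to a_*$ — but that stability statement is the corollary proved \emph{after} Lemma~\ref{l:l3-cont} from its proof, so invoking it here would be circular. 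Supplying either ingredient is precisely the nontrivial content of the paper's proof: one fixes $k_0$ with $\sum_{k\ge k_0}c_k^2\ll_{\hM}1$, extends the single regular solution $A_{<k_0}$ past $s_0$ to an interval $J_1$ where $\|F_{<k_0}\|_{L^3(J_1;L^3)}\le\frac54\hM$, and reruns the existence/bootstrap argument of Lemma~\ref{p:ext-h1} on $J_1$ to extend all $A_{<k}$ and their limit. Your proposal defers exactly this work to a step treated as routine, so as written it does not close the argument; note also that with the paper's route the endpoint limit $a_*$ is not even needed for the continuation itself.
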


\begin{proof}
Let $A$ be an $\dot H^1$ solution on a finite time interval $J$ so that 
\[
\| F\|_{L^3(J; L^{3})} = \hM < \infty.
\]
Then we seek to show that the solution $A$ can be continued past
$J$. Let $J_0 \subset J$ and $k_0 < \infty$ be maximal so that the
approximate solutions $A_{<k}$ exist on $J_0$ and satisfy
\[
\| F_{<k} \|_{L^3(J_0; L^{3})} \leq 2 \hM , \qquad k \geq k_0.
\]
Such $J_0$ and $k_0$ exist by the existence part. Also by the argument
in the existence part, we obtain the bounds \eqref{fe-As} and
\eqref{fe-reg-diff}, but where the implicit constant now depends on
$F$. Choosing $k_0$ large enough so that 
\[
\sum_{k \geq k_0} c_k^2 \ll_\hM 1,
\]
from \eqref{fe-reg-diff} it follows that in effect 
\[
\| F_{<k} \|_{L^3(J_0; L^{3})} \leq \frac32 \hM , \qquad k \geq k_0.
\]
Hence we must have $J_0 = J$, else we contradict the maximality of $J$.
Thus the bounds \eqref{fe-As} and \eqref{fe-reg-diff} hold in $J$. Further, 
by Lemma~\ref{l:h3}, the solution $A_{<k_0}$ extends beyond $J$ to an interval $J_1$
where 
\[
\| F_{<k_0} \|_{L^3(J_1; L^{3})} \leq \frac54 \hM.
\]
Then the prior existence argument shows that the solutions $A$ and $A_{<k}$ all extend to 
$J_1$, satisfying similar bounds. \qedhere

\end{proof}

As a corollary of the above result (or rather its proof) we also obtain the following stability
result:

\begin{corollary}
Let $a$ be a $\dot H^1$ initial data with $\nrm{a}_{\dot{H}^{1}} \leq M_{1}$, and $J$  a time interval where the solution exists 
and satisfies 
\[
\|F\|_{L^3(J; L^{3})} \leq \hM.
\]
Then there exists $\epsilon = \epsilon(\hM, M_{1}) > 0$ so
that for all data $\tilde a$ satisfying
\[
\| a - \tilde a\|_{\dot H^1} \leq  \epsilon,
\]
the corresponding solution exists in $J$, satisfies 
\[
\|\tilde F\|_{L^3(J; L^{3})} \leq 2 \hM,
\]
and has a Lipschitz dependence on the initial data.
\end{corollary}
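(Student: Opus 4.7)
The argument is a variant of the continuation reasoning from Lemma~\ref{l:l3-cont}, applied along the affine family $a_\tau = (1-\tau) a + \tau \tilde a$, $\tau \in [0,1]$, of interpolating data. For $\epsilon \leq M_1$ we have $\|a_\tau\|_{\dot{H}^{1}} \leq 2 M_1$ uniformly. Let $T \subseteq [0,1]$ be the set of $\tau$ for which the maximal $\dot H^1$ solution $A_\tau$ exists on all of $J$ and satisfies $\|F_\tau\|_{L^{3}(J; L^{3})} \leq 2 \hM$; the goal is to show $T = [0,1]$. By hypothesis $0 \in T$, and openness of $T$ in $[0,1]$ follows from Corollary~\ref{c:ymhf-cont}. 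The crux is closedness, which I establish via the linearized bounds as follows.

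For any $\tau \in T$, the derivative $B_\tau = \partial_\tau A_\tau$ solves the linearized Yang--Mills heat flow around $A_\tau$ with initial data $\tilde a - a \in \dot H^1$. Applying Lemma~\ref{l:lin} with $\sigma = 1$ (equivalently, Theorem~\ref{t:ymhf-l3}(2) at $\sigma = 1$) yields
\begin{equation*}
  \|B_\tau\|_{L^\infty \dot H^1} + \|\mathrm{curl}_{A_\tau} B_\tau \|_{L^2 \dot H^1} \lesssim_{\hM, M_1} \|\tilde a - a\|_{\dot H^1} \leq \epsilon.
\end{equation*}
The Sobolev embedding $\dot H^1 \hookrightarrow L^4$ on $\bbR^4$ then upgrades this to $\mathrm{curl}_{A_\tau} B_\tau \in L^2 L^4$ of size $O(\epsilon)$; and the schematic identity $\mathrm{curl}_{A_\tau} B_\tau = \partial B_\tau + [A_\tau, B_\tau]$, combined with $A_\tau, B_\tau \in L^\infty L^4$, yields $\mathrm{curl}_{A_\tau} B_\tau \in L^\infty L^2$ of size $O(\epsilon)$. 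Interpolating between these two norms gives the scale-invariant bound $\|\mathrm{curl}_{A_\tau} B_\tau\|_{L^{3}(J; L^{3})} \lesssim_{\hM, M_1} \epsilon$.

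Since $\partial_\tau F_{jk}[A_\tau] = (\mathrm{curl}_{A_\tau} B_\tau)_{jk}$ by direct computation, the fundamental theorem of calculus together with Minkowski's inequality gives, for any $\tau_0 \in \overline{T} \cap [0,1]$,
\begin{equation*}
  \|F_{\tau_0} - F\|_{L^{3}(J; L^{3})} \leq \int_0^{\tau_0} \|\mathrm{curl}_{A_\tau} B_\tau\|_{L^{3}(J; L^{3})} \, d\tau \lesssim_{\hM, M_1} \epsilon.
\end{equation*}
Choosing $\epsilon$ sufficiently small, we obtain $\|F_{\tau_0}\|_{L^{3}(J; L^{3})} \leq \hM + C\epsilon \leq \frac{3}{2} \hM$, which together with the continuation criterion of Lemma~\ref{l:l3-cont} forces $\tau_0 \in T$. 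Thus $T$ is closed, $T = [0,1]$, and $\tilde A = A_1$ exists on $J$ with $\|\tilde F\|_{L^{3}(J; L^{3})} \leq 2\hM$. The Lipschitz estimate follows from $\|\tilde A - A\|_{L^\infty \dot H^1} \leq \int_0^1 \|B_\tau\|_{L^\infty \dot H^1} \, d\tau \lesssim_{\hM, M_1} \epsilon = \|\tilde a - a\|_{\dot H^1}$.

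The main technical obstacle is justifying the differentiation $\partial_\tau A_\tau$ and the assertion that $B_\tau$ satisfies the linearized equation at $\dot H^1$ regularity. For smooth data this is routine from the $C^1$ local theory; for rough data one approximates as in Lemma~\ref{p:ext-h1} and passes to the limit using the uniform linearized bounds above. All other steps are quantitative refinements of estimates already established in the proofs of Theorem~\ref{t:ymhf-l3} and Lemma~\ref{l:l3-cont}.
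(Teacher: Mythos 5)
Your argument is correct in substance and rests on the same two pillars that the paper's (implicit) proof does --- the corollary is presented as a byproduct of the proof of Lemma~\ref{l:l3-cont}: namely, (i) the linearized estimates of Lemma~\ref{l:lin} (equivalently Theorem~\ref{t:ymhf-l3}(2)), whose constants depend only on the $L^3L^3$ curvature bound and the $\dot H^1$ size of the background, upgraded to an $O(\epsilon)$ bound for the curvature variation in $L^3L^3$ via exactly the $L^\infty L^2$--$L^2L^4$ interpolation the paper uses, and (ii) the continuation criterion and local stability (Lemma~\ref{l:l3-cont}, Corollary~\ref{c:ymhf-cont}). The only real difference is organizational: you integrate the linearized flow along the straight-line homotopy $a_\tau$ and run a continuity argument in $\tau$, whereas the paper compares the two flows directly through its regularization scheme ($a_{<k}$, frequency-envelope difference bounds) with a bootstrap along heat-time. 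Your route has the minor advantage of only ever invoking the linearized equation rather than difference equations, at the cost of justifying $\tau$-differentiability at $\dot H^1$ regularity, which you correctly delegate to the approximation scheme of Lemma~\ref{p:ext-h1}.

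One part of your open/closed argument is looser than it should be. Openness of $T$ does not follow from Corollary~\ref{c:ymhf-cont} alone: for $\tau\in T$ that corollary only produces neighboring solutions obeying the doubled bound $4\hM$, so to put neighbors back into $T$ you must first run your FTC self-improvement (which is fine, since the linearized constants are still controlled in terms of $4\hM$ and $2M_1$, and $\epsilon$ is allowed to depend on these). More seriously, in the closedness step you apply the fundamental theorem of calculus up to $\tau_0\in\overline{T}$ and estimate $\|F_{\tau_0}\|_{L^3(J;L^3)}$ before you know that $A_\tau$ exists on $J$ for every $\tau\le\tau_0$, or that $A_{\tau_0}$ exists on $J$ at all; as written this is circular. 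The standard repair uses only tools you already cite: set $\tau_*=\sup\{\tau:[0,\tau]\subseteq T\}$, deduce $\|F_\tau\|_{L^3(J;L^3)}\le\tfrac32\hM$ for all $\tau<\tau_*$ from the FTC bound, transfer this to $A_{\tau_*}$ on compact heat-time subintervals of its maximal interval of existence via the local Lipschitz dependence (Corollary~\ref{c:ymhf-cont}, or Theorem~\ref{t:ymhf-l3} on such subintervals), and then invoke the continuation criterion of Lemma~\ref{l:l3-cont} to conclude that $A_{\tau_*}$ exists on all of $J$ with the improved bound, hence $\tau_*\in T$ and, by openness, $\tau_*=1$. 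With this bookkeeping fixed, your proof is complete and equivalent in content to the paper's.
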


This concludes the proof of Theorems~\ref{t:hf-loc}, \ref{t:ymhf-l3} and \ref{t:ymhf-l3-fe} in $\dot H^1$, as well as
Corollary~\ref{c:max}.  

We next consider the similar problems for the $\bfH$ space, where all
that is needed is the $\ell^1 L^2$ norm both for $\partial^\ell A_\ell$ and
for $ \partial^\ell B_\ell$ in the context of the linearized equation.
This works because  the equation for $\partial^\ell A_\ell$ is
not strongly parabolic, instead it is merely a transport equation.
Our first goal is to show that the bounds for $\partial^\ell A_\ell$ propagate in time:

\begin{lemma} \label{l:da}
  Let $A$ be an $\dot H^1$  Yang--Mills heat flow so that
  \eqref{l3} and $\nrm{A(0)}_{\dot{H}^{1}} \leq M_{1}$ hold. Then we have the bounds
\begin{equation}
\sup_{s_1,s_2} \| \partial^{\ell} A_{\ell}(s_1) - \partial^{\ell} A_{\ell}(s_2) \|_{\ell^1 L^2  \cap \dot W^{1,\frac43}} 
\lesssim_{\hM, M_{1}}  1.
\end{equation}
In addition, if $c_k$ is a $(-1,S)$ frequency envelope for $a$ then we have
\begin{equation} \label{eq:da-fe}
\begin{split}
\sup_{s_1,s_2} \|P_k( \partial^\ell A_\ell(s_1) - \partial^\ell A_\ell(s_2)) \|_{L^2}\lesssim_{\hM, M_{1}}  & c_k c_k^{[1]}, \\
\sup_{s_1,s_2} \|P_k( \partial^\ell A_\ell(s_1) - \partial^\ell A_\ell(s_2)) \|_{\dot{W}^{1,\frac43}}  \lesssim_{\hM, M_{1}} &  c_k c_{\leq k}.
\end{split}
\end{equation}
\end{lemma}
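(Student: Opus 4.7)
The starting point is the evolution equation \eqref{DA-rep}, which reads $\partial_s \partial^\ell A_\ell = -[A^k, \covD^\ell F_{\ell k}]$. Integrating over $s \in [s_1, s_2]$ yields
\begin{equation*}
\partial^\ell A_\ell(s_2) - \partial^\ell A_\ell(s_1) = -\int_{s_1}^{s_2} [A^k, \covD^\ell F_{\ell k}] \, ds.
\end{equation*}
The plan is to estimate the right-hand side using the frequency envelope bounds from Lemma~\ref{l:nolin}, namely $\|P_k A\|_{L^\infty \dot H^1} + \|P_k \covD F\|_{L^1 \dot H^1} \lesssim_{\hM, M_1} c_k$, together with the Sobolev embedding $\dot H^1 \hookrightarrow L^4$ on $\bbR^4$, Bernstein inequalities, and the auxiliary bound $\|\covD F\|_{\ell^2 L^1 \dot H^1} \lesssim_{\hM,M_1} 1$ from Lemma~\ref{l:nocov-smth}.

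The global $\dot W^{1, 4/3}$ bound is obtained directly via H\"older with $L^2 \cdot L^4 \to L^{4/3}$, after distributing $\partial$ across the bilinear form. Using $\|\partial A\|_{L^\infty L^2} + \|A\|_{L^\infty L^4} \lesssim_{M_1} 1$ together with $\|\covD F\|_{L^1 \dot H^1} \leq \|\covD F\|_{\ell^2 L^1 \dot H^1} \lesssim_{\hM, M_1} 1$ (the first inequality by Minkowski since $L^1 \ell^2 \subseteq \ell^2 L^1$), this yields the desired $\dot W^{1, 4/3}$ estimate.

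For the frequency envelope $L^2$ bound \eqref{eq:da-fe}, apply $P_k$ to the bilinear expression and perform a paradifferential decomposition into low-high (LH), high-low (HL), and high-high (HH) interactions. In the LH case, place $A_{<k}$ in $L^\infty L^\infty$, estimated via Bernstein and Sobolev as $\|A_{<k}\|_{L^\infty L^\infty} \lesssim \sum_{j<k} 2^j c_j \lesssim 2^k c_k^{[1]}$, and $P_k \covD F$ in $L^1 L^2 \lesssim 2^{-k} c_k$; the product yields $c_k c_k^{[1]}$, and HL is symmetric. The HH case is handled using Bernstein $\|P_k u\|_{L^2} \lesssim 2^k \|u\|_{L^{4/3}}$ combined with H\"older $L^4 \cdot L^2 \to L^{4/3}$, giving per-scale contribution $\lesssim 2^{k-j} c_j^2$, which resums to $\lesssim c_k^2 \lesssim c_k c_k^{[1]}$ via admissibility. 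The $\dot W^{1, 4/3}$ envelope bound $c_k c_{\leq k}$ is established by a parallel decomposition using H\"older $L^4 \cdot L^2 \to L^{4/3}$ throughout, with low-frequency factors summed without weights. Finally, the global $\ell^1 L^2$ bound follows from $\sum_k c_k c_k^{[1]} \lesssim \sum_k c_k^2 \lesssim 1$ via Cauchy--Schwarz and the admissibility of $c_k^{[1]}$.

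The main obstacle is the paradifferential bookkeeping needed to extract the correct envelope combinations, particularly showing that the HH contribution stays within $c_k c_k^{[1]}$ uniformly in $S$. The delicate point is that the $(-1,S)$ admissibility allows $c_j$ to grow mildly as $j$ increases past $k$, so the resummation of $\sum_{j \geq k} 2^{k-j} c_j^2$ must balance the Bernstein off-diagonal decay against this potential growth; the factor $2^{k-j}$ from Bernstein is just strong enough to absorb the growth allowed by the lower admissibility rate, provided that one sets up the H\"older exponents so that the slowly varying lower bound (with rate $-1$) comes into play.
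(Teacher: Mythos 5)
Your overall scheme---integrating \eqref{DA-rep} in heat-time and feeding in the frequency envelope bounds $\|P_k A\|_{L^\infty \dot H^1} + \|P_k \covD F\|_{L^1 \dot H^1} \lesssim c_k$ from Lemma~\ref{l:nolin}, then running a Littlewood--Paley trichotomy---is exactly the paper's (very terse) proof, and your low-high/high-low placements correctly produce the $2^{j-k} c_j c_k$ off-diagonal gains that resum to $c_k c_k^{[1]}$. However, two of your steps fail as written.

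First, the high-high resummation. With your split $L^4 \cdot L^2 \to L^{4/3}$ plus Bernstein $P_k : L^{4/3} \to L^2$ you gain only one power $2^{k-j}$, and you then assert $\sum_{j \geq k} 2^{k-j} c_j^2 \lesssim c_k^2$ ``via admissibility''. For a $(-1,S)$ envelope the lower admissibility permits $c_j \lesssim 2^{(1-\eps)(j-k)} c_k$ for $j > k$, so the summand can be as large as $2^{(1-2\eps)(j-k)} c_k^2$, which grows in $j$; one power of off-diagonal decay is exactly \emph{not} enough, contrary to your closing remark that it is ``just strong enough''. The repair is standard: put both high-frequency factors in $L^2_x$-based norms, $\|P_j A\|_{L^\infty L^2} \lesssim 2^{-j} c_j$ and $\|P_j \covD F\|_{L^1 L^2} \lesssim 2^{-j} c_j$, and use Bernstein at the output frequency, $\|P_k(\cdot)\|_{L^2} \lesssim 2^{2k} \|\cdot\|_{L^1}$, which yields the full two-power gain $2^{2(k-j)} c_j^2 \lesssim 2^{-2\eps(j-k)} c_k^2$ and a convergent sum (bounded by $c_k^2 \lesssim c_k c_k^{[1]}$ by slow variation).

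Second, the global $\dot W^{1,\frac43}$ bound. You reduce it to $\|\covD F\|_{L^1 \dot H^1} \lesssim 1$ by claiming $\|\covD F\|_{L^1 \dot H^1} \leq \|\covD F\|_{\ell^1} \cdots$, more precisely $\|\covD F\|_{L^1 \dot{H}^1} \leq \|\covD F\|_{\ell^2 L^1 \dot H^1}$ ``by Minkowski''. Minkowski's inequality goes the other way: $\|\cdot\|_{\ell^2_k L^1_s} \leq \|\cdot\|_{L^1_s \ell^2_k}$, so the $\ell^2 L^1$ bound of Lemma~\ref{l:nocov-smth} does \emph{not} control $L^1_s \dot H^1$. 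In fact $\covD F$ is precisely the bad case for such an interchange: each piece $P_k \covD F$ is concentrated on its own heat-time scale $s \sim 2^{-2k}$, so $\|\covD F\|_{L^1 \dot H^1}$ behaves like $\sum_k c_k$, which is not bounded by $M_1$ for general $\dot H^1$ data. This is exactly why the stated $\dot W^{1,\frac43}$ envelope bound carries the $\ell^1$-type factor $c_{\leq k}$ (rather than $c_k^{[1]}$) and why the paper flags the $L^{\frac43}$ case as borderline: the global bounds must be assembled from the frequency-localized bilinear estimates (for $\ell^1 L^2$, sum $c_k c_k^{[1]}$ over $k$ using Cauchy--Schwarz together with $\|c^{[1]}\|_{\ell^2} \lesssim \|c\|_{\ell^2}$; for $p = \frac43 \leq 2$ one may use $\|u\|_{\dot W^{1,4/3}} \lesssim (\sum_k \|P_k u\|_{\dot W^{1,4/3}}^2)^{1/2}$), not from an $L^1_s$/$\ell^2_k$ interchange that is false in general.
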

\begin{proof}
We use the relation 
\[
\partial_s \partial^\ell A_\ell = \partial^\ell \covD^j F_{j \ell} =  - [A^\ell,\covD^j F_{j \ell}].
\]
Let $c_k$ be a frequency envelope for $a$ in $\dot H^1$. Then $c_k$ is
also a frequency envelope for $A$ in $L^\infty \dot H^1$, and also for
$\covD F$ in $L^1 \dot H^1$. The conclusion easily follows by an integration in heat-time.  \qedhere
\end{proof}

We remark that for the $L^2$ bound we have off-diagonal decay,
therefore we obtain a $c_k^2$ envelope (at least if $c_k$ is a $(-1,1)$ envelope), whereas the case of $L^{\frac{4}{3}}$ is borderline.

Now we switch to  the linearized Yang--Mills heat flow \eqref{lin-heat}:

\begin{lemma}\label{l:db}
  Let $A$ be an $\dot H^1$  Yang--Mills heat flow so that
  \eqref{l3} holds, and $B$ a corresponding $\dot H^1$ solution for the linearized 
equation. Then we have the bounds
\begin{equation}
\sup_{s_1,s_2}  \| \covD^\ell B_\ell(s_1) - \covD^\ell B_\ell(s_2) \|_{\ell^1 L^2  \cap \dot{W}^{1,\frac43}} 
\lesssim_{\hM, M_{1}}  \|b\|_{\dot H^1},
\end{equation}
and its analogue for $L^2$ data,
\begin{equation}
\sup_{s_1,s_2}  \| \covD^\ell B_\ell(s_1) - \covD^\ell B_\ell(s_2) \|_{\ell^1 \dot H^{-1}  \cap L^{\frac43}} 
\lesssim_{\hM, M_{1}}  \|b\|_{L^2}.
\end{equation}
In addition, if $c_{k}$ is a $(-1, S)$ frequency envelope for $a$ in $\dot{H}^{1}$, and $d_{k}$ is a $2$-compatible $(-1, S)$ frequency envelope for $b$ in $L^{2}$, then
\begin{equation} \label{eq:db-fe}
\begin{aligned}
\sup_{s_1,s_2}  \| P_{k}(\covD^{\ell} B_{\ell}(s_1) - \covD^{\ell} B_{\ell}(s_2)) \|_{\dot{H}^{-1}} \aleq_{\hM, M_{1}} & c_{k} d_{k}^{[2]} + d_{k} c_{k}^{[1]} + \sum_{j > k} 2^{k-j} c_{j} d_{j}, \\
\sup_{s_1,s_2}  \| P_{k}(\covD^{\ell} B_{\ell}(s_1) - \covD^{\ell} B_{\ell}(s_2)) \|_{L^{\frac43}} \aleq_{\hM, M_{1}} & c_{k} d_{k}^{[1]} + d_{k} c_{\leq k} + \sum_{j > k} 2^{k-j} c_{j} d_{j}.
\end{aligned}
\end{equation}
\end{lemma}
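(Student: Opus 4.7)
The plan is to derive an evolution identity for $\covD^\ell B_\ell$ that plays the role of the identity $\partial_s \partial^\ell A_\ell = -[A^\ell, \partial_s A_\ell]$ used in the proof of Lemma~\ref{l:da}. The cleanest way to obtain it is by linearizing that nonlinear identity in the direction $B$, giving $\partial_s \partial^\ell B_\ell = -[A^\ell, \partial_s B_\ell] - [B^\ell, \partial_s A_\ell]$, and combining with the time derivative of $[A^\ell, B_\ell]$; after cancellation of the $[A^\ell, \partial_s B_\ell]$ terms and use of the antisymmetry of the Lie bracket, one is left with the clean identity
\begin{equation*}
\partial_s \covD^\ell B_\ell \;=\; 2\,[\partial_s A^\ell, B_\ell].
\end{equation*}
A direct verification from \eqref{lin-heat}, \eqref{caloric-re}, and the relation $\covD^\ell F_{j\ell} = -\partial_s A_j$ (which comes from the YMHF in the local caloric gauge together with antisymmetry of $F$) is equally possible; the key cancellation there arises from antisymmetrizing $\covD^\ell \covD^j$ against the antisymmetric 2-tensor $\omega_{j\ell} = \covD_j B_\ell - \covD_\ell B_j$. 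Integrating the identity in heat-time between $s_1$ and $s_2$ reduces all of the claims of Lemma~\ref{l:db} to bilinear envelope and space-norm bounds on $\int [\partial_s A^\ell(s), B_\ell(s)]\, ds$.

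\textbf{The bilinear estimates.} The $B$-factor is controlled by $\|P_k B\|_{L^\infty_s L^2_x} \aleq d_k$ from \eqref{ymhflc-feB}. The decisive ingredient for the $\partial_s A$-factor is the heat-time-integrable bound
\begin{equation*}
\|P_k \partial_s A\|_{L^1_s \dot H^1_x} \;\aleq\; c_k, \qquad \text{i.e.,} \qquad \|P_k \partial_s A\|_{L^1_s L^2_x} \;\aleq\; 2^{-k} c_k,
\end{equation*}
which follows from $\partial_s A = \covD^\ell F_{\ell x}$ and the frequency-envelope version \eqref{ymhflc-feA} of the $\ell^2 L^1 \dot H^1$ control \eqref{eq:nocov-smth-2} on $\covD F$. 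A standard Littlewood--Paley trichotomy applied to $P_k[\partial_s A, B]$ then produces exactly the three summands in \eqref{eq:db-fe}: the \emph{high-low} case ($\partial_s A$ at frequency $\sim k$, $B$ at frequency $<k$) uses Bernstein to sum $B_{<k}$ in $L^\infty_x$ or $L^4_x$, yielding $c_k d_k^{[2]}$ in $\dot H^{-1}$ and $c_k d_k^{[1]}$ in $L^{4/3}$; the \emph{low-high} case uses Bernstein/Sobolev to sum $\partial_s A_{<k}$ analogously, yielding $d_k c_k^{[1]}$ and $d_k c_{\leq k}$ respectively; and the \emph{high-high} case (both factors at frequency $j>k$) applies $\|P_k(fg)\|_{L^p_x} \aleq 2^{4k(1-1/p)} \|fg\|_{L^1_x}$ to recover localization at $k$, yielding $\sum_{j>k} 2^{k-j} c_j d_j$ in both norms. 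The non-envelope estimates then follow by summing in $k$, using the $\ell^2$-summability of $c_k$, $d_k$ together with the slowly varying character of $c_k^{[\sigma]}$, $d_k^{[\sigma]}$, and $c_{\leq k}$ to extract $\ell^1$ summability.

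\textbf{Main obstacle.} The principal difficulty is closing the heat-time integration on a (possibly infinite) interval $J$. A direct Cauchy--Schwarz pairing of $\partial_s A \in L^2_s L^2_x$ against $B \in L^\infty_s L^2_x$ puts the product only in $L^2_s$ in heat-time, which is insufficient. The resolution is the $L^1_s$-in-heat-time upgrade on $\partial_s A$ described above, which in turn rests on combining the basic covariant bound $\covD F \in L^2 L^2$ with the smoothing bound $\|sF\|_{L^2 \dot H^2}$ from Lemma~\ref{l:nocov-smth} to produce the $\ell^2 L^1 \dot H^1$ control on $\covD F$. This single key improvement---from $L^2_s$ to $L^1_s$ at each dyadic frequency of $\partial_s A$---is what allows the bilinear estimates above to close with the precise envelope structure predicted by \eqref{eq:db-fe}.
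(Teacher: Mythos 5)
Your proposal is correct and follows essentially the same route as the paper: your identity $\partial_s \covD^\ell B_\ell = 2[\partial_s A^\ell, B_\ell]$ is exactly the paper's $\partial_s \covD^k B_k = -2[B^j, \covD^k F_{kj}]$ (since $\partial_s A_j = \covD^\ell F_{\ell j}$ in the local caloric gauge), and the paper likewise closes the heat-time integration using the frequency-envelope bounds for $\covD F$ in $L^1_s \dot H^1$ and $B$ in $L^\infty_s L^2$ together with the Littlewood--Paley trichotomy you describe. Your identification of the $L^1_s$-in-heat-time control on $\covD F$ (via Lemma~\ref{l:nocov-smth} and its frequency-envelope upgrade in \eqref{ymhflc-feA}) as the decisive ingredient matches the paper's argument.
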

As a consequence of \eqref{eq:db-fe}, if $d'_{k}$ is a $(-1, S)$ frequency envelope for $b$ in $\dot{H}^{1}$ which is $1$-compatible with $c_{k}$, then 
\begin{equation} \label{eq:db-fe-h1}
\begin{aligned}
\sup_{s_1,s_2}  \| P_{k}(\covD^{\ell} B_{\ell}(s_1) - \covD^{\ell} B_{\ell}(s_2)) \|_{L^{2}} \aleq_{\hM, M_{1}} & c_{k} (d_{k}')^{[1]} + d_{k}' c_{k}^{[1]}, \\
\sup_{s_1,s_2}  \| P_{k}(\covD^{\ell} B_{\ell}(s_1) - \covD^{\ell} B_{\ell}(s_2)) \|_{\dot{W}^{1, \frac43}} \aleq_{\hM, M_{1}} & c_{k} d_{\leq k}' + d_{k} c_{\leq k}.
\end{aligned}	
\end{equation}
Indeed, note that $d_{k} = 2^{-k} d'_{k}$ is a $2$-compatible $(-1, S)$ frequency envelope for $b$ in $L^{2}$.
\begin{proof}
The equation for $\covD^{\ell} B_{\ell}$ is
\[
\partial_s \covD^{\ell} B_{\ell} = [\covD_j F^{j \ell}, B_{\ell}]  +  \covD^{\ell} [B_j,F_{j \ell}] + [F_{\ell j}, \covD^j B_{\ell}] 
= -2 [B^j,\covD^{\ell} F_{\ell j}].
\]
If $c_{k}$ and $d_{k}$ are as above, then they are frequency envelopes for $\covD F$ in $L^{1} \dot{H}^{1}$ and $B$ in $L^{\infty} L^{2}$, respectively. The desired lemma follows by integration in heat-time. \qedhere
\end{proof}

Next, we prove the explicit bound for $F \in L^{3} ([0, \infty); L^{3})$ when the energy is small:
\begin{lemma} \label{l:global-small}
Let $A$ be a $\dot{H}^{1}$ Yang--Mills heat flow with energy $\calE \ll 1$. Then $A$ exists globally, and
\begin{equation*}
	\nrm{F}_{L^{3}([0, \infty); L^{3})}^{2} \aleq \calE.
\end{equation*}
\end{lemma}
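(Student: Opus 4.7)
The plan is to establish the a priori bound $\int_{0}^{s_{\max}} \nrm{\covD F(s)}_{L^{2}}^{2} \, ds \aleq \calE$, combine it with a Kato--Sobolev interpolation to produce $\nrm{F}_{L^{3} L^{3}}^{2} \aleq \calE$, and then apply the $L^{3} L^{3}$ continuation criterion (Corollary~\ref{c:max}) to upgrade local existence to global. Assuming first that $A$ is regular enough to justify the following manipulations (guaranteed, e.g., for $\dot H^{1} \cap \dot H^{3}$ data by Lemma~\ref{l:h3-loc}), I pair the curvature evolution from Lemma~\ref{lem:cymhf-eqns},
\[
\covD_{s} F_{ij} - \covD^{\ell} \covD_{\ell} F_{ij} = -2 [\tensor{F}{_{i}^{\ell}}, F_{j\ell}],
\]
with $F^{ij}$ in the bi-invariant inner product and integrate over $\bbR^{4}$. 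Using $\brk{F, \covD_{s} F} = \tfrac{1}{2} \rd_{s} \brk{F, F}$ (since $\brk{[A_{s}, F], F} = 0$ by bi-invariance) and integrating $\covD^{\ell}$ by parts in the diffusion term, I obtain the energy identity
\[
\tfrac{1}{2} \rd_{s} \nrm{F(s)}_{L^{2}}^{2} + \nrm{\covD F(s)}_{L^{2}}^{2} = -2 \int \brk{F^{ij}, [\tensor{F}{_{i}^{\ell}}, F_{j \ell}]} \, \ud x.
\]

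Next, the right-hand side is controlled by $C \nrm{F}_{L^{3}}^{3}$. By the Kato/diamagnetic inequality $\abs{\rd \abs{F}} \leq \abs{\covD F}$ and the Sobolev embedding $\dot H^{1}(\bbR^{4}) \hookrightarrow L^{4}(\bbR^{4})$, we have $\nrm{F}_{L^{4}} \aleq \nrm{\covD F}_{L^{2}}$; interpolation with $L^{2}$ yields $\nrm{F}_{L^{3}}^{3} \aleq \nrm{F}_{L^{2}} \nrm{\covD F}_{L^{2}}^{2}$. The monotonicity formula recorded in the paper (which is the manifestly nonpositive identity $\rd_{s} \calE = -c\int |\covD^{\ell} F_{\ell i}|^{2}$) already supplies $\nrm{F(s)}_{L^{2}}^{2} \leq 2 \calE$ on the entire existence interval. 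Integrating the energy identity from $0$ to $s_{1}$ then gives
\[
\int_{0}^{s_{1}} \nrm{\covD F}_{L^{2}}^{2} \, \ud s \leq \calE + C \calE^{1/2} \int_{0}^{s_{1}} \nrm{\covD F}_{L^{2}}^{2} \, \ud s.
\]
Provided $C \calE^{1/2} \leq 1/2$, a continuity bootstrap on $\Phi(s_{1}) := \int_{0}^{s_{1}} \nrm{\covD F}_{L^{2}}^{2} \, \ud s$ closes: the finiteness of $\Phi$ on every compact subinterval of the $\dot{H}^{3}$ existence interval (which follows from the $\dot H^{3}$ regularity of $A$ furnished by Lemma~\ref{l:h3-loc}) permits the absorption, yielding $\Phi(s_{1}) \leq 2\calE$ uniformly for $s_{1} < s_{\max}$.

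The same Kato--Sobolev interpolation then gives
\[
\nrm{F}_{L^{3}([0, s_{\max}); L^{3})}^{3} \aleq \nrm{F}_{L^{\infty} L^{2}} \nrm{\covD F}_{L^{2} L^{2}}^{2} \aleq \calE^{3/2},
\]
hence $\nrm{F}_{L^{3} L^{3}}^{2} \aleq \calE$. By Corollary~\ref{c:max} (or Lemma~\ref{l:l3-cont}), this forces $s_{\max} = \infty$. For rough $\dot{H}^{1}$ data with $\calE \ll 1$, approximate with smooth $\dot H^{1} \cap \dot H^{3}$ data $a^{(n)} \to a$ in $\dot{H}^{1}$ as in Lemma~\ref{p:ext-h1}; since $\calE[a^{(n)}] \to \calE[a]$, smallness is preserved for large $n$, and the uniform $L^{3} L^{3}$ bound $\nrm{F^{(n)}}_{L^{3} L^{3}}^{2} \aleq \calE$ passes to the limit. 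The main subtlety is the apparent circularity in the bootstrap (the a priori bound is used to control the very quantity $\Phi$ that must be a priori finite to absorb); this is resolved by exploiting $\dot{H}^{3}$ smoothness on compact subintervals of the maximal existence time so that the absorption step is rigorously justified before passing to the $\dot{H}^{1}$ limit.
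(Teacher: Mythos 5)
Your proof is correct, and it runs on the same two estimates as the paper's own argument: the $n=0$ energy identity for $F$ (the base case of Proposition~\ref{p:cov-smth}), whose cubic error is controlled by $\nrm{F}_{L^{3}}^{3}$, and the Kato/diamagnetic--Sobolev interpolation $\nrm{F}_{L^{3}}^{3} \aleq \nrm{F}_{L^{2}} \nrm{\covD F}_{L^{2}}^{2}$, followed by the continuation criterion of Corollary~\ref{c:max}. The only genuine difference is how smallness closes the estimate: the paper runs a bootstrap directly on $\nrm{F}_{L^{3}(J;L^{3})}^{2} \leq 2C_{0}\calE$, inserts this into the energy estimate, and recovers the improved bound by interpolation, whereas you first invoke the exact monotonicity formula \eqref{Lyapunov} to get the unconditional pointwise-in-$s$ bound $\nrm{F(s)}_{L^{2}}^{2} \leq 2\calE$ and then absorb the cubic term into the dissipation $\int \nrm{\covD F}_{L^{2}}^{2}\, ds$ using only $C\calE^{1/2} \leq \tfrac12$. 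Your route avoids the choice of the bootstrap constant $C_{0}$ and the bootstrap on the $L^{3}L^{3}$ norm, at the price of needing the a~priori finiteness of $\int_{0}^{s_{1}} \nrm{\covD F}_{L^{2}}^{2}\, ds$ to justify the absorption; you correctly supply this from the $\dot{H}^{1}\cap\dot{H}^{3}$ local theory (Lemma~\ref{l:h3-loc}) and then pass to rough $\dot{H}^{1}$ data by the approximation scheme of Lemma~\ref{p:ext-h1}, exactly as the paper implicitly does via Lemma~\ref{l:l3-cont}. Also note that once $\Phi(s_{1})$ is known to be finite, the inequality $\Phi \leq \calE + C\calE^{1/2}\Phi$ yields $\Phi \leq 2\calE$ immediately, so no continuity argument is actually needed at that step.
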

This proves Corollary~\ref{c:global-small}.
\begin{proof}
By local well-posedness and Lemma~\ref{l:l3-cont}, it suffices to prove the following: Assuming that $A$ exists on $J$ and satisfies the bootstrap assumption
\begin{equation*}
	\nrm{F}_{L^{3}(J; L^{3})}^{2} \leq 2 C_{0} \calE,
\end{equation*}
we claim that
\begin{equation} \label{eq:global-small-goal}
	\nrm{F}_{L^{3}(J; L^{3})}^{2} \leq C_{0} \calE,
\end{equation}
provided that $C_{0}$ is large enough, and $\calE$ is sufficiently small.

From the proof of Proposition~\ref{p:cov-smth}, recall that
\begin{equation*}
	\nrm{F}_{L^{\infty}_{\frac{\ud s}{s}}(J; L^{2})}^{2} + \nrm{s^{\frac{1}{2}} \covD F}_{L^{2}_{\frac{\ud s}{s}} (J; L^{2})}^{2} \aleq \calE + \nrm{F}_{L^{3}(J; L^{3})}^{3} \aleq \calE + C_{0}^{3/2} \calE^{3/2}.
\end{equation*}
Then by covariant Sobolev and interpolation, we have
\begin{equation*}
\nrm{F}_{L^{3}(J; L^{3})}^{2} \aleq \nrm{F}_{L^{\infty}_{\frac{\ud s}{s}}(J; L^{2})}^{2} + \nrm{s^{\frac{1}{2}} \covD F}_{L^{2}_{\frac{\ud s}{s}} (J; L^{2})}^{2} \aleq \calE + C_{0}^{3/2} \calE^{3/2},
\end{equation*}
from which \eqref{eq:global-small-goal} is clear.
\end{proof}

Finally, we are in a position to prove that the limits of $A$ and $B$ at infinity exist
if $F \in L^3([0, \infty); L^{3})$:
\begin{lemma} \label{l:global}
\begin{enumerate}
\item Let $A$ be an $\dot H^1$ (resp. $\bf H$)  Yang--Mills heat flow so that
  \eqref{l3} holds. Then the limit 
\[
A_{\infty} = \lim_{s \to \infty} A(s)
\]
exists in $\dot H^1$ (resp. $\bf H$), and has zero curvature 
\[
F_\infty = 0.
\]
Further, the map $A(0) \to A_\infty$ is $C^1$ in $\dot H^1$, $\bfH$ and $\dot H^1 \cap \dot H^\sigma$, where $\sigma > 1$.

\item Let $B$ be a  solution for the corresponding linearized 
equation. If $b \in \dot H^\sgm $ (resp. $\bfH$), $0 \leq \sgm \leq 1$, then the limit 
\[
B_{\infty} = \lim_{s \to \infty} B(s)
\]
exists in $\dot H^\sigma$ (resp. $\bfH$) and satisfies
\[
\covD[A_\infty]_{k} B_{\infty,j} - \covD[A_\infty]_{j} B_{\infty,k} = 0.
\]
If in addition $A(0) \in \dot H^{\sigma'}$ $(\sgm' > 1)$ then the same property holds in $\dot{H}^{1} \cap \dot H^{\sgm'}$.
\end{enumerate}
\end{lemma}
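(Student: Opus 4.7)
The plan is to run the two parts in parallel, exploiting the covariant parabolic estimates of Section~\ref{sec:cov} together with the bounds on the flow already collected in Lemmas~\ref{l:h1}--\ref{l:nocov-smth}.

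For part (1), I would prove the existence of $A_{\infty}$ in $\dot H^1$ by Cauchy-ness. In the local caloric gauge $\rd_s A_j = \covD^\ell F_{\ell j}$, so
\[
\| A(s_2) - A(s_1)\|_{\dot H^1} \leq \Big\| \int_{s_1}^{s_2} \covD^\ell F_{\ell j}\, \ud s \Big\|_{\dot H^1}.
\]
Lemma~\ref{l:nocov-smth} supplies $\|\covD F\|_{\ell^2 L^1 \dot H^1} \lesssim_{\hM, M_1} 1$; splitting $\covD = \rd + ad(A)$ and absorbing the $[A,F]$ contribution via $A \in L^\infty \dot H^1$ and $F \in L^2 \dot H^1$ converts this into an $\ell^2$-summable dyadic tail bound on $\|P_k(A(s_2) - A(s_1))\|_{\dot H^1}$ that vanishes as $s_1, s_2 \to \infty$. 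For the $\bfH$ topology, Lemma~\ref{l:da} provides the additional Cauchy control on $\rd^\ell A_\ell$ in $\ell^1 L^2$. To identify $F[A_\infty] = 0$, I use the monotonicity identity to extract $s_n \to \infty$ with $\|\covD F(s_n)\|_{L^2} \to 0$, apply the diamagnetic inequality $|\rd|F|| \leq |\covD F|$ together with Sobolev $\dot H^1 \hookrightarrow L^4$ to obtain $\|F(s_n)\|_{L^4} \to 0$, and note that $A(s) \to A_\infty$ in $\dot H^1$ forces $F(s) \to F[A_\infty]$ in $L^2$ (the quadratic piece via $\dot H^1 \hookrightarrow L^4$); testing against $C_c^\infty$ then yields $F[A_\infty] = 0$. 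Finally, the $C^1$ regularity of $a \mapsto A_\infty$ comes from identifying its derivative with the map $b \mapsto B_\infty$ constructed in part~(2) and estimating the remainder $A(a+b) - A(a) - B(a;b)$ by the frequency-envelope Lipschitz bounds of Theorem~\ref{t:ymhf-l3-fe}.

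For part (2), I would apply the infinitesimal de Turck trick of Section~\ref{subsec:dymhf}: initializing $a_0 = 0$, solve the strongly parabolic flow \eqref{F0l} with $F_{0j}(s=0) = b_j$, and recover $A_0(s) = \int_0^s \covD^\ell F_{\ell 0}(\tilde s)\, \ud \tilde s$, so that $B_j = F_{0j} + \covD_j A_0$. Theorem~\ref{t:heatB} yields the dyadic decay $\|P_k F_{0j}(s)\|_{\dot H^\sigma} \lesssim d_k(1+2^{2k}s)^{-N}$ (with $d_k$ the envelope of $b$ in $\dot H^\sigma$), whence $F_{0j}(s) \to 0$ in $\dot H^\sigma$. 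Integrating the same decay in $s$ and summing via admissibility of $d_k$ gives $\int_0^\infty \|P_k \covD^\ell F_{\ell 0}(s)\|_{\dot H^{\sigma+1}}\, \ud s \lesssim d_k$, so $A_0(s)$ is Cauchy in $\dot H^{\sigma+1}$ and converges to some $A_{0,\infty}$. Combining, $B_\infty = \covD[A_\infty] A_{0,\infty}$ in $\dot H^\sigma$, and
\[
\covD[A_\infty]_i B_{\infty,j} - \covD[A_\infty]_j B_{\infty,i} = [F_{ij}[A_\infty], A_{0,\infty}] = 0
\]
by the vanishing of $F[A_\infty]$ established in part~(1). The $\bfH$ refinement uses the divergence Cauchy estimates of Lemma~\ref{l:db}, while the higher-regularity $\dot H^1 \cap \dot H^{\sigma'}$ assertion is obtained by differentiating the linearized equation and reducing to the $\sigma = 1$ case already treated.

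The main technical obstacle is integrability of $\covD^\ell F_{\ell 0}$ in $L^1_s \dot H^{\sigma+1}$ at the upper endpoint $\sigma = 1$, where the scaling-critical estimate of Theorem~\ref{t:heatA-L1} is borderline and cannot be invoked directly; the dyadic parabolic-decay bound from Theorem~\ref{t:heatB} handles this after $\ell^2$ summation through the admissibility of the envelope of $b$, essentially by the same mechanism already exploited in the proof of Theorem~\ref{t:heatA-L1}. A secondary but soft difficulty is reconciling the strong-$L^2$ and subsequential-$L^4$ limits used to pin down $F[A_\infty]=0$, as the two modes of convergence are not nested and must be compared through a duality test.
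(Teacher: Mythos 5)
Your part (1) and the range $0 \leq \sgm < 1$ of part (2) follow essentially the paper's route: existence of $A_\infty$ by rerunning the Lemma~\ref{l:h1}-type estimates between two late heat-times, $C^1$ dependence by applying part (2) to $\rd_h A^{(h)}$, and the sub-endpoint case of (2) by the infinitesimal de Turck trick, writing $B_j = F_{0j} + \covD_j A_0$, showing $F_{0j} \to 0$ and $A_0 \to a_{0,\infty}$ via an $L^1_s$-type bound on $\covD^\ell F_{\ell 0}$. The problem is the endpoint $\sgm = 1$, where your proposal has a genuine gap. You propose to replace the (correctly identified as inapplicable) Theorem~\ref{t:heatA-L1} by the dyadic parabolic-decay bound of Theorem~\ref{t:heatB}; but Theorem~\ref{t:heatB} requires the hypothesis \eqref{A=parab}, i.e.\ $\| P_k A(s)\|_{\dot H^1} \lesssim c_k (1+2^{2k}s)^{-N}$, which in this paper is available only for \emph{caloric} connections ($A(\infty)=0$; cf.\ Proposition~\ref{p:ab-l2}, whose proof integrates $\covD F$ from $s=\infty$). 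Lemma~\ref{l:global} concerns general flows satisfying \eqref{l3}, for which $A(s) \to a_\infty \neq 0$, so the dyadic pieces of $A$ do not decay in $s$ and the decay estimate $\|P_k F_{0j}(s)\|_{\dot H^\sgm} \lesssim d_k(1+2^{2k}s)^{-N}$ you rely on is not at your disposal.

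Moreover, even granting convergence of $A_0$ in $\dot H^{\sgm+1}$ at $\sgm=1$, your final step $B_\infty = \covD[A_\infty] a_{0,\infty}$ in $\dot H^1$ needs the product estimate $\dot H^1 \cdot \dot H^2 \to \dot H^1$ on $\bbR^4$ for the commutator term $[A, A_0]$, which fails (neither factor is in $L^\infty$); this is exactly why the paper uses the de Turck route only for $\sgm + 1 < 2$ and treats $\sgm = 1$ by a different mechanism. There, one writes the nondegenerate covariant parabolic equation for $G = \mathrm{curl}_A B$, whose right-hand side $[F, \covD B] + [\covD F, B]$ is perturbative in $L^2 \dot H^{-1}$, concluding $\mathrm{curl}_A B \to 0$ in $L^2$; the limit of $\covD^\ell B_\ell$ is supplied by Lemma~\ref{l:db}; and the limit of $B$ itself is then recovered from the covariant div--curl system, using that the curvature decays to zero in $L^2$. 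To repair your argument you should either adopt this div--curl route at the endpoint, or restrict the de Turck route to $\sgm < 1$ as the paper does.
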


This proves Corollary~\ref{c:global}.
\begin{proof}
We proceed in several steps.
\pfstep{Step~1: Proof of (1), existence of $A_{\infty}$}
The result is obtained   by revisiting the proof of
Lemma~\ref{l:h1}. Precisely, the same computation but between two
times $s_0$ and $s_1$ shows that
\[
\lim_{s_0,s_1 \to \infty} \| A(s_0) - A(s_1)\|_{\dot H^1} = 0,
 \]
 as a consequence of the similar decay estimates for the parabolic
 space-time norms of $F$. The $\bfH$ bound follows in a similar manner 
as in Lemma~\ref{l:da}.

\pfstep{Proof of (2)}
We first consider $b \in \dot{H}^{1}$. We write the (schematic) equation for $G = \mathrm{curl}_{A} B$, 
\[
(\partial_s  - \Delta_A  - 2 ad(F)) G  = [F,\covD B]+ [\covD F,B].
\]
The RHS is bounded in $L^2 \dot H^{-1}$, so using the $L^2$ solvability for this 
problem we conclude that $\mathrm{curl}_{A} B \to 0$ in $L^2$ as $s \to \infty$. On the other hand 
as in Lemma~\ref{l:db}, $\mathrm{div}_{A} B = \covD^{\ell} B_{\ell}$ has a limit. Then the limit for $B$ is obtained by solving the associated covariant div-curl system, using the fact that the curvature decays to zero in $L^2$.

First, we consider the case $0 \leq \sgm < 1$. To avoid solving the covariant div-curl system, we employ a more 
roundabout route using the ``infinitesimal de Turck trick'' (cf. Section~\ref{subsec:dymhf}). As in Case~1 of the proof of Lemma~\ref{l:lin}, 
we introduce a dynamic component $A_{0}$ and the equation $F_{s0} = \covD^{\ell} F_{\ell 0}$ with $A_{0}(s=0) = 0$. Then $F_{0j}$ solves
\begin{equation*}
(\rd_{s} - \lap_{A} - 2 ad(F)) F_{0j} = 0, \qquad F_{0j}(s=0) = b_{j}.
\end{equation*}
Thus $F_{0j}$ vanishes at $s = \infty$ in $\dot{H}^{\sgm}$.

To transfer this behavior to $B_{j} = F_{0j} + \covD_{j} A_{0}$, it suffices to show that $\covD_{j} A_{0}$ has a limit as $s \to \infty$ in $\dot{H}^{\sgm}$. Proceeding as in Case~1 of the proof of Lemma~\ref{l:lin}, we obtain
\begin{equation*}
	\nrm{\covD^{\ell} F_{\ell 0}}_{\ell^{2} L^{1} \dot{H}^{\sgm+1}} \aleq \nrm{b}_{\dot{H}^{\sgm}}.
\end{equation*}
Since $\rd_{s} A_{0} = F_{s0} = \covD^{\ell} F_{\ell 0}$, we see that $A_{0} \to a_{0, \infty}$ in $\dot{H}^{\sgm+1}$. Then
using the $\dot{H}^{1}$ convergence of $A \to a_{\infty}$ in part~(1), and also the fact that $\sgm +1 < 2$, it follows that $\covD A_{0} \to \covD[a_{\infty}] a_{0, \infty}$ in $\dot{H}^{\sgm}$, as desired.

Finally, the $\bfH$ bound follows in a similar manner 
as in Lemma~\ref{l:db}, and the $\dot{H}^{1} \cap \dot{H}^{\sgm'}$ $(\sgm' > 1)$ bound follows from $\dot{H}^{1}$ case and the frequency envelope bound in Lemma~\ref{l:lin-fe}.

\pfstep{Step~3: Proof of (1), $C^{1}$ dependence of $A_{\infty}$ on $A(0)$}
Let $A^{h}(0)$ be a $C^{1}$ family of initial data in $\dot{H}^{1}$, $\bfH$ or $\dot{H}^{1} \cap \dot{H}^{\sgm}$ with $\sgm > 1$), and let $A^{h}$ be the corresponding Yang--Mills heat flows in the local caloric gauge. Then $\rd_{h} A^{h}$ solves the linearized equation, with data in the respective topology.  Thus, the desired statement follows from part (2). \qedhere
\end{proof}

\section{The Dichotomy and the Threshold Theorems} \label{s:thr}
In this section, we present the Dichotomy and the Threshold Theorems for the Yang--Mills heat flow, which are sharp criteria for global well-posedness and convergence to the flat connection of the Yang--Mills heat flow in $\dot{H}^{1}(\bbR^{4})$. 

\subsection{The Dichotomy Theorem} \label{subsec:dich}
Here, we precisely state and prove the Dichotomy Theorem for the Yang--Mills heat flow.
\begin{theorem}[Dichotomy Theorem for the Yang--Mills heat flow] \label{thm:dich}
Let $a$ be a connection 1-form in $\dot{H}^{1}$, and let $A$ be the solution to \eqref{eq:cYMHF} with initial data $A(s=0) = a$.
Then either the solution is global and $\hM(a) < \infty$, or the solution ``bubbles off'' a soliton in the following sense:
\begin{enumerate}[label=(\roman*)]
\item (Finite blow-up time). If the blow-up time (maximal existence time) $s_{0}$ is finite, then there exist a point $x_{0} \in \bbR^{4}$, a sequence of points $(x_{n}, s_{n}) \to (x_{0}, s_{0})$ and a sequence of scales $r_{n} \to 0$ such that
\begin{align*}
	\lim_{n \to \infty} \calG(O^{n}) (r_{n} A)(x_{n} + r_{n} x, s_{n} + r_{n}^{2} s) = Q(x) \quad \hbox{ in } L^{2}_{loc}(\bbR^{4} \times [-1, 0]), \\
	\lim_{n \to \infty} Ad(O^{n}) (r_{n}^{2} F)(x_{n} + r_{n} x, s_{n} + r_{n}^{2} s) = F[Q](x) \quad \hbox{ in } L^{2}_{loc}(\bbR^{4} \times [-1, 0]),
\end{align*}
for some sequence of $s$-independent gauge transformations $O^{n} \in H^{2}_{loc}$ (in the sense that $O^{n}_{;x} \in H^{1}_{loc}$) and a nontrivial finite energy harmonic Yang--Mills connection $Q$.

\item (Infinite blow-up time). If the maximal existence time is $s = \infty$, then there exist a point $x_{0} \in \bbR^{4}$, a sequence of points $(x_{n}, s_{n}) \to (x_{0}, \infty)$ and a sequence of scales $r_{n}$ such that
\begin{align*}
	\lim_{n \to \infty} \calG(O^{n}) (r_{n} A)(x_{n} + r_{n} x, s_{n} + r_{n}^{2} s) = Q(x) \quad \hbox{ in } L^{2}_{loc}(\bbR^{4} \times [-1, 0]), \\
	\lim_{n \to \infty} Ad(O^{n}) (r_{n}^{2} F)(x_{n} + r_{n} x, s_{n} + r_{n}^{2} s) = F[Q](x) \quad \hbox{ in } L^{2}_{loc}(\bbR^{4} \times [-1, 0]),
\end{align*}
for some sequence of $s$-independent gauge transformations $O^{n} \in H^{2}_{loc}$ (in the sense that $O^{n}_{;x} \in H^{1}_{loc}$) and a nontrivial finite energy harmonic Yang--Mills connection $Q$.
\end{enumerate}
\end{theorem}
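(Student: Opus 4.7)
The plan is to prove the dichotomy by contradiction via a standard concentration/rescaling bubbling argument, in the spirit of Struwe--Schlatter, but adapted to the noncompact setting $\bbR^{4}$ using the analytical tools developed earlier in this paper (in particular Theorem~\ref{thm:str-simple} and the covariant estimates of Proposition~\ref{p:cov-smth}).

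\textbf{Step 1: reduction to a concentration statement.} Suppose case (a) of Theorem~\ref{thm:dich-simple} fails, i.e.\ either the maximal heat-time interval is $[0,s_{0})$ with $s_{0}<\infty$, or it is $[0,\infty)$ but $\hM(a)=\nrm{F}_{L^{3}L^{3}}=\infty$. In both cases Theorem~\ref{thm:str-simple} (applied on subintervals) implies $\nrm{F}_{L^{3}([0,s_{0});L^{3})}=\infty$. I would first establish a localized $\eps$-regularity statement: there exists $\eps_{0}>0$ so that if $\nrm{F}_{L^{3}([s_{1},s_{2}]\times B_{r}(x_{0}))}\le\eps_{0}$ on a parabolic cylinder, then $F$ is smoothly controlled inside a smaller cylinder, with bounds depending only on $\eps_{0}$ and $r$. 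This is a local counterpart of Theorem~\ref{thm:str-simple}, obtained by running the same covariant $L^{2}L^{2}$/$L^{3}L^{3}$ bootstrap of Proposition~\ref{p:cov-smth} against cutoffs, using the local energy bound of Remark~\ref{rem:h1-loc}. Combined with $\nrm{F}_{L^{3}L^{3}}=\infty$ and finiteness of the total energy, this forces a concentration point $(x_{0},s_{0})\in\bbR^{4}\times(0,s_{0}]$ (with $s_{0}\le\infty$) and sequences $(x_{n},s_{n})\to(x_{0},s_{0})$, $r_{n}>0$, so that
\begin{equation*}
\nrm{F}_{L^{3}\left([s_{n}-r_{n}^{2},s_{n}]\times B_{r_{n}}(x_{n})\right)}\ge\tfrac{1}{2}\eps_{0},
\end{equation*}
with $r_{n}\to0$ in case (i), and with $r_{n}$ extracted from the scale at which concentration happens in case (ii) (using that the energy dissipation tail $\int_{s}^{\infty}\!\!\int|\covD^{\ell}F_{\ell j}|^{2}$ tends to zero by the Lyapunov identity).

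\textbf{Step 2: rescale and pass to the limit.} Define the rescaled flows $A^{n}(x,s):=r_{n}A(x_{n}+r_{n}x,\,s_{n}+r_{n}^{2}s)$ on $\bbR^{4}\times[-1,0]$. By scale invariance of the covariant Yang--Mills heat flow, each $A^{n}$ is again a solution of \eqref{eq:cYMHF}, with uniformly bounded energy and with $F[A^{n}]$ carrying at least $\tfrac{1}{2}\eps_{0}$ worth of $L^{3}L^{3}$ mass in $B_{1}\times[-1,0]$. The crucial point is that the dissipation on the rescaled cylinder satisfies
\begin{equation*}
\int_{-1}^{0}\!\!\int_{\bbR^{4}}|\covD^{\ell}F_{\ell j}[A^{n}]|^{2}\,dx\,ds=\int_{s_{n}-r_{n}^{2}}^{s_{n}}\!\!\int_{\bbR^{4}}|\covD^{\ell}F_{\ell j}[A]|^{2}\,dx\,ds\longrightarrow 0,
\end{equation*}
because the total dissipation over $[0,s_{0})$ is bounded by the initial energy and the windows $[s_{n}-r_{n}^{2},s_{n}]$ shrink (case (i)) or drift to $+\infty$ (case (ii)). Hence any weak $\dot{H}^{1}_{loc}$ limit $Q_{\infty}$ of the curvatures $F[A^{n}]$, up to subsequences and gauge transformations, must satisfy $\covD^{\ell}F_{\ell j}=0$, i.e.\ it is a finite-energy harmonic Yang--Mills connection.

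\textbf{Step 3: gauge extraction and nontriviality.} To upgrade the convergence to a genuine limit of connection 1-forms (rather than just curvatures), I would invoke the Uhlenbeck compactness part of Theorem~\ref{thm:weak-conv}: at the slice $s=0$ the sequence $A^{n}(\cdot,0)$ has uniformly bounded $\dot{H}^{1}_{loc}$ energy, so after passing to a subsequence there exist $H^{2}_{loc}$ gauge transformations $O^{n}$ so that $\calG(O^{n})A^{n}(\cdot,0)\to Q$ weakly in $H^{1}_{loc}$. Propagating $O^{n}$ as $s$-independent gauge transformations and using the local parabolic regularity of Step~1 on compact subsets of $\bbR^{4}\times[-1,0]$ promotes this to the strong $L^{2}_{loc}$ convergence claimed in the statement, with $F[Q]$ the limit of $Ad(O^{n})r_{n}^{2}F(x_{n}+r_{n}x,s_{n}+r_{n}^{2}s)$. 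Nontriviality of $Q$ follows from the lower bound on the $L^{3}L^{3}$ mass of $F$ in $B_{1}$ combined with the $\eps$-regularity of Step~1, which prevents the mass from escaping to spatial infinity or concentrating on a negligible set.

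\textbf{Main obstacle.} The subtlest part is Step~1: the usual $\eps$-regularity is stated for the gradient flow in the de Turck gauge on compact manifolds, whereas here we work in the local caloric gauge on $\bbR^{4}$ with only degenerate parabolicity in the divergence of $A$. I would circumvent this by working at the covariant/curvature level (where \eqref{eq:YMHF-Fij} is strictly parabolic), mimicking the bootstrap in Proposition~\ref{p:cov-smth} but against spatial cutoffs supported in $B_{r}(x_{0})$, and controlling the commutator errors by the local energy bound \eqref{eq:h1-loc}. A secondary technical point is that in case (ii) the rescaling parameter $r_{n}$ is not canonical; one has to choose $r_{n}$ adapted to the Struwe-type energy gathering profile so that the extracted limit is nontrivial, and this is done by a standard pigeonhole on dyadic scales using the decay $\int_{s}^{\infty}\!\!\int|\covD^{\ell}F_{\ell j}|^{2}\to0$.
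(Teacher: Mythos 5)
Your overall bubbling strategy (select concentrating space-time regions, rescale, gauge-fix, pass to a limit which is harmonic because the dissipation vanishes) is the same as the paper's, but the two steps where the real work lies are asserted rather than proved, and as stated they have genuine gaps. The first is your Step~1 claim that $\nrm{F}_{L^{3}L^{3}}=\infty$ together with finite energy ``forces a concentration point'', i.e.\ a parabolic cylinder of some small scale carrying at least $\eps_{0}/2$ of $L^{3}$ mass. This does not follow: the $L^{3}$ norm on a heat-time subinterval of unit $L^{3}$ size can be spread over an arbitrarily large spatial region, and in case (ii) the norm can even be infinite purely by accumulation over an infinite time interval while every fixed-scale cylinder carries tiny mass; a local $\eps$-regularity theorem gives no contradiction there because there is no continuation issue at $s=\infty$. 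This is exactly the point the paper addresses differently: Lemma~\ref{lem:select-int} pigeonholes subintervals on which $\nrm{F}_{L^{3}}=1$ while the tension field $\covD^{\ell}F_{\ell j}$ vanishes in $L^{2}$, and then Lemma~\ref{lem:conc} produces a parabolic cube on which the \emph{scale-invariant $L^{2}$ norm} of $F$ is bounded below, via an improved Gagliardo--Nirenberg inequality through $\dot{B}^{-2,\infty}_{\infty}$ exploiting the almost-harmonicity at a well-chosen time slice. No local $\eps$-regularity theorem is used or needed, and nothing in your sketch replaces this mechanism.

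The second gap is nontriviality of the bubble. A lower bound on $\nrm{F^{n}}_{L^{3}(B_{1}\times[-1,0])}$ does not pass to a weak $\dot H^{1}_{loc}$ or even strong $L^{2}_{loc}$ limit: the $L^{3}$ mass could re-concentrate at still smaller scales inside $B_{1}$ and vanish in the limit. Your appeal to $\eps$-regularity cannot fix this unless $r_{n}$ is chosen as a finest concentration scale (so that all strictly smaller scales are below threshold), a selection you never perform; in case (ii) you also leave the choice of $r_{n}$ essentially unspecified. The paper avoids this entirely because its concentration statement is already an $L^{2}$ lower bound on a unit cube after rescaling, which survives the strong $L^{2}_{loc}$ convergence furnished by the uniform gauge-normalized $\dot H^{1}_{loc}$ bounds of Lemma~\ref{lem:rad-gauge} (a radial-gauge construction at a fixed heat time, propagated by the localized energy bound of Remark~\ref{rem:h1-loc}) together with compact Sobolev embedding. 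Relatedly, your gauge-extraction step invokes Theorem~\ref{thm:weak-conv}, which the paper states only for sequences of \emph{harmonic} Yang--Mills connections, so it is not available for the rescaled heat flows at a time slice; you would need either the general Uhlenbeck compactness theorem (not quoted in the paper) or a construction like Lemma~\ref{lem:rad-gauge}. Your Step~2 observation that the dissipation on the rescaled windows vanishes (absolute continuity in case (i), tail of a convergent integral in case (ii)) is correct and is consistent with the paper's use of the monotonicity formula, but it rests on the unproven concentration step.
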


The remainder of this subsection is devoted to the proof of Theorem~\ref{thm:dich}. Unless otherwise stated, we always consider Yang--Mills heat flows in the local caloric gauge given by Theorem~\ref{t:hf-loc}.

The key starting point of the proof is the monotonicity formula (or the energy identity)
\begin{equation} \label{eq:mono}
	\int \frac{1}{2} \brk{F_{ij}, F^{ij}} (s_{1}) \, \ud x + \int_{s_{0}}^{s_{1}} \int \brk{\covD^{\ell} F_{\ell i}, \covD^{\ell} \tensor{F}{_{\ell}^{i}}} \, \ud x \ud s= \int \frac{1}{2} \brk{F_{ij}, F^{ij}} (s_{0}) \, \ud x,
\end{equation}
which, in the case $\hM(a) = \infty$, allows us to locate intervals $I_{n}$ on which the Yang--Mills heat flow is concentrated in the sense that $\nrm{F}_{L^{3}(I_{n}; L^{3})} = 1$, but the \emph{harmonic (Yang--Mills) tension field} $\covD^{\ell} F_{\ell j}$ vanishes in $L^{2}(I_{n}; L^{2})$ in the limit as $n \to \infty$.

\begin{lemma} \label{lem:select-int}
Let $a^{n}$ be a sequence of initial data, and let $A^{n}$ be the corresponding solution to \eqref{eq:cYMHF} with $A^{n}(s=0) = a^{n}$, with the maximal heat-time interval of existence $J_{n}$, such that
\begin{equation*}
	\nrm{f^{n}}_{L^{2}} \leq \calE, \qquad \lim_{n \to \infty} \nrm{F^{n}}_{L^{3}(J_{n}; L^{3})} = \infty,
\end{equation*}
where $f^{n}$ and $F^{n}$ are the curvature $2$-forms for $a^{n}$ and $A^{n}$, respectively.
Then there exist subintervals $I_{n} \subseteq J_{n}$ with the following properties:
\begin{equation} \label{eq:select-int} 
	\nrm{F^{n}}_{L^{3}(I_{n}; L^{3})} = 1, \qquad
	\nrm{(\covD[A^{n}])^{\ell} F^{n}_{\ell i}}_{L^{2}(I_{n}; L^{2})} \to 0, \qquad
	\nrm{\covD[A^{n}] F^{n}}_{L^{2}(I_{n}; L^{2})} \aleq_{\calE} 1. 
\end{equation}
\end{lemma}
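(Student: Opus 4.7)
The plan is to run a pigeonhole argument driven by the monotonicity formula \eqref{eq:mono}. Applied on all of $J_n$, \eqref{eq:mono} will give, uniformly in $n$, the bound
\begin{equation*}
\int_{J_n} \int \brk{(\covD[A^n])^\ell F^n_{\ell i}, (\covD[A^n])^\ell \tensor{F}{_\ell^i}^n} \, dx\, ds \leq \frac{1}{2}\nrm{f^n}_{L^2}^2 \leq \frac{\calE^2}{2}.
\end{equation*}
Since by hypothesis $\nrm{F^n}_{L^3(J_n; L^3)} \to \infty$, this uniform $L^2 L^2$ control of the harmonic tension field should force the existence of subintervals on which $F^n$ concentrates at unit $L^3 L^3$ scale while $(\covD[A^n])^\ell F^n_{\ell i}$ is squeezed to zero in $L^2 L^2$.

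Concretely, I will set $N_n := \lfloor \nrm{F^n}_{L^3(J_n; L^3)}^3 \rfloor$, which tends to infinity, and, using continuity of $t \mapsto \nrm{F^n}_{L^3([0,t]; L^3)}$ on $[0,s_+^n)$, partition $J_n$ into $N_n$ consecutive subintervals $I_n^{(1)}, \ldots, I_n^{(N_n)}$ (plus a residual tail) with $\nrm{F^n}_{L^3(I_n^{(k)}; L^3)} = 1$ on each. Summing the monotonicity bound over the partition yields
\begin{equation*}
\sum_{k=1}^{N_n} \nrm{(\covD[A^n])^\ell F^n_{\ell i}}_{L^2(I_n^{(k)}; L^2)}^2 \leq \frac{\calE^2}{2},
\end{equation*}
so pigeonholing in $k$ will produce some index $k_n$ with $\nrm{(\covD[A^n])^\ell F^n_{\ell i}}_{L^2(I_n^{(k_n)}; L^2)}^2 \leq \calE^2/(2 N_n) \to 0$. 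The choice $I_n := I_n^{(k_n)}$ then secures the first two properties in \eqref{eq:select-int}.

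For the remaining bound on $\covD[A^n] F^n$, I will apply Proposition~\ref{p:cov-smth}(1) on $I_n$ with $n=0$. Both of its hypotheses are already in hand: by monotonicity the energy at the left endpoint $s_n^-$ of $I_n$ satisfies $\nrm{F^n(s_n^-)}_{L^2} \leq \nrm{f^n}_{L^2} \leq \calE$, and $\nrm{F^n}_{L^3(I_n; L^3)} = 1$ by construction. Using the identification $\nrm{s^{1/2}\covD F}_{L^2_{ds/s} L^2} = \nrm{\covD F}_{L^2 L^2}$ in the conclusion of the proposition will then give
\begin{equation*}
\nrm{\covD[A^n] F^n}_{L^2(I_n; L^2)} \aleq \hM_0\bigl(\nrm{F^n(s_n^-)}_{L^2},\, 1 \bigr) \aleq_{\calE} 1,
\end{equation*}
closing \eqref{eq:select-int}. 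The only mildly technical point is that both \eqref{eq:mono} and Proposition~\ref{p:cov-smth} are stated for sufficiently regular flows; at the $\dot H^1$ regularity level this will be handled by the parabolic smoothing for $s > 0$ encoded in Theorem~\ref{t:ymhf-l3} together with a standard approximation argument, and I do not expect any genuine obstacle here.
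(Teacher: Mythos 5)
Your proposal is correct and follows essentially the paper's proof: the same partition of $J_{n}$ into unit $L^{3}L^{3}$ subintervals and the same pigeonhole on the total dissipation $\int_{J_{n}}\int \abs{\covD^{\ell}F_{\ell i}}^{2}\,dx\,ds \leq \tfrac{1}{2}\nrm{f^{n}}_{L^{2}}^{2}$ furnished by the monotonicity formula. The only (harmless) difference is in the last bound: you obtain $\nrm{\covD[A^{n}]F^{n}}_{L^{2}(I_{n};L^{2})}\aleq_{\calE}1$ by invoking Proposition~\ref{p:cov-smth}(1) with $n=0$ (noting correctly that $\nrm{s^{1/2}\covD F}_{L^{2}_{\frac{ds}{s}}L^{2}}=\nrm{\covD F}_{L^{2}L^{2}}$ and that the left-endpoint energy is controlled by monotonicity), whereas the paper uses a direct integration-by-parts (Bianchi/div–curl) argument; both rest on the same energy identity with the cubic term absorbed by the unit $L^{3}$ bound.
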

\begin{proof}
We partition the time intervals $J_n$ into subintervals $I_{n, m}$ where
$\|F^n\|_{L^3(I_{n, m}; L^{3})} = 1$ for each $n, m$. We select the subinterval with minimal energy
dissipation, and denote it by $I_{n}$. Thus, we now have a sequence of solutions 
$A_n$ in time intervals $I_n$ with the following properties:
\begin{equation*}
\| F^n\|_{L^3 (I_n \times \R^4)} = 1, \qquad \nrm{(\covD[A^{n}])^{\ell} F^{n}_{\ell i}}_{L^{2}(I_{n}; L^{2})} \to 0.
\end{equation*} 
Further, a straightforward integration by parts argument shows that the 
two bounds above also allow us to control the full gradient for the curvature,
\begin{equation*}
\nrm{\covD[A^{n}] F^{n}}_{L^{2}(I_{n}; L^{2})} \aleq 1. \qedhere
\end{equation*} 
\end{proof}

Naively, we wish to rescale $A^{n}$ so that each $I_{n}$ becomes the unit interval, and pass to the limit; if successful, the limiting curvature $F$ would satisfy $\|F\|_{L^3} = 1$, as well as
$\covD^{\ell} F_{\ell j} = 0$. The first property would guarantee that $F \neq 0$, and 
the second, that $F$ is a harmonic Yang--Mills connection, as claimed in Theorem~\ref{thm:dich}. 
However, to make such an argument precise, we need to handle several issues:
\begin{itemize}
 \item The major concern when taking a weak limit is to insure that this is nonzero. To achieve that,
we need to break the scaling and translation symmetries. This is done using localized $L^2$ norms. (Lemma~\ref{lem:conc})
 \item To pass to the weak limit at the level of $A^{n}$, we need a uniform bound for the size of $A^{n}$. While we do have uniformity in all $F^{n}$ bounds, there  is no such guarantee for $A_n$. We will address this by renormalizing $a_n$ via a suitable gauge transformation, and then using the local caloric gauge bounds in order to propagate these uniform bounds in heat-time. (Lemma~\ref{lem:rad-gauge})
\item Compactness fails, the obvious culprits being the scaling and translation (in both space and heat-time) symmetries. Even after factoring out the symmetry group, we still cannot hope for full compactness in the $\dot{H}^{1}$ topology, and we will have to settle for a weaker sense as stated in Theorem~\ref{thm:dich}.
\end{itemize} 

We now turn to the detail, addressing first the issue of breaking the scaling and translation symmetries.
The idea here is that having a nontrivial $L^3 L^{3}$ norm for the curvature requires some nontrivial
concentration of the $L^2$ norm on parabolic cubes $Q_\epsilon$ of the form
\begin{equation} \label{eq:Q-eps}
	Q_{\eps} = B_{\eps}(x_{1}) \times (s_{1}-\eps^{2}, s_{1}).
\end{equation}
We will measure the concentration via the scaling-invariant norm 
\[
\epsilon^{-1} \| F\|_{L^2 L^{2}(Q_\epsilon)} .
\]
To obtain this, we begin with a simple propagation bound:
\begin{lemma} \label{lem:en-prop}
Let $A$ be a Yang--Mills heat flow on $I$.
For any subinterval $J \subseteq I$, we have
\begin{equation*}
	\nrm{F}_{L^{\infty} L^{2}(B_{\eps}(x_{1}) \times J)} \aleq \eps^{-1} \nrm{F}_{L^{2} L^{2}(B_{2 \eps}(x_{1}) \times J)} + \nrm{\covD^{\ell} F_{\ell j}}_{L^{2}(B_{2 \eps}(x_{1}) \times J)}.
\end{equation*}
\end{lemma}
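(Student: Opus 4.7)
The proof is a parabolic Caccioppoli-type local energy estimate. Fix a smooth spatial cutoff $\chi \in C^{\infty}_{c}(B_{2\eps}(x_{1}))$ with $\chi \equiv 1$ on $B_{\eps}(x_{1})$ and $\abs{\rd \chi} \aleq \eps^{-1}$, and track the localized energy density $u(s) := \int \chi^{2} \tfrac{1}{2} \brk{F_{ij}, F^{ij}} \, \ud x$. Because $\abs{F}^{2}$ is a gauge-invariant scalar and $\brk{[A_{s}, F_{ij}], F^{ij}} = 0$ by bi-invariance, differentiating yields $\rd_{s} \tfrac{1}{2}\abs{F}^{2} = \brk{\covD_{s} F_{ij}, F^{ij}}$. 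The key algebraic step is to apply the second Bianchi identity $\covD_{s} F_{ij} = \covD_{i} F_{sj} - \covD_{j} F_{si}$ together with the Yang--Mills heat flow equation $F_{sj} = \covD^{\ell} F_{\ell j}$. After integrating by parts in $\rd_i$ (and using antisymmetry of $F$ and compatibility of $\covD$ with $\brk{\cdot,\cdot}$), one arrives at the local energy identity
\begin{equation*}
	\frac{\ud u}{\ud s} + 2 \int \chi^{2} \abs{F_{s}}^{2} \, \ud x = -4 \int \chi \, (\rd_{i} \chi) \brk{F_{sj}, F^{ij}} \, \ud x,
\end{equation*}
where $\abs{F_{s}}^{2} = \sum_{j} \abs{\covD^{\ell} F_{\ell j}}^{2}$.

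Balanced Cauchy--Schwarz on the boundary term,
\begin{equation*}
	\Big|4 \int \chi (\rd_{i} \chi) \brk{F_{sj}, F^{ij}} \, \ud x\Big| \leq \int \chi^{2} \abs{F_{s}}^{2} \, \ud x + 4 \int \abs{\rd \chi}^{2} \abs{F}^{2} \, \ud x,
\end{equation*}
preserves half the dissipation and produces the differential inequality
\begin{equation*}
	\frac{\ud u}{\ud s} + \int \chi^{2} \abs{F_{s}}^{2} \, \ud x \leq C \eps^{-2} \int_{B_{2 \eps}} \abs{F}^{2} \, \ud x.
\end{equation*}
Integrating from $s_{0}$ to $s_{1}$ in $J$ with $s_{0} \leq s_{1}$ and dropping the nonnegative dissipation yields
\begin{equation*}
	u(s_{1}) \leq u(s_{0}) + C \eps^{-2} \nrm{F}^{2}_{L^{2} L^{2}(B_{2 \eps} \times J)}.
\end{equation*}
To remove $u(s_{0})$, one averages over $s_{0}$ in a suitable subinterval of $J$, which is where the first term on the RHS of the lemma comes from. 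The $\covD^{\ell} F_{\ell j}$ contribution on the RHS arises from an alternative, non-absorbed splitting of the boundary term via $4 \nrm{\chi F_{s}}_{L^{2}_{s,x}} \nrm{\rd \chi \, F}_{L^{2}_{s,x}}$ (Cauchy--Schwarz in $s$ applied before integration); AM--GM and taking square roots then give the stated estimate.

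The main technical obstacle is the one-sided nature of the differential inequality: it propagates $u$ only forward in heat-time, while the LHS of the lemma is an $L^{\infty}_{s}$ norm over all of $J$. For $s_{1}$ near the left endpoint of $J$, one handles this by either averaging over a small initial window, or by running the same identity with an additional compactly supported temporal cutoff $\phi(s)$ on $J$ satisfying $\abs{\phi'} \aleq \abs{J}^{-1}$; the latter contributes a harmless extra $\abs{J}^{-1} \nrm{F}^{2}_{L^{2} L^{2}}$ term that is absorbed into $\eps^{-2} \nrm{F}^{2}_{L^{2} L^{2}}$ in the regime $\abs{J} \ageq \eps^{2}$ relevant to the application in Lemma~\ref{lem:select-int}.
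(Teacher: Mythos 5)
Your argument is correct and is essentially the paper's own proof: the paper likewise tests the localized energy $\int \chi_{\eps}\,\abs{F}^{2}\, dx$ against the flow, keeps the dissipation $-\int \chi_{\eps} \abs{\covD^{\ell}F_{\ell j}}^{2}$, and controls a cutoff error of size $\eps^{-2}\abs{F}^{2}$ before integrating in $s$ (the paper records that error schematically as $\int \Delta\chi_{\eps}\abs{F}^{2}$, whereas your flux form $-4\int \chi\,\rd_{i}\chi\,\brk{F_{sj},F^{ij}}$ obtained via the Bianchi identity is the precise one, and your Cauchy--Schwarz treatment of it is exactly what produces both terms on the right-hand side). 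Your closing caveat is also legitimate rather than a defect of your proof: as literally stated the bound fails when $\abs{J}\ll \eps^{2}$ (test it on a nontrivial harmonic Yang--Mills connection, which is $s$-independent), so one must either assume $\abs{J}\gtrsim \eps^{2}$ or accept an extra $\abs{J}^{-1/2}\nrm{F}_{L^{2}L^{2}}$ term from averaging over the starting time -- a point the paper's one-line proof glosses over and which is harmless in the actual application, namely Lemma~\ref{lem:conc} (where $\abs{J}=C^{-2}\eps^{2}$), rather than Lemma~\ref{lem:select-int}; also note that for the $L^{\infty}_{s}$ conclusion your temporal cutoff should equal $1$ at the evaluation time and vanish only near the left endpoint, not be compactly supported in $J$.
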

\begin{proof}
Let $\chi_\epsilon$ be a spatial cutoff adapted to $B_\epsilon(x_{1})$, which vanishes outside $2 B_{\eps}(x_{1})$. We compute 
\[
\frac{d}{ds} \int \chi_\epsilon(x) |F|^2 dx = - \int \chi_\epsilon(x) |\covD^{\ell} F_{\ell j}|^2 dx + \int \Delta \chi_\epsilon |F|^2 dx.
\]
Here $\Delta \chi_\epsilon$ has size $\epsilon^{-2}$ so integrating
this relation we obtain the desired conclusion. \qedhere

\end{proof}

Using Lemma~\ref{lem:en-prop}, we may prove the desired concentration lemma.
\begin{lemma} \label{lem:conc}
Let $A$ be a Yang--Mills heat flow on $I$ with energy $\leq \calE$, which satisfies
\begin{equation} \label{eq:selected-int} 
	\nrm{F}_{L^{3}(I; L^{3})} = 1, \qquad
	\nrm{\covD^{\ell} F_{\ell i}}_{L^{2}(I; L^{2})} < \dlt, \qquad
	\nrm{\covD F}_{L^{2}(I; L^{2})} \leq \calF. 
\end{equation}
If $\dlt \ll_{\calE, \calF} 1$, then there exists a parabolic cube $Q_{\eps} \subseteq \bbR^{4} \times I$  with the property that
\begin{equation} \label{eq:conc}
	\eps^{-1} \nrm{F}_{L^{2} L^{2}(Q_{\eps})} \ageq_{\calE, \calF} 1.
\end{equation}
\end{lemma}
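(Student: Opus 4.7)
The plan is an $\eps$-regularity argument by contradiction, of the flavor originally due to Struwe for the harmonic map heat flow. Set
\[
M := \sup_{Q_{\eps} \subseteq \bbR^{4} \times I} \eps^{-1} \nrm{F}_{L^{2}L^{2}(Q_{\eps})},
\]
and suppose for contradiction that $M$ is small. I will combine Lemma~\ref{lem:en-prop}, which trades local $L^{2}L^{2}$ smallness for $L^{\infty}L^{2}$ smallness (modulo the tension $\dlt$), with a localized critical Gagliardo--Nirenberg--Sobolev inequality that reconstructs the $L^{3}L^{3}$ norm, to derive $\nrm{F}_{L^{3}L^{3}(I)} < 1$, contradicting the first hypothesis of \eqref{eq:selected-int}.

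The local estimate proceeds as follows: on any ball $B_{\eps} \subseteq \bbR^{4}$, Hölder interpolation $\nrm{f}_{L^{3}}^{3} \leq \nrm{f}_{L^{2}}\nrm{f}_{L^{4}}^{2}$ combined with the scale-invariant critical Sobolev embedding $\nrm{f}_{L^{4}(B_{\eps})} \aleq \nrm{\nb f}_{L^{2}(B_{\eps})} + \eps^{-1}\nrm{f}_{L^{2}(B_{\eps})}$, applied to $|F|$ via the diamagnetic inequality $|\nb|F|| \leq |\covD F|$, yields
\[
\nrm{F}_{L^{3}(B_{\eps})}^{3} \aleq \nrm{F}_{L^{2}(B_{\eps})} \bigl(\nrm{\covD F}_{L^{2}(B_{\eps})}^{2} + \eps^{-2}\nrm{F}_{L^{2}(B_{\eps})}^{2}\bigr).
\]
Integrating in time over an interval $J$ of length $\eps^{2}$ and using Hölder $L^{\infty} \cdot L^{1}$ for the time integral gives
\[
\nrm{F}_{L^{3}L^{3}(B_{\eps} \times J)}^{3} \aleq \nrm{F}_{L^{\infty}L^{2}(B_{\eps} \times J)} \bigl(\nrm{\covD F}_{L^{2}L^{2}(B_{\eps} \times J)}^{2} + \eps^{-2}\nrm{F}_{L^{2}L^{2}(B_{\eps} \times J)}^{2}\bigr).
\]

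Now set $\eps := |I|^{1/2}/2$, so that $|I| = (2\eps)^{2}$. Cover $\bbR^{4}$ by finitely overlapping balls $B_{\eps}(x_{j})$ arranged on a lattice of spacing $\eps$, and partition $I$ into four sub-intervals $I_{k}$ of length $\eps^{2}$. Apply Lemma~\ref{lem:en-prop} to each $B_{\eps}(x_{j}) \times I_{k}$; the resulting term $\eps^{-1}\nrm{F}_{L^{2}L^{2}(B_{2\eps}(x_{j}) \times I_{k})}$ is enlarged in time to the strict parabolic cube $B_{2\eps}(x_{j}) \times I$, giving $\eps^{-1}\nrm{F}_{L^{2}L^{2}(B_{2\eps}(x_{j}) \times I_{k})} \leq 2 \cdot (2\eps)^{-1}\nrm{F}_{L^{2}L^{2}(B_{2\eps}(x_{j}) \times I)} \leq 2M$. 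Together with the tension bound this yields $\nrm{F}_{L^{\infty}L^{2}(B_{\eps}(x_{j}) \times I_{k})} \aleq M + \dlt$ uniformly in $j, k$. Summing the local estimate and invoking the finite overlap of the cover,
\[
\nrm{F}_{L^{3}L^{3}(I)}^{3} \aleq (M + \dlt)\bigl(\nrm{\covD F}_{L^{2}L^{2}}^{2} + \eps^{-2}\nrm{F}_{L^{2}L^{2}}^{2}\bigr) \aleq (M + \dlt)(\calF^{2} + \calE),
\]
where we used $\nrm{\covD F}_{L^{2}L^{2}} \leq \calF$ and $\eps^{-2}\nrm{F}_{L^{2}L^{2}}^{2} \leq \eps^{-2}|I|\calE = 4\calE$. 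Since $\nrm{F}_{L^{3}L^{3}(I)} = 1$, this forces $M + \dlt \ageq_{\calE, \calF} 1$; choosing $\dlt \ll_{\calE, \calF} 1$, we conclude $M \ageq_{\calE, \calF} 1$, so a near-optimal parabolic cube realizes the desired concentration \eqref{eq:conc}.

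The main obstacle is purely technical: Lemma~\ref{lem:en-prop} is stated with the factor $\eps^{-1}$ on the $L^{2}L^{2}$ term rather than $|J|^{-1/2}$, so applying it to the slightly \emph{thin} cubes $B_{\eps}(x_{j}) \times I_{k}$ (where $|I_{k}| = \eps^{2}$ against a spatial radius $2\eps$ on the right-hand side) requires care to see that the outcome is genuinely bounded by the parabolic Morrey quantity $M$. This is resolved by the enlargement to $B_{2\eps}(x_{j}) \times I$, which is a strict parabolic cube at scale $2\eps$ thanks to the calibrated choice $\eps = |I|^{1/2}/2$. Everything else is a routine Struwe-type interpolation.
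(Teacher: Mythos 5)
Your argument is correct, but it takes a genuinely different route from the paper's. The paper's proof is constructive at a single time slice: it pigeonholes a heat-time $s_{\ast}\in I$ at which the interpolation inequality $\nrm{F}_{L^{3}}^{3}\aleq\nrm{F}_{L^{2}}\nrm{\covD F}_{L^{2}}^{2}$ is nearly saturated with $\nrm{F(s_{\ast})}_{L^{3}}^{3}\approx|I|^{-1}$, then refines the Gagliardo--Nirenberg step through the endpoint $\dot{B}^{-2,\infty}_{\infty}$ (equivalently the Morrey quantity $\sup_{B_{\eps}}\eps^{-2}\nrm{\cdot}_{L^{1}(B_{\eps})}$) so that near-saturation forces $\sup_{B_{\eps}}\eps^{-1}\nrm{F(s_{\ast})}_{L^{2}(B_{\eps})}\approx\nrm{\covD F(s_{\ast})}_{L^{2}}\ageq|I|^{-1/2}$; this fixed-time spatial concentration is then spread to a parabolic cube using Lemma~\ref{lem:en-prop} and a pigeonhole in time. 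You avoid both the time-slice selection and the refined Besov inequality: you work at the single scale $\eps\sim|I|^{1/2}$, cover $\bbR^{4}\times I$ by finitely overlapping slabs $B_{\eps}(x_{j})\times I_{k}$, use only the elementary scale-invariant embedding $H^{1}(B_{\eps})\hookrightarrow L^{4}(B_{\eps})$ together with the diamagnetic inequality on each piece, control every local $L^{\infty}L^{2}$ norm by $M+\dlt$ via Lemma~\ref{lem:en-prop} --- your calibration $|I|=(2\eps)^{2}$ correctly turns the doubled slab into a genuine parabolic cube of scale $2\eps$, so the bound by $2M$ is legitimate --- and then sum with finite overlap to obtain $1=\nrm{F}_{L^{3}L^{3}(I)}^{3}\aleq(M+\dlt)(\calF^{2}+\calE)$, forcing $M\ageq_{\calE,\calF}1$ once $\dlt\ll_{\calE,\calF}1$. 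This is softer and more elementary; what it gives up is only the constructive identification of the concentration time and of the scale $\eps_{\ast}\aleq_{\calE}|I|^{1/2}$, neither of which is needed downstream, since any cube $Q_{\eps}\subseteq\bbR^{4}\times I$ automatically has $\eps\leq|I|^{1/2}$. Two small points to record: the energy hypothesis gives $\nrm{F(s)}_{L^{2}}^{2}\leq 2\calE$ (a harmless factor of $2$ in your bound for $\eps^{-2}\nrm{F}_{L^{2}L^{2}}^{2}$), and the choice $\eps=|I|^{1/2}/2$ tacitly assumes $|I|<\infty$; if $I$ were unbounded one would first pass to a finite subinterval carrying, say, half of the $L^{3}L^{3}$ norm, which is a shared (and equally implicit) reduction in the paper's own proof through its use of $|I|^{-1}$.
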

\begin{proof}
In this proof, we use the shorthand $L^{p} L^{q}(I)$ for $L^{p}(I; L^{q})$. Furthermore, we suppress the dependence of the implicit constants on $\calE, \calF$.

We begin with the fixed time bound 
\[
\|F(s)\|_{L^3}^{3} \aleq \|F(s)\|_{L^2} \| \covD F(s)\|_{L^2}^2,
\]
which after time integration yields the interpolation inequality
\[
\|F\|_{L^3 L^{3}(I)}^3 \lesssim \| F\|_{L^\infty L^{2}(I)} \| \covD F\|_{L^2 L^{2} (I)}^2.
\]
Since $\| F\|_{L^3 L^{3}(I)}=1$, whereas $\nrm{F}_{L^{\infty} L^{2}(I)}, \nrm{\covD F}_{L^{2} L^{2}(I)} \aleq 1$, there must be some $s_{\ast} \in I$ so that 
\[
\|F(s_{\ast})\|^3_{L^3} \gtrsim \|F(s_{\ast})\|_{L^2} \| \covD F(s_{\ast})\|_{L^2}^2, \qquad
\|F(s_{\ast})\|^3_{L^3} \approx \abs{I}^{-1}.
\]
Moreover, going back to the first inequality it follows that 
\[
\| \covD F(s_{\ast})\|_{L^2}^2 \gtrsim |I|^{-1}.
\]
We now revisit the proof of this bound, using the improved Gagliardo-Nirenberg inequality:
\begin{equation*}
	\nrm{u}_{L^{2}} \aleq \nrm{u}_{L^{\frac{4}{3}}}^{\frac{2}{3}} \nrm{u}_{\dot{B}^{-2, \infty}_{\infty}}^{\frac{1}{3}},
\end{equation*}
where $\nrm{u}_{\dot{B}^{-2, \infty}_{\infty}} = \sup_{k} 2^{-2k} \nrm{P_{k} u}_{L^{\infty}}$. Since the kernel of $P_{k}$ rapidly decays on the scale $2^{-k}$, we have
\begin{equation*}
	\nrm{u}_{\dot{B}^{-2, \infty}_{\infty}}
	\aleq \sup_{B_{\eps}} \eps^{-2} \nrm{u}_{L^{1}(B_{\eps})}.
\end{equation*}
Putting these estimates together, at $s = s_{\ast}$, we have
\[
\begin{split}
\|F \|_{L^3}^3 = & \ \| |F|^2\|_{L^\frac32}^\frac32 \\
\aleq & \||F|^2\|_{L^1}^\frac12 \||F|^2\|_{L^2} \\
\aleq &\||F|^2\|_{L^1}^\frac12 \|\nabla |F|^2\|_{L^\frac43}^\frac23 \||F|^2\|_{B^{-2, \infty}_{\infty}}^\frac13 \\
\aleq & \|F\|_{L^2} \| \covD F\|_{L^2}^\frac23  \|F\|_{L^4}^\frac23 \sup_{B_\epsilon}( \epsilon^{-1} 
\|F\|_{L^2(B_\epsilon)})^\frac23 \\
\aleq & \|F\|_{L^2} \| \covD F\|_{L^2}^\frac43 
 \|F\|_{L^4}^\frac23 .
\end{split}
\]
By the (covariant) Sobolev embedding, the last line is bounded from above by $\nrm{F}_{L^{2}} \nrm{\covD F}_{L^{2}}^{2}$, and then by $\nrm{F}_{L^{3}}^{3}$ by our choice of $s_{\ast}$. Thus near equality must hold at the last step, i.e.,
\begin{equation} \label{eq:select-ball}
\sup_{B_\epsilon} \epsilon^{-1}  \|F(s_{\ast})\|_{L^2(B_\epsilon)} \approx \| \covD F(s_{\ast})\|_{L^2}.
\end{equation}
Further, recall that $ \| \covD F(s_{\ast})\|_{L^2} \gtrsim |I|^{-\frac12} $, so for a
near optimal $\epsilon$ we must also have $\epsilon \lesssim
|I|^\frac12$. Consider now a (nonstandard) parabolic cube $\tilde{Q}$ of the form
\begin{equation*}
	\tilde{Q} = B_{\eps_{\ast}}(x_{\ast}) \times I_{C^{-2} \eps_{\ast}^{2}}, \qquad \abs{I_{C^{-2} \eps_{\ast}^{2}}} = C^{-2} \eps_{\ast}^{2},
\end{equation*}
where $B_{\eps_{\ast}}(x_{\ast})$ is the near optimal ball in \eqref{eq:select-ball} and $s_{\ast} \in I_{C^{-2} \eps_{\ast}^{2}} \subseteq I$. By the previous lemma, it follows that
\begin{equation*}
	\eps^{-1} \nrm{F}_{L^{2} L^{2}(B_{2 \eps_{\ast}}(x_{\ast}) \times  I_{C^{-2} \eps_{\ast}^{2}})} \ageq 1.
\end{equation*}
By the pigeonhole principle, we may find a parabolic cube $Q_{\eps} \subseteq B_{2\eps_{\ast}}(x_{\ast}) \times I_{C^{-2} \eps_{\ast}^{2}}$ with $\eps = C^{-1} \eps_{\ast}$ satisfying the lower bound \eqref{eq:conc}, as desired. \qedhere
\end{proof}

Next, we handle the issue of obtaining a uniform bound for $A^{n}$. The idea is to exploit covariant parabolic regularity and use the radial (or exponential) gauge on a fixed heat time, then propagate the good bound using to other heat times in the local caloric gauge. 

\begin{lemma} \label{lem:rad-gauge}
Suppose that 
\begin{equation*}
\nrm{F(s=0)}_{L^{2}} \leq \calE, \qquad \nrm{F}_{L^{3} L^{3}(\bbR^{4} \times [0, 2])} \leq 1.
\end{equation*}
Then there exists a gauge transformation $O \in H^{2}_{loc}(\bbR^{4} \times [1, 2])$ (in the sense that $O_{;x} \in H^{1}_{loc}(\bbR^{4} \times [1, 2])$) such that $\tA = \calG(O) A = Ad(O) A - O_{;x}$ obeys
\begin{equation} \label{eq:rad-gauge}
	\nrm{\rd \tA}_{L^{\infty} L^{2}(B_{R} \times [1, 2])} + \nrm{\tA}_{L^{\infty} L^{4}(B_{R} \times [1, 2])} \aleq_{\calE, R} 1,
\end{equation}
where $B_{R}$ is the ball of radius $R$ with the same (spatial) center as $Q$, and
\begin{equation*}
	\rd_{s} \tilde{A} = 0 \qquad \hbox{ in } \bbR^{4} \times [1, 2].
\end{equation*}
\end{lemma}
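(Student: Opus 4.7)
The plan is to fix a reference heat-time $s_{*} \in [1, 2]$, build a single purely spatial gauge transformation $O_{*}(x)$ on $\bbR^{4}$ that tames $A(\cdot, s_{*})$ on $B_{R}$, and then set $O(x, s) := O_{*}(x)$ independently of $s$. Since $A_{s} \equiv 0$ in the local caloric gauge and $\rd_{s} O_{*} = 0$, the transformed connection $\tA = \calG(O_{*}) A$ automatically preserves $\tA_{s} \equiv 0$, which is what the last condition of the lemma encodes (the $s$-component of the dynamic gauge transformation vanishes, equivalently $O$ is $s$-independent). The entire work then reduces to producing such an $O_{*}$ and verifying that the spatial estimate \eqref{eq:rad-gauge} holds uniformly in $s \in [1, 2]$, not merely at $s_{*}$.

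First, I would harvest parabolic smoothing. Combining the hypotheses $\nrm{F(0)}_{L^{2}} \leq \calE$ and $\nrm{F}_{L^{3} L^{3}([0, 2])} \leq 1$ with Proposition~\ref{p:cov-smth} and its Gagliardo--Nirenberg corollary \eqref{GNS}--\eqref{GNS1} (via the diamagnetic inequality) yields the gauge-covariant bounds
\begin{equation*}
\nrm{F}_{L^{\infty}([1, 2]; L^{2} \cap L^{4})} + \nrm{\covD F}_{L^{\infty}([1, 2]; L^{2} \cap L^{4})} + \nrm{\covD^{(2)} F}_{L^{\infty}([1, 2]; L^{2})} \aleq_{\calE} 1,
\end{equation*}
together with analogous covariant $\dot{H}^{1}$ control on $F$. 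These estimates are gauge invariant and depend only on $\calE$.

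Second, at $s_{*} := 3/2$ I construct $O_{*}(x) \in H^{2}_{loc}(\bbR^{4})$ so that $\tilde a := \calG(O_{*}) A(\cdot, s_{*})$ satisfies $\nrm{\rd \tilde a}_{L^{2}(B_{R})} + \nrm{\tilde a}_{L^{4}(B_{R})} \aleq_{\calE, R} 1$. Since we allow $\calE$ to be large, one cannot invoke Uhlenbeck's small-energy gauge lemma directly on $B_{2R}$; instead I would cover $B_{2R}$ by $N(\calE, R)$-many small balls on each of which $\nrm{F(\cdot, s_{*})}_{L^{2}}$ is subcritical, apply small-energy Uhlenbeck locally, and patch the local gauges through $H^{2}_{loc}$ transition functions (whose regularity is supplied by Step~1). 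Alternatively, the explicit exponential (radial) gauge $\tA_{i} x^{i} = 0$ centered in $B_{R}$ gives $\tA_{j}(x) = \int_{0}^{1} \tilde F_{ji}(tx) \, t x^{i} \, \ud t$, after which Hardy-type inequalities transfer the $L^{2} \cap L^{4} \cap \dot{H}^{1}$ control of $\tilde F(\cdot, s_{*})$ into the required bounds on $\tA(\cdot, s_{*})$. This large-energy local gauge fixing is the main technical obstacle; everything else is soft parabolic regularity.

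Finally, I propagate from $s_{*}$ to every $s \in [1, 2]$. Because $O_{*}$ is $s$-independent and the local caloric gauge enforces $\rd_{s} A_{j} = \covD^{\ell} F_{\ell j}$, one has
\begin{equation*}
\tA_{j}(\cdot, s) - \tA_{j}(\cdot, s_{*}) = Ad(O_{*}) \int_{s_{*}}^{s} \covD^{\ell} F_{\ell j}(\cdot, s') \, \ud s'.
\end{equation*}
Since $Ad(O_{*})$ preserves pointwise norms on $\g$, the $L^{4}(B_{R})$ norm of this difference is bounded by $|s - s_{*}| \nrm{\covD F}_{L^{\infty}([1, 2]; L^{4}(B_{R}))} \aleq_{\calE} 1$; analogously, $\nrm{\rd (\tA(\cdot, s) - \tA(\cdot, s_{*}))}_{L^{2}(B_{R})}$ is absorbed by the bound on $\covD^{(2)} F$ from Step~1 (after peeling off connection coefficients locally via Step~2). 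Combined with the $s = s_{*}$ bound, this delivers \eqref{eq:rad-gauge} on all of $B_{R} \times [1, 2]$.
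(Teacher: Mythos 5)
Your overall skeleton — fix the gauge at a single heat-time by a radial/exponential gauge, keep $O$ independent of $s$ so that the local caloric condition $\tA_s=0$ is preserved (your reading of the condition ``$\rd_s\tA=0$'' matches the paper's proof), and then propagate the bound across $[1,2]$ by integrating $\rd_s A_j=\covD^\ell F_{\ell j}$ against the uniform covariant bounds from Proposition~\ref{p:cov-smth} — is essentially the paper's argument; the paper propagates via the localized Lemma~\ref{l:h1} (Remark~\ref{rem:h1-loc}), while your direct integration on $[1,2]$, where $s\approx 1$, amounts to the same computation and is fine, including the mild circularity you resolve by establishing the $L^4$ bound before the gradient bound.

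The genuine gap is in the step you yourself flag as the main obstacle: constructing $O_*$ at the fixed time when the data is merely $\dot H^1$. The heat flow in the local caloric gauge smooths only the curvature, not the connection: $\rd^\ell A_\ell$ obeys the transport equation \eqref{DA-rep}, so $A(\cdot,s_*)$ retains the full $\dot H^1$-roughness of $\rd^\ell a_\ell$ even for $s_*\in[1,2]$. Consequently the exponential gauge of your alternative (b) cannot be produced directly: solving $O^{-1}\rd_r O=A_r(\cdot,s_*)$ requires restricting an $\dot H^1(\bbR^4)$ coefficient to rays, which is not meaningful, and the formula $\tA_j(x)=\int_0^1\tilde F_{ji}(tx)\,t x^i\,dt$ presupposes that this gauge already exists. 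The paper closes exactly this gap with a two-case argument: first smooth data (where the radial ODE is classical and the radial integration of the covariant $L^\infty$ bounds on $\covD^{(k)}F$ gives \eqref{eq:rad-gauge-s=1}), then rough data by running the construction for the truncated flows $A_{<k}$ with data $P_{<k}a$, obtaining bounds uniform in $k$, and extracting a weak limit of the gauge transformations $O_{<k}$ in $H^2_{loc}$ via a matrix embedding $\G\hookrightarrow O(N)$, verifying that the limit lands in $\G$ and that the bound survives. Your alternative (a) (small-ball Uhlenbeck gauges plus patching) would sidestep the ray-restriction issue, since Uhlenbeck's lemma works at $W^{1,2}$ regularity, but as written it is only a gesture: the gluing of local Coulomb gauges with constants depending only on $\calE,R$, the extension of $O$ to all of $\bbR^4$ in $H^2_{loc}$, and the verification of the uniform $L^\infty_s$ bound are precisely the substantive content, and none of it is carried out. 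As it stands, the proposal does not contain a complete construction of the gauge transformation at the stated regularity.
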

\begin{proof}
Without loss of generality, we may assume that the spatial center of $Q$ is $0$. We consider first the special case when $a = A(s=0)$ is smooth, and then the general case.
\pfstep{Case~1: $a$ is smooth}
Thanks to the $L^{3} L^{3}$ norm bound for $F$, the local caloric gauge solution $A$ is smooth on $[0, 2]$.
Moreover, by Proposition~\ref{p:cov-smth}, we have full covariant parabolic regularity of $F$. In particular, on the interval $[1, 2]$ we have
\begin{equation} \label{eq:cov-smth-s=1}
\nrm{\covD^{(N)} F}_{L^{\infty} L^{\infty}(\bbR^{4} \times [1,2])}
\aleq \sum_{0 \leq k \leq N+3} \nrm{\covD^{(k)} F}_{L^{\infty} L^{2}(\bbR^{4} \times [1,2])} \aleq_{N} 1.
\end{equation}
Solving the ODE
\begin{equation*}
	O^{-1} \rd_{r} O = A_{r}(1), \qquad O(0) = Id,
\end{equation*}
which is straightforward since $A$ is smooth, we obtain a smooth $s$-independent gauge transformation $O$ such that $\tA = \calG(O) A = Ad(O) A - O_{;x}$ satisfies
\begin{equation*}
	\tA_{r}(s=1) = 0, \qquad \tA_{s} = 0 \quad \hbox{ on } [0, 2].
\end{equation*}
In particular, in the polar coordinates $(r, \Tht)$, we have $\rd_{r} \tA_{\Tht} = \tilde{F}_{r \Tht} = Ad(O) F_{r \Tht}$ at $s = 1$. Integrating \eqref{eq:cov-smth-s=1} in the radial direction, we easily obtain
\begin{equation} \label{eq:rad-gauge-s=1}
	\nrm{\rd \tA(s=1)}_{L^{2}(B_{R})} + \nrm{\tA(s=1)}_{L^{4}(B_{R})} \aleq_{R} 1.
\end{equation}
By Lemma~\ref{l:h1} (more precisely, see Remark~\ref{rem:h1-loc}), we may propagate this bound to other times. This proves the desired bound \eqref{eq:rad-gauge}.

\pfstep{Case~2: $a \in \dot{H}^{1}$}
To avoid solving the ODE for $O$, we approximate the rough solution $A$ by smooth solutions. More precisely, for each $k \in \bbR$ consider the smooth approximation $a_{<k} = P_{<k} a$ of $a$, and let $A_{<k}$ be the corresponding local caloric gauge Yang--Mills heat flow with $A_{<k}(s=0) = a_{<k}$. For $k$ sufficiently large, we know that $A_{<k}$ exists on $[0, 2]$, $A_{<k} \to A$ in $L^{\infty} \dot{H}^{1}$, and that its curvature $F_{<k}$ obeys
\begin{equation*}
	\nrm{F_{<k}(s=0)}_{L^{2}} \leq 2 \calE, \qquad
	\nrm{F_{<k}}_{L^{3} L^{3}(\bbR^{4} \times [0, 2])} \leq 2. 
\end{equation*}
Then by the previous case, we may find a smooth gauge transformation $O_{<k}$ such that $\tA_{<k} (s=1)= Ad(O_{<k}) A_{<k}(s=1) - O_{<k; x}$ obeys
\eqref{eq:rad-gauge-s=1} with a uniform constant. In particular, for each fixed $R$,
\begin{equation*}
	\nrm{O_{<k; x}}_{H^{1}(B_{R})} \aleq_{\calE, R} 1 + \nrm{a}_{\dot{H}^{1}}.
\end{equation*}
Let us fix a matrix group representation $\G \hookrightarrow O(N) \subseteq \bbR^{N \times N}$ (which exists since $\G$ is compact), and view $O_{<k}$ as a sequence in $H^{2}_{loc}(\bbR^{4}; \bbR^{N \times N})$. By the preceding bound and \eqref{eq:rad-gauge-s=1}, after passing to a subsequence, we may find a weak limit $O_{<k} \weakto O$ in $H^{2}_{loc}(\bbR^{4}; \bbR^{N \times N})$, which also converges a.e, and $\tA_{<k}(s=1) \weakto \tA(s=1)$ in $H^{1}_{loc}(\bbR^{4})$. This weak convergence is sufficient to justify that $O \in \G$ a.e., $O_{;x} = \rd_{x} O O^{-1} \in H^{1}_{loc}(\bbR^{4}; \g)$ and
\begin{equation*}
	\tA(s=1) = Ad(O) A(s=1) - O_{;x},
\end{equation*}
as well as the bound \eqref{eq:rad-gauge-s=1}. Extending $\tA$ to $s \in [0, 2]$ by defining $\tA(s) = Ad(O) A(s) - O_{;x}$, and using Lemma~\ref{l:h1} (more precisely Remark~\ref{rem:h1-loc}), the desired bound \eqref{eq:rad-gauge} follows. \qedhere
\end{proof}

We are now ready to complete the proof of Theorem~\ref{thm:dich}.
\begin{proof}[Proof of Theorem~\ref{thm:dich}]
When $\hM(a) < \infty$, $A$ extends globally thanks to Corollary~\ref{c:max}. It remains to consider the case $\hM(a) = \infty$.

\pfstep{Step~1: Selection of intervals}
Let $\calE = \spE[a]$, and let $J = [0, s_{0})$ be the maximal interval of existence of the local caloric gauge solution $A$ with data $a$, where $s_{0}$ may be finite or infinite. In either case, we fix a sequence $\tilde{s}^{n} \nearrow s_{0}$ and apply Lemma~\ref{lem:select-int} to $a^{n} = A(\tilde{s}^{n})$ (which is possible since $\hM(a^{n}) = \infty$ for each $n$). The resulting sequence of solutions and intervals, which we denote by $A^{n}$ and $I_{n}$, satisfy \eqref{eq:select-int}.

\pfstep{Step~2: Breaking the scaling and translation symmetries}
Next, we apply Lemma~\ref{lem:conc} to each $A^{n}$ on $I_{n}$, whose hypothesis is insured by \eqref{eq:select-int} for large enough $n$. 
Thus we find a parabolic cube $Q^{n} = B_{r_{n}}(x_{n}) \times (s_{n}-r_{n}^{2}, s_{n}) \subseteq \bbR^{4} \times I_{n}$ with the property that
\begin{equation*}
	r_{n}^{-1} \nrm{F^{n}}_{L^{2} L^{2}(Q^{n}} \ageq_{\calE} 1.
\end{equation*}
By construction, observe that $s_{n} \to s_{0}$ and $r_{n} \aleq s_{0} - \tilde{s}_{n} \to 0$ if $s_{0} < \infty$.
We rescale and translate each $A^{n}$ so that $(x_{n}, s_{n}, r_{n}) = (0, 0, 1)$; for simplicity of notation, we still call the resulting heat flows $A^{n}$.

\pfstep{Step~3: Gauge transformation and compactness argument}
For the sequence of solutions constructed at the previous step, we have the uniform covariant bounds 
\[
\|F^n\|_{L^\infty L^2} \leq \calE, \qquad
\|F^{n}\|_{L^3 L^{3}} \leq 1, \qquad
\|\covD[A^{n}] F^{n}\|_{L^2} \lesssim_{\calE} 1,
\]
the decay 
\[
\| \covD^j F_{jk}\|_{L^2} \to 0,
\]
and the bound from below
\[
\| F^n\|_{L^2(Q)} \gtrsim_E 1.
\]
Applying Lemma~\ref{lem:rad-gauge}, we find a sequence of $s$-independent gauge transformations $O^{n} \in H^{2}_{loc}$ such that $\tA^{n} = \calG(O^{n}) A^{n} =  Ad(O^{n}) A^{n} - O^{n}_{x}$ obeys the uniform local-in-space bounds
\[
\| \tA^n\|_{L^\infty \dot H^1 (B_{R} \times [1, 2])} \lesssim_{\calE, R} 1.
\]
By covariant Sobolev embeddings this further yields the bounds from above
\[
 \| \tF^{n}\|_{L^2 \dot H^1(B_{R} \times [1,2])}+ \| \partial_s \tF^{n}\|_{L^2 \dot H^\frac12 (B_{R} \times [1,2])} \lesssim_{\calE, R} 1.
\]

We consider a weakly convergent subsequence in the above topologies,
and denote by $(A, F)$ the corresponding limits. These must satisfy the same bounds 
from above as $\tF^n$ and $\tA^n$.

In what follows, we drop the tilde and simply write $(A^{n}, F^{n}) = (\tA^{n}, \tF^{n})$ for simplicity of notation.
  By compact Sobolev embeddings
the sequences $F^n$ and $A^n$ can be taken to converge strongly in say $L^2(B_{R} \times [-1,0])$.
This shows that $F$ is the curvature of $A$, and it also allows us to pass to the limit in the last 
two relations two obtain
\[
\covD^j F_{jk} = 0, \qquad \| F\|_{L^2(Q)} > 0
\]
We can also pass to the limit in the local caloric gauge Yang--Mills heat flow in $L^2 L^{2}$ 
to obtain 
\[
\partial_s A = 0
\]
Thus $A$ is a nontrivial, stationary $H^1_{loc}$ connection, which satisfies the harmonic Yang--Mills 
equation. \qedhere
\end{proof}

\subsection{The Threshold Theorem} \label{subsec:thr}
In this section, we prove the Threshold Theorem for the energy critical Yang--Mils heat flow in $\dot{H}^{1}$, whose precise statement is as follows:
\begin{theorem}[Threshold theorem] \label{thm:thr} Let $a$ be a connection
  1-form in $\dot{H}^{1}(\bbR^{4})$ with energy $\calE$, which is below twice the ground
  state energy $2 \Egs$.  Let $A$ be the solution to \eqref{eq:cYMHF}
  with initial data $A_{i}(s=0) = a_{i}$. Then $A$ extends globally in
  heat-time. Moreover, there exists a positive function $\hM(\calE)$
  such that
\begin{equation} \label{eq:thr-unif}
	\calQ(a) = \nrm{F}_{L^{3}([0, \infty); L^{3})} \leq \hM(\calE).
\end{equation}
\end{theorem}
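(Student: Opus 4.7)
The plan is to invoke the Dichotomy Theorem~\ref{thm:dich} and systematically rule out the bubbling alternative using the energy--topology information available below the threshold $2 \Egs$. By Theorem~\ref{thm:dich}, either $A$ is global with $\hM(a) < \infty$, or we can extract a nontrivial finite-energy harmonic Yang--Mills connection $Q$ as a parabolic rescaling limit of $A$. In the bubbling case, weak lower semicontinuity of the $L^{2}$ norm together with the monotonicity formula \eqref{eq:mono} yields $\spE[Q] \leq \spE[a] < 2 \Egs$. By Theorem~\ref{thm:gks-simple}, a non-instanton harmonic Yang--Mills connection has energy at least $16 \pi^{2} = 2 \Egs$, so the bubble $Q$ must in fact be an instanton, and Theorem~\ref{thm:ADHM} then forces $c_{2}(Q) \neq 0$.

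Next I exploit the topological triviality of $a \in \dot{H}^{1}(\bbR^{4})$. Since the Yang--Mills heat flow preserves the topological class, each $A(s)$ satisfies $c_{2}(A(s)) = 0$. At the bubbling time $s_{*}$ (possibly $+\infty$), I would refine the extraction in the proof of Theorem~\ref{thm:dich} to additionally produce a weak limit $A_{*}$ of $A(s_{n})$ along a suitable subsequence $s_{n} \to s_{*}$ in $H^{1}_{loc}(\bbR^{4} \setminus \set{x_{0}})$, corresponding to the residual obtained by subtracting off the bubble. A standard Struwe-type argument, using the local caloric gauge bounds and the vanishing of the tension field on the concentration intervals from Lemma~\ref{lem:select-int}, shows that $A_{*}$ is a finite-energy harmonic Yang--Mills connection away from $x_{0}$. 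Uhlenbeck's removable singularity theorem then extends $A_{*}$ to a connection on $\bbS^{4}$ with a well-defined $c_{2}(A_{*})$. Charge additivity in the bubbling limit via the Chern--Weyl formula \eqref{eq:chern-no}, combined with conservation $c_{2}(A(s_{n})) = 0$, forces $c_{2}(A_{*}) = - c_{2}(Q) \neq 0$. Thus $A_{*}$ is itself topologically nontrivial, so $\spE[A_{*}] \geq 8 \pi^{2} = \Egs$ by Theorem~\ref{thm:ADHM}. Energy additivity at the bubbling scale,
\begin{equation*}
\spE[A(s_{n})] \geq \spE[A_{*}] + \spE[Q] - o(1) \geq 2 \Egs - o(1),
\end{equation*}
combined with monotonicity \eqref{Lyapunov}, yields $\spE[a] \geq 2 \Egs$, contradicting the hypothesis.

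For the uniform bound $\hM(a) \leq \hM(\calE)$, I would argue by compactness and contradiction. Suppose one has a sequence $a^{n} \in \dot{H}^{1}$ with $\spE[a^{n}] \leq \calE < 2 \Egs$ and $\hM(a^{n}) \to \infty$. As in the proof of Theorem~\ref{thm:dich}, we apply Lemma~\ref{lem:select-int} to produce concentration intervals $I_{n}$ where $\nrm{F^{n}}_{L^{3}(I_{n}; L^{3})} = 1$ and $\nrm{(\covD[A^{n}])^{\ell} F^{n}_{\ell i}}_{L^{2}(I_{n}; L^{2})} \to 0$, then Lemma~\ref{lem:conc} and Lemma~\ref{lem:rad-gauge} to extract in the limit a nontrivial harmonic Yang--Mills bubble $Q$ of energy at most $\calE$. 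Since every $a^{n}$ is topologically trivial, the same topological argument as in the previous paragraph---now applied in a sequential sense rather than to a single flow---produces a residual $A_{*}$ with $c_{2}(A_{*}) = -c_{2}(Q) \neq 0$, hence the same contradiction $\calE \geq 2 \Egs$.

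The principal obstacle is making the topological decomposition $0 = c_{2}(a) = c_{2}(A_{*}) + c_{2}(Q)$ rigorous in the bubbling limit: one must show that the Chern--Weyl density $\tr(F \wedge F)$ splits cleanly, with its concentrated contribution at $x_{0}$ converging exactly to $8 \pi^{2} c_{2}(Q)$ under the parabolic rescaling and the nonconcentrated contribution converging to $8 \pi^{2} c_{2}(A_{*})$, with no residual defect at $x_{0}$. A cutoff-function argument around $x_{0}$, combined with the strong $H^{1}_{loc}$ convergence of the bubble in the good gauge of Lemma~\ref{lem:rad-gauge} and the $H^{1}_{loc}$ convergence of the residual away from $x_{0}$, should suffice; a secondary complication is the case $s_{*} = \infty$, where the residual and the bubble must be extracted along a suitably chosen diagonal sequence, but the structure of the argument is unchanged.
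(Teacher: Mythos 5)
Your overall strategy --- extract a bubble via the Dichotomy Theorem, note $\spE[Q]<2\Egs$ forces $Q$ to be topologically nontrivial, and then play this against the topological triviality of $\dot H^1$ connections --- is exactly the paper's strategy, and your compactness/contradiction setup for the uniform bound $\hM(\calE)$ (sequence $a^n$, Lemma~\ref{lem:select-int}, Lemmas~\ref{lem:conc} and \ref{lem:rad-gauge}) matches the paper's Step~1. The gap is in how you convert ``the bubble carries nonzero charge'' into the energy lower bound $2\Egs$. You propose to extract a residual harmonic Yang--Mills connection $A_*$ on $\bbR^4\setminus\{x_0\}$, remove the singularity, and prove the additivity identities $0=c_2(A_*)+c_2(Q)$ and $\spE[A(s_n)]\geq \spE[A_*]+\spE[Q]-o(1)$. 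The charge additivity step is not a ``cutoff-function argument'': the Chern--Weyl density $\brk{F\wedge F}$ is quadratic in $F$ and hence not weakly continuous, so under the weak $H^1_{loc}$ convergence you have, charge can be lost in the neck region between the bubble scale $r_n$ and the fixed scale, or escape to spatial infinity, without appearing in either $c_2(A_*)$ or $c_2(Q)$. In that scenario $A_*$ may well be flat, your identity $c_2(A_*)=-c_2(Q)$ fails, and the argument collapses. Ruling this out amounts to a full bubble-tree/no-neck analysis, which is substantially harder than the theorem itself and is nowhere available in the paper.

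The paper's actual proof (Step~2 of the proof of Theorem~\ref{thm:thr}) avoids the residual entirely, and it is worth seeing why. By Lemma~\ref{lem:top-triv}, $\boldsymbol{\chi}(\tilde A^n(s))=0$ \emph{for each finite $n$}, so the charge inside $B_R$ is exactly cancelled by the charge outside $B_R$ at every stage --- no limiting additivity is needed. The local $L^2$ convergence of $\tilde F^n$ identifies the interior charge as $\boldsymbol{\chi}(Q)+O(\eps)$, hence the exterior charge is $-\boldsymbol{\chi}(Q)+O(\eps)$; then the Bogomoln'yi bound of Lemma~\ref{lem:bog}, $\abs{\brk{f\wedge f}}\leq\tfrac12\brk{f,f}$, converts \emph{each} of these two charges into a lower bound on the energy of the corresponding region, giving $\calE\geq 2\abs{\boldsymbol{\chi}(Q)}-2\eps=2\spE[Q]-2\eps\geq 2\Egs-2\eps$ via Theorem~\ref{thm:gks}. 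In other words, the exterior region is never required to converge to anything; its topological charge alone, pinned down by the exact cancellation at finite $n$, already costs energy $\Egs$. If you want to salvage your approach, replace the residual extraction by this finite-$n$ cancellation plus the pointwise Bogomoln'yi inequality.
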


As a consequence of this result and Theorem~\ref{t:ymhf-l3}, we have global-in-time bounds for both the 
Yang--Mills heat flow in the local caloric gauge with subthreshold data, as well as its linearization.

Some preliminary discussion is in order. From the Dichotomy Theorem (Theorem~\ref{thm:dich}), if $a$ fails to exist globally or $\hM(a) = \infty$, then a nontrivial harmonic Yang--Mills connection $Q$ bubbles off. Since $\calE[Q] \geq \Egs$, this scenario is ruled out when the energy of $a$ is below $\Egs$. Theorem~\ref{thm:thr} differs from this naive result in two ways:
\begin{itemize}
\item The threshold energy is $2\Egs$, instead of $\Egs$. This refinement is achieved by taking into account the ``topological triviality'' of $\dot{H}^{1}$ connections, as well as ``topological nontriviality'' of any harmonic Yang--Mills connection $Q$ with $\spE[Q] < 2 \Egs$. 
\item Instead of just the qualitative statement $\calQ(a) < \infty$, a uniform a-priori bound $\hM(a) \leq \hM(\calE)$ for any data $a$ with energy $\leq \calE < 2 \Egs$ is claimed. For this purpose, we apply the argument in Section~\ref{subsec:dich} to a sequence of solutions; in contrast, it was applied to a single solution in the Dichotomy Theorem.
\end{itemize}

Associated to a connection $a$ on $\bbR^{4}$ with curvature $f \in L^{2}$, we introduce the \emph{characteristic number}
\begin{equation} \label{eq:ch}
\boldsymbol{\chi}(a) = \int_{\bbR^{4}} - \brk{f \wedge f} = \frac{1}{4} \int_{\bbR^{4}} - \brk{f_{ij}, f_{k \ell}} \, d x^{i} \wedge d x^{j} \wedge d x^{k} \wedge d x^{\ell}.
\end{equation}
Remarkably, the characteristic number $\boldsymbol{\chi}(a)$ vanishes for $a \in \dot{H}^{1}$. This is a manifestation of ``topological triviality'' of an $\dot{H}^{1}$ connection. 
\begin{lemma} \label{lem:top-triv}
If $a \in \dot{H}^{1}$ with curvature $2$-form $f$, then 
\begin{equation*}
	\boldsymbol{\chi}(a) = \int_{\bbR^{4}} \brk{f \wedge f} = 0.
\end{equation*}
\end{lemma}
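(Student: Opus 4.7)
The plan is to prove the identity in two stages: first for smooth, compactly supported connections via Stokes' theorem applied to the Chern--Simons 3-form, and then for general $a \in \dot H^1$ by density and continuity.

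\emph{Stage 1: Chern--Simons identity.} For a smooth connection $a$, define the Chern--Simons 3-form
\begin{equation*}
\omega(a) = \brk{a \wedge \ud a} + \tfrac{1}{3}\brk{a \wedge [a \wedge a]}.
\end{equation*}
A direct computation using $f = \ud a + \tfrac{1}{2}[a \wedge a]$, the Leibniz rule, and the bi-invariance identity $\brk{[X,Y], Z} = \brk{X, [Y,Z]}$ yields
\begin{equation*}
	\ud \omega(a) = \brk{f \wedge f}.
\end{equation*}
If $a \in C_c^\infty(\bbR^4; T^*\bbR^4 \otimes \g)$, then $\omega(a)$ is also compactly supported, so Stokes' theorem gives $\boldsymbol{\chi}(a) = -\int_{\bbR^4} \ud\omega(a) = 0$.

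\emph{Stage 2: Density and continuity.} The space $C_c^\infty(\bbR^4; T^*\bbR^4 \otimes \g)$ is dense in $\dot H^1(\bbR^4; T^*\bbR^4 \otimes \g)$ (this is standard in dimension $d \geq 3$, where no issue with constants arises). It therefore suffices to check that $\boldsymbol{\chi}$ is continuous on $\dot H^1$. Expanding,
\begin{equation*}
	\brk{f \wedge f} = \brk{\ud a \wedge \ud a} + \brk{\ud a \wedge [a\wedge a]} + \tfrac{1}{4}\brk{[a\wedge a] \wedge [a \wedge a]},
\end{equation*}
and by Sobolev embedding $\dot H^1(\bbR^4) \hookrightarrow L^4(\bbR^4)$ the three contributions are controlled respectively by $\nrm{a}_{\dot H^1}^2$, $\nrm{a}_{\dot H^1}^3$, and $\nrm{a}_{\dot H^1}^4$. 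Hence $\boldsymbol{\chi}: \dot H^1 \to \bbR$ is a continuous (polynomial) functional, and the conclusion follows from Stage~1 by passing to the limit.

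\emph{Main obstacle.} The only real work is the computational verification of the Chern--Simons identity $\ud \omega(a) = \brk{f \wedge f}$ in the non-abelian setting, where one must carefully track the Lie bracket terms using bi-invariance of $\brk{\cdot,\cdot}$. Everything else---density of $C_c^\infty$ in $\dot H^1(\bbR^4)$ and continuity of $\boldsymbol{\chi}$ via $L^4$ Sobolev embedding---is standard. Conceptually, this lemma reflects the fact that every $\dot H^1$ connection on $\bbR^4$ can be approximated by compactly supported ones, which necessarily lie in the trivial topological class on the one-point compactification $\bbS^4$; the vanishing of the second Chern number for the trivial bundle is precisely the content of $\boldsymbol{\chi}(a) = 0$.
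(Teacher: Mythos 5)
Your proof is correct, and its algebraic heart is the same as the paper's: both rest on the Chern--Simons identity, i.e. that $\brk{f \wedge f}$ is the exterior derivative of the $3$-form $\brk{a \wedge \ud a} + \tfrac13 \brk{a \wedge [a \wedge a]}$ (the paper's $Y$, up to sign and index conventions). Where you genuinely differ is in the handling of the rough case. The paper applies Stokes directly to the $\dot H^1$ connection on balls $B_R$ and kills the boundary contribution by taking $\liminf_{R \to \infty}$, using that $\abs{a}\abs{\rd a} + \abs{a}^3 \in L^1(\bbR^4)$ forces the flux through a sequence of large spheres to vanish; you instead prove the vanishing for smooth compactly supported connections, where there is no boundary term at all, and transfer it to $\dot H^1$ by density of $C^\infty_c$ together with continuity of $\boldsymbol{\chi}$ coming from $\dot H^1 \hookrightarrow L^4$ and multilinearity. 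Your route is a bit more robust analytically: Stokes is only ever invoked for smooth compactly supported forms, so you never need to justify the distributional exactness $\ud Y = -\brk{f \wedge f}$ or the existence of spherical traces for a merely $\dot H^1$ connection, at the (small) price of the standard density and continuity step, which you verify correctly. The only blemishes are convention-dependent signs and normalizations in the Chern--Simons form (e.g. the coefficient of the cubic term depends on whether one writes $f = \ud a + \tfrac12 [a \wedge a]$ or uses the component convention $f_{jk} = \rd_j a_k - \rd_k a_j + [a_j, a_k]$); these do not affect the conclusion, since the integral vanishes either way.
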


\begin{proof}
We give a direct computation. The $4$-form $-\brk{f \wedge f}$ is closed, and thus exact (since $\bbR^{4}$ is contractible). Indeed, if we introduce the $3$-form
\begin{equation*}
	Y = - \left( \brk{a_{j}, \rd_{k} a_{\ell}} + \frac{1}{3} \brk{a_{j}, [a_{k}, a_{\ell}]} \right) \, d x^{j} \wedge d x^{k} \wedge d x^{\ell}
\end{equation*}
then 
\begin{equation*}
	d Y = - \brk{f \wedge f}.
\end{equation*}
By Stokes's theorem, 
\begin{equation*}
	\boldsymbol{\chi}(a) = \liminf_{R \to \infty} \int_{\rd B_{R}(0)} Y = - \liminf_{R \to \infty} \int_{\rd B_{R}(0)} \left( \brk{a_{j}, \rd_{k} a_{\ell}} + \frac{1}{3} \brk{a_{j}, [a_{k}, a_{\ell}]} \right) \, d x^{j} \wedge d x^{k} \wedge d x^{\ell}.
\end{equation*}
But since $a \in \dot{H}^{1}$, the RHS vanishes. \qedhere
\end{proof}

\begin{remark}
In \cite{OTYM2.5}, where we define precisely the notion of topological classes of (possibly rough) finite energy connections, it is shown that (1) $a \in \dot{H}^{1}$ is equivalent to $a$ being in the same topological class as the trivial connection (hence ``topologically trivial''), and (2) $\boldsymbol{\chi}$ is a topological invariant. These facts lead to an alternative proof of the lemma.
\end{remark}

On the other hand, the integrand of \eqref{eq:ch} provides a pointwise lower bound on the energy density. This statement is often referred to as the \emph{Bogomoln'yi bound}.
\begin{lemma} \label{lem:bog}
We have the pointwise bound
\begin{equation} \label{eq:bog}
	\abs{\brk{f \wedge f}} \leq \frac{1}{2} \brk{f, f},
\end{equation}
where we use the standard inner product $(\cdot, \cdot)$ for $2$-forms, which makes $\set{d x^{j} \wedge d x^{k} : j < k}$ an orthonormal basis.
\end{lemma}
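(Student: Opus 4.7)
The statement is a pointwise algebraic inequality on $\g$-valued 2-forms at each point of $\bbR^{4}$, so the plan is to work at a single point and drop the spatial dependence. The natural tool for analyzing the 4-form $\brk{f \wedge f}$ on $\bbR^{4}$ is the self-dual/anti-self-dual decomposition of 2-forms, which I will lift to the $\g$-valued setting.

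First I would fix an orthonormal basis $\set{e_{a}}$ of $\g$ with respect to $\brk{\cdot, \cdot}$, write $f = f^{a} \otimes e_{a}$ with each $f^{a}$ an ordinary real 2-form, and decompose $f^{a} = f^{a, +} + f^{a, -}$ into its self-dual and anti-self-dual parts under the Hodge star $*$. Since $*$ acts as an isometry with eigenvalues $\pm 1$ on $\Lmb^{2}(\bbR^{4})$, these two pieces are orthogonal in $(\cdot, \cdot)$ and satisfy $\abs{f^{a}}^{2} = \abs{f^{a, +}}^{2} + \abs{f^{a, -}}^{2}$.

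Next I would use the basic identity $\alpha \wedge \beta = (\alpha, * \beta) \, dvol$ for real 2-forms on $\bbR^{4}$ to rewrite
\begin{equation*}
\brk{f \wedge f} = \sum_{a} f^{a} \wedge f^{a} = \sum_{a} (f^{a}, * f^{a}) \, dvol = \sum_{a} \bigl( \abs{f^{a, +}}^{2} - \abs{f^{a, -}}^{2} \bigr) \, dvol,
\end{equation*}
where the cross contributions $(f^{a, +}, * f^{a, -}) + (f^{a, -}, * f^{a, +})$ vanish because $f^{a, +}$ and $f^{a, -}$ are orthogonal in $(\cdot, \cdot)$. The triangle inequality then gives
\begin{equation*}
\abs{\brk{f \wedge f}} \leq \sum_{a} \bigl( \abs{f^{a, +}}^{2} + \abs{f^{a, -}}^{2} \bigr) \, dvol = \sum_{a} \abs{f^{a}}^{2} \, dvol = \sum_{a} \sum_{j < k} \abs{f^{a}_{jk}}^{2} \, dvol.
\end{equation*}

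The only real subtlety is conventional bookkeeping: in the notation set in \eqref{Le}, the symbol $\brk{f, f}$ denotes $\brk{f_{jk}, f^{jk}}$ with all index pairs summed, so it equals $2 \sum_{a} \sum_{j<k} \abs{f^{a}_{jk}}^{2}$. The right-hand side displayed above is therefore exactly $\tfrac{1}{2} \brk{f, f}$, yielding \eqref{eq:bog}. I do not anticipate any genuine obstacle; the entire argument is pointwise linear algebra, and the factor of $\tfrac{1}{2}$ is simply the normalization distinguishing a sum over ordered versus unordered index pairs.
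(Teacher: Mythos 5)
Your proof is correct and follows essentially the same route as the paper: the paper's manipulation $\brk{f,f} = \star\brk{(f\pm\star f)\wedge\star(f\pm\star f)} \pm 2\star\brk{f\wedge f}$, with the first term nonnegative, is exactly the self-dual/anti-self-dual decomposition you carry out explicitly, combined with the same identity $(\omg,\eta)\, dx^{1}\wedge\cdots\wedge dx^{4} = \omg\wedge\star\eta$. Your normalization remark is also the intended reading: in \eqref{eq:bog} the quantity $\brk{f,f}$ is the full contraction $\brk{f_{jk},f^{jk}}$ (twice the sum over $j<k$), which is the convention under which the bound is saturated by (anti-)self-dual fields and which the paper's own first identity $\brk{f,f}=\star(2\brk{f\wedge\star f})$ presupposes.
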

\begin{proof}
We use the Hodge star operator $\star$, which has the property
\begin{equation*}
	(\omg, \eta) d x^{1} \wedge \cdots \wedge d x^{4} = \omg \wedge \star \eta \qquad \hbox{ for $2$-forms } \omg, \eta,
\end{equation*}
as well as $\star 1 = d x^{1} \wedge \cdots \wedge d x^{4}$ and $\star d x^{1} \wedge \cdots \wedge d x^{4} = 1$.
Then
\begin{align*}
	\brk{f, f} 
	=& \star (2 \brk{f \wedge \star f}) \\
	=& \star \brk{(f \pm \star f) \wedge \star (f \pm \star f)} \pm 2 \star \brk{f \wedge f} \\
	=& \brk{(f \pm \star f) \wedge (f \pm \star f)} \pm 2 \star \brk{f \wedge f}.
\end{align*}
Since the first term on the last line is nonnegative, \eqref{eq:bog} follows. \qedhere
\end{proof}

Next, we need the fact that any nontrivial harmonic Yang--Mills connection with energy below $2\Egs$ is ``topologically nontrivial'', in the sense that $\abs{\boldsymbol{\chi}} = \Egs$. Indeed, we have:
\begin{theorem} \label{thm:gks}
Let $\G$ be a noncommutative compact Lie group. Let
\begin{equation*}
	\Egs = \inf \set{\spE[Q] : \hbox{$Q$ is a nontrivial harmonic Yang--Mills connection on $\bbR^{4}$}}.
\end{equation*}
Then the following statements hold.
\begin{enumerate}
\item There exists a nontrivial harmonic Yang--Mills connection $Q$ such that $\spE[Q] = \Egs < \infty$.
\item Let $Q$ be any nontrivial harmonic Yang--Mills connection. Then either $\spE[Q] \geq 2 \Egs$, or 
\begin{equation*}
	\abs{\boldsymbol{\chi}(Q)} = \spE[Q] \geq \Egs.
\end{equation*}
\end{enumerate}
\end{theorem}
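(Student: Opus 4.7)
The plan is to prove the two parts of Theorem~\ref{thm:gks} by combining the compactness theory of Section~\ref{subsec:har-ym} with a careful analysis based on the self-dual/anti-self-dual decomposition of the curvature.

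\textbf{Part (1).} I would proceed by direct minimization. First, $\Egs < \infty$: by Theorem~\ref{thm:ADHM} there are $SU(2)$-instantons on $\bbR^{4}$ with finite energy $8\pi^{2}$, and for any noncommutative compact $\G$ an embedding $su(2) \hookrightarrow \g$ (which exists by the Cartan--Weyl structure theory) pushes such an instanton forward to a nontrivial harmonic Yang--Mills connection with finite energy. Take a minimizing sequence $Q^{n}$ of nontrivial harmonic Yang--Mills connections with $\spE[Q^{n}] \to \Egs$ and apply Theorem~\ref{thm:weak-conv}. In the strong-convergence case, the limit $Q$ is a harmonic Yang--Mills connection of energy $\Egs$; one rules out $Q = 0$ using the small-energy part of Theorem~\ref{thm:weak-conv}, which would force the nontrivial $Q^{n}$'s into the zero connection for $n$ large. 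In the bubbling case, each bubble is itself a nontrivial harmonic Yang--Mills connection whose energy is at most $\liminf \spE[Q^{n}] = \Egs$, so the bubble itself attains the infimum.

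\textbf{Part (2).} I would argue by contrapositive: if $Q$ is a nontrivial harmonic Yang--Mills connection with $\spE[Q] < 2\Egs$, then $Q$ must be (anti-)self-dual, in which case Lemma~\ref{lem:bog} gives $\abs{\boldsymbol{\chi}(Q)} = \spE[Q] \geq \Egs$ by definition of $\Egs$. First I would lift $Q$ to a harmonic Yang--Mills connection on a principal $\G$-bundle over $S^{4}$ using conformal invariance in dimension $4$ and Uhlenbeck's removable singularity theorem \cite{U2}. Decomposing $F = F^{+} + F^{-}$ with $\star F^{\pm} = \pm F^{\pm}$, one has (in appropriate normalizations) $\spE[Q] = \int (\abs{F^{+}}^{2} + \abs{F^{-}}^{2}) \, dvol$ and $\abs{\boldsymbol{\chi}(Q)} = \abs{\int (\abs{F^{+}}^{2} - \abs{F^{-}}^{2}) \, dvol}$, so equality in Bogomoln'yi corresponds exactly to one of $F^{\pm}$ vanishing.

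\textbf{The main obstacle} is the quantitative dichotomy step: if both $F^{+}$ and $F^{-}$ are nontrivial, then each self-duality component is forced to contribute at least $\Egs$ of energy, yielding $\spE[Q] \geq 2\Egs$. The idea, following \cite{GKS}, is to combine an instability/separation argument with Uhlenbeck compactness. Since $Q$ fails to be (anti-)self-dual, both $F^{+}$ and $F^{-}$ are nonzero, and one constructs a minimizing sequence (for instance, running a gradient-type flow adapted to the decomposition, or a suitable constrained minimization) that decouples the self-dual and anti-self-dual content. Applying Theorem~\ref{thm:weak-conv} to this sequence yields at least two distinct bubbles (one associated with the self-dual and one with the anti-self-dual part), each a nontrivial harmonic Yang--Mills connection, and conservation of energy along the limiting process gives $\spE[Q] \geq 2\Egs$. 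Making this decoupling precise---ensuring that two genuinely nontrivial bubbles, rather than a single one, separate---is the main technical content of the theorem and the crux of the \cite{GKS} argument.
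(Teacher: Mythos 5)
Your part~(2) has a genuine gap at exactly the point that constitutes the whole content of the theorem. The implication ``$Q$ neither self-dual nor anti-self-dual $\Rightarrow \spE[Q] \geq 2\Egs$'' is not proved in your proposal: you sketch a ``decoupling'' of the self-dual and anti-self-dual content by a gradient-type flow or constrained minimization, extraction of two bubbles via Theorem~\ref{thm:weak-conv}, and then concede that making this precise ``is the main technical content of the theorem.'' That concession is the theorem. Moreover, the sketched route is not viable as described and is not how \cite{GKS} argues: $Q$ is a single fixed solution, Uhlenbeck compactness produces bubbles only along sequences whose energy concentrates, and nothing in your construction forces concentration at all, let alone at two distinct points with one nontrivial bubble carrying each duality component of energy $\geq \Egs$ (a non-self-dual Yang--Mills connection need not resemble a superposition of an instanton and an anti-instanton). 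The actual result of Gursky--Kelleher--Streets is a conformally invariant gap theorem on $\bbS^{4}$, proved by sharp functional inequalities that force a definite lower bound on the anti-self-dual energy $\nrm{F^{-}}_{L^{2}}^{2}$ unless $F^{-}=0$; the paper does not reprove it, but quotes \cite[Corollary~1.3]{GKS} for $\G = SU(2)$ and defers the reduction of a general noncommutative compact $\G$ (which requires the structure theory relating $\Egs$, $\boldsymbol{\chi}$ and the embedded copies of $su(2)$, and is where the threshold $2\Egs$ for general $\g$ is actually justified) to \cite[Section~6]{OTYM2.5}. Your proposal does not address the general-group case of part~(2) at all, so even granting GKS for $SU(2)$ the claimed statement is not reached.

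There is also a smaller flaw in part~(1): to rule out a trivial limit you invoke the small-energy clause of Theorem~\ref{thm:weak-conv}, but that clause says the \emph{sequence converges} to a flat connection up to gauge, not that its members are flat, so it cannot ``force the nontrivial $Q^{n}$'s into the zero connection.'' Since the convergence in the dichotomy is only $H^{1}_{loc}$ and the problem has noncompact translation and scaling symmetries, a minimizing sequence can perfectly well converge strongly to a flat connection with all of its energy escaping to spatial infinity. One needs a normalization step (translate and rescale each $Q^{n}$ so that a fixed amount of energy, below the small-energy threshold, is concentrated in the unit ball) or an appeal to the energy gap for nontrivial harmonic Yang--Mills connections before the dichotomy is applied; with that repair, your part~(1) — finiteness of $\Egs$ via embedded ADHM instantons plus compactness/bubbling — does follow the route the paper intends. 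The Bogomoln'yi step identifying $\abs{\boldsymbol{\chi}(Q)} = \spE[Q]$ for (anti-)self-dual $Q$ is fine.
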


This theorem is a combination of well-known facts concerning
instantons (i.e., energy minimizers in a topological class) and a
recent lower bound on non-instanton harmonic Yang--Mills connections
by Gursky--Kelleher--Streets \cite{GKS}. For instance, consider the
case $\G = SU(2)$ and $\brk{A, B} = - tr(AB)$, where
$\boldsymbol{\chi}(a) = 8 \pi^{2} c_{2}$ (here, $c_{2}$ is the second
Chern number computed from the connection $a$). Then part (1) is the
classical existence of BPST instantons, and part (2) follows from
\cite[Corollary~1.3]{GKS}. For the proof of Theorem~\ref{thm:gks} in
the general case of a noncommutative compact Lie group $\G$, we refer
the reader to \cite[Section~6]{OTYM2.5}.

We are now ready to prove Theorem~\ref{thm:thr}.
\begin{proof}[Proof of Theorem~\ref{thm:thr}]
We divide the proof into two steps.
\pfstep{Step~1: Contradiction argument and extraction of a bubble}
Fix a positive number $\calE < 2 \Egs$. Suppose, for the purpose of contradiction, that there does \emph{not} exist $\hM(\calE) > 0$ such that \eqref{eq:thr-unif} holds for every $a \in \dot{H}^{1}$ with $\spE[a] \leq \calE$. Then there exists a sequence $a^{n} \in \dot{H}^{1}$ of initial data, such that the corresponding solutions on the maximal time interval of existence $J_{n}$ obey
\begin{equation*}
	\nrm{F^{n}}_{L^{3}(J_{n}; L^{3})} \to \infty.
\end{equation*}
Applying Lemma~\ref{lem:select-int}, we may find a sequence of solutions $A^{n}$ on $\bbR^{4} \times I_{n}$ satisfying \eqref{eq:select-int}. Proceeding as in Steps~2 and 3 in the proof of the Dichotomy Theorem, we find a sequence $(x_{n}, s_{n}, r_{n}, O^{n})$ such that (after passing to a subsequence)
\begin{align}
	\lim_{n \to \infty} \tilde{A}^{n}(x, s) = Q(x) \quad \hbox{ in } L^{2}_{loc}(\bbR^{4} \times [-1, 0]), \\
	\lim_{n \to \infty} \tilde{F}^{n}(x, s) = F[Q](x) \quad \hbox{ in } L^{2}_{loc}(\bbR^{4} \times [-1, 0]), \label{eq:F-L2-loc-conv}
\end{align}
where 
\begin{align*}
	\tilde{A}^{n}(x, s) =& \calG(O^{n}) (r_{n} A^{n})(x_{n} + r_{n} x, s_{n} + r_{n}^{2} s), \\
	\tilde{F}^{n}(x, s) =& F[\tilde{A}^{n}](x, s) = Ad(O^{n}) (r_{n}^{2} F^{n})(x_{n} + r_{n} x, s_{n} + r_{n}^{2} s),
\end{align*}
and $Q$ is a nontrivial $H^{1}_{loc}$ harmonic Yang--Mills connection.

\pfstep{Step~2: Refinement using $\boldsymbol{\chi}$}
 By the local $L^{2}$ convergence \eqref{eq:F-L2-loc-conv}, it follows  that
\begin{equation*}
	\spE[Q] \leq \calE < 2 \Egs.
\end{equation*}
Thus, by Theorem~\ref{thm:gks}, $\abs{\boldsymbol{\chi}(Q)} = \spE[Q]$. Without loss of generality, assume that $\boldsymbol{\chi}(Q) > 0$.

Fix any $R > 0$. By Lemma~\ref{lem:top-triv}, we have
\begin{align*}
	0 = \boldsymbol{\chi}(\tilde{A}^{n}(s)) =& \int_{B_{R}} - \brk{\tilde{F}^{n} \wedge \tilde{F}^{n}}(s) + \int_{\bbR^{4} \setminus B_{R}} - \brk{\tilde{F}^{n} \wedge \tilde{F}^{n}}(s).
\end{align*}
Thus, again by the local $L^{2}$ convergence \eqref{eq:F-L2-loc-conv}, we have
\begin{equation*}
	\lim_{n \to \infty} \int_{\bbR^{4} \setminus B_{R}} \brk{\tilde{F}^{n} \wedge \tilde{F}^{n}}(s) = \int_{B_{R}} - \brk{F[Q] \wedge F{Q]}}
\end{equation*}
after passing to a subsequence, for almost every $s \in (-1, 0)$. Given any $\eps > 0$, by choosing $R$ sufficiently large, the RHS can be made equal to $\boldsymbol{\chi}(Q)$ up to an error of size as most $\eps$. Hence, by Lemma~\ref{lem:bog},
\begin{align*}
	\calE \geq & \limsup_{n \to \infty} \frac{1}{2} \int_{\bbR^{4}} \brk{\tilde{F}^{n}, \tilde{F}^{n}}(s) \\
	= & \limsup_{n \to \infty} \frac{1}{2} \int_{\bbR^{4} \setminus B_{R}} \brk{\tilde{F}^{n}, \tilde{F}^{n}}(s) + \frac{1}{2} \int_{B_{R}} \brk{\tilde{F}^{n}, \tilde{F}^{n}}(s) \\
	\geq & \limsup_{n \to \infty} \abs{\int_{\bbR^{4} \setminus B_{R}} \brk{\tilde{F}^{n} \wedge \tilde{F}^{n}}(s)} + \abs{\int_{B_{R}} \brk{\tilde{F}^{n} \wedge \tilde{F}^{n}}(s)} \\
	 \geq & 2 \boldsymbol{\chi}(Q) - 2 \eps = 2 \Egs - 2 \eps.
\end{align*}
Since $\eps > 0$ is arbitrary, we arrive at $\calE \geq 2 \Egs$, which is a contradiction.  \qedhere

\end{proof}
\section{ The caloric gauge}\label{s:caloric}

\subsection{Caloric connections and the projection map}

The results in Section~\ref{s:caloric-loc} show that for 
connection $a \in \dot{H}^{1}$ with $\hM(a) < \infty$, its Yang--Mills heat flow $A$ converges at infinity to a flat connection $A_\infty$
which has a $C^1$ dependence on $a$ in the topologies $\dot{H}^{1}$, $\bfH$ and $\dot{H}^{1} \cap \dot{H}^{\sgm}$ $(\sgm > 1)$. Moreover, in Section~\ref{s:thr}, we showed that for connections $a \in \dot{H}^{1}$ with $\spE[a] < 2 \Egs$, its Yang--Mills heat flow in the local caloric gauge is globally well-posed in the above topologies, and $\hM(a) \leq \hM(\calE)$ for some positive function $\hM(\calE)$. 

We are now ready to take advantage of these properties in order to
formulate precisely the (global) \emph{caloric} gauge. We first define caloric gauge connections.

\begin{definition}
A  $\dot H^1$ connection $a$ in $\R^4$ with $\hM(a) < \infty$ is a \emph{caloric gauge connection} if 
the corresponding Yang--Mills heat flow $A$ in the local caloric gauge satisfies $A(\infty) = 0$.
\end{definition}

 We immediately have the following:

\begin{proposition} \label{p:cal-a} 
 For each $\dot H^1$ connection $\ta$ in $\R^4$ with $\hM(a) < \infty$, there
  exists an unique (up to constant gauge transformations) gauge-equivalent connection
  $a$, which is a caloric gauge connection.  Further, the map
$\ta \to a$ is continuous in the quotient topology defined by the distance
\[
d(a_1,a_2) = \inf_{O \in \G} \| Oa_1O^{-1} - a_2\|_{\dot H^1}
\]
and $C^1$ in $\bfH$
(for a suitable choice of the associated constant gauge transformation).
\end{proposition}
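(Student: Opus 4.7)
The plan addresses existence, uniqueness (up to constant gauge), and regularity of the map. Given $\ta \in \dot H^1$ with $\hM(\ta) < \infty$, I run the Yang--Mills heat flow in the local caloric gauge with data $\ta$ (Theorem~\ref{t:hf-loc}), obtaining $\tA$ with limit $\ta_\infty \in \dot H^1$ of zero curvature (Corollary~\ref{c:global}). Since $\bbR^4$ is simply connected, I integrate the flat $\ta_\infty$ to a $\G$-valued map $O$ satisfying $O^{-1} \rd_j O = (\ta_\infty)_j$, e.g.\ by solving the radial ODE $\rd_r O = O \, \ta_\infty(\rd_r)$ with $O(0) = Id$ and checking path-independence via flatness; the $\dot H^1$ bound on $\ta_\infty$ yields $O_{;x} = \rd O \cdot O^{-1} \in \dot H^1$. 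Setting $a := \calG(O) \ta$, gauge covariance of \eqref{eq:cYMHF} together with the local caloric condition implies the heat flow of $a$ is $\calG(O) \tA$, converging to $\calG(O) \ta_\infty = 0$; hence $a$ is caloric. Uniqueness up to constants follows because if $O_1, O_2$ both integrate $\ta_\infty$, then $U := O_2 O_1^{-1}$ satisfies $\rd U = 0$ by direct differentiation, so $U \in \G$ is constant, and $\calG(O_2) \ta$ and $\calG(O_1) \ta$ differ by a constant conjugation.

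For continuity in the $\dot H^1$ quotient topology, I compose three continuous operations: the map $\ta \mapsto \ta_\infty$ from Corollary~\ref{c:global}; the Lipschitz reconstruction $\ta_\infty \mapsto [O]$ into $\G$-valued gauge transformations modulo constants (obtained from the radial ODE construction plus Moser-type estimates in $\dot H^1$); and the continuous conjugation $(O, \ta) \mapsto \calG(O) \ta$ from $\dot H^2 \times \dot H^1$ to $\dot H^1$. Since the target is the $\dot H^1$ quotient under constant conjugations, the constant ambiguity in $O$ is absorbed automatically.

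For $C^1$ dependence in $\bfH$, the crucial point is that $\ta \in \bfH$ implies $\ta_\infty \in \bfH$ by the $\bfH$ version of Corollary~\ref{c:global}, and the $\ell^1 L^2$ endpoint control on $\rd^\ell (\ta_\infty)_\ell$ forces $O \in C^0(\bbR^4)$. I then pin down the constant ambiguity by normalizing $\lim_{x \to \infty} O(x) = Id$, producing a canonical $O \in \dot H^2 \cap C^0$. The composition $\ta \mapsto \ta_\infty \mapsto O \mapsto a$ is then $C^1$: the first arrow by the $\bfH$ version of Corollary~\ref{c:global}, the second by the implicit function theorem applied to the Maurer--Cartan reconstruction on the space of pointed gauge transformations, and the third (conjugation) is smooth once the base point of $O$ is fixed.

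The main obstacle is the $C^1$ regularity of the Maurer--Cartan reconstruction $\ta_\infty \mapsto O$ at the correct topology. The $\dot H^1$ control alone does not produce pointwise values of $O$ at infinity, so one cannot canonically remove the constant ambiguity; the $\ell^1 L^2$ divergence control provided by $\bfH$ is precisely what I need to upgrade to $C^0$ convergence of $O$ at infinity and to obtain quantitative Lipschitz/$C^1$ dependence uniformly on bounded subsets of $\bfH$. This is what dictates restricting the $C^1$ statement to $\bfH$ rather than $\dot H^1$.
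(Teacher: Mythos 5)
Your overall outline (run the heat flow, integrate the flat limit $\ta_\infty$ to a gauge transformation $O$, conjugate, and note the constant ambiguity) matches the paper's strategy, but the step where you construct $O$ has a genuine gap at the stated regularity. For $\ta \in \dot H^1$ only, the limit $\ta_\infty$ is merely $\dot H^1$: it has no trace on rays (restriction to a line in $\bbR^4$ is codimension $3$, which requires more than $H^{3/2}$), so the radial ODE $\rd_r O = O\,\ta_\infty(\rd_r)$ with $O(0)=Id$ is not solvable as stated, and ``checking path-independence via flatness'' is not available either, since the curvature is only an $L^2$/distributional object and cannot be restricted to the $2$-dimensional homotopy surfaces one would use in Stokes' theorem. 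The paper is explicit that the ODE integration is rigorous only for regular $a_\infty$ (e.g. $\dot H^1\cap\dot H^3$) and handles the general case by smoothing the data ($a^n_\infty$ from $P_{<n}\ta$) and passing to the limit in the gauges $O^n$. That limit is exactly where the work lies: in the $\dot H^1$-only scenario the $O^n$ (normalized at a point) need \emph{not} converge; one only controls $[O^n(O^m)^{-1}]_{;x}$ in $\dot H^1$, and convergence is recovered only after renormalizing $O^n$ by constants $P^n\in\G$, constructed via averages on cubes, Poincar\'e's inequality, nearest-point projection onto $\G$, an approximate cocycle condition, and a diagonal argument. Your ``Lipschitz reconstruction $\ta_\infty\mapsto[O]$ modulo constants \ldots plus Moser-type estimates'' asserts precisely this missing step without an argument (and claims Lipschitz dependence where the paper only obtains, and only claims, continuity in the quotient metric). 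The same gap infects your continuity argument, since it is built on that reconstruction map.

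The $C^1$-in-$\bfH$ part of your plan is much closer to the paper: you correctly identify that $\ta\in\bfH$ upgrades $\ta_\infty$ (via the divergence bound and the flatness relation for the curl, i.e. an elliptic argument giving $\ta_\infty\in\ell^1\dot H^1$), which yields uniform convergence of the approximating gauges and a canonical normalization $O(\infty)=Id$. Where the paper differentiates the Maurer--Cartan relation in the parameter, obtains the covariant elliptic equation $\Delta_{a^h_\infty}(O^{-1}\rd_h O)=\mathrm{div}_{a^h_\infty}\rd_h a^h_\infty\in\ell^1 L^2$ and invokes Theorem~\ref{t:deltaA}, you invoke an implicit function theorem for the ``pointed'' reconstruction; that is plausible but would in any case reduce to the same elliptic solvability, so it should be spelled out rather than cited as IFT. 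To repair the proposal, replace the radial ODE by the paper's approximation scheme and supply the constant-renormalization argument (or an equivalent construction of a flat-connection trivialization at $\dot H^1$ regularity); without it, existence of the caloric representative and continuity of $\ta\mapsto a$ in the quotient topology are not established.
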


In the sequel, we will denote the gauge-equivalent caloric gauge connection $a$ given by this 
proposition by 
\[
a = \Cal (\ta).
\]

\begin{proof}
  If $\ta$ is a $\dot{H}^{1}$ connection with $\hM(\ta) < \infty$, 
  then the corresponding solution to the Yang--Mills heat flow in the local caloric gauge is global (by definition) and has a limit $a_\infty$ as $s$ approaches
  infinity (Corollary~\ref{c:global}). Furthermore, $a_\infty$ has zero curvature. Then we need to  find $O$ so that
\[
O^{-1} \rd_{j} O = a_{\infty, j}.
\]
Then the connection $a$ is given by 
\[
a_j = O \ta_j O^{-1} -O_{;j}.
\]

Here we interpret this as a system of ODE's.  Formally, the zero curvature condition is viewed
as a complete integrability condition for this system. This is rigorous 
if $a_\infty$ is more regular, e.g. if $a_\infty \in \dot H^1 \cap \dot H^3$,
in which case we can initialize $O$ by\footnote{In case (i) below another natural normalization is to set $O(\infty) = Id$.} $O(0) = Id$ .

For less regular $a_\infty$, we consider a sequence of regular
approximations $a^n_\infty$, which are obtained simply by localizing
the initial data $\ta$ below frequency $2^n$.  We note that $n$ must be
sufficiently large, in order to insure that the truncated connections also obey $\hM(\ta^{n}) \leq 2 \hM(\ta) < \infty$.  
This leads to a corresponding sequence
$O^n$ of regular gauge transformations.  For the sequence $O^n$, we have
\begin{equation*}
	[O^{n} (O^{m})^{-1}]_{;x} = Ad(O^{n}) (a^{n}_{\infty} - a^{m}_{\infty}).
\end{equation*}
Hence, an easy computation shows that
\[
\|  [O^n (O^m)^{-1}]_{;x}\|_{\dot H^1} \lesssim \|a^n_\infty - a^m_\infty\|_{\dot H^1} ,
\]
but for the pointwise bound we only have
\[
\|  [O^n (O^m)^{-1}]\|_{L^\infty} \lesssim \|a^n_\infty - a^m_\infty\|_{\ell^1 \dot H^1} .
\]
This is proved in a standard manner as in the proof of the Gagliardo--Nirenberg--Sobolev,
by showing first that we have the averaged bound
\begin{align*}
	R^{-4} \int_{B_{R}(x)} d(O^{n}(O^{m})^{-1}(x), O^{n} (O^{m})^{-1}(y)) \, d y
	\aleq & \int_{B_{R}(x)} \frac{1}{\abs{x-y}^{3}} \abs{[O^{n} (O^{m})^{-1}]_{;x}(y)} \, d y \\
	\aleq & \int_{\bbR^{4}} \frac{1}{|x-y|^3} | a^n_\infty(y) - a^m_\infty(y)| dy,
\end{align*}
then observing that the RHS is bounded by $\nrm{a^{n}_{\infty} - a^{m}_{\infty}}_{\ell^{1} \dot{H}^{1}}$. The pointwise bound follows by taking $R \to 0$ and using the Lebesgue differentiation theorem.

Thus it is natural to distinguish two scenarios:

\pfstep{Scenario (i)}
If $\ta \in \bfH$ then $a_\infty \in \ell^1 \dot H^1$. Indeed, $\rd^{\ell} a_{\infty, \ell} \in \ell^{1} L^{2}$ and $\rd_{j} a_{\infty, k} - \rd_{k} a_{\infty, j} = - [a_{\infty, j}, a_{\infty, k}]$, so that
\begin{equation*}
	\lap_{a_{\infty}} a_{\infty, j} = \rd_{k} \rd^{\ell} a_{\infty, \ell} + [a_{\infty}^{\ell}, \rd_{k} a_{\infty, \ell}].
\end{equation*}
Since the RHS belongs to $\ell^{1} \dot{H}^{-1}$, it follows that $a_{\infty} \in \ell^{1} \dot{H}^{1}$ by Theorem~\ref{t:deltaA} (plus a simple interpolation).

It follows that the sequence $O^n$ converges in $L^\infty$, and thus the limit
\[
O = \lim_{n \to \infty} O^n
\]
exists. To establish $C^1$ dependence on the data, we consider a smooth 
one parameter family $\ta^h$ of initial data in $\bfH$, which in turn generates a smooth  
one parameter family $a^h_\infty$ in $\ell^1 \dot H^1$. To see how $O$ depends on $h$ 
we compute
\[
\rd_{x} (O^{-1} \rd_{h} O) = \rd_{h} a_{\infty}^{h} - [a_{\infty}^{h}, O^{-1} \rd_{h} O],
\]
or equivalently, using the $ a^h_\infty$ connection $\covD = \covD[a^{h}_{\infty}]$,
\[
\covD_{x}  (O^{-1} \rd_{h} O)= \partial_h a^h_\infty  \in \ell^1 \dot H^1.
\]
Taking the divergence this yields the elliptic equation
\begin{equation}\label{delta-Oh}
\Delta_{a^h_\infty} (O^{-1} \rd_{h} O)=  \mathrm{div}_{a_{\infty}^{h}} \partial_h a^h_\infty \in \ell^1 L^2,
\end{equation}
and thus, by Theorem~\ref{t:deltaA} and Corollary~\ref{c:global}, 
\[
\|  O^{-1} \rd_{h} O \|_{\ell^1 \dot H^2} \lesssim \| \partial_h a_{\infty}^h\|_{\ell^{1} \dot{H}^{1}} \aleq \nrm{\rd_{h} \ta^{h}}_{\bfH}.
\]
This is the only step in the argument where the extra $\ell^1$ summability is used.

Now we are ready to establish the $C^1$ dependence of $a = \Cal(\ta)$ on $\ta$.
Indeed, we have 
\begin{align*}
\rd_{h} a
= \rd_{h} (Ad(O) (\ta - O^{-1} \rd_{x} O))
= Ad(O)(\rd_{h} (\ta - a_{\infty}) + [O^{-1} \rd_{h} O, \ta - a_{\infty}]).
\end{align*}
The $L^4$ bound
\[
\| \partial_h a\|_{L^4} \lesssim \| \partial_h \ta\|_{\dot H^1}
\]
follows by the unitarity of $Ad(O)$, and the $\dot H^1$ bound
\[
\| \partial_h a\|_{\dot H^1} \lesssim \| \partial_h \ta\|_{\dot H^1}
\]
is easily obtained after one additional differentiation.

It remains to establish the $\ell^1 L^2$ bound for $\text{div} \partial_h a$. After 
peeling off some good terms, this reduces to the mapping property
\[
\ell^1 L^2 \ni b \to Ob O^{-1} \in \ell^1 L^2.
\]
This follows by interpolation from the easier $\dot H^1$ similar mapping property
and its dual $\dot H^{-1}$ bound.

\pfstep{Scenario (ii)} 
If $\ta \in \dot H^1$ then $a_\infty \in \dot H^1$ and the sequence 
$O^n$ is no longer guaranteed to converge pointwise.
However, we still have
\begin{equation*}
	\nrm{[O^{n} (O^{m})^{-1}]_{;x}}_{\dot{H}^{1}}^{2} 
	\aleq \nrm{a_{\infty}^{n} - a_{\infty}^{m}}_{\dot{H}^{1}} \aleq \sum_{k = n}^{m} c_{k}^{2},
\end{equation*}
where $c_{k}$ is a $(-1, 1)$ frequency envelope for $\ta$ in $\dot{H}^{1}$. We claim that there exists
a sequence $P^{n} \in \G$ such that $\tilde{O}^{n} = P^{n} O^{n}$ satisfy
\[
\int_Q d(\tilde O^n, \tilde O^m)^2 dx   \lesssim_{Q} \sum_{k=\min\set{n, m}}^{\infty} c_k^2 \to 0 \quad \hbox{ as } n, m \to \infty
\]
for any cube $Q$. Assuming this claim, we obtain a limit 
\[
O = \lim_{n \to \infty} \tilde{O}^{n} \qquad \text{ in } L^2_{loc}
\]
so that $O_{;x} \in \dot H^1$ and $O^{-1} \rd_{x} O = a_\infty$.  By Lebesgue's
dominated convergence theorem this suffices in order to guarantee that
the limit 
\[
 a = \lim_{n \to \infty} a^n
\]
exists in $\dot H^1$, as desired.

It remains to verify the claim. For this purpose, we take the extrinsic viewpoint by fixing a matrix group representation $\G \hookrightarrow O(N) \subseteq \bbR^{N \times N}$ (which exists since $\G$ is compact) and viewing each $O^{n}$ as a $\bbR^{N \times N}$-valued map. Fix a cube $Q$, and consider the average
\begin{equation*}
	\tilde{O}^{nm} = \frac{1}{\abs{Q}} \int_{Q} O^{n} (O^{m})^{-1} \, d x .
\end{equation*}
By Poincar\'e's inequality, it follows that
\begin{align*}
	d(\G, \tilde{O}^{nm})^{2}
	\aleq & \frac{1}{\abs{Q}} \int \abs{O^{n} (O^{m})^{-1} - \tilde{O}^{nm}}^{2} \, d x \\
	\aleq & \left( \int_{Q} \abs{[O^{n} (O^{m})^{-1}]_{;x}}^{4} \, d x \right)^{\frac{1}{2}} 
	\aleq  \sum_{k=n}^{m} c_{k}^{2}.
\end{align*}
This implies that, for sufficiently large $m, n$, $\tilde{O}^{nm}$ is close to $\G$, and its nearest-point projection $O^{nm} \in \G$ satisfies
\begin{equation*}
	\frac{1}{\abs{Q}} \int_{Q} d(O^{mn}, O^{n} (O^{m})^{-1})^{2} \, d x \aleq \sum_{k=n}^{m} c_{k}^{2}.
\end{equation*}
A similar argument using the nearest-point projection and Poincar\'e's inequality also shows that $O^{n m}$ approximately satisfy the cocycle condition, i.e.,
\begin{equation*}
	d(O^{n\ell}, O^{n m} O^{m \ell})^{2} \aleq d(\tilde{O}^{n\ell}, \tilde{O}^{n m} \tilde{O}^{m \ell}) + \sum_{k=\min\set{n, m, \ell}}^{\max\set{n, m, \ell}} c_{k}^{2}
	\aleq \sum_{k=\min\set{n, m, \ell}}^{\max\set{n, m, \ell}} c_{k}^{2}.
\end{equation*}
Now we define $P^{n}$ by the following inductive procedure: Define $P^{1}  = \lim_{m \to \infty} O^{1 n^{(1)}_{m}}$ for some convergent subsequence of $O^{1 m}$, $P^{2} = \lim_{m \to \infty} O^{2 n^{(2)}_{m}}$ for some further subsequence of $O^{2 n^{(1)}_{m}}$, etc. Then 
\begin{equation*}
	d((P^{n})^{-1} P^{m}, O^{nm})^{2} \aleq \sum_{k = \min\set{n, m}}^{\infty} c_{k}^{2}
\end{equation*}
so that $\tilde O^{n} = P^{n} O^{n}$ satisfy the claimed bound.

Finally, a similar argument yields the continuous dependence of $a$
on $\ta$ with respect to the metric in the proposition. We omit the details. \qedhere
\end{proof}

To understand the higher regularity of the map $\ta \to a = \Cal(\ta)$ we also 
establish frequency envelope bounds. 
We begin with a technical bound for $Ad(O)$. We introduce the notation
\begin{equation*}
c^{p}_{jk} = \nrm{P_{j} Ad(O) P_{k}}_{L^{p} \to L^{p}}.
\end{equation*}
where $2 \leq p < \infty$. When $p = 2$, we will often omit $p$ and simply write
\begin{equation*}
	c_{jk} = c^{2}_{jk}.
\end{equation*}

\begin{lemma}\label{l:O-conj}
\begin{enumerate}
\item Assume that $O^{-1} \rd_{x} O \in \dot H^1$. Then
\begin{equation*}
	c_{jk}^{p} \aleq_{\nrm{O^{-1} \rd_{x} O}_{\dot{H}^{1}}} 2^{-\frac{4}{p} \abs{j - k}}.
\end{equation*}

\item Let $c_k$ be a $(- 1,S)$ frequency envelope for $O^{-1} \rd_{x} O$ in $\dot H^1$.
Then
\begin{equation*}
	c_{jk}^{p} \aleq_{\nrm{O^{-1} \rd_{x} O}_{\dot{H}^{1}}} 2^{\frac{4}{p} (k-j)} c_{j}, \qquad k < j - C.
\end{equation*}
\end{enumerate}
\end{lemma}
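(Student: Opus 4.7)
\medskip

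\noindent\textbf{Proof plan.}  The starting point is the smoothness we can extract for $O$ from the hypothesis $O^{-1}\rd_x O \in \dot H^1$ together with $|O|\leq C$ (valued in a compact group).  By writing $\rd O = O \cdot (O^{-1}\rd O)$ and $\rd^2 O = \rd O \cdot (O^{-1}\rd O) + O\cdot \rd(O^{-1}\rd O)$, and using the $4$-dimensional Sobolev embedding $\dot H^1 \hookrightarrow L^4$, I obtain $\rd O \in L^4$ and $\rd^2 O\in L^2$, both with norms controlled by $\|O^{-1}\rd O\|_{\dot H^1}$ (and a tame power thereof).  From $\|P_m O\|_{L^\infty}\aleq 1$ and $\|P_m O\|_{L^2}\aleq 2^{-2m}\|\rd^2 O\|_{L^2}$ for $m > 0$, an interpolation yields the key off-diagonal localization bound
\begin{equation*}
\|P_m O\|_{L^p}\aleq 2^{-4m/p}, \qquad 2\leq p\leq\infty,
\end{equation*}
and the same estimate holds for $O^{-1}$.

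\medskip

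\noindent\emph{Part (1).}  Fix $j>k+C$ and $B$ at frequency $k$.  Decompose $Ad(O)P_kB = O(P_kB)O^{-1}$ via a Littlewood--Paley trichotomy in $O$ and $O^{-1}$.  The purely low-frequency contribution $(P_{<j-5}O)(P_kB)(P_{<j-5}O^{-1})$ has Fourier support in $\{|\xi|\lesssim 2^j\}\setminus\{|\xi|\sim 2^j\}$ (since $k<j-C$), so $P_j$ kills it up to rapidly decaying tails.  The remaining terms each contain one factor $P_{\geq j-5}O$ or $P_{\geq j-5}O^{-1}$; for each, apply H\"older with the split $L^p\times L^\infty\times L^\infty$, and use $\|P_{\geq j-5}O\|_{L^p}\aleq 2^{-4j/p}$ together with the Bernstein upgrade $\|P_kB\|_{L^\infty}\aleq 2^{4k/p}\|P_kB\|_{L^p}$.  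This gives
\begin{equation*}
\|P_j\,Ad(O)P_kB\|_{L^p}\aleq 2^{-\frac{4}{p}(j-k)}\|P_kB\|_{L^p}.
\end{equation*}
For the opposite range $k > j + C$, I dualize: the bi-invariance of $\brk{\cdot,\cdot}$ gives $Ad(O)^*=Ad(O^{-1})$, hence $\|P_j Ad(O)P_k\|_{L^p\to L^p}=\|P_k Ad(O^{-1})P_j\|_{L^{p'}\to L^{p'}}$, to which the estimate just proved (applied to $O^{-1}$, which enjoys identical smoothness) yields even better decay $2^{-\frac{4}{p'}(k-j)}$; discarding the improvement gives the stated bound.

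\medskip

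\noindent\emph{Part (2).}  This is a refinement of the pointwise estimate on $\|P_mO\|_{L^p}$ that tracks the frequency envelope $c_m$ of $O^{-1}\rd O$ in $\dot H^1$, equivalently $\|P_m(O^{-1}\rd O)\|_{L^2}\aleq 2^{-m}c_m$.  A Bernstein interpolation between this $L^2$ bound and the trivial $L^\infty$ bound for $O^{-1}\rd O$ (which follows from $O^{-1}\rd O\in\dot H^1\cap L^4$ and a further Bernstein step) yields
\begin{equation*}
\|P_m(O^{-1}\rd O)\|_{L^p}\aleq 2^{m(1-\frac{4}{p})}c_m, \qquad 2\leq p \leq \infty.
\end{equation*}
Now decompose $P_m\rd O = P_m\bigl(O\cdot(O^{-1}\rd O)\bigr)$ via Littlewood--Paley.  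The low--high term $P_m\bigl((P_{<m-5}O)\cdot P_{\sim m}(O^{-1}\rd O)\bigr)$ is bounded in $L^p$ by the last display using $\|P_{<m-5}O\|_{L^\infty}\aleq 1$; the high--low and high--high contributions are controlled by the coarse estimate from Part~(1) combined with the envelope property (the $high\times high\to low$ tails decay and are absorbed by slow variation of $c_m$).  Dividing by $2^m$ produces the envelope-refined localization
\begin{equation*}
\|P_m O\|_{L^p}\aleq 2^{-\frac{4m}{p}}\,c_m .
\end{equation*}
Inserting this into the same Littlewood--Paley decomposition as in Part~(1) (again for $k<j-C$), with $\|P_{\geq j-5}O\|_{L^p}$ now replaced by $2^{-4j/p}c_j$ using the slowly-varying property of $c_j$, completes the proof.

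\medskip

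\noindent\textbf{Main obstacle.}  The main subtlety is the upgrade from the coarse estimate $\|P_mO\|_{L^p}\aleq 2^{-4m/p}$ to the envelope-refined $\|P_mO\|_{L^p}\aleq 2^{-4m/p}c_m$: a naive interpolation between an envelope $L^2$ bound and a non-envelope $L^\infty$ bound loses a power of $c_m$ (yielding $c_m^{2/p}$ instead of $c_m$).  The point is to interpolate \emph{before} the product, at the level of $O^{-1}\rd O$, and then transfer the bound to $\rd O$ via the paraproduct, absorbing $O\in L^\infty$ in the dominant low--high channel.  The other routine concern is handling the high--high tails in both parts, which is standard given the slow variation of $c_k$ and the $L^\infty$ boundedness of $O,O^{-1}$.
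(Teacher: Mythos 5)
Your part (1) in the regime $j>k+C$ is sound and close in spirit to the paper's, but there are two places where the argument as written does not close. The more serious one is the heart of part (2): the claimed refined localization $\|P_m O\|_{L^p}\lesssim 2^{-4m/p}c_m$ is not established by the paraproduct argument you give. In $P_m\rd O=P_m\bigl(O\cdot(O^{-1}\rd O)\bigr)$ the low--high channel is fine, but the high--low channel $P_m\bigl(P_{\sim m}O\cdot P_{<m-5}(O^{-1}\rd O)\bigr)$, if you only use the coarse Part (1) bound $\|P_{\sim m}O\|_{L^p}\lesssim 2^{-4m/p}$, is bounded by $2^{m(1-4/p)}\sum_{\ell<m}2^{\ell-m}c_\ell$, and for a general $(-1,S)$ envelope with $S>1$ the tail $\sum_{\ell<m}2^{\ell-m}c_\ell$ is \emph{not} $\lesssim c_m$ (e.g.\ $c_\ell\sim 1$ for $\ell\le 0$, $c_\ell\sim 2^{-S(1-\eps)\ell}$ for $\ell>0$): the envelope factor is lost exactly in this channel, and recovering it requires the refined bound on $P_{\sim m}O$ itself, which is circular. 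This is the actual difficulty of the lemma, and it is why the paper never localizes $O$: it differentiates the conjugated object, $\rd\, Ad(O)a=Ad(O)(\rd a+[O^{-1}\rd_x O,a])$, obtains a recursive inequality for the operator norms $c^{p}_{j\ell}$ themselves, absorbs the diagonal term, reiterates at a near-optimal frequency, and falls back to part (1) when $c_j\approx 1$. Your proposal needs an analogous bootstrap with an absorption mechanism; the sentence ``high--low and high--high contributions are controlled by the coarse estimate from Part (1) combined with the envelope property'' is precisely where the proof fails. (A smaller issue in the same step: for $p$ large, $\sum_{m\ge j-5}2^{-4m/p}c_m$ can diverge since $c_m$ may grow like $2^{(1-\eps)(m-j)}$, so the single-high-factor terms must be pinned at frequency $\sim 2^j$ and the double-high terms treated separately.)

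The second problem is the dual regime $k>j+C$ in part (1). After dualizing you invoke ``the estimate just proved'' at exponent $p'\le 2$, but that estimate rested on $\|P_m O\|_{L^q}\lesssim 2^{-4m/q}$, which your interpolation between $L^2$ and $L^\infty$ gives only for $q\ge 2$; for $q<2$ it would amount to more than two derivatives of $O$ in $L^2$, which $O^{-1}\rd_x O\in\dot H^1$ does not provide, and correspondingly the asserted improved decay $2^{-\frac{4}{p'}(k-j)}$ is not available for $p'$ near $1$. The bound you need is still true: either place the high-frequency piece of $O$ in $L^2$ (decay $2^{-2k}$, which beats $2^{-\frac{4}{p}k}$ since $p\ge 2$) and Bernstein the frequency-$2^j$ input up from $L^{p'}$, or do as the paper does and prove only the $p=2$ case (symmetric under duality) by differentiating $Ad(O)a$ twice, then interpolate the operator norms $P_jAd(O)P_k$ against the trivial $L^\infty$ bound. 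Finally, your parenthetical claim that $O^{-1}\rd_x O\in L^\infty$ is false, but harmless, since the displayed per-frequency bound follows from Bernstein applied to $P_m(O^{-1}\rd_x O)$ alone.
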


\begin{proof}
In this proof, we suppress the dependence of implicit constants on $\nrm{O^{-1} \rd_{x} O}_{\dot{H}^{1}}$.

For part (1), by interpolation, it suffices to only consider the cases $p = \infty$ and $p = 2$. The first case is trivial. In the case $p = 2$, by duality we consider $j \geq  k$ and by scaling we set $k = 0$.
Differentiating we have
\[
\partial Ad(O) a = Ad(O)( \partial a + [O^{-1} \rd_{x} O, a]) 
\]
which yields an $\dot H^1$ bound and thus $2^{-|j-k|}$. A second differentiation gives the bound in part (1).

For part (2) we set $j = 0$.  For $a$ localized at frequency $2^k$ with $\nrm{a}_{L^{p}} \aleq 1$, using Littlewood--Paley trichotomy we can estimate
\begin{equation} \label{eq:Ox-Lp}
\nrm{P_{\ell} [O^{-1} \rd_{x} O, a]}_{L^{p}}
\aleq \left\{
\begin{array}{cl}
2^{\frac{4}{p} k} 2^{(1-\frac{4}{p}) \ell} c_{\ell} & \ell \geq k+5, \\
2^{k}  & k-5 \leq \ell < k+5, \\
2^{2 \ell - k} c_{k}  & \ell < k-5.
\end{array} 
\right.
\end{equation}
For $\ell > k+5$, we also note that
\begin{equation} \label{eq:Ox-L2}
\nrm{P_{\ell} [O^{-1} \rd_{x} O, a]}_{L^{2}} \aleq 2^{\frac{4}{p} k} 2^{-\ell} c_{\ell} .
\end{equation}
Hence for $k < j - C$, we can estimate
\begin{equation*}
	2^{j} c^{p}_{j k} \aleq 2^{k} c^{p}_{jk} + c_{k} \sum_{\ell < k} 2^{2 \ell - k} c^{p}_{j \ell} + \sum_{\ell : k < \ell < j - C} 2^{\frac{4}{p}(k-\ell)} 2^{\ell} c^{p}_{j \ell} + \sum_{\ell > j - C} 2^{(2-\frac{4}{p})j} c^{2}_{j \ell} 2^{\frac{4}{p} k} 2^{-\ell} c_{\ell}
\end{equation*}
For $C$ sufficiently large, the first term on the RHS can be absorbed into the LHS. Moreover, since $c_{\ell}$ grows slowly to the right, we can easily estimate the last term on the RHS to obtain
\begin{equation*}
	c^{p}_{jk} \aleq c_{k} \sum_{\ell < k} 2^{2 \ell - k -j} c^{p}_{j \ell} + \sum_{\ell: k < j - C} 2^{\frac{4}{p}(k - \ell)} 2^{\ell - j} c^{p}_{j \ell} + 2^{C} 2^{\frac{4}{p}(k -j)} c_{j}.
\end{equation*}
We claim that $c^{p}_{jk} \aleq_{C} 2^{\frac{4}{p}(k-j)} c_{j}$.
Indeed, choosing $C$ large enough, reiterating this bound yields strictly smaller contributions unless $c_j \approx 1$, in which case we use the bound in part (1). \qedhere
\end{proof}

We may now prove a frequency envelope bound for $Cal(\ta)$.
\begin{proposition}\label{p:Cal-fe-ah}
Let $\ta$  be a connection in $\dot H^1$ satisfying $\hM(a) \leq \hM < \infty$ and $\nrm{\ta}_{\dot{H}^{1}} \leq M_{1}$, with $(-1,S)$ frequency envelope $c_k$.
Then $a= \Cal(\ta)$ also has frequency envelope $c_k$ in $\dot H^1$, with the bound
\begin{equation*}
	\nrm{P_{j} a}_{\dot{H}^{1}} \aleq_{\hM, M_{1}} c_{j}.
\end{equation*}

\end{proposition}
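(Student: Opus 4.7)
The plan is to exploit the explicit representation $a_j = Ad(O)(\ta_j - a_{\infty, j})$ which follows from the construction in Proposition~\ref{p:cal-a}: here $a_\infty = \lim_{s \to \infty} A(s)$ is the limit of the Yang--Mills heat flow of $\ta$ in the local caloric gauge, and $O$ is chosen so that $O^{-1} \rd_j O = a_{\infty, j}$, whence $O_{;j} = Ad(O) a_{\infty, j}$. With this formula, the proposition reduces to two separate frequency-envelope statements: one for $b := \ta - a_\infty$, and one for the action of the conjugation $Ad(O)$.

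First I would transport the envelope $c_k$ from $\ta$ to $a_\infty$. By Theorem~\ref{t:ymhf-l3-fe}, the local caloric gauge Yang--Mills heat flow $A(s)$ of $\ta$ has $c_k$ as a $(-1, S)$ frequency envelope in $\dot H^1$ uniformly in $s$; combined with the $\dot H^1$ convergence $A(s) \to a_\infty$ of Corollary~\ref{c:global}, and Fatou's property of $\ell^\infty$ with respect to weak limits at each dyadic frequency, we obtain that $a_\infty$ carries the same envelope $c_k$ in $\dot H^1$. By the triangle inequality, so does $b = \ta - a_\infty$, i.e.\ $\nrm{P_k b}_{L^2} \aleq 2^{-k} c_k$.

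Next I would estimate
\begin{equation*}
\nrm{P_j a}_{\dot H^1} = 2^j \nrm{P_j Ad(O) b}_{L^2} \leq \sum_k 2^{j-k} c_{jk} c_k
\end{equation*}
via Littlewood--Paley decomposition of $b$ and Lemma~\ref{l:O-conj}, applied with $O^{-1} \rd O = a_\infty$ having envelope $c_k$ in $\dot H^1$. The diagonal piece $|k - j| \leq C$ is controlled by $c_j$ using slow variation of $c_k$. For the high-frequency piece $k > j + C$, part (1) of Lemma~\ref{l:O-conj} supplies $c_{jk} \aleq 2^{-2(k-j)}$, which combined with the slow growth $c_k \aleq c_j 2^{(1-\eps)(k-j)}$ from $(-1, S)$-admissibility produces a geometric tail summing to $\aleq c_j$. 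For the low-frequency piece $k < j - C$, I would instead use part (2) of Lemma~\ref{l:O-conj}, namely $c_{jk} \aleq 2^{2(k-j)} c_j$, which crucially exploits that the envelope of $a_\infty = O^{-1} \rd O$ at the \emph{output} frequency $j$ is exactly $c_j$.

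The main obstacle I expect is closing the low-frequency sum when $S$ is large, since the $(-1, S)$ admissibility only gives the weak bound $c_k \aleq c_j \, 2^{S(1-\eps)(j-k)}$, and a naive application of part (2) leads to a divergent geometric series whenever $S(1-\eps) > 1$. The resolution I anticipate is a bootstrap/minimal-constant argument in the spirit of the proofs of Theorem~\ref{t:deltaA-fe} and Theorem~\ref{t:heatA-fe}: let $C^{*}$ be the minimal constant with $\nrm{P_j a}_{\dot H^1} \leq C^{*} c_j$ for all $j$; saturating this bound near some $j_0$, the preceding estimates give a self-improving inequality of the form $C^{*} c_{j_0} \aleq c_{j_0} + C^{*} c_{j_0} \cdot (\text{divisible})$, and the only scenario in which the divisible factor cannot be made small is $c_{j_0} \approx 1$, which is only realized for a bounded number of indices and is then directly absorbed into the $\dot H^1$ bound $\nrm{a}_{\dot H^1} \aleq_{M_1} 1$ inherited from Proposition~\ref{p:cal-a}. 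Working through this bootstrap (paralleling the structure in the linear theory) is what closes the proof.
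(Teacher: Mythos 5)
Your argument is, in its core, exactly the paper's proof: the same representation $a = Ad(O)(\ta - a_{\infty})$ with $O^{-1}\rd_x O = a_\infty$, the same transport of the envelope $c_k$ to $a_\infty$ via Theorem~\ref{t:ymhf-l3-fe}, and the same estimate $\nrm{P_j a}_{\dot H^1} \aleq \sum_k 2^{j-k} c_{jk} c_k$ handled by the trichotomy with Lemma~\ref{l:O-conj}, parts (1) and (2). Up to that point everything is correct and complete.

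The ``main obstacle'' you anticipate in the low-frequency range, however, is not there, and the bootstrap you propose to resolve it is both unnecessary and not well-founded. After applying part (2) of Lemma~\ref{l:O-conj}, the low-frequency contribution is
\begin{equation*}
\sum_{k < j - C} 2^{j-k}\, c_{jk}\, c_k \aleq \sum_{k < j - C} 2^{j-k}\, 2^{2(k-j)} c_j \, c_k = c_j \sum_{k < j - C} 2^{k-j} c_k ,
\end{equation*}
and the remaining sum is $\aleq_{M_1} 1$ simply because $c_k \aleq_{M_1} 1$ (or by Cauchy--Schwarz, using $\nrm{c}_{\ell^2} \aleq_{M_1} 1$): the geometric factor $2^{k-j}$ does all the work, and at no point does one need to compare $c_k$ with $c_j$ through the $(-1,S)$ admissibility. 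Hence there is no divergence for large $S$ and no restriction on $S$ at all; this is precisely how the paper closes the estimate in one pass. Moreover, the minimal-constant argument you sketch has nothing to bite on: unlike the proofs of Theorems~\ref{t:deltaA-fe} and \ref{t:heatA-fe}, where the envelope of the unknown appears on both sides of the estimate, here the right-hand side $\sum_k 2^{j-k} c_{jk} c_k$ involves only the envelopes of $\ta$ and $a_\infty$ (through $c_{jk}$), not that of $a$, so no self-improving inequality of the form $C^{*} c_{j_0} \aleq c_{j_0} + C^{*} c_{j_0}\cdot(\text{small})$ arises, and there is nothing to bootstrap. Deleting that final paragraph and replacing it with the one-line summation above turns your proposal into the paper's proof.
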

\begin{proof}
Here, we suppress the dependence of implicit constants on $\hM$ and $M_{1}$.
Let $O$ be the generator of the corresponding gauge transformation, $O^{-1} \rd_{x} O = a_\infty$. Then 
\[
a = \Cal(\ta) = Ad(O) \ta - O_{;x} = Ad(O) (\ta- a_\infty).
\]
By Theorem~\ref{t:ymhf-l3-fe}, $c_{k}$ is a frequency envelope for $O^{-1} \rd_{x} O$ in $\dot{H}^{1}$.
Using Lemma~\ref{l:O-conj}, we compute
\[
\| P_j a\|_{\dot H^1} \lesssim \sum_{k} 2^{j-k} c_k c_{kj} \lesssim c_j + \sum_{k < j-C} c_k c_j 2^{k-j} \lesssim c_j 
\]
as needed. \qedhere
\end{proof}

We now consider bounds for the linearization of $\Cal$.

\begin{proposition}\label{p:fe-ah}
  Let $\ta^{(h)}$ be a $C^1$ family in $\bfH$ satisfying the uniform bounds $\hM(\ta^{(h)}) \leq \hM < \infty$ and $\nrm{\ta^{(h)}}_{\bfH} \leq M_{1}$. Let $O^{(h)}$ the
  corresponding gauge transformations into caloric gauge, normalized so that $O^{(h)}(\infty) = Id$.  Assume that $c_k$ is a
  $(-1, S)$ frequency envelope for $\ta^{(0)}$ in $\dot H^1$ ,  $d_k$
  is a $1$-compatible $(-1, S)$ frequency envelope for $\partial_h
  \ta^{(0)}$ in $\dot H^1$ and that
\[
d_k' = c_k d_k^{[1]} + d_k c_k^{[1]}
\]
is a  $(-1, S)$ frequency envelope for $\rd^{\ell} \partial_h \ta_{\ell}^{(0)}$ in $L^2$.
Then we have
\begin{equation}
\|P_k \partial_h a^{(0)} \|_{\dot H^1} \lesssim d_k+ c_k (c \cdot d)_{\leq k}.
\end{equation}
\end{proposition}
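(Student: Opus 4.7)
The plan is to promote the continuity argument in Scenario~(i) of the proof of Proposition~\ref{p:cal-a} to a frequency envelope statement. Write $a^{(0)} = \Cal(\ta^{(0)})$, let $O = O^{(0)}$ be the associated gauge transformation, and set $V = O^{-1} \partial_h O$. First I would establish the envelope bounds for the asymptotic connection and its linearization: applying Theorem~\ref{t:ymhf-l3-fe} to the Yang--Mills heat flow $A$ starting at $\ta^{(0)}$ yields $\|P_k a_\infty^{(0)}\|_{\dot H^1} \lesssim c_k$, together with the divergence bound \eqref{ymhflc-feDA} giving $\|P_k \rd^{\ell} (a^{(0)}_{\infty,\ell} - \ta^{(0)}_\ell)\|_{L^2} \lesssim c_k c_k^{[1]}$. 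Applying the analogous result to the linearized flow with data $\partial_h \ta^{(0)}$ yields $\|P_k \partial_h a_\infty^{(0)}\|_{\dot H^1} \lesssim d_k$, and via \eqref{eq:db-fe-h1} combined with the hypothesis on $d_k'$,
\begin{equation*}
	\|P_k \mathrm{div}_{a_\infty^{(0)}} \partial_h a_\infty^{(0)}\|_{L^{2}} \lesssim d_k'.
\end{equation*}

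Differentiating the defining relation $O^{-1} \rd_j O = a^{(0)}_{\infty, j}$ in $h$ gives the covariant identity $\rd_j V + [a^{(0)}_{\infty,j}, V] = \partial_h a^{(0)}_{\infty, j}$, and taking the divergence produces the elliptic equation
\begin{equation*}
	\Delta_{a_\infty^{(0)}} V = \mathrm{div}_{a_\infty^{(0)}} \partial_h a_\infty^{(0)}.
\end{equation*}
Since $2^{-k} d_k'$ is a $(-1, S)$ envelope in $\dot H^{-1}$ for the right-hand side and (using the $1$-compatibility hypothesis on $d_k$) is $1$-compatible with $c_k$, Theorem~\ref{t:deltaA-fe} gives $\|P_k V\|_{\dot H^1} \lesssim 2^{-k} d_k'$. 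By Lemma~\ref{l:O-conj}, the same envelope carries over to $\tilde V := Ad(O) V = \partial_h O\, O^{-1}$.

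A direct computation (differentiating $a_j = O \ta_j O^{-1} - \rd_j O \cdot O^{-1}$ in $h$) produces the identity $\partial_h a_j = Ad(O) \partial_h \ta_j - \rd_j \tilde V - [a_j, \tilde V]$. I would estimate each summand in $\dot H^1$. For $Ad(O) \partial_h \ta$, Lemma~\ref{l:O-conj} applied with the envelope $d_k$ for $\partial_h \ta$ and the envelope $c_k$ for $O^{-1} \rd O = a_\infty^{(0)}$ produces a bound by $d_k + c_k (c\cdot d)_{\leq k}$, the correction arising from the low-frequency contributions of $Ad(O)$. For $\rd \tilde V$, the $L^2$ envelope $d_k'$ from Step~2 is exactly $c_k d_k^{[1]} + d_k c_k^{[1]}$ and is absorbed into $d_k + c_k(c\cdot d)_{\leq k}$. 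For the commutator $[a_j, \tilde V]$, Littlewood--Paley trichotomy with envelopes $c_k$ for $a$ and $2^{-k} d_k'$ for $\tilde V$ in $\dot H^1$ produces a contribution again fitting into $c_k(c \cdot d)_{\leq k}$.

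The principal technical obstacle is the bookkeeping in the final step: verifying that the $high \times low$ and $high \times high \to low$ contributions from both the conjugation by $Ad(O)$ and the commutator $[a, \tilde V]$ fit precisely into the correction term $c_k(c \cdot d)_{\leq k}$. The explicit structure $d_k' = c_k d_k^{[1]} + d_k c_k^{[1]}$, together with the $1$-compatibility of $d_k$ with $c_k$, is what makes this accounting succeed and is precisely the reason these hypotheses appear in the statement.
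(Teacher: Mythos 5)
Your argument is essentially the paper's own proof: you use the same decomposition $\partial_h a_j = Ad(O)\,\partial_h \ta_j - (\covD[a])_j (\partial_h O\, O^{-1})$, the same elliptic equation \eqref{delta-Oh} for $O^{-1}\partial_h O$ (with the right-hand side controlled via the linearized heat-flow divergence bounds of Theorem~\ref{t:ymhf-l3-fe}/Lemma~\ref{l:db}), and the same conjugation estimates from Lemma~\ref{l:O-conj}, with the final trichotomy bookkeeping matching the paper's bound $d_k' + c_k (c\cdot d)_{\leq k}$ together with $d_k' \lesssim d_k$. The only point treated more casually than it deserves — in your writeup and, to be fair, in the paper's — is the precise admissibility/compatibility check when invoking the frequency-envelope elliptic solvability at the $\dot H^2$ level, but this does not change the route.
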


\begin{proof}
The expression $\partial_h a^{(h)}$ is given by 
\[
\partial_h a^{(h)} =  Ad(O^{(h)}) \partial_h \ta^{(h)} - \covD[a^{(h)}] a_h 
\]
where
\[
a_h = O^{(h)}_{;h}.
\]
The $O^{(h)}$ conjugation is again harmless by Lemma~\ref{l:O-conj}; precisely, this gives
\[
\|P_k (Ad(O^{(0)}) \partial_h \ta^{(0)})\|_{\dot H^1} \lesssim  \sum_{j} 2^{k-j} d_j c_{jk} \lesssim d_j + c_k \sum_{j < k-C} d_j  2^{j-k} 
\lesssim d_k.
\]
where at the last step we have used the compatibility condition.

Hence in order to estimate $\partial_h a^{(h)} $ we need to understand $a_h$. 
First, we have
\begin{equation}
\|P_k ((O^{(0)})^{-1} \rd_{h} O^{(0)}) \|_{\dot H^2} \lesssim d_k'.
\end{equation}
This is obtained directly from the elliptic equation \eqref{delta-Oh},
using the bounds for $\div \partial_h a_\infty$ provided by Theorem~\ref{t:ymhf-l3-fe}. Then since 
\begin{equation*}
a_{h} = O_{;h}^{(0)} = Ad(O^{(0)}) (O^{(0)})^{-1} \rd_{h} O^{(0)}, 
\end{equation*}
by a similar argument as before using Lemma~\ref{l:O-conj}, we obtain
\begin{equation}
\|P_k a_{h} \|_{\dot H^2} \lesssim d_k' + c_{k} \sum_{j < k} 2^{j - k} d_{j}' \aleq d_{k}'.
\end{equation}
where the last inequality follows from compatibility, i.e.,
\begin{equation*}
c_{k} \sum_{j < k} 2^{j - k} d_{j}'
\aleq c_{k} \sum_{j < k} 2^{j - k} d_{j} \aleq d'_{k}.
\end{equation*}
To complete the proof of the proposition we estimate
\[
\| \covD a_h\|_{\dot H^1} \lesssim  d_k' + \sum_{j < k} d_j' c_k \lesssim   d_k' + c_k (c \cdot d)_{\leq k}.
\]
Finally, we note that by the compatibility condition we have 
\[
 d_k'  \lesssim d_k. \qedhere
\]
\end{proof}

As a corollary of Proposition~\ref{p:fe-ah}, we obtain
\begin{corollary}
The map $\ta \to a = \Cal(\ta)$ for $\dot{H}^{1}$ connections with $\hM(a) \leq \hM < \infty$  is $C^1$ 
 in  $\bfH \cap H^\sigma$ ($\sigma > 1$), with a bound depending on $\hM$.
\end{corollary}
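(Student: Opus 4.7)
The plan is to derive the corollary from Propositions~\ref{p:Cal-fe-ah} and \ref{p:fe-ah} by selecting frequency envelopes that encode the additional $\dot{H}^{\sigma}$ regularity beyond $\bfH$. The $\bfH$ part of the $C^{1}$ statement is already provided by scenario~(i) of Proposition~\ref{p:cal-a}, so only the $\dot{H}^{\sigma}$ bounds remain to be established, and for these the strategy is to sum the dyadic estimates with the appropriate $2^{2(\sigma-1)k}$ weight.

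First I would handle the zeroth order bound. Given $\ta \in \bfH \cap H^{\sigma}$ with $\hM(\ta) \leq \hM$, pick a $(-1,S)$ frequency envelope $c_{k}$ for $\ta$ in $\dot{H}^{1}$ with $S$ chosen strictly larger than $\sigma-1$, so that $\ta \in \dot{H}^{\sigma}$ is encoded in the weighted summability $\sum_{k} 2^{2(\sigma-1)k} c_{k}^{2} \lesssim \|\ta\|_{\dot{H}^{\sigma}}^{2}$. Proposition~\ref{p:Cal-fe-ah} transfers the same envelope $c_{k}$ to $a = \Cal(\ta)$ in $\dot{H}^{1}$, and summing against the weight yields $\|a\|_{\dot{H}^{\sigma}} \lesssim_{\hM,M_{1}} \|\ta\|_{\dot{H}^{\sigma}}$.

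For the linearized bound, consider a $C^{1}$ family $\ta^{(h)} \subset \bfH \cap H^{\sigma}$ with $\hM(\ta^{(h)}) \leq \hM$, and let $d_{k}$ be a $(-1,S)$ frequency envelope for $\partial_{h}\ta^{(0)}$ in $\dot{H}^{1}$, $1$-compatible with $c_{k}$ and carrying matching high-frequency decay so that $\sum_{k} 2^{2(\sigma-1)k} d_{k}^{2} \lesssim \|\partial_{h}\ta^{(0)}\|_{\dot{H}^{\sigma}}^{2}$. One checks that $d_{k}' = c_{k} d_{k}^{[1]} + d_{k} c_{k}^{[1]}$ is an admissible envelope for $\partial^{\ell}\partial_{h}\ta^{(0)}_{\ell}$ in $L^{2}$, using the $\bfH$ regularity of the family (this is precisely the $C^{1}$ assumption in $\bfH$). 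Proposition~\ref{p:fe-ah} then gives
\[
 \|P_{k}\partial_{h} a^{(0)}\|_{\dot{H}^{1}} \lesssim d_{k} + c_{k}(c\cdot d)_{\leq k}.
\]
Weighting by $2^{2(\sigma-1)k}$ and summing yields the desired $\dot{H}^{\sigma}$ bound on $\partial_{h}a^{(0)}$. Combined with the $\bfH$ Lipschitz bound already proved, this establishes $C^{1}$ dependence of $\Cal$ in $\bfH \cap H^{\sigma}$.

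The main obstacle will be controlling the tail contribution $c_{k}(c\cdot d)_{\leq k}$ in the weighted $\ell^{2}$ sum. The point is that since $c_{k}$ decays like $2^{-(\sigma-1-\eps)k}$ at high frequencies (by slow variation with parameter $S > \sigma-1$), the convolution-type tail $(c\cdot d)_{\leq k}$ is tamed by a geometric factor, and a Schur-test argument bounds the weighted $\ell^{2}$ norm by $(\sum 2^{2(\sigma-1)k} c_{k}^{2})^{1/2}(\sum 2^{2(\sigma-1)k} d_{k}^{2})^{1/2}$. The slack afforded by choosing $S$ strictly larger than $\sigma - 1$ (and $\eps$ small enough) is what makes the estimate close.
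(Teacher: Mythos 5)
Your reduction to Propositions~\ref{p:Cal-fe-ah} and \ref{p:fe-ah} breaks down at the step where you claim that $d'_k = c_k d_k^{[1]} + d_k c_k^{[1]}$ is an admissible $L^2$ frequency envelope for $\partial^\ell \partial_h \ta^{(0)}_\ell$ ``using the $\bfH$ regularity of the family.'' The $C^1$ assumption in $\bfH$ only gives $\partial^\ell \partial_h \ta_\ell \in \ell^1 L^2$; it does not give the dyadic domination $\|P_k \partial^\ell \partial_h \ta_\ell\|_{L^2} \lesssim c_k d_k^{[1]} + d_k c_k^{[1]}$, which is a genuinely quadratic (generalized Coulomb type) bound. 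That hypothesis of Proposition~\ref{p:fe-ah} is tailored to special families — e.g.\ paths of caloric connections, where it follows from Corollary~\ref{c:diff-cal} and \eqref{diff-tb} — and it fails for arbitrary directions: if $\ta$ has small $\dot H^1$ norm and the variation $b=\partial_h\ta^{(0)}$ is, say, a pure gradient concentrated at frequency $2^k$, then $\|P_k \partial^\ell b_\ell\|_{L^2}\sim 2^k\|P_k b\|_{L^2}\sim d_k$, which is much larger than $c_k d_k^{[1]} + d_k c_k^{[1]}$. Since $C^1$ in $\bfH\cap H^\sigma$ requires differentiability in every direction of that space, you cannot invoke Proposition~\ref{p:fe-ah} as a black box. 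The fix, and the paper's intended route, is to rerun the argument of Scenario~(i) of Proposition~\ref{p:cal-a} (equivalently, the proof of Proposition~\ref{p:fe-ah}) with frequency envelopes adapted to a general variation: control $\covD^\ell B_\ell(\infty)$ using \eqref{eq:db-fe-h1} together with an envelope for $\partial^\ell b_\ell$ reflecting only $b\in\bfH\cap H^{\sigma}$ (i.e.\ an $\ell^1 L^2\cap H^{\sigma-1}$ envelope), feed this into the elliptic equation \eqref{delta-Oh} via Theorem~\ref{t:deltaA-fe}, and handle the conjugations with Lemma~\ref{l:O-conj}; the resulting envelope for $\partial_h a^{(0)}$ then contains the divergence envelope of $b$ itself, and still sums against $2^{2(\sigma-1)k}$.

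Two smaller remarks. First, the obstacle you single out — the Schur-type summation of $c_k(c\cdot d)_{\leq k}$ — is not where the difficulty lies: $(c\cdot d)_{\leq k}\lesssim \|c\|_{\ell^2}\|d\|_{\ell^2}$, and since $2^{(\sigma-1)k}c_k\in\ell^2$ (for a minimal envelope with $S(1-\eps)>\sigma-1$) that term sums trivially. Second, when you enlarge $d_k$ to enforce $1$-compatibility you should check (it does hold, since the added tail is $\lesssim c_k\sup_{j<k}d_j$) that the weighted summability $\sum_k 2^{2(\sigma-1)k}d_k^2<\infty$ survives; and for full $C^1$ regularity, as opposed to a uniform bound on the differential, one also needs the corresponding difference/continuity estimates, which your outline does not address.
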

We omit the straightforward proof, which is similar to Scenario (i) in the proof of Proposition~\ref{p:cal-a}, but now taking into account the frequency envelope bound.

\subsection{The caloric manifold and its tangent space}
We denote by $\calC$ the set of all caloric gauge connections, and define
\begin{equation*}
	\calC_{\hM} = \set{a \in \dot{H}^{1} : \hbox{$a$ is a caloric gauge connection with } \hM(a) \leq \hM}.
\end{equation*}
Note that $\calC = \cup_{\hM > 0} \calC_{\hM}$.

We seek to describe $\calC$ as a $C^1$ infinite
dimensional manifold.  Given the results above, it is natural to seek
to do this in the $\bfH$ topology.  As a first step, we show that
$\calC$ connections are indeed in $\bfH$, and satisfy some nonlinear
form of the Coulomb gauge condition.

\begin{proposition} \label{p:cal-in-bfH}
 For $a \in \calC_{\hM}$ with energy $\leq \calE$, its caloric gauge Yang--Mills heat flow satisfies the bound
\begin{equation} \label{eq:cal-in-bfH}
\|A\|_{\dot H^1} + \| \partial^{\ell} A_{\ell} \|_{\ell^1 L^2 \cap W^{1,\frac43}} \lesssim_{\hM, \calE} 1.
\end{equation}
\end{proposition}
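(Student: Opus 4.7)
The plan is to derive both bounds by combining the earlier a priori estimates with the defining property of a caloric connection, namely that $A(\infty) = 0$ in $\dot H^1$ (see Corollary~\ref{c:global} and Definition~\ref{d:C}).

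First, I would establish the $\dot H^1$ bound via Lemma~\ref{l:h1}. Since $\hM(a) \leq \hM$ and $\spE[a] \leq \calE$ by hypothesis, Lemma~\ref{l:h1} applied on $J = [0, \infty)$ gives
\begin{equation*}
  \| A \|_{L^\infty \dot H^1} \lesssim_{\hM, \calE} 1 + \inf_{s \in [0,\infty)} \| A(s)\|_{\dot H^1}.
\end{equation*}
By the caloric condition $A(s) \to 0$ in $\dot H^1$ as $s \to \infty$, the infimum on the right is zero, yielding $\| A \|_{L^\infty \dot H^1} \lesssim_{\hM, \calE} 1$. In particular, $\| a \|_{\dot H^1} \lesssim_{\hM, \calE} 1$, so we acquire a quantitative bound $M_1 = M_1(\hM, \calE)$ for use in subsequent steps.

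Next, I would obtain the bound on $\partial^\ell A_\ell$ by invoking Lemma~\ref{l:da}. Applied with $M_1$ as above, that lemma furnishes
\begin{equation*}
  \sup_{s_1, s_2 \in [0,\infty)} \| \partial^\ell A_\ell(s_1) - \partial^\ell A_\ell(s_2) \|_{\ell^1 L^2 \cap \dot W^{1, 4/3}} \lesssim_{\hM, \calE} 1.
\end{equation*}
The caloric condition $A(s) \to 0$ in $\dot H^1$ forces $\partial^\ell A_\ell(s) \to 0$ in $\dot H^{-1}$ as $s \to \infty$; combined with the uniform Cauchy-type bound above, this identifies $0$ as the limit in $\ell^1 L^2 \cap \dot W^{1, 4/3}$ as well. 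Setting $s_2 \to \infty$ in the display then gives $\|\partial^\ell A_\ell(s_1)\|_{\ell^1 L^2 \cap \dot W^{1, 4/3}} \lesssim_{\hM, \calE} 1$ uniformly in $s_1 \in [0, \infty)$, which is the desired conclusion.

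There is no real obstacle here beyond bookkeeping; the entire argument is essentially a direct application of Lemmas~\ref{l:h1} and \ref{l:da}, with the caloric condition used precisely to convert each "differential" estimate (i.e.\ energy controlled up to adding an infimum, or divergence controlled up to differences) into an absolute estimate. The only mild subtlety is justifying that the $\ell^1 L^2 \cap \dot W^{1, 4/3}$ limit of $\partial^\ell A_\ell(s)$ as $s \to \infty$ is genuinely zero, which follows from uniqueness of weak limits once $\dot H^{-1}$ convergence is established.
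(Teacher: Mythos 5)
Your proposal is correct and follows essentially the same route as the paper: the $\dot H^1$ bound comes from Lemma~\ref{l:h1} applied on $[0,\infty)$ with the infimum killed by the caloric condition $A(\infty)=0$, and the divergence bound comes from Lemma~\ref{l:da} together with $\partial^\ell A_\ell(s)\to 0$ at $s=\infty$. The extra care you take in identifying the limit in $\ell^1 L^2 \cap \dot W^{1,4/3}$ is a reasonable elaboration of what the paper leaves implicit.
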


\begin{proof}
The $\dot H^1$ bound  is a direct consequence of Lemma~\ref{l:h1} applied on an infinite interval.
Then we use Lemma~\ref{l:da} for the divergence of $A$.
\end{proof}

Now we can prove the following:

\begin{theorem}
The set $\calC$ is an infinite dimensional $C^1$ submanifold of the Banach space $\bfH$.
\end{theorem}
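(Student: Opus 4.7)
The plan is to produce local $C^1$ graph parametrizations of $\calC$ near any fixed $a \in \calC_{\hM}$, using the caloric projection map $\Cal$ together with a direct sum decomposition of $\bfH$ at $a$. The natural model tangent space at $a$ is
\[
 T_a \calC := \bigl\{ b \in \bfH : \lim_{s \to \infty} B(s) = 0 \text{ in } \bfH \bigr\},
\]
where $B$ is the linearized local caloric gauge flow \eqref{eq:YM-h-lc-lin} with $B(0)=b$; by the $\bfH$ version of Corollary~\ref{c:global} and the linearity of the flow, this is a closed subspace of $\bfH$. The candidate transversal is the space of pure covariant gradients
\[
 G_a := \bigl\{ \covD[a] c : c \in \dot H^2(\bbR^4;\g) \bigr\},
\]
which embeds into $\bfH$ using Theorem~\ref{t:deltaA-fe} together with the $\ell^1 L^2$ control on $\partial^\ell a_\ell$ provided by Proposition~\ref{p:cal-in-bfH}.

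The first step is the topological splitting $\bfH = T_a \calC \oplus G_a$ with bounded projections. Given $b \in \bfH$, run the linearized flow and take the limit $B_\infty := \lim_{s\to\infty} B(s) \in \bfH$. Since $a$ is caloric, $a_\infty = 0$, and passing to $s=\infty$ in \eqref{eq:YM-h-lc-lin} yields $\partial_j B_{\infty,k}-\partial_k B_{\infty,j}=0$, so $B_\infty = \partial c_\infty$ for a unique $c_\infty \in \dot H^2$ modulo constants. I would then use the infinitesimal de Turck procedure of Section~\ref{subsec:dymhf}(3) to recognize $c_\infty$ as the limit of a $C$-variable whose data at $s=0$ satisfies an elliptic equation of the form $\Delta_a c = (\text{data determined by } B_\infty)$ on $\bbR^4$; this is solvable with bounds by Theorem~\ref{t:deltaA-fe}. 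Setting $b = (b - \covD[a]c) + \covD[a]c$ exhibits the decomposition, and Propositions~\ref{p:cal-in-bfH} and~\ref{p:fe-ah} give the continuity of both projections in $\bfH$.

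The second step converts this splitting into a $C^1$ chart. Fixing the constant gauge ambiguity by $O(\infty) = Id$, the caloric projection $\Cal : \bfH \to \calC \subset \bfH$ is a $C^1$ idempotent by Propositions~\ref{p:cal-a} and~\ref{p:fe-ah}. Hence the differential $P_a := d\Cal(a)$ is a bounded linear projection on $\bfH$; one verifies directly from the construction of $\Cal$ that $\mathrm{ran}(P_a) \subseteq T_a\calC$ and $G_a \subseteq \ker(P_a)$, and Step~1 upgrades both inclusions to equalities. Define
\[
 \Psi : T_a \calC \cap B_\varepsilon \To \calC, \qquad \Psi(t) := \Cal(a + t).
\]
Then $\Psi$ is $C^1$ with $d\Psi(0) = P_a \rst_{T_a\calC} = \mathrm{id}_{T_a\calC}$, so the inverse function theorem on the Banach space $T_a \calC$ makes $\Psi$ a local $C^1$ diffeomorphism onto a neighborhood of $a$ in $\calC$. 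This is the required submanifold chart, and the family of charts obtained as $a$ varies endows $\calC$ with the structure of a $C^1$ submanifold of $\bfH$ modelled on $T_a\calC$.

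The main obstacle will be Step~1: rigorously establishing the splitting in the $\bfH$-topology, which requires propagating $\bfH$-regularity of the linearized flow all the way to $s = \infty$ and keeping the gauge corrector $c$ inside $\dot H^2$ with enough divergence control so that $\covD[a]c$ actually belongs to $\bfH$. The $L^2$ analogue is essentially Theorem~\ref{t:cal-proj}, but in the $\bfH$ setting one must combine the refined divergence estimates of Lemma~\ref{l:db} with the compatibility/reiteration arguments used in the proof of Proposition~\ref{p:fe-ah}. Once this splitting and the idempotency of $\Cal$ at the $\bfH$-level are in place, the submanifold conclusion follows cleanly from the inverse function theorem.
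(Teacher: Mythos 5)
Your route is genuinely different from the paper's (you parametrize by the tangent slice $a + T_a\calC$ and lean on $\Cal$ being a $C^1$ idempotent, while the paper parametrizes by the covariant Coulomb slice $\calB = \set{a+b : (\covD[a])^{\ell} b_{\ell} = 0}$), and most of your ingredients are sound: the $\bfH$-splitting of your Step~1 is essentially Proposition~\ref{p:transverse} at $\sgm = 1$ (whose proof does not use the manifold theorem), and the computation $d\Psi(0) = \mathrm{id}_{T_a\calC}$ is correct once $O$ is normalized at spatial infinity. The genuine gap is the final step. The inverse function theorem cannot be applied to $\Psi(t) = \Cal(a+t)$ "on the Banach space $T_a\calC$": $\Psi$ takes values in the larger space $\bfH$, and the most that $d\Psi(0)$ being an injection with closed complemented range buys you (via the local immersion theorem) is that $\Psi$ is a homeomorphism onto its image and that this image is a $C^1$ submanifold of $\bfH$ \emph{contained in} $\calC$. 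It does not give that the image is a neighborhood of $a$ \emph{in} $\calC$: a priori there could be caloric connections arbitrarily close to $a$ in $\bfH$ that are not of the form $\Cal(a+t)$ with $t \in T_a\calC$ small. This local surjectivity is exactly the step the paper spends real effort on (Step~3 of its proof): given a nearby caloric $a_1$, it constructs a one-parameter family of gauge transformations $O(h)$ along the segment joining $a$ to $a_1$, solving the elliptic equation for $O_{;h}$ perturbatively, so as to move the segment into the slice and exhibit $a_1$ in the range of the chart. As written, your proposal proves only that some $C^1$ submanifold through $a$ sits inside $\calC$, not that $\calC$ itself is one.

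The gap is fixable within your framework, but it needs an explicit argument. Since $\Cal$ is $C^1$ on a full $\bfH$-neighborhood of $a$ (Proposition~\ref{p:cal-a}; nearby connections still have finite caloric size by the stability statement following Lemma~\ref{l:l3-cont}) and satisfies $\Cal \circ \Cal = \Cal$, differentiating at the fixed point $a$ gives $P_a^2 = P_a$, so $\bfH = \mathrm{ran}\, P_a \oplus \ker P_a$ automatically; one can then run the standard "fixed-point set of a $C^1$ retraction is a $C^1$ submanifold" argument, e.g.\ by checking that $t \mapsto P_a(\Cal(a+t)-a)$ is a local diffeomorphism of $T_a\calC$, that every $\Cal(a+t)$ is caloric, and that a flattening map such as $x \mapsto a + P_a(\Cal(x)-a) + (I-P_a)(x - \Cal(x))$ carries caloric connections near $a$ into $a + T_a\calC$ bijectively onto a neighborhood. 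Either that retraction argument or the paper's gauge-transformation construction must be supplied before you may conclude that your charts cover $\calC$ near $a$.
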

The fact that we only get $C^1$ may well be an artifact of the construction; 
the difficulty is that the map $a \to a_\infty$ is $C^1$ but possibly no better.

\begin{proof}
  Consider a caloric gauge connection $a \in \calC_{\hM}$ with energy
  $\calE$.  We will show that there exists $\epsilon= \epsilon(\calE, \hM) > 0$ so
  that $\calC \cap B^{\bfH}_{\eps} (a)$ can be parametrized with a
  $C^1$ local chart. Here, $B^{\bfH}_{\eps}(a)$ is the ball of radius $\eps$ around $a$ in the $\bfH$ topology. 
  For the purpose of this proof, all covariant differentiations will be with respect to the connection $a$.
  
To achieve our goal we begin with the closed affine subspace of $\dot H^1$
\[
\mathcal B = \{ a+ b \in \dot{H}^{1} : (\covD[a])^\ell b_\ell = 0  \},
\]
which is in some sense a local Coulomb gauge adapted to the connection $a$. 
Then we  consider the caloric gauge representations of elements of $\calB$ near $a_0$,
\begin{equation}\label{chart}
\calB \ni a + b \to \Cal(a+b) \in \calC.
\end{equation}
We will prove that this map represents a $C^1$ parametrization of $\calC$ near $a$.

\pfstep{Step~1: Proof of regularity}
We first claim that this map is $C^1$ in a neighbourhood of $a$. By the previous results 
we know that
\[
\| \partial^\ell a_{\ell} \|_{\ell^1 L^2} \lesssim_{\hM, \calE} 1.
\]
 Then the map
\[
\calB \ni a+ b \to \partial^{\ell} (a +b)_{\ell} \in \ell^1 L^2
\]
is $C^1$. Hence the $C^1$ regularity of $\Cal$ restricted to $\mathcal B$ follows.

\pfstep{Step~2: Proof of local invertibility} 
In order to view this map as a local
chart on $\calC$ we need to show that it is invertible in the $\dot H^1$
topology. It suffices to show that its differential at $a$ is
bounded from below. To compute its differential we denote by $
O(b)$ the associated gauge transformation, normalized by $O(\infty) =
Id$. Then $\Cal$ has the form
\[
 \Cal(a+b)_j = O(b) (a +b)_j O(b)^{-1} - O(b)_{;j} ,
\]
and its differential at $a$ has the form
\[
( d \Cal(a) b)_j = b_j + [dO(0) b, a_{j}] - \partial_j  (dO(0) b)  
\]
Hence
\[
  (d \Cal(a) b)_j = b_j - (\covD[a])_{j}c, \qquad  c = dO(0) b.
\]
Hence we need to prove the bound
\begin{equation} \label{transverse1}
\| b - \covD[a] c\|_{\bfH} \gtrsim \|b\|_{\bfH} .
\end{equation}
Since $(\covD[a])^{\ell} b_{\ell} = 0$, it suffices to show that
\begin{equation}
\label{delta-1}
\| \covD[a] c\|_{\ell^1 \dot H^1} \lesssim \| \Delta_{a} c\|_{\ell^1 L^2}.
\end{equation}
But this is a consequence of Theorem~\ref{t:deltaA}.

\pfstep{Step~3: Proof of local surjectivity}  
Here we show that  our map \eqref{chart} is locally surjective near $a$.
Thus consider another caloric connection $a_1$ which is close to $a$. Then we have
\[
\| a - a_1\|_{\dot H^1} \ll 1, \qquad \| \partial^k (a - a_1)_k\|_{\ell^1 L^2} \ll 1 .
\]
At this point we use only these bounds, forgetting that $a_1$ is
caloric. We consider the straight line joining $a$ and $a_1$, denoted
by $a(h)$ with $h \in [0,1]$.  Along this line we construct a family
of gauge transformations $O(h)$, with $O(0) = Id$, which move this
segment into $\calB$. We need to verify the relation
\[
\covD^k ( Ad(O(h)) a(h)_k  - O(h)_{;k}) = \covD^k a_k(0).
\]
Here and below, all covariant differentiations are taken with respect
to the fixed connection $a$.  Equivalently, we can differentiate with
respect to $h$ to rewrite this condition as
\begin{align*}
0 = & \rd_{h} \covD^{k} (Ad(O) a(h)_{k} - O_{;k}) \\
= & \covD^{k} ([O_{;h}, Ad(O) a(h)_{k}] + Ad(O)(a_{1} - a)_{k}) - \rd_{k} O_{;h} + [O_{;k}, O_{;h}])
\end{align*}
We view this as an equation for $O_{;h}$:
\begin{align*}
	\covD^{k} \covD_{k} O_{;h} 
	= &\covD^{k} (Ad(O)(a_{1} - a)_{k})\\
	& + \covD^{k} (h [O_{;h}, Ad(O) (a_{1} -a)_{k}] - [O_{;h}, O_{;x}] - [O_{;h}, (Ad(O) - 1)a_{k}]) 
\end{align*}
Here we view the right hand side terms as perturbative, and integrate $O_{;h}$ in $h$ to find $O$
in the space $\ell^1 \dot H^2$. For this, we use the smallness of $a_1-a$ in $\bfH$, of $O_{;k}$ in $\ell^1 \dot H^1$, as well as of $Ad(O) - 1$ in $\bfH \to \bfH$. We skip the straightforward details. \qedhere

\end{proof}

We now take a closer look at the tangent space to $\calC$.
Since the caloric manifold $\calC$ is a $C^1$ submanifold of $\bfH$,
its tangent space $T_a\calC$ is naturally defined as a closed subspace
of $\bfH$. Precisely, given $b \in \bfH$, we denote by $B$ the
solution to the linearized Yang--Mills heat flow (i.e., \eqref{eq:cYMHF-lin}) in the local caloric gauge $a_{s} = 0$ (i.e., \eqref{lin-heat}), which we recall here
\begin{equation}\label{lin-heat-re}
\partial_s B_i = \covD^j ( \covD_j B_i - \covD_i B_j)  +  [B^j,F_{ji}] , \qquad B_i(0) = b_i.
\end{equation}
This is a well-posed flow in $\bfH$, with bounds similar to the bounds
for the Yang--Mills heat flow.  Further, the limit $B(\infty)$ exists in $\bfH$ and is curl-free.  
Then the tangent space $T \calC$ can be defined as
\begin{equation}\label{TC}
T_a \calC = \{ b \in \bfH:  B(\infty) = 0\} .
\end{equation}

For our purposes here we need to look at a larger tangent space,
namely with respect to the $L^2$ topology. We denote it by $T_a^{L^2}
\calC$, and it is defined as the closure of $T_a\calC$ in the $L^2$
topology, or equivalently as
\[
T_a^{L^2}  \calC = \{ b \in L^2 : \ B(\infty) = 0\}.
\]
Due to the linearized gauge invariance \eqref{eq:cYMHF-lin-gt} with $\rd_{s} a_{0} = 0$, it is clear that 
\[
T_a^{L^2}  \calC \cap \covD[a] \dot H^1 = 0.
\]
Our next result shows that these two closed subspaces of $L^2$ are in effect transversal:

\begin{proposition}\label{p:transverse}
Let $a \in \calC_{\hM} \subseteq \calC$ with energy $\calE$. Then the following statements hold:
\begin{enumerate}
\item Any function $w \in L^2$ admits a unique representation
\[
w = b - \covD[a] c, \qquad b \in T_a^{L^2}  \calC, \quad c \in \dot H^1
\]
and the following estimate holds:
\begin{equation}
\| b \|_{H^{\sgm}} + \|c \|_{\dot H^{\sgm+1}} \lesssim_{\hM, \calE} \| w\|_{\dot{H}^{\sgm}}, \qquad -1 <  \sgm <  1.
\end{equation}
Correspondingly, if $\sgm = 1$ then we have
\begin{equation}
\| b \|_{\bfH} + \|c \|_{\ell^1 \dot H^{2}} \lesssim_{\hM, \calE} \| w\|_{\bfH}.
\end{equation}

\item Furthermore, we can represent $c$ as 
\[
c = -\Delta^{-1} \nabla \cdot w +  L(a,w) ,
\]
where $L(a, w)$ is at least quadratic, and satisfies better bounds 
\begin{equation}
 \|L(a,w)\|_{\ell^1 \dot{H}^{\sgm+1}} \lesssim_{\hM, \calE} \|w\|_{\dot{H}^\sgm}, \qquad -1 < \sgm < 1.
\end{equation}
\end{enumerate}
\end{proposition}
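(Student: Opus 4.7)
The plan is to construct the decomposition via the \emph{infinitesimal de Turck trick} described in Section~\ref{subsec:dymhf} item~(3): I will set up a dynamic Yang--Mills heat flow starting from $a$ at $s = 0$ whose temporal component $A_{0}$ vanishes at $s = \infty$, and then read off $c$ and $b$ as boundary values at $s = 0$.

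\textbf{Step~1 (Construction).} Given $w \in L^{2}$, I first solve the covariant parabolic equation
\begin{equation*}
	(\rd_{s} - \lap_{A} - 2\,ad(F)) F_{0j} = 0, \qquad F_{0j}(s=0) = w_{j},
\end{equation*}
via Theorem~\ref{t:heatA}, which yields $\nrm{F_{0j}}_{L^{\infty} \dot{H}^{\sgm}} + \nrm{\covD F_{0j}}_{L^{2} \dot{H}^{\sgm}} \lesssim \nrm{w}_{\dot{H}^{\sgm}}$ for $-2 < \sgm < 2$. I then set
\begin{equation*}
	A_{0}(s) := -\int_{s}^{\infty} \covD^{\ell} F_{\ell 0}(\tilde s)\, d\tilde s, \qquad c := A_{0}(0), \qquad b_{j} := w_{j} + \covD_{j} c,
\end{equation*}
so that $\rd_{s} A_{0} = \covD^{\ell} F_{\ell 0}$ and $A_{0}(\infty) = 0$ by construction. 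To make sense of the integral defining $A_{0}$ and obtain $\nrm{c}_{\dot{H}^{\sgm+1}} \lesssim \nrm{w}_{\dot{H}^{\sgm}}$ for $-1 < \sgm < 1$, I apply Theorem~\ref{t:heatA-L1} to $\covD F_{\ell 0}$, whose derived parabolic equation has source terms that are perturbative thanks to Lemma~\ref{l:nocov-smth}. The decomposition $w = b - \covD c$ and the stated $\dot{H}^{\sgm}$ bounds follow.

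\textbf{Step~2 ($b \in T^{L^{2}} \calC$).} Set $B_{j}(s) := F_{0j}(s) + \covD_{j} A_{0}(s)$. A direct computation using $\rd_{s} A_{0} = \covD^{\ell} F_{\ell 0}$, the equation for $F_{0j}$, and $[\covD_{j}, \covD^{\ell}] = ad(\tensor{F}{_{j}^{\ell}})$ verifies that $B$ solves the linearized flow \eqref{lin-heat-re} with $B(0) = b$. Since $F_{0j}(\infty) = 0$ and $\covD_{j} A_{0}(\infty) = \rd_{j} A_{0}(\infty) + [a_{\infty}, A_{0}(\infty)] = 0$ (using $a_{\infty} = 0$ for $a \in \calC$ and the caloric convergence from Corollary~\ref{c:global}), we have $B(\infty) = 0$, hence $b \in T^{L^{2}} \calC$ by the definition \eqref{TC}.

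\textbf{Step~3 (Uniqueness).} Suppose $b - \covD c = 0$ with $b \in T^{L^{2}} \calC$ and $c \in \dot{H}^{1}$. Define $B_{j}(s) := \covD[A(s)]_{j} c$; a direct computation (reflecting the fact that $s$-independent gauge transformations preserve the local caloric gauge) shows that $B$ solves the linearized flow with $B(0) = \covD[a]_{j} c = b_{j}$. Then $B(\infty) = \covD[a_{\infty}]_{j} c = \rd_{j} c$, which must vanish since $b \in T^{L^{2}} \calC$. As $c \in \dot{H}^{1} \hookrightarrow L^{4}$, $\rd c = 0$ forces $c = 0$, and hence $b = 0$.

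\textbf{Step~4 ($\bfH$ bound and representation in part~(2)).} The $\bfH$ estimate is obtained by the frequency envelope refinements of Theorems~\ref{t:heatA} and \ref{t:heatA-L1}, combined with the caloric bound $\rd^{\ell} a_{\ell} \in \ell^{1} L^{2}$ from Proposition~\ref{p:cal-in-bfH} and compatibility of frequency envelopes as in Section~\ref{s:caloric-loc}. For part~(2), I decompose $F_{0j}(s) = e^{s\lap} w_{j} + \tilde F_{0j}(s)$, where $\tilde F_{0j}$ solves the inhomogeneous covariant heat equation with zero initial data and source $(\lap_{A} - \lap + 2\, ad(F)) e^{s\lap} w_{j}$, which is quadratic (and higher) in $(a, w)$. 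Substituting into $c = -\int_{0}^{\infty} (\rd^{\ell} F_{\ell 0} + [A^{\ell}, F_{\ell 0}])\, ds$ and using $\int_{0}^{\infty} e^{s\lap}\, ds = -\lap^{-1}$, the leading linear part yields exactly $-\lap^{-1} \rd^{\ell} w_{\ell}$, while the remainder $L(a, w)$ collects the $[A^{\ell}, F_{\ell 0}]$ term and the contribution of $\tilde F_{0j}$. The improved $\ell^{1} \dot{H}^{\sgm+1}$ bound on $L(a, w)$ is then established by Littlewood--Paley trichotomy, exploiting the parabolic high-frequency decay of $\tilde F_{0j}$ and the $\bfH$ control on $a$.

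\textbf{Main obstacle.} The hardest step is step~4, specifically obtaining the sharp $\ell^{1}$ summability for $L(a, w)$ rather than the plain $\dot{H}^{\sgm+1}$ bound; this requires careful frequency envelope bookkeeping in the high-high-to-low interactions between $a$ and the heat-regularized $w$, in the spirit of the arguments in Sections~\ref{sec:cov} and \ref{s:caloric-loc}.
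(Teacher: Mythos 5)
Your construction is correct, but it is not the route the paper's own proof takes. The paper solves the linearized Yang--Mills heat flow \eqref{lin-heat-re} with data $w$, invokes Corollary~\ref{c:global} to conclude that the limit $W(\infty)$ exists and is curl-free, writes $W(\infty) = -\nabla c$, and sets $b = w + \covD[a] c$; the $\bfH$ case is obtained by quoting Proposition~\ref{p:fe-ah} with $O^{(0)} = Id$, and part (2) is proved by peeling off $c_0 = -\lap^{-1} \rd^{\ell} w_{\ell}$ at the level of the linearized flow (so that $W - \covD c_0$ has divergence data in $\ell^1 \dot H^{\sgm-1}$) and propagating as in Lemma~\ref{l:db}. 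Your route --- solving the $F_{0j}$ heat flow and integrating $\covD^{\ell} F_{\ell 0}$ in heat-time to produce $c = A_0(0)$ --- is precisely the ``infinitesimal de Turck'' alternative that the paper itself records in Remark~\ref{r:proj-a0} for part (1) in the range $-1 < \sgm < 1$, so the two arguments are equivalent in substance; your version has the advantage of an explicit formula for $c$ (which the paper exploits later, e.g.\ in Lemma~\ref{l:pi-l2} and Proposition~\ref{p:path-l2}), and your Step~3 makes the uniqueness explicit where the paper only asserts $T_a^{L^2}\calC \cap \covD[a]\dot H^1 = 0$ as clear from gauge invariance. What the paper's route buys is a cleaner treatment of the two places where you are sketchiest: the $\sgm = 1$ ($\bfH$) endpoint, which the paper gets from the elliptic equation \eqref{delta-Oh} for the gauge generator at $s = \infty$ inside Proposition~\ref{p:fe-ah} --- in your setup you must instead track that the $\ell^1 L^2$ control of $\rd^{\ell} w_{\ell}$ (together with the bilinear gains in $[a^{\ell}, w_{\ell}]$) propagates through the $L^1_s$ frequency-envelope estimates for $\covD^{\ell} F_{\ell 0}$ before integrating in $s$ --- and the $\ell^1 \dot H^{\sgm+1}$ bound on $L(a,w)$ in part (2), where the paper propagates improved divergence data through the linearized flow while you must run the Littlewood--Paley trichotomy directly on the $s$-integral. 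Both are completable with the tools of Sections~\ref{sec:cov} and \ref{s:caloric-loc}, but you should make explicit where the divergence information on $w$ enters your argument, since with only $w \in \dot H^1$ no $\ell^1$ gain is available.
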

The above decomposition is in effect a nonlinear div-curl decomposition.
The map from $w$ to $b$ can be viewed as a canonical projection onto $T_a^{L^2}  \calC$.
In the sequel we will denote this projection by 
\begin{equation}\label{def-pi}
b = \Pi_a w.
\end{equation}
\begin{proof}
\pfstep{Proof of (1)}
The case of $\bfH$ is essentially\footnote{Here, the $L^{2}$ frequency envelope $d'_{k}$ for $\rd^{\ell} w_{\ell}$ need not be related to $c_{k}$ and $d_{k}$, except that it must be $1$-compatible with $c_{k}$. Then the proof of Proposition~\ref{p:fe-ah} goes through, with the only exception of the last inequality $d'_{k} \aleq d_{k}$.} Proposition~\ref{p:fe-ah} with $O^{(0)} = Id$, $w = \rd_{h} \ta^{(0)}$, $\Pi_{a} w = \rd_{h} a^{(0)}$ and $c = - O_{;h}^{(0)}$.
In the case of $\dot{H}^{\sgm}$ with $-1 < \sgm < 1$, we solve the linearized equation \eqref{lin-heat-re} with $w$ as
  initial data; the solution is denoted by $W$.  By Corollary~\ref{c:global} (see also Lemma~\ref{l:global}), the map $w \to
  W(\infty)$ is bounded in $\dot{H}^{\sgm}$, and $W(\infty)$ has the form
  $W(\infty) = - \nabla c$, with $c \in \dot{H}^{\sgm+1}$. Then we simply set
\[
b = w + \covD[a] c \in T_a^{L^2}  \calC.
\]

\pfstep{Proof of (2)} 
We peel off the leading part of $c$, namely 
\[
c_0 = -\Delta^{-1} \rd^{\ell} w_{\ell} .
\]
Then $W + \covD[A] c_0$ still solves the linearized heat flow equation,
and further, its data satisfies the better (schematic) equation for the divergence
\[
\covD[a] \cdot (w + \covD[a] c_0)  
= a \cdot w + a \cdot a \cdot \Delta^{-1} \rd w + \rd a \cdot  \Delta^{-1} \rd w,
\]
which implies that $\covD[a](w + \covD_{a} c_{0}) \in \ell^{1} \dot{H}^{\sgm-1}$.
Then we propagate this regularity through the linearized flow, as in Lemma~\ref{l:db}.
\qedhere
\end{proof}

\begin{remark} \label{r:proj-a0}
Now we discuss an alternative way to derive bounds on $c$ (and therefore $b$) relying on the dynamic Yang--Mills heat flow instead of appealing to bounds for \eqref{lin-heat-re} (cf. Sections~\ref{subsec:dymhf} and \ref{subsec:dymhf-re}). This is a variant of the proof of Lemma~\ref{l:global}.

Let $w \in L^{2}$. We begin from the existence of a decomposition $w_{j} = b_{j} - \covD[a] c$ with $b \in T_{a}^{L^{2}} \calC$ and $c \in \dot{H}^{1}$; our aim is then to derive a formula for $c$, which can be analyzed without reference to \eqref{lin-heat-re}. Assume that $b_{j} = \rd_{t} a_{j}(t)$, where $(-\eps_{0}, \eps_{0}) \ni t \mapsto a_{j}(t) \in \calC$ is a $C^{1}$ curve in $\calC$ for some $\eps_{0} > 0$. As in Section~\ref{subsec:dymhf-re}, we introduce $A_{j}(t=0, x, s)$ and $A_{0}(t=0, x, s)$ by solving $F_{s j} = \covD^{\ell} F_{\ell j}$ and $F_{s 0} = \covD^{\ell} F_{\ell 0}$ on $t =0$ with $A_{j}(t=0, x, s=0) = a_{j}(x)$ and $A_{0}(t=0, x, s=0) = c(x)$. Observe that $F_{0j} (t=0, x, s) = \left(B_{j} - \covD A_{0}\right)(t=0, x, s)$, where $B_{j}$ is the solution to \eqref{lin-heat-re} with $B_{j}(s=0) = b_{j}$. Since $F_{0j}$ solves the linear covariant (nondegenerate) parabolic equation \eqref{eq:YMHF-Fij+} with $F_{0j}(t=0, \cdot, s=0) = b_{j} - \covD[a] c \in L^{2}$, by the $L^{2}$ theory in Theorem~\ref{t:heatA}, it follows that $\lim_{s \to \infty} F_{0j} = 0$ in $L^{2}$ on $\set{t = 0}$. Moreover, by the definition of $T_{a}^{L^{2}} \calC$, it follows that $\lim_{s \to \infty} B_{j} = 0$ in $L^{2}$. Hence, we see that $\lim_{s \to \infty} \covD A_{0} = 0$ in $L^{2}$ on $\set{t = 0}$. By the diamagnetic inequality as in the proof of Proposition~\ref{t:deltaA}, as well as the softer facts that $A_{0}(s=0) \in \dot{H}^{1}$ and that $F_{s0} = \covD^{\ell} F_{\ell 0}$ can be viewed as a parabolic equation for $A_{0}$, it then follows that 
\begin{equation*}
	\lim_{s \to \infty} A_{0} = 0
\end{equation*}
in $\dot{H}^{1}$ on $\set{t = 0}$. Using $\rd_{s} A_{0} = F_{s0} = \covD^{\ell} F_{\ell 0}$ (thank to $A_{s} = 0$), we arrive at
\begin{equation} \label{eq:proj-a0}
	c = A_{0}(s=0) = - \int_{0}^{\infty} \covD^{\ell} F_{\ell 0} \, \ud s,
\end{equation}
in $\dot{H}^{1}$ and on $\set{t = 0}$, which is the desired representation formula.

Observe that, while we assumed the existence of $c$ to derive \eqref{eq:proj-a0}, the right-hand side of \eqref{eq:proj-a0} can be \emph{computed in a manner that depends only on $a_{j}$ and $w_{j}$}. Indeed, $a_{j}$ already determines $A_{j}(t=0, x, s)$ in the global caloric gauge, and $F_{0j}$ satisfies the same covariant linear parabolic equation \eqref{eq:YMHF-Fij+} on $\set{t = 0}$ with $F_{0j}(s=0) = w_{j}$ \emph{regardless of $c$}. In particular, by \eqref{eq:proj-a0} and the results in Section~\ref{s:caloric-loc}, we may obtain estimates on $c$ such as $\nrm{c}_{\dot{H}^{\sgm+1}} \aleq_{\hM, \calE} \nrm{w}_{\dot{H}^{\sgm}}$ for $-1 < \sgm < 1$ etc. The advantage of this approach is that we have an explicit formula \eqref{eq:proj-a0}, which will be useful later.

We conclude by noting the following consequences of the preceding argument, which give dynamic-Yang--Mills-heat-flow characterizations of a curve in $\calC$ and $T_{a}^{L^{2}} \calC$. 
\begin{enumerate}
\item Let $A_{\mu} \ud x^{\mu}$ be a dynamic Yang--Mills heat flow on $I \times \bbR^{4} \times [0, \infty)$ with $A_{\mu}(t, \cdot, s = 0) \in \dot{H}^{1}$. Then $A_{j} \, \ud x^{j} \in \calC$ for each $t \in I$ if and only if $A_{s} = 0$ everywhere and $A_{j}(t, \cdot s=\infty) = 0$, $A_{0}(t, \cdot, s=\infty) = 0$ (in $\dot{H}^{1}$) on $t \in I$. 
\item Let $b_{j} \in L^{2}$ and $a_{j} \in \calC$. Then $b_{j} \in T_{a}^{L^{2}} \calC$ if and only if there exists a dynamic Yang--Mills heat flow $A_{\mu}$ on $\set{t = 0} \times \bbR^{4} \times [0, \infty)$ with $A_{j}(s=0) = a_{j}$, $F_{0j}(s=0) = b_{j}$ satisfying the double boundary condition (on $\set{t = 0}$):
\begin{equation}\label{TC-A0}
A_0(s=0) = A_0(s=\infty) = 0.
\end{equation}
\end{enumerate}
\end{remark}

One consequence of Proposition~\ref{p:transverse} is that if $b \in T_a^{L^2}
\calC$ then $\rd^{\ell} b_{\ell}$ has better regularity; this is a linearized analogue of Proposition~\ref{p:cal-in-bfH}:
\begin{corollary}
Let $a \in \calC_{\hM} \subseteq \calC$ with energy $\calE$, and $b \in \dot H^\sgm$ a corresponding 
linearized caloric data set. Then we have
\begin{equation}
\| \partial^{\ell} b_{\ell} \|_{\ell^1 \dot H^{\sgm-1}} \lesssim_{\hM, \calE} \| b\|_{\dot H^{\sgm}} \qquad -1 < \sgm < 1.
\end{equation}
\end{corollary}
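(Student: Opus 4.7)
The plan is to identify $b$ with $\Pi_a b$ via Proposition~\ref{p:transverse} applied to $w = b$, and then read off the divergence information from the explicit formula for the potential $c$ in part (2).

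More precisely, first observe that since $b \in T_a^{L^2}\calC$, we may apply part~(1) of Proposition~\ref{p:transverse} with $w = b$. Both decompositions
\begin{equation*}
b = b - \covD[a]\cdot 0 = \Pi_a(b) - \covD[a]\, c
\end{equation*}
represent $b$ in the form required by the proposition, and by the uniqueness statement therein we conclude $\Pi_a(b) = b$ and $c = 0$ in $\dot{H}^{\sigma+1}$. (The uniqueness in $\dot{H}^\sigma$ for $-1 < \sigma < 1$ follows from the $L^2$ case by approximation, using the $\dot{H}^\sigma$ bounds provided by the proposition.)

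Next, we invoke part~(2) of Proposition~\ref{p:transverse} with the same $w = b$. The representation
\begin{equation*}
c = -\Delta^{-1}\nabla\cdot b + L(a,b)
\end{equation*}
combined with $c = 0$ yields the identity
\begin{equation*}
\partial^\ell b_\ell = \Delta\, L(a,b).
\end{equation*}
Since $\Delta$ maps $\ell^1 \dot{H}^{\sigma+1}$ boundedly into $\ell^1 \dot{H}^{\sigma-1}$ (this is immediate from the dyadic definition of $\ell^1$ spaces), the bound from part~(2) gives
\begin{equation*}
\|\partial^\ell b_\ell\|_{\ell^1 \dot{H}^{\sigma-1}} \lesssim \|L(a,b)\|_{\ell^1 \dot{H}^{\sigma+1}} \lesssim_{\hM,\calE} \|b\|_{\dot{H}^\sigma},
\end{equation*}
which is precisely the desired estimate.

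The only subtlety, and the step that requires a moment's attention rather than a real obstacle, is the extension of the uniqueness in part~(1) of Proposition~\ref{p:transverse} from $L^2$ to $\dot{H}^\sigma$ with $-1 < \sigma < 1$; this is handled by approximating $b$ by its Littlewood--Paley truncations, which lie in $L^2 \cap \dot{H}^\sigma$, applying the proposition and its quantitative bounds to each truncation, and passing to the limit using the continuity of $\Pi_a$ and of the $c$-component on $\dot{H}^\sigma$.
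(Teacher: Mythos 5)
Your overall route is the same as the paper's: the corollary is read off from Proposition~\ref{p:transverse} by taking $w=b$, concluding that the potential $c$ vanishes, and then converting the representation $c=-\Delta^{-1}\nabla\cdot b+L(a,b)$ of part~(2) into the identity $\partial^{\ell}b_{\ell}=\Delta L(a,b)$, which together with the $\ell^1\dot H^{\sgm+1}$ bound on $L(a,b)$ gives the claim. That part of the argument is correct.

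The gap is in your justification that $c=0$ when $\sgm\neq 0$, i.e.\ the parenthetical ``uniqueness in $\dot H^{\sgm}$ follows from the $L^2$ case by approximation.'' Littlewood--Paley truncations $P_{[-N,N]}b$ do lie in $L^2\cap\dot H^{\sgm}$, but they are \emph{not} linearized caloric data: the tangent-space condition (vanishing at $s=\infty$ of the linearized flow \eqref{lin-heat-re}) is not preserved by frequency truncation, so the $L^2$ uniqueness statement cannot be applied to them. Passing to the limit using boundedness of $w\mapsto(\Pi_a w, c)$ only shows that $b$ admits \emph{some} decomposition $b=\Pi_a b-\covD[a]c$ in $\dot H^{\sgm}$; it does not identify $\Pi_a b=b$, $c=0$, which is exactly the uniqueness/transversality statement in $\dot H^{\sgm}$ that remains unproven in your write-up. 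The fix is direct and requires no approximation: in the proof of Proposition~\ref{p:transverse}(1) in the range $-1<\sgm<1$, the potential $c$ is \emph{defined} through the limit of the linearized flow with data $w$, namely $W(\infty)=-\nabla c$; since $b$ is a linearized caloric data set, $W(\infty)=0$ by definition, hence $\nabla c=0$ and so $c=0$ in $\dot H^{\sgm+1}$. (Equivalently, one can prove transversality in $\dot H^{\sgm}$ directly: if $\covD[a]c$ were tangent, its linearized flow is the pure gauge solution $\covD[A(s)]c$, which converges to $\partial c$ as $s\to\infty$ because $A(\infty)=0$, forcing $c=0$.) With this replacement for the uniqueness-by-approximation step, your argument is complete and coincides with the intended proof.
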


We note that the endpoint case $\sgm = 1$ is somewhat different, in that we no longer have a bounded 
projection on the caloric tangent space. However, the bound for caloric tangent state survives, as 
a consequence of Lemma~\ref{l:db} (see also Proposition~{p:fe-ah}).

We also have a frequency envelope bound for $\Pi_{a}$:
\begin{lemma} \label{l:pi-l2}
Let $a \in \calC_{\hM} \subseteq \calC$ with energy $\calE$, and with $\dot{H}^{1}$ $(-1, S)$-frequency envelope $c_{k}$.
\begin{enumerate}
\item Let $w \in L^{2}$ with a $(-1, S)$ frequency envelope $d_{k}$, which is $1$-compatible with $c_{k}$. Then we have
\begin{equation*}
	\nrm{P_{k} \Pi_{a} w}_{L^{2}} \aleq_{\hM, \calE} d_{k}.
\end{equation*} 
\item Alternatively, let $w \in \bfH$, and let $d_{k}$, respectively $d'_{k}$, be $1$-compatible $(-1, S)$ frequency envelopes for $w$ in $\dot{H}^{1}$, respectively $\rd^{\ell} w_{\ell}$ in $L^{2}$. Then we have
\begin{equation*}
	\nrm{P_{k} \Pi_{a} w}_{\dot{H}^{1}} \aleq_{\hM, \calE} d_{k} + d'_{k} + c_{k} d'_{\leq k}.
\end{equation*}
\end{enumerate}
\end{lemma}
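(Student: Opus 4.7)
The strategy for both parts is to exploit the explicit representation of $\Pi_a$ provided by the dynamic Yang--Mills heat flow. Following Remark~\ref{r:proj-a0}, we solve the covariant heat equation \eqref{F0l} for $F_{0j}$ with data $F_{0j}(s=0) = w_j$, and represent
\begin{equation*}
\Pi_a w = w + \covD[a] c, \qquad c = - \int_0^\infty \covD^\ell F_{\ell 0}(s) \, \ud s.
\end{equation*}
Since $a \in \calC_{\hM}$, Proposition~\ref{p:cal-in-bfH} and Lemma~\ref{l:nolin} imply that $c_k$ controls $A$ in the sense of \eqref{hf-A-fe-re+}.

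For part~(1), the argument parallels the proof of Lemma~\ref{l:lin-fe}. Since $d_k$ is $1$-compatible with $c_k$, Theorem~\ref{t:heatA-fe} applied to \eqref{F0l} yields $\nrm{P_k F_{0j}}_{L^\infty L^2} + \nrm{P_k \covD F_{0j}}_{L^2 L^2} \aleq d_k$. The parabolic equation for $\covD^\ell F_{\ell 0}$ has schematic source $[\covD F, F_{0j}] + [F, \covD F_{0j}]$, which admits an $L^1 \dot H^{-1}$ envelope of $d_k$ by Littlewood--Paley trichotomy and the $L^1 \dot H^1$ bound for $\covD F$ from \eqref{hf-A-fe-re+}. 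Theorem~\ref{t:heatA-L1} then produces $\nrm{P_k \covD^\ell F_{\ell 0}}_{L^1 \dot H^1} \aleq d_k$, and integrating in $s$ yields $\nrm{P_k c}_{\dot H^1} \aleq d_k$. A paraproduct estimate for $\covD[a] c = \nabla c + [a, c]$, using $\nrm{P_j a}_{L^4} \aleq c_j$, $\nrm{P_j c}_{L^4} \aleq d_j$, and the $1$-compatibility of $d_k$ with $c_k$, then concludes $\nrm{P_k \Pi_a w}_{L^2} \aleq d_k$.

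For part~(2), use the decomposition $c = c_0 + L$ from Proposition~\ref{p:transverse}(2) with $c_0 = -\Dlt^{-1} \rd^\ell w_\ell$, so that
\begin{equation*}
\Pi_a w = \bbP w + [a, c_0] + \covD[a] L,
\end{equation*}
where $\bbP$ denotes the Leray projection. The first term obeys $\nrm{P_k \bbP w}_{\dot H^1} \aleq d_k$ directly. For the second, the multiplier bound $\nrm{P_k c_0}_{\dot H^2} \aleq d'_k$ together with Bernstein's inequality $\nrm{P_j c_0}_{L^\infty} \aleq d'_j$, combined with a paraproduct analysis of $\nabla[a, c_0] = [\nabla a, c_0] + [a, \nabla c_0]$, give $\nrm{P_k [a, c_0]}_{\dot H^1} \aleq c_k d'_{\leq k}$. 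For the third term, $L$ arises from solving the linearized flow with data $b_L := \bbP w + [a, c_0]$; its $\dot H^1$ envelope is bounded by $d_k + c_k d'_{\leq k}$, but, crucially, its covariant divergence $\covD^\ell[a] (b_L)_\ell$ admits the quadratic schematic $a \cdot w + \rd a \cdot \Dlt^{-1} \rd w + a \cdot a \cdot \Dlt^{-1} \rd w$ from the proof of Proposition~\ref{p:transverse}(2), giving an $L^2$ envelope dominated by $c_k(d_k + d'_k)$ plus higher-order terms. Propagating these bounds through the frequency-envelope version of Lemma~\ref{l:db}, analogous to \eqref{eq:db-fe-h1}, we obtain $\nrm{P_k \covD[a] L}_{\dot H^1} \aleq d_k + d'_k + c_k d'_{\leq k}$.

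The main obstacle is the careful frequency-envelope bookkeeping in part~(2): one must exploit the quadratic structure of $\covD^\ell[a] (b_L)_\ell$ to obtain the sharp envelope, then propagate the resulting improved divergence regularity through the linearized flow. This parallels the proof of Proposition~\ref{p:fe-ah}, but must be redone to accommodate an arbitrary $1$-compatible $(-1, S)$ envelope $d'_k$ for $\rd^\ell w_\ell$ in place of the specific form $c_k d_k^{[1]} + d_k c_k^{[1]}$ used there, with the balanced high-high interactions being the most delicate to track.
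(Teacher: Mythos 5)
Your part~(1) is essentially the paper's own proof: the paper likewise uses the representation of Remark~\ref{r:proj-a0}, bounds $F_{0j}$ via Theorem~\ref{t:heatA-fe}, gets $\nrm{P_k \covD^\ell F_{\ell 0}}_{L^1 \dot H^1} \aleq d_k$ as in Step~1 of Lemma~\ref{l:lin-fe} via Theorem~\ref{t:heatA-L1}, integrates in $s$, and uses $1$-compatibility exactly where you do to control $\covD[a] a_0$.

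For part~(2) the paper's proof is a one-line citation: it reads the bound off from the proof of Proposition~\ref{p:fe-ah} with $O^{(0)}=Id$, i.e.\ it works with the full potential $c$ (the gauge derivative $O_{;h}$), bounds $\covD^\ell B_\ell$ at $s=\infty$ by the divergence-propagation bound of Lemma~\ref{l:db}/Theorem~\ref{t:ymhf-l3-fe}, solves the elliptic equation \eqref{delta-Oh} for $c$, and closes with the paraproduct for $\covD[a]c$ in which the low frequencies of $c$ enter through $L^\infty$ (Bernstein) and produce the $c_k d'_{\leq k}$ term. You instead peel off the explicit Leray piece $c_0=-\Delta^{-1}\rd^\ell w_\ell$ as in Proposition~\ref{p:transverse}(2), estimate $\bbP w$ and $[a,c_0]$ directly, and recover the remainder $L$ from the limit at $s=\infty$ of the linearized flow with the corrected data $b_L=\bbP w+[a,c_0]$, whose covariant divergence is genuinely quadratic. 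This is a legitimate rearrangement, and your identification $-\nabla L=\tilde W(\infty)$ is correct (it uses that $\covD[A(s)]c_0$ is an exact solution of the linearized flow and that $A(\infty)=0$, which is where the caloric hypothesis enters); but it rests on exactly the same two pillars as the paper's route — the Lemma~\ref{l:db}-type propagation of $\covD^\ell B_\ell$ and the low-frequency $L^\infty$ paraproduct — so it buys at most a slightly more explicit bookkeeping of where $d'$ versus $d$ enters, at the cost of having to re-verify admissibility/$1$-compatibility of the modified envelope for $b_L$. Two points to tighten: your intermediate claim that $\covD^\ell[a](b_L)_\ell$ has $L^2$ envelope ``dominated by $c_k(d_k+d'_k)$'' is too optimistic — the $high \times low$ interactions $\rd a \cdot c_0$ and $a\cdot \rd c_0$ produce $c_k d'_{\leq k}$, and $[a,w]$ produces a $d_k$-type term via $1$-compatibility — though since the target bound already contains $d_k+d'_k+c_k d'_{\leq k}$ this does not damage the conclusion; and the final step from the divergence at $s=\infty$ to $\nrm{P_k \covD[a]L}_{\dot H^1}$ (i.e.\ the $\dot H^2$ envelope of $L$ via $\Delta L = -\rd^\ell \tilde W_\ell(\infty)$ followed by the $[a,L]$ paraproduct) is asserted rather than carried out, although it is the same computation as your $[a,c_0]$ estimate.
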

Note that the $1$-compatibility condition rules out application of (1) to $w \in \dot{H}^{1}$, in which case (2) must be used.
\begin{proof}
\pfstep{Proof of (1)}
In anticipation of ensuing arguments, we give a proof using the idea in Remark~\ref{r:proj-a0}. It is very similar to the proof of Lemma~\ref{l:lin-fe}, Step~1, except we integrate $\covD^{\ell} F_{\ell 0}$ from infinity to obtain $a_{0}$. 

More precisely, note that
\begin{equation*}
	\Pi_{a} w = w - \covD[a] a_{0}
\end{equation*}
where $a_{0}$ is given by the formula \eqref{eq:proj-a0}, $A$ is the Yang--Mills heat flow of $a$, and $F_{0j}$ solves
\begin{equation*}
	(\rd_{s} - \lap_{A} - 2 ad(F)) F_{0 x} =  0, \qquad F_{0j}(0) = f_{0j} = w_{j}.
\end{equation*}
As in the proof of Lemma~\ref{l:lin-fe}, we have
\begin{equation*}
	\nrm{P_{k} \covD^{\ell} F_{\ell 0}}_{L^{1} \dot{H}^{1}} \aleq d_{k}.
\end{equation*}
and thus after integration,
\begin{equation*}
	\nrm{P_{k} a_{0}}_{\dot{H}^{1}} + \nrm{P_{k} \covD a_{0}}_{L^{2}} \aleq d_{k}.
\end{equation*}
Note that $1$-compatibility is crucial to get the control of $\covD a_{0}$. The proof (1) is complete. 

\pfstep{Proof of (2)} This statement can be read off from the proof of Proposition~\ref{p:fe-ah}, where $O^{(0)} = Id$.
\qedhere
\end{proof}

\subsection{The heat flow of caloric connections: 
\texorpdfstring{$L^2$}{L2} analysis.}
Our goal here is to better describe the Yang--Mills heat flow of
caloric connections as a perturbation of the linear heat
flow. Toward that goal we begin with a caloric connection
$a \in \calC_{\hM}$ with energy $\calE$ and with a corresponding linearized caloric data $b \in T_a^{L^2} \calC$ (which we write $(a, b) \in T^{L^{2}} \calC_{\hM}$ for short), and we seek to obtain bounds for their caloric heat flows $A(s)$ and
$B(s)$. It is useful and natural to express these bounds in terms of
frequency envelopes.  The next result shows that for caloric
connections both $A$ and $F$ have full parabolic regularity:

\begin{proposition}\label{p:ab-l2}
\begin{enumerate}
\item Let $a$ be a caloric connection in $\calC_{\hM}$ with energy at most $\calE$, and let $c_{k}$ be its $(-1, S)$-frequency envelope in $\dot{H}^{1}$. Then for any $N \geq 0$, its Yang--Mills heat flow $A(s)$ satisfies
  \begin{align} 
\|P_k A(s)\|_{\dot H^1} + \| P_k F(s)\|_{L^2}  \lesssim_{\hM, \calE, N} & c_{k}(1+ 2^{2k} s)^{-N}, 	\label{AF-fe} \\
\|P_k \partial^\ell A_\ell(s)\|_{L^2}  \lesssim_{\hM, \calE, N} & c_k  c_k^{[1]} (1+ 2^{2k} s)^{-N}.	\label{DA-fe}
\end{align}

\item Let $b$ be a corresponding linearized caloric data set with a $2$-compatible $L^2$
 $(-1,S)$-frequency envelope $d_k$. Then for any $N \geq 0$, its linearized caloric flow $B$ satisfies the
  bounds
\begin{align} 
\|P_k B(s)\|_{L^2}   \lesssim_{\hM, \calE, N} & d_k (1+ 2^{2k} s)^{-N}  ,	\label{B-fe} \\
\|P_k \partial^j B_j(s)\|_{\dot H^{-1}}  \lesssim_{\hM, \calE, N}  & \Big( d_k c_k^{[1]} +  c_k d_k^{[2]} + \sum_{j > k} 2^{k-j} c_{j} d_{j}  \Big) (1+ 2^{2k} s)^{-N}. \label{DB-fe}
\end{align}
\end{enumerate}
\end{proposition}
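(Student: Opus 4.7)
The claim upgrades the frequency envelope bounds of Theorem~\ref{t:ymhf-l3-fe} by inserting the parabolic decay factor $(1+2^{2k}s)^{-N}$. The mechanism is covariant parabolic regularity (Section~\ref{sec:cov}) combined with the caloric boundary conditions $A(\infty)=0$, $B(\infty)=0$.

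For Part (1), the curvature satisfies the nondegenerate covariant parabolic equation $(\partial_s-\lap_A-2\,ad(F))F=0$ with $F(0)=f$, in the local caloric gauge (Lemma~\ref{lem:cymhf-eqns}). The plan is to apply Theorem~\ref{t:heatB} with $(\sgm,p)=(0,2)$, using $c_k$ as the frequency envelope for both $A$ in $\dot H^1$ (hypothesis \eqref{A=parab}) and $f$ in $L^2$, both of which are provided by Theorem~\ref{t:ymhf-l3-fe}. Since \eqref{A=parab} is precisely the target bound on $A$, I would run a bootstrap argument: the non-decaying bound $\|P_k A(s)\|_{\dot H^1}\lesssim c_k$ from Theorem~\ref{t:ymhf-l3-fe} serves as the starting point, and the caloric identity
\[
A_j(s)=-\int_s^\infty \covD^\ell F_{\ell j}(\tilde s)\,\ud\tilde s
\]
converts parabolic decay of $\covD F$ in $L^2$ (via \eqref{AF-fe} applied to $F$) back into parabolic decay of $A$ with any $N$ chosen sufficiently large to absorb the integration. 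Iterating this loop (or using a continuity argument with a large decay exponent) closes the bootstrap with an absolute constant. The divergence bound \eqref{DA-fe} is obtained by integrating \eqref{DA-rep}, $\rd_s\rd^\ell A_\ell=-[A^k,\covD^\ell F_{\ell k}]$, from $s=\infty$ (where $\rd^\ell A_\ell=0$ since $a_\infty=0$); Littlewood--Paley trichotomy produces the factor $c_k c_k^{[1]}$, with the extra $c_k^{[1]}$ coming from the low-high sum $\sum_{j\leq k}c_j$ after accounting for parabolic decay at frequency $2^k$.

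For Part (2), the plan is to use the infinitesimal de Turck trick described in Section~\ref{subsec:dymhf} and Remark~\ref{r:proj-a0}. Introduce a dynamic component $A_0$ and write $B_j=F_{0j}+\covD_j A_0$. Then $F_{0j}$ solves $(\partial_s-\lap_A-2\,ad(F))F_{0j}=0$ with $F_{0j}(0)=b_j$, while $A_0$ is determined by $\rd_s A_0=\covD^\ell F_{\ell 0}$ and the caloric boundary condition $A_0(\infty)=0$ (which is equivalent to $b\in T_a^{L^2}\calC$). Applying Theorem~\ref{t:heatB} at $(\sgm,p)=(0,2)$ with the $2$-compatible envelope $d_k$ yields $\|P_k F_{0j}(s)\|_{L^2}\lesssim d_k(1+2^{2k}s)^{-N}$; reconstructing $\covD A_0$ by integrating from infinity and combining with Theorem~\ref{t:heatA-L1} preserves the decay, giving \eqref{B-fe}. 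The bound \eqref{DB-fe} then follows from the identity $\rd_s\covD^k B_k=-2[B^j,\covD^k F_{kj}]$ (as in Lemma~\ref{l:db}), integrated from infinity: the three frequency envelope contributions correspond exactly to the three Littlewood--Paley interactions: $d_k c_k^{[1]}$ from $B$-high/$\covD F$-low, $c_k d_k^{[2]}$ from $B$-low/$\covD F$-high (the $d_k^{[2]}$ reflecting the two-derivative exponent on $F$ via $\covD F$), and $\sum_{j>k}2^{k-j}c_j d_j$ from the $\dot H^{-1}$ off-diagonal decay on high-high interactions.

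The principal obstacle is closing the bootstrap in Part (1): the hypothesis \eqref{A=parab} of Theorem~\ref{t:heatB} is what we are trying to prove. The resolution relies on the fact that the constant in Theorem~\ref{t:heatB} depends only on $\|c\|_{\ell^2}$ and $N$, not on the bootstrap constant for $A$, so propagating through $F$ and integrating back yields a strict improvement. A secondary technical nuisance is the bookkeeping of compatibility conditions: the envelope $d_k$ in Part (2) is $2$-compatible with $c_k$ precisely so that it satisfies the $\dlt$-compatibility hypothesis of Theorem~\ref{t:heatB} at the $L^2$ scaling; similarly, the bilinear estimates for $\rd^\ell A_\ell$ and $\covD^k B_k$ require one to verify that terms like $c_k c_k^{[1]}$ and $c_k d_k^{[2]}$ are in fact valid envelopes compatible with the subsequent uses of these bounds in \cite{OTYM2}.
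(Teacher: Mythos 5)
Your Part (2) is essentially the paper's argument (the $F_{0j}$/$A_{0}$ route via Theorem~\ref{t:heatB} and Lemma~\ref{l:ah-l2}, then \eqref{DB-fe} by integrating $\rd_{s}\covD^{\ell}B_{\ell}=-2[B^{j},\covD^{k}F_{k\ell}]$ from infinity), but it is conditional on Part (1), and there your argument has a genuine gap. Theorem~\ref{t:heatB} requires the hypothesis \eqref{A=parab}, i.e.\ $\|P_{k}A(s)\|_{\dot H^{1}}\aleq c_{k}(1+2^{2k}s)^{-N}$ with $N>2$, and this is exactly the bound \eqref{AF-fe} you are trying to prove; the non-decaying bound from Theorem~\ref{t:ymhf-l3-fe} is the case $N=0$ and does not allow you to invoke the theorem even once. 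Your proposed resolution --- that the constant in Theorem~\ref{t:heatB} depends only on $\|c\|_{\ell^{2}}$ and $N$ and not on the bootstrap constant --- does not close the loop: if your current knowledge of $A$ carries a bootstrap constant $C_{*}$, then the only admissible envelope you may feed into \eqref{A=parab} is $C_{*}c_{k}$, so $C_{*}$ re-enters both through $\|C_{*}c\|_{\ell^{2}}$ (which governs the interval subdivision and hence the constants in Theorem~\ref{t:heatB} in a far from linear way) and through the $low\times high$ interactions $[A_{<k},\rd F_{k}]$, where the decay of the low-frequency pieces of $A$ is genuinely used. Since the problem is large-data there is no small parameter producing a strict improvement, so the continuity/bootstrap scheme as described does not converge.

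The paper proves Part (1) by a different mechanism, and the needed smallness is generated elsewhere: one first establishes non-covariant weighted-in-$s$ parabolic bounds $\nrm{s^{m/2}\rd_{x}^{(m)}F}_{L^{\infty}L^{2}}+\nrm{s^{(m+1)/2}\rd_{x}^{(m+1)}F}_{L^{2}L^{2}}\aleq 1$ (and analogues for $A$ and for the linearized flow $B$ in $\dot H^{\sgm}$, $-1<\sgm<2$), by induction on the number of derivatives starting from Proposition~\ref{p:cov-smth} and Lemma~\ref{l:nocov-smth}; there the smallness comes from divisibility of $\nrm{s^{1/2}\covD F}_{L^{2}_{\frac{ds}{s}}L^{2}}$ (splitting $[0,\infty)$ into finitely many intervals), not from a bootstrap on frequency-localized decay. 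Because these bounds hold for a whole range of Sobolev indices and arbitrarily many derivatives, they immediately yield the proposition for $(-1,2)$ envelopes, and --- this is the trick you miss --- the bounds \eqref{AF-fe} for $A$ and $F$ are then obtained by applying the Part (2) statement to $\rd_{x}A$, which solves the linearized equation with caloric data. The extension from $(-1,2)$ to $(-1,S)$ admissibility is done by an induction on $S$, reiterating the Duhamel formula for $F$ based on the free heat flow and integrating from $s=\infty$ to recover $A$. Only after \eqref{AF-fe} is closed does the paper apply Theorem~\ref{t:heatB} (to $F_{0j}$, together with Theorem~\ref{t:heatA-L1} and Lemma~\ref{l:ah-l2}) to get \eqref{B-fe}, at which point its hypothesis \eqref{A=parab} is available; your Part (2) and the derivation of \eqref{DA-fe}, \eqref{DB-fe} by integrating the divergence identities from infinity then go through as you describe.
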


Our starting point for the proof is the covariant curvature bounds in Proposition~\ref{p:cov-smth},
and their slightly less covariant local caloric gauge versions in Lemma~\ref{l:nocov-smth}.
Our first goal is to expand them to fully noncovariant versions, taking advantage of the 
additional information $A(\infty) = 0$, $B(\infty) = 0$. This is done in the following lemma:

\begin{lemma}
\begin{enumerate}
\item Let $a$ be a caloric connection in $\calC_{\hM}$ with energy $\calE$, and let $A$ be its Yang--Mills heat flow. Then for any $m \geq 0$, we have the bounds
\begin{align} 
 \nrm{s^{m/2} \partial_{x}^{(m)} F}_{L^{\infty}_{\frac{\ud s}{s}} ([0, \infty); L^{2} _{x})}^{2}
	+ \nrm{s^{(m+1)/2} \partial_{x}^{(m+1)} F}_{L^{2}_{\frac{\ud s}{s}} ([0, \infty); L^{2} _{x})}^{2} 
	\lesssim_{\hM, \calE} & 1, \label{nocov-est-f} \\
	 \nrm{s^{m/2} \partial_{x}^{(m+1)} A}_{L^{\infty}_{\frac{\ud s}{s}} ([0, \infty); L^{2} _{x})}^{2}
	+ \nrm{s^{(m+1)/2} \partial_{x}^{(m+2)} A}_{L^{2}_{\frac{\ud s}{s}} ([0, \infty); L^{2} _{x})}^{2} 
	\lesssim_{\hM, \calE} & 1.  \label{nocov-est-a}
\end{align}

\item In addition, if $b \in \dot{H}^{\sgm}$ is a corresponding linearized caloric data set with $-1  < \sgm < 2$,
then the corresponding solution $B$ satisfies the estimates
\begin{equation} \label{nocov-est-b}
 \nrm{s^{m/2} \partial_{x}^{(m)} B}_{L^{\infty}_{\frac{\ud s}{s}} ([0, \infty); \dot{H}^\sgm_{x})}^{2}
	+ \nrm{s^{(m+1)/2} \partial_{x}^{(m+1)} B}_{L^{2}_{\frac{\ud s}{s}} ([0, \infty); \dot{H}^\sgm_{x})}^{2} 
	\lesssim_{\hM, \calE} \|b\|_{\dot H^\sgm}.
\end{equation}
\end{enumerate}
\end{lemma}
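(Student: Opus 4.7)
My plan is to establish all three families of estimates by a joint induction on the order of derivatives $m$, converting the covariant parabolic regularity bounds of Proposition~\ref{p:cov-smth} (for $F$) and Theorem~\ref{t:heatA} (for $B$) into their noncovariant counterparts. The global ingredient that was absent in Lemma~\ref{l:nocov-smth} is the caloric boundary condition $A(\infty) = 0$ together with $\hM(a) \leq \hM < \infty$, which via Proposition~\ref{p:cal-in-bfH} upgrades $A$ to a member of $\bfH$ with the full parabolic regularity estimated in Proposition~\ref{p:ab-l2}(1). This provides absolute convergence of all heat-time integrals at $s = \infty$, which is what is needed to pass from local-in-$s$ bounds to the weighted $L^p_{ds/s}$ norms.

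For part (1), I first treat the bounds \eqref{nocov-est-f} on $F$. The covariant form with $\covD^{(m)}$ replaced by $\partial^{(m)}$ is exactly \eqref{est-cov}. To remove the covariant derivatives I expand $\covD = \partial + \mathrm{ad}(A)$ inductively, so that the commutator between $\covD^{(m)}$ and $\partial^{(m)}$ produces a sum of schematic terms of the form $\partial^{(j_1)}A \cdots \partial^{(j_k)} A \cdot \partial^{(\ell)} F$ with $j_1 + \cdots + j_k + \ell \leq m$. Each such term is estimated by distributing $s^{m/2}$ according to scaling, putting the highest-order factor in $L^2_{ds/s} L^2$ (from the inductive hypothesis or \eqref{est-cov}), and the remaining factors in appropriate mixed parabolic $L^p_{ds/s} L^q$ norms supplied by \eqref{GNS}--\eqref{GNS1} and the lower-order case of \eqref{nocov-est-a}. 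The splitting into a ``high frequency factor'' versus ``low frequency factors'' and the reiteration trick used in Proposition~\ref{p:cov-smth} allow one to absorb the only dangerous term (namely the one with all derivatives on the principal $F$) back into the left-hand side after subdividing the heat-time interval into pieces on which the relevant divisible norm of $F$ is small.

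The estimate \eqref{nocov-est-a} for $A$ is then obtained via the representation
\begin{equation*}
A_i(s) = -\int_s^\infty \covD^\ell F_{\ell i}(\tilde s)\, \ud \tilde s,
\end{equation*}
which follows from the local caloric gauge equation $\partial_s A_i = \covD^\ell F_{\ell i}$ together with $A(\infty) = 0$. Applying $\partial^{(m+1)}$ and expanding $\covD = \partial + \mathrm{ad}(A)$, each $A$-factor of order $k < m+1$ is controlled by \eqref{nocov-est-a} at the inductive level, while the principal term $\int_s^\infty \partial^{(m+2)} F(\tilde s)\, \ud \tilde s$ is estimated using \eqref{nocov-est-f} at order $m+1$. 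To produce the $L^\infty_{ds/s}$ and $L^2_{ds/s}$ weighted bounds one splits $\int_s^\infty = \int_s^{2s} + \int_{2s}^\infty$, applying Cauchy--Schwarz on each piece and using the off-diagonal decay $(\tilde s/s)^{-N}$ implicit in the high-frequency tail bounds of Proposition~\ref{p:ab-l2}(1). The base case $m = 0$ is provided directly by Proposition~\ref{p:cal-in-bfH} and Lemma~\ref{l:nocov-smth}.

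Part (2) for $B$ proceeds by the same blueprint. First I use Theorem~\ref{t:heatA} applied repeatedly to the equations obtained by commuting $\covD^{(m)}$ past the covariant heat operator in \eqref{heatA+}, with all commutator and curvature-perturbation terms controlled by the $F$ bounds from part (1); this yields the covariant analogue of \eqref{nocov-est-b}, namely $\|s^{m/2} \covD^{(m)} B\|_{L^\infty_{ds/s} \dot H^\sgm} + \|s^{(m+1)/2} \covD^{(m+1)} B\|_{L^2_{ds/s} \dot H^\sgm} \lesssim \|b\|_{\dot H^\sgm}$. The conversion to noncovariant derivatives repeats the expansion $\covD = \partial + \mathrm{ad}(A)$, with all emerging $A$-factors absorbed by \eqref{nocov-est-a}. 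The range $-1 < \sgm < 2$ reflects exactly the range in which Theorem~\ref{t:heatA} is available with the perturbative treatment of $2\mathrm{ad}(F) B$, together with an additional $\dot H^1$ loss when passing from $\dot H^2$ to $\dot H^{1+\sgm}$ via $\lap_A^{-1}$.

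The main obstacle, as I see it, is not any single inequality but the combinatorial bookkeeping needed to interlock three inductions: the bounds on $A$ at order $m$ rely on the bounds on $F$ at orders up to $m+1$, while each conversion step from covariant to noncovariant derivatives for $F$ and $B$ at order $m$ in turn requires the $A$ bounds at orders up to $m$. It is therefore essential to carry out the argument in the order ``$F$ at order $m$'' $\to$ ``$A$ at order $m$'' $\to$ ``$B$ at order $m$'' before advancing $m \to m+1$, and to keep careful track of which heat-time weights and Sobolev/Lebesgue norms are used at each step so that the dependencies never cycle.
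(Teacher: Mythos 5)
Your part (1) follows essentially the paper's route (global covariant bounds from Proposition~\ref{p:cov-smth}, integration of the flow equation from $s=\infty$ using the caloric condition $A(\infty)=0$, then an inductive covariant-to-noncovariant conversion), but two warnings: first, you should not lean on Proposition~\ref{p:ab-l2}, since in the paper that proposition is deduced \emph{from} the present lemma (its part (1) is obtained by applying part (2) to $\rd_x A$), so quoting its ``full parabolic regularity'' or its high-frequency tails here is circular; the decay needed to handle $\int_s^\infty$ already comes from \eqref{est-cov}, Lemma~\ref{l:nocov-smth} and Proposition~\ref{p:cal-in-bfH}. Second, your own ordering ``$F$ at order $m$ $\to$ $A$ at order $m$'' is inconsistent with your statement that the $A$ bound at order $m$ uses \eqref{nocov-est-f} at order $m+1$; this is fixable by reordering, but as written the induction cycles.

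The genuine gap is in part (2). You propose to commute $\covD^{(m)}$ past ``the covariant heat operator in \eqref{heatA+}'' and apply Theorem~\ref{t:heatA} to $B$, but $B$ does not solve \eqref{heatA+}: the linearized flow \eqref{lin-heat} is only degenerate parabolic, namely $(\rd_s - \lap_A - 2\,ad(F))B_i = -\covD_i \covD^j B_j$, and the divergence $\covD^j B_j$ obeys a transport/ODE-type equation which gains no regularity or decay from the equation alone, so it can be neither dropped nor treated perturbatively (it is part of the very quantity $\covD B$ you are trying to bound). Indeed, for data $b\notin T_a^{L^2}\calC$ the full smoothing claimed in \eqref{nocov-est-b} is false, since $\covD^j B_j(s)$ does not decay as $s\to\infty$; so any correct proof must use the caloric condition on $b$, which your part (2) never invokes. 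The paper's mechanism is the infinitesimal de Turck trick: solve the genuinely parabolic equation for $F_{0j}$ with data $b$ (this is where Theorem~\ref{t:heatA} and its higher-order iterates legitimately apply, giving \eqref{nocov-est-f0j}), recover $A_0$ by integrating $\rd_s A_0=\covD^\ell F_{\ell 0}$ in heat-time using the double boundary condition $A_0(0)=A_0(\infty)=0$ encoded by $b\in T_a^{L^2}\calC$, and then set $B_j = F_{0j} + \covD_j A_0$; moreover, for $\sgm \geq 1$ one must also integrate from $s=0$ (the bound \eqref{nocov-est-a0-0}) to control the low frequencies of $A_0$ in the bilinear term $[A,A_0]$. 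Without this decomposition, or some equivalent exploitation of $B(\infty)=0$, your argument for \eqref{nocov-est-b} does not go through.
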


\begin{proof}
For counting the $s$-weights, it is convenient to use the measure $\frac{\ud s}{s}$ for $s \in [0, \infty)$. In this proof, for simplicity of notation, we simply write $L^{q} L^{r} = L^{q}_{\frac{d s}{s}}([0, \infty); L^{r})$. Moreover, we suppress the dependence of implicit constants on $\hM$ and $\calE$.

\pfstep{Proof of (1)} We  start from the bounds in Proposition~\ref{p:cov-smth}, which now hold globally in time. Recall also that, by Proposition~\ref{p:cal-in-bfH}, we already have
\begin{equation*}
	\nrm{\rd_{x} A}_{L^{\infty} L^{2}} \aleq_{\hM, \calE} 1.
\end{equation*}

\pfsubstep{Step~(1).1:~Covariant bounds for $A$}
These follow from  \eqref{est-cov} after time integration from $\infty$.
Starting from the (schematic) relation
\[
\partial_s A = \covD F,
\]
we differentiate to obtain 
\[
\partial_s \covD A = \covD^{(2)} F + [\covD F,A] .
\]
Then reiterating yields
\[
\partial_s \covD^{(m)} A = \covD^{(m+1)} F + \sum_{k  = 1}^{m}    [\covD^{(k)} F, \covD^{(m-k)} A].
\]
Thus, for $m > 0$ we can estimate
\[
\begin{split}
\| s^{m/2} \covD^{(m+1)} A\|_{L^{\infty} L^2} \lesssim & \ \| s^{m/2+1} \partial_s \covD^{(m+1)} A\|_{L^{\infty} L^2} 
\\ 
\lesssim & \  \| s^{m/2+1}  \covD^{(m+2)} F\|_{L^\infty L^2}  + \sum_{k=1}^{m+1} \|  s^{m/2+1}[\covD^{(k)} F, \covD^{(m+1-k)} A]\|_{L^\infty L^2}
\end{split}
\]
and the last term is bounded inductively if $k< m+1$, as
\[
 \| s^{m/2+1} [\covD^{(k)} F, \covD^{(m+1-k)} A] \|_{L^\infty L^2}  \lesssim 
 \| s^{k/2+1} \covD^{(k)} F\|_{L^\infty L^{\infty}} \| s^{(m-k)/2} \covD^{(m-k+1)} A\|_{L^\infty L^2}
\]
and directly if $k = m+1$,
\[
\|  s^{m/2+1}[\covD^{(m+1)} F,  A]\|_{L^\infty L^2}  \lesssim \| s^{m/2+1} \covD^{(m+1)} F\|_{L^\infty L^4} \| A\|_{L^\infty L^4}.
\]
The argument for the $L^2 L^{2}$ bounds is similar, and also applies if $m = 0$.

\pfsubstep{Step~(1).2:~The bound \eqref{nocov-est-f} for $m = 1$}
The $L^\infty L^2$ part follows from \eqref{D=d}. For the $L^2$ part we estimate
\begin{align*}
\|s \partial^{(2)} F\|_{L^2 L^{2}} \lesssim & \| s \covD  \partial F\|_{L^2 L^{2}} \lesssim \| s \partial \covD F\|_{L^2 L^{2}} 
+ \| s[ \partial A,F]\|_{L^2 L^{2}} \\
\lesssim &  \| s  \covD^{(2)} F\|_{L^2 L^{2}} 
+ \| \partial A\|_{L^\infty L^2} \| s F\|_{L^2 L^\infty}.
\end{align*}

\pfsubstep{Step~(1).3:~The bound \eqref{nocov-est-a} for $m = 0$} 
We already know the $L^\infty L^2$ part. For the $L^2 L^{2}$ bound we estimate
\[
\begin{split}
\| s^{1/2} \partial_x^{(2)} A\|_{L^2 L^{2}} \lesssim & \  \| s^{1/2} \covD \partial A\|_{L^2 L^{2}} 
\lesssim \| s^{1/2} \partial \covD A\|_{L^2 L^{2}} + \| s ^{1/2} [\partial A, A]\|_{L^2 L^{2}}
\\ \lesssim  & \ \| s^{1/2} \covD^{(2)} A\|_{L^2 L^{2}} + \|\partial A\|_{L^\infty L^2}   \| s ^{1/2} A \|_{L^2 L^\infty}.
\end{split}
\]
Finally for the last factor we have
\[
  \| s ^{1/2} A \|_{L^2 L^\infty} \lesssim \| s^{3/2} \covD F\|_{L^2 L^\infty}
\]

\pfsubstep{Step~(1).4:~The bound \eqref{nocov-est-f}}
For $m = 0$ we only need the following simple bound:
\begin{equation}\label{D=d}
\| F\|_{\dot H^1} \lesssim \|\covD F\|_{L^2}.
\end{equation}

We now consider $m \geq 1$.
For the $L^\infty L^2$ bound we have 
\[
\|s^{m/2} \partial^{(m)} F\|_{L^\infty L^2} \lesssim \| s^{m/2} \covD^{(m)} F\|_{L^\infty L^2}
+ \!\!\!\! \sum_{m_0+ m_1+\cdots+ m_k = m-k} \!\!\!\!  \|  s^{m/2} \Big( \prod_{j=1}^k ad(\covD^{(m_j)} A)  \Big) \covD^{(m_0)} F\|_{L^\infty L^2}
\]
and it remains to estimate
\[
\|  s^{m/2} \Big( \prod_{j=1}^k ad(\covD^{(m_j)} A)  \Big) \covD^{(m_0)} F \|_{L^\infty L^2} \lesssim 
\Big( \prod_{j=1}^k \| s^{m_j/2 +1} \covD^{(m_j)} A\|_{L^{\infty} L^\infty} \Big) \| s^{m_0/2}  \covD^{(m_0)} F\|_{L^\infty L^2}.
\]

Similarly, for the $L^2 L^{2}$ bound we get
\begin{align*}
\|s^{(m+1)/2} \partial^{(m+1)} F\|_{L^2 L^{2}} \lesssim & \| s^{(m+1)/2} \covD^{(m+1)} F\|_{L^2 L^{2}} \\
&+ \sum_{m_0+ m_1+\cdots+ m_k = m+1-k} \|  s^{(m+1)/2} \Big( \prod_{j=1}^k ad( \covD^{(m_j)} A) \Big) \covD^{(m_0)} F\|_{L^2 L^{2}}.
\end{align*}
Here we distinguish two cases. If $m_0 > 0$ then we estimate the $F$ factor in $L^2 L^{2}$ and 
all $A$ factors in $L^\infty L^{\infty}$ as above. Else we estimate the $F$ factor in $L^2 L^4$, and one of the $A$ 
factors in $L^\infty L^4$.
\medskip

\pfsubstep{Step~(1).5: The bound \eqref{nocov-est-a}} 
For the $L^\infty L^2$ bound we have, with $m \geq 0$, 
\begin{align*}
\|s^{m/2} \partial^{(m+1)} A\|_{L^\infty L^2} \lesssim & \| s^{m/2} \covD^{(m+1)} A\|_{L^\infty L^2} \\
& + \sum_{m_0+ m_1+\cdots+ m_k = m-k+1} \|  s^{m/2} \Big( \prod_{j=1}^k ad(\covD^{(m_j)} A)  \Big) \covD^{(m_0)} A\|_{L^\infty L^2}
\end{align*}
and conclude as above using the induction hypothesis. The $L^2 L^{2}$ bound
is also similar, and that argument also applies if $m = 0$.

\pfstep{Proof of~(2)} We will approach $B$ via the $F_{0j}$ flow with same data, which also has $\dot{H}^\sgm$
regularity.  We already know from Theorem~\ref{t:heatA} that this is
well-posed in $\dot H^\sgm$, which gives the bound in \eqref{nocov-est-b}
for $N = 0$.  Then we write the equation for $s \Delta_A F_{0j}$, apply the 
same estimate and then argue as above and repeat. This yields the desired
estimate, but for $F_{0j}$ rather than $B$:
\begin{equation} \label{nocov-est-f0j}
 \nrm{s^{m/2} \partial_{x}^{(m)} F_{0j}}_{L^{\infty} \dot{H}^{\sgm}}^{2}
	+ \nrm{s^{(m+1)/2} \partial_{x}^{(m+1)} F_{0j}}_{L^{2} \dot{H}^{\sgm} }^{2} 
	\lesssim_{\hM, \calE} 1 .
\end{equation}

Next we turn our attention to $A_0$. As $b$ is a linearized caloric data set, it follows that $A_0$ vanishes at infinity and at $s = 0$.
Then $A_0$ is represented in two different ways as
\[
A_0(s_0) = -  \int_{s_0}^\infty \covD^{\ell} F_{\ell 0}(s)ds =  \int_0^{s_0} \covD^\ell F_{\ell 0}(s)ds .
\]

We first claim that 
\begin{equation} \label{nocov-est-a0}
 \nrm{s^{m/2} \partial_{x}^{(m)} A_0}_{L^{\infty} \dot{H}^{\sgm+1}}^{2}
	+ \nrm{s^{(m+1)/2} \partial_{x}^{(m+1)} A_0}_{L^{2} \dot{H}^{\sgm+1}}^{2} 
	\lesssim_{\hM, \calE} 1 .
\end{equation}
Indeed, the integrand satisfies the same bounds as for $F_{0j}$ but
with $\sgm$ replaced by $\sgm-1$.  Then by direct integration from infinity
we obtain all the desired bounds for $A_0$ (i.e. the same as for
$F_{0j}$ but with $\sgm-1$ replaced by $\sgm+1$), with the notable exception
of the $L^\infty \dot H^{\sgm+1}$ bound.  For this we combine the
$L^2\dot H^\sgm$ bound for $\covD F = \partial_s A_0$ with the $s^{-1} L^2
\dot H^{\sgm+2}$ bound for $A_0$.

Unfortunately the estimate \eqref{nocov-est-a0} does not directly yield the similar 
bounds for $\covD A_0$, precisely in the range $\sgm \geq 1$. There \eqref{nocov-est-a0}
does not provide any good control over the low frequencies of $A_0$, which is needed for 
the bilinear term $[A,A_0]$.  To remedy this we also integrate from zero
to obtain the bound
\begin{equation} \label{nocov-est-a0-0}
 \nrm{s^{-1} A_{0}}_{L^{\infty} \dot{H}^{\sgm-1}}^{2} \lesssim_{\hM, \calE} 1. 
\end{equation}
Now we can bound the term $[A,A_0]$ using a Littlewood-Paley trichotomy as follows:
if the frequency of $A_0$ is higher, then combine  \eqref{nocov-est-a} with  \eqref{nocov-est-a0},
and if the frequency of $A_0$ is lower, then combine  \eqref{nocov-est-a} with  \eqref{nocov-est-a0-0}. \qedhere
\end{proof}

We are now ready to complete the proof of Proposition~\ref{p:ab-l2}.

\begin{proof}[Completion of the proof of Proposition~\ref{p:ab-l2}]
As before, we suppress the dependence of implicit constants on $\hM, \calE, N$.

We first consider $(-1,2)$ envelopes for both $B$ and $A$.   
Then the bounds for the linearized caloric flow $B$ in part (2) of the
  proposition follow directly from the lemma.  Hence the bounds for $A$
  and $F$ in part (1) are obtained by applying the result in part (2)
  to $\partial_x A$, which solves the linearized equation. 

In order to relax the admissibility constraint on the frequency envelope from
$(-1,2)$ to $(-1,S)$, we reiterate the equations based on the linear
heat flow. Indeed, denoting by $c_k^{S}$ the minimal $(-1,S)$
frequency envelopes for the data $a$ in $\dot H^1$, we use induction
on $S$ to show that
\[
\| P_k A\|_{\dot H^1} \lesssim c_k^{S}(1+ 2^{2k} s)^{-N}.
\]
The above analysis proves this for $S < 2$. To increase $S$ to
$S+\sigma$ with $\sigma < 1$ we reiterate based on the linear (schematic) Duhamel formula
\begin{equation*}
	F(s) = e^{s \lap} F(s=0) + \int_{0}^{s} e^{(s-\tilde{s}) \lap} ([A, \rd F] + [\rd A, F] + [A, [A, F]]) \, d \tilde{s},
\end{equation*}
followed by integration from infinity for $A$.  A direct computation (whose tedious details we omit) using Littlewood--Paley trichotomy yields
\[
\| P_k A\|_{\dot H^1} \lesssim (c_k^{S+\sigma} +   
c_k^{S} \sum_{j< k} 2^{j-k} c_{j}^{S})    (1+ 2^{2k} s)^{-N} .
\]
As we have 
\[
c_k^{S} \approx \sup_{j <  k}  2^{S(1-\eps)(j-k)} c_{j}^{S+\sigma} ,
\]
we compute
\[
c_k^{S} \sum_{j< k} 2^{j-k} c_{j}^{S} \leq \sum_{j, \ell < k}
 2^{S(1-\eps)(j-k)}  c_j^{S+\sigma}  2^{\ell-k}  c_{\ell}^{S+\sigma} \lesssim \sum_{j < k}
2^{(S+\sigma+)(j-k)}  c_j^{S+\sigma} \approx  c_k^{S+\sigma}
\]
where at the second-to-last step we separate the cases $j < \ell$ and $\ell < j$ and use 
$c_k \lesssim 1$. The induction is concluded. 

Since the estimate for $A$ has been closed, a more direct argument applies for $B$. We first estimate $F_{0j}$ 
perturbatively using Theorem~\ref{t:heatB}, which yields
\begin{equation} \label{eq:F0-fe}
	\nrm{P_{k} F_{0j}(s)}_{L^{2}} \aleq d_{k} (1+2^{2k} s)^{-N}. 
\end{equation}
Then for $A_0$ we can integrate either from infinity (for the high frequencies)
or from zero (for the low frequencies). We obtain:
\begin{lemma} \label{l:ah-l2}
Under the assumptions above, we have
\begin{align}
	\nrm{P_{k} A_{0}(s)}_{\dot{H}^{1}} \aleq & d_{k} (1+2^{2k} s)^{-N}, \label{eq:ah-l2-high} \\
	\nrm{s^{-1} P_{k} A_{0}(s)}_{\dot{H}^{-1}} \aleq & d_{k}. \label{eq:ah-l2-low}
\end{align}
\end{lemma}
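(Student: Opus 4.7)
The plan is to integrate the relation $\rd_{s} A_{0} = \covD^{\ell} F_{\ell 0}$, exploiting the double boundary condition $A_{0}(s=0) = A_{0}(s=\infty) = 0$ which characterizes $b \in T_{a}^{L^{2}} \calC$ (cf.~\eqref{TC-A0}). The high frequencies of $A_{0}$ are best handled by integrating from $s = \infty$, yielding \eqref{eq:ah-l2-high}; the low frequencies (relative to the scale $s^{-1/2}$) are handled by integrating from $s = 0$, yielding \eqref{eq:ah-l2-low}.

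The key intermediate estimate I would establish is the parabolic-decay frequency-envelope bound
\begin{equation} \label{eq:key-bd-plan}
\nrm{P_{k} \covD^{\ell} F_{\ell 0}(s)}_{L^{2}} \aleq 2^{k} d_{k} (1+2^{2k}s)^{-N-1}
\end{equation}
for arbitrary $N$. Starting from \eqref{eq:F0-fe}, I would split $\covD^{\ell} = \rd^{\ell} + ad(A^{\ell})$: the first piece contributes exactly $2^{k} d_{k} (1+2^{2k}s)^{-N-1}$ by \eqref{eq:F0-fe} with $N$ replaced by $N+1$, while the bilinear piece $P_{k} [A^{\ell}, F_{\ell 0}]$ is estimated by Littlewood--Paley trichotomy using the bound \eqref{AF-fe} for $A$ together with \eqref{eq:F0-fe} for $F_{0j}$. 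The $low \times high$ interaction is handled directly, while the $high \times low$ and $high \times high$ cascades rely on the $2$-compatibility of $d_{k}$ with $c_{k}$, just as in the proof of Theorem~\ref{t:heatB}.

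Given \eqref{eq:key-bd-plan}, for \eqref{eq:ah-l2-high} I would use
\begin{equation*}
P_{k} A_{0}(s) = - \int_{s}^{\infty} P_{k} \covD^{\ell} F_{\ell 0}(\tilde{s}) \, d \tilde{s}
\end{equation*}
and split the integral at the natural parabolic scale $\tilde{s} \sim 2^{-2k}$; choosing $N$ large enough, the resulting integral is dominated by $2^{-k} d_{k} (1+2^{2k}s)^{-N}$, which is exactly the $\dot{H}^{1}$ envelope bound claimed. For \eqref{eq:ah-l2-low} I would instead write
\begin{equation*}
P_{k} A_{0}(s) = \int_{0}^{s} P_{k} \covD^{\ell} F_{\ell 0}(\tilde{s}) \, d \tilde{s}
\end{equation*}
and apply the $\dot{H}^{-1}$ form of \eqref{eq:key-bd-plan}, namely $\nrm{P_{k} \covD^{\ell} F_{\ell 0}(\tilde{s})}_{\dot{H}^{-1}} \aleq d_{k} (1+2^{2k}\tilde{s})^{-N-1} \leq d_{k}$. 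Since this is uniform in $\tilde{s}$, the integral over $[0, s]$ gives a factor of $s$, producing $\nrm{P_{k} A_{0}(s)}_{\dot{H}^{-1}} \aleq s \, d_{k}$.

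The main obstacle is establishing \eqref{eq:key-bd-plan} while preserving both the parabolic decay and the frequency-envelope structure. The delicate step is the $high \times high \to low$ cascade in $[A, F_{0}]$, where one must balance the decay in $s$ coming from $F_{0}$ against the envelope $c_{k}$ for $A$ at higher frequencies; this is precisely where $2$-compatibility of $d_{k}$ with $c_{k}$ is used, exactly as in Theorem~\ref{t:heatB}. Once \eqref{eq:key-bd-plan} is in hand, both integration arguments are essentially one-line computations.
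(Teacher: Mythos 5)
Your proposal is correct and follows exactly the route the paper intends: the paper's own ``proof'' of this lemma is the single sentence preceding it (``integrate either from infinity for the high frequencies or from zero for the low frequencies''), and your integration of $\rd_{s} A_{0} = \covD^{\ell} F_{\ell 0}$ from $s=\infty$ for \eqref{eq:ah-l2-high} and from $s=0$ for \eqref{eq:ah-l2-low}, using the double boundary condition \eqref{TC-A0} and the intermediate bound \eqref{eq:key-bd-plan} derived from \eqref{eq:F0-fe} and \eqref{AF-fe}, is precisely that argument fleshed out. The only point to watch in establishing \eqref{eq:key-bd-plan} is that in the $high(A) \times low(F_{0})$ interaction you must pair $\nrm{P_{k}A}_{L^{2}}$ against $\nrm{P_{j}F_{0}}_{L^{\infty}}$ (rather than $L^{4}\times L^{4}$) so that the resulting sum is $2^{k} c_{k}\sum_{j<k}2^{2(j-k)}d_{j}$, which is exactly what $2$-compatibility controls; the $L^{4}\times L^{4}$ pairing would instead demand $1$-compatibility.
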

Combining this lemma with the \eqref{eq:F0-fe}, we obtain \eqref{B-fe} for $(-1, S)$ frequency envelopes.

It remains
  to prove the estimates for $\partial^\ell A_\ell$ and $\partial^\ell
  B_\ell$. As in Lemmas~\ref{l:da} and \ref{l:db},
   this is a direct computation based on the representations
\[
\rd^\ell A_\ell(s) = \int_{s}^\infty  [A^\ell, \covD^k F_{k \ell}] d \tilde{s},
\]
respectively,
\[
\rd^{\ell} B_{\ell}(s) + [A^{\ell}, B_{\ell}] (s) = \covD^\ell B_\ell(s) = 2\int_{s}^\infty  [B^{\ell}, \covD^{k} F_{k \ell}] d \tilde{s}.
\]
We omit the details.
\end{proof}

Our next goal is to establish difference bounds for the heat flows of
caloric connections. For this we consider two linearized caloric data sets
$(a^{(0)},b^{(0)}), (a^{(1)},b^{(1)}) \in T^{L^2} \calC_{\hM}$ with energy at most $\calE$, which are
assumed to be sufficiently close
\begin{equation}\label{dA-small}
\|a^{(0)} - a^{(1)}\|_{\dot H^1} \ll_{\hM, \calE} 1.
\end{equation}
Then we seek to compare their corresponding caloric extensions
$(A^{(0)}(s),B^{(0)}(s))$, respectively $ (A^{(1)}(s),B^{(1)}(s))$,
and provide frequency envelope bounds for the differences
\[
\delta A(s) = A^{(1)}(s)-A^{(0)}(s), \qquad \delta B(s) = B^{(1)}(s)-B^{(0)}(s),.
\]
Our main result is as follows:

\begin{proposition}\label{p:diff-l2}
Let $(a^{(0)},b^{(0)}), (a^{(1)},b^{(1)})\in T^{L^2} \calC_{\hM}$ be two linearized caloric
data sets with energy at most $\calE$, such that \eqref{dA-small} holds. Assume 
that  $c_k$ is a $(-1,S)$-frequency envelope for $(a^{(0)},b^{(0)})$ and $(a^{(1)},b^{(1)})$ in $\dot H^1 \times L^2$,
and that $d_k$  is a $1$-compatible $(-1,S)$-frequency envelope for $(a^{(0)} - a^{(1)},b^{(0)} - b^{(1)})$ in $\dot H^1 \times L^2$, such that
\begin{equation} \label{eq:e-l2}
	e_{k} = d_{k} + c_{k} (c \cdot d)_{\leq k}
\end{equation}
and $c_{k} e_{k}$ are also $(-1, S)$-admissible.
Then we have the difference bounds 
\begin{equation}\label{dA-fe}
 \|P_k \delta A(s)\|_{\dot H^1} + \|P_k \delta F(s)\|_{L^2} + 
\|P_k \delta B(s)\|_{L^2} \lesssim_{\hM, \calE, N}  e_{k} (1+ 2^{2k} s)^{-N},
\end{equation}
respectively,
\begin{equation}\label{dDA-fe}
 \|P_k \partial^\ell \delta A_\ell(s)\|_{L^2} 
+ \|P_k \partial^\ell \delta B_\ell(s)\|_{\dot H^{-1}}  \\
 \lesssim_{\hM, \calE, N} \Big( c_{k} e_{k}^{[1]} + e_{k} c_{k}^{[1]} + \sum_{j > k} 2^{k-j} c_{j} e_{j} \Big) (1+ 2^{2k} s)^{-N}.
\end{equation}
\end{proposition}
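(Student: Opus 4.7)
The strategy is to reduce the claim to Proposition~\ref{p:ab-l2} by interpolating between $(a^{(0)}, b^{(0)})$ and $(a^{(1)}, b^{(1)})$ via a smooth one-parameter path in $T^{L^2} \calC_{\hM}$, and integrating linearized flow bounds along it. Using the local parametrization of $\calC$ by the affine Coulomb slice $\calB = \{a^{(0)} + b : \covD[a^{(0)}]^\ell b_\ell = 0\}$ established in the previous subsection, together with the smallness hypothesis \eqref{dA-small}, I would write $a^{(1)} = \Cal(a^{(0)} + b^\ast)$ for a small $b^\ast \in \calB$ and set $a^{(\tau)} = \Cal(a^{(0)} + \tau b^\ast) \in \calC_{\hM}$. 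A frequency envelope version of the chart analysis, patterned after Proposition~\ref{p:fe-ah}, would show that $\rd_\tau a^{(\tau)} \in T_{a^{(\tau)}} \calC$ uniformly in $\tau$, with $\dot H^1$ envelope $e_k = d_k + c_k (c \cdot d)_{\leq k}$.

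Next, define $b^{(\tau)} = \Pi_{a^{(\tau)}}((1-\tau) b^{(0)} + \tau b^{(1)})$, which lies in $T^{L^2}_{a^{(\tau)}} \calC$ by Proposition~\ref{p:transverse}. Differentiating in $\tau$ and using the explicit representation \eqref{eq:proj-a0} of $\Pi_{a}$ together with Lemma~\ref{l:pi-l2} and the envelope from Step~1, one obtains $\|P_k \rd_\tau b^{(\tau)}\|_{L^2} \aleq_{\hM, \calE} e_k$, with the requisite $2$-compatibility with $c_k$.

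With these inputs, Proposition~\ref{p:ab-l2} applied fiber-wise in $\tau$ to $(\rd_\tau a^{(\tau)}, \rd_\tau b^{(\tau)}) \in T^{L^2}_{a^{(\tau)}} \calC$ produces $\|P_k \rd_\tau A^{(\tau)}(s)\|_{\dot H^1} + \|P_k \rd_\tau F^{(\tau)}(s)\|_{L^2} \aleq_{\hM, \calE, N} e_k (1+2^{2k}s)^{-N}$, together with analogous bounds for $\rd^\ell \rd_\tau A^{(\tau)}_\ell$, uniformly in $\tau$. Integrating in $\tau \in [0,1]$ yields \eqref{dA-fe}--\eqref{dDA-fe} for $\delta A$, $\delta F$, and their divergences. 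For $\delta B$, one observes that $\rd_\tau B^{(\tau)}$ satisfies the linearized caloric heat flow around $A^{(\tau)}$ with initial data $\rd_\tau b^{(\tau)}$ and an inhomogeneous source $H$ bilinear in $\rd_\tau A^{(\tau)}$ and $B^{(\tau)}$, arising from differentiating the $\tau$-dependent coefficients. By Littlewood--Paley trichotomy together with the Step~3 envelope for $\rd_\tau A^{(\tau)}$ and the Proposition~\ref{p:ab-l2} envelope for $B^{(\tau)}$, this source has $L^1 L^2$ envelope controlled by $c_k e_k$; feeding this into Lemma~\ref{l:lin-fe} and integrating in $\tau$ yields the $\delta B$ bounds.

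The main obstacle I expect is Step~1: propagating the $e_k$ envelope for $\rd_\tau a^{(\tau)}$ uniformly along the path, since the background $a^{(\tau)}$ and thus the associated gauge transformation itself varies in $\tau$. Concretely, this amounts to analyzing the Coulomb-type elliptic equation \eqref{delta-Oh} with both the operator $\lap_{a^{(\tau)}_\infty}$ and the source $\covD \rd_\tau a^{(\tau)}$ changing in $\tau$, while keeping track of frequency envelopes throughout. The $1$-compatibility of $d_k$ with $c_k$ in the hypothesis, together with the additional $(-1,S)$-admissibility of $c_k e_k$, are tailored precisely so that Theorem~\ref{t:deltaA-fe} and the paradifferential machinery of Section~\ref{sec:cov} close uniformly in $\tau$. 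Once Step~1 is secured, Steps~2--4 are essentially mechanical applications of the previously established tools.
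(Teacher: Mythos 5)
Your overall architecture --- join the two data sets by a path inside the caloric manifold, apply the linearized bounds of Proposition~\ref{p:ab-l2} fiber-wise in the path parameter, and integrate --- is exactly the paper's strategy (this is the content and proof of Proposition~\ref{p:path-l2}). However, your Step~1 has a gap beyond the one you flag. Parametrizing the path through the Coulomb chart forces you to first produce $b^\ast$ with frequency envelope control, i.e.\ an estimate of the form $\|P_k b^\ast\|_{\dot H^1} \aleq d_k$ from the implicit relation $\Cal(a^{(0)}+b^\ast)=a^{(1)}$; this is an envelope-level inverse function theorem that is established nowhere and is not routine, and only after it would your ``frequency envelope version of the chart analysis'' even have admissible inputs. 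The paper avoids this entirely by taking the straight line $\ta^{(h)} = h a^{(1)} + (1-h) a^{(0)}$ and projecting, $a^{(h)} = \Cal(\ta^{(h)})$: then $\rd_h \ta^{(h)} = a^{(1)}-a^{(0)}$ has envelope $d_k$ by hypothesis, and the additional input needed for Proposition~\ref{p:fe-ah} --- an $L^2$ envelope $c_k d_k^{[1]} + d_k c_k^{[1]}$ for $\rd^\ell \rd_h \ta^{(h)}_\ell$ --- comes from Lemma~\ref{l:db}/Corollary~\ref{c:diff-cal}, which uses precisely that both endpoints are caloric ($A^{(0)}(\infty)=A^{(1)}(\infty)=0$).

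The second, more serious gap is Step~2. Differentiating $b^{(\tau)} = \Pi_{a^{(\tau)}} \tb^{(\tau)}$ in $\tau$ produces the term $\covD[a^{(\tau)}]\, \rd_\tau a_0^{(\tau)}$, i.e.\ the variation of the projection \emph{with respect to the base connection}; Lemma~\ref{l:pi-l2} and the representation \eqref{eq:proj-a0} only control $\Pi_a w$ for a fixed base $a$. Getting $\|P_k \rd_\tau a_0^{(\tau)}\|_{\dot H^1} \aleq e_k$ requires differentiating \eqref{eq:proj-a0} through the heat flow, and this is the technical heart of the paper's proof: one runs a second round of the infinitesimal de Turck trick, introducing the dynamic component $A_h$ with $F_{sh}=\covD^\ell F_{\ell h}$, deriving the covariant heat equation \eqref{eq:heat-dhF0} for $\covD_h F_{0j}$ with source $[\covD F_h, F_0]+[F_h,\covD F_0]$, and applying Theorems~\ref{t:heatB} and \ref{t:heatB-inhom} to obtain \eqref{dh-f0j} and then \eqref{A0-l2}, \eqref{a0-l2}. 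Without this (or an equivalent second-variation analysis) your claimed $e_k$ envelope for $\rd_\tau b^{(\tau)}$ is unsubstantiated. Two further points: the divergence bounds \eqref{dDA-fe} do not follow by citing Proposition~\ref{p:ab-l2} with data $\rd_\tau a^{(\tau)}$ (which lies in $\dot H^1$, not $L^2$, so part (2) does not apply); the paper instead computes $\rd_s$ of $\rd_h \rd^\ell A_\ell$ and of $\rd_h \covD^\ell B_\ell$ and integrates from $s=\infty$. And for $\delta B$ the paper does not differentiate the degenerate equation \eqref{lin-heat} directly, but writes $\rd_h B = \covD_h F_{0x} + \covD \covD_h A_0 + [F_{hx}, A_0] - [A_h, B]$ and reuses the de Turck-trick quantities; your direct-source route through Lemma~\ref{l:lin-fe} might be workable, but the $L^1L^2$ source estimate is asserted rather than checked and is not obviously compatible with the available regularity of $\rd_\tau F^{(\tau)}$.
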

The compatibility assumption may be sharpened if we consider separate frequency envelopes for $b^{(0)}$, $b^{(1)}$ and $b^{(0)} - b^{(1)}$; however, we avoid this for the sake of simplicity.

We remark that similar bounds for the linearized equation follow
  from the previous proposition, at least for infinitesimal
  deformations of $A$.  However, if we try to transfer this directly
  to differences, then we need to address the problem of constructing
  a smooth path between $a^{(0)}$ and $a^{(1)}$ which stays within the
  caloric manifold. This will be of independent interest later, so
 we state the result separately. 

\begin{proposition}\label{p:path-l2}
Under the same assumptions as in the previous proposition, there exists a 
$C^1$ path 
\[
[0,1] \ni h \to (a^{(h)},b^{(h)}) \in T^{L^{2}} \calC 
\]
so that for $h \in [0, 1]$, $\hM(a^{(h)}) \leq 2 \hM$, $\spE[a^{(h)}] \leq 2 \calE$, and the following estimates hold uniformly:
\begin{equation}\label{h-ab}
\|P_k  a^{(h)}\|_{\dot H^1} + \|P_k  b^{(h)}\|_{L^2} \lesssim_{\hM, \calE}   c_k ,
\end{equation}
respectively,
\begin{equation}\label{dh-ab}
\|P_k \partial_h a^{(h)}\|_{\dot H^1} + \|P_k \partial_h b^{(h)}\|_{L^2} \lesssim_{\hM, \calE}  e_{k}.
\end{equation}
\end{proposition}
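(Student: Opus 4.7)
My plan is to construct the path in two stages: first build the caloric connection path $a^{(h)}$, then construct the tangent vector path $b^{(h)}$ by projection.

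For the first stage, I would take the linear interpolation $\tilde a^{(h)} = (1-h) a^{(0)} + h a^{(1)}$ in $\dot H^1$ and then set $a^{(h)} = \Cal(\tilde a^{(h)})$. The smallness assumption \eqref{dA-small} together with the stability portion of Corollary~\ref{c:ymhf-cont} ensures that the Yang--Mills heat flow of $\tilde a^{(h)}$ exists globally and satisfies $\hM(\tilde a^{(h)}) \leq 2\hM$ uniformly in $h \in [0,1]$; the energy bound $\spE[\tilde a^{(h)}] \leq 2 \calE$ follows from $\dot H^1$-continuity of $\spE$ and the same smallness assumption. Both properties transfer to $a^{(h)}$, since $\Cal$ is energy- and $\hM$-preserving. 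The uniform bound on $\nrm{P_k a^{(h)}}_{\dot H^1}$ then follows from Proposition~\ref{p:Cal-fe-ah} applied uniformly in $h$, noting that $c_k$ is a common frequency envelope for each $\tilde a^{(h)}$ by linearity. For the derivative bound on $\rd_h a^{(h)}$, I apply Proposition~\ref{p:fe-ah} to the $C^1$ family $\tilde a^{(h)}$ in $\bfH$, whose hypotheses are valid since caloric connections lie in $\bfH$ by Proposition~\ref{p:cal-in-bfH}, and the required $L^2$ envelope $d'_k = c_k d_k^{[1]} + d_k c_k^{[1]}$ for $\rd^\ell(a^{(1)} - a^{(0)})_\ell$ is a consequence of the bilinear divergence bound \eqref{ymhflc-feDA}. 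This yields $\nrm{P_k \rd_h a^{(h)}}_{\dot H^1} \lesssim d_k + c_k (c \cdot d)_{\leq k} = e_k$, as required.

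For the second stage, I set $\tilde b^{(h)} = (1-h) b^{(0)} + h b^{(1)}$ in $L^2$ and define $b^{(h)} = \Pi_{a^{(h)}}(\tilde b^{(h)})$, which by construction lies in $T_{a^{(h)}}^{L^2} \calC$. The uniform frequency envelope bound $\nrm{P_k b^{(h)}}_{L^2} \lesssim c_k$ then follows from Lemma~\ref{l:pi-l2}(1) applied uniformly in $h$, using the $1$-compatibility of $c_k$ with itself.

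The hard part will be the derivative bound for $\rd_h b^{(h)}$, which requires controlling the dependence of the projection $\Pi_a$ on both the connection $a$ and the argument $w$. Using the dynamic Yang--Mills heat flow representation from Remark~\ref{r:proj-a0}, we write $\Pi_a(w) = w - \covD[a] \mathbf{c}(a,w)$ where $\mathbf{c}(a,w) = A_0(s=0) = -\int_0^\infty \covD^\ell F_{\ell 0}(s)\,ds$, with $F_{0j}(s)$ the solution of the covariant heat equation with connection $A$ (the local caloric flow of $a$) and data $w_j$. Differentiating in $h$,
\begin{equation*}
\rd_h b^{(h)} = \Pi_{a^{(h)}}(b^{(1)} - b^{(0)}) + [\rd_h a^{(h)}, \mathbf{c}] - \covD[a^{(h)}] \rd_h \mathbf{c}(a^{(h)}, \tilde b^{(h)}).
\end{equation*}
The first term is controlled by Lemma~\ref{l:pi-l2}(1) with envelope $d_k \leq e_k$. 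The second term is a bilinear product estimated by combining $\nrm{P_k \rd_h a^{(h)}}_{\dot H^1} \lesssim e_k$ from Stage 1 with the bound $\nrm{P_k \mathbf{c}}_{\dot H^1} \lesssim c_k$ (from the proof of Lemma~\ref{l:pi-l2}), yielding a contribution $\lesssim c_k e_{\leq k} + e_k c_{\leq k}$ that is subsumed by $e_k$ after using $1$-compatibility. For the third term, I differentiate the dynamic Yang--Mills heat flow in $h$: the perturbations $\rd_h A$, $\rd_h F_{0j}$, $\rd_h A_0$ satisfy linearized covariant parabolic equations with forcing terms determined by $\rd_h a^{(h)}$, $\rd_h \tilde b^{(h)} = b^{(1)} - b^{(0)}$, and the already-controlled quantities $A$, $F_{0j}$, $A_0$. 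Applying the frequency envelope solvability results Theorems~\ref{t:heatA-fe} and \ref{t:heatA-L1} to this system, and exploiting the $(-1,S)$-admissibility of both $e_k$ and $c_k e_k$ (which is exactly what is needed to keep the bilinear combinations within admissible envelope classes), I obtain $\nrm{P_k \covD[a^{(h)}] \rd_h \mathbf{c}}_{L^2} \lesssim e_k$. Combining the three contributions gives the claimed bound.

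The $C^1$ regularity of the full path in $h$ is a consequence of the $C^1$ regularity of $\Cal$ in $\bfH$ (Proposition~\ref{p:cal-a}, Scenario~(i)) and of the joint $C^1$ regularity of $(a,w) \mapsto \Pi_a(w)$, which is a byproduct of the same linearization analysis used above.
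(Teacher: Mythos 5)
Your proposal follows essentially the same route as the paper: project the straight-line interpolation by $\Cal$ to get $a^{(h)}$ (with \eqref{h-ab} from Proposition~\ref{p:Cal-fe-ah} and \eqref{dh-ab} from Proposition~\ref{p:fe-ah}), then set $b^{(h)} = \Pi_{a^{(h)}}\tb^{(h)}$ (with Lemma~\ref{l:pi-l2} for the uniform bound), and control $\rd_h b^{(h)}$ by differentiating the dynamic Yang--Mills heat flow in $h$ (the infinitesimal de Turck trick) and running covariant parabolic frequency-envelope estimates before integrating in $s$ --- this is exactly the paper's argument, which uses Theorems~\ref{t:heatB} and \ref{t:heatB-inhom} where you invoke Theorems~\ref{t:heatA-fe} and \ref{t:heatA-L1}.

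One slip worth correcting: the hypothesis of Proposition~\ref{p:fe-ah} requires $d'_k = c_k d_k^{[1]} + d_k c_k^{[1]}$ to be an $L^2$ envelope for $\rd^{\ell}(a^{(1)}-a^{(0)})_{\ell}$, and this does \emph{not} follow from \eqref{ymhflc-feDA}, which only bounds the divergence of a \emph{single} caloric flow at two heat-times (envelope $c_k c_k^{[1]}$) and cannot produce the mixed $c$--$d$ envelope for a difference of two connections. The correct input is the linearized divergence bound \eqref{ymhflc-feDB} (equivalently \eqref{eq:db-fe-h1} of Lemma~\ref{l:db}) applied to $B = \rd_h \tA^{(h)}$, whose data is $a^{(1)}-a^{(0)}$, and then integrated over $h\in[0,1]$; this is precisely how the paper obtains it (Lemma~\ref{l:diff-cal} and Corollary~\ref{c:diff-cal}). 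With that reference fixed, your argument matches the paper's proof.
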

We now prove the two propositions above.

\begin{proof}[Proof of Proposition~\ref{p:path-l2}]
We suppress the dependence of constants on $\hM, \calE, N$.

  We first construct the path joining $a^{(0)}$ and $a^{(1)}$.  We
  begin with the straight line path $\ta^{(h)} = h a^{(1)} + (1-h) a^{(0)}$ between $a^{(0)}$ and
  $a^{(1)}$, which does not stay within the caloric manifold. In view
  of \eqref{dA-small}, this path remains in $\set{(a, b) \in \dot{H}^{1} \times L^{2} : \hM(a) \leq 2 \hM, \ \spE[a] \leq 2 \calE}$.
For $\ta^{(h)}$ and $\partial_h \ta^{(h)}$ we have 
\[
\| P_k \ta^{(h)}\|_{\dot H^1} \lesssim c_k, \qquad  \| P_k \partial_h \ta^{(h)}\|_{\dot H^1} \lesssim d_k.
\]

Hence, using Theorem~\ref{t:ymhf-l3-fe} and Lemma~\ref{l:db} (in particular, \eqref{eq:db-fe-h1}), we conclude that we have the uniform bounds
\[
\| P_k \tA^{(h)} \|_{L^\infty \dot H^1} + \| P_k\tF^{(h)} \|_{L^2 \dot H^1} \lesssim c_k,
\]
\[
\| P_k \partial_h \tA^{(h)} \|_{L^\infty \dot H^1} + \| P_k(\covD_i \partial_h \tA^{(h)}_j - \covD_j \partial_h \tA^{(h)}_i) \|_{L^2 \dot H^1} \lesssim d_k ,
\]
as well as the improved divergence bound 
\[
 \| P_k (\partial^\ell \partial_h \tA^{(h)}_\ell (s_1) - \partial^\ell  \partial_h \tA^{(h)}_\ell (s_2)) \|_{L^\infty L^2 } \lesssim c_k d_k^{[1]} 
+ d_k c_k^{[1]}.
\]
Integrating this over the interval $h \in [0,1]$ we obtain the following intermediate 
result:
\begin{lemma}\label{l:diff-cal}
Given the frequency envelopes $c_k$ and $d_k$ as above, we have the uniform difference bounds
\begin{equation}\label{diff-A}
 \| P_k \delta A \|_{ L^\infty \dot H^1} + \| P_k \delta F \|_{L^2 \dot H^1} \lesssim d_k.
\end{equation}
respectively
\begin{equation}\label{diff-dA}
\|P_k \partial^\ell \delta A_\ell \|_{L^\infty L^2} \lesssim c_k d_k^{[1]} + d_k c_k^{[1]}.
\end{equation}
\end{lemma}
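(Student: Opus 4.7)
The proof is essentially to integrate in $h \in [0,1]$ the uniform linearized bounds already assembled in the paragraph immediately preceding the lemma. By Corollary~\ref{c:ymhf-cont} together with the smallness hypothesis \eqref{dA-small}, the straight-line path $\ta^{(h)} = h a^{(1)} + (1-h) a^{(0)}$ stays within a neighborhood where $\hM(\ta^{(h)}) \lesssim \hM$ and $\spE[\ta^{(h)}] \lesssim \calE$ uniformly in $h$. Consequently Theorem~\ref{t:ymhf-l3-fe} and Lemma~\ref{l:db} apply uniformly in $h$, yielding the three displayed envelope bounds on $\tA^{(h)}$, on $\partial_h \tA^{(h)}$, and on the $s$-variation of $\partial^\ell \partial_h \tA^{(h)}_\ell$ stated just before the lemma.

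For \eqref{diff-A}, I would write $\delta A(s) = \int_0^1 \partial_h \tA^{(h)}(s)\, dh$ and apply Minkowski's inequality in $h$ to the $h$-uniform bound $\| P_k \partial_h \tA^{(h)}\|_{L^\infty \dot H^1} \lesssim d_k$. For the $\delta F$ part I would use the linearization identity $\partial_h \tF^{(h)}_{ij} = (\covD[\tA^{(h)}])_i (\partial_h \tA^{(h)})_j - (\covD[\tA^{(h)}])_j (\partial_h \tA^{(h)})_i$ together with the uniform $L^2 \dot H^1$ control of $\mathrm{curl}_{\tA^{(h)}} (\partial_h \tA^{(h)})$, and again integrate.

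The divergence bound \eqref{diff-dA} requires an additional step: the envelope bound in the preceding paragraph only controls the $s$-variation of $\partial^\ell \partial_h \tA^{(h)}_\ell$, not the quantity itself. Integrating in $h$ gives
\begin{equation*}
\|P_k(\partial^\ell \delta A_\ell(s_1) - \partial^\ell \delta A_\ell(s_2))\|_{L^2} \lesssim c_k d_k^{[1]} + d_k c_k^{[1]}
\end{equation*}
uniformly in $s_1, s_2$. To promote this to an absolute $L^\infty_s L^2$ bound, I would crucially use that both endpoints $a^{(0)}, a^{(1)}$ are caloric, so $\tA^{(0)}(\infty) = \tA^{(1)}(\infty) = 0$ and hence $\delta A(s) \to 0$ in $\dot H^1$ as $s \to \infty$ (Corollary~\ref{c:global}). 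Sending $s_2 \to \infty$ then yields \eqref{diff-dA}.

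The main technical point --- and the only nontrivial one, given that the uniform linearized bounds are already assumed --- is the verification that the frequency envelope machinery of Theorem~\ref{t:ymhf-l3-fe} and Lemma~\ref{l:db} can genuinely be applied uniformly along the non-caloric path $\ta^{(h)}$. The rest is bookkeeping: integration in $h$, Minkowski, and the caloric boundary condition at $s = \infty$.
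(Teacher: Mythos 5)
Your proposal is correct and follows essentially the same route as the paper: the paper also establishes the uniform-in-$h$ frequency envelope bounds along the straight-line path via Theorem~\ref{t:ymhf-l3-fe} and Lemma~\ref{l:db} (in particular \eqref{eq:db-fe-h1}), and then obtains the lemma simply by integrating in $h \in [0,1]$. Your promotion of the $s_{1},s_{2}$-variation bound to the absolute bound \eqref{diff-dA} by sending $s_{2}\to\infty$ and using $A^{(0)}(\infty)=A^{(1)}(\infty)=0$ is exactly the point the paper highlights in the remark following the lemma, namely that the caloric condition at the two endpoints is critical for the divergence bound.
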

We remark that for the first bounds we do not use the caloric gauge. However for the second it is critical 
that $A^{(0)}(\infty) = A^{(1)}(\infty) = 0$. In particular, its conclusion is nontrivial even at $s = 0$, where 
we get

\begin{corollary}\label{c:diff-cal}
Given the frequency envelopes $c_k$ and $d_k$ as above, we have
\begin{equation}\label{diff-b}
\| P_k \partial^{\ell} \delta a_{\ell}\|_{L^2} \lesssim  c_k d_k^{[1]} + d_k c_k^{[1]}.
\end{equation}
\end{corollary}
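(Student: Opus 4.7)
The plan is to simply evaluate the uniform bound \eqref{diff-dA} from Lemma~\ref{l:diff-cal} at the initial heat-time $s = 0$. Since $A^{(0)}$ and $A^{(1)}$ are the caloric extensions with initial data $a^{(0)}$ and $a^{(1)}$ respectively, their difference satisfies $\delta A(s=0) = a^{(1)} - a^{(0)} = \delta a$, and hence
\[
P_{k} \partial^{\ell} \delta a_{\ell} = \bigl(P_{k} \partial^{\ell} \delta A_{\ell}\bigr)(s=0).
\]
Taking the $L^{2}_{x}$ norm and dominating by the $L^{\infty}_{s} L^{2}_{x}$ norm, the estimate \eqref{diff-dA} gives
\[
\|P_{k} \partial^{\ell} \delta a_{\ell}\|_{L^{2}} \;\leq\; \|P_{k} \partial^{\ell} \delta A_{\ell}\|_{L^{\infty} L^{2}} \;\lesssim\; c_{k} d_{k}^{[1]} + d_{k} c_{k}^{[1]},
\]
which is precisely the desired bound.

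No new work is needed at this stage, since the substantive content has already been absorbed into Lemma~\ref{l:diff-cal}. The point worth emphasizing — and the main obstacle that had to be overcome earlier — is that the caloric normalization $A^{(0)}(\infty) = A^{(1)}(\infty) = 0$ is what makes the bilinear structure $c_{k} d_{k}^{[1]} + d_{k} c_{k}^{[1]}$ available. Without this boundary condition at heat-time infinity, one could only extract $\|P_{k} \partial^{\ell} \delta a_{\ell}\|_{L^{2}} \lesssim d_{k}$ from the hypothesis $\delta a \in \dot{H}^{1}$, which is strictly weaker at high frequencies. The improvement comes from representing $\partial^{\ell} \delta A_{\ell}$ via a quadratic expression integrated from $s = \infty$ (as in Lemma~\ref{l:da}), and this integration is only legitimate because the caloric endpoint vanishes.
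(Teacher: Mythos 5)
Your proposal is correct and coincides with the paper's own argument: the corollary is stated there precisely as the $s=0$ evaluation of the uniform divergence bound \eqref{diff-dA} in Lemma~\ref{l:diff-cal}, using $\delta A(s=0)=\delta a$, and the paper likewise remarks that the caloric endpoint condition $A^{(0)}(\infty)=A^{(1)}(\infty)=0$ is what makes the bilinear envelope $c_k d_k^{[1]}+d_k c_k^{[1]}$ available. No further comment is needed.
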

As a consequence of this, by construction we also get 
\begin{equation}\label{diff-tb}
\| P_k \partial^{\ell} \partial_h \ta_{\ell}^{(h)} \|_{L^2} \lesssim   c_k d_k^{[1]} + d_k c_k^{[1]}.
\end{equation}

Once  we have a good understanding of $\ta^{(h)}$ and $\partial_h\ta^{(h)}$, we project onto the caloric manifold,
setting 
\[
a^{(h)} = \Cal(\ta^{(h)}).
\]
Now the bounds for $a^{(h)}$ in the proposition follow from Proposition~\ref{p:Cal-fe-ah}, while those for 
$\partial_h a^{(h)}$ follow from Proposition~\ref{p:fe-ah}.

\bigskip

We now consider the question of choosing $b^{(h)}$. We begin with $\tb^{(h)}$ which interpolates linearly between 
$b^{(0)}$ and $b^{(1)}$, and define
\[
b^{(h)} = \Pi_{a^{(h)}} \tb^{(h)} = \tb^{(h)} - \covD[a^{(h)}] a^{(h)}_0
\]
where $a^{(h)}_0$ is the initial data for the corresponding connection component which is initialized to zero at infinity (see Remark~\ref{r:proj-a0}), i.e.,
\begin{equation*}
	a^{(h)}_{0} = \int_{0}^{\infty} (\covD[a^{(h)}])^{\ell} F_{0 \ell}^{(h)} (\tilde{s}) \, \ud \tilde{s}.
\end{equation*}	
Now the $b^{(h)} $ bound in \eqref{h-ab} is a consequence of Lemma~\ref{l:pi-l2}.
To estimate $\partial_h b^{(h)}$ we need $\partial_h \covD[a^{(h)}] a^{(h)}_0$, which we write as
\[
\partial_h \covD[a^{(h)}] a^{(h)}_0 = \covD[a^{(h)}]  \partial_h a^{(h)}_0 + [\partial_h a^{(h)}, a^{(h)}_0] .
\]
The second term is easy to estimate using the previous bound for $\partial_h a^{(h)}$ 
and for $a^{(h)}_0$ as in the proof of Proposition~\ref{p:ab-l2}.

It remains to bound $\partial_h a^{(h)}_0$ in $\dot H^1$. We claim that
\begin{equation}\label{a0-l2}
\| P_k \partial_h a^{(h)}_0 (s) \|_{\dot H^1} \lesssim   e_{k}.
\end{equation}
 
 To achieve this, we use another round of the ``infinitesimal de Turck trick'' (see Remark~\ref{r:proj-a0}). 
In what follows, we suppress the superscript $(h)$. Let us introduce a dynamical gauge component $A_h$, which satisfies
\begin{equation} \label{eq:ah-dymhf}
\partial_s A_h = \covD^j F_{jh}, \qquad A_h(\infty) = 0,
\end{equation}
where $F_{jh}$ solves the covariant heat equation
\begin{equation} \label{eq:fjh-heat}
(\rd_{s} - \lap_{A} - 2 ad (F)) F_{hj} = 0, \qquad F_{hj}(0) = \partial_h a_{j}.
\end{equation}
Then we also have
\begin{equation} \label{eq:ah0=0}
	A_h(0) = 0.
\end{equation}
Differentiating with respect to $s$, we see that
\begin{equation} \label{eq:dsdhA0}
	\rd_{s} \covD_{h} A_{0}
	= [\covD^{\ell} F_{\ell h}, A_{0}] + \covD_{h} \covD^{\ell} F_{\ell 0}
	= [\covD^{\ell} F_{\ell h}, A_{0}] + [\tensor{F}{_{h}^{\ell}}, F_{\ell 0}] + \covD_{j} \covD_{h} F_{j0}.
\end{equation}
On the other hand, $\covD_{h} F_{j0}$ obeys the inhomogeneous covariant heat equation
\begin{equation} \label{eq:heat-dhF0}
	(\rd_{s} - \lap_{A} - 2 ad (F)) \covD_{h} F_{0j} = G_{j}, \qquad \covD_{h} F_{0 j}(0) = \rd_{h} \tilde{b}_{j},
\end{equation}
where $G_{i}$ is (schematically) of the form\footnote{For this computation, we also need the Bianchi identity $\covD_{h} F_{ij} = \covD_{i} F_{h j} - \covD_{j} F_{h i}$.}
\begin{equation} \label{eq:heat-dhF0-RHS}
	G_{i} = [\covD F_{h}, F_{0}] + [F_{h}, \covD F_{0}].
\end{equation}
Now  we have all the equations we need in order to prove the estimates.

By Proposition~\ref{p:ab-l2}.(2), we already have
\begin{equation}\label{fhj}
\| P_k F_{hj} \|_{\dot H^1} \lesssim  e_{k}   (1+2^{2k} s)^{-N}
, \qquad \| P_k F_{0j} \|_{L^2}  \lesssim c_k(1+2^{2k} s)^{-N}.
\end{equation}
Hence for $G_i$ we obtain
\[
\| P_k G_i \|_{\dot H^{-1}} \lesssim  s^{-\frac12} e_{k} (1+2^{2k} s)^{-N}.
\]
Thus solving the parabolic equation for $\covD_h F_{0i}$ using Theorems~\ref{t:heatB} and \ref{t:heatB-inhom}, we obtain 
\begin{equation}\label{dh-f0j}
\| P_k \covD_h F_{0i} \|_{L^2} \lesssim    e_{k}     (1+2^{2k} s)^{-N}.
\end{equation}

This implies that 
\[
\| P_k  \partial_s \covD_h A_0 \|_{\dot H^{-1}} \lesssim   e_{k}   (1+2^{2k} s)^{-N},
\]
which in turn yields the desired bound for $\covD_h A_0$,
\begin{equation}\label{A0-l2}
\| P_k \covD_h A_0 \|_{\dot H^1} \lesssim   e_{k} (1+2^{2k} s)^{-N},
\end{equation}
which at $s = 0$ gives \eqref{a0-l2}. \qedhere
\end{proof}

\begin{proof}[Proof of Proposition~\ref{p:diff-l2}]
We use the path $(a^{(h)}, b^{(h)})$ constructed in Proposition~\ref{p:path-l2}.
The difference bounds in the proposition are obtained by integrating with respect to 
$h \in [0,1]$ the corresponding bounds for $\partial_h A^{(h)}(s)$ and $\partial_h B^{(h)}(s)$. 

The frequency envelope bounds for the data $\partial_h a^{(h)}$ translate to similar bounds 
for $\partial_h A^{(h)}(s)$ by Proposition~\ref{p:ab-l2}, and thus to bounds for $\partial_h F^{(h)}_{ij} = \covD_{i} \rd_{h} A^{(h)}_{j} - \covD_{j} \rd_{h} A^{(h)}_{i}$. We remark that, by $1$-compatibility of $d_{k}$ and $e_{k}$ with $c_{k}$, we end up with simply the frequency envelope $e_{k}$ on the RHS.

For $\partial_h B^{(h)}$, we again introduce the auxiliary dynamic component $A_{h}$ as in the preceding proof. Suppressing the superscript $(h)$, we have
\[
\partial_h B = \covD_h  B - [A_h, B].
\]
For the second term we combine the $B$ bound given by Proposition~\ref{p:ab-l2}.(2)
with the $A_h$ bound in Lemma~\ref{l:ah-l2} (note that $A_{0}$ in the lemma corresponds to $A_{h}$ here). For the first term we write
\[
\covD_h  B = \covD_h F_{0j} + \covD_h \covD A_0 = \covD_h F_{0j} + \covD \covD_h  A_0 + [F_{hj}, A_0].
\]
To estimate the RHS, we combine the bounds \eqref{dh-f0j} for $\covD_h F_{0j}$, \eqref{A0-l2} for $\covD_h A_0$,
\eqref{fhj} for $F_{hj}$ and the following bound for $A_{0}$:
\begin{equation*}
	\nrm{P_{k} A_{0}(s)}_{\dot{H}^{1}} \aleq c_{k} (1+2^{2k} s)^{-N},
\end{equation*}
which is obtained by integrating $\rd_{s} A_{0} = \covD^{\ell} F_{\ell 0}$ from $s = \infty$, and using \eqref{fhj} for $F_{\ell 0}$.
Again, by compatibility, the frequency envelope bound on the RHS simplifies to $e_{k}$.

Our final goal is to prove the bound for $\rd_{h} \rd^{\ell} A_{\ell}$ and $\partial_h \partial^\ell B_\ell$; we only give a sketch of the proof.
For $\rd_{h} \rd^{\ell} A_{\ell}$, its $s$ derivative is:
\begin{equation*}
	\rd_{s} \rd_{h} \rd^{\ell} A_{\ell}
	= \rd_{h} [A_{\ell}, \covD^{i} \tensor{F}{_{i}^{\ell}}]
	= [\rd_{h} A_{\ell}, \covD^{i} \tensor{F}{_{i}^{\ell}}]
	+ [A_{\ell}, \covD^i ( \covD_i \partial_h A^{\ell} - \covD^{\ell} \partial_h A_i)]
\end{equation*}
We then integrate from infinity, estimating the RHS using the bounds for $A$ in Proposition~\ref{p:ab-l2} and $\rd_{h} A$ from above.

The case of $\rd_{h} \rd^{\ell} B_{\ell}$ is dealt with similarly. Given the estimates for $\partial_h A$, this is easily seen to be equivalent to the bound for  $\partial_h \covD^\ell B_\ell$. For this  we compute its $s$ derivative,
\[
\begin{split}
\partial_s \partial_h \covD^\ell B_\ell = & -2 \partial_h   [B_\ell,\covD^i \tensor{F}{_{i}^{\ell}}] 
\\ = & -2 [\partial_h B_{\ell},\covD^i \tensor{F}{_{i}^{\ell}}]
- 2    [B_\ell, [\partial_h A^i, \tensor{F}{_{i}^{\ell}}]] 
-2 [B_\ell, \covD^i ( \covD_i \partial_h A^{\ell} - \covD^{\ell} \partial_h A_i)] .
\end{split}
\]
Now it suffices to integrate from infinity, estimating all terms on the RHS, using the bounds for $A$ in Proposition~\ref{p:ab-l2} and $\rd_{h} B$ from above.
\end{proof}

Next we compare the Yang--Mills heat flow of a caloric connection $a \in \calC$ with its linear heat flow.
Precisely, for its heat flow and the associated curvature tensor, we consider the following representations:
\begin{equation}
A_j(s) = e^{s \Delta} a_j + \bfA_j(s), \qquad F_{ij}(s) = e^{s \Delta} f_{ij} + \bfF_{ij}(s),
\end{equation}
as well as
\begin{equation}
\partial^{\ell} A_{\ell}(s) =  \DA(s).
\end{equation}
Here $\bfA_j$, $\bfF_{ij}$ and $\DA$ are viewed as maps on $\calC \times  [0,\infty)$.

Similarly, if $b$ is a corresponding linearized caloric data, then for its (local caloric gauge) linearized 
Yang--Mills heat flow  $B$, we consider the representation
\begin{equation}
B_j(s) = e^{s \Delta} b_j + \bfB_j(s), \qquad 
\partial^{\ell} B_{\ell}(s) =  \DB(s),
\end{equation}
where $\bfB_j$ and $\DB$ are  maps on $T^{L^2}\calC  \times  [0,\infty)$.

 Our goal is now to 
show that these maps satisfy favorable quadratic bounds
with Lipschitz dependence on $a$.  
For each heat-time $s$, recall that $k(s)$ refers to the associated frequency with $2^{2k(s)} s = 1$. 
As part of our analysis, we will show that $\bfA_j(s)$, $\bfF_{ij}(s)$ and $\bfB(s)$ are primarily concentrated
at frequency $k(s)$.

For the following proposition, let $0 < \dlt \ll 1$.
\begin{proposition}\label{p:AB2-l2}
Let $(a,b) \in T^{L^2}\calC_{\hM}$ with energy at most $\calE$ be equipped with $\dot{H}^{1} \times L^{2}$ $(-\dlt, S)$ frequency envelope $c_{k}$. Then we have the bounds
\begin{equation}\label{F-first}
\|(1-s\Delta)^N \bfA_{i}(s)\|_{\dot W^{1,\frac43}} +\|(1-s\Delta)^N \bfF_{ij}(s)\|_{L^{\frac43}}+
\|(1-s\Delta)^N \bfB_j(s)\|_{ L^{\frac43}}  \lesssim_{\hM, \calE, N}  2^{-k(s)} c_{k(s)} ,
\end{equation}
respectively,
\begin{equation}\label{DA-first}
\|P_k  \DA\|_{ \dot{W}^{1,\frac43}}  + \|P_k  \DB\|_{  L^\frac43} \lesssim_{\hM, \calE, N}   c_k^{[1]} (1+ 2^{2k}s)^{-N}.
\end{equation}

Similarly, if $(a^{(0)},b^{(0)})$ and $(a^{(1)},b^{(1)})$ are two close linearized caloric data sets, with a joint $\dot{H}^{1} \times L^{2}$ $(-\dlt, S)$ frequency envelope $c_{k}$ and $1$-compatible $(-\dlt, S)$ frequency envelope $d_{k}$ for the difference in $\dot{H}^{1} \times L^{2}$. Let
\begin{equation*}
	e_{k} = d_{k} + c_{k} (c \cdot d)_{\leq k}.
\end{equation*}
Then we have the difference bounds
 \begin{equation}\label{dF-first}
\|(1-s\Delta)^N \delta\bfA_{i}(s)\|_{\dot W^{1,\frac43}} +\|(1-s\Delta)^N \delta \bfF_{ij}(s)\|_{L^{\frac43}}+
\|(1-s\Delta)^N \delta \bfB_j(s)\|_{ L^{\frac43}}  \lesssim_{\hM, \calE, N}  2^{-k(s)} e^{[1]}_{k(s)},
\end{equation}
respectively,
\begin{equation}\label{dDA-first}
\|P_k  \delta \DA\|_{ \dot{W}^{1,\frac43}}  + \|P_k   \delta\DB\|_{ L^\frac43} \lesssim_{\hM, \calE, N}   e^{[1]}_{k} (1+ 2^{2k}s)^{-N}.
\end{equation}

\end{proposition}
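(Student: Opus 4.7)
The plan is to derive evolution equations for the nonlinear corrections $\bfA$, $\bfF$, $\bfB$, exploit their manifestly quadratic-and-higher structure, and then feed bilinear Littlewood--Paley estimates into the a priori heat-flow bounds of Proposition~\ref{p:ab-l2}. Expanding the local caloric gauge flow \eqref{caloric-re} gives
\[
(\partial_s - \Delta) A_i = -\partial_i \DA + 2[A^\ell,\partial_\ell A_i] - [A^\ell,\partial_i A_\ell] + [\DA, A_i] + [A^\ell,[A_\ell,A_i]],
\]
so that $\bfA_i = A_i - e^{s\Delta}a_i$ satisfies an inhomogeneous constant-coefficient heat equation whose right-hand side is at least quadratic in $A$ (the first term is itself quadratic after using \eqref{DA-rep}). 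Analogous equations hold for $\bfF$ and for $\bfB$ (the latter via linearization of \eqref{caloric-re}), while \eqref{DA-rep} and its analogue for the linearized flow furnish the explicit representations
\[
\DA(s) = -\int_0^s [A^k,\covD^\ell F_{\ell k}](\tilde s)\,d\tilde s, \qquad \covD^\ell B_\ell(s) = -2 \int_0^s [B^k,\covD^\ell F_{\ell k}](\tilde s)\,d\tilde s,
\]
from which the $\partial^\ell B_\ell$ representation follows via $\partial^\ell B_\ell = \covD^\ell B_\ell - [A^\ell,B_\ell]$.

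For the bilinear estimates, Proposition~\ref{p:ab-l2} and Sobolev embedding $\dot H^1 \hookrightarrow L^4$ on $\bbR^4$ give
\[
\|P_k A(s)\|_{L^4} + 2^{-k}\|P_k F(s)\|_{L^4} + \|P_k B(s)\|_{L^4_{x,s}\text{-variants}} \aleq c_k (1+2^{2k}s)^{-N},
\]
with the admissibility and compatibility of $c_k$ inherited from the hypothesis. The representative quadratic term is then estimated via the Littlewood--Paley trichotomy using $L^4 \cdot L^2 \hookrightarrow L^{4/3}$ on dyadic pieces:
\[
\|P_k [A^\ell,\partial_\ell A_i](s)\|_{L^{4/3}} \aleq c_k (1+2^{2k}s)^{-N},
\]
where one factor $\|A\|_{L^4}\aleq_{\calE} 1$ is absorbed into the implicit constant, and low-high sums of the form $\sum_{j<k} 2^{-\eps(k-j)} c_j \aleq c_k^{[1]}\aleq_\calE 1$ are absorbed likewise. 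The cubic terms are handled by a further application of $\|A\|_{L^4}\aleq_\calE 1$. Plugging the resulting $L^{4/3}$ bound into the constant-coefficient case of Theorem~\ref{t:heatB-inhom} (for the Duhamel integral against $e^{s\Delta}$) produces $\|(1-s\Delta)^N P_k \bfA_i(s)\|_{\dot W^{1,4/3}} \aleq 2^{-k} c_k (1+2^{2k}s)^{-N}$, and summation over $k \geq k(s)$ using the rapid-decay multiplier $(1-s\Delta)^N$ yields \eqref{F-first}; the analogous argument for $\bfF$ and $\bfB$ is identical except that in the $\bfB$ case the full bilinear character is $A \cdot B$, so only $\|A\|_{L^4}$ is absorbed.

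For $\DA$, we insert the frequency-envelope bound $\|P_k \covD F\|_{L^2 \dot H^1}\aleq c_k^{[1]}$ (via Lemma~\ref{l:nocov-smth} lifted to the caloric setting) into the Littlewood--Paley decomposition of $[A,\covD F]$ and use the integral representation, producing the $\dot W^{1,4/3}$ envelope $c_k^{[1]}$ in \eqref{DA-first}; the sharpening from $c_k c_k^{[1]}$ in $L^2$ (which appeared in \eqref{DA-fe}) to $c_k^{[1]}$ in $L^{4/3}$ reflects the absorption of one factor of $\|A\|_{L^4}$. The bound for $\DB$ proceeds identically. The difference estimates \eqref{dF-first}, \eqref{dDA-first} are obtained by writing $(\partial_s-\Delta)\delta\bfA$, etc., in schematic form $\delta A \cdot \nabla A + A\cdot \nabla \delta A + \text{cubic}$ and combining the base bounds from Proposition~\ref{p:ab-l2} with the difference bounds from Proposition~\ref{p:diff-l2}, which yield precisely the envelope $e_k = d_k + c_k (c\cdot d)_{\leq k}$ (and its $[1]$-shift from the $L^4$ Sobolev step) on the right.

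The main obstacle is bookkeeping the frequency envelopes across the bilinear trichotomy so that exactly one factor in each product is absorbed into the $\hM,\calE$-dependent constant, while the other carries $c_k$ or $e_k$: the naive estimate would produce a quadratic envelope $c_k^2$, which does not match the claim. This is achieved by distributing the derivatives and $L^p$ exponents so that one factor is measured in a scale-invariant, $\calE$-controlled norm (typically $\|A\|_{L^4}$ or $\|F\|_{L^2}$) and the other carries the dyadic envelope. A secondary difficulty is the apparently two-derivative term $\partial_i \DA$ on the right-hand side of the $\bfA$ equation; this is resolved by using \eqref{DA-rep} to realize $\DA$ as a quadratic integral in $s$ and invoking Proposition~\ref{p:cal-in-bfH} for its $\dot W^{1,4/3}$ regularity, so that $\partial_i \DA$ contributes no more than a $c_k^{[1]}$ envelope which is then absorbed against $\|A\|_{L^4}$ in the next step of the Duhamel iteration.
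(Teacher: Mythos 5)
There is a genuine gap, and it sits at the heart of the proposition: your representations
\[
\DA(s) = -\int_0^s [A^k,\covD^\ell F_{\ell k}](\tilde s)\,d\tilde s, \qquad \covD^\ell B_\ell(s) = -2\int_0^s [B^k,\covD^\ell F_{\ell k}](\tilde s)\,d\tilde s
\]
are wrong as written. Integrating $\rd_s \rd^\ell A_\ell = -[A^k,\covD^\ell F_{\ell k}]$ from $0$ to $s$ leaves the boundary term $\rd^\ell a_\ell$ (resp.\ $\covD^\ell b_\ell$), which is nonzero for caloric connections; your formula would give the false conclusion $\DA(0)=0$, whereas the content of \eqref{DA-first} at $s=0$ is precisely the quantitative generalized Coulomb condition for $\rd^\ell a_\ell \neq 0$. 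Moreover, a $0$-to-$s$ integral cannot produce the decay factor $(1+2^{2k}s)^{-N}$: that decay comes from writing $\DA(s)=\int_s^\infty[A^k,\covD^j F_{kj}]\,d\tilde s$ and $\DB$ analogously as tail integrals, which is only legitimate because $A(\infty)=0$ and $B(\infty)=0$, i.e.\ because $(a,b)\in T^{L^2}\calC$. Nowhere in your argument do you invoke this caloric/linearized-caloric boundary condition, yet without it the proposition is false (for a general Yang--Mills heat flow one only controls differences $\rd^\ell A_\ell(s_1)-\rd^\ell A_\ell(s_2)$, as in Lemma~\ref{l:da}, never $\rd^\ell A_\ell(s)$ itself). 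Since your treatment of the two-derivative term $\rd_i\DA$ in the $\bfA$ equation, of the degenerate divergence part of the linearized flow for $\bfB$, and of the decay in \eqref{DA-first}--\eqref{dDA-first} all lean on these representations, the gap propagates through the whole proof; fixing it is not cosmetic but requires the integration-from-infinity structure (the paper additionally routes $\bfB$ through $F_{0j}$ and $A_0$ with the double boundary condition $A_0(0)=A_0(\infty)=0$, though a direct route via a corrected $\DB$ bound could plausibly be made to work).

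Apart from this, your overall scheme is the paper's: Duhamel off the free heat flow for $\bfF$, $\bfA$, $\bfB$, Littlewood--Paley trichotomy with one factor absorbed into an energy-controlled scale-invariant norm so the envelope stays linear in $c_k$, the term $\rd_i\DA$ treated perturbatively using the previously obtained $\DA$ bound, and the difference estimates obtained by inserting Proposition~\ref{p:diff-l2} into the same representations. So the repair is concrete: replace both representations by the tail integrals from $s$ to $\infty$, justified by $A(\infty)=0$, $B(\infty)=0$, and then your estimation strategy aligns with the paper's proof.
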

Here we note that  all quantities estimated here are of quadratic type.  In particular we can easily get $c_k$ replaced  by $c_k^{2}$ 
if we give up a bit in terms of Sobolev embeddings and taper off rapid decay of high frequencies of $c_{k}$ (i.e., consider $(-\dlt, \dlt)$ frequency envelopes).
See Proposition~\ref{p:AB2-sp-lin} for such a statement in the $\dot{W}^{\sgm, p}$ setting.

\begin{proof}
The proof is tedious but straightforward. We focus on the structure of the equations, and only sketch the details.

First we establish the curvature bound,
using the equations \eqref{eq:YMHF-Fij} and Duhamel's principle to write
\begin{equation} \label{rep-F}
\begin{aligned}
\bfF_{ij}(s_{0}) =& F_{ij} (s_0) - e^{s_0 \Delta }f_{ij} \\
=& \int_{0}^{s_0} e^{(s_0-s)\Delta}\left(  -2 [\tensor{F}{_i^\ell},F_{j \ell}] + 2 [A_\ell, \covD^\ell F_{ij}]
+ [\rd^\ell A_\ell, F_{ij}]\right) \,  ds.
\end{aligned}
\end{equation}
Then we use the bounds \eqref{AF-fe} for $A$ and $F$ to estimate the integrand, using Littlewood--Paley trichotomy; the worst term is $[A, \covD^{\ell} F_{ij}]$ in the $low \times high$ scenario. Combined with heat flow bounds, this yields the $\bfF$ bound in \eqref{F-first}.

Next we establish the corresponding $\DA$ bound, this time integrating from infinity 
using the representation \eqref{DA-rep}:
\begin{equation} \label{rep-DA}
\DA(s) = \int_{s}^\infty [A^k,\covD^j F_{kj}] ds .
\end{equation}
The desired bound follows again from \eqref{AF-fe} by Littlewood--Paley trichotomy. Again, the worst case is the $low \times high$ scenario.

Finally, we establish the $A$ bound, solving again from zero in the equation \eqref{caloric-re} to obtain 
\begin{equation} \label{rep-A}
\begin{aligned}
\bfA_i(s_0) =& A_{i}(s_{0}) - e^{s_0 \Delta }a_i \\
=& \int_{0}^{s_0} e^{(s_0-s)\Delta}\left(  - \partial_i \partial^\ell A_{\ell}-
 [A^\ell, \partial_i A_\ell] + 2 [A_\ell, \covD^\ell A_i]
+ [\rd^\ell A_\ell, A_i]\right) \,  ds.
\end{aligned}\end{equation}
Here we treat all $\partial^\ell A_\ell$ terms perturbatively, using the previously 
obtained bound. 

We now consider the $B$ bounds. Here $B$ is  obtained via the $F_{0j}$ and $A_0$
route (see Section~\ref{subsec:dymhf}). The first step is to consider the quadratic part of the curvature $\bfF_{0j}$,
for which the analysis and estimates are identical to that for $\bfF_{ij}$. The next step
is to obtain bounds for $A_0$, for which we have the double boundary condition
$A_0(0) = A_0(\infty) = 0$ thanks to the fact that $b$ is caloric. Thus, we obtain the double representation
\begin{equation}
A_0(s) = \int_{s}^\infty  \covD^j F_{j0} ds  = - \int_{0}^s  \covD^j F_{j0} ds
\end{equation}
Peeling off the linear heat flow of $F_{j0}$
we obtain 
\[
A_0(s) = \int_{s}^\infty  \covD^j \bfF_{j0}  + [A_j,e^{\tilde{s} \Delta } b]  d\tilde{s} + \Delta^{-1} e^{s \Delta} \partial^j b_j .
\]
respectively
\[
A_0(s) = -\int_{0}^s  \covD^j \bfF_{j0}  + [A_j,e^{\tilde{s} \Delta } b]  d\tilde{s} -  \Delta^{-1} (1- e^{s \Delta}) \partial^j b_j . 
\]
Combining the two we arrive at
\begin{equation}\label{A0-bi}
A_0(s) = (1- e^{s \Delta}) \int_{s}^\infty  \covD^j \bfF_{j0}  + [A_j,e^{\tilde{s} \Delta } b]  d\tilde{s}  - e^{s\Delta}
\int_{0}^s  \covD^j \bfF_{j0}  + [A_j,e^{\tilde{s} \Delta } b]  d \tilde{s}.
\end{equation}
Now we can estimate this in $\dot W^{1,\frac43}$
as follows,
\begin{equation}
\|(1-s\Delta)^N \bfA_{0}(s)\|_{\dot W^{1,\frac43}} \lesssim  2^{-k(s)} c_{k(s)} .
\end{equation}
This immediately leads to the $\bfB$ bound in \eqref{F-first}, as $B_j = F_{0j} + \covD_j A_0$. 

We now consider the bounds for $\DB$, which is given by 
\begin{equation}\label{rep-DB}
\DB(s) = 2 \int_{s}^\infty [B^j, \covD^i F_{ij}] d \tilde{s} .
 \end{equation}
The corresponding bound in \eqref{DA-first} immediately follows.

Lastly, the difference bounds \eqref{dF-first}, \eqref{dDA-first} follow in the same fashion by using the difference 
estimates in the previous proposition. \qedhere

\end{proof}

\subsection{The heat flow of caloric connections: \texorpdfstring{$L^p$}{Lp} analysis} \label{subsec:lp}

We are also interested in the $L^p$ regularity properties for caloric connections.
For this we will primarily be interested in considering the caloric flow for subthreshold 
linearized caloric data $(a,b) \in \dot W^{\sgm,p} \times\dot W^{\sgm-1,p} $, for
 a range of indices $(\sgm,p)$  related to the Strichartz estimates for the wave equation with $\dot H^1$ data,
which corresponds to the line between the spaces 
\[
\dot H^1, \qquad \dot W^{\frac16,6}
\]
However, in order to have a good range of admissible envelopes we want
to be able to vary somewhat the number of derivatives. We also want to
be able to work with weaker spaces, obtained from the above ones by
Sobolev embeddings. This will be useful in order to take full
advantage of the energy dispersion later in the paper.  Because of
these, we will use a range $(\sgm,p)$ as well as associated frequency
envelopes as follows:
\begin{equation}\label{sp-range}
2 \leq p \leq \infty,  \qquad -1 < \sgm <  \frac{4}{p} .
\end{equation}
The above range insures global well-posedness of the covariant heat
flow in both $\dot W^{\sgm,p}$ and $\dot W^{\sgm,p-1}$ for caloric connections $a \in
\dot H^1$, with good parabolic decay, see Theorem~\ref{t:heatB}.
The last condition asserts that $\dot W^{\sgm,p}$ scales below $L^\infty$. 

For an appropriately constant $\dlt > 0$, which is small depending on $p, \sgm$, we will denote by $c_k^{\sgm, p}$, respectively $c_{k}$, $(-\delta,S)$ frequency envelopes for $(a,b)$  in $ \dot W^{\sgm, p} \times\dot W^{\sgm-1,p} $, respectively in $\dot H^{1} \times L^{2}$.
 We will also compare flows corresponding to two pairs of data $(a^{(0)}, b^{(0)})$ and
$(a^{(1)},b^{(1)})$. In that case we will use the notation $c_k^{\sgm, p}$, respectively $c_{k}$,
for joint $ \dot W^{\sgm, p} \times\dot W^{\sgm - 1, p}$, respectively $\dot H^{1} \times L^{2}$, $(-\dlt, S)$-frequency envelopes.
Also, we will denote by $d_k^{\sgm, p}$, respectively $d_{k}$, for a $(-\dlt, S)$
frequency envelope for their difference in $ \dot W^{\sgm, p} \times\dot W^{\sgm-1,p}$, respectively $\dot{H}^{1} \times L^{2}$. 
We will assume that $c_{k}^{\sgm}$, $d^{\sgm, p}_{k}$ and $d_{k}$ are $\dlt$-compatible with $c_{k}$.

Our estimates for differences will primarily involve the following modification of $ d_k^{\sgm, p}$:
\begin{equation} \label{eq:e-sp}
e_k^{\sgm, p} = d_k^{\sgm, p}+    c_k^{\sgm,p} d_k^{[\dlt]}   +     c_k^{\sgm, p} (c \cdot d)_{\leq k}.
\end{equation}
By the above compatibility properties, it can be verified that $e^{\sgm, p}_{k}$ is $2\dlt$-compatible with $c_{k}$.

With the setup as above, our first main goal will be to prove:

\begin{proposition}\label{p:main-sp}
\begin{enumerate}
\item Let  $(a,b)$ be a linearized caloric data set in $T^{L^{2}} \calC_{\hM}$ with energy $\leq \calE$, equipped with frequency envelopes $c_{k}$ and $c^{\sgm, p}_{k}$ as above. Then the corresponding Yang--Mills heat flow $A$ and the linearized Yang--Mills heat flow $B$, respectively, satisfy the bounds
\begin{equation}
\| P_k A(s) \|_{\dot W^{\sgm, p}}+ \| P_k F(s) \|_{\dot W^{\sgm-1,p}}
+  \| P_k B(s) \|_{\dot W^{\sgm-1,p}}  \lesssim_{\hM, \calE, N} c^{\sgm,p}_{k} (1+ 2^{2k} s)^{-N}.
\end{equation}

\item Let $(a^{(0)}, b^{(0)})$ and $(a^{(1)},b^{(1)})$ be two linearized caloric data sets in $T^{L^{2}} \calC_{\hM}$ with energy $\leq \calE$,  which are sufficiently close as in  \eqref{dA-small}, and equipped with frequency envelopes $c_{k}$, $c^{\sgm, p}_{k}$, $d_{k}$, $d^{\sgm, p}_{k}$ as above. Then the difference of their Yang--Mills heat flow pairs satisfies
\begin{equation}\label{cal-diff-sp}
\| P_k \delta A(s) \|_{\dot W^{\sgm, p}}+ \| P_k \delta F(s) \|_{\dot W^{\sgm-1,p}}
+  \| P_k \delta B(s) \|_{\dot W^{\sgm-1,p}}  \lesssim_{\hM, \calE, N} e_k^{\sgm,p} (1+ 2^{2k} s)^{-N}.
\end{equation}
\end{enumerate}
\end{proposition}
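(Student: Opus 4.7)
The overall plan is to reduce everything to the covariant parabolic and elliptic $L^p$ theory developed in Section~\ref{sec:cov} (especially Theorem~\ref{t:heatB}), using the $L^2$ results of Proposition~\ref{p:ab-l2} as the background on which the higher $L^p$ regularity is built. The $L^2$ results give us precisely the hypothesis \eqref{A=parab} on $A$ needed to invoke Theorem~\ref{t:heatB}, with the envelope $c_k$ playing the role of the $(-\dlt,S)$ background envelope, while the $L^p$ regularity of $(a,b)$ is encoded in $c^{\sgm,p}_k$, which by assumption is $\dlt$-compatible with $c_k$.

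First, I would prove the bounds for the curvature $F$. Since $F_{ij}$ satisfies the homogeneous covariant heat equation \eqref{eq:YMHF-Fij}, which is of the form $(\rd_s - \lap_A - 2\,ad(F))F_{ij} = 0$ (treating the $2\,ad(F)$ term as the ``$F$'' appearing in Theorem~\ref{t:heatB}), a direct application of Theorem~\ref{t:heatB} with exponents $(\sgm-1,p)$ yields
\[
\|P_k F(s)\|_{\dot W^{\sgm-1,p}} \lesssim_{\hM,\calE,N} c^{\sgm,p}_{k}(1+2^{2k}s)^{-N},
\]
provided one checks that the initial curvature $f = da+[a,a]$ admits a $(-\dlt,S)$ envelope comparable to $c^{\sgm,p}_k$, which is a straightforward Littlewood--Paley trichotomy using the $\dlt$-compatibility of $c^{\sgm,p}_k$ with $c_k$ and the $\dot H^1$ bound \eqref{AF-fe}. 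The bound on $A$ is then obtained by integrating from infinity: since $a$ is caloric we have $A(\infty)=0$, so
\[
A_i(s) = -\int_s^\infty \covD^\ell F_{\ell i}(\tilde s)\, d\tilde s,
\]
and the derivative of the $F$-bound just established, together with Littlewood--Paley trichotomy to peel off the $[A,F]$ term coming from $\covD = \rd + ad(A)$, gives the $\dot W^{\sgm,p}$ envelope for $A$.

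For the linearized flow $B$, I would run the ``infinitesimal de Turck trick'' of Section~\ref{subsec:dymhf}, as in the proof of Proposition~\ref{p:ab-l2}(2). Namely, introduce the auxiliary component $A_0$ and write $B_j = F_{0j} + \covD_j A_0$, where $F_{0j}$ solves the homogeneous covariant heat equation with initial data $b_j$. A second application of Theorem~\ref{t:heatB} with envelope $c^{\sgm,p}_k$ gives the $\dot W^{\sgm-1,p}$ bound for $F_{0j}$. For $A_0$, which satisfies both $A_0(0)=0$ and $A_0(\infty)=0$ since $b$ is caloric, I would use the bilateral representation \eqref{A0-bi}, integrating from zero on the low-frequency side (to avoid divergence) and from infinity on the high-frequency side. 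Combined with the elliptic estimate of Theorem~\ref{t:deltaA-fe} to handle $\covD_j A_0$, this yields the $\dot W^{\sgm-1,p}$ envelope for $B$. The key technical check throughout is that the frequency-envelope interactions close: the $low\times high$ and $high\times high \to low$ terms produced by Littlewood--Paley trichotomy must respect the admissibility and compatibility ranges imposed by \eqref{sp-range} and the $\dlt$-compatibility of $c^{\sgm,p}_k$ with $c_k$.

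For part~(2), I would construct a $C^1$ path $(a^{(h)}, b^{(h)}) \in T^{L^2}\calC$ joining the two data sets exactly as in Proposition~\ref{p:path-l2}, verifying in addition the corresponding $\dot W^{\sgm,p}$ envelope bounds for $\rd_h a^{(h)}$ and $\rd_h b^{(h)}$ with envelope $e^{\sgm,p}_k$ defined in \eqref{eq:e-sp}. Granting this, the difference bounds \eqref{cal-diff-sp} follow by integrating in $h$ the estimates obtained by applying part~(1) to $(\rd_h a^{(h)}, \rd_h b^{(h)})$ viewed as a linearized solution, using the dynamic equations \eqref{eq:ah-dymhf}--\eqref{eq:heat-dhF0-RHS} for $A_h$, $F_{hj}$ and $\covD_h F_{0j}$. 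The main obstacle will be precisely this bookkeeping on the path: controlling $\rd_h a^{(h)}$ in $\dot W^{\sgm,p}$ requires tracking how the caloric projection $\Cal$ distorts the $L^p$ envelopes, and the extra term $c^{\sgm,p}_k (c\cdot d)_{\leq k}$ in the definition of $e^{\sgm,p}_k$ is precisely what arises from the commutator $[\rd_h A, A]$ appearing in \eqref{eq:dsdhA0} when one factor is estimated at frequency $\lesssim 2^k$ in $L^p$ and the other in $L^2$. I expect that, once the path is set up correctly, the remaining analysis is a direct $L^p$ analogue of the $L^2$ arguments in Proposition~\ref{p:diff-l2}.
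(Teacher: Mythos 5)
Your proposal is correct and follows essentially the same route as the paper: apply the covariant parabolic $L^p$ theory of Theorem~\ref{t:heatB} to the curvature equations, recover $A$ by integrating $\covD^{\ell}F_{\ell i}$ from infinity, treat $B$ via the infinitesimal de Turck trick through $F_{0j}$ and $A_{0}$, and for part (2) run these linearized estimates along the path of Proposition~\ref{p:path-l2} (with the $L^p$ version of the projection/$\Cal$ bounds supplying the extra term $c^{\sgm,p}_{k}(c\cdot d)_{\leq k}$ in $e^{\sgm,p}_{k}$) and integrate in $h$. The small deviations — using the bilateral representation \eqref{A0-bi} rather than integrating $A_{0}$ only from infinity, and attributing the extra envelope term to \eqref{eq:dsdhA0} rather than to the high--low interaction in $[a,a_{0}]$ inside the projection bound — are harmless variants of the same argument.
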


\begin{proof}
  We proceed in several steps.  Our first result is concerned with the
  $\dot W^{\sgm, p}$ bounds for the caloric projection map (cf. Proposition~\ref{p:Cal-fe-ah}).

\begin{lemma}\label{l:Cal-sp}
  Let $(\sgm,p)$ be as in \eqref{sp-range}. Let $\ta$ be a $\dot{H}^{1} \cap \dot{W}^{\sgm, p}$
  connection with $\hM(\ta) \leq \hM < \infty$ and $\nrm{\ta}_{\dot{H}^{1}} \leq M_{1}$. Let $c_{k}$ be a $(-\dlt, S)$ frequency envelope for $\ta$ in $\dot{H}^{1}$, and let $c_{k}^{\sgm, p}$ be a $(-\dlt, S)$ frequency envelope for $\ta$ in $\dot W^{\sgm,p}$ which is $\dlt$-compatible with $c_{k}$.  Then
  the caloric projection $a = \Cal(\ta)$ satisfies the bounds
\begin{equation}
\| P_k a \|_{\dot W^{\sgm,p}} \lesssim_{\hM, M_{1}} c_k^{\sgm, p}.
\end{equation}
\end{lemma}

\begin{proof}
We suppress the dependence of implicit constants on $\hM$ and $M_{1}$. The idea is to first estimate $F$ in the caloric gauge, and then pass to $a$ using the caloric gauge representation
\begin{equation*}
	a = - \int_{0}^{\infty} \covD^{\ell} F_{\ell i} \, d s.
\end{equation*}

To begin, let $O$ be the corresponding gauge transformation, i.e.,
\begin{equation*}
	O^{-1} \rd_{x} O = a_{\infty}.
\end{equation*}
By Theorem~\ref{t:ymhf-l3-fe}, $c_{k}$ is a frequency envelope for $a_{\infty}$ in $\dot{H}^{1}$. Then by Lemma~\ref{l:O-conj} and  compatibility, it follows that $c^{\sgm, p}_{k}$ is a $\dot{W}^{\sgm - 1, p}$ frequency envelope for
\begin{equation*}
	f_{jk} = Ad(O) \tf_{jk}.
\end{equation*}
By Propositions~\ref{p:Cal-fe-ah} and \ref{p:ab-l2}, we have
\begin{equation*}
	\nrm{P_{k} A(s)}_{\dot{H}^{1}} \aleq c_{k} (1+2^{2k} s)^{-N}.
\end{equation*}
Therefore, solving the parabolic equation for $f$ using Theorem~\ref{t:heatB}, we obtain
\begin{equation*}
	\nrm{P_{k} F(s)}_{\dot{W}^{\sgm-1, p}} \aleq c^{\sgm, p}_{k} (1+2^{2k} s)^{-N}.
\end{equation*}
By Littlewood--Paley trichotomy for $[A^{\ell}, F_{\ell i}]$, we have
\begin{equation} \label{eq:cal-DF-sp}
	\nrm{P_{k} \covD^{\ell} F_{\ell i}(s)}_{\dot{W}^{\sgm, p}} \aleq 2^{2k} c^{\sgm, p}_{k} (1+2^{2k} s)^{-N} +  \sum_{j > k} 2^{(2 + \sgm) (k-j)} c_{j} c^{\sgm, p}_{j} 2^{2j} (1+2^{2j} s)^{-N-10}.
\end{equation}
Then integrating in $s$, the desired bound follows. \qedhere
\end{proof}

Next we consider the heat flow of $\dot W^{\sgm, p}$ caloric data, and prove the $A$ bound in part (1) of the 
proposition: 

\begin{lemma}\label{l:cal-sp}
Let $a \in \calC_{\hM}$ with energy $\leq \calE$, and with a $\dot{H}^{1}$ $(-\dlt, S)$ frequency envelope $c_{k}$.
Let $(\sgm, p)$ be as in \eqref{sp-range}, and let $c^{\sgm, p}_{k}$ be a $(-\dlt, S)$ frequency envelope for $a$ in $\dot{W}^{\sgm, p}$, which is $\dlt$-compatible with $c_{k}$.  Then we have the bounds
\begin{equation}
\| P_k A(s) \|_{\dot W^{\sgm,p}}+ \| P_k F(s) \|_{\dot W^{\sgm-1,p}}  \lesssim_{\hM, \calE, N} c_k^{\sgm, p} (1+ 2^{2k} s)^{-N}.
\end{equation}
\end{lemma}

\begin{proof}
This is the same as the previous argument but with $O = I$. Note that $M_{1} \aleq_{\hM, \calE} 1$ by Proposition~\ref{p:cal-in-bfH}. Moreover, we have the representation
\begin{equation*}
	A_{j}(s) = - \int_{s}^{\infty} \covD^{\ell} F_{\ell j}(\tilde{s}) \, d \tilde{s}.
\end{equation*}
Hence, the desired bound follows by integrating \eqref{eq:cal-DF-sp} from $\infty$ to $s$. \qedhere
\end{proof}

Next we turn our attention to the linearized caloric flow and and the corresponding projection map.
To set the notations, let $a$ be a caloric connection, and $\tb$ a linearized data set.
Its projection   $b = \Pi_a \tb$ is a linearized caloric data set. Our goals will be to 

\begin{itemize}
\item Provide $\dot W^{\sgm, p}$ frequency envelope bounds for the projection map $\tilde b \to b$.
\item Provide $\dot W^{\sgm, p}$ frequency envelope bounds for the linearized Yang--Mills heat flow of $b$.
\end{itemize}

We begin with a frequency envelope bound for the projection map, which is analogous to Lemma~\ref{l:pi-l2}:
\begin{lemma}  \label{l:pi-sp}
Let $a \in \calC_{\hM}$ with energy $\leq \calE$, and with a $\dot{H}^{1}$ $(-\dlt, S)$ frequency envelope. Let $\tb \in L^{2}$, and let $b = \Pi_{a} \tilde{b}$ be its caloric projection.

\begin{enumerate}
\item Let $(\sgm, p)$ be in the range $-2 < \sgm < \frac{4}{p} - 1$, and let $d^{\sgm, p}_{k}$ be a $(-\dlt, S)$ frequency envelope for $\tb$ in $\dot{W}^{\sgm, p}$, which is $\dlt$-compatible with $c_{k}$. Then $b$ satisfies the bound
\begin{equation} \label{b-wsp}
	\nrm{P_{k} b}_{\dot{W}^{\sgm, p}} \aleq_{\hM, \calE} d_{k}^{\sgm, p}.
\end{equation}

\item Suppose that $\frac{4}{p} - 1 \leq \sgm < \frac{4}{p}$. Let $c^{\sgm, p}_{k}$ be a $(-\dlt, S)$ frequency envelope for $a$ in $\dot{W}^{\sgm, p}$, and let $d_{k}$, respectively $d^{\sgm, p}_{k}$ be $(-\dlt, S)$ frequency envelopes for $\tb$ in $\dot{H}^{1}$, respectively $\dot{W}^{\sgm, p}$, which are $\dlt$-compatible with $c_{k}$. Assume also that $d'_{k} = c_{k} d_{k}^{[1]} + d_{k} c_{k}^{[1]}$ is a $(-\dlt, S)$ frequency envelope for $\rd^{\ell} \tb_{\ell}$ in $L^{2}$. Then
\begin{equation} \label{b-wsp+}
	\nrm{P_{k} b_{j}}_{\dot{W}^{\sgm, p}} + \nrm{P_{k} F_{0j}(s)}_{\dot{W}^{\sgm, p}} \aleq_{\hM, \calE} d_{k}^{\sgm, p} + c^{\sgm, p}_{k} (c \cdot d)_{\leq k}.
\end{equation}
\end{enumerate} 
\end{lemma}
Note that the frequency envelope in \eqref{b-wsp+} is bounded by $e^{\sgm, p}_{k}$ in \eqref{eq:e-sp}.
\begin{proof}
\pfstep{Proof of (1)}
As in the proof of Lemma~\ref{l:pi-l2}, we begin by solving the covariant parabolic flow for $F_{0j}$, with data 
\[
f_{0j} = \tb.
\]
By Theorem~\ref{t:heatB}, this yields the parabolic bounds
\begin{equation}\label{pk-F-0j}
\| P_k F_{0j} \|_{\dot W^{\sgm,p}} \lesssim d_k^{\sgm, p}(1+2^{2k}s)^{-N}.
\end{equation}
Now 
\[
b = \tb - \covD a_0
\]
where 
\[
a_0 = - \int_{0}^\infty \covD^\ell F_{\ell 0} ds.
\] 
By the previous estimate and compatibility, we obtain the bound
\begin{equation}\label{pk-a0}
\| P_k a_0 \|_{\dot W^{\sgm+1,p}} \lesssim d_k^{\sgm,p}
\end{equation}
which in turn leads to the desired bound for $\covD a_0$,
\begin{equation}\label{pk-Da0}
\| P_k \covD a_0 \|_{\dot W^{\sgm,p}} \lesssim d_k^{\sgm,p},
\end{equation}
where we again used compatibility.

\pfstep{Proof of (2)}
Here the bound \eqref{pk-a0} still
holds, but it no longer implies \eqref{pk-Da0}. Precisely, the only difficulty occurs in the 
 expression $[a,a_0]$ for the high-low interactions. In that case we use
 \begin{equation*}
	\nrm{P_{\leq k} a_{0}}_{L^{\infty}} \aleq \sum_{j \leq k} \nrm{P_{j} a_{0}}_{\dot{H}^{2}} \aleq (c \cdot d)_{\leq k},
\end{equation*}
which is derived from the bound on $\rd^{\ell} \tb_{\ell}$ (see the proof of Proposition~\ref{p:fe-ah}). This is combined with the bound $\nrm{P_{k} a}_{\dot{W}^{\sgm, p}} \leq c^{\sgm, p}_{k}$. \qedhere
\end{proof}

Next we consider the regularity of linearized caloric flows, and in particular prove the $B$ bounds in part (1) of the proposition:

\begin{lemma} \label{l:lincal-sp}
Let $a \in \calC_{\hM}$ with energy $\leq \calE$ and with a $\dot{H}^{1}$ $(-\dlt, S)$ frequency envelope $c_{k}$.
Let $(\sgm, p)$ be as in \eqref{sp-range}, and let $d^{\sgm, p}_{k}$ be a $(-\dlt, S)$ frequency envelope for $b$ in $\dot{W}^{\sgm-1, p}$, which is $\dlt$-compatible with $c_{k}$. Then $b$ satisfies the bound
\begin{equation}
\| P_k B(s) \|_{\dot W^{\sgm-1,p}} \lesssim_{\hM, \calE, N} d_k^{\sgm,p} (1+ 2^{2k} s)^{-N}.
\end{equation}
\end{lemma}

\begin{proof}
We proceed exactly as in part (1) of the previous proof (with a caution, however, that the scaling of $d^{\sgm, p}_{k}$ is different!). Observe that
\begin{equation*}
	B_{j}(s) = \rd_{0} A_{j}(s) = F_{0j}(s) - \covD_{j} A_{0}(s).
\end{equation*}
In view of \eqref{pk-F-0j}, it only remains to estimate $\covD A_{0}(s)$. For $A_{0}(s)$, we have the representation
\begin{equation*}
	A_{0}(s) = -\int_{s}^{\infty} \covD^{\ell} F_{\ell}(\tilde{s}) \, d \tilde{s}.
\end{equation*}
Thus, the desired estimate for $\covD A_{0}(s)$ follows by integration from infinity to $s$, as in the proof of Lemma~\ref{l:pi-sp}. \qedhere
\end{proof}

Finally,  we consider difference bound (which is the only use we have for the previous projection bounds): 
\begin{lemma} \label{l:diff-sp}
\begin{enumerate}
\item Consider two caloric connections $a^{(0)}, a^{(1)} \in \calC_{\hM}$ and energy $\leq \calE$ so that \eqref{dA-small} holds. Let $c_{k}$ and $c^{\sgm, p}_{k}$, respectively $d^{\sgm, p}_{k}$, be frequency envelopes for $a^{(0)}, a^{(1)}$, respectively $a^{(0)} - a^{((1)}$, as in the assumptions of Proposition~\ref{p:main-sp}. Then there exists a one parameter family $a^{(h)}$ of caloric data in $\calC_{2 \hM}$ with energy $\leq 2 \calE$, so that   
\begin{equation}\label{a-wsp}
\|P_k a^{(h)} \|_{\dot W^{\sgm,p}} \lesssim_{\hM, \calE} c_k^{\sgm,p},
\end{equation}
as well as
\begin{equation}\label{dha-wsp}
\|P_k \partial_h  a^{(h)} \|_{\dot W^{\sgm,p}} \lesssim_{\hM, \calE}   e_k^{\sgm,p} .
\end{equation}

\item In addition, let $b^{(0)}, b^{(1)}$ be corresponding linearized caloric data sets with $\dot W^{\sgm-1,p}$
$(-\dlt, S)$ frequency envelopes $c_k^{\sgm,p}$, respectively  $d_k^{\sgm,p}$ for the difference, as in the assumptions of Proposition~\ref{p:main-sp}. Then there exists a corresponding family of linearized caloric data sets $b^{(h)}$ with similar bounds,
\begin{equation}\label{bh-wsp}
\|P_k b^{(h)} \|_{\dot W^{\sgm-1,p}} \lesssim_{\hM, \calE}  c_k^{\sgm,p},
\end{equation}
as well as
\begin{equation}\label{dhb-wsp}
\|P_k \partial_h  b^{(h)} \|_{\dot W^{\sgm-1,p}} \lesssim_{\hM, \calE}   e_k^{\sgm,p}.
\end{equation}
\end{enumerate}
\end{lemma}

\begin{proof}
\pfstep{Proof of (1)}
We will prove that the family $a^{(h)}$ constructed in Proposition~\ref{p:path-l2} has all the desired properties.
We use the same notation as there. The first bound \eqref{a-wsp} is a direct consequence of the mapping properties of the caloric projection operator $\Cal$ in Lemma~\ref{l:Cal-sp}.

Next we consider the bound for $\partial_h a^{(h)}$, which is obtained by 
\[
\partial_h a^{(h)} = Ad(O^{(h)}) \rd_{h} \ta^{(h)}_{j} - (\covD[a])_{j} O^{(h)}_{;h} = \Pi_{a^{(h)}} Ad(O^{(h)}) \partial_h \ta^{(h)}  ,
\] 
where the last equality follows from the uniqueness statement in Proposition~\ref{p:transverse}. By Lemma~\ref{l:O-conj} and compatibility, it follows that $d^{\sgm, p}_{k}$, $d_{k}$ and $d'_{k} = c_{k} d^{[1]}_{k} + d_{k} c^{[1]}_{k}$, respectively, are frequency envelopes for $Ad(O^{(h)}) \rd_{h} \ta^{(h)}$ in $\dot{W}^{\sgm, p}$, for $Ad(O^{(h)}) \rd_{h} \ta^{(h)}$ in $\dot{H}^{1}$ and for $\rd^{\ell} (Ad(O^{(h)}) \rd_{h} \ta_{\ell}^{(h)})$ in $L^{2}$, respectively. Then the desired bound \eqref{dha-wsp} follows from Lemma~\ref{l:pi-sp}.(2). 
\pfstep{Proof of (2)}
We proceed again as in Proposition~\ref{p:path-l2}, from which we borrow the notation and equations.
 Given $a^{(h)}$ constructed in part (1), we first define $\tb^{(h)}$ by linearly interpolating between $b^{(0)}$ and $b^{(1)}$.
These have the desired regularity but are not yet on the tangent space of the caloric manifold, so we project,
setting
\[
b^{(h)} = \Pi_{a^{(h)}} \tb^{(h)}.
\]
Now the $b^{(h)} $ bound \eqref{a-wsp} is a consequence of Lemma~\ref{l:pi-sp}.

To estimate $\partial_h b^{(h)}$ we need $\partial_h \covD a_0$.  
\[
\partial_h \covD a_0 = D  \partial_h a_0 + [\partial_h a,a_0] .
\]
The second term is easy to estimate using the previous $\dot W^{\sgm,p}$ bound for $\partial_h a$ and 
the $\dot H^1$ bound
for $a_0$ in Lemma~\ref{l:ah-l2}, as 
\[
\dot W^{\sgm, p} \cdot \dot H^1 \to \dot W^{\sgm-1,p}.
\]

It remains to bound $\partial_h a_0$ in $\dot W^{\sgm,p}$. 
Exactly as in Proposition~\ref{p:path-l2}, we use another round of the ``infinitesimal de Turck trick'',
where we introduce a dynamical component $A_{h}$ satisfying \eqref{eq:ah-dymhf}, \eqref{eq:fjh-heat} and \eqref{eq:ah0=0}. We have
\begin{equation*}
	\rd_{s} \covD_{h} A_{0} = [\covD^{\ell} F_{\ell h}, A_{0}] + [\tensor{F}{_{0}^{\ell}}, F_{\ell 0}] + \covD_{j} \covD_{h} F_{j 0},
\end{equation*}
where $\covD_{h} F_{j0}$ solves the covariant heat equation \eqref{eq:heat-dhF0} with inhomogeneity $G_{j}$ as in \eqref{eq:heat-dhF0-RHS}, and with initial data $\rd_{h} \tb$.

By Theorem~\ref{t:heatB} and the prior bounds at $s =0$, we already have
\[
\| P_k F_{hj} \|_{\dot H^1} \lesssim (d_k + c_{k} (c \cdot d)_{\leq k}) (1+2^{2k} s)^{-N}
, \qquad \| P_k F_{0j} \|_{L^2}  \lesssim c_k(1+2^{2k} s)^{-N},
\]
\[
\| P_k F_{hj} \|_{\dot W^{\sgm,p}} \lesssim (d^{\sgm, p}_k + c^{\sgm, p}_{k} (c \cdot d)_{\leq k})  (1+2^{2k} s)^{-N}
, \qquad \| P_k F_{0j} \|_{\dot W^{\sgm-1,p}}  \lesssim c^{\sgm, p}_k(1+2^{2k} s)^{-N}.
\]
Hence for $G_i$ (which is, schematically, $[\covD F_{h}, F_{0}] + [F_{h}, \covD F_{0}]$) we obtain, by Littlewood--Paley trichotomy,
\[
\| P_k G_i \|_{s^{\frac{1}{2}} \dot W^{\sgm-2,p} + s^{1-} \dot{W}^{\sgm-1 - \dlt, p}} \lesssim     e^{\sgm, p}_k  (1+2^{2k} s)^{-N}.
\]
where, of course, both $s^{\frac{1}{2}} \dot W^{\sgm-2,p}$ and $s^{1-} \dot{W}^{\sgm-1-, p}$ have the same scaling as $\dot{W}^{\sgm-3, p}$.
The worst terms arise from (i) $high \times low$ interaction in $[\covD F_{h}, F_{0}]$, (ii) $low \times high$ interaction in $[F_{h}, \covD F_{0}]$ , and (iii) $high \times high \to low$ for both terms. These give (i) $d^{\sgm, p}_{k} + c^{\sgm, p}_{k} (c \cdot d)_{\leq k}$ in $s^{\frac{1}{2}} \dot W^{\sgm-2,p}$, (ii) $c^{\sgm, p} d^{[1]}_{k}$ in $s^{\frac{1}{2}} \dot W^{\sgm-2,p}$ and (iii) $d^{\sgm, p}_{k} + c^{\sgm, p}_{k} (c \cdot d)_{\leq k}$ in $s^{1-} \dot{W}^{\sgm-1 - \dlt, p}$, respectively. Recall from \eqref{eq:e-sp} that the resulting frequency envelopes add up to $e^{\sgm, p}_{k}$.

Then solving the parabolic equation for $\covD_{h} F_{0i}$ we obtain 
\[
\| P_k \covD_h F_{0i} \|_{\dot W^{\sgm-1,p}} \lesssim  e^{\sgm, p}_k   (1+2^{2k} s)^{-N}.
\]
By \eqref{eq:dsdhA0}, this implies that
\[
\|  \partial_s \covD_h A_0 \|_{\dot W^{\sgm-2,p}} \lesssim  e^{\sgm,p}_k  (1+2^{2k} s)^{-N},
\]
which in turn, after integration from infinity, yields the desired bound for $\covD_h A_0$,
\[
\| \covD_h A_0 \|_{\dot W^{\sgm, p}} \lesssim  e^{\sgm, p}_k   (1+2^{2k} s)^{-N}. \qedhere
\]
\end{proof}

As a consequence of the last result, we are able to provide the difference bounds in part (2) of the proposition.
These are obtained by combining the last lemma with the linearized bounds in Lemma~\ref{l:lincal-sp}.
\end{proof}

Our next goal is to compare our caloric heat flow for $(A,B)$ with the corresponding linear heat flow.
We do this first at the linear level, where we can prove a $\dot W^{\sgm, p}$ counterpart of Proposition~\ref{p:AB2-l2}.
From here on we assume that 
\[
0 < \sgm < \frac{4}{p}, \qquad 2 < p < \infty. 
\]
To the pair $(\sgm,p)$  we associate another pair $(\sgm_1,p_1)$ 
so that the following relations hold:
\[
0 \leq \sgm_1 < \sgm,
\]
respectively,
\[
\frac{4}{p_1}- \sgm_1 = 2(\frac{4}{p} - \sgm) .
\]
These are so that we have the scaling equivalence (and bilinear multiplicative property)
\[
 \dot W^{\sgm,p} \cdot \dot W^{\sgm,p} \sim \dot W^{\sgm_1,p_1}.
\]

\begin{proposition}\label{p:AB2-sp-lin}
 Let $(a,b) \in T^{L^2} \calC$ be a linearized caloric initial data set, with 
$\dot H^1 \times L^{2}$ $(-\dlt, S)$-frequency envelope $c_k$ and $\dot W^{\sgm,p} \times \dot W^{\sgm-1,p}$ $(-\dlt, S)$-frequency envelope $c_k^{\sgm,p}$, which is $\dlt$-compatible with $c_{k}$. Then its Yang--Mills heat flow satisfies the bounds
\begin{equation} \label{cal-linf-af}
 \| (1-s\Delta)^N \bfA_j(s)\|_{\dot W^{\sgm_1,p_1}} + \|  (1-s\Delta)^N\bfF_{ij}(s)\|_{\dot W^{\sgm_1-1,p_1}} 
 \lesssim_{\hM, \calE, N}  2^{-k(s)}  c_{k(s)}^{\sgm,p} c_{k(s)}^{\sgm,p [\dlt]} ,
\end{equation}
\begin{equation}\label{cal-linf-bf}
 \| (1-s\Delta)^N \bfB_{j}(s)\|_{\dot W^{\sgm_1-1,p_1}} + \|  (1-s\Delta)^N\bfF_{i0}(\sgm)\|_{\dot W^{\sgm_1-1,p_1}} \lesssim_{\hM, \calE, N}  2^{-k(s)}  c_{k(s)}^{\sgm,p} c_{k(s)}^{\sgm,p [\dlt]}
\end{equation}
respectively,
\begin{equation}\label{cal-linf-da}
 \|P_k \DA\|_{\dot W^{\sgm_1,p_1}} +   \|P_k \DB\|_{\dot W^{\sgm_1-1,p_1}}\lesssim_{\hM, \calE}  c_{k}^{\sgm,p} c_{k}^{\sgm,p [\dlt]}  .
\end{equation}

Similarly, if $(a^{(0)},b^{(0)})$ and $(a^{(1)},b^{(1)})$ are two close linearized caloric initial data sets with frequency envelopes $c_{k}, c_{k}^{\sgm, p}, d_{k}, d_{k}^{\sgm, p}$ satisfying the assumptions of Proposition~\ref{p:main-sp},
then we have the difference bounds
\begin{equation} \label{cal-linf-af-diff}
\begin{aligned}
& \| (1-s\Delta)^N \delta \bfA_j(s)\|_{\dot W^{\sgm_1,p_1}} + \|  (1-s\Delta)^N\delta \bfF_{ij}(s)\|_{\dot W^{\sgm_1-1,p_1}} \\
&  \lesssim_{\hM, \calE, N}  2^{-k(s)} (c_{k(s)}^{\sgm, p} e_{k(s)}^{\sgm, p [\dlt]} + e_{k(s)}^{\sgm, p} c_{k(s)}^{\sgm, p [\dlt]}),
\end{aligned}
\end{equation}
\begin{equation} \label{cal-linf-bf-diff}
\begin{aligned}
& \| (1-s\Delta)^N \delta \bfB_{j}(s)\|_{\dot W^{\sgm_1-1,p_1}}  + \|  (1-s\Delta)^N\delta \bfF_{i0}(s)\|_{\dot W^{\sgm_1-1,p_1}} \\
 & \lesssim_{\hM, \calE, N}  2^{-k(s)} (c_{k(s)}^{\sgm, p} e_{k(s)}^{\sgm, p [\dlt]} + e_{k(s)}^{\sgm, p} c_{k(s)}^{\sgm, p [\dlt]}),
\end{aligned}
\end{equation}
respectively,
\begin{equation}\label{cal-linf-da-diff}
 \|P_k \delta \DA(s)\|_{\dot W^{\sgm_1,p_1}} + \|P_k \delta \DB(s) \|_{\dot W^{\sgm_1-1,p_1}} \lesssim_{\hM, \calE}  c_{k}^{\sgm, p} e_{k}^{\sgm, p [\dlt]} + e_{k}^{\sgm, p} c_{k}^{\sgm, p [\dlt]}.
\end{equation}
Here for all $\DB$ bounds we make the additional assumption $\sgm > \frac12$.

\end{proposition}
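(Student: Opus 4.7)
The strategy is to mirror the proof of Proposition~\ref{p:AB2-l2}, replacing the bilinear $L^{2}$ estimates with the bilinear Sobolev product
\[
\dot W^{\sgm,p} \cdot \dot W^{\sgm,p} \hookrightarrow \dot W^{\sgm_{1}, p_{1}},
\]
whose scaling we fixed precisely so that pairing two factors at the regularity provided by Proposition~\ref{p:main-sp} lands in the target space. Throughout, Proposition~\ref{p:main-sp} together with Lemmas~\ref{l:cal-sp} and \ref{l:lincal-sp} supplies the frequency envelope bounds $\nrm{P_k A(s)}_{\dot W^{\sgm,p}}, \nrm{P_k F(s)}_{\dot W^{\sgm-1,p}}, \nrm{P_k B(s)}_{\dot W^{\sgm-1,p}} \lesssim c^{\sgm,p}_{k}(1+2^{2k}s)^{-N}$, and likewise at the $\dot H^{1}/L^{2}$ level with the envelope $c_{k}$.

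First I would dispatch the curvature and divergence estimates \eqref{cal-linf-af} and \eqref{cal-linf-da}. Starting from the Duhamel representation \eqref{rep-F}, the source is a sum of quadratic terms of the form $[F,F]$, $[A,\covD F]$, $[\rd^{\ell}A_{\ell}, F]$. Apply Littlewood--Paley trichotomy at output frequency $2^{k}$: in the $\emph{high}\times\emph{low}$ regime the high factor contributes $c^{\sgm,p}_{k}$ and summing the low factor against the slowly varying envelope produces $c_{k}^{\sgm,p[\dlt]}$; the $\emph{high}\times\emph{high}\to\emph{low}$ regime decays rapidly off-diagonal because of the $\dlt$-slow variation. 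Applying Theorem~\ref{t:heatB-inhom} (with $\bt = 0$) together with the $(1-s\Delta)^{N}$ factor produces the claimed concentration near $k(s)$ and the prefactor $2^{-k(s)}$. The $\DA$ bound is identical, using \eqref{rep-DA} and integration from infinity. The $\bfA$ bound then follows from \eqref{rep-A}, where the $\rd^{\ell}A_{\ell}$ contribution is absorbed using \eqref{cal-linf-da} (which is already quadratic).

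Next I would handle $\bfF_{0j}$, $\bfA_{0}$, $\bfB$, and $\DB$ using the dynamic $A_{0}$-route of Section~\ref{subsec:dymhf}. Since $(a,b) \in T^{L^{2}}\calC$, the double boundary condition $A_{0}(0) = A_{0}(\infty) = 0$ holds. The $\bfF_{0j}$ analysis is parallel to step one, applied to \eqref{F0l} whose source is $[F, F_{0}]$ plus commutator terms from rewriting the equation against the background heat flow. For $\bfA_{0}$ I would invoke the balanced two-sided formula \eqref{A0-bi}: the first integral over $[s,\infty)$ controls the low frequencies via $(1 - e^{s\Delta})$, the second over $[0,s]$ controls the high frequencies via $e^{s\Delta}$, and each piece carries a pure quadratic $\covD \bfF_{j0}$ together with the bilinear remainder $[A_{j}, e^{\tilde s \Delta}b]$; both are bounded by the bilinear product rule and the $\bfF_{0j}$ estimate just obtained. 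The identity $B_{j} = F_{0j} + \covD_{j}A_{0}$ then yields \eqref{cal-linf-bf}. Finally, the $\DB$ bound uses \eqref{rep-DB}: here one factor is $B \in \dot W^{\sgm-1,p}$ and the other is $\covD F \in \dot H^{1}$, and the restriction $\sgm > \tfrac12$ is exactly what is needed for $\dot W^{\sgm-1,p} \cdot \dot H^{1} \hookrightarrow \dot W^{\sgm_{1}-1,p_{1}}$ at the right scaling.

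The difference bounds are handled in parallel by passing to the one-parameter caloric path $(a^{(h)}, b^{(h)})$ from Lemma~\ref{l:diff-sp} and differentiating the Duhamel representations in $h$. Each bilinear expression then splits as (derivative of one factor) times (the other factor); by Proposition~\ref{p:main-sp}.(2) the first carries envelope $e^{\sgm,p}_{k}$ and the second $c^{\sgm,p}_{k}$, so Littlewood--Paley trichotomy produces sums of products $c^{\sgm,p}_{k(s)} e^{\sgm,p[\dlt]}_{k(s)} + e^{\sgm,p}_{k(s)} c^{\sgm,p[\dlt]}_{k(s)}$, exactly as claimed in \eqref{cal-linf-af-diff}--\eqref{cal-linf-da-diff}. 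The principal obstacle I anticipate is the $\bfA_{0}$ analysis: one must balance the two halves of \eqref{A0-bi} so that neither boundary condition overwhelms the other, and this balance must be compatible with the $\dot W^{\sgm_{1}, p_{1}}$ scaling rather than the $L^{2}$ scaling used in Proposition~\ref{p:AB2-l2}. A secondary but purely bookkeeping obstacle is verifying that $e^{\sgm,p}_{k}$ as defined in \eqref{eq:e-sp} remains $2\dlt$-compatible with $c_{k}$ throughout the iterations, which is what allows the Littlewood--Paley summations to close.
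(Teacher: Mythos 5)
Your proposal is correct and follows essentially the same route as the paper: the representations \eqref{rep-F}, \eqref{rep-DA}, \eqref{rep-A}, \eqref{A0-bi} and \eqref{rep-DB}, Littlewood--Paley trichotomy fed by the $\dot W^{\sgm,p}$ envelope bounds of Proposition~\ref{p:main-sp} (via Lemmas~\ref{l:cal-sp}, \ref{l:lincal-sp}), the double boundary condition $A_0(0)=A_0(\infty)=0$ for the $\bfB$ bound, and the difference bounds through Proposition~\ref{p:main-sp}.(2) and the caloric path of Lemma~\ref{l:diff-sp}. One correction worth noting: the restriction $\sgm>\frac12$ is not explained by a fixed-time product embedding $\dot W^{\sgm-1,p}\cdot\dot H^{1}\hookrightarrow\dot W^{\sgm_1-1,p_1}$ (this is not scale-consistent except for the $\dot H^1$-scaling family, since the missing scaling is supplied by the heat-time integration, and it would in any case produce an envelope of the form $c^{\sgm,p}_k c_k$ rather than the claimed $c^{\sgm,p}_k c^{\sgm,p[\dlt]}_k$); rather, in \eqref{rep-DB} the constraint comes from the $high \times high \to low$ interaction $\sum_{j>k}2^{-2j}P_k\bigl(P_jB\cdot \rd P_jF\bigr)$, where the $s$-integration gains $2^{-2j}$, the derivative on $P_jF$ costs $2^{j}$, and the residual $2^{-j}$ converts into off-diagonal decay $2^{(2\sgm-1)(k-j)}$, which is summable over $j>k$ precisely when $\sgm>\frac12$.
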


\begin{proof}
  We proceed as in the proof of Proposition~\ref{p:AB2-l2}. Again, the proof is tedious but straightforward. 
  We focus on the key structural aspects, and omit the details.
  
  First we
  estimate $\bfF_{ij}$, using the representation \eqref{rep-F}.  The
  same argument applies to $F_{0j}$.  For the bilinear terms in the
  integral we estimate both factors using the $\dot W^{\sgm, p}$ envelopes.
   To fix the notations, consider the worst
  term $[A,\partial F]$.  Then using the Littlewood-Paley trichotomy
  we have two main contributions,
\[
\bfF_{ij}(s) \approx 2^{-2k} P_{\leq k(s)} [A_{<k(s)},\partial F_{<k(s)}] + \sum_{k > k(s)} 2^{-2k} P_{\leq k(s)} [A_k,\partial F_k] 
\]
In the second term we have additional off-diagonal decay as we only need to apply Bernstein for the product.
In the first, however, if $A$ has lower frequency then we need to apply  Bernstein separately for $A$, and so we can only use
an  $L^q$ bound for $A$.

Next we estimate $\DA$, for which we use the representation \eqref{rep-DA}. 
The integrand is quite similar to the one above, but now we integrate to infinity. Thus the leading term 
is 
\[
\DA(s) \approx   \sum_{k_1,k_2 \leq k(s)}  2^{-2k_{max}}  [A_{k_1},\partial F_{k_2}]
\] 
and when we consider $P_k \DA(s)$ we arrive at the same two cases as above, with the same final result.

To estimate $A_i$ we use the same analysis one derivative higher, via \eqref{rep-A}. The same applies to $A_0$
via \eqref{A0-bi}, which implies the $B$ bound.

Finally we consider the $\DB$ bound, where we use 
\[
\covD^k B_k = - 2\int_{0}^\infty [B^j, \covD^i F_{ij}] ds 
\]
The worst contribution is in the $high \times high \to low$ case,
\[
\covD^k B_k \approx  \sum_{j > k}  2^{-2j} P_{k} (P_{j} B \cdot \partial P_{j} F).
\]
The derivative in front of $P_{j} F$ gives one more $2^{j}$, so we only have $2^{-j}$ left. This is where we need to assume that $\sgm > \frac{1}{2}$.

The estimates for the differences are similar, using part (2) of Proposition~\ref{p:main-sp} and Proposition~\ref{p:diff-l2}.
\end{proof}

We end this subsection by fixing some parameters for the ensuing analysis. For our goal below, we will need to work with five different sets of exponents $(\smin,\pmin)$, $(\smid,\pmid)$, \ldots, $(\smaxp,\pmaxp)$. Their choice is somewhat flexible, within a range.
We describe it in the following table\footnote{$L^{q}_{t}$ refers to the $t$-integrability in the matching $\dot{H}^{1}$-Strichartz norm.}:
\renewcommand{\arraystretch}{3}

\begin{table}[ht]
\begin{tabular}{| c | c | c | c | c | c |}
\hline
$(\sgm, p)$ & Scaling &$L^q_t$ match & $\sgm$ constraint     &$p$  constraint     & $(\sgm_1,p_1)$ 
\\ 
\hline
$(\smin,\pmin)$  & $\dot H^1$ & $L^\infty$ &  $\dfrac12 < \smin < 1$  &  $2 < p < 4$   & $(0,2)$
\\
\hline
$(\smid,\pmid)$ & $\dot H^{\frac54}$ & $L^4$&   $\dfrac12 < \smid < \dfrac{7}{12} $ & $3 < p < \dfrac{16}5$   &  $(\dfrac12,2)$
\\
\hline
$(\smidp,\pmidp)$ & $\dot H^{\frac54+\smexp}$ & $L^{4-}$&  $\dfrac12+2\smexp < \smidp < \dfrac{7}{12}-\dfrac{5\smexp}{3} $ & $p < 3$   &  $(\dfrac12+2\smexp,2)$
\\ 
\hline
  $(\smax,\pmax)$ &  $\dot H^{\frac43}$ &$ L^3$&  $\dfrac13 < \smax < \dfrac{4}{9} $  &  $ \dfrac{18}5  < p < 4$     &  $(\dfrac13,\dfrac{12}5)$
\\ 
\hline
  $(\smaxp,\pmaxp)$ &  $\dot H^{\frac32-2\smexp}$ & $ L^{2+}$ &  $\dfrac16-4\smexp  < \smaxp < \dfrac{1}{6}-\dfrac{10\smexp}{3} $  &  $  p < 6$     &  $(\dfrac16-4\smexp,\dfrac{24}{5})$
\\ 
\hline
\end{tabular}
\end{table}

Here $\smexp$ is a small parameter,
\[
0 < \smexp < \frac{1}{22}.
\]
The above proposition applies the these sets of indices as follows: 

\begin{corollary}
\begin{enumerate}
\item All bounds in Proposition~\ref{p:AB2-sp-lin} apply  for the sets of indices 
$(\smin,\pmin)$, $(\smid,\pmid)$ and $(\smidp,\pmidp)$.
\item  The bounds in Proposition~\ref{p:AB2-sp-lin}, except for the $\DB$ bounds, 
apply   for the sets of indices $(\smax,\pmax)$  and  $(\smaxp,\pmaxp)$.
\end{enumerate}
\end{corollary}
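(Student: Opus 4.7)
The plan is to verify directly that each of the five parameter choices tabulated above satisfies the hypotheses of Proposition~\ref{p:AB2-sp-lin}, so that the bounds claimed in that proposition transfer immediately. Concretely, the corollary is a numerical check; there is no real analytical work beyond interpreting the table.

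For each row I would carry out three verifications. First, the baseline admissibility $0 < \sgm < \frac{4}{p}$ and $2 < p < \infty$: this is immediate from the columns of $\sgm$ and $p$ constraints in the table. For example, for $(\smin,\pmin)$ we have $\sgm < 1$ and $p < 4$, so $\sgm < 1 < \frac{4}{p}$; for $(\smaxp,\pmaxp)$ the bound $\sgm < \frac16$ together with $p < 6$ gives $\sgm < \frac16 < \frac23 < \frac{4}{p}$; the other rows are analogous. Second, the scaling relation $\frac{4}{p_1} - \sgm_1 = 2 \bigl( \frac{4}{p} - \sgm \bigr)$ together with $0 \leq \sgm_1 < \sgm$ required by Proposition~\ref{p:AB2-sp-lin}: using the fact that the scaling column records $\frac{4}{p} - \sgm = 2 - s_{\mathrm{scale}}$, this reduces to arithmetic. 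For instance, for $(\smid,\pmid)$ one has $\frac{4}{p} - \sgm = \frac34$, so the relation demands $\frac{4}{p_1} - \sgm_1 = \frac32$, which the tabulated $(\sgm_1,p_1) = (\tfrac12, 2)$ indeed satisfies; analogous checks go through for the remaining rows.

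Third, to obtain the $\DB$ bound Proposition~\ref{p:AB2-sp-lin} additionally requires $\sgm > \tfrac12$. For $(\smin,\pmin)$, $(\smid,\pmid)$, and $(\smidp,\pmidp)$ this is built into the $\sgm$ constraint column, so all bounds (including the $\DB$ ones) apply, giving part (1) of the corollary. For $(\smax,\pmax)$ and $(\smaxp,\pmaxp)$, by contrast, the constraint column forces $\sgm < \tfrac49 < \tfrac12$ and $\sgm < \tfrac16 - \tfrac{10\smexp}{3} < \tfrac12$ respectively, so the $\DB$ hypothesis fails; this is why part (2) excludes the $\DB$ bounds from the claim.

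I do not foresee any obstacle: the entire proof is the verification of the three inequalities above for each row, and each step is a one-line arithmetic check. The only mild care required is to confirm the scaling of $(\sgm_1, p_1)$ matches the doubled scaling of $(\sgm, p)$, which we did above; once this is noted, nothing remains beyond inspection of the table.
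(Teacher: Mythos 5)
Your proposal is correct and is exactly the argument the paper intends: the corollary is stated without a separate proof precisely because it amounts to checking, row by row, that each tabulated pair $(\sgm,p)$ lies in the admissible range \eqref{sp-range}, that $(\sgm_1,p_1)$ satisfies $0\leq\sgm_1<\sgm$ and the doubled-scaling relation, and that the extra condition $\sgm>\tfrac12$ needed for the $\DB$ bounds holds only for the first three rows. Your three-step verification matches this intended inspection of the table, so there is nothing to add.
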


To keep the notation simpler, in what follows we denote the corresponding homogeneous Sobolev spaces,
and the associated frequency envelopes by $\dot W^{(j)}$, respectively $c_k^{(j)}$ for $j = 0,1,2,3,4$.
For the most part, the following embeddings will suffice for our estimates:
\begin{equation}
\begin{split}
\dot W^{(0)} \subset \dot W^{\frac12, \frac83}, \qquad \dot W^{(1)} \subset \dot W^{\frac12, \frac{16}5}, \qquad 
 \dot W^{(2)} \subset \dot W^{\frac12-\smexp, 3}, \\
\dot W^{(3)} \subset \dot W^{\frac13, 4}, \qquad
 \dot W^{(4)} \subset \dot W^{\frac16-2\smexp, 6}.
\end{split}
\end{equation}
The reason we go past the range of these embeddings is to be able to gain off-diagonal decay in several quadratic
and cubic estimates.  

We fix a universal constant $\dlt_{0} > 0$, which is sufficiently small relative to the five pairs $(\smin, \pmin)$, \ldots, $(\smaxp, \pmaxp)$, as well as $\smexp$. This will be our lower admissibility range, as well as compatibility parameter, for all the frequency envelopes we use. 

Given a linearized caloric data set $(a, b) \in T^{L^{2}} \calC$, we let $\hM, \calE$ be upper bounds $\hM(a) \leq \hM$ and $\spE[a] \leq \calE$. We let $c_{k}$, respectively $c_{k}^{(j)}$, be $(-\dlt_{0}, S)$ frequency envelopes for $(a, b)$ in $\dot{H}^{1} \times L^{2}$, respectively in $\dot{W}^{\sgm^{(j)}, p^{(j)}} \times \dot{W}^{\sgm^{(j)}-1, p^{(j)}}$. 

Next, given a pair of linearized caloric data sets $(a^{(0)}, b^{(0)}), (a^{(1)}, b^{(1)}) \in T^{L^{2}} \calC$, with again let $\hM, \calE$ be upper bounds $\hM(a^{(0)}), \hM(a^{(1)}) \leq \hM$ and $\spE[a^{(0)}], \spE[a^{(1)}] \leq \calE$, which are close in the sense of \eqref{dA-small}. We let $c_{k}$, respectively $c_{k}^{(j)}$, be joint $(-\dlt_{0}, S)$ frequency envelopes in $\dot{H}^{1} \times L^{2}$, respectively in $\dot{W}^{\sgm^{(j)}, p^{(j)}} \times \dot{W}^{\sgm^{(j)}-1, p^{(j)}}$. For the difference $(a^{(0)} - a^{(1)}, b^{(0)} - b^{(1)})$, we let $d_{k}$, respectively $d_{k}^{(j)}$, be $(-\dlt_{0}, S)$ frequency envelopes in $\dot{H}^{1} \times L^{2}$, respectively in $\dot{W}^{\sgm^{(j)}, p^{(j)}} \times \dot{W}^{\sgm^{(j)}-1, p^{(j)}}$. 

In all cases above, we assume that $c_{k}^{(j)}$, $d_{k}$, and $d_{k}^{(j)}$ are $\dlt_{0}$-compatible with $c_{k}$. 

\subsection{Generalized Coulomb condition and \texorpdfstring{$\bfQ$}{bfQ}}
By now, we have repeatedly seen (and took advantage of the fact) that caloric connections $a$ and their linearizations $b$ satisfy a generalized type of Coulomb gauge condition
\[
\partial^{\ell} a_{\ell} = \DA(a), \qquad \partial^\ell b_\ell = \DB(a,b),
\]
where the smooth maps $\DA$ and $\DB$ contain only quadratic and
higher terms, and have better regularity. 
As a remarkable corollary of the results proved in the preceding subsection, we are now able to provide a better description of these maps. In particular, the main quadratic part is described in terms of the explicit symmetric bilinear form $\bfQ$ with symbol
\begin{equation} \label{eq:Q-sym}
	\bfQ(\xi, \eta) = \frac{\abs{\xi}^{2} - \abs{\eta}^{2}}{2 (\abs{\xi}^{2} + \abs{\eta}^{2})}.
\end{equation}
Later, in the analysis of the hyperbolic Yang--Mills equation, we will use
the explicit form of the quadratic part, while the cubic and higher
terms will only play a perturbative role.

\begin{proposition}\label{p:DAB}
 Let $(a,b) \in T^{L^{2}} \calC_{\hM}$ be a linearized caloric data set with energy $\leq \calE$. 
Then $\rd^{\ell} a_{\ell} = \DA(a)$ and $\rd^{\ell} b_{\ell} = \DB(a, b)$ decompose into the quadratic and the higher order parts
\begin{align*}
\DA(a) =& \bfQ(a, a) + \DA^3(a), \\
\DB(a, b) =& \frac{1}{2} \left( [a, b] + 2 \bfQ(a, b) \right) + \DB^{3}(a,b),
\end{align*}
where $\bfQ$ is the symmetric bilinear form with symbol \eqref{eq:Q-sym}, and the remainders $\DA^3$, $\DB^3$ are  maps containing cubic and higher order terms. Under the assumptions at the end of Section~\ref{subsec:lp}, they obey the following bounds:
\begin{align}
 \|P_k \DA^3\|_{L^{2}}+  \|P_k \DB^3\|_{\dot{H}^{-1}} \lesssim_{\hM, \calE} & c_{k}^{(0)}(c_{k}^{(0), [\dlt_{0}]})^2, \label{DAB3-0}  \\
  \|P_k \DA^3\|_{\dot H^{\frac{1}{2}}}+  \|P_k \DB^3\|_{\dot{H}^{-\frac{1}{2}}} \lesssim_{\hM, \calE} & c_{k}^{(1)}(c_{k}^{(1), [\dlt_{0}]})^2, \label{DAB3-1}  \\
 \|P_k \DA^3\|_{\dot H^1}+  \|P_k \DB^3\|_{L^{2}} \lesssim_{\hM, \calE} & c_{k}^{(3)}(c_{k}^{(3), [\dlt_{0}]})^2. \label{DAB3}
\end{align}
as well as the corresponding difference bounds
\begin{align}
 \|P_k \delta \DA^3\|_{L^{2}}+  \|P_k \delta \DB^3\|_{\dot{H}^{-1}} \lesssim_{\hM, \calE} &   (c_{k}^{(0), [\dlt_{0}]})^2 e_{k}^{(0), [\dlt_{0}]} , \label{dDAB3-0} \\
 \|P_k \delta \DA^3\|_{\dot{H}^{\frac{1}{2}}}+  \|P_k \delta \DB^3\|_{\dot{H}^{-\frac{1}{2}}} \lesssim_{\hM, \calE} &   (c_{k}^{(0), [\dlt_{0}]})^2 e_{k}^{(0), [\dlt_{0}]} , \label{dDAB3-1} \\
  \|P_k \delta \DA^3\|_{\dot H^1}+  \|P_k \delta \DB^3\|_{L^{2}} \lesssim_{\hM, \calE} &   (c_{k}^{(3), [\dlt_{0}]})^2 e_{k}^{(3), [\dlt_{0}]} . \label{dDAB3} 
\end{align}
\end{proposition}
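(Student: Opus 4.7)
My plan is to start from the integral representations for $\DA$ and $\DB$ already established earlier in the section. Namely, from \eqref{rep-DA} one has at $s=0$
\[
\DA(a) = \int_{0}^{\infty} [A^{k}, \covD^{\ell} F_{\ell k}] \, d\tilde{s},
\]
and the analogous identity for $\covD^{\ell} B_{\ell}(0)$ derived in the proof of Lemma~\ref{l:db}, combined with the conversion $\partial^{\ell} b_{\ell} = \covD^{\ell} b_{\ell} - [a^{\ell}, b_{\ell}]$, yields a similar integral expression for $\DB(a,b)$. The caloric flow of $a$ admits the decompositions $A = e^{s\Delta} a + \bfA$, $F = e^{s\Delta} f + \bfF$ and $B = e^{s\Delta} b + \bfB$.

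The first step is to isolate the purely quadratic contribution, obtained by replacing each factor by its linear heat flow image and dropping the curvature commutator $[A_{j}, A_{k}]$ and the $\partial^{\ell} A_{\ell}$ terms inside $\covD^{\ell} F_{\ell k}$ (both of which are themselves quadratic on $\calC$, and thus feed only into $\DA^{3}$, $\DB^{3}$). Explicitly, the leading term of $\DA(a)$ reduces to
\[
\int_{0}^{\infty} [e^{s\Delta} a^{k}, \Delta e^{s\Delta} a_{k}] \, ds,
\]
which on the Fourier side evaluates to the multiplier $-|\eta|^{2}/(|\xi|^{2}+|\eta|^{2})$ acting on $[\hat a^{k}(\xi), \hat a_{k}(\eta)]$. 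Symmetrizing in $\xi \leftrightarrow \eta$ via the antisymmetry of the Lie bracket converts this into the multiplier $(|\xi|^{2}-|\eta|^{2})/(2(|\xi|^{2}+|\eta|^{2}))$, i.e., precisely the symbol of $\bfQ(a,a)$. An identical Fourier computation on the leading term of $\int_{0}^{\infty} [B^{j}, \covD^{\ell} F_{\ell j}] d\tilde s$ produces the quadratic part of $\covD^{\ell} B_{\ell}(0)$; combined with the $-[a^{\ell}, b_{\ell}]$ correction one then reads off the stated quadratic form of $\DB(a,b)$.

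For the cubic remainders $\DA^{3}$ and $\DB^{3}$, the plan is to enumerate and estimate all error terms, which fall into three categories: (i) contributions where at least one instance of $A$, $F$, or $B$ in the integrand is replaced by the quadratic correction $\bfA$, $\bfF$, or $\bfB$; (ii) contributions of the curvature self-commutator $[A_{j}, A_{k}]$ in $F$; and (iii) contributions of $[A^{\ell}, F_{\ell k}]$ inside $\covD^{\ell} F_{\ell k}$ and of $\partial^{\ell} A_{\ell}$ in the expansion of the linear part of $F$. Each such term is now at least trilinear in $(a, b)$. These are estimated using Littlewood--Paley trichotomy, with the fixed-$s$ inputs being: the parabolic pointwise bounds for $A$, $F$, $B$ from Proposition~\ref{p:ab-l2}; the quadratic $L^{4/3}$-based bounds on $\bfA$, $\bfF$, $\bfB$, $\DA$, $\DB$ from Proposition~\ref{p:AB2-l2}; the $\dot W^{\sgm,p}$ bounds from Proposition~\ref{p:main-sp}; and the trilinear-in-envelope bounds \eqref{cal-linf-af}--\eqref{cal-linf-da} from Proposition~\ref{p:AB2-sp-lin}. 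The three target norms $L^{2}$, $\dot H^{1/2}$, $\dot H^{1}$ for $\DA^{3}$ (and their duals for $\DB^{3}$) are reached by choosing among the exponent pairs $(\smin,\pmin)$, $(\smid,\pmid)$, $(\smax,\pmax)$ from the table at the end of Section~\ref{subsec:lp}, whose Sobolev embeddings and scaling are precisely designed so that the resulting trilinear products close with the envelopes $c_{k}^{(j)}(c_{k}^{(j),[\dlt_{0}]})^{2}$.

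The difference bounds \eqref{dDAB3-0}--\eqref{dDAB3} follow the same scheme, applied to two nearby caloric pairs $(a^{(0)},b^{(0)})$, $(a^{(1)},b^{(1)})$. The new inputs are Proposition~\ref{p:diff-l2} in the $L^{2}$-based norms and the difference estimates \eqref{cal-linf-af-diff}--\eqref{cal-linf-da-diff} in the $\dot W^{\sgm,p}$ norms; in each trilinear product one factor is measured by the $e^{(j)}$-type envelope and the remaining two by the $c^{(j)}$-type envelope. The main technical obstacle I anticipate is the careful bookkeeping needed to extract the precise envelopes $c_{k}^{(j)}(c_{k}^{(j),[\dlt_{0}]})^{2}$ and $(c_{k}^{(j),[\dlt_{0}]})^{2} e_{k}^{(j),[\dlt_{0}]}$, and in particular to ensure in every trilinear sum that only one factor carries no $[\dlt_{0}]$-shift. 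This forces one in several cases to accept a slightly weaker Sobolev embedding than the maximal one (which is exactly why the five pairs $(\sgm^{(j)}, p^{(j)})$ are prepared with a small margin) so that enough off-diagonal decay in the Littlewood--Paley sums is retained; no additional analytic input beyond Section~\ref{subsec:lp} is needed.
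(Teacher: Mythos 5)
Your proposal is correct and follows essentially the same route as the paper: the quadratic parts are extracted from the heat-time integral representations \eqref{rep-DA} and \eqref{rep-DB} by substituting linear heat flows and reading off the symbol $-|\eta|^2/(|\xi|^2+|\eta|^2)$, which antisymmetrizes to \eqref{eq:Q-sym}, while the cubic-and-higher remainders (the $\bfA$, $\bfF$, $\bfB$ substitutions, the $[A,F]$ commutator, and the $\partial^\ell A_\ell$ term, the latter controlled by the already-proved $\DA(s)$, $\DB(s)$ bounds) are estimated by Littlewood--Paley trichotomy using Propositions~\ref{p:main-sp}, \ref{p:AB2-l2} and \ref{p:AB2-sp-lin} with the exponent pairs $(\smin,\pmin)$, $(\smid,\pmid)$, $(\smax,\pmax)$, and differences are handled the same way via Proposition~\ref{p:diff-l2} and the difference estimates of Proposition~\ref{p:AB2-sp-lin}. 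This is the paper's argument in all essentials.
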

We will also often write
\begin{equation*}
	\DA^{2}(a) = \bfQ(a, a), \qquad
	\DB^{2}(a, b) = \frac{1}{2} \left( [a, b] + 2 \bfQ(a, b) \right).
\end{equation*}
To understand their mutual relation, note that $\DB$ is the \emph{not} the linearization of $\DA$, but rather $\covD^{\ell} b_{\ell} = \DB + [a^{\ell}, b_{\ell}]$ is.
We also remark that \eqref{DAB3} and \eqref{dDAB3}, respectively \eqref{DAB3} and \eqref{dDAB3}, will be dynamically accompanied with $L^{2}_{t}$, respectively $L^{1}_{t}$, in Section~\ref{s:wave} below.

\begin{proof}
We use the representation \eqref{rep-DA}. The leading quadratic part is obtained 
using the linear heat flow for $A$, and has the form 
\[
\DA^2(a) = \int_0^{\infty} [e^{s \Delta} a^j,    \partial^k (e^{s \Delta} \partial_k a_j -  e^{s \Delta} \partial_j a_k)] ds.
\]
Integrating this and symmetrizing in $j$ and $k$ yields the symbol
\[
 \DA^2 (\xi,\eta) = - \frac{\eta^2}{\xi^2+\eta^2} .
\]
The desired expression \eqref{eq:Q-sym} follows after antisymmetrization.

To estimate $\DA^{3}$ we write
\[
\DA^3(a) = \int_0^{\infty} [\bfA^2,    \covD^k F_{kj}]  +  [e^{s \Delta} a^j,[A^k, F_{kj}]] + [e^{s \Delta} a^j, \partial^k \bfF_{kj}] \, ds.
\]
All of \eqref{DAB3-0}, \eqref{DAB3-1} and \eqref{DAB3} are proved by estimating the integral on the right by Littlewood--Paley trichotomy.
\begin{itemize}
\item For \eqref{DAB3-0}, we use the $\dot W^{\smin, \pmin} \subset L^{4}$ bound for $A$ and $e^{s \Delta} a$, the $\dot{W}^{-1, 4}$ bound for $F$, the $L^{2}$ bound for $\bfA^{2}$ and the $\dot{H}^{-1}$ bound for $\bfF$.
\item For \eqref{DAB3-1}, we use the $\dot W^{\smid, \pmid} \subset L^{\frac{16}{3}}$ bound for $A$ and $e^{s \Delta} a$, the $\dot{W}^{-1, \frac{16}{3}}$ bound for $F$, the $\dot{H}^{\frac{1}{2}}$ bound for $\bfA^{2}$ and the $\dot{H}^{-\frac{1}{2}}$ bound for $\bfF$.
\item For \eqref{DAB3}, we use the $\dot W^{\smax,\pmax} \subset L^6$ bound for $A$ and $e^{s \Delta} a$, the 
$\dot W^{\smax-1,\pmax}\subset \dot W^{-1,6}$ bound for $F$, the $\dot W^{\frac13,\frac{12}5}$ bound for $\bfA^2$ and the $\dot W^{-\frac23,\frac{12}5}$ bound for  $\bfF$. 
\end{itemize}

The argument for $\DB^3$ is similar, there we use the representation 
\[
\covD^k B_k = - 2\int_{0}^\infty [B^j, \covD^i F_{ij}] ds 
\]
where we expand all terms as the linear heat flow plus a quadratic
error, so that
\[
\DB^{3}  = - 2\int_{0}^\infty [\bfB^j, \covD^i F_{ij}] + [e^{t\Delta} b^j, [A^i,F_{ij}]] +     [e^{t\Delta} b^j,\partial^j \bfF_{kj}]  + [B^j, \covD_j \partial^k A_k] ds .
\]
Here $\partial^k A_k$ yields only cubic contributions. \qedhere
\end{proof}

\section{The dynamic Yang--Mills heat flow and the 
caloric Yang--Mills waves}
Consider a sufficiently regular space-time connection $A_{t,x}$ on $J \times \bbR^{4}$, which solves the inhomogeneous hyperbolic Yang--Mills equation
\begin{equation} \label{YMWC-s}
	\covD^{\alp} F_{\alp \bt} = w_{\bt}.
\end{equation}
Here, $w$ is called the \emph{Yang--Mills tension field}, and satisfies the constraint equation
\begin{equation}\label{w-constraint}
\covD^\beta w_\beta = 0.
\end{equation}
Assume in addition that for each $t$, $A(t) = A_{x}(t)$ is a caloric connection and $B(t) = \rd_{t} A_{x}(t) \in T_{A(t)}^{L^{2}} \calC$; in short, we call $A_{t,x}$ an \emph{inhomogeneous caloric Yang--Mills wave}.

To take advantage of the caloric gauge condition, we extend $A_{t,x} = A_{t,x,s}$ as a dynamic Yang--Mills heat flow on $J \times \bbR^{4} \times [0, \infty)$. Precisely, we adjoint the heat-time $s \in [0, \infty)$ and consider the dynamic Yang--Mills heat flow $A_{t, x, s}$
\begin{equation*}
	F_{s \alp} = \covD^{\ell} F_{\ell \alp}, \qquad A_{t,x}(t, x, s=0) = A_{t,x}(t, x),
\end{equation*}
under the local caloric gauge condition $A_{s} = 0$. By the (global) caloric gauge assumption, the Yang--Mills heat flow $A(t, x, s)$ exists globally in heat-time $s$, and tends to $0$ as $s \to \infty$. Afterwards, it follows that $A_{0} (t,x,s) = A_{t}(t, x, s)$ also exists globally in heat-time $s$ and tends to $0$ as $s \to \infty$.

In order to study the problem \eqref{YMWC-s} in the caloric gauge, we
first need to clarify what is a proper initial data set.  Our starting
point is the notion, introduced earlier, of a gauge invariant data set
$(a,e) \in \dot H^1 \times L^2$, where $e_j= F_{0j}$ is subject to the
constraint $\covD^j e_j=w_0$.  On the other hand, once the gauge is
fixed we expect to have control of a full initial data set
$(A, \partial_t A)$. However, these are not all independent due to the
gauge condition, and at least conceptually, we expect to see the same
pattern as in the Coulomb gauge, namely that $A = A_x \in \calC$ and
$B = \partial_t A_x \in T_A^{L^2} \calC$ are the independent
variables. Thus, we will call the pair $(A(t), B(t))$ the initial data
for the Yang--Mills connection $A_{t,x}$ at time $t$ in the caloric
gauge (see Definiton~\ref{d:cal-data}).

Our goals are now three-fold:
\begin{itemize}

\item  To establish a one to one correspondence between the two initial data sets
$(a,e)$ and $(A, B)$.

\item To show that the remaining initial data components $A_0$ and $\partial_0 A_0$ 
can be recovered in an elliptic fashion from $A_{x}$ and $B_{x}$.

\item To understand the evolution of $w_{\nu}$ with respect to the heat-time.
\end{itemize}

Of course, our main interest lies in the homogeneous case $w_{\nu} =
0$; for this purpose, it is not immediately apparent why the third
goal is important. However, it will shortly become clear that there
are multiple reasons.  On the one hand, this turns out to be closely
related to the second goal above, even for the homogeneous case
$w_{\nu} = 0$. On the other hand, knowing that the dynamic Yang--Mills
heat flow $A(s)$ at a heat-time $s$ is a good approximate hyperbolic
Yang--Mills connection plays a key role in our induction on energy
argument in \cite{OTYM2}.

Unrelated to the above objectives, in the last part of this section 
we turn the tables and prove that we can transfer some $L^\infty$
type bounds in the opposite direction, namely from the curvature 
$(f,e)$ to the caloric data $(a,b)$. This part has no further continuation
in the present paper, but will be very useful in the next article \cite{OTYM2}
in the context of the energy dispersion.

\bigskip

We begin with the equivalence of the two notions of initial data sets:
\begin{theorem} \label{t:data}
\begin{enumerate}
\item Given any Yang--Mills initial data pair $(a_k,e_k) \in \dot H^1 \times L^2$
 such that $\hM(a) < \infty$, there exists a unique caloric gauge
  Yang--Mills data set $(\tilde a_k,b_k) \in \dot H^1 \times L^2$ and $a_0 \in \dot
  H^1$,  so that the initial data pair
  $(\tilde a_k, \tilde e_k)$ is gauge equivalent to $(a_k,e_k)$, where
\[
 \tilde e_k = b_k - \covD_k a_0 .
\]
    In addition, $(\ta, b)$ and $a_0$ are unique up to constant
    gauge transformations, and depend continuously on $(a,e)$ in the
    corresponding quotient topology.  Further, the map $(a,e) \mapsto
    (\ta,b)$ is locally $C^1$ in the stronger topology\footnote{Here
      we impose again the condition $\lim_{\abs{x} \to \infty} O(a) =
      I$ in order to fix the choice of $O(a)$.}  $\bfH \times L^2 \to
    \bfH \times L^2$, as well as in more regular spaces $H^{N} \times H^{N-1} \to H^{N}
    \times H^{N-1}$ $(N \geq 2)$.

\item Given any  caloric gauge data $(a_k,b_k) \in T^{L^{2}} \calC$, there exists an unique $a_0 \in \dot H^1$,
depending smoothly on $(a_k,b_k)$ so that 
\[
e_k = b_k - \covD_k a_0 
\]
satisfies the constraint equation \eqref{eq:YMconstraint}. Further, the map $(a, b) \to a_{0}$ is also Lipschitz from $H^{N} \times H^{N-1} \to H^{N}$ for $N \geq 3$.
\end{enumerate}
\end{theorem}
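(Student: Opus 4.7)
Both parts are essentially corollaries of results already established for the caloric projection $\Cal$ (Proposition~\ref{p:cal-a}), the transverse decomposition $\Pi_a$ (Proposition~\ref{p:transverse}), and elliptic solvability for $\Delta_a$ (Theorem~\ref{t:deltaA}). The proof is a matter of assembling these ingredients in the correct order and tracking regularity. The primary subtlety is ensuring that the resulting $(\tilde a, b, a_0)$ enjoy the claimed higher regularity properties, since the caloric projection $\Cal$ is only $C^1$.

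\textbf{Part (1).} Given $(a,e)\in \dot H^1\times L^2$ with $\hM(a)<\infty$, I would first invoke Proposition~\ref{p:cal-a} to obtain a gauge transformation $O=O(a)$, unique up to constant conjugation, so that $\tilde a:=\Cal(a)\in \calC$. Applying the (time-independent) gauge transformation $O$ to the electric field gives $\tilde e_k = Ad(O)e_k \in L^2$, with the constraint equation $\covD[\tilde a]^k \tilde e_k=0$ preserved by gauge covariance. Since $\tilde a\in\calC$, Proposition~\ref{p:transverse} produces a unique decomposition
\[
\tilde e_k = b_k - \covD[\tilde a]_k\, a_0, \qquad b\in T^{L^2}_{\tilde a}\calC,\quad a_0\in \dot H^1.
\]
Uniqueness of the triple $(\tilde a, b, a_0)$ modulo constant gauge transformations then follows from the uniqueness in Propositions~\ref{p:cal-a} and~\ref{p:transverse}. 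For continuity/regularity, Proposition~\ref{p:cal-a} gives continuity in the quotient topology and $C^1$ dependence in $\bfH$, while the linear map $\tilde e\mapsto (b,a_0)$ depends smoothly on the data $\tilde e$ and $C^1$-smoothly on the connection $\tilde a\in\bfH$ (via $\covD[\tilde a]$ and the solution operator in the proof of Proposition~\ref{p:transverse}). Composing yields the claimed $C^1$ mapping $\bfH\times L^2\to \bfH\times L^2$. For $H^N\times H^{N-1}$ with $N\geq 2$, I would propagate regularity by applying the higher-regularity version of $\Cal$ (Proposition~\ref{p:Cal-fe-ah} combined with the frequency-envelope bounds of Corollary~\ref{c:global}) together with the higher-regularity mapping properties of $\Pi_a$ derived by iterating the elliptic bounds in Theorem~\ref{t:deltaA}.

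\textbf{Part (2).} Given $(a,b)\in T^{L^2}\calC$, the defining condition $\covD^k e_k=0$ for $e_k=b_k-\covD_k a_0$ reduces to the elliptic equation
\[
\Delta_a\, a_0 = \covD^k b_k.
\]
The right-hand side lies in $\dot H^{-1}$: indeed, since $b\in T^{L^2}_a\calC$, the corresponding linearized caloric flow $B(s)$ vanishes at $s=\infty$, and Lemma~\ref{l:db} (or equivalently \eqref{DB-fe}) implies
\[
\covD^k b_k = 2\int_0^\infty [B^j,\covD^\ell F_{\ell j}]\,ds \in \dot H^{-1},
\]
with the estimate controlled by $\nrm{b}_{L^2}$ and the caloric parameters of $a$. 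Then Theorem~\ref{t:deltaA} produces a unique $a_0\in \dot H^1$ with Lipschitz dependence on $(a,b)$, and in fact $C^1$ dependence since the map $(a,b)\mapsto \covD^k b_k$ is smooth and $a\mapsto \Delta_a^{-1}$ is $C^1$ in $\bfH$. The higher-regularity Lipschitz bound $H^N\times H^{N-1}\to H^N$ for $N\geq 3$ follows by applying the higher-regularity elliptic estimates of Theorem~\ref{t:deltaA} together with the frequency-envelope bounds from Proposition~\ref{p:ab-l2} for $\covD^k B_k$ at $s=0$.

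\textbf{Main obstacle.} The conceptual content is straightforward; the genuine work lies in bookkeeping the higher-regularity claims. The $C^1$ ceiling on $\Cal$ (noted after Proposition~\ref{p:cal-a}) means that, in Part (1), we must live with $C^1$ rather than $C^\infty$ dependence even at high regularity — but the statement only asks for Lipschitz/$C^1$, which is compatible. The delicate point is to verify that the $H^N\times H^{N-1}$ regularity of $e$ indeed transfers through $O$ and the transverse projection to give $H^N\times H^{N-1}$ regularity of $(\tilde a, b)$, which requires the algebra property of $H^N$ for $N\geq 2$ together with the higher-regularity frequency-envelope bounds for $O(a)$ derived from Proposition~\ref{p:fe-ah} and the Yang–Mills heat flow estimates in Theorem~\ref{t:ymhf-l3-fe}.
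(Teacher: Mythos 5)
Your proposal is correct and follows essentially the same route as the paper: part (1) is the caloric gauge placement via Proposition~\ref{p:cal-a} (i.e.\ $\ta = \Cal(a)$, $\tilde e = Ad(O)e$) followed by the transverse decomposition of Proposition~\ref{p:transverse}, and part (2) is the elliptic equation $\Delta_a a_0 = \covD^k b_k$ solved by Theorem~\ref{t:deltaA}. The extra regularity bookkeeping you sketch (higher $H^N$ dependence, $\covD^k b_k \in \dot H^{-1}$ via the heat-time integral representation) is consistent with the paper's supporting results, which the paper's own terse proof leaves implicit.
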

This proves Theorem~\ref{t:data-simple}. 
\begin{proof}
\pfstep{Proof of (1)} For the first part we note that we can first place $a_j$ in the caloric gauge, and 
thus reduce the problem to the case when $\tilde a_j = a_j$. Then the fact that $e_k$ 
and $\tilde e_k$ are gauge equivalent simply means that $e = \tilde e$. 

Both the existence and the uniqueness part for the decomposition
\[
 e_k = b_k - \covD_k a_0 
\]
comes from Proposition~\ref{p:transverse}.

\pfstep{Proof of (2)}
For the second part, we note that the divergence equation for $e_k$ gives
\[
\covD^k \covD_k a_0 = \covD^k b_k + w_0
\]
so that $a_0$ is obtained by solving this elliptic equation, see Theorem~\ref{t:deltaA}. \qedhere

\end{proof}

Next we turn our attention to the expressions for $A_0$ and $\partial_0 A_0$.
For $A_0$ we will directly use the above elliptic equation,
\begin{equation}
\covD^k \covD_k A_0(s) = \covD^k B_k(s) + w_0(s).
\end{equation}
In particular this will uniquely identify $A_0(0)$ as a smooth 
function 
\begin{equation}
A_0 = \bfA_0 (A,B)= \bfA_0^2(A,B) + \bfA_0^3(A,B)
\end{equation}
where we will further separate the quadratic part and the higher order terms.

Alternately, we can also obtain $A_0$ by integrating \eqref{A0-cal}
to obtain the following formula (see Remark~\ref{r:proj-a0}):
\begin{equation} \label{eq:l-caloric-A0}
A_{0}(s) = \int_{s}^{\infty} \covD^{\ell} F_{0 \ell}(s') \, \ud s' = \int_{s}^{\infty} w_0(s') \, \ud s'.
\end{equation}
We will use this expression to gain control of $\covD^0 A_0$.
Indeed, differentiating with respect to $t$ we arrive at
\begin{equation} \label{eq:l-caloric-d0a0}
\rd^{0} A_{0}(s) = \int_{s}^{\infty} \partial^0 w_0(s')  \, \ud s' =  - \int_{s}^{\infty}   \covD^k w_k(s')
+[A^0,w_0(s')]  \, \ud s'
\end{equation}

To continue we need to understand the evolution of $w_0$, which is
coupled to the evolution of all $w_\nu$'s:

\begin{lemma}[Deformation of the Yang--Mills
  tension] \label{lem:dymhf-tension} 
Let $A = A_{j} \, \ud x^{j} +  A_{s} \, \ud s$ be a sufficiently regular dynamic covariant 
Yang--Mills heat flow, i.e. solution 
to  \eqref{eq:cdYMHF}. Then the  Yang--Mills tension $w_{\mu}$ obeys the
  following covariant parabolic equation.
\begin{equation} \label{eq:ymhf-w}
\covD_{s} w_{\nu} - \covD^{\ell} \covD_{\ell} w_{\nu} = 2 \LieBr{\tensor{F}{_{\nu}^{\ell}}}{w_{\ell}}
+ 2 \LieBr{F^{\mu \ell}}{\covD_{\mu} F_{\nu \ell} + \covD_{\ell} F_{\nu \mu}}
\end{equation}
\end{lemma}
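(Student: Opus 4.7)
My plan is to carry out a direct covariant computation, parallel to the derivations of \eqref{eq:YMHF-Fij+}--\eqref{eq:YMHF-Fsi+}, using only three tools: the commutator formula $[\covD_\mu,\covD_\nu]X = [F_{\mu\nu},X]$ (applied with one of the indices equal to $s$ when needed), the Bianchi identity for $F$, and the defining equation $F_{s\alpha} = \covD^\ell F_{\ell\alpha}$ of the dynamic Yang--Mills heat flow.

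The starting point is the definition $w_\nu = \covD^\alpha F_{\alpha\nu}$. First I would apply $\covD_s$ and push it through $\covD^\alpha$, picking up the commutator $[F_s{}^\alpha, F_{\alpha\nu}]$, and then use the Bianchi identity for the triple $(s,\alpha,\nu)$ to write $\covD_s F_{\alpha\nu} = \covD_\alpha F_{s\nu} - \covD_\nu F_{s\alpha}$. This yields
\begin{equation*}
\covD_s w_\nu = \covD^\alpha\covD_\alpha F_{s\nu} - \covD^\alpha\covD_\nu F_{s\alpha} + [F_s{}^\alpha, F_{\alpha\nu}].
\end{equation*}

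Next I would substitute $F_{s\mu} = \covD^\ell F_{\ell\mu}$ into both terms, then commute $\covD^\ell$ past the other two covariant derivatives in order to bring the expression into the form $\covD^\ell\covD_\ell w_\nu$ plus curvature--commutator remainders. For the first term, commuting $\covD^\alpha \covD_\alpha$ past $\covD^\ell$ produces exactly $\covD^\ell\covD_\ell \covD^\alpha F_{\alpha\nu} = \covD^\ell\covD_\ell w_\nu$, together with two commutators of the shape $[\covD_\alpha,\covD^\ell]F_{\ell\nu}$, each contributing a term $[F^{\alpha\ell}, F_{\ell\nu}]$ or its $\covD$-derivative. For the second term, $\covD^\alpha\covD_\nu\covD^\ell F_{\ell\alpha}$, I would commute $\covD_\nu$ past $\covD^\alpha$ (producing $[F^\alpha{}_\nu, F_{s\alpha}] = [F_\nu{}^\ell, w_\ell]$-type contributions via the dynamic YMHF equation and the constraint $\covD^\beta w_\beta = 0$, which is automatic as observed in the discussion of \eqref{w-constraint}) and then commute $\covD^\alpha$ past $\covD^\ell$ on the resulting $\covD^\alpha\covD^\ell F_{\ell\alpha}$, which vanishes up to another $[F,F]$ commutator by antisymmetry.

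At this point, after subtracting $\covD^\ell\covD_\ell w_\nu$ from $\covD_s w_\nu$, what remains is a collection of commutator terms of two types: those quadratic in $F$ (which assemble into the $[F_\nu{}^\ell, w_\ell]$ term after resubstituting $F_{s\ell} = w_\ell$ once the $\covD^0 F_{0\ell}$ contributions are folded in via the Bianchi identity) and those of the form $[F, \covD F]$. For the latter, I would use the Bianchi identity $\covD_\alpha F_{\ell\nu} = \covD_\ell F_{\alpha\nu} + \covD_\nu F_{\ell\alpha}$ to rewrite the derivatives in the precise form $\covD_\mu F_{\nu\ell} + \covD_\ell F_{\nu\mu}$ appearing on the right-hand side, producing the factor of $2$ after the symmetric $\mu\leftrightarrow\ell$ relabeling in $[F^{\mu\ell}, \,\cdot\,]$. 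The main obstacle is purely bookkeeping: tracking every commutator term and grouping them correctly; no genuine analytic difficulty arises, since the identity is algebraic and holds pointwise for sufficiently regular solutions.
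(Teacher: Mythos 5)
Your overall strategy (a pointwise covariant computation from $w_\nu=\covD^\mu F_{\mu\nu}$ using the commutator identity, the Bianchi identity and $F_{s\alpha}=\covD^\ell F_{\ell\alpha}$) is the right one — the paper itself only cites \cite[Appendix~A]{Oh1} for exactly such a computation — but two of your central steps are false as written, and they are precisely the bookkeeping you dismiss as routine. First, after substituting $F_{s\nu}=\covD^\ell F_{\ell\nu}$ you claim that commuting turns $\covD^\alpha\covD_\alpha\covD^\ell F_{\ell\nu}$ into $\covD^\ell\covD_\ell\covD^\alpha F_{\alpha\nu}=\covD^\ell\covD_\ell w_\nu$ up to commutators $[\covD_\alpha,\covD^\ell]F_{\ell\nu}$. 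Commuting covariant derivatives never changes which indices are contracted: the left side is the wave operator $\covD^\alpha\covD_\alpha$ (containing $-\covD_0\covD_0$) applied to the spatial divergence of $F$, so it carries two hyperbolic time derivatives at top order, while $\covD^\ell\covD_\ell w_\nu$ carries at most one; their difference is third order in $A$ and cannot be absorbed into quadratic curvature commutators. To continue along your route you must, after pulling $\covD^\ell$ out, apply Bianchi again to get $\covD^\alpha\covD_\alpha F_{\ell\nu}=\covD_\ell w_\nu-\covD_\nu w_\ell+[F^{\alpha}{}_{\ell},F_{\alpha\nu}]-[F^{\alpha}{}_{\nu},F_{\alpha\ell}]$, which produces, besides $\covD^\ell\covD_\ell w_\nu$, an extra term $-\covD^\ell\covD_\nu w_\ell$ that your sketch never sees. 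Second, in handling $-\covD^\alpha\covD_\nu F_{s\alpha}$ you assert that $\covD^\alpha\covD^\ell F_{\ell\alpha}$ ``vanishes up to a $[F,F]$ commutator by antisymmetry''. The opposite is true: the commutator $[F^{\alpha\ell},F_{\ell\alpha}]$ is the part that vanishes, and what survives is $\covD^\alpha F_{s\alpha}=-\covD^\ell w_\ell$, which is nonzero in general (only the full spacetime divergence $\covD^\mu w_\mu$ vanishes, cf.\ \eqref{w-constraint}). That surviving $\covD_\nu\covD^\ell w_\ell$ is exactly what cancels the extra term from the first piece; with it dropped, as in your sketch, the computation cannot close.

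Both errors are repairable, but the cleaner route — essentially the referenced one — avoids the spacetime d'Alembertian altogether: use the already-established heat equation \eqref{eq:YMHF-Fij+}, write $\covD_s w_\nu=\covD^\mu\covD_s F_{\mu\nu}+[F_{s}{}^{\mu},F_{\mu\nu}]$, substitute $\covD_s F_{\mu\nu}=\covD^\ell\covD_\ell F_{\mu\nu}-2[F_{\mu}{}^{\ell},F_{\nu\ell}]$, commute the spatial Laplacian past $\covD^\mu$ (the resulting term $[\covD^\ell F^{\mu}{}_{\ell},F_{\mu\nu}]=-[F_{s}{}^{\mu},F_{\mu\nu}]$, via the dynamic flow equation, cancels the commutator already produced), and expand $-2\covD^\mu[F_{\mu}{}^{\ell},F_{\nu\ell}]$ by Leibniz to extract $2[F_{\nu}{}^{\ell},w_\ell]$; the leftover is $2[F^{\mu\ell},\covD_\ell F_{\mu\nu}-\covD_\mu F_{\nu\ell}]$, which antisymmetry puts in the stated shape. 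Finally, do track signs to the end: carrying this out with the paper's conventions I find the last group as $-2[F^{\mu\ell},\covD_\mu F_{\nu\ell}+\covD_\ell F_{\nu\mu}]$ rather than $+2[\cdots]$, and it is precisely this relative sign that makes the $w_\ell$-terms cancel in the $\nu=0$ reduction \eqref{eq:ymhf-w3} (the schematic estimates are insensitive to it). In other words, the ``purely bookkeeping'' part is the entire content of this lemma, and the proposal as written gets the bookkeeping wrong at the two decisive points.
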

For a proof, see \cite[Appendix~A]{Oh1}. We remark that in the last expression
by symmetry all terms cancel unless $\mu = 0$, so we can rewrite it as
\begin{equation} \label{eq:ymhf-w1}
\covD_{s} w_{\nu} - \covD^{\ell} \covD_{\ell} w_{\nu} = 2 \LieBr{\tensor{F}{_{\nu}^{\ell}}}{w_{\ell}}
+ 2 \LieBr{F^{0 \ell}}{\covD_{0} F_{\nu \ell} + \covD_{\ell} F_{\nu 0}}
\end{equation}
For this system to be self-contained at fixed $t$, we need to avoid the $D_0$ derivatives 
on the right. This is achieved differently depending on whether $\nu $ is zero or not.
For $\nu \neq 0$ we simply apply the Bianchi identities to get
\begin{equation} \label{eq:ymhf-w2}
\covD_{s} w_{\nu} - \covD^{\ell} \covD_{\ell} w_{\nu} = 2 \LieBr{\tensor{F}{_{\nu}^{\ell}}}{w_{\ell}}
+ 2 \LieBr{F^{0 \ell}}{\covD_{\nu} F_{0 \ell} + 2\covD_{\ell} F_{\nu 0}}, \qquad \nu \neq 0
\end{equation}
which does not involve $\nu = 0$ at all. On the other hand if $\nu = 0$ then we 
have
\begin{equation} \label{eq:ymhf-w3}
\covD_{s} w_{0} - \covD^{\ell} \covD_{\ell} w_{0} = 2 \LieBr{\tensor{F}{_{0}^{\ell}}}{w_{\ell}}
- 2 \LieBr{{F_{0}}^\ell}{w_\ell + \covD^k F_{k \ell}} =- 2 \LieBr{{F_{0}}^\ell}{ \covD^k F_{k \ell}} .
\end{equation}

Thus the above computation shows that we can express $\covD^0 A_0(0)$ as a function
\[
\covD^0 A_0 = \DA_0(A, B):=  \DA_0^2(B, B) +  \DA_0^3(A, B)
\]
which is again decomposed into a quadratic term and a higher order term. 
The aim of the remaining subsections is to make all these decompositions
quantitative rather than qualitative.

In what follows, $A$ or $B$ without any subscripts refer to the spatial components $A_{x}$ or $B_{x}$. Moreover, $A$, $B$, $A_{0}$, $B_{0}$ etc. without $(s)$ refers to the corresponding components at $s = 0$. We use the convention set up at the end of Section~\ref{subsec:lp}, with $(a, b)$ is replaced by $(A, B)$.

\subsection{The analysis of \texorpdfstring{$w_\nu$}{w-nu}}

We begin with the case when the initial data for $w$ is $w(s=0) = 0$, i.e., our 
map is a homogeneous Yang--Mills wave. Then we have the following:
\begin{proposition}\label{p:w}
Let $A_{t, x}$ be a caloric Yang--Mills wave on $I \times \bbR^{4}$ satisfying $(A_{0}, A) \in C_{t}(I; \dot{H}^{1} \times \calC_{\hM})$ with $\calE[a] \leq \calE$. 
Then at fixed heat-time $s > 0$ we have
\begin{equation}
w(s) = \bfw(A,B,s) = \bfw^2(A,B,s) + \bfw^3(A,B,s) 
\end{equation}
where the  quadratic part $\bfw^2$  has the form
\begin{equation}
\bfw^2_\nu(B,B) = -2 \bfW( B^l, \partial_\nu B_l - 2\partial_l B_\nu), \qquad \nu \neq 0
\end{equation}
\begin{equation}
\bfw^2_0(A,B) = 2 \bfW( B^l, \partial_k^2 A_l), \qquad \nu \neq 0
\end{equation}
where $\bfW$ is a symmetric bilinear form with symbol 
  \begin{equation} \label{W-sym}
    \begin{aligned}
      \bfW (\xi, \eta, s)
    = & \int_{0}^{s} e^{- (s - s') \abs{\xi + \eta}^{2}} e^{- s' (\abs{\xi}^{2} + \abs{\eta}^{2})} \, \ud s'  \\
      = & - \frac{1}{2 \xi \cdot \eta} e^{ - s \abs{\xi + \eta}^{2}}
      \left( 1 - e^{2 s (\xi \cdot \eta)} \right).
    \end{aligned}
  \end{equation}
Further,  $\bfw$ satisfies the following bounds:
\begin{equation}\label{w-linf}
\| (1-s\Delta)^N  \bfw(s) \|_{\dot H^{-\frac32}} \lesssim_{\hM, \calE, N}  2^{-\frac{k(s)}2} c_{k(s)}^{(0)}  c_{k(s)}^{(0)[\dlt_{0}]}
\end{equation}
\begin{equation}\label{w-l2}
\| (1-s\Delta)^N \bfw(s) \|_{\dot H^{-\frac12-\smexp}} \lesssim_{\hM, \calE, N} 2^{-\smexp k(s)} c_{k(s)}^{(1)} c_{k(s)}^{(1)[\dlt_{0}]}
\end{equation}
respectively 
\begin{equation}\label{w-l1}
\|(1-s\Delta)^N  \bfw^3(s) \|_{\dot H^{-\frac1{12}}} \lesssim_{\hM, \calE, N} 2^{-\frac{k(s)}{12}}  c_{k(s)}^{(2)} c_{k(s)}^{(2)[\dlt_{0}]} c_{k(s)}^{(4)[\dlt_{0}]} + (c_{k(s)}^{(2)[\dlt_{0}]})^2 c_{k(s)}^{(4)} 
\end{equation}
as well as corresponding difference bounds:
\begin{equation}\label{dw-linf}
\| (1-s\Delta)^N  \dlt \bfw(s) \|_{\dot H^{-\frac32}} \lesssim_{\hM, \calE, N}  2^{-\frac{k(s)}2} c_{k(s)}^{(0)[\dlt_{0}]}  e_{k(s)}^{(0)[\dlt_{0}]}
\end{equation}
\begin{equation}\label{dw-l2}
\| (1-s\Delta)^N \dlt \bfw(s) \|_{\dot H^{-\frac12-\smexp}} \lesssim_{\hM, \calE, N} 2^{-\smexp k(s)} c_{k(s)}^{(1)[\dlt_{0}]} e_{k(s)}^{(1)[\dlt_{0}]}
\end{equation}
respectively 
\begin{equation}\label{dw-l1}
\|(1-s\Delta)^N  \dlt \bfw^3(s) \|_{\dot H^{-\frac1{12}}} \lesssim_{\hM, \calE, N} 2^{-\frac{k(s)}{12}}  e_{k(s)}^{(2)[\dlt_{0}]} c_{k(s)}^{(2)[\dlt_{0}]} c_{k(s)}^{(4)[\dlt_{0}]} + (c_{k(s)}^{(2)[\dlt_{0}]})^2 e_{k(s)}^{(4)[\dlt_{0}]} 
\end{equation}
where
\begin{equation*}
	\dlt \bfw^{2,3}(s) = \bfw^{2,3}(A^{(0)}, B^{(0)}, s) -  \bfw^{2,3}(A^{(1)}, B^{(1)}, s).
\end{equation*}
\end{proposition}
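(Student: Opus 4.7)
The plan is to use Duhamel's formula applied to the parabolic equations for $w_\nu$ (namely \eqref{eq:ymhf-w2} for $\nu \neq 0$ and \eqref{eq:ymhf-w3} for $\nu = 0$), starting from the zero initial data $w_\nu(s=0) = 0$, which holds because $A_{t,x}$ is a caloric Yang--Mills wave. Since the RHS of both equations is quadratic (or higher) in the curvatures $F_{\mu\nu}$ and $F_{0\mu}$, and these curvatures are linear-plus-higher-order in $(A,B)$, Duhamel's formula naturally produces a quadratic-plus-higher expansion of $w$ in $(A,B)$.

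For the quadratic part, I would extract the leading order by making three simultaneous approximations: first, replace the covariant solution operator of $\covD_s - \covD^\ell \covD_\ell$ (given by Theorem~\ref{t:heatA}) with the flat operator $\partial_s - \Delta$; second, replace $F_{0\ell}(s) \approx e^{s\Delta} B_\ell$ and $F_{ij}(s) \approx e^{s\Delta}(\partial_i A_j - \partial_j A_i)$, using Proposition~\ref{p:AB2-sp-lin} to control the quadratic remainders $\bfF$, $\bfB$; third, replace $\covD$ by $\partial$, which costs one extra factor of $A$. In each approximation the dropped terms contain at least three factors of $(A,B)$, hence contribute only to $\bfw^3$. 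For $\nu \neq 0$, the surviving quadratic Duhamel integral of $-2[e^{s'\Delta}B^\ell, \partial_\nu e^{s'\Delta} B_\ell - 2 \partial_\ell e^{s'\Delta} B_\nu]$ produces the bilinear form $-2\bfW(B^\ell, \partial_\nu B_\ell - 2\partial_\ell B_\nu)$, and a direct Fourier computation gives
\[
\int_0^s e^{-(s-s')|\xi+\eta|^2} e^{-s'(|\xi|^2 + |\eta|^2)} \, ds' = -\frac{1}{2\xi \cdot \eta} e^{-s|\xi+\eta|^2}(1 - e^{2 s \xi \cdot \eta}),
\]
matching the symbol \eqref{W-sym}. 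The case $\nu = 0$ is handled identically, with the quadratic RHS $-2[e^{s'\Delta} B^\ell, \Delta e^{s'\Delta} A_\ell]$ (the $\partial^\ell A_\ell$ correction in $\covD^k F_{k\ell}$ is already quadratic in $A$ by the generalized Coulomb condition, hence only contributes to $\bfw^3$).

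The estimates \eqref{w-linf}--\eqref{w-l2} for $\bfw$ are then obtained by estimating the quadratic bilinear form $\bfW(B^\ell, \cdot)$ directly from its symbol via Littlewood--Paley trichotomy combined with the appropriate $\dot{W}^{(0)}$ and $\dot{W}^{(1)}$ frequency envelope bounds for $A$ and $B$ from Proposition~\ref{p:main-sp}; the parabolic localization factor $(1-s\Delta)^N$ on the left combines with the Gaussian decay of $\bfW$ to give the asserted $2^{-k(s)/2}$ and $2^{-\sigma_0 k(s)}$ gains, using that $\bfW(\xi, \eta, s)$ has the scale of $\min\{s, (|\xi|^2 + |\eta|^2)^{-1}\}$. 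For \eqref{w-l1}, which concerns only the cubic remainder $\bfw^3$, I would carefully separate the distinct sources of cubic terms --- namely the quadratic correction to the curvatures ($\bfF, \bfB$ from Proposition~\ref{p:AB2-sp-lin}), the $A \cdot \partial$ and $A \cdot A$ terms from $\covD$ versus $\partial$, the $A$-commutator terms in the covariant heat semigroup, and the self-coupling $[F, w]$ in \eqref{eq:ymhf-w2}. Each of these is estimated using the $\dot{W}^{(2)}$ and $\dot{W}^{(4)}$ frequency envelope bounds together with Theorems~\ref{t:heatA} and \ref{t:heatB}, choosing exponents so that the product of three factors lands in $\dot{H}^{-1/12}$ with the stated combinations of frequency envelopes.

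The main obstacle is bookkeeping: obtaining the sharp power of $2^{-k(s)}$ in the cubic bound \eqref{w-l1} while respecting the admissibility and compatibility constraints on the frequency envelopes. Concretely, the Littlewood--Paley trichotomy applied to the triple products produces a low-high-high term (where Bernstein forces us onto an $L^q$ space with $q > 2$), a high-high-low term with off-diagonal decay, and a purely low-frequency term; each must be estimated in the correct function space, picked from the spaces $\dot{W}^{(j)}$, so that the output sits in the target $\dot{H}^{-1/12}$ with the specific mixture of envelopes $c^{(2)}, c^{(4)}$ indicated. Finally, the difference bounds \eqref{dw-linf}--\eqref{dw-l1} will follow by running the same argument on the one-parameter family $(A^{(h)}, B^{(h)})$ constructed in Proposition~\ref{p:path-l2} and its $L^p$ counterpart (Lemma~\ref{l:diff-sp}), integrating in $h$ and invoking the difference parts of Propositions~\ref{p:diff-l2}, \ref{p:main-sp} and \ref{p:AB2-sp-lin} in place of their direct counterparts, so that every frequency envelope $c^{(j)}$ carried by one of the three factors is replaced by the corresponding $e^{(j)}$.
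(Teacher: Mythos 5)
Your proposal follows essentially the same route as the paper for the structural part: both start from the parabolic equations \eqref{eq:ymhf-w2}--\eqref{eq:ymhf-w3} with $w(s=0)=0$, extract the quadratic part by Duhamel with the linear heat flow replacing the covariant one (and $F_{0\ell}\approx e^{s\Delta}B_\ell$, $\covD\approx\rd$), and identify the symbol \eqref{W-sym} by the same explicit integral; your remark that the $\rd^\ell A_\ell$ correction in $\covD^k F_{k\ell}$ is already quadratic is exactly the point used implicitly in the paper. Where you diverge is in how the bounds are obtained. The paper does not estimate the bilinear form $\bfW$ or track the cubic sources term by term: it treats $w$ as the solution of the nondegenerate covariant parabolic equation with zero data, estimates the full source $[F,\covD F]$ (and its analogue for $\nu=0$) at each heat-time in $\dot H^{-\frac32}$, $\dot H^{-\frac12-\smexp}$, etc.\ using Proposition~\ref{p:main-sp} (with Proposition~\ref{p:AB2-sp-lin} only to separate $\bfw^3$ for \eqref{w-l1}), and then invokes the inhomogeneous covariant parabolic estimate, Theorem~\ref{t:heatB-inhom}, which delivers both the stated bound and the $(1-s\Delta)^N$ frequency localization in one stroke. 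Your symbol-based estimate of the quadratic part plus a case-by-case accounting of the cubic corrections can be made to work, but it is more laborious, and as written it leaves a small loose end: the bounds \eqref{w-linf}--\eqref{w-l2} are for the \emph{full} $\bfw$, so the cubic remainder must also be estimated in $\dot H^{-\frac32}$ and $\dot H^{-\frac12-\smexp}$ (the paper gets this for free since it never splits $\bfw$ there); also, for the cubic pieces you will want the inhomogeneous Theorem~\ref{t:heatB-inhom} rather than the homogeneous Theorems~\ref{t:heatA}/\ref{t:heatB}, since $w$ has zero data and a nontrivial source. Your plan for the difference bounds via the path of Proposition~\ref{p:path-l2}/Lemma~\ref{l:diff-sp} and $h$-integration is heavier than needed but consistent with the paper, which simply reruns the source estimates with the difference versions of Propositions~\ref{p:main-sp} and \ref{p:AB2-sp-lin}.
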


\begin{proof}
  Here we use the equations \eqref{eq:ymhf-w2}, respectively
  \eqref{eq:ymhf-w3}, recalling that at the initial time $F_{0\ell} -
  B_{\ell} = \covD_{\ell} A_0$ is a quadratic term which is better
  behaved.

  To compute the leading quadratic component of $w_\nu$ we proceed as
  follows, first for $\nu \neq 0$:
\[
\begin{split}
\bfw^2_\nu \approx & \ 2 \int_{0}^{s_0} e^{(s-s_0)\Delta} [ e^{s\Delta} F^{0l},   
e^{s\Delta}(\covD_\nu F_{0 \ell} + 2 \covD_\ell F_{\nu 0})] ds
\\
\approx & \  - 2 \int_{0}^{s_0} e^{(s-s_0)\Delta} [ e^{s\Delta} B^\ell,   e^{s\Delta}
(\partial_\nu B_\ell  - 2  \partial_\ell B_{\nu})] ds
\\
= & \  -2 \bfW( B^\ell, \partial_\nu B_\ell  - 2  \partial_\ell   B_{\nu})
\end{split}
\]
where $\bfW$ has the symbol
\[
\bfW(\xi,\eta) = \int_{0}^{s_0} e^{-(s-s_0)(\xi+\eta)^2}  e^{-s(\xi^2+\eta^2)} ds
\]
We remark that $B$ has size $(\xi^2+\eta^2)$ localized in the region
$|\xi+\eta| \lesssim s_0^{-\frac12}$.

Next we consider $\nu=0$, where we use  \eqref{eq:ymhf-w3} instead.
Then a computation which is similar to the one above yields
\[
\bfw^2_0 = -2  \bfW( B^\ell,\partial^k \partial_k A_\ell)
\]

To prove the  bounds in the proposition we use \blue{Theorem~\ref{t:heatB-inhom}}.
Thus we need to estimate the right hand side in the equations~\eqref{eq:ymhf-w2}
respectively \eqref{eq:ymhf-w3}. 

For the bound \eqref{w-linf} we use Proposition~\ref{p:main-sp} to estimate
\[
\| (1 - s \lap)^{N} [F(s),\covD F(s)] \|_{\dot H^{-\frac32}} \lesssim 2^{\frac{3k(s)}2} (c_{k(s)}^{(0)})^2
\]
and then apply heat flow bounds.

For the bound \eqref{w-l2} we use the same bounds  to similarly estimate 
\[
\| (1 - s \lap)^{N} [F(s),\covD F(s)] \|_{\dot H^{-\frac12 - \smexp}} \lesssim_E 2^{-(2+\smexp)k(s)} (c_{k(s)}^{(1)})^2
\]
The same applies for \eqref{w-l1}.   Here we use again Proposition~\ref{p:main-sp}
 for $F$,  while the contribution of the
nonlinear terms $\bfF$ in $F$ is easy to account for based on Proposition~\ref{p:AB2-sp-lin}. 

Finally, the difference bounds are proved similarly. \qedhere

\end{proof}

\subsection{The analysis of \texorpdfstring{$A_0$}{A0}}
Our main result is as follows:

\begin{proposition}\label{p:A0}
Let $A_{t, x}$ be a caloric Yang--Mills wave on $I \times \bbR^{4}$ satisfying $(A_{0}, A) \in C_{t}(I; \dot{H}^{1} \times \calC_{\hM})$ with $\calE[a] \leq \calE$.  Then for $A_0$ we have the representation:
\begin{equation}\label{a0-rep}
A_0 = \bfA_0(A, B) =  \bfA_0^2(A, B) +   \bfA_0^3(A, B)
\end{equation}
where $\bfA_0^2(A,B) $ is a bilinear form of the form
\begin{equation} \label{A0-sym}
\bfA_0^2(A, B) = (-\lap)^{-1} ([A, B] + 2 \bfQ(A, B)).
\end{equation}
and  $\bfA_0^3(A, B)$ is a higher order term, linear in $B$, so that the following bounds hold:
\begin{equation}\label{A0-est1}
\| P_k  \bfA_0^{2,3}(A,B) \|_{\dot H^1} \lesssim_{\hM, \calE} c_k^{(0)} c_k^{(0)[\dlt_{0}]}
\end{equation}
and (corresponding to $L^{2}_t$)
\begin{equation}\label{A0-est2}
\| P_k \bfA_0^{2,3}(A,B) \|_{\dot H^{\frac32}} \lesssim_{\hM, \calE} c_k^{(1)} c_k^{(1)[\dlt_{0}]}
\end{equation}
as well as (corresponding to $L^1_t$)
\begin{equation}\label{A03-est}
    \|P_k \bfA_0^3(A,B)  \|_{\dot H^2} \lesssim_{\hM, \calE} c_k^{(2)[\dlt_{0}]} c_k^{(2)} c_k^{(4)} + (c_k^{(2)})^2  c_k^{(4)[\dlt_{0}]}.
\end{equation}
We also have the corresponding difference bounds:
\begin{equation}\label{dA0-est1}
\| P_k  \dlt \bfA_0^{2,3} \|_{\dot H^1} \lesssim_{\hM, \calE} c_k^{(0)[\dlt_{0}]} e_k^{(0)[\dlt_{0}]},
\end{equation}
\begin{equation}\label{dA0-est2}
\| P_k \dlt \bfA_0^{2,3} \|_{\dot H^{\frac32}} \lesssim_{\hM, \calE} c_k^{(1)[\dlt_{0}]} e_k^{(1)[\dlt_{0}]},
\end{equation}
\begin{equation}\label{dA03-est}
    \|P_k \dlt \bfA_0^3  \|_{\dot H^2} \lesssim_{\hM, \calE} e_k^{(2)[\dlt_{0}]} c_k^{(2)[\dlt_{0}]} c_k^{(4)[\dlt_{0}]}  + (c_k^{(2)[\dlt_{0}]})^2  e_k^{(4)[\dlt_{0}]},
\end{equation}
where
\begin{equation*}
\dlt \bfA_0^{2,3} = \bfA_0^{2,3}(A^{(0)},B^{(0)}) - \bfA_0^{2,3}(A^{(1)},B^{(1)}).
\end{equation*}

\end{proposition}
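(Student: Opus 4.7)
The natural approach is to leverage the integral representation \eqref{eq:l-caloric-A0} at $s=0$. Combined with the caloric boundary condition $A_0(s=\infty) = 0$ and the local caloric relation $\partial_s A_0(s) = w_0(s)$, this yields
\begin{equation*}
A_0 = A_0(s=0) = \int_0^\infty w_0(s)\, ds,
\end{equation*}
reducing the entire proposition to corresponding properties of the Yang--Mills tension $w_0(s)$ established in Proposition~\ref{p:w} (noting that at $s=0$, $w_0$ vanishes by the constraint equation, but for $s>0$ it is quadratic and higher). Accordingly, I would define $\bfA_0^{2,3}(A,B) := \int_0^\infty \bfw_0^{2,3}(A,B,s)\, ds$, which gives the decomposition \eqref{a0-rep} immediately.

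The identification of the quadratic symbol \eqref{A0-sym} is a direct computation. Using the explicit formula \eqref{W-sym} and exchanging the order of integration, one computes
\begin{equation*}
\int_0^\infty \bfW(\xi,\eta,s)\, ds = \int_0^\infty e^{-s'(|\xi|^2+|\eta|^2)} \frac{1}{|\xi+\eta|^2}\, ds' = \frac{1}{|\xi+\eta|^2(|\xi|^2+|\eta|^2)}.
\end{equation*}
Substituting into $\bfw_0^2(A,B) = 2\bfW(B^\ell, \partial_k^2 A_\ell)$ produces a symbol $\sim \frac{-2|\eta|^2}{|\xi+\eta|^2(|\xi|^2+|\eta|^2)}$ attached to $[B(\xi), A(\eta)]$. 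Exploiting antisymmetry of the Lie bracket to symmetrize in $(\xi,\eta)$ and rewriting the outcome in terms of the symbol $\bfQ$ from \eqref{eq:Q-sym} yields the claimed form $(-\Delta)^{-1}([A,B] + 2\bfQ(A,B))$.

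The frequency-envelope estimates \eqref{A0-est1}, \eqref{A0-est2} and \eqref{A03-est} then follow by integrating the respective bounds \eqref{w-linf}, \eqref{w-l2} and \eqref{w-l1} in heat-time. The scaling works out precisely: at a fixed frequency $2^k$, the bound $\|(1-s\Delta)^N \bfw(s)\|_{\dot H^{-3/2}} \lesssim 2^{-k(s)/2} c_{k(s)}^{(0)} c_{k(s)}^{(0)[\delta_0]}$ implies, via parabolic localization, that the $\dot H^{-3/2}$-mass of $P_k \bfw(s)$ is concentrated near $s \sim 2^{-2k}$ with decay $(1+2^{2k}s)^{-N}$ elsewhere; integrating over $ds \sim 2^{-2k}$ and converting to $\dot H^1$ via $2^{5k/2}$ recovers \eqref{A0-est1}. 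The bounds \eqref{A0-est2} and \eqref{A03-est} work identically, using the $\dot H^{-1/2-\sigma_0}$ and $\dot H^{-1/12}$ estimates from Proposition~\ref{p:w} together with the matching Sobolev shifts. The difference estimates \eqref{dA0-est1}--\eqref{dA03-est} are obtained by the same integration procedure applied to the difference bounds \eqref{dw-linf}--\eqref{dw-l1}.

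The main technical obstacle lies not in the estimates, which reduce mechanically to Proposition~\ref{p:w}, but in the symbol computation for $\bfA_0^2$: the Lie-bracket ordering of $[B^\ell, \partial^2 A_\ell]$ must be carefully symmetrized against the structure of $\bfQ$ (which is symmetric as a bilinear form, hence has antisymmetric symbol per Definition~\ref{def:mult-form}), and the resulting cancellations must be tracked so that the quadratic part matches \eqref{A0-sym}. As a cross-check, one can verify the same quadratic form by the alternative elliptic route: since $w_0(s=0)=0$, the equation $\Delta_A A_0 = \covD^k B_k$ holds, and expanding $\Delta_A^{-1} = \Delta^{-1} + \mathrm{cubic}$ together with Proposition~\ref{p:DAB} for $\partial^k B_k$ yields the same quadratic form, with the higher-order remainder $\bfA_0^3$ being controlled by \eqref{DAB3-0}--\eqref{DAB3} and the resolvent identity $\Delta_A^{-1} - \Delta^{-1} = -\Delta_A^{-1}(\Delta_A - \Delta)\Delta^{-1}$.
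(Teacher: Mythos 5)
Your argument is correct in substance, but it takes a genuinely different route from the paper's. The paper proves Proposition~\ref{p:A0} elliptically at $s=0$: since the wave solves \eqref{ym}, one has $\lap_{A} A_{0} = \covD^{k} B_{k}$, and the representation of $\covD^{k}B_{k}$ from Proposition~\ref{p:DAB} identifies $\bfA_{0}^{2} = \Delta^{-1}\DB^{2}$ (hence the symbol \eqref{A0-sym}), the bounds \eqref{A0-est1}--\eqref{A0-est2} following from Propositions~\ref{p:AB2-sp-lin}, \ref{p:DAB} and Theorem~\ref{t:deltaA}, while the cubic part is controlled through $\lap_{A}\bfA_{0}^{3} = \DB^{3} - 2[A^{k},\rd_{k}\bfA_{0}^{2}] - [\rd^{k}A_{k} + A\cdot A, \bfA_{0}^{2}]$; the difference bounds then come directly from the difference bounds for $\DB$. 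You instead integrate the tension field in heat-time via \eqref{eq:l-caloric-A0}, reducing everything to Proposition~\ref{p:w}; this is precisely the strategy the paper itself uses for the companion statement on $\covD^{0}A_{0}$ (Proposition~\ref{p:DA0}), including the same symbol integration $\int_{0}^{\infty}\bfW(\xi,\eta,s)\,ds = \big( (\abs{\xi}^{2}+\abs{\eta}^{2})\abs{\xi+\eta}^{2}\big)^{-1}$, so it is viable and has the merit of unifying \ref{p:A0} with \ref{p:DA0} and with the $s>0$ corrections $\bfA_{0;s}$. What the paper's route buys is that it stays at fixed heat-time and sidesteps the one delicate point in yours: the weights $(1-s\lap)^{N}$ in \eqref{w-linf}--\eqref{w-l1} suppress frequencies above $2^{k(s)}$ but give no gain below, so in $P_{k}\int_{0}^{\infty}\bfw_{0}\,ds$ the region $s\ll 2^{-2k}$ produces sums of the type $\sum_{j>k}2^{-\frac{5}{2}(j-k)}c^{(0)}_{j}c^{(0)[\dlt_{0}]}_{j}$, which close only because the off-diagonal gain dominates the admissible growth of the envelopes — the same step implicit in the paper's own proof of Proposition~\ref{p:DA0}, so it is a point to spell out rather than a gap. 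Two small items you should make explicit: Proposition~\ref{p:w} is logically prior, but its proof already uses that $F_{0\ell}(0) - B_{\ell} = -\covD_{\ell}A_{0}$ is quadratic, which must be quoted from the elliptic identity $\lap_{A}A_{0} = \covD^{k}B_{k}$ together with Proposition~\ref{p:DAB} (not from \ref{p:A0} itself) to avoid circularity; and the asserted linearity of $\bfA_{0}^{3}$ in $B$ does follow in your setup, since \eqref{eq:ymhf-w3} is linear in $F_{0x}$, whose heat-time data $B - \covD A_{0}$ is itself linear in $B$.
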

\begin{proof}
We only sketch the proof, emphasizing the structural points. 

For $A_0$ we already have the 
elliptic equation
\[
\Delta_A A_0 = \covD^k B_k 
\]
On the other hand for $\covD^k b_k$ we have  the representation in 
Proposition~\ref{p:DAB}.  Thus we have 
\[
\Delta_A A_0 = \DB^2(a,b) + \DB^3(a,b)
\]
In particular the quadratic part of $A_0$ is given by 
\[
\bfA_0^2(a,b) = \Delta^{-1} \DB^2(a,b)
\]
and its symbol is directly obtained from the symbol of  $\DB^2$,
\[
\bfA_0^2(\xi,\eta) = \frac{1}{(\xi+\eta)^2} \DB^2(\xi,\eta) 
\]
The bounds \eqref{A0-est1} and \eqref{A0-est2} are immediate
consequences of the estimates in Propositions~\ref{p:AB2-sp-lin},~\ref{p:DAB},
combined with Theorem~\ref{t:deltaA}.

It remains to prove the bound \eqref{A03-est}.
The cubic part of $\bfA_0(a,b)$  is given by 
\[
 \Delta_A \bfA_0^3(a,b) =  \DB^3(a,b) - 2[A^k, \partial_k \A_0^2]  - [\partial^k A_k + A_k^2, \A_0^2]   
\]
We will separately bound the three terms in the right hand side above in $ \ell^1 L^{2}$. 
We have already proved this for the first term in Proposition~\ref{p:DAB}, and the remaining two terms are 
similar using only the $ \dot H^{\frac32+2\smexp}$ bound for $\A_0$ (combined with the $\dot W^{\frac16-2\smexp,6}$ bound for $A$,
in the worst case), which in turn is proved similarly as \eqref{A0-est2}. \qedhere

\end{proof}

The above description of $A_0$ suffices for our description of the caloric Yang--Mills wave at heat-time $s = 0$.
However, we will also need to show that at $s > 0$,  $A(s)$ is a good approximate caloric Yang--Mills wave.
One difference between the two is that $A_0(s) \neq \bfA_0(A(s),B(s))$; this is because solving the 
$w$ equation with zero Cauchy data at time $0$, respectively zero Cauchy data at time $s$, yields 
different results. Nevertheless, we need to compare the two:
\begin{proposition}\label{p:A0-diff}
Let $A_{t, x}$ be a caloric Yang--Mills wave on $I \times \bbR^{4}$ satisfying $(A_{0}, A) \in C_{t}(I; \dot{H}^{1} \times \calC_{\hM})$ with $\calE[a] \leq \calE$. Let $A_{t,x}(s)$ be the corresponding dynamic Yang--Mills heat flow. Then for $A_0(s)$ we have the representation
\begin{equation}\label{A0-diff}
A_0(s) =  \bfA_0(A(s),B(s)) +    \bfA_{0;s}^2(A,B) + \bfA_{0;s}^3(A,B)
\end{equation}
where $\bfA_{0;s}^2(A,B) $ is a bilinear form 
\[
\bfA_{0;s}^2(A,B) = \Delta^{-1} \bfw_0^2(A,B,s).
\]
Moreover, under the additional assumption that all frequency envelope bounds are $(-\dlt, \dlt)$-admissible, the following bounds hold:
\begin{equation}\label{a0s}
\|  (1-s\Delta)^N \bfA_{0;s}^{2,3}(A,B) \|_{\dot H^{\frac12}} \lesssim_{\hM, \calE, N} 2^{-\frac{k(s)}2} (c_{k(s)}^{(0)})^2,
\end{equation}
\begin{equation}\label{a0s1}
\| (1-s\Delta)^N  \bfA_{0;s}^{2,3}(A,B) \|_{\dot H^{\frac32 - \smexp}} \lesssim_{\hM, \calE, N} 2^{-\smexp k(s)} (c_{k(s)}^{(1)})^2,
\end{equation}
respectively
\begin{equation}\label{a0s2}
\|(1-s\Delta)^N  \bfA_{0;s}^3(A,B) \|_{\dot H^{2-\frac{1}{12}}} \lesssim_{\hM, \calE, N} 2^{-\frac{k(s)}{12}} (c_{k(s)}^{(2)})^2  c_{k(s)}^{(4)}.
\end{equation}
\end{proposition}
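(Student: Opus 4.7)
My plan is to derive an elliptic equation for the difference $A_{0}(s) - \bfA_{0}(A(s),B(s))$, invert it, and then combine the bounds on $w_{0}(s)$ from Proposition~\ref{p:w} with the elliptic bounds from Theorem~\ref{t:deltaA}. First, I would recall that at any fixed heat-time $s$, the $\bt=0$ component of \eqref{YMWC-s} yields
\begin{equation*}
\covD^{\ell} F_{\ell 0}(s) = w_{0}(s),
\end{equation*}
and, expanding $F_{\ell 0} = \covD_{\ell} A_{0} - B_{\ell}$, this is equivalent to the elliptic identity
\begin{equation*}
\Dlt_{A(s)} A_{0}(s) = \covD^{\ell} B_{\ell}(s) + w_{0}(s).
\end{equation*}
On the other hand, the defining property of the map $\bfA_{0}$ (Proposition~\ref{p:A0}) is precisely that $\bfA_{0}(A(s),B(s))$ solves the same elliptic equation with $w_{0}=0$. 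Subtracting,
\begin{equation*}
\Dlt_{A(s)} \bigl( A_{0}(s) - \bfA_{0}(A(s),B(s)) \bigr) = w_{0}(s),
\end{equation*}
so we can define $\bfA_{0;s}(A,B) = \Dlt_{A(s)}^{-1} w_{0}(s)$, which is the key representation.

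Next, I would peel off the leading quadratic part. Writing $\Dlt_{A(s)}^{-1} = \Dlt^{-1} + (\Dlt_{A(s)}^{-1} - \Dlt^{-1})$ and $w_{0}(s) = \bfw_{0}^{2}(A,B,s) + \bfw_{0}^{3}(A,B,s)$ from Proposition~\ref{p:w}, the only genuinely quadratic contribution comes from $\Dlt^{-1} \bfw_{0}^{2}$, giving the formula \eqref{A0-sym}; all remaining terms are at least cubic (since $(\Dlt_{A(s)}^{-1} - \Dlt^{-1})u$ involves one power of $A(s)$ via Duhamel expansion of the covariant Laplacian). Hence we set $\bfA_{0;s}^{2} := \Dlt^{-1} \bfw_{0}^{2}$ and collect the rest into $\bfA_{0;s}^{3}$.

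To prove the estimates, I would apply Theorem~\ref{t:deltaA}, which gives $\Dlt_{A(s)}^{-1} : \dot{H}^{\sgm-2} \to \dot{H}^{\sgm}$ for $0 < \sgm < 2$ (with constants depending on $\hM, \calE$ via the uniform control of $A(s)$ in $\dot{H}^{1}$ from Proposition~\ref{p:ab-l2}), and match the target regularity to the $w$-bounds from Proposition~\ref{p:w}. Specifically, \eqref{a0s} pairs $\Dlt_{A(s)}^{-1} : \dot{H}^{-3/2} \to \dot{H}^{1/2}$ with \eqref{w-linf}; \eqref{a0s1} pairs $\Dlt_{A(s)}^{-1} : \dot{H}^{-1/2-\smexp} \to \dot{H}^{3/2-\smexp}$ with \eqref{w-l2}; and \eqref{a0s2} pairs $\Dlt_{A(s)}^{-1} : \dot{H}^{-1/12} \to \dot{H}^{2-1/12}$ with \eqref{w-l1} applied to $\bfw^{3}$. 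The cubic perturbation $(\Dlt_{A(s)}^{-1} - \Dlt^{-1})\bfw^{2}$ picks up an extra $A$-factor controlled by the $\dot{H}^{1}$-envelope $c_{k}$ (using Littlewood--Paley trichotomy and Theorem~\ref{t:deltaA}), which is absorbed into the cubic bound thanks to $c_{k} \lesssim 1$.

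The main technical point, which I expect to require the most care, is the frequency-envelope bookkeeping: Proposition~\ref{p:w} delivers the $w$-estimates in the slightly awkward mixed form $c_{k(s)}^{(j)} c_{k(s)}^{(j)[\dlt_{0}]}$, whereas the conclusion \eqref{a0s}--\eqref{a0s2} is stated with clean squares $(c_{k(s)}^{(j)})^{2}$. This is precisely where the additional hypothesis that all envelopes are $(-\dlt,\dlt)$-admissible is used: under this assumption $c_{k}^{(j)[\dlt_{0}]} \sim c_{k}^{(j)}$ uniformly (provided $\dlt_{0}$ is chosen smaller than $\dlt$), since the envelope varies slowly in both directions. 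Once this reduction is made, the three displayed estimates follow directly from Proposition~\ref{p:w} and the $\Dlt_{A(s)}^{-1}$ bounds above, with the parabolic weight $(1-s\Dlt)^{N}$ propagated through the elliptic inversion by standard Littlewood--Paley arguments (localizing the inversion to frequencies $\lesssim 2^{k(s)}$ modulo rapidly decaying tails, using the parabolic localization of $\bfw(s)$ at frequency $k(s)$ already built into Proposition~\ref{p:w}).
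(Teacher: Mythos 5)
Your representation step is correct, and it takes a genuinely different route from the paper. You subtract the two elliptic identities $\lap_{A(s)} A_0(s) = \covD^{\ell} B_{\ell}(s) + w_0(s)$ and $\lap_{A(s)} \bfA_0(A(s),B(s)) = \covD^{\ell} B_{\ell}(s)$ and invert $\lap_{A(s)}$ at fixed heat-time, whereas the paper works in the heat-time variable: it writes $A_0(s) - \bfA_0(A(s),B(s)) = \int_s^\infty (w_0 - \tw_0)(s')\,ds'$, where $\tw$ solves the tension-field system re-initialized to zero at heat-time $s$, observes that $z_0 = w_0 - \tw_0$ solves the homogeneous covariant heat equation with data $\bfw_0(s)$, and then propagates the bounds of Proposition~\ref{p:w} parabolically before integrating in $s'$. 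Your route is shorter for extracting the formula $\bfA_{0;s}^2 = \lap^{-1}\bfw_0^2$, and for \eqref{a0s}--\eqref{a0s1} the pairing of Theorem~\ref{t:deltaA} with \eqref{w-linf}, \eqref{w-l2} is sound (though to retain the $(1-s\lap)^N$ localization and the envelope evaluated at $k(s)$ you really need the frequency-envelope elliptic theory of Theorem~\ref{t:deltaA-fe}, not just the norm bound of Theorem~\ref{t:deltaA}; the paper gets this localization for free from the weights $(1+2^{2k}s')^{-N}$ in its parabolic estimates). Your reading of the role of the $(-\dlt,\dlt)$-admissibility hypothesis is also correct.

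The genuine gap is in \eqref{a0s2}, specifically in the piece $(\lap_{A(s)}^{-1} - \lap^{-1})\bfw_0^2$ of $\bfA_{0;s}^3$. You propose to control the extra $A$-factor by its $\dot H^1$ envelope and absorb it ``since $c_k \lesssim 1$''; but that only reproduces the regularity already present in \eqref{a0s}--\eqref{a0s1} (roughly $\dot H^{3/2-\smexp}$), while \eqref{a0s2} demands the strictly higher regularity $\dot H^{2-\frac1{12}}$ \emph{and} the envelope factor $c^{(4)}_{k(s)}$ in place of a constant. An $\dot H^1$ bound on $A$ paired with $\bfw_0^2 \in \dot H^{-\frac12-\smexp}$ cannot place the commutator term in $\dot H^{-\frac1{12}}$ (one loses roughly half a derivative), so the elliptic inversion cannot land in $\dot H^{2-\frac1{12}}$; the missing half derivative must come from measuring the extra $A$-factor in a Strichartz-type space, which is exactly what the paper does (it uses the $\dot W^{\smaxp,\pmaxp}$ norm of $A$ against the $\dot H^{-\frac12-\smexp}$ bound for the $\bfw_0^2$-evolution). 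Moreover, the resulting factor $c^{(4)}_{k(s)}$ is not cosmetic: \eqref{a0s2} is later paired with $L^1_t$ integrability, which is achieved precisely through the product structure $(c^{(2)})^2 c^{(4)}$ (matching $L^{4-}_t \cdot L^{4-}_t \cdot L^{2+}_t$), so replacing $c^{(4)}_{k(s)}$ by $O(1)$ destroys the statement you are trying to prove. To close the gap you should expand $\lap_{A(s)}^{-1} - \lap^{-1}$ (or, as in the paper, the covariant heat propagator) to exhibit the terms $[A, \covD(\cdot)] + [\covD A, (\cdot)]$, estimate the $\bfw_0^2$-input in $\dot H^{-\frac12-\smexp}$ via \eqref{w-l2}, and the $A$-factor in $\dot W^{\smaxp, \pmaxp}$, keeping the envelope bookkeeping at frequency $k(s)$.
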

In the next section, the three bounds above would be dynamically accompanied by $L^\infty_t,L^{2}_t$ respectively $L^1_t$.

\begin{proof}
Denote by $\tw$ the solutions to the $w$ equations \eqref{eq:ymhf-w2}-\eqref{eq:ymhf-w} 
but with initial data $\tw(s) = 0$. Then we have 
\[
A_0(s) -   \bfA_0(A(s),B(s)) = \int_s^\infty (w_0-\tw_0)(s') ds'.
\]
The function $z_0 = w_0-\tw_0$ solves the homogeneous heat equation
\[
(\partial_s - \Delta_A)z_0 = 0, \qquad z_0(s) = \bfw_0(s).
\]
For $w_0(s)$ we can use the $\dot H^{-\frac32}$, respectively $\dot{H}^{-\frac12-\smexp}$ bounds
in \eqref{w-linf} and \eqref{w-l2} to estimate $z_0(s')$ in the same spaces in a parabolic fashion.
Then \eqref{a0s} and \eqref{a0s1} directly follow. The same applies to the contribution 
of  $\bfw_0^3(s)$ in \eqref{a0s2}. It remains to consider the contribution of $\bfw_0^2(s)$ in \eqref{a0s2}.
This corresponds to replacing $z_0$ above by the solution $z_0^2$ to
\[
(\partial_s - \Delta_A)z_0^2 = 0, \qquad z_0(s) = \bfw_0^2(s).
\]
We consider to the expansion
\[
z_0^2(s') = e^{s' \Delta} \bfw^2_0(s) + \int_{s}^{s'} e^{(s''-s)\Delta} ([A,\covD z^2_0] + [\covD A,z^2_0])(s') ds''.
\]
For $z_0^2$, we use the  $\dot H^{-\frac12-\smexp}$ derived from \eqref{w-l2}, and for $A$ we use the $\dot W^{\smaxp, \pmaxp}$ norm.  \qedhere
\end{proof}

\subsection{The analysis of \texorpdfstring{$\covD^0 A_0$}{DA0}}

Here we have a representation as follows:
\begin{proposition}\label{p:DA0}
Let $A_{t, x}$ be a caloric Yang--Mills wave on $I \times \bbR^{4}$ satisfying $(A_{0}, A) \in C_{t}(I; \dot{H}^{1} \times \calC_{\hM})$ with $\calE[a] \leq \calE$. Then for $\partial^0 A_0$ we have the representation
\begin{equation}
\partial^0 A_0 =  \DA_0^2(B,B) + \DA_0^3(A,B)
\end{equation}
where the two terms are quadratic, respectively cubic and higher in $A,B$, 
and  $\DA_0^2(B,B)$ takes the form
\begin{equation*}
\DA_0^2(B,B) = -2 \lap^{-1} \bfQ(B, B).
\end{equation*}
Further, they satisfy the 
bounds
\begin{equation}\label{bfb0}
\| P_k \DA_0^{2,3} \|_{L^2} \lesssim_{\hM, \calE} c_k^{(0)} c_k^{(0)[\dlt_{0}]}
\end{equation}
\begin{equation}\label{bfb01}
\| P_k \DA_0^{2,3} \|_{\dot H^{\frac12}} \lesssim_{\hM, \calE}   c_k^{(1)} c_k^{(1)[\dlt_{0}]}
\end{equation}
\begin{equation}\label{bfb02}
\|P_k \DA_0^3 \|_{\dot H^1} \lesssim_{\hM, \calE}  c_k^{(2)} c_k^{(2)[\dlt_{0}]} c_k^{(4)[\dlt_{0}]} + (c_k^{(2)[\dlt_{0}]})^2 c_k^{(4)}
\end{equation}
as well as the corresponding difference bounds (cf. Proposition~\ref{p:A0}).
\end{proposition}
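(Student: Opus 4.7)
The starting point is formula \eqref{eq:l-caloric-d0a0} at $s = 0$, which (using $A_0(\infty) = 0$) reads
\[
\partial^0 A_0 \;=\; - \int_{0}^{\infty} \big( \covD^k w_k(s') + [A^0(s'), w_0(s')] \big) \, \ud s'.
\]
I would substitute the decomposition $w = \bfw^2 + \bfw^3$ from Proposition~\ref{p:w} and expand $\covD^k = \partial^k + ad(A^k)$. The only contribution that is purely quadratic in $(A,B)$ is
\[
\DA_0^2(B,B) \;=\; -\int_{0}^{\infty} \partial^k \bfw^2_k(B,B,s) \, \ud s,
\]
while all remaining pieces, namely $\int [A^k, \bfw^2_k + \bfw^3_k]$, $\int \partial^k \bfw^3_k$, and $\int [A^0, w_0]$ (which is already cubic, since $A^0$ starts quadratic by Proposition~\ref{p:A0}), are collected into $\DA_0^3(A,B)$.

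For the quadratic symbol, I would use the explicit expression for $\bfw^2_k$ in Proposition~\ref{p:w} together with the identity
\[
\int_0^\infty \bfW(\xi,\eta,s) \, \ud s \;=\; \frac{1}{(\abs{\xi}^2 + \abs{\eta}^2)\,\abs{\xi+\eta}^2},
\]
obtained by swapping the order of integration in \eqref{W-sym}. A direct Fourier computation at the symbol level, combined with the antisymmetrization in $(\xi,\eta)$ that accompanies the Lie bracket via Definition~\ref{def:mult-form}, then yields
\[
\mathrm{symb}(\DA_0^2)(\xi,\eta) \;=\; -\frac{\abs{\xi}^2 - \abs{\eta}^2}{\abs{\xi+\eta}^2\,(\abs{\xi}^2 + \abs{\eta}^2)} \;=\; -2\,\frac{\bfQ(\xi,\eta)}{\abs{\xi+\eta}^2},
\]
which is exactly the symbol of $-2\,\lap^{-1} \bfQ(B,B)$ in the sense of Definition~\ref{def:mult-form}.

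The frequency envelope bounds \eqref{bfb0}--\eqref{bfb02} follow by parabolic integration in $s$. At fixed heat-time $s$, each term $\partial^k \bfw_k(s)$ and $[A^0(s), w_0(s)]$ is concentrated near frequency $k(s)$ with parabolic decaying tails. Combining the bounds \eqref{w-linf}, \eqref{w-l2} and \eqref{w-l1} of Proposition~\ref{p:w} with the corresponding estimates for $A^0(s)$ from Propositions~\ref{p:A0}--\ref{p:A0-diff} gives fixed-$s$ bounds of schematic form
\[
\nrm{P_k (\partial^\ell w_\ell + [A^0, w_0])(s)}_{\dot H^{\sigma - 1}} \;\lesssim_{\hM,\calE}\; 2^{\sigma k(s)} \mathcal{C}_{k(s)} \,(1 + 2^{2k} s)^{-N},
\]
where $\mathcal{C}_{k(s)}$ is the appropriate frequency-envelope product from \eqref{w-linf}--\eqref{w-l1}. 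Integration in $\tfrac{\ud s}{s}$ gains one degree of regularity and, after a slow-variation argument using $\dlt_0$-compatibility, produces the claimed bounds at $\dot H^0$, $\dot H^{1/2}$, and $\dot H^1$ respectively. The difference bounds are obtained by the same scheme, now substituting the difference versions \eqref{dw-linf}--\eqref{dw-l1} and \eqref{dA0-est1}--\eqref{dA03-est} in place of their nonlinear counterparts.

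The main technical obstacle will be the bookkeeping of the cubic-and-higher remainder $\DA_0^3$, since the term $[A^0(s), w_0(s)]$ mixes $A^0(s) = \bfA_0(A(s),B(s)) + \bfA_{0;s}^{2,3}$ from Proposition~\ref{p:A0-diff} with the nontrivial $s$-dependent weights in \eqref{a0s}--\eqref{a0s2}. Care is required here so that the composite bilinear structure $(\mathrm{quadratic})(\mathrm{quadratic})$ at the level of $A^0 \cdot w^2$ is charged to the cubic envelopes $c_k^{(2)}c_k^{(2)}c_k^{(4)}$ (and their primed versions) rather than to lower envelopes; in each case the $s$-concentration of $\bfw^{2,3}$ and $\bfA_0^{2,3}$ at frequency $k(s)$ ensures that the two factors share a common frequency, which is exactly what is needed to match \eqref{bfb02} and \eqref{dA03-est}.
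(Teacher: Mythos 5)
Your proposal follows essentially the same route as the paper's proof: start from the integrated representation \eqref{eq:l-caloric-d0a0}, split $w$ into $\bfw^2+\bfw^3$ via Proposition~\ref{p:w}, relegate the commutator terms $[A^\nu,w_\nu]$ and the cubic part of $w$ to $\DA_0^3$, obtain the quadratic symbol by integrating the $\bfW$ symbol in $s$ to get $\big((|\xi|^2+|\eta|^2)|\xi+\eta|^2\big)^{-1}$ and antisymmetrizing, and derive the envelope bounds by combining the $w$ estimates with the $A_0$ estimates exactly as in the $\bfA_0$ analysis. The only blemish is the overall sign in your symbol identification, which after antisymmetrization should come out as $(|\xi|^2-|\eta|^2)/\big(|\xi+\eta|^2(|\xi|^2+|\eta|^2)\big)$ (cf.\ the remark following the proposition); this is within the paper's own loose convention for $\lap^{-1}$ versus $(-\lap)^{-1}$ and does not affect the argument.
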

\begin{proof}
This is obtained by integrating the previous representation and bounds for $w_k$ via the formula 
\eqref{eq:l-caloric-d0a0}.
Precisely, we have
\[
\partial^0 A_0 = \int_{0}^\infty [A^\nu,w_\nu] + \partial_j w_j \, ds.
\]
The first term above is cubic, and it suffices to  combine $A$ and $w$ bounds 
($\dot W^{\frac16-2\delta}$ and $\dot H^{-\frac12+\delta}$ in the worst case for \eqref{bfb02}).

So it remains to consider the $ \partial_j w_j $ term. For the quadratic part 
we integrate the symbol of $\bfW(s)$
\[
\int_0^\infty \int_0^{s_0}  e^{-s(\xi^2+\eta^2)} e^{(s_0-s)(\xi+\eta)^2} ds ds_0 = \frac{1}{(\xi^2+\eta^2)(\xi+\eta)^2}
\]
which combines with the argument of $W$, namely $\partial^j [ B^l,\partial_j B_l] =  [ B^l,\Delta B_l]$.
It remains to account for the cubic term in $w$, for which we use the bounds in Proposition~\ref{p:w}.
This is exactly the same argument as for $A_0$.
\end{proof}

\begin{remark} 
 In the proof of the preceding proposition, we can also obtain the quadratic symbol in a more direct
  fashion, by returning to the $A_0$ equation. Retaining only
  quadratic terms, we have
\[
\partial_0 A_0 \approx \partial_0 \bfA_0(a,b) = \bfA_0(b,b) +  \bfA_0(a,\partial_t b) 
\approx \bfA_0(b,b) +  \bfA_0(a,\Delta a) 
\]
At the symbol level we get for the first expression after antisymmetrization
\[
\frac{\xi^2-\eta^2}{(\xi+\eta)^2 (\xi^2+\eta^2)}
\]
whereas the second expression vanishes after antisymmetrization.
\end{remark}

As in the case of $A_0$, we also need to compare $\covD^0 A_0(s)$ with 
$\DA_0(A(s),B(s))$.

\begin{proposition}\label{p:DA0-diff}
Let $A_{t, x}$ be a caloric Yang--Mills wave on $I \times \bbR^{4}$ satisfying $(A_{0}, A) \in C_{t}(I; \dot{H}^{1} \times \calC_{\hM})$ with $\calE[a] \leq \calE$. Let $A_{t,x}(s)$ be the corresponding dynamic Yang--Mills heat flow. 
Then for $\covD^0A_0(s)$ we have the representation
\begin{equation}\label{DA0-diff}
\covD^0A_0(s) =  \DA_0(A(s),B(s)) +    \DA_{0;s}^2(A, B) + \DA_{0;s}^3(A, B)
\end{equation}
where $\DA_{0;s}^2(A, B) $ is a bilinear form 
\[
\DA_{0;s}^2(A, B) = \Delta^{-1} \partial^k \bfw_k^2(A, B,s).
\]
Moreover, under the additional assumption that all frequency envelope bounds are $(-\dlt, \dlt)$-admissible, the following bounds hold:
\begin{equation}\label{bfb0s}
\| (1+s \lap)^{N} \DA_{0;s}^{2,3} \|_{\dot H^{-\frac12}} \lesssim_{\hM, \calE, N} 2^{-\frac{k(s)}2} (c_{k(s)}^{(0)})^2
\end{equation}
\begin{equation}\label{bfb0s1}
\| (1+s \lap)^{N} \DA_{0;s}^{2,3} \|_{\dot H^{\frac12-\smexp}} \lesssim_{\hM, \calE, N}  2^{-\smexp k(s)} (c_{k(s)}^{(1)})^2
\end{equation}
respectively
\begin{equation}\label{bfb0s2}
\| (1+s \lap)^{N} \DA_{0;s}^3 \|_{\dot H^{1-\frac1{12}}} \lesssim_{\hM, \calE, N}  2^{-\frac{k(s)}{12}}(c_{k(s)}^{(2)})^2 c_{k(s)}^{(4)}.
\end{equation}
\end{proposition}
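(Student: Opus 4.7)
The plan is to mirror the strategy of Proposition~\ref{p:A0-diff}, but starting from the $t$-differentiated representation \eqref{eq:l-caloric-d0a0} instead of \eqref{eq:l-caloric-A0}. Thus I begin by writing
\begin{equation*}
\covD^{0} A_{0}(s) = -\int_{s}^{\infty} \left( \covD^{k} w_{k}(s') + [A^{0}, w_{0}(s')] \right) \ud s',
\end{equation*}
and I let $\tilde w_{\nu}$ denote the solution of the covariant parabolic system \eqref{eq:ymhf-w2}--\eqref{eq:ymhf-w3} driven by the same background connection $A$, but with zero initial data prescribed at heat-time $s$ (rather than at $s=0$). Since \eqref{eq:ymhf-w2}--\eqref{eq:ymhf-w3} are forced only by quadratic-in-$F$ terms, the integral $-\int_{s}^{\infty}(\covD^{k}\tilde w_{k} + [A^{0},\tilde w_{0}])(s')\,\ud s'$ coincides, up to cubic-and-higher errors in the background, with the expression for $\DA_{0}(A(s),B(s))$ produced by the proof of Proposition~\ref{p:DA0}: indeed, heat-time translation invariance of the structural integration identity used there turns $\int_{0}^{\infty}$ evaluated on $(A(s),B(s))$ into $\int_{s}^{\infty}$ evaluated along the flow of the same connection. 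Subtracting, the identity I must analyze is
\begin{equation*}
\covD^{0} A_{0}(s) - \DA_{0}(A(s),B(s)) = -\int_{s}^{\infty}\left(\covD^{k} z_{k}(s') + [A^{0},z_{0}(s')]\right)\ud s',
\end{equation*}
where $z_{\nu} = w_{\nu} - \tilde w_{\nu}$ solves the homogeneous covariant heat equation obtained from \eqref{eq:ymhf-w2}--\eqref{eq:ymhf-w3} with initial data $z_{\nu}(s) = \bfw_{\nu}(s) = \bfw_{\nu}^{2}(s) + \bfw_{\nu}^{3}(s)$.

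Next I split $z = z^{2} + z^{3}$ according to the initial data split in Proposition~\ref{p:w}. For the quadratic piece I replace $\Delta_{A}$ by $\Delta$ at leading order and expand via Duhamel:
\begin{equation*}
z^{2}_{k}(s') = e^{(s'-s)\Delta} \bfw_{k}^{2}(s) + \int_{s}^{s'} e^{(s''-s)\Delta}\bigl([A,\covD z^{2}] + [\covD A, z^{2}] + 2ad(F)z^{2}\bigr)(s'')\,\ud s'',
\end{equation*}
with analogous expansions from the extra quadratic source on the right of \eqref{eq:ymhf-w2}--\eqref{eq:ymhf-w3}. Integrating the leading $e^{(s'-s)\Delta}\bfw_{k}^{2}(s)$ in $s'$ from $s$ to $\infty$ produces exactly $\Delta^{-1}\partial^{k}\bfw_{k}^{2}(A,B,s)$, which is the advertised $\DA_{0;s}^{2}$. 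Everything else—the commutator corrections in the Duhamel expansion of $z^{2}$, the entire evolution of $z^{3}$, and the $[A^{0},z_{0}]$ term—is at least cubic in $(A,B)$ and I collect all of these into $\DA_{0;s}^{3}$.

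The bounds then reduce to parabolic estimates. For $\DA_{0;s}^{2} = \Delta^{-1}\partial^{k}\bfw_{k}^{2}(s)$, I simply convert the $\dot H^{-3/2}$ and $\dot H^{-1/2-\smexp}$ bounds \eqref{w-linf}--\eqref{w-l2} for $\bfw^{2}$ into $\dot H^{-1/2}$ and $\dot H^{1/2-\smexp}$ bounds using the near-frequency-$k(s)$ localization, which yields \eqref{bfb0s}--\eqref{bfb0s1}. For $\DA_{0;s}^{3}$, I proceed in two substeps: first, the $z^{3}$ contribution is handled by running the parabolic estimates of Theorem~\ref{t:heatB} starting from the $\dot H^{-1/12}$ bound \eqref{w-l1} on $\bfw^{3}(s)$, then integrating in $s'$; second, the cubic corrections to $z^{2}$ are handled by combining \eqref{w-linf}--\eqref{w-l2} with the Strichartz-type $\dot W^{(j)}$ bounds from Proposition~\ref{p:AB2-sp-lin} for the background $A$ and $F$, and (for the $[A^{0},z_{0}]$ piece) with the $\dot H^{1}$ and $\dot H^{3/2-\smexp}$ control of $A_{0}$ from Proposition~\ref{p:A0}. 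This yields \eqref{bfb0s2} in the same way that \eqref{a0s2} was derived in the proof of Proposition~\ref{p:A0-diff}.

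The main obstacle I anticipate is purely bookkeeping: tracking the $(1-s\Delta)^{N}$ decay for the cubic remainder through the Duhamel iteration, since I need off-diagonal smallness at every frequency and each term in the expansion combines a factor localized near $k(s)$ with background pieces at other frequencies. This is handled exactly as in Proposition~\ref{p:A0-diff}: one uses the $\dot W^{\smaxp,\pmaxp}$ norms, together with the $\dlt_{0}$-compatibility of the envelopes, to insure that the high-high-to-low and high-low interactions do not worsen the concentration at frequency $k(s)^{-1/2}$. No new ingredient beyond Propositions~\ref{p:w},~\ref{p:A0}, \ref{p:AB2-sp-lin} and Theorem~\ref{t:heatB} is needed.
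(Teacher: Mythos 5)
Your proposal is correct and follows essentially the same route as the paper: the paper's own proof of this proposition is simply the remark that one argues as for Proposition~\ref{p:A0-diff}, i.e.\ starting from \eqref{eq:l-caloric-d0a0}, comparing $w$ with the tension field $\tilde w$ reinitialized to zero at heat-time $s$, noting that $z = w - \tilde w$ solves the homogeneous covariant heat system with data $\bfw(s)$, extracting the quadratic part $\Delta^{-1}\partial^{k}\bfw_{k}^{2}(s)$ from the free evolution, and estimating the remainder parabolically using Proposition~\ref{p:w} and the background $\dot W^{\sgm,p}$ bounds — exactly what you do. Your elaboration (including the treatment of the $[A^{0}, z_{0}]$ term and the $(1-s\Delta)^{N}$ localization) is a faithful filling-in of the details the paper omits, modulo the cosmetic point that the fixed-time background bounds you want are those of Proposition~\ref{p:main-sp} rather than Proposition~\ref{p:AB2-sp-lin}.
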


\begin{proof}
The proof is similar to that of Proposition~\ref{p:A0-diff} for $\bfA_{0;s}^{2,3}$; we omit the details.
\end{proof}

\subsection{Turnabout: from curvature to caloric data}
Throughout this section so far, we have adopted the viewpoint that
$(a,b)$ should be considered as the canonical initial data
set. However, we also briefly need to turn the tables, and prove an
estimate for caloric data $(a,b)$ and its caloric flow which is
derived from information about initial curvature $(f,e)$. This is one
of the end results of this paper, which will be used in \cite{OTYM2}
to transfer small ``inhomogeneous energy dispersion'' information from
$(f, e)$ to $(A, B)$.

\begin{proposition} \label{prop:ed}
Let $c_{k}$ be a $(-\dlt_{0}, \dlt_{0})$ frequency envelope for $(a, b)$ in $\dot{H}^{1} \times L^{2}$, and let $d_{k}$ be a $(-\dlt_{0}, \dlt_{0})$ frequency envelope for $(f, e)$ in $\dot{W}^{-2, \infty}$. Then the following bounds hold:
\begin{align} 
2^{-k} \nrm{P_{k} A(s)}_{L^{\infty}} + 2^{-2k} \nrm{P_{k} B(s)}_{L^{\infty}} \aleq_{\hM, \calE, N} & (d_{k})^{\frac{1}{2}} (1+2^{2k} s)^{-N},
\label{ed-AB}
 \\
\nrm{P_{k} \rd^{j} A_{j}(s)}_{L^{2}} + \nrm{P_{k} \rd^{j} B_{j}(s)}_{\dot{H}^{-1}} \aleq_{\hM, \calE, N} & (d_{k})^{\frac{1}{2}} c_{k} (1+2^{2k} s)^{-N},
\label{ed-DAB}
 \\
\nrm{(1+s \lap)^{N} \bfA(s)}_{L^{2}} + \nrm{(1+s \lap)^{N} \bfB(s)}_{\dot{H}^{-1}} \aleq_{\hM, \calE, N}  & 2^{-k(s)} (d_{k(s)})^{\frac{1}{2}} c_{k(s)}.\label{ed-bfAB}
\end{align}
\end{proposition}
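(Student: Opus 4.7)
\smallskip

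The plan is to exploit the interpolation between the standard $L^{2}$-based energy envelopes $c_{k}$ and the weaker $\dot{W}^{-2,\infty}$ bound encoded by $d_{k}$, which at the level of Littlewood--Paley blocks reads $\nrm{P_{k} f}_{L^{\infty}} + \nrm{P_{k} e}_{L^{\infty}} \aleq 2^{2k} d_{k}$. Combined with $\nrm{P_{k} f}_{L^{2}} \aleq 2^{k} c_{k}$ (from Proposition~\ref{p:ab-l2}), interpolation gives $\nrm{P_{k} f}_{L^{4}} \aleq 2^{3k/2} (c_{k} d_{k})^{1/2}$, and similarly $\nrm{P_{k} e}_{L^{4}} \aleq 2^{k/2} (c_{k} d_{k})^{1/2}$. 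The square-root appearing in the statement of the proposition thus arises from a single instance of $L^{2} \times L^{\infty} \to L^{4}$ interpolation, and all three bounds should ultimately reduce to propagating this pointwise information through the caloric heat flow and the elliptic div-curl relations.

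I would first propagate the pointwise-type bounds to heat-times $s > 0$. Applying Theorem~\ref{t:heatB} in the $L^{\infty}$ setting (more precisely, in $\dot{W}^{-2,\infty}$, which is within the allowed range \eqref{eq:heatB-sp}) to the covariant parabolic equation \eqref{eq:YMHF-Fij} for $F$, and analogously for $F_{0x}$ via \eqref{eq:YMHF-Fsi+}, one obtains
\begin{equation*}
\nrm{P_{k} F(s)}_{L^{\infty}} + \nrm{P_{k} F_{0x}(s)}_{L^{\infty}} \aleq_{\hM, \calE, N} 2^{2k} d_{k} (1 + 2^{2k} s)^{-N}.
\end{equation*}
Interpolation against the $L^{2}$-based bounds of Proposition~\ref{p:ab-l2} then yields $L^{4}$ frequency envelopes $2^{3k/2}(c_{k} d_{k})^{1/2}$ for $F(s)$ and $2^{k/2}(c_{k}d_{k})^{1/2}$ for $F_{0x}(s)$, all with parabolic decay factors. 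The technical point here is that the nonlinear contributions of $[A, \rd F]$-type terms driving the parabolic equation are handled as in the proof of Proposition~\ref{p:ab-l2}, using Littlewood--Paley trichotomy: in each bilinear interaction precisely one factor is assigned the $d_{k}^{1/2}$ envelope while the other is absorbed by the $c$-envelopes, so no self-interaction of $d^{1/2}$ ever occurs.

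Next, I would transfer these bounds from the curvature to the connection and its time derivative. For $A(s)$, the nonlinear Coulomb relation $\rd^{\ell} A_{\ell}(s) = \DA(A(s))$ of Proposition~\ref{p:DAB} combined with $\covD_{i} A_{j} - \covD_{j} A_{i} = F_{ij}$ gives the schematic div-curl system $\lap A_{j}(s) = \rd_{j} \DA(A(s)) + \rd^{\ell}(F_{\ell j}(s) - [A_{\ell}, A_{j}](s))$, so that at the linear level $A(s) = \lap^{-1} \rd F(s) + (\text{quadratic in } A)$. The linear part contributes $\nrm{P_{k} A(s)}_{L^{4}} \aleq 2^{k/2}(c_{k} d_{k})^{1/2}$, and Bernstein promotes this to $\nrm{P_{k} A(s)}_{L^{\infty}} \aleq 2^{k} (c_{k} d_{k})^{1/2} \aleq 2^{k} d_{k}^{1/2}$, which is \eqref{ed-AB} for $A$. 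For $B(s)$, use the decomposition $B = F_{0x} + \covD A_{0}$: the first term is already controlled by the $F_{0x}(s)$ bound above, while $A_{0}(s)$ is obtained from \eqref{eq:l-caloric-A0} or the elliptic equation $\lap_{A} A_{0} = \DB(A, B)$, which by the analysis in Section~\ref{s:caloric} transfers the $L^{4}$ envelope of $F_{0x}$ (and of $\covD^{\ell} F_{\ell 0}$ at heat-time $s$) to an $\dot{W}^{1, 4}$ envelope for $A_{0}$. Bernstein then yields the $B$ half of \eqref{ed-AB}.

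For the divergence bounds \eqref{ed-DAB} I would apply Proposition~\ref{p:DAB} directly: each bilinear/trilinear term in $\bfQ(A, A)$, $\DA^{3}$, $[A, B]$, $\bfQ(A, B)$, $\DB^{3}$ admits a Littlewood--Paley decomposition in which one factor is estimated in the improved $L^{\infty}$ norm (producing $d^{1/2}$) while the remaining factors are placed in $L^{2}$-based norms (producing $c$). Finally, \eqref{ed-bfAB} follows similarly from the bilinear representation formulas \eqref{rep-F}, \eqref{rep-A}, \eqref{A0-bi} of Proposition~\ref{p:AB2-l2}, reading off the same structure. The main obstacle I anticipate is organizing a clean bootstrap that simultaneously carries the pointwise $d_{k}^{1/2}$-envelope through the nonlinear equations without ever pairing two such factors together: one must verify at each occurrence of a product that exactly one entry can be estimated in $L^{\infty}$ with envelope $2^{k} d_{k}^{1/2}$, while the companion factor carries only the standard $c$-envelope. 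The slowly varying nature of $(-\delta_{0}, \delta_{0})$-envelopes and the $\delta_{0}$-compatibility hypothesis are essential in making the off-diagonal and $high \times high \to low$ cases in these trichotomies converge.
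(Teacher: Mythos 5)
Your overall strategy parallels the paper's: interpolate the $\dot W^{-2,\infty}$ information against the $L^{2}$-based envelopes to produce an intermediate $(c_{k}d_{k})^{1/2}$ envelope for the curvature, propagate it through the covariant heat flow, and then transfer it to $A$, $A_{0}$, $B$ and to the quadratic quantities with exactly one factor carrying $d^{1/2}$. However, one step fails as written: you apply Theorem~\ref{t:heatB} directly in $\dot W^{-2,\infty}$, asserting this lies in the range \eqref{eq:heatB-sp}. It does not: \eqref{eq:heatB-sp} requires $-2<\sgm<\tfrac4p$ and $0<\dlt\leq\min\{\tfrac4p-\sgm,\,2+\sgm\}$, and at $(\sgm,p)=(-2,\infty)$ both conditions degenerate ($\sgm=-2$ is excluded, and $2+\sgm=0$ forces $\dlt\leq 0$). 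The paper flags exactly this point (``borderline inadmissible'') and rectifies it by interpolating at the level of the \emph{initial data}, $\nrm{P_{k}(f,e)}_{\dot W^{-1,4}}\aleq (c_{k}d_{k})^{1/2}$, and then running Theorem~\ref{t:heatB} in the admissible space $\dot W^{-1,4}$. Your own interpolation recovers precisely this intermediate norm, so the repair is to interpolate \emph{before} propagating rather than after; as submitted, the propagation step has no theorem behind it. Relatedly, your numerology is off: by Proposition~\ref{p:ab-l2} one has $\nrm{P_{k}f}_{L^{2}}\aleq c_{k}$ (not $2^{k}c_{k}$), so the correct interpolated bounds are $\nrm{P_{k}f}_{L^{4}}\aleq 2^{k}(c_{k}d_{k})^{1/2}$ and likewise for $e$; your stated exponents $2^{3k/2}$ and $2^{k/2}$ do not chain consistently to the $L^{\infty}$ bound $2^{k}(c_{k}d_{k})^{1/2}$ you then claim for $A$ (with the corrected input they do).

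Two further places where the paper's route is safer. To pass from curvature to connection you invoke an elliptic div--curl system, but its quadratic terms contain $A$ itself in the slot where the improved envelope is needed, so this genuinely requires the bootstrap you only allude to; the paper avoids it by using the caloric-gauge representation $A_{\alpha}(s)=-\int_{s}^{\infty}\covD^{j}F_{j\alpha}\,ds_{1}$ (for both the spatial and temporal components) and estimating $\covD^{j}F_{j\alpha}$ in $L^{4}$ via Littlewood--Paley trichotomy, where every companion $A$-factor is handled with the already-known $c$-envelopes of Proposition~\ref{p:ab-l2}, then concluding \eqref{ed-AB} by Bernstein. Second, for \eqref{ed-DAB}--\eqref{ed-bfAB} you cite Proposition~\ref{p:DAB}, which is a statement at $s=0$ only, whereas these bounds are needed at all heat-times with the $(1+2^{2k}s)^{-N}$ weights; you would have to rerun the representations \eqref{rep-DA}, \eqref{rep-DB} by hand. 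The paper instead gets both bounds at once from the already-proved estimates \eqref{cal-linf-af}, \eqref{cal-linf-da} of Proposition~\ref{p:AB2-sp-lin}, choosing $(\sgm_{1},p_{1})=(0,2)$ and $(\sgm,p)$ interpolating between $(1,2)$ and $(0,4)$, so that the $\dot W^{\sgm,p}$ envelope of $(a,b)$ interpolates $c_{k}$ with $(c_{k}d_{k})^{1/2}$ and the product of two such envelopes is $\aleq c_{k}d_{k}^{1/2}$. These last two items are fixable organizational issues, but the inadmissible application of Theorem~\ref{t:heatB} is a genuine gap in the argument as written.
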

One can view this as a non-symmetric variant of Propositions~\ref{p:main-sp}, \ref{p:AB2-sp-lin} 
and \ref{p:DAB}. Here, there is no need to consider more general $(-\dlt_{0}, S)$ frequency envelopes.
\begin{proof}
We proceed in several steps, omitting the dependence of implicit constants on $\hM, \calE, N$:

\pfstep{ Step 1: $F_{ij}$ and $F_{0j}$ bounds}  For the curvature components we have the covariant heat equations 
\eqref{eq:YMHF-Fij+} therefore we are in a position to apply the bounds in Theorem~\ref{t:heatB}. Unfortunately the 
$\dot W^{-2,\infty}$ norm is borderline inadmissible there. To rectify this we work instead in the intermediate $\dot W^{-1,4}$
norm, for which by interpolation we have the initial data bounds
\[
\| P_k (f,e)\|_{\dot W^{-1,4}} \lesssim (c_k d_k)^{\frac12}
\]
By Theorem~\ref{t:heatB} this yields the corresponding parabolic bounds for their caloric flows,
\begin{equation}\label{ed:f}
\| P_k F_{\alpha \beta}\|_{\dot W^{-1,4}} \lesssim (c_k d_k)^{\frac12} (1+2^{2k} s)^{-N}.
\end{equation}

\pfstep{ Step 2: $A_{j}$ and $A_{0}$ bounds} These are obtained by integrating from infinity
\[
A_\alpha(s) = - \int_{s}^\infty D^j F_{j\alpha}(s_1) d s_1.
\]
We estimate in $L^4$ using \eqref{ed:f}, Bernstein's inequality and the Littlewood-Paley trichotomy 
\[
\begin{split}
\| P_k D^j F_{j\alpha}(s_1) \|_{L^4} & \  \lesssim \| P_k \partial^j F_{j\alpha}(s_1) \|_{L^4}+ \| P_k [A^j, F_{j\alpha}(s_1)] \|_{L^4}
\\
\lesssim & \ 2^{2k} \left[(c_k d_k)^{\frac12} (1+2^{2k} s_1)^{-N} +   \sum_{j > k} c_j^\frac32  d_j        (1+2^{2j} s_1)^{-N}\right] .
\end{split}
\]
After integration in $s_1$ this yields 
\begin{equation}\label{ed:a}
\| P_k A_{\alpha}(s)\|_{L^4} \lesssim (c_k d_k)^{\frac12} (1+2^{2k} s)^{-N}.
\end{equation}

\pfstep{ Step 3: $B$ bounds} Recalling that 
\[
B_j = F_{0j} + D_j A_0 = F_{0j} + \partial_j A_0 +[A_j,A_0] 
\]
we use \eqref{ed:f} and \eqref{ed:a} for the first two terms and combine $\dot H^1$ and $L^4$ bounds 
for the last term to obtain
\begin{equation}\label{ed:b}
\| P_k B_{j}(s)\|_{\dot W^{-1,4}} \lesssim (c_k d_k)^{\frac12} (1+2^{2k} s)^{-N}.
\end{equation}
By Bernstein's inequality, this bound together with \eqref{ed:a} complete the proof of \eqref{ed-AB}.

\pfstep{ Step 4: the remaining bounds \eqref{ed-DAB} and \eqref{ed-bfAB}}
These follow from the estimates \eqref{cal-linf-af} and \eqref{cal-linf-da} by choosing $(\sigma_1,p_1) = (0,2)$,
and appropriate $(\sigma,p)$ interpolating between $(1,2)$ and $(0,4)$.
\end{proof}

\section{The wave equation for \texorpdfstring{$A_x$}{Ax}} \label{s:wave}

Our main goal here is to interpret the hyperbolic Yang--Mills equation in the caloric gauge as 
a system of nonlinear wave equations for $A_x$. To be more precise, we seek to formulate the equations in a 
form where all the quadratic terms are explicit, while the cubic terms satisfy favorable frequency envelope
bounds which only involve the non-endpoint Strichartz type norms for $A$. 

In this section, by \emph{time} we always refer to the hyperbolic-time $t$. Accordingly, in this section the shorthand $L^{q} L^{p}$ means the space-time norm $L^{q}_{t} L^{p}_{x}$, \emph{not} the space-heat-time norm $L^{q}_{s} L^{p}_{x}$ as it were in the prior sections.
Otherwise, the conventions fixed at the end of Section~\ref{subsec:lp} are still in effect.

For economy of notation, we introduce the following definition:
\begin{definition} \label{def:env-pres} Let $X, Y$ be dyadic norms.
  \begin{itemize}
  \item A map $\bfF : X \to Y$ is said to be \emph{envelope-preserving
      of order $\geq n$} ($n \in \bbN$ with $n \geq 2$) if the
    following property holds: Let $c$ be a $(-\dlt_{0}, S)$
    frequency envelope for $a$ in $X$. Then
    \begin{equation*}
      \nrm{P_{k} \bfF(a)}_{Y} \aleq_{\nrm{a}_{X}} (c_{k}^{[\dlt_{0}]})^{n-1} c_{k}.
    \end{equation*}
  \item A map $\bfF: X \to Y$ is said to be \emph{Lipschitz
      envelope-preserving of order $\geq n$} if, in addition to being
    envelope preserving of order $\geq n$, the following additional
    property holds: Let $c$ be a common $(-\dlt_{0}, \dlt_{0})$ frequency
    envelopes for $a_{1}$ and $a_{2}$ in $X$, and let $d$ be a
    $(-\dlt_{0}, \dlt_{0})$ frequency envelope for $a_{1} - a_{2}$ in $X$. Then
    \begin{equation*}
      \nrm{P_{k}(\bfF(a_{1}) - \bfF(a_{2}))}_{Y_{k}} 
      \aleq_{\nrm{a_{1}}_{X}, \nrm{a_{2}}_{X}}  c_{k}^{n-1} e_{k}.
    \end{equation*}
    where $e_{k} = d_{k} + c_{k} (c \cdot d)_{\leq k}$,
  \end{itemize}
\end{definition}

Our main result is as follows:

\begin{theorem} \label{thm:main-eq} Let $A_{t,x} = (A_{0}, A) \in
  C_{t}(I; \dot{H}^{1} \times \calC_{\hM})$ with $(\rd_{t} A_{0},
  \rd_{t} A) \in C_{t}(I; L^{2} \times T^{L^{2}}_{A(t)} \calC_{\hM})$
  be a solution to \eqref{ym} with energy $\nE$. Then its spatial components $A =
  A_{x}$ satisfy an equation of the form
  \begin{equation}\label{eq:main-wave}
    \Box_{A} A_{j} = \P_{j} [A, \rd_{x} A] + 2\Delta^{-1} \partial_{j} \bfQ (\rd^{\alp} A, \rd_{\alp} A) + R_{j}(A),
  \end{equation}
  together with a compatibility condition
  \begin{equation}\label{eq:main-compat}
    \partial^{\ell} A_{\ell} = \DA(A) :=  \bfQ(A, A) + \DA^{3}(A).
  \end{equation}
  Moreover, the temporal component $A_{0}$ and its time derivative
  $\rd_{t} A_{0}$ admit the expressions
  \begin{align}
    A_{0} =& \bfA_{0}(A) := \lap^{-1}[A, \rd_{t} A] + 2 \lap^{-1} \bfQ(A, \rd_{t} A) + \bfA_{0}^{3}(A), \label{eq:main-A0} \\
    \rd_{t} A_{0} =& \DA_{0}(A) := - 2 \lap^{-1} \bfQ (\rd_{t} A,
    \rd_{t} A) + \DA_{0}^{3}(A). \label{eq:main-DA0}
  \end{align}
  Here $\P$ is the Leray projector, and $\bfQ$ is the symmetric
 bilinear form with symbol as in \eqref{eq:Q-sym}.
  Moreover, $R_{j}(t)$, $\DA^{3}(t)$, $\bfA_{0}^{3}(t)$ and
  $\DA_{0}^{3}(t)$ are uniquely determined by $(A, \rd_{t} A)(t) \in
  T^{L^{2}} \calC$, and are Lipschitz envelope preserving maps of
  order $\geq 3$ on the following spaces: 
  \begin{align}
    R_{j}(t): & \ \dot{H}^{1} \to \dot{H}^{-1}, \label{eq:Rj-t} \\
    \DA^3(t): & \ \dot{H}^{1} \to L^2, \label{eq:DA-tri-t} \\
    \bfA_{0}^{3}(t): & \  \dot{H}^{1} \to \dot{H}^{1}, \label{eq:A0-tri-t} \\
    \DA_{0}^{3}(t) : & \  \dot{H}^{1} \to L^{2}. \label{eq:DA0-tri-t}
  \end{align}
  Finally, on any interval $I \subseteq \bbR$, $R_{j}$, $\DA^{3}$,
  $\bfA_{0}^{3}$ and $\DA_{0}^{3}$ are Lipschitz envelope preserving
  maps of order $\geq 3$ (with bounds independent of $I$) on the
  following spaces:
  \begin{align}
    R_{j}: & \ \Str^{1}[I] \to L^{1} L^{2} \cap L^{2} \dot{H}^{-\frac{1}{2}}[I], \label{eq:Rj} \\
    \DA^3: & \ \Str^1[I] \to \blue{L^1 \dot H^1 \cap L^{2} \dot{H}^{\frac{1}{2}}}[I], \label{eq:DA-tri} \\
    \bfA_{0}^{3}: & \ \Str^1[I] \to  L^1 \dot H^2 \cap L^{2} \dot{H}^{\frac{3}{2}}[I], \label{eq:A0-tri} \\
    \DA_{0}^{3} : & \ \Str^{1}[I] \to  \blue{L^1 \dot H^1 \cap L^{2} \dot{H}^{\frac{1}{2}}}[I].
     \label{eq:DA0-tri}
  \end{align}
  All implicit constants depend on $\hM$ and $\nE$.
\end{theorem}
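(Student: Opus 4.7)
The plan is to obtain \eqref{eq:main-compat}, \eqref{eq:main-A0}, \eqref{eq:main-DA0} as pointwise-in-time specializations of the structure theorems of Section~\ref{s:caloric}, then derive the wave equation \eqref{eq:main-wave} by algebraic manipulation of the spatial hyperbolic Yang--Mills equation \eqref{ym-x} using those identities, and finally upgrade the fixed-time envelope-preserving property \eqref{eq:Rj-t}--\eqref{eq:DA0-tri-t} of the four remainder maps to the spacetime Strichartz bounds \eqref{eq:Rj}--\eqref{eq:DA0-tri} via H\"older in time. The principal obstacle is the second step: the symbol-level bookkeeping required to show that the various gradient-quadratic contributions on the right-hand side of \eqref{ym-x} collapse precisely into $2 \lap^{-1} \rd_{j} \bfQ(\rd^{\alp} A, \rd_{\alp} A)$, with everything else fitting either into the Leray-projected bracket or into the cubic-or-higher remainder $R_{j}$.

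For the first step, I would apply Proposition~\ref{p:DAB} with $a = A(t) \in \calC_{\hM}$ to obtain \eqref{eq:main-compat}, Proposition~\ref{p:A0} with $(A, B) = (A(t), \rd_{t} A(t)) \in T^{L^{2}} \calC_{\hM}$ to obtain \eqref{eq:main-A0}, and Proposition~\ref{p:DA0} (combined with $\partial^{0} = -\rd_{t}$ in our Minkowski convention, which only affects the sign of $\DA_{0}^{3}$) to obtain \eqref{eq:main-DA0}. The explicit quadratic symbols listed in the theorem are read off from $\bfA_{0}^{2}$ and $\DA_{0}^{2}$ in those propositions, and the fixed-time envelope-preserving bounds \eqref{eq:DA-tri-t}, \eqref{eq:A0-tri-t}, \eqref{eq:DA0-tri-t} are exactly \eqref{DAB3}, \eqref{A03-est}, \eqref{bfb02} specialized at the single exponent pair $(\smax, \pmax)$; the Lipschitz versions come from \eqref{dDAB3}, \eqref{dA03-est} and the analogous difference bound in Proposition~\ref{p:DA0}.

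For the wave equation, I would start with $\Box_{A} A_{k} = \covD_{k} \rd^{\alp} A_{\alp} - [A^{\alp}, \covD_{k} A_{\alp}]$ from \eqref{ym-x} and substitute $\rd^{\alp} A_{\alp} = -\rd_{t} A_{0} + \rd^{\ell} A_{\ell} = \DA(A) - \DA_{0}(A)$ (with the sign coming from the signature) using the identities just established. Next I would split $[A^{\alp}, \covD_{k} A_{\alp}] = [A^{j}, \rd_{k} A_{j}] + \{ [A^{j}, [A_{k}, A_{j}]] + [A^{0}, \covD_{k} A_{0}] \}$, observing that the braced terms are cubic (the $A^{0}$ piece is quartic, since $A_{0}$ is itself quadratic by \eqref{eq:main-A0}). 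A Leray decomposition $[A^{j}, \rd_{k} A_{j}] = -\P_{k}[A, \rd_{x} A] + \rd_{k} \lap^{-1} \rd^{\ell} [A^{j}, \rd_{\ell} A_{j}]$ isolates the transverse part, and the remaining gradient terms combine with $\rd_{k} \bfQ(A, A)$ from $\rd_{k} \DA$ and $-2 \rd_{k} \lap^{-1} \bfQ(B, B)$ from $\rd_{k} \DA_{0}$. The identity $\rd^{\alp} A \cdot \rd_{\alp} A = -|B|^{2} + \rd^{\ell} A \cdot \rd_{\ell} A$, together with the defining symbol \eqref{eq:Q-sym} of $\bfQ$, is then used at the symbol level to collapse this sum into $2 \lap^{-1} \rd_{j} \bfQ(\rd^{\alp} A, \rd_{\alp} A)$ modulo cubic error; the latter, together with the cubic brackets above, is absorbed into $R_{j}$, which inherits the envelope-preserving property \eqref{eq:Rj-t} from the fixed-time bounds on $\DA^{3}$, $\bfA_{0}^{3}$, $\DA_{0}^{3}$ and the trivial cubic bound on the remaining bracket errors.

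Finally, for \eqref{eq:Rj}--\eqref{eq:DA0-tri} I would apply the fixed-time envelope-preserving bounds pointwise in $t$ inside a H\"older-in-time argument. Each cubic-or-higher map carries at least three factors of $A$ or $\rd_{t} A$: two absorb Strichartz-type regularity from $\Str^{1}[I]$ (using the available $L^{q}_{t} \dot W^{\sgm, r}_{x}$ norms away from the endpoints) while the remaining factors are placed in $L^{\infty}_{t}(\dot H^{1} \times L^{2})$, controlled by the a priori bound $\nrm{A}_{L^{\infty}_{t}(\bfH \cap \dot H^{1})} + \nrm{\rd_{t} A}_{L^{\infty}_{t} L^{2}} \aleq_{\hM, \nE} 1$ coming from Proposition~\ref{p:cal-in-bfH} and the hyperbolic energy conservation. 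The $L^{q}_{t}$ exponents of the two Strichartz factors combine via H\"older to give the required $L^{1}_{t}$ or $L^{2}_{t}$ target integrability; divisibility of $\Str$ in time and interval-independence of the constants complete the argument, and the Lipschitz analogues follow from the difference bounds \eqref{dDAB3}, \eqref{dA03-est} and their counterparts. The off-diagonal decay in \eqref{DAB3-0}--\eqref{DAB3} and \eqref{bfb02} ensures that all high-low and high-high frequency interactions are summable.
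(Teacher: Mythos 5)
Your first two steps coincide with the paper's own argument: the identities \eqref{eq:main-compat}, \eqref{eq:main-A0}, \eqref{eq:main-DA0} are read off from Propositions~\ref{p:DAB}, \ref{p:A0}, \ref{p:DA0} at fixed time, and the wave equation is obtained by writing $\Box_A A_j = \partial_j \partial^\alpha A_\alpha + [A^\alpha,\partial_j A_\alpha]$, substituting $\partial^\alpha A_\alpha = \DA - \DA_0$, performing the Leray split of $[A^j,\partial_k A_j]$, and using the symbol \eqref{eq:Q-sym} to cancel the residual gradient-quadratic terms against $\partial_j \bfQ(A,A)$ and $\partial_j \Delta^{-1}\bfQ(\partial_t A,\partial_t A)$; this is exactly the computation in the paper, with $R_j = [A^0,\partial_j A_0] + \partial_j(\DA^3-\DA_0^3)$. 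One small correction: the fixed-time statements \eqref{eq:Rj-t}--\eqref{eq:DA0-tri-t} are at $\dot H^1$ scaling, so they come from the $(\smin,\pmin)$ estimates \eqref{DAB3-0}, \eqref{A0-est1}, \eqref{bfb0} (and their difference versions \eqref{dDAB3-0}, \eqref{dA0-est1}), not from \eqref{DAB3}, \eqref{A03-est}, \eqref{bfb02} at $(\smax,\pmax)$, which scale like $\dot H^{4/3}$ and are reserved for the $L^1_t$ space-time bounds.

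The genuine gap is in your last step. You propose to reach the space-time targets by placing \emph{two} factors in Strichartz norms from $\Str^1[I]$ and the remaining factor(s) in $L^\infty_t(\dot H^1\times L^2)$, with H\"older in time. For the $L^2_t$ targets this is consistent, but for the $L^1_t$ targets (\eqref{eq:Rj}, \eqref{eq:DA-tri}, \eqref{eq:A0-tri}) it cannot work with constants independent of $I$: by construction $\Str$ only contains exponents with $\dltStr \le \frac1p \le \frac12 - \dltStr$, so no $L^2_t$ (sharp) Strichartz norm is controlled by $\Str^1$, and two factors with time exponents strictly greater than $2$ combine to $L^{r}_t$ with $r>1$; lowering to $L^1_t$ by H\"older costs a power of $\abs{I}$, which the theorem forbids. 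The paper's route is different precisely at this point: the fixed-time trilinear bounds are formulated with \emph{three} Strichartz-compatible regularities whose matching time exponents sum exactly to $1$ --- e.g.\ three copies of the $(\smax,\pmax)$ envelope with $q^{(3)}=3$ for $\DA^3 : \Str^1 \to L^1\dot H^1$, or two $(\smidp,\pmidp)$ factors ($L^{4-}_t$) and one $(\smaxp,\pmaxp)$ factor ($L^{2+}_t$) for $\bfA_0^3 : \Str^1 \to L^1 \dot H^2$ --- and then the transfer to space-time norms goes through the envelope relation \eqref{ckt}. This is the reason the five exponent pairs, with their $\smexp$-shifts, are introduced at the end of Section~\ref{subsec:lp}; without using fixed-time estimates at those mixed regularities (none of which places a factor at bare $\dot H^1$ energy regularity), your H\"older bookkeeping does not close for the $L^1_t$ components.
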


\begin{proof}
We expand the equations \eqref{ym} in terms of the connection $A$,
\[
\Box_A A_j = D^\alpha \partial_j A_\alpha = \partial_j \partial^\alpha A_\alpha+
[A^\alpha, \partial_j A_\alpha] 
\]
In the first term on the right we use the expressions for $\partial_0 A_0$ and $\partial^j A_j$,
\[
\Box_A A_j = \partial_j (\DA - \DA_0) + [A^\alpha, \partial_j A_\alpha] 
\]
We separate the quadratic and cubic terms to obtain 
\begin{equation}\label{wave-A}
\Box_A A_j = \partial_j (\DA^2 - \DA_0^2) + [A^\alpha, \partial_j A_\alpha] +  \partial_j (\DA^3 - \DA_0^3)
\end{equation}
Then we denote 
\begin{equation}
R_j (A_x,\partial_t A_x) = [A^0,\partial_j A_0] +  \partial_j (\DA^3 - \DA_0^3)
\end{equation}
To complete the proof of \eqref{eq:main-wave} we need to compare the above quadratic expressions with those
in \eqref{eq:main-wave}. We begin with the $A_x$ bilinear forms. From Proposition~\ref{p:DAB} we have the 
relation $\DA^2 = \bfQ(A_x,A_x)$ therefore the $A_x$ bilinear forms are 
\[
\begin{split}
[A^k, \partial_j A_k] + \partial_j Q(A^k,A_k) = & \  P_j [A^k, \partial_x A_k]  + 
2 \Delta^{-1} \partial_j \bfQ(\partial^\ell A^k, \partial_\ell A_k) 
\\ & \ + \Delta^{-1} \partial_j  [A^k, \Delta A_k]  +   \Delta^{-1} \partial_j [ \bfQ(\Delta A_k, A_k)+ \bfQ( A_k, \Delta A_k)]
\end{split}
\]
and the terms on the last line cancel in view of the expression \eqref{eq:Q-sym}. On the other hand for the bilinear 
term in $\partial_t A_x$ we have from Proposition~\ref{p:DA0}
\[
\DA_0^2(\partial_0 A, \partial_0 A) = 2 \Delta^{-1} \bfQ( \partial^0 A,\partial_0 A)
\]

Next we prove the  estimates for $R$, $\DA^3$, $\bfA_0^3$, $\A_0^3$ and $\DA_0$. 
In terms of subcritical Strichartz norms we will use the components 
\[
L^{q^{(j)}} \dot W^{\sgm^{(j)},p^{(j)}} \subset \Str^{1}
\]
If $\cstr  \in \ell^2$ is a $(-\delta,S)$ admissible $\Str^1$ frequency
envelope for $(A,B=\partial_t A)$ in a time interval $[0,T]$, then we
denote by $c_k^{(j)}(t)$ a (minimal) $(-\delta,S)$ admissible frequency envelope for
$(A(t),B(t))$ in $\dot W^{\sgm^{(j)},p^{(j)}}$. Then we observe that we
must have the relation
\begin{equation}\label{ckt}
\| c_k^{(j)}(t)\|_{L^{q^{(j)}}} \lesssim \cstr_k
\end{equation}
We will always use this relation in order to transition from the fixed time bounds in 
the previous section to the space-time bounds  here.

\pfstep{1.~The bounds for $\DA^3$}  The fixed time bound
\eqref{eq:DA-tri-t} is a direct consequence of \eqref{DAB3-0}, while
the Lipschitz property is due to the difference bound
\eqref{dDAB3-0}.  For the space-time bound \eqref{eq:DA-tri} we
first estimate separately the term $\DA^3$ using the bound
\eqref{DAB3} at fixed $t$,
\[
\|P_k \DA^3 (t)\|_{\dot H^1} \lesssim c_k^{(3)}(t)(c_k^{(3),[\delta_{0}]})^{2}(t).
\]
Since $q^{(3)} = 3$, by \eqref{ckt} this yields the space-time bound 
\[
\|P_k \DA^3 \|_{L^1 \dot H^1} \lesssim  \cstr_k (\cstrd_k)^2.
\]
Finally the $L^2 \dot H^\frac12$ bound is obtained similarly using \eqref{DAB3-1}.

\pfstep{2.~The bounds for $\A_0^3$} These follow as above but starting from
the bounds \eqref{A0-est1}-\eqref{A03-est}. For later use, we also note the 
quadratic bounds
\begin{equation}\label{eq:A02-re-t}
\A_0^2(t) : \dot H^1 \to \dot H^1
\end{equation}
\begin{equation}\label{eq:A02-re}
\A_0^2 : \Str^1[I] \to L^2 \dot H^\frac32[I]
\end{equation}
which are a consequence of  \eqref{A0-est1} and \eqref{A0-est2}.

\pfstep{3.~The bounds for $\DA_0^3$} Again the same argument applies, but now starting from 
\eqref{bfb0}-\eqref{bfb02}.

\pfstep{4.~The bounds for $R$} Given our definition of $R$ above, the bounds \eqref{eq:Rj-t} and \eqref{eq:Rj}
are a consequence of the similar bounds for $\DA^3$, $\DA_0^3$, $\A_0^3$ together with the estimates
\eqref{eq:A02-re-t} and \eqref{eq:A02-re} for $\A_0^2$. \qedhere

\end{proof}

For our study in subsequent work \cite{OTYM2}, \cite{OTYM2.5}, \cite{OTYM3}
of the large data hyperbolic Yang--Mills flow  will also need a hyperbolic evolution for the connection $A$ 
at a nonzero parabolic time $s > 0$. The added difficulty is that $A(s)$ no longer solves exactly the 
 hyperbolic Yang--Mills equation \eqref{ym}. Instead we have $\covD^{\bt}
F_{\alp \bt}(s) = w_{\alp} \neq 0$ in general. We expect the
``heat-wave commutator'' $w_{\alp}$ (called the Yang--Mills tension
field) to be concentrated primarily at frequency comparable to
$s^{-\frac{1}{2}}$.   Other errors are  also expected to have a similar concentration.  Precisely, we have

\begin{theorem} \label{thm:main-eq-s} Let $A_{t,x} = (A_{0}, A) \in
  C_{t}(I; \dot{H}^{1} \times \calC_{\hM})$ with $(\rd_{t} A_{0},
  \rd_{t} A) \in C_{t}(I; L^{2} \times T^{L^{2}}_{A(t)} \calC_{\hM})$
  be a solution to \eqref{ym} with energy $\nE$. Let $A_{t,x}(s) = A_{t,x}(t, x, s)$
  be the dynamic Yang--Mills heat flow development of $A_{t,x}$ in the
  caloric gauge. Then the spatial components $A(s) = A_{x}(s)$ of
  $A_{t,x}(s)$ satisfy an equation of the form
  \begin{equation}\label{eq:main-wave-s}
    \begin{aligned}
      \Box_{A(s)} A_{j}(s) =& \P_{j} [A(s), \rd_{x} A(s)] + 2\Delta^{-1} \partial_{j} \bfQ (\rd^{\alp} A(s), \rd_{\alp} A(s)) + R_{j}(A(s)) \\
      & + \P_{j} \bfw_{x}^{2}(\rd_{t} A, \rd_{t} A, s) + R_{j; s}(A)
    \end{aligned}
  \end{equation}
  together with the compatibility condition
  \begin{equation}\label{eq:main-compat-s}
    \partial^{\ell} A_{\ell}(s) = \DA(A(s)).
  \end{equation}
  Moreover, the temporal component $A_{0}(s)$ and its time derivative
  $\rd_{t} A_{0}(s)$ admit the expansions
  \begin{align}
    & \begin{aligned}
      A_{0}(s) =& \bfA_{0}(A(s)) + \bfA_{0; s}(A) \\
      :=&\bfA_{0}(A(s)) + \lap^{-1} \bfw_{0}^{2}(A, A, s) + \bfA_{0;
        s}^{3}(A),
    \end{aligned} \label{eq:A0-struct} \\
    & \begin{aligned} \rd_{t} A_{0}(s) = \DA_{0}(A(s)) + \DA_{0; s}(A)
    \end{aligned} \label{eq:DA0-struct}
  \end{align}
  Here $\P$, $\bfQ$, $R_{j}$, $\DA$, $\bfA_{0}$ and $\DA_{0}$ are as
  before, and $\bfw_{\alp}^{2}$ are defined as
  \begin{align}
    \bfw_{0}^{2}(A, B, s) =& - 2 \bfW(\rd_{t} A, \lap B, s), \label{eq:w2-0-def}\\
    \bfw_{j}^{2}(A, B, s) =& - 2 \bfW(\rd_{t} A, \rd_{j} \rd_{t} B - 2
    \rd_{x} \rd_{t} B_{j}, s), \label{eq:w2-x-def}
  \end{align}
  where $\bfW(\cdot, \cdot, s)$ is a bilinear form with symbol as in \eqref{W-sym}.
  
  Moreover, $R_{j; s}(t)$, $\bfA_{0; s}^{3}(t)$ and $\DA_{0;s}(t)$ are
  uniquely determined by $(A, \rd_{t} A)(t) \in T^{L^{2}} \calC$ for
  each $s > 0$, and satisfy the following properties
  \begin{itemize}
  \item $R_{j; s}(t) : \dot{H}^{1} \to \dot{H}^{-1}$ is a Lipschitz
    map with output concentrated at frequency $s^{-\frac{1}{2}}$. More
    precisely,
    \begin{equation} \label{eq:Rj-s-t} (1- s \lap)^N R_{j; s}(t) :
      \dot{H}^{1} \to \blue{2^{-\smexp k(s)} \dot{H}^{-1-\smexp}}.
    \end{equation}

  \item $\bfA_{0; s}^{3}(t): \dot{H}^{1} \to \dot{H}^{1}$ is a
    Lipschitz map with output concentrated at frequency
    $s^{-\frac{1}{2}}$, i.e.,
    \begin{equation} \label{eq:A0-tri-s-t} (1- s \lap)^N \bfA_{0;
        s}^3(t) : \dot{H}^{1} \to \blue{2^{-\smexp k(s)}
      \dot{H}^{1-\smexp}}
    \end{equation}

  \item $\DA_{0; s}(t): \dot{H}^{1} \to L^{2}$ is a Lipschitz map with
    output concentrated at frequency $s^{-\frac{1}{2}}$, i.e.,
    \begin{equation} \label{eq:DA0-tri-s-t} (1- s\lap)^N \DA_{0; s}(t)
      : \dot{H}^{1} \to \blue{2^{-\smexp k(s)} \dot{H}^{-\smexp}}.
    \end{equation}
  \end{itemize}
  Finally, on any time interval $I \subseteq \bbR$ (with bounds
  independent of $I$), $R_{j; s}$, $\bfA_{0; s}^{3}$ and $\DA_{0; s}$
  satisfy the following properties:
  \begin{itemize}
  \item $R_{j; s} : \Str^{1}[I] \to L^{1} L^{2} \cap L^{2}
    \dot{H}^{-\frac{1}{2}} [I]$ is a Lipschitz map with output
    concentrated at frequency $s^{-\frac{1}{2}}$, i.e.,
    \begin{equation} \label{eq:Rj-s} (1- s \lap)^N R_{j; s} : \Str^1
      [I] \to \blue{2^{-\smexp k(s)} (L^1 \dot H^{-\smexp} \cap
      L^{2} \dot{H}^{-\frac{1}{2}-\frac{1}{12}})[I]}
    \end{equation}

  \item $\bfA_{0; s}^{3}: \Str^1 [I] \to L^1 \dot H^{2} \cap L^{2}
    \dot{H}^{\frac{3}{2}} [I]$ is a Lipschitz map with output
    concentrated at frequency $s^{-\frac{1}{2}}$, i.e.,
    \begin{equation} \label{eq:A0-tri-s} (1- s \lap)^N \bfA_{0; s}^3 :
      \Str^1[I] \to \blue{2^{-\smexp k(s)} (L^{1} \dot
      H^{2-\smexp} \cap L^{2} \dot{H}^{\frac{3}{2} -
        \smexp})[I]}
    \end{equation}

  \item $\DA_{0; s}: \Str^1[I] \to L^{2} \dot H^{\frac{1}{2}} [I]$ is
    a Lipschitz map with output concentrated at frequency
    $s^{-\frac{1}{2}}$, i.e.,
    \begin{equation} \label{eq:DA0-tri-s} (1- s\lap)^N \DA_{0; s} :
      \Str^1[I] \to \blue{2^{-\smexp k(s)} L^{2}
      \dot{H}^{\frac{1}{2}-\smexp}[I]}
    \end{equation}
  \end{itemize}
  All implicit constants depend on $\hM$ and $\nE$.
\end{theorem}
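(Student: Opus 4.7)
The derivation will follow the same algebraic scheme as the proof of Theorem~\ref{thm:main-eq}, with the crucial modification that at heat-time $s > 0$ the hyperbolic Yang--Mills equations are sourced by the Yang--Mills tension field: $\covD^{\alp} F_{\alp j}(s) = w_{j}(s)$. Expanding $\covD^{\alp} F_{\alp j} = \Box_{A(s)} A_j(s) - \covD_j \covD^{\alp} A_{\alp}(s) - [A^\alp(s),\rd_j A_\alp(s)]$ with the sign conventions as in \eqref{wave-A}, I obtain
\begin{equation*}
\Box_{A(s)} A_j(s) = \rd_j \bigl(\DA(A(s)) - \rd^0 A_0(s)\bigr) + [A^\alp(s), \rd_j A_\alp(s)] + w_j(s).
\end{equation*}
Since $A(s)$ is again a caloric connection (the Yang--Mills heat flow of $A(s_0)$ is just the time-shifted flow of $a$, which still decays to $0$), Proposition~\ref{p:DAB} applies at each $s$ and gives $\DA(A(s)) = \bfQ(A(s),A(s)) + \DA^3(A(s))$, yielding \eqref{eq:main-compat-s}. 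For the temporal components, Propositions~\ref{p:A0-diff} and \ref{p:DA0-diff} supply the decompositions
\begin{equation*}
A_0(s) = \bfA_0(A(s)) + \Delta^{-1} \bfw_0^2(A,A,s) + \bfA_{0;s}^3(A), \qquad \rd^0 A_0(s) = \DA_0(A(s)) + \Delta^{-1} \rd^k \bfw_k^2(A,A,s) + \DA_{0;s}^3(A),
\end{equation*}
which gives \eqref{eq:A0-struct}--\eqref{eq:DA0-struct}.

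Next I will identify the new quadratic structure. The quadratic parts of $\DA(A(s))$, $\DA_0(A(s))$ and $[A^k(s), \rd_j A_k(s)]$ combine exactly as in the proof of Theorem~\ref{thm:main-eq} to produce $\P_j [A, \rd_x A] + 2\Delta^{-1} \rd_j \bfQ(\rd^{\alp} A, \rd_{\alp} A)$. The genuinely new quadratic contribution is $\bfw_j^2(A,B,s) - \rd_j \Delta^{-1} \rd^k \bfw_k^2(A,B,s) = \P_j \bfw_x^2$, which is precisely the second line of \eqref{eq:main-wave-s}. The definitions \eqref{eq:w2-0-def}--\eqref{eq:w2-x-def} of $\bfw_\alp^2$ are then read off from the symbol computation in the proof of Proposition~\ref{p:w}. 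All remaining contributions (cubic and higher) are collected into $R_{j;s}(A)$, namely
\begin{equation*}
R_{j;s}(A) = \bfw_j^3(A,B,s) - \rd_j \DA_{0;s}^3(A) + \bigl[\bfA_{0;s}(A),\, \rd_j A_0(s)\bigr] + \bigl[\bfA_0(A(s)),\, \rd_j \bfA_{0;s}(A)\bigr],
\end{equation*}
where $\bfA_{0;s}(A) = \Delta^{-1} \bfw_0^2 + \bfA_{0;s}^3$.

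To establish the fixed-time estimates \eqref{eq:Rj-s-t}, \eqref{eq:A0-tri-s-t}, \eqref{eq:DA0-tri-s-t}, I will use Proposition~\ref{p:A0-diff} for $\bfA_{0;s}^3$, Proposition~\ref{p:DA0-diff} for $\DA_{0;s}^3$, and Proposition~\ref{p:w} for the $\bfw^3$-contribution to $R_{j;s}$; all of these give parabolic decay factors of the form $2^{-\smexp k(s)}$ matching the frequency concentration at $s^{-1/2}$. The bilinear terms involving $\bfA_{0;s}$ are handled by combining the $(1 - s\Delta)^N$ decay for $\bfA_{0;s}$ with the $\dot H^1$ or $\dot W^{\sgm,p}$ bounds for $A$ and $\bfA_0(A(s))$ that are already controlled by $\calE$ and $\hM$ via Proposition~\ref{p:cal-in-bfH} and Proposition~\ref{p:A0}. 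For the space-time Strichartz bounds \eqref{eq:Rj-s}--\eqref{eq:DA0-tri-s}, I will transfer the fixed-time envelope bounds $c_k^{(j)}(t)$ to Strichartz envelopes $\cstr_k$ via the relation \eqref{ckt}, using the pairings $(q^{(j)}, \sgm^{(j)}, p^{(j)})$ from the table in Section~\ref{subsec:lp}; the choice of the five indices is exactly tuned so that the $L^1_t \dot H^{2-\smexp}$, $L^2_t \dot H^{3/2 - \smexp}$, etc.\ outputs follow by H\"older integration in time.

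The main obstacle, as usual in this framework, is bookkeeping: one must carefully isolate the quadratic $\bfw_j^2$ and $\bfA_{0;s}^2$ pieces from their cubic counterparts, show that they combine cleanly to produce $\P_j \bfw_x^2$ and $\Delta^{-1} \bfw_0^2$, and verify that every other term appearing in the commutator expansion falls into the cubic class of Definition~\ref{def:env-pres} with the parabolic localization $2^{-\smexp k(s)}$. The Lipschitz estimates are proved in parallel using the difference bounds \eqref{dw-linf}--\eqref{dw-l1}, the difference statements appended to Propositions~\ref{p:A0-diff} and \ref{p:DA0-diff}, and the difference bounds for $\bfA_0$, $\DA_0$, $\DA^3$ from Propositions~\ref{p:DAB}, \ref{p:A0}, \ref{p:DA0}, exactly as in the last step of the proof of Theorem~\ref{thm:main-eq}.
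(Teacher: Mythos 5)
Your proposal is correct and follows essentially the same route as the paper: start from the inhomogeneous equation $\covD^{\alp}F_{\alp j}(s)=w_{j}(s)$, use Propositions~\ref{p:w}, \ref{p:A0-diff} and \ref{p:DA0-diff} to split off the quadratic tension pieces, observe that $\bfw^{2}_{j}-\rd_{j}\Delta^{-1}\rd^{k}\bfw^{2}_{k}=\P_{j}\bfw^{2}_{x}$ while $\Delta^{-1}\bfw_{0}^{2}$ matches in \eqref{eq:A0-struct}, and obtain the fixed-time and space-time bounds for $R_{j;s}$, $\bfA_{0;s}^{3}$, $\DA_{0;s}$ from the cited propositions together with the envelope transfer \eqref{ckt}. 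The only divergence is bookkeeping: you explicitly place the cross terms $[\bfA_{0;s},\rd_{j}A_{0}(s)]+[\bfA_{0}(A(s)),\rd_{j}\bfA_{0;s}]$ into $R_{j;s}$, whereas the paper absorbs them implicitly by letting the ``first line'' terms depend on $w(s)$; your choice is consistent with the statement (since $R_{j}(A(s))$ there is the $s=0$ map evaluated at $(A(s),\rd_{t}A(s))$) and these products inherit the frequency-$s^{-1/2}$ localization from $\bfA_{0;s}$, so they indeed satisfy the claimed perturbative bounds.
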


\begin{remark}
 Compared with the prior theorem, here we have additional contributions
$R_0$, $\bfA_{0s}$ and $\DA_{0s}$ as well as the $\bfw$ terms. These have the downside
that they depend on $A(0)$ and $\partial_t A(0)$ rather than 
 $A(s)$ and $\partial_t A(s)$. The redeeming feature is that  these terms will not only 
be small due to the energy dispersion, but also, critically,  concentrated at frequency $s^{-\frac12}$.
\end{remark}

\begin{remark}
 The other change here is due to the inhomogeneous terms $\bfw^2$; these are matched 
in the $A_k(s)$ and the $A_0(s)$ equations, and will interact in the trilinear analysis for the hyperbolic Yang--Mills flow.
\end{remark}

\begin{proof}
  Using \eqref{A0-diff} and \eqref{DA0-diff} we obtain that instead of
  the equation \eqref{wave-A} we now have the equation
\begin{equation}\label{wave-As}
\begin{split}
\Box_{A(s)} A_j(s)  = & \  \partial_j (\DA^2 - \DA_0^2) + [A^\alpha, \partial_j A_\alpha] +  \partial_j (\DA^3 - \DA_0^3) 
\\    & \  + (\bfw^2_j(s) - \partial_j \DA_{0;s}^2) +(\bfw^3_j(s) - \partial_j \DA_{0;s}^3)
\end{split}
\end{equation}
where $\DA^3$ and $\DA_0^3$ now depend on $A_k(s)$, $\partial_0 A_k(s)$ and
$w(s)$.  On the second line we have  separated  the effect of $w$, which is nonzero at $s > 0$.  

The terms on the first line are as in the previous theorem. For the second line,
we define 
\[
R_{j;s} = \bfw^3_j(s) - \partial_j \DA_{0s}^3.
\]
For the quadratic part, on the other hand, using Proposition~\ref{p:DA0-diff} we have
\[
\bfw^2_j(s) - \partial_j \DA_{0;s}^2 =  \bfw^2_j(s) - \Delta^{-1} \partial_j \partial_k  \bfw^2_j(s) = 
P_j \bfw^2_x
\]
The remaining algebraic relations \eqref{eq:A0-struct} and \eqref{eq:DA0-struct} are obtained from
Propositions~\ref{p:A0-diff} and \ref{p:DA0-diff}. We now consider the estimates in the theorem:

\pfstep{1. The $\bfw^3_j(s)$ component of $R_{j;s}$} The corresponding parts of the bounds \eqref{eq:Rj-s-t} 
and \eqref{eq:Rj-s}  follow from the estimates \eqref{w-linf}-\eqref{w-l1} in Proposition~\ref{p:w}. 

\pfstep{2. The $\DA_{0;s}^3$ component of $R_{j;s}$} Here we use instead the bounds \eqref{bfb0s}-\eqref{bfb0s2}.

\pfstep{3. The $\A_{0;s}$ bound} The estimates \eqref{eq:A0-tri-s} and \eqref{eq:A0-tri} are consequences of the 
bounds \eqref{a0s}-\eqref{a0s2}.

\pfstep{4. The $\DA_{0;s}$ bound} The estimates \eqref{eq:DA0-tri-s} and \eqref{eq:DA0-tri} are consequences of the 
bounds \eqref{bfb0s}-\eqref{bfb0s2}.

\end{proof}

\bibliographystyle{ym}
\bibliography{ym}

\end{document}